\newcommand{\oH}{\overline{H}}
\newcommand{\tH}{\tilde{H}}
\newcommand{\E}{{\bf E}}
\newcommand{\te}{\tilde{e}}
\newcommand{\A}{{\bf A}}
\newcommand{\K}{{\bf K}}
\newcommand{\ttt}{\tilde{t}}
\newcommand{\oa}{\overline{a}}
\newcommand{\beq}{\begin{equation}}
\newcommand{\eeq}{\end{equation}}
\newcommand{\mc}[1]{\mathcal{#1}}  
\newcommand{\e}{\epsilon}
\newcommand{\g}{{\bf g}}
\newcommand{\tttt}{{\tilde{t}}}
\newcommand{\ttheta}{{\tilde{\theta}}}
\newcommand{\bs}{\begin{split}}
\newcommand{\es}{\end{split}}
\theoremstyle{plain}
\newtheorem{theorem}{Theorem}[section]
\newtheorem{proposition}[theorem]{Proposition}
\newtheorem{lemma}[theorem]{Lemma}
\newtheorem{corollary}[theorem]{Corollary}
\theoremstyle{definition}
\newtheorem{definition}[theorem]{Definition}
\newtheorem{remark}[theorem]{Remark}
\numberwithin{equation}{section}
\begin{document}

\title{Stable space-like singularity formation for axi-symmetric and polarized near-Schwarzschild black hole interiors.}

\author{Spyros Alexakis and Grigorios Fournodavlos}

\date{}

\maketitle

\parskip = 0 pt

\abstract{ We show a stability result for the Schwarzschild singularity 
(inside the black hole region)
 for the Einstein {vacuum} equations (EVE). The result is proven in the class of 
 polarized axial 
 symmetry, under  perturbations of the Schwarzschild data induced  on a
 hypersurface $\{r=\e\}$, $\e<<2M$. 
  Our result is only partly a stability result, in that we show that while a (space-like) singularity 
 persists under perturbations as above, the behaviour of the metric approaching the 
 singularity is much more  involved than for the Schwarzschild solution. Indeed, we find that the 
 solution displays asymptocially-velocity-term-dominated dynamics and  approaches  a different Kasner solution at \emph{each} point of the singularity. 
 These Kasner-type asymptotics are very far from isotropic, since (as in Schwarzschild) 
 there are two contracting directions and one expanding one. 
 Our proof relies on energy methods and 
 on a new 
 approach to the {EVE}  in axial symmetry, which we believe 
 has wider applicability: In this symmetry class and under a suitable \emph{geodesic} 
 gauge, the EVE can be studied as a free wave  coupled to 
(nonlinear)  ODEs, which couple the geometry of the projected, 2+1 space-time to the
free  wave. 
The fact that
 the 
nonlinear part of the Einstein equations is described by ODEs   
lies at the heart of how one can  overcome  a certain \emph{linear instability} exhibited by the singularity.}
\tableofcontents

\section{Introduction}

We study the problem of the stability of the singularity of the Schwarzschild black hole
from the point of view of the forwards-in-time initial value problem for the Einstein vacuum equations (EVE): We consider perturbations of the initial data of Schwarzschild, {along a space-like hypersurface in the
black hole  interior}, and wish to 
understand 
the maximal future  hyperbolic development of the solution, up to any singularities that 
might form, with detailed asymptotics at the singularity.

 We restrict {our} attention to polarized axially symmetric perturbations.
 Within that class we find that the maximal hyperbolic development 
 of any sufficiently small perturbation of the Scwharzschild initial data terminates 
 at a space-like singularity with very rich dynamics. 
As we review below, at each point on its ``final'' {singular hypersurface}, the Schwarzschild solution
exhibits a \emph{collapsing} behaviour of the metric in two principal directions and an \emph{expanding} behaviour in a remaining, third principal direction.  In this regard,
 the  behaviour of our solutions 
 is qualitatively similar to that of the Schwarzschild solution, in that the singularity it forms 
 is still space-like, and moreover at each point there are still two collapsing and one expanding principal directions. From this point of view, 
{our result can 
 be seen as a stability result}. However the \emph{rates} of the two contractions and {expansion}
 are different at each point on the final {singular hypersurface} and also 
  generically different from those of Schwarzschild. Thus,
  the result we derive 
should be thought of as a stability property, albeit holding only 
in a broad sense.

We next present a rough version of our result, and then situate it in the context of singularity formation in black hole interiors and cosmological space-times. We then provide a broad outline of some of the ideas in this
paper. 
 
 \subsection{The result.}
 
We will exclusively be studying space-times $({\cal M}^{1+3}, g)$ which are axially  symmetric and 
the axial symmetry is \emph{polarized}, with the Killing field corresponding to a rotation.  In particular for our space-times there exists a system of coordinates 
${r\in(0,2\epsilon)},t\in (-\infty,\infty), \theta\in (0,\pi),\phi\in [0,2\pi)$ with $\partial_\phi$ being the Killing field. The polarization condition is equivalent to the requirement that:
\[
\partial_\phi\perp {\rm Span}\langle\partial_t,\partial_r,\partial_\theta\rangle
\]
Equivalently, the metric components $g_{t\phi}, g_{\theta\phi}, 
g_{r\phi}$ all vanish.
\medskip
 
For the sake of comparison, recall the Schwarzschild metric in the standard coordinates $r,t,\theta,\phi$:
\begin{align}\label{Schmetric.pre}
g_{\rm S}=-(\frac{2M}{r}-1)^{-1}dr^2+(\frac{2M}{r}-1)dt^2+r^2(d\theta^2+\sin^2\theta d\phi^2),&&M>0.
\end{align} 
We note that in the interior of the black hole, $r<2M$, the characters of 
the coordinate 
vector fields $\partial_r,\partial_t$ are reversed, namely, $\partial_r$ is 
timelike and 
$\partial_t$ is space-like. The (true) singularity is thus at $\{r=0\}$, 
where  the 
metric components 
$(g_{\rm S})_{\phi\phi}=r^2 \sin^2\theta$, $(g_{\rm S})_{\theta\theta}=r^2$ \emph{collapse}, as 
$r\to 0^+$, 
while $(g_{\rm S})_{tt}=(\frac{2M}{r}-1)$ \emph{expands} as $r\to 0^+$.

Now, consider the hypersurface $\Sigma_{\e}:=\{r=\e\}$, for some constant 
$\e>0$ that 
will be chosen suitably small {further down}.
 { In the Schwarzschild space-time, 
 {let us denote} by $\g_{\rm S}$ be the induced metric on this hypersurface and by 
 $\K_{\rm S}$ its
the second fundamental form}. It follows straightforwardly from 
 \eqref{Schmetric.pre} that the second fundamental form $\K_{\rm S}$  
 on 
 these slices is  of magnitude 
 $\e^{-3/2}$, {see \eqref{SK} below}. The metric components 
  $(\g_{\rm S})_{\theta\theta}$,
  $(\g_{\rm S})_{\phi\phi}$, {$(\g_{\rm S})_{tt}$}
are of magnitude  $\e^2, \e^2sin^2\theta,{ (\frac{2M}{\epsilon}-1)}$ respectively.

The initial data $(\g_{\rm init}, \K_{\rm init})$ that we consider in this paper will 
be (polarized and axially symmetric) perturbations of the Schwarzschild 
background data $(\g_{\rm S}, \K_{\rm S})$. The closeness will be measured in suitable Sobolev spaces, and the closeness (in these spaces)
will be captured by a parameter $\eta>0$. The precise assumption will be 
formulated {further down} in subsection \ref{sec_REVESNGG}.
\medskip

It is for the space of initial data for which both $\epsilon,\eta$ are 
small enough (to be
 determined later) that we obtain our result. Our main finding will be 
 that in a suitable coordinate system  $r,T,\Theta$,  the solution exists all the way 
 up to a space-like singularity that occurs at $\{r=0\}$.

 We provide a first, rough formulation of our main result here: 
 \medskip

\begin{theorem}
\label{thm_rough}
Consider a perturbation $(\g, \K)$ of the Schwarzschild initial data 
 $(\g_{\rm S}$,  $\K_{\rm S})$ on the hypersurface $r=\e$. 
   Let $\eta>0$ capture the size of the perturbation (in a re-normalized
    sense to be specified in {subsection \ref{sec_REVESNGG}), in $H^s\times H^{s-1}(\mathbb{S}^2\times \mathbb{R})$ spaces,
    for some $s\in\mathbb{N}$,} to be chosen suitably large below. 
    Assume the perturbation preserves the polarized axi-symmetric 
    structure of the Schwarzschild background and solves the vacuum 
    constraint equations within this symmetry class. 
    
     Then for $\epsilon, \eta>0$ small enough this initial data admits a future maximal hyperbolic
      development 
     which terminates at a space-like singularity, {where the Kretschmann scalar blows up}. 
 Near {the singularity}, the space-time metric $g^{3+1}$ has an asymptotic {profile}
 of the following form, in suitable coordinates 
 $r, \phi, T, \Theta$: 
 \begin{align}
 \begin{split}
 \label{form}
&g=-(\frac{2M}{r}-1)^{-1}dr^2+ g_{\phi\phi}(r,T,\Theta) d\phi^2+ 
g_{\Theta\Theta}(r,T,\Theta) d\Theta^2+
 g_{TT}(r,T,\Theta) dT^2+
 g_{T\Theta}(r,T,\Theta) dT d\Theta
 \\&+g_{r\Theta}(r,T,\Theta)
  drd\Theta+g_{rT}(r,T,\Theta) drdT,
 \end{split}
\end{align}
where the metric components {admit the asymptotic expansions}:
\begin{align}
  \begin{split}
  \label{cl}
&    g_{\phi\phi}(r,T,\Theta)= A(T,\Theta)r^{2\alpha(T,\Theta)}
    (1+O(r^{\frac{1}{4}})) \sin^2\Theta, \quad
    g_{TT}(r,T,\Theta)= B(T,\Theta) r^{2\beta(T,\Theta)}(2M+O(r^{\frac{1}{4}})),  
\\&    g_{\Theta\Theta}(r,T,\Theta)= C(T,\Theta) r^{2\delta(T,\Theta)}
    (1+O(r^{\frac{1}{4}})), \qquad g_{T \Theta}= D_1(T,\Theta) r^{\vartheta_1(T,\Theta)}(1+O(r^\frac{1}{4})), \\
&  g_{rT}(r,T,\Theta)= D_2(\tilde{t},\tilde{\theta})r^{\vartheta_2(T,\Theta)}(1+O(r^\frac{1}{4})),\qquad g_{r\Theta}(r,T,\Theta) \equiv0,\;\;r\in(0,\frac{\epsilon}{2}),
\end{split} 
\end{align}
{as $\rightarrow0^+$}.
  Here the function $\alpha(T,\Theta)$ is everywhere close to 1, 
  $\delta(T,\Theta)$ is also close 
  to 1, $\beta(T,\Theta)$ is everywhere close to $-1/2$. 
       Moreover the coefficients 
  $A(T,\Theta), B(T,\Theta), 
  C(T,\Theta)$ are everywhere close to 1 also.

      In fact, the value of $\alpha$ at each point $(T,\Theta)$ uniquely 
     determines the value of the other 
  powers $\beta(t,\theta),\delta(t,\theta)$.   
   The remaining metric components are less singular, in the sense that the corresponding exponents satisfy 
{  $\vartheta_1(T,\Theta)\ge 1+\frac{1}{8}$,  
  $\vartheta_2(T,\Theta)\ge\frac{3}{8}$.}
  
\end{theorem}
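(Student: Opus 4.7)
The starting point is to exploit the polarized axi-symmetric structure by quotienting the spacetime by the rotational Killing field $\partial_\phi$. Setting $\Phi := \tfrac12\log g_{\phi\phi}$, the field $\Phi$ descends (modulo the fixed background $\log\sin\Theta$) to a scalar on the $2+1$-dimensional quotient manifold $(\bar{\mathcal{M}},\bar g)$, and the vacuum Einstein equations decompose into (i) a covariant wave equation $\Box_{\bar g}\Phi = 0$ on the quotient, and (ii) the $2+1$-dimensional Einstein equations with stress-energy quadratic in $d\Phi$. Because $2+1$ gravity carries no propagating degrees of freedom, once an appropriate gauge is imposed the latter reduce to constraint/transport equations along a distinguished timelike direction. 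I would fix such a gauge by demanding $g_{rr} = -(2M/r-1)^{-1}$ (matching Schwarzschild exactly) together with $g_{r\Theta}\equiv 0$; this partial geodesic gauge exhausts the residual freedom compatible with the symmetry and turns the $2+1$ Einstein equations into a coupled system of \emph{ordinary differential equations} in $r$ for the profiles $g_{TT}, g_{\Theta\Theta}, g_{T\Theta}, g_{rT}$, driven by $\Phi$ and its first derivatives. This free-wave-plus-nonlinear-ODE split is exactly the structure advertised in the abstract.

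With the system in hand, the core of the argument is a bootstrap on slabs $\{r_* \le r \le \epsilon\}$, letting $r_*\downarrow 0$. The bootstrap unknowns are: the wave $\Phi$, controlled in high-order $H^s$ energies on the level sets $\{r=\mathrm{const}\}$; and the $2+1$ metric profiles, controlled pointwise relative to Kasner ans\"atze $r^{2\alpha(T,\Theta)}, r^{2\beta(T,\Theta)}, r^{2\delta(T,\Theta)}$ with exponents in a small neighbourhood of the Schwarzschild values $(1,-1/2,1)$. The analytic heart is the energy inequality for $\Phi$: the Lagrangian energy adapted to the timelike vector $\partial_r$, weighted by a power of $r$ tuned to the Kasner scaling, produces a bulk term of favourable sign up to contributions absorbable by the smallness of $\epsilon$ and $\eta$. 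The ODE system for the metric profiles is then closed by direct integration from $\Sigma_\epsilon$, using the $H^s$ control on $\Phi$ to bound source terms; the ODEs are essentially linear in $\log g_{TT}, \log g_{\Theta\Theta},\ldots$, with coefficients supplied by the bootstrap assumptions themselves.

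Once the bootstrap closes uniformly as $r_*\to 0^+$, the asymptotics \eqref{cl} are extracted directly from the ODE solutions. The wave estimate combined with an Abel-type integration in $r$ yields $\Phi(r,T,\Theta) = \alpha(T,\Theta)\log r + \Phi_*(T,\Theta) + O(r^{1/4})$, with $\alpha$ determined dynamically from the initial data and everywhere close to $1$. Substituting this profile into the ODEs for the $2+1$ metric components produces the stated power laws, with $\beta(T,\Theta), \delta(T,\Theta)$ expressed algebraically in terms of $\alpha$ via analogues of the Kasner constraints; this is the statement that $\alpha$ alone determines $\beta,\delta$. The less singular exponents $\vartheta_1, \vartheta_2$ for the cross terms $g_{T\Theta}, g_{rT}$ emerge from the same integration as subleading contributions, yielding the bounds $\vartheta_1\ge 1+\tfrac18$, $\vartheta_2\ge \tfrac38$. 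The spacelike character of $\{r=0\}$ and the blowup of the Kretschmann scalar then follow directly from the leading Kasner profile.

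The main obstacle is the \emph{linear instability} flagged in the abstract: the Schwarzschild Kasner exponents are a non-generic fixed point of the AVTD/BKL flow, and a naive linearised energy estimate registers the drift of the exponents under perturbation as an unbounded growth of order $|\log r|$ (or worse) as $r\to 0^+$. The decisive point is that these unstable modes sit \emph{entirely in the ODE sector}---they correspond to pointwise shifts of the Kasner exponents themselves---rather than in the propagating wave sector. Because the ODEs can be integrated along $\partial_r$ without derivative loss, the instability can be absorbed into a $(T,\Theta)$-dependent Kasner profile against which one renormalises both $\Phi$ and the metric components before running the Sobolev energy machinery. The renormalised variables then satisfy estimates with genuinely bounded energies, which is precisely what allows the bootstrap to close all the way to $\{r=0\}$.
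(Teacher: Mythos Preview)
Your overall architecture is right—the reduction to a free wave $\gamma$ coupled to Riccati-type ODEs for the connection coefficients $K_{ij}$ in geodesic gauge is exactly what the paper does. But your handling of the linear instability has a genuine gap.

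You propose to absorb the unstable modes into a $(T,\Theta)$-dependent Kasner profile and then run a bootstrap with renormalised variables. This does not work. The instability is not a drift of the exponents that can be subtracted off; it lives in the \emph{differentiated} Riccati equation for $\partial K_{22}$. That equation is a linear first-order ODE in $r$ whose homogeneous branch behaves like $r^{2d_2(t,\theta)-\alpha(t,\theta)}\sim r^{-3}$, which is far more singular than the expected $r^{-3/2}$. Subtracting the profile $d_2(t,\theta)\sqrt{2M}\,r^{-3/2}$ from $K_{22}$ gives a remainder whose spatial derivatives still solve this same linear ODE with the same dangerous free branch. If you integrate forward from $\Sigma_\epsilon$ toward $r=0$—as any bootstrap must—nothing prevents this $r^{-3}$ branch from being excited, and the bootstrap cannot close. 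The paper is explicit that this is a genuine obstruction to a forward/bootstrap argument.

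The paper's resolution is structurally different: a \emph{forward--backward Picard iteration} rather than a bootstrap. At step $m$, solve $\Box_{g^{m-1}}\gamma^m=0$ forward; then solve the Riccati equations for $K^m_{22}, K^m_{12}$ \emph{backward from the singularity} $r=0$, explicitly setting the singular free branch to zero (a well-posed Fuchsian problem). The price is that $K^m_{22}$ on the physical initial hypersurface is no longer free data—it is whatever the backward integration delivers. One must therefore \emph{solve for the location} $\Sigma_{r_*^m}=\{r=r_*^m(t,\theta)\}$ of the hypersurface on which the prescribed data $(\mathbf g,\mathbf K)$ are actually induced, via a coupled $2\times 2$ system in $r_*^m$ and a frame-rotation parameter $\tilde K^m_{12}$. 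Only $K^m_{11}$ is solved forward from $\Sigma_{r_*^m}$, which is harmless since its homogeneous branch $r^{2d_1-\alpha}$ is \emph{less} singular than $r^{-3/2}$. This forward--backward split, together with the determination of $r_*^m$, is the missing idea in your proposal.
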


\begin{remark}
The same expansions hold for up to {a certain number of $\partial_{T},\partial_\Theta$ derivatives of the metric, which henceforth we will denote by ${\rm low}$, ${\rm low}\ll s$}. (At these orders, the Kretschmann scalar blows up like $r^{-6}$, as $r\rightarrow0$).  We expand on this in the stricter formulation of our result.   
\end{remark}

\begin{remark}
 Note that the metric $g$ in this form is evidently axisymmetric and  
 polarized, due to the absence of cross-terms 
 $g_{T\phi},g_{r\phi}, g_{\phi\Theta}$, and also since all metric 
 coefficients are
  independent of $\phi$.  
 However there is an extra gauge normalization, 
captured in this form: The integral curves of $\partial_r$  are geodesics 
which are (asymptotically as $r\to 0^+$) 
orthogonal to 
$\partial_{T},\partial_{\Theta},\partial_\phi$. Purely for 
comparison reasons, the 
parameter 
$r$ has been chosen to agree with the corresponding parameter in the 
Schwarzschild 
background.
 \end{remark}

%\begin{remark}
%It is possible to make an extra change of coordinates of the form 
%$\tilde{r}=G(r,\tttt,\ttheta)$ with $\partial_{\tilde{r}}=\partial_r$ at 
%$\{r=0\}$ to bring the metric near $\{r=0\}$ to the form: 

 % \beq
 %\begin{split}
 %\label{form.norm}
%&g=-(\frac{2M}{\tilde{r}}-1)^{-1}d\tilde{r}^2+ g_{\phi\phi}(\tilde{r},\tttt,\ttheta) d\phi^2+ 
%g_{\ttheta\ttheta}(\tilde{r},\tttt,\ttheta) d\ttheta^2+
 %g_{\tttt\tttt}(\tilde{r},\tttt,\ttheta) d\tttt^2+
 %g_{\tttt\ttheta}(\tilde{r},\tttt,\ttheta) d\tttt d\ttheta.
 %\end{split}
  %\eeq
%where all terms satisfy the same asymptotic bounds as in \eqref{form}:
 % \beq
  %\begin{split}
  %\label{cl.again}
%&    g_{\phi\phi}(\tilde{r},\tttt,\ttheta)\sim A(\tttt,\ttheta)\tilde{r}^{2\alpha(\tttt,\ttheta)}
 %   (1+O(\tilde{r}^{\frac{1}{4}})) sin^2\ttheta, 
  %  g_{\tttt\tttt}(r,\tttt,\ttheta)\sim B(\tttt,\ttheta) \tilde{r}^{2\beta(\tttt,\ttheta)}(1+O(\tilde{r}^{\frac{1}{4}})), \\&
   % g_{\ttheta\ttheta}(\tilde{r},\tttt,\ttheta)\sim C(\tttt,\ttheta) \tilde{r}^{2\delta(\tttt,\ttheta)}
    %(1+O(\tilde{r}^{\frac{1}{4}})), 
%g_{\tttt \ttheta}\sim D(\tttt,\ttheta)d\tttt d\ttheta 
%\tilde{r}^{\vartheta_3(\tttt,\ttheta)}.
%\end{split} 
 % \eeq

%This makes the comparison to Schwarzschild more immediate, since the 
%cross terms $g_{\tilde{r}\tttt}, g_{\tilde{r}\ttheta}$ both vanish now. 
%How this is extra change of coordinates can be achieved is sketched in the appendix. 
%\end{remark}

\begin{remark}
Observe that as in the Schwarzschild background,
   the directions $\partial_\phi,\partial_{\Theta}$ are \emph{collapsing}, while the direction 
  $\partial_{T}$ is \emph{expanding}. The terms {of order}
  $r^{\vartheta_1(T,\Theta)}, r^{\vartheta_2(T,\Theta)}$ 
  should  be seen to be \emph{less singular} off-diagonal terms (which
  vanish for the  Schwarzschild metric).
    \end{remark}

While our paper is entirely concerned with black hole interiors, one can 
state a corollary that links it with recent studies of perturbations of the 
Schwarzschild black hole \emph{exterior} regions (for two-ended initial 
data).  
 
In particular, our main theorem complements the recent breakthrough stability result of the exterior region by Klainerman-Szeftel \cite{KS} and that of 
the inner red-shift region,\footnote{{In fact, the stability of the inner red-shift region, announced in \cite{DL2}, concerns general space-times that 
converge to a Kerr along the horizon, which is only simpler in the absence of rotation.}} announced by Dafermos-Luk \cite{DL2}, which combined give the 
full picture of near-Schwarzschild (double-ended) space-times in polarized axi-symmetry. 
\begin{corollary}\label{thm:stabPen}
Dynamical space-times, arising from sufficiently regular and small perturbations
of the Schwarzschild initial data of mass $M_0$, on a global Cauchy hypersurface $\Sigma$, have the Penrose diagram depicted in Figure \ref{stabPen}.
\begin{figure}[h]
	\centering
	\def\svgwidth{9cm}
	\includegraphics[scale=1.2]{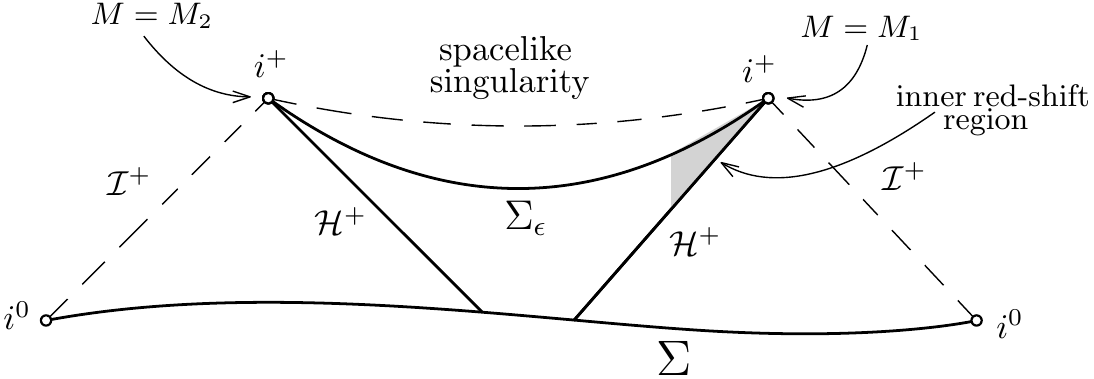}
	\caption{The Penrose diagram of dynamical, polarized axi-symmetric, near-Schwarzschild space-times.} \label{stabPen}
\end{figure}

\noindent
In the exterior regions, they are globally defined, having complete null infinities, and converge to Schwarzschild metrics at timelike infinities, of  masses 
$M_1,M_2$ that are close to that of the background mass $M_0$. Moreover, the inner boundary of  their black hole is entirely space-like, singular, and 
the asymptotic behaviour of the metrics 
 towards the singularity are as in Theorem \ref{thm_rough}.  In particular, both the weak and strong cosmic censorship conjectures 
 %as formulated in FILLIN 
 are valid in the axi-symmetric, polarized, near-Schwarzschild regime.  
\end{corollary}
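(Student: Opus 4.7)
The plan is to splice together three existing stability results across three contiguous regions of the Penrose diagram: the exterior Klainerman--Szeftel regime \cite{KS}, the inner red-shift Dafermos--Luk regime \cite{DL2}, and the interior regime of Theorem \ref{thm_rough}. Starting from a small, polarized axi-symmetric perturbation $(\g_0,\K_0)$ of Schwarzschild Cauchy data of mass $M_0$ on $\Sigma$, I would first restrict to each of the two asymptotically flat ends and apply \cite{KS} to produce the two exterior regions of Figure \ref{stabPen}, each with a complete future null infinity and each converging, along its timelike infinity, to a Schwarzschild metric of mass $M_i$, $i=1,2$, with $|M_i-M_0|$ controlled by the perturbation size. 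The output of \cite{KS} also provides convergence, at all orders up to $s$, to the relevant Schwarzschild solution along each of the two future event horizons.

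Next, I would feed this convergence along the event horizons into the inner red-shift stability result of \cite{DL2}. In the polarized axi-symmetric subclass the absence of rotation makes the hypotheses of that result easier to verify, and its output is control of the solution in a one-sided neighbourhood of the bifurcate event horizon inside the black hole region. Foliating that neighbourhood by the level sets of the area-radius function $r$, and choosing $\e>0$ small but bounded away from $0$, the trace of the solution on $\Sigma_\e:=\{r=\e\}$ (topologically $\mathbb{S}^2\times\mathbb{R}$) is a polarized axi-symmetric perturbation of the Schwarzschild data $(\g_{\rm S},\K_{\rm S})$ at $\{r=\e\}$. The main bookkeeping step, and the key technical obstacle, is verifying that this trace is small in exactly the re-normalized $H^s\times H^{s-1}(\mathbb{S}^2\times\mathbb{R})$ sense of subsection \ref{sec_REVESNGG} required as input for Theorem \ref{thm_rough}. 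Since \cite{KS} and \cite{DL2} are naturally formulated in wave/double-null type gauges while Theorem \ref{thm_rough} is formulated in a geodesic gauge, this requires an explicit change of coordinates on $\Sigma_\e$ and a check that $s$ can be chosen large enough to absorb any loss of derivatives incurred in the transition. Once this is arranged, a single application of Theorem \ref{thm_rough} on $\Sigma_\e$ produces the entire interior up to the space-like singular hypersurface $\{r=0\}$ with the Kasner-type profile \eqref{form}--\eqref{cl}, thereby completing the Penrose diagram.

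Cosmic censorship is then a formal consequence of this global picture. Weak cosmic censorship follows from the completeness of both future null infinities, established by \cite{KS}, together with the fact that the singular hypersurface $\{r=0\}$ lies entirely to the future of the two event horizons and is therefore hidden from future null infinity. Strong cosmic censorship follows from the blow-up of the Kretschmann scalar at $\{r=0\}$ established in Theorem \ref{thm_rough}, which precludes any $C^2$ extension of the maximal globally hyperbolic development across the terminal singularity.
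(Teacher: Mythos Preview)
Your overall architecture is correct: splice \cite{KS} for the exteriors, \cite{DL2} for the red-shift neighbourhoods of the horizons, and Theorem \ref{thm_rough} for the deep interior. But you have misidentified the main technical obstacle. The gauge transition between \cite{KS}/\cite{DL2} and the geodesic gauge of Theorem \ref{thm_rough} is not the issue the paper dwells on; the smallness of the induced data on $\Sigma_\e$ is taken as a direct consequence of the convergence results in those references, once the perturbation is chosen small enough.

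The genuine gap in your proposal is the sentence ``a single application of Theorem \ref{thm_rough} on $\Sigma_\e$ produces the entire interior.'' Theorem \ref{thm_rough} is stated for perturbations of a \emph{single} Schwarzschild metric of fixed mass $M$; its iteration scheme (and in particular the background subtraction $\gamma^m_{\rm rest}=\gamma^m-\gamma^S$) presupposes one asymptotic mass at both cylindrical ends of $\Sigma_\e$. But the data induced on $\Sigma_\e$ by the construction converges to Schwarzschild of mass $M_1$ as $t\to+\infty$ and to Schwarzschild of mass $M_2$ as $t\to-\infty$, and in general $M_1\ne M_2$. So Theorem \ref{thm_rough} does not apply directly.

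The paper resolves this by a cut-and-paste plus domain-of-dependence argument. One constructs two auxiliary data sets $({\bf g}_i,{\bf K}_i)$ on $\Sigma_\e$ via partitions of unity: each agrees with the true data on a large central region $[-C,C]_t\times\mathbb{S}^2$ but is glued to exact Schwarzschild of a single mass $M_i$ outside. These auxiliary data do not satisfy the constraints in the gluing region, but the iteration scheme of Section \ref{suc.iter} does not use the constraints and still converges to a metric $g_i$ satisfying the reduced equations. Since both $({\bf g}_i,{\bf K}_i)$ agree with the true (constraint-satisfying) data on $[-C,C]_t\times\mathbb{S}^2$, the argument of \S\ref{retrEVE} shows each $g_i$ solves the full EVE on the domain of dependence of that region. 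By uniqueness $g_1=g_2$ there, and one defines the true solution $g$ piecewise from $g_1$ and $g_2$ on overlapping domains of dependence, which together cover the full future of $\Sigma_\e$. This step exploits the locality of spacelike singularities highlighted in the introduction.
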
  

 The proof of this corollary is carried out in \S\ref{app:stabPen}. 
 \medskip

  \subsubsection{AVTD behaviour of our solutions.}
     
     Let us comment here on the asymptotically velocity term dominated {(AVTD)}
     behaviour of our solutions. One can formulate this property in different (essentially equivalent) ways. We here present the property purely in terms of the behaviour of the kinetic part of the energy of the 
 metric components (gravitational field components) relative to the potential part of the same energy, as in  Kichenassamy-Rendal  \cite{KR98}, where they constructed analytic Gowdy space-times with Big Bang singularities exhibiting such 
 behaviour, as well as section 3 in \cite{IM02}.
 The term was originally coined in the first construction of AVTD space-times
by Isenberg and Moncrief in 1990, \cite{IM90} for polarized Gowdy space-times. 
We note that the notion appears in different guise already in Eardley, Liang and 
Sachs' work in 
1972, \cite{ELM72}. 
\medskip

In our {space-times}, $\partial_r$ is time-like and 
$\partial_T,\partial_\Theta$ are space-like. Let us denote by 
$e_0, e_1, e_2$ the unit length vector fields in those directions. 

In the Schwarzschild solution (where $t=T,\theta=\Theta$), the
gravitational field component $(g_{\rm S})_{\phi\phi}=r^2 \sin^2\theta$ has the following  
property: 
%Let $e_0$ be the (time-like) unit vector field that is parallel 
%to $\partial_r$ then  
	
$e_0(g_{\rm S})_{\phi\phi}\sim r^{1/2}{\rm sin}^2\theta$,
 while $e_1(g_{\rm S})_{\phi\phi}=0, e_2(g_{\rm S})_{\phi\phi}=O(r)$.
For the rest of the principal gravitational fields components 
$(g_{\rm S})_{\theta\theta}, (g_{\rm S})_{tt}$ we also have that: 
\[
e_0(g_{\rm S})_{\theta\theta}\sim r^{1/2}, e_1(g_{\rm S})_{\theta\theta}=e_2(g_{\rm S})_{\theta\theta}=0, 
\]
and also 
\[
e_0(g_{\rm S})_{tt}\sim r^{-5/2}, e_1(g_{\rm S})_{tt}=e_2(g_{\rm S})_{tt}=0
\]
In particular for each of these components  the kinetic (the $e_0$)-part of the energy dominates the 
potential parts (the $e_1, e_2$-parts). 

For the {space-times} that we construct we have a {similar} behaviour. 
We illustrate this for the function $\tilde{\gamma}:=\frac{1}{2}\log g_{\phi\phi}-\log\sin\Theta$.\footnote{This is a modification of the logarithm of the 
axi-symmetric gravitational field component.} 
{Up to lower-order terms in 
 $r$ in the RHSs, we will see that}:
 \beq
 \label{AVTD1}
 e_0(\tilde{\gamma})\sim -\alpha(\tilde{t},\tilde{\theta})\sqrt{2M} r^{-3/2}, \quad|e_1(\tilde{\gamma})|\le C 
 {r^{-1-\frac{1}{4}}}, \quad
 |e_2(\tilde{\gamma})|\le C r^{-1-\frac{1}{4}}.
 \eeq
In terms of the energy, this implies that
\beq\label{energy.def}
E_{\{r=r_0\}}[\tilde{\gamma}]= \int_{r=r_0} |e_0\tilde{\gamma}|^2+|e_1\tilde{\gamma}|^2+|e_2\tilde{\gamma}|^2 
sin\tilde{\theta} d\tilde{\theta} d\tilde{t}=
(r_0)^{-3}\int_{r=r_0}  2M|\alpha(\tilde{t},\tilde{\theta})|^2sin\tilde{\theta} d\tilde{\theta}
d\tilde{t}+O(r_0^{-2-3/4})
\eeq
In particular, the energy of $\tilde{\gamma}$ is all  
asymptotically concentrated 
in the $e_0$ direction, i.~e.~the spatial (potential) components of the energy 
$e_1(\tilde{\gamma}), e_2(\tilde{\gamma})$ are strictly \emph{less singular} than the 
time-like (kinetic) component 
 $e_0(\tilde{\gamma})$ of the energy. 
 
 This captures the 
 {AVTD} behaviour of the 
component $g_{\phi\phi}$ of the gravitational field.

As we will see, a consequence of this behaviour of $g_{\phi\phi}$ 
is that the principal components 
$g_{TT}, g_{\Theta\Theta}$  and their derivatives 
   display a behaviour that is consistent with 
    \eqref{cl}. This in particular implies that the 
    derivatives in the $e_0\parallel \partial_r$ and $e_1, e_2\perp e_0$ 
    directions of  
    $\log g_{TT}, \log g_{\Theta\Theta}$ have the property that: 
    \begin{align*}
    e_0(\log  g_{TT}) \sim r^{-3/2}, \quad|e_1(\log  g_{TT})|
    \le C{r^{-1-\frac{1}{4}}},\quad |e_2 g_{TT}| 
    \le C r^{-1-\frac{1}{4}},\\
    e_0(\log  g_{\Theta\Theta}) \sim r^{-3/2},\quad |e_1(\log 
     g_{\Theta\Theta})|
    \le C{r^{-1-\frac{1}{4}}},\quad |e_2 g_{\Theta\Theta}| 
    \le C r^{-1-\frac{1}{4}}. 
    \end{align*}
    Notably letting $f$ to stand for any of the components $g_{AA}$ of the gravitational field $g$ (where $A$ takes on one of the values 
    $T,\Theta, \phi$), the energy of that field component 
\[
E[f](r=r_0)\int_{-\infty}^\infty \int_0^\pi \int_0^{2\pi}  [|e_0 f|^2+|e_1f|^2+|e_2f|^2](r=r_0) d\phi sin\theta d\theta dt 
\]    
is asymptotically, as $r_0\to 0$,  concentrated entirely in the $e_0$-direction.  
In other words the \emph{kinetic} part of the energy dominates the potential parts of the energy. 
In particular in the {evolution equations} further down, if one were to \emph{drop} terms involving 
spatial derivatives of all fields, one would still derive the correct \emph{leading order} behaviour of all gravitational fields in $r$, 
from the resulting (dramatically simplified) equations. \medskip
 
This velocity-term dominated behaviour of the gravitational field for the solutions that we obtain {is consistent with the predictions} in the physics literature, as we explain next:

\subsection{Singularity formation in black holes {and} cosmological 
  singularities: Predictions and results.}
  
The question of whether and how singularities form in the evolution of smooth initial data 
is a central question for all non-linear evolutionary  PDEs. In the Einstein equations specifically, it is intimately linked to 
the question of \emph{strong cosmic censorship}; in the usual formulation this predicts that inside black holes, generically, the 
space-time metric terminates at a final singularity, past which it is inextendible.\footnote{The relation with the Penrose singularity (alternatively,  incompleteness)
theorem is discussed in \cite{RS3}.} The nature of the singularity is not formally part of the conjecture.

 We recall some results on singularity formation in black hole interiors and in Big
 Bang settings in the next few sections.  A more extensive discussion of these examples can be found in \cite{RS3} and the references therein.

\subsubsection{Singularity formation in black hole interiors and the \emph{instability} of the Schwarzschild singularity.}

   A brief comment is in order concerning the possibilities of viewing our result beyond 
   polarized axial symmetry: On one hand, one  sees that our stability result 
\emph{ceases} to hold by merely removing 
   the condition of polarization from the perturbations: Indeed, one can consider 
   the family of Kerr solutions $g_{\mc{K}, M, a}$, $a\ne 0$ bifurcating off of a 
   background 
   Schwarzschild solution $g_{\mc{K}, M, 0}$; as is well-known, the
   future 
maximal hyperbolic development of 
   the data on 
   ${\{r=\e\}}$ is then \emph{smooth}.
 (In fact, that maximal hyperbolic development even admits a smooth extension 
{past} a Cauchy horizon ${\cal CH}^+$, which can be attached as a boundary to this 
maximal hyperbolic development).    
    In other words, the Schwarzschild 
   singularity 
   entirely \emph{disappears} under (still axially symmetric!) perturbations 
   that introduce angular momentum. 
   {To our knowledge, the only known examples of vacuum space-times that exhibit a Schwarzschild-type singularity, without any symmetry assumptions, are the ones constructed by the second author \cite{F}. These spacetimes contain a singularity at a collapsed 2-sphere, where their asymptotic behaviour agrees with that of Schwarzschild at a suitably {\it high} order. This special requirement provided  an early 
   indication of the thinness of the set of perturbed initial data, which can lead to Schwarzschild-type singularity.}
   \medskip
   
{Therefore, one comes to the conclusion} that the Schwarzschild singularity, as it appears 
in the maximal analytic (two-ended) extension of the Schwarschild solution, 
is  
\emph{unstable} from the point of view of the initial value problem in 
full generality (i.e. with no symmetry assumptions imposed).       
The stability of the (double-ended) Kerr maximal hyperbolic development was 
also studied in the recent {breakthrough} paper of Dafermos-Luk \cite{DL,DL3}. It was shown there 
that general perturbations of a rotating Kerr solution   $g_{\mc{K}, M, a}$, 
$a\ne 0$, \emph{when the perturbation is small enough to rule out proximity 
to a 
(non-rotating) Schwarzschild solution}, {still form Cauchy horizons in the interior, along which the metric is ${\cal C}^0$-extendible}. A very interesting, \emph{weaker} 
type of singularity, is expected  to emerge in that context, see also \cite{DafCMP} and references therein 
for an earlier result on such weak null singularities {in spherical symmetry}, as well as \cite{Luk} for examples of weak null singularities in vacuum without symmetries.

A brief comparison of the two types of singularities (space-like and null) 
is in order: The space-like singularity in Schwarzschild (and in our 
solutions also)  has a locality property, in that each compact set on 
the final hypersurface $\{r=0\}$ depends on a \emph{compact} subset $\cal D$ of any 
Cauchy hypersurface in the entire space-time, as in the next picture. 
 In contrast, any given point 
on the weak null singularity depends on a a non-compact set of a Cauchy 
hypersurface. In particular, it depends on the entire 
future event horizon to which the weak null hypersurface is ``attached''.

\begin{figure}[h]
	\centering
	\def\svgwidth{9cm}
	\includegraphics[scale=1.5]{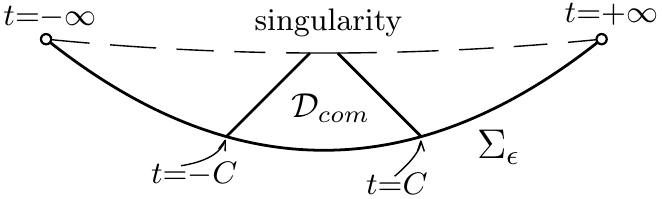}
	\caption{The region to the future of $\Sigma_\e$.} \label{twoDom}
\end{figure}

This in particular allows for the possibility of studying  space-like 
singularities \emph{locally}, by prescribing initial data (which perturbs the Schwarzschild data {within polarized axi-symmetry}) on an incomplete initial data hypersurface; one can 
reduce oneself to the setting of our theorem by constructing an artificial 
extension to a complete initial data set (satisfying the constraints) which 
asymptotes to the Schwarzschild data on $\{r=\e\}$ 
so as to  fulfil the assumptions of our theorem (in particular it would be asymptotically cylindrical). {If 
one can do this,} then our 
result would yield a space-like singularity, a portion of which is 
independent of the extension we constructed. 

The construction of such an extension is a matter of solving the constraint equations with asymptotically  cylindrical data. 
We are not aware that this has been done in the literature; 
pursuing this here is beyond the scope of our paper. 
\medskip

 This locality property also makes the space-time singularity indistinguishable (up to
 a time reversal), whether it occurs in a black hole or 
 at  an \emph{initial} Big-Bang type-singularity.   
This latter class has been extensively studied and we wish to make some connections of our result to that {part of the} literature:

\subsubsection{Big Bang type singularities.}

The nature of the Big-Bang type singularities is in general a wide-open 
and very interesting question. Many of the beliefs surrounding this 
question stem from the explicit family of Kasner solutions, which in 1+3 dimensions have the form (in a local system of 
spatial coordinates $x_1, x_2, x_3$): 
\begin{align}\label{Kasnermetric}
g=-dt^2+(\omega_1)^2t^{2p_1}+(\omega_2)^2t^{2p_2}+(\omega_3)^2t^{2p_3},&&\lim_{t\rightarrow0}\omega_i=\sum_{j=1}^3c_{ij}dx_j,
\end{align}
where $t\in(0,T]$ is a time function synchronizing the singularity at $t=0$ and $p_i,c_{ij}:\Sigma\to\mathbb{R}$ are functions of the spatial coordinates $x_1,x_2,x_3$.
Moreover, in {\it vacuum}, the $p_i$'s must satisfy the Kasner relations:
\begin{align}\label{sumpi}
\sum_{i=1}^3p_i=\sum_{i=1}^3p_i^2=1.
\end{align}  
Note that in view of the space-like nature of the singularity, for $t$ small enough, different points $(t,x_1, x_2, x_3)$ near different
 points on the singularity cannot be joined by time-like curves. {In our case, different points have Kasner dynamics with exponents close to their
 Schwarzschild counterparts,\footnote{{Setting $ds=(\frac{2M}{r}-1)^{-\frac{1}{2}}dr$, $s\sim r^\frac{3}{2}$, the Schwarzschild metric \eqref{Schmetric.pre} 
takes the form \eqref{Kasnermetric}, where $(g_{\rm S})_{tt}\sim s^{-2/3}$, $(g_{\rm S})_{\theta\theta}\sim s^{4/3}$, $(g_{\rm S})_{\phi\phi}\sim s^{4/3}\sin^2\theta$.}} 
$p_1=-\frac{1}{3}$, $p_2=p_3=\frac{2}{3}$, where the topology of $\Sigma$ is 
$\mathbb{S}^2\times\mathbb{R}$.}

  Note also that locally, 
 up to a time reversal $t\to (-t)$ one cannot ``see'' 
 whether the singularity occurs as an \emph{initial} Big Bang singularity or \emph{terminally}, inside a black hole. In $1+3$ vacuum, at least one of the $p_i$'s has to be negative, in view of \eqref{sumpi}. So in particular, isotropic 
 solutions or nearly-isotropic (where all $p_i$'s in \eqref{Kasnermetric}
 are close to equal) are not possible in $1+3$ vacuum.

 The question of how general solutions that exhibit Kasner-type behaviour
 are within the class of all big-bang singularities  
   has been
   studied in the mathematical literature in two main directions: In one
    direction one  \emph{constructs} classes of solutions with the prescribed 
    asymptotics at the singularity. 
    All of the examples constructed in this way 
    directly display an AVTD behaviour towards a (different) Kasner solution at each point on the singularity hypersurface:
    \medskip

{\it Constructions of AVTD singularities}: There are various constructions 
in the literature of AVTD space-times, 
applying  Fuchsian techniques to a first order reduction of 
the Einstein equations in order to produce Kasner-type
singularities, where the Kasner exponents depend on the spatial coordinates 
of every point on the singularity. {Apart from a recent work of the second author with J. Luk \cite{FL}, where Kasner-type singularities were constructed in 1+3 vacuum without symmetries or analyticity, all such other examples} are either in the analytic class, or with extra symmetries imposed (or both):  

The first such construction of AVTD space-times was given by Kichenassamy-Rendall \cite{KR98}, in the analytic Gowdy class. Other such results in the literature include: 
analytic AVTD space-times, without symmetries, by Anderson-Rendall \cite{AndRend} for the Einstein-scalar field model or a stiff-fluid; smooth AVTD space-times
 in the Gowdy class by Rendall \cite{Rend}; analytic $U(1)$-symmetric polarized\footnote{Polarization here is defined at the singularity in a function counting
 sense and is different from ours. However, from a function counting point of view, the gravitational degrees of freedom are the same. } and half-polarized 
AVTD space-times in vacuum by Isenberg-Moncrief \cite{IM02} and Choquet-Bruhat-Isenberg-Moncrief \cite{BIM04}; higher dimensional, analytic, AVTD
 space-times in vacuum, without symmetries, by Damour et al. \cite{DHRW02}, for space-time dimensions $1+d$, $d\ge 10$; {analytic AVTD space-times in $1+3$ dimensions without symmetries by P. Klinger \cite{Kl}}.  {In the other direction, one 
studies the stability of Kasner-like singularities. Such results, which go beyond the previous constructions via Fuchsian techniques, have only been fairly recently obtained:} 
\medskip

{\it Stability of Big Bang singularities without symmetries}: In breakthrough works by Rodnianski-Speck \cite{RS1,RS2}, it was proved that 
for models with certain special matter models (massless scalar fields and stiff fluids), 
near-\emph{isotropic} 
FLRW-Big Bang singularities are non-linearly stable, for $\mathbb{T}^3$ topologies and later by Speck \cite{Speck} for $\mathbb{S}^3$. Moreover,
Rodnianski-Speck \cite{RS3} verified the stable Big Bang formation of Kasner type, in vacuum, for an open set of Kasner exponents in space-time dimensions $1+d\ge 39$, perturbing off of close-to-isotropic 
explicit solutions with spatial topology $\mathbb{T}^d$. These works did not include any symmetry assumptions. In particular, the {authors}
consider \emph{any } sufficiently small perturbation of Kasner data
 at  {constant $t_0>0$} hypersurface
and then solve towards { the singularity at $t=0$}. The space-times one thus obtains (within the models considered) are therefore unrestricted  stability results, 
in that the results hold for open sets of data that are prescribed 
on a hypersurface ${\Sigma}$ off of the singularity. 
In contrast, the space-times obtained from {constructions} by Fuchsian techniques
are not apriori known to cover such an open set of data on a 
 hypersurface off of the singularity, {even if they enjoy all the gravitational degrees of freedom from a function counting point of view}--in principle they could be  a very 
 thin set in the {moduli} space of allowable initial data on ${\Sigma}$. 

The results in \cite{RS1, RS2, Speck, RS3}  are perturbative results (as is ours), but the techniques used differ from ours substantially. 
The main difference of our setting  with those considered in   \cite{RS1, RS2, Speck, RS3} on the face of them, 
 is that our background is \emph{highly} anisotropic; in fact it contains an \emph{expanding} direction in addition to the two contracting ones. 
One could thus speculate on the 
universality of this  \emph{extra}
instability in (3+1) Kasner vacua, which are of necessity highly an-isotropic. One can also wonder whether 
stability results within more restricted symmetry classes (like the ones considered here) might be true for other (3+1) Kasner vacua. 
While these questions are
 not pursued here, we note that the methods we develop seem very robust in that regard.

On the other hand,  the stability results in \cite{RS1, RS2, Speck, RS3} encompass \emph{all} sufficiently 
close perturbations of the background solution. In our setting such a result is \emph{not true}, as we saw by virtue of the Kerr examples. 
The fact that we are able to prove a stability result in the axial-symmetry class considered here 
utilizes many geometric features of this symmetry class, but most essentially 
a way to re-write the equations as a \emph{free} wave coupled with 1st order ODEs. 
(This special free-wave-ODE system which we will introduce and exploit here is not available in settings outside (polarized)  axial symmetry).

A brief comparison of some of the methods in  \cite{RS1, RS2, Speck, RS3} and this work 
is also due: 
The former works use a CMC foliation of the space-times and utilize that gauge in 
the study of Einstein's equations. 

We use instead a \emph{geodesic} gauge (not used before in the study of the Einstein's equations outside 
the analytic class, as far as we are aware); as we will explain below, we \emph{derive} 
an approximate CMC property of the geodesic parameter
which is \emph{central} in obtaining our result. (We are unable to explain on prior grounds why our geodesic gauge should display this additional behaviour). 
On a more analytic level, 
in the former works \cite{RS1,RS2,Speck}, the authors relied on an approximate monotonicity, i.e., good signs of error terms in the main estimates after the use of a combination of identities. 
In \cite{RS3} this approximate monotonicity is not available, and the authors allow a much more singular behaviour of the solutions and higher derivatives, coupled with a weights-descent scheme to derive optimal estimates at the lower ones. 
This is in fact similar to what we perform here (and indeed descent techniques have been used in other problems in nonlinear waves). But the method presented here is different; the source 
of the descent scheme here is traced directly back to the AVTD behaviour displayed in some geometric parameters of the space-time.  
\medskip

The fact that our result (the only such singularity 
formation result in $(3+1)$-vacua with just one degree of symmetry) holds in (polarized) axial symmetry, but manifestly \emph{is false}
with no symmetry assumptions  also relates to predictions by 
 {\it Belinski\u{i}-Khalatnikov-Lifshitz (BKL)} \cite{BKL70,BKL82}:

A long-standing, if controversial, proposal on the \emph{generic} 
behaviour of 
Big Bang (and, by extension, black hole interior {spacelike}) singularities was put 
forward in \cite{BKL70,BKL82}. The prediction there was that generically (in 1+3 vacuum), the space-time should experience rapid oscillations around 
different Kasner {epochs}, as one approaches any fixed point on  the 
 singular hypersurface;
this proposed generic behaviour is often called `mix-master' type. 
The ``generic'' part of the statement is based on a formal 
analysis that identifies settings where this mixmaster 
behavior should not be 
true, 
but instead {AVTD behavior occurs}.
  This class of solutions where the mix-master behavior is 
 ``turned off'', due to eliminating gravitational degrees of freedom, for 
 example,  includes 
 the polarized models studied here, cf. \cite{IM02}.
 
{Finally, we should note that} the mix-master type dynamics have only been rigorously derived in the 
 spatially isotropic (meaning three degrees of symmetry) setting of Bianchi 
 IX space-times by Ringstr\"om \cite{Rin01}. Further numerical investigations have 
 appeared in \cite{Gar,WIB98}. We refer to \cite{IM02} for a more detailed discussion 
 of such results.
The extent to which the mix-master proposal should be trusted to be generic 
is  a matter of discussion, and we do not take a position here.
 One must note, however that non-mix-master (but rather, AVTD-type behavior) \emph{was} predicted in the literature
 for all the settings for which it has now been proven, in particular \cite{RS1, RS2, Speck, RS3} and the present paper--see \cite{RS3} 
for a discussion of the relevant literature. 
  \medskip

We close the discussion of the literature by
comparing the ``strength'' of the singularities occurring here with those that have been studied 
 prior:
 %discussing the 
%potential applicability of our methods beyond the questions considered 
%here:   

\subsubsection{The strength of the singularity.}

%We present a brief comparison with singularities in black holes and big bangs in vacuum from the literature. 

 The ``strength'' of a singularity can be measured with respect to the behaviour of the space-time curvature, measured against a suitably propagated orthonormal frame. Altenratively, as often used in
 spherical symmetry one can consider the blow-up of the Kretschmann scalar $R^{abcd}(g)\cdot R_{abcd}(g)$. 
The blow-up of such componenents against the frame we introduce below signifies the inextendibility of the metric past the singularity in a ${\cal C}^2$ 
sense. In fact we strongly expect that the proof of Sbierski \cite{Sb} extends to this setting to show the ${\cal C}^0$ inextendibility also. 

The choice  of the \emph{parameter} relative to which the blow-up rate is determined can be made in different ways: 
If one had a CMC foliation of our space-time foliation that terminates at the singularity, one could measure curvature components or the Kretschmann 
scalar with respect to that parameter; this is done in \cite{RS1, RS2}, for example. We here have an \emph{asymptotically CMC} foliation, 
given by a parameter $r$ in Theorem \ref{thm_rough}; as we will see the 
mean curvature ${\rm tr}_g K$ of level sets of $r$ satisfies: 

\[
{\rm tr}_g K= -\frac{3\sqrt{2M}}{2} r^{-3/2} +O(r^{-\frac{3}{2}+\frac{1}{4}}). 
\]
Relative to this parameter the Kretschmann scalar blows up like $r^{-6}$. This is in complete agreement with the 
asymptotic behaviour of this scalar in the higher-dimensional vacuum
 space-times in \cite{RS3}.
 (The parameter $t$ referred to there corresponds to $r^{3/2}$ here). 
In fact each of the curvature components $R_{0101}, R_{0202}, R_{0303}$ for an orthonormal frame $<e_0, e_1, e_2 e_3>$
 where $e_0$ is time-like 
blow up like $r^{-3}$, where $r$ is the  parameter that appears in Theorem \ref{thm_rough}. 
 
 It is also useful to make an analogy with the parameter usually used in spherical symmetry. There, the natural parameter is the \emph{area radius} 
 of the spheres of symmetry;
in our settings the analogues of these are 
$\{ {\cal S}_{\tau, r_0} \}:=\{T=\tau, r=r_0\}$. Given the extreme  an-isotropy of these spheres,  one can instead consider the area element 
at any  point $T,\Theta$ on such spheres (with a suitable renormalization to account for the degeneracy of the induced metrics at the poles),
 as the correct localized analogue of the area radius. 
It follows from \eqref{cl} that the area element $({\rm sin}\Theta)^{-1}\sqrt{ g_{\Theta\Theta}g_{\phi\phi}-(g_{\Theta\phi})^2}:={\rm rad}(T,\Theta)$ 
coefficient behaves like $(\sqrt{r})^{\alpha(T,\Theta)+\delta (T,\Theta)}$. 
In particular, the blow-up rate of the curvature components $R_{0101}, R_{0202}, R_{0303}$ 
will behave like: 

\[
[({\rm sin}\theta)^{-1}\cdot \sqrt{ g_{\Theta\Theta}g_{\phi\phi}-(g_{\Theta\phi})^2}]^{-\frac{6}{[\alpha(T,\Theta)+\delta (T,\Theta)]}}.
\]
As we will see below the function $\delta(T,\Theta)$ is an explicit function of 
$\alpha$: $\delta(T,\Theta)=-d_2(\alpha(T,\Theta))$, 
given by the explicit formula \eqref{d1.d2} below. In particular, since the 
function $\alpha(T,\Theta)$ is close to 1 everywhere,
 the rate of blow-up relative to this (localized) area radius is a \emph{function}, 
 whose rate  of blow up depends on the point $(T,\Theta)$ on the final singularity; 
we note that this exponent function ${\rm rad} ^{-\zeta(T,\Theta)}$  has 
$\zeta(T,\Theta)$  varying continuously in $t,\theta$, but its minimum value is 3.  

We note that this is in agreement with the blow-up behaviour exhibited by the massless 
scalar fields in spherical symmetry considered by Christodoulou  in \cite{Christ87, Christ91}, where the 
rate of blow-up of these components is \emph{at least} like ${\rm rad}^{-3}$. (See also  \cite{An} where upper bounds for the same matter model, again in spherical symmetry 
are established). 
%Thus the vacuum space-times here allow for a broader range of blow-up rates 
%of the basic curvature components in the same parametrization, compared to the massless scalar field in spherical symmetry. 

\subsubsection{Outlook: Results beyond two degrees of symmetry?}

In the classical $(1+3)$-dimensional space-times, 
many of the settings in which an understanding of the entire maximal 
hyperbolic developments of solutions to the Einstein equations 
(including in black hole interior regions) has been obtained, {concern} 
 space-times with two degrees of symmetry imposed, such as the spherical 
symmetry or $\mathbb{T}^2$ and Gowdy symmetry classes. 
{In the first case, this is always in the presence of  matter fields, in view of Birkhoff's theorem}. 

A wealth of literature on such space-times exists over the past decades. 
Mathematically, 
two degrees of symmetry result in a quotient space-time of 1+1 dimensions. 
These are especially well-suited for analysis since the resulting 
quotient space-time is locally described by two scalar-valued functions; these are  the area radius (usually denoted by
 $r$--\emph{not to be confused} with the function $r$ here) and the conformal factor $\Omega$ in the $(1+1)$-quotient. Moreover, 
in many matter models one is able  to close estimates at the level of first-derivative norms of the matters fields and of
 r, $\Omega$.\footnote{In particular, the structure equations themselves can be studied directly, without considering further derivatives thereof.} 
This allows for large-data results in these symmetry classes, 
capturing the behaviour up to (frequently space-like) singularities, with 
remarkably ingenious techniques. 

{In the cosmological setting, we single out the resolution of strong cosmic censorship for unpolarized $\mathbb{T}^3$-Gowdy space-times,
 in the seminal work of Ringstrom \cite{Rin09}, previously known in the polarized\footnote{Polarized here has a different meaning than ours.}
 case by Chru\'sciel-Isenberg-Moncrief \cite{CIM90}. Towards the expanding direction, global existence is known in the polarized $\mathbb{T}^2$ class \cite{BCIM97}, 
even for weakly regular spacetimes \cite{LS15,LS16}, including precise late-time asymptotics, see also \cite{Rin15}.}

In spherical symmetry, we recall the seminal work of
Christodoulou \cite{Christ87,Christ91} for 
the Einstein-massless scalar field model, where he provided a complete 
classification of all solutions arising from one-ended initial data. In 
particular, he showed that under open conditions on the initial data, a 
black hole forms, containing a space-like singularity at $r=0$, where the 
Kretschmann scalar $|\text{Riem}|^2$ blows up no slower than $r^{-6}$, $r$ 
being the area radius function.\footnote{See recent work \cite{An}, where 
the authors derived upper bounds for the Kretschmann scalar.}
Large portions of the singularities there, are expected to be very similar 
to the Schwarzschild one. Of course, one also has 
 the classical 
   Oppenheimer-Snyder solution and the Vaidya space-times display portions 
of their final singularities to be \emph{isometric} to a portion of the 
Schwarzschild space-time. 
\medskip

Naturally, one would like to be able to obtain results of this nature 
 outside  the 
2-degree symmetry class. 
We believe the techniques developed here should be very helpful in studying 
perturbations of any of the solutions obtained in spherical symmetry in 
just 
the axi-symmetric setting, {where the Einstein equations admit a wave map formulation \cite{Wein}.}

{We note also that the setting of polarized axi-symmetry has recently attracted  attention in other settings. We recall} the problem of full non-linear stability of the 
Schwarzschild exterior in vacuum, resolved in polarized axi-symmetry 
in \cite{KS}, as well as the construction of solutions to the Einstein-Vlasov system 
out of suitable limits of pure vacuum solutions by Huneau-Luk \cite{HL19,HL18}.  

It is hoped that the methods developed herein can serve as a powerful tool
 to study many of the other questions surrounding 
 Einstein's equations in the presence of just one Killing field.

% A further consequence of our result (which is intimately tied to the 
% spatial  nature of the singularity and its
%  associated locality property in terms of the initial data problem)
%is that it allows for the study of stability properties of 
%\emph{local} portions of the singularity; these would include many examples beyond what is studied here, and in particular 
%  be black hole singularities arising from \emph{one-ended} 
%   initial data.

%This class has been exptensively studied with 
%two-parameter space-like isometry
%groups imposed  (for example spherical symmetry or $\mathbb{T}^2$ and 
%Gowdy-type symmetries).    
  %   Indeed, examples within the sherical symmetry category 
  %   often have a portion of their singularities isometric to a 
  %   \emph{portion} of the maximal development of Schwarzschild.  Others have a 
 %  portion of their singularities qualitatively similar to the one of 
 %  Scharzschild.    
  
 % We thus believe that the present work can serve as a blue-print 
 % for the study of perturbations of these solutions outside the 
 % two-parameter group of symmetries imposed, at least 
 % in polarized axial symmetry. 

\subsection{An outline of the ideas.}
  
  We briefly outline some of the main ideas in this paper and how we overcome the
  central 
  challenges that come up. First we  recall   the reduced Einstein equations.  
  \medskip

\subsubsection{The reduced Einstein vacuum equations under polarized axi-symmetry}\label{axipol}

Let $(\mathcal{M}^{1+3},g)$ be a Lorentzian manifold. We work in the class of polarized 
axial symmetry, that is, in the class of metrics which admit a hypersurface orthogonal, 
spatial Killing vector field $\partial_{\phi}$ with closed $\mathbb{S}^1$ orbits. Under 
this symmetry assumption, the space-time metric takes the form
\begin{align}\label{metric}
g=\sum_{a,b=0,1,2}h_{ab}dx_adx_b+e^{2\gamma}d\phi^2,
\end{align}
where $h_{ij},\gamma$ are independent of $\phi$. Define the projected $1+2$ metric on the Lorentzian hypersurfaces orthogonal to $\partial_{\phi}$:
\begin{align}\label{h}
h=\sum_{a,b=0,1,2}h_{ab}dx_adx_b
\end{align}
In this context, the {EVE}
\begin{align}\label{EVE}
\text{R}_{ab}(g)=0,\qquad\qquad a,b=0,1,2,3,
\end{align}
are equivalent to the system \cite[Appendix VII]{ChoqBook}
\begin{align}
\label{redEVEwav}\square_g\gamma=&\;0\\
\label{redEVERic}\text{R}_{ab}(h)
=&\;\nabla_{ab}\gamma+\partial_a\gamma\partial_b\gamma,
\end{align}
for $a,b=0,1,2$, where $\nabla$ is the Levi-Civita connection of 
the $(1+2)$-metric $h$. {Note that \eqref{redEVEwav} can also be written as a non-linear wave equation with respect to $h$:
\begin{align}\label{redEVEwav2}
\square_h\gamma=-|\nabla\gamma|^2_h.
\end{align}
}The following lemma plays a central role in our approach.
\begin{lemma}\label{lem:Weyl}
In the reduction (\ref{redEVEwav}),(\ref{redEVERic}) of the EVE, the Riemann curvature of the $1+2$ metric $h$ is locally determined by its Ricci tensor, due to the vanishing of the Weyl tensor in 3 dimensions.
 In particular, the following identities are valid:
\begin{align}
\label{Rh=0}\mathrm{R}(h)=&\,0\\
\label{Riem=Ric}\mathrm{R}_{abcd}(h)=&\,2\big(h_{a[c}\mathrm{R}_{d]b}(h)-h_{b[c}\mathrm{R}_{d]a}(h)
\big)
\end{align}
for all indices $a,b=0,1,2$.\footnote{Here we adopt the notation $T_{a[bc]d}=\frac{1}{2}(T_{abcd}-T_{acbd})$ for the antisymmetrization of indices between square brackets.}
\end{lemma}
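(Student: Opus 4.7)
The plan has two components corresponding to the two claimed identities, and both are essentially algebraic once one exploits the reduced equations \eqref{redEVEwav}--\eqref{redEVERic}.

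\textbf{Step 1: Vanishing of the scalar curvature.} First I would trace the Ricci equation \eqref{redEVERic} with the inverse metric $h^{ab}$. Since $h^{ab}\nabla_{ab}\gamma = \square_h \gamma$ and $h^{ab}\partial_a\gamma\partial_b\gamma = |\nabla\gamma|_h^2$, this yields
\[
\mathrm{R}(h) = \square_h \gamma + |\nabla\gamma|_h^2.
\]
The wave equation for $\gamma$ in its projected form \eqref{redEVEwav2} then gives $\square_h\gamma = -|\nabla\gamma|_h^2$, so the two terms cancel and $\mathrm{R}(h) = 0$, which is \eqref{Rh=0}. The key insight is that the two seemingly independent reduced equations are in fact compatible with (indeed, force) the scalar-flatness of the quotient geometry — this is a manifestation of the fact that the projection of the $(1+3)$-Ricci tensor in the $\partial_\phi$ direction, i.e.\ the equation $\mathrm{R}_{\phi\phi}(g)=0$, is precisely the $\gamma$-wave equation.

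\textbf{Step 2: Weyl vanishes in three dimensions.} Next I would recall the standard orthogonal decomposition of the Riemann tensor of a pseudo-Riemannian manifold of dimension $n$:
\[
\mathrm{R}_{abcd} = W_{abcd} + \frac{2}{n-2}\bigl(h_{a[c}\mathrm{R}_{d]b} - h_{b[c}\mathrm{R}_{d]a}\bigr) - \frac{2\mathrm{R}}{(n-1)(n-2)} h_{a[c}h_{d]b},
\]
where $W_{abcd}$ is the Weyl tensor. The fact that $W\equiv 0$ in dimension $n=3$ is a purely algebraic statement: an object with the symmetries of the Riemann tensor lives in a space of dimension $\tfrac{1}{12}n^2(n^2-1)$, which for $n=3$ equals $6$, exactly matching the number of independent components of a symmetric $3\times 3$ Ricci tensor; imposing all the trace-free conditions characterizing the Weyl tensor then forces $W=0$. (Equivalently, one checks directly that the defining trace-free conditions on $W$ in three dimensions leave no nonzero solutions.) I would simply cite this standard fact rather than re-derive it.

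\textbf{Step 3: Combine.} Setting $n=3$ in the decomposition and using $W=0$ gives
\[
\mathrm{R}_{abcd}(h) = 2\bigl(h_{a[c}\mathrm{R}_{d]b}(h) - h_{b[c}\mathrm{R}_{d]a}(h)\bigr) - \mathrm{R}(h)\, h_{a[c}h_{d]b}.
\]
Invoking Step 1 to drop the last term yields precisely \eqref{Riem=Ric}, completing the proof.

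\textbf{Anticipated obstacle.} There is essentially no analytic difficulty here: the lemma is a tensorial identity that fuses a well-known three-dimensional algebraic fact (triviality of Weyl) with the trace-compatibility of the reduced system. The only point one must be careful about is the consistency check in Step 1 — this is not automatic from \eqref{redEVERic} alone and requires \eqref{redEVEwav2} as an independent input. In that sense the ``content'' of the lemma is the observation that the polarized axisymmetric reduction produces a \emph{scalar-flat} $(1+2)$ geometry, which together with the vanishing of Weyl in three dimensions turns the full Riemann tensor of $h$ into an algebraic function of $\nabla\gamma$, $\nabla^2\gamma$ and $h$ itself — the structural fact that makes the subsequent ODE/wave decomposition viable.
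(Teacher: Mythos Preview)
Your proof is correct, and for the second identity \eqref{Riem=Ric} it matches the paper exactly. For the scalar-flatness \eqref{Rh=0}, however, you take a genuinely different route: you trace \eqref{redEVERic} and feed in \eqref{redEVEwav2} to obtain $\mathrm{R}(h)=\square_h\gamma+|\nabla\gamma|_h^2=0$, working entirely within the reduced $(1+2)$-system. The paper instead goes back to the $(1+3)$-picture, invoking the twice-contracted Gauss equation for the hypersurfaces orthogonal to $\partial_\phi$; it observes that the second fundamental form $\pi$ of these hypersurfaces is both symmetric (being a second fundamental form) and antisymmetric (since $\partial_\phi$ is Killing and hypersurface-orthogonal), hence $\pi=0$, and then the Gauss equation together with the full EVE gives $\mathrm{R}(h)=0$. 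Your argument is more self-contained and arguably cleaner, since it never leaves the reduced equations; the paper's argument is more geometric and makes explicit why the polarization hypothesis (hypersurface-orthogonality of $\partial_\phi$) is what forces $\pi=0$ and hence the scalar-flatness.
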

The proof is presented in \S\ref{pfWeyl}.

The reduced system (\ref{redEVEwav}),(\ref{redEVERic}) consists of seven equations, when 
\eqref{redEVERic} is expressed with respect to an $h$-orthonormal frame $\{e_0,e_1,e_2\}$ with $h_{00}=-1$. As we will see,  the $\{0b\}$-
components of (\ref{redEVERic}) should be thought of as constraint equations, {which satisfy separate propagations equations (see \S
\ref{retrEVE}), leaving} four genuine \emph{evolution} equations: 1) The scalar wave equation for 
$\gamma$ and 2) the equation (\ref{redEVERic}) for $(a;b)=(1;1),(1;2),(2;2)$ which we 
replace by corresponding formulas for $\text{R}_{0i0j}(h)$, $i,j=1,2$, using 
(\ref{Riem=Ric}):
\begin{align}\label{R0i0j}
\text{R}_{0i0j}(h)=-\text{R}_{ij}(h)+h_{ij}\text{R}_{00}(h)=-\nabla_{ij}\gamma-
e_i\gamma e_j\gamma
+\delta_{ij}\big(\nabla_{00}\gamma+(\nabla_0\gamma)^2\big).
\end{align}

  {\bf Choice of gauge:}
 We wish to use a \emph{geodesic} gauge, with 
time-like geodesics that end at different points on the singularity surface 
$\{r=0\}$. The main advantage of this choice  is that the second set of equations \eqref{redEVERic} 
gives rise to a system of (non-linear) ODEs, whose forcing terms are determined 
from the polarized field $\gamma$. This uses in an essential way the 
fact that $h$ is a $(1+2)$-dimensional metric, and thus the curvature tensor is locally 
expressible in terms of the Ricci curvature via \eqref{Riem=Ric}. 
(The latter being directly expressible in 
terms of the field $\gamma$).  
\medskip

 We put down some key equations schematically here for the purposes of this outline 
 of ideas: We will be choosing the vector field $e_0$ to be affine: 
 \[
 \nabla_{e_0}e_0=0,
 \]
 and associate to it a parameter $r$ which we choose to agree with the one in Schwarzchild, in the sense that: 
 \[
 e_0= -(\frac{2M}{r}-1)^{\frac{1}{2}}\partial_r.
 \]
 We will be considering an $h$-orthonormal frame $e_0, e_1, e_2$ which is propagated
  along $e_0$ through  a modification of parallel transport, which we specify {in \eqref{almpar.trans}}. 

The metric $h$ (and thus also the metric $g$, in view of formula \eqref{metric}) will 
then be encoded in the connection coefficients of this frame. Certain key connection coefficients we wish to highlight for this introduction  are the components  
\[
K_{ij}(r,t,\theta):={h(\nabla_{e_i}e_0, e_j),\qquad i,j =1,2. } 
\]  
 These satisfy the Riccati equations-see 
 \eqref{finredEVERic11pre}-\eqref{finredEVERic12pre} below. 

Our frame will be partly 
 ``initialized'' \emph{at} the singularity, $r=0$, and partly on the 
 initial data hypersurface. The two key
directions $e_1, e_2$ will be normalized by requiring that: 
\begin{itemize}

\item $e_2$ should capture the  collapsing direction, i.e. $K_{22}(r,t,\theta)<0$ as $r\to 0^+$, and 

\item $K_{11}(r,t,\theta)$ should capture the normal to $e_2$ 
 expanding direction, i.e. $K_{11}>0$ as $r\to 0^+$. 
 
\item  The mixed component $K_{12}(r,t,\theta)$ should {be less singular}, capturing thus an asymptotic diagonalization of $K_{ij}$, and in particular will satisfy:
 
 \beq\label{diag.asympt}
 K_{12}(r,t,\theta)\cdot K^{-1}_{11}(r,t,\theta))=o(1),  \quad
 K_{12}(r,t,\theta)\cdot K^{-1}_{22}
 (r,t,\theta))=o(1)
\eeq
 as $r\to 0^+$. 
 
 \item $e_2$ should be ``tangent'' to the singularity in a suitable sense, and $e_1$ should be tangent to the initial data hypersurface. 
\end{itemize} 

%While one would initially expect the geodesic gauge to \emph{loose derivatives} 
%(and thus not be a suitable choice for the study of Einstein's equations), it turns out 

One would expect this geodesic gauge to not be  a suitable framework for the Einstein 
equations, due to the apparent \emph{loss of derivatives} of the metric 
relative to the 
curvature in the directions $e_1, e_2$ in such  a gauge. 
In particular, one might expect to \emph{not} be able to prove energy 
estimates 
that close for the reduced Einstein equations. This however turns out 
\emph{not} to be 
the case, as we will explain below. 

The upshot of all this is that the EVE system in this symmetry class and in the geometric parameters  that we introduce below 
can be seen as a coupled system 
 of a  free wave with a system of transport equations for the connection 
 coefficients, the most important of which are {the}  non-linear  
 Riccati-type system
 \eqref{finredEVERic11pre},  
 \eqref{finredEVERic22pre}, \eqref{finredEVERic12pre} below. 
 The metric 
 $g$
 in the  wave equation \eqref{redEVEwav} of $\gamma$ is of course coupled to
 $\gamma$, since $g$ is determined by the Riccati equation with  
 the forcing term depending on $\nabla\gamma,\nabla^2\gamma$. So \eqref{redEVEwav}
 can schematically be expressed as: 
 \[
 \square_{g(\gamma)} \gamma=0. 
 \]
 This is the main equation where the quasi-linearity of the Einstein 
 equations  is manifested in our setting. 
 \medskip
 
 A first difficulty appears here already, in that due to the contracting direction 
 $\partial_{\Theta}$ 
 of the space-time metric $g$, one expects a generic family of time-like geodesics 
 would develop caustics \emph{long before} they reach the singularity. So in particular,
 the
 solutions to the  Riccati equations  would blow up before $\{r=0\}$, resulting in a 
 \emph{gauge} 
breakdown which would impede the study of the \emph{true} singularity which lies at 
$\{r=0\}$.

 Secondly, we note that both the Schwarzschild metric itself  and the solutions we 
 eventually
  obtain by perturbing its initial data are highly an-isotropic: There are two 
  \emph{contracting}  directions $\partial_\phi,{\partial_{\tilde{\Theta}}}$ and an \emph{expanding} 
  direction ${\partial_{T}}$. If one hopes to obtain uniform estimates consistent with the 
  desired conclusion \eqref{cl}, 
  one must control the metric strongly enough (at least in lower norms),
  in order to 
  capture the collapsing directions and separate them from the expanding 
  {direction}.  
  We review this in the overview of the Riccati equations. 
 
\subsubsection*{The wave equation on very singular backgrounds, and the 
``asymptotically CMC, 
for free'' property of our space-times} Our 
analysis of the wave 
equation proceeds  via energy estimates, using a weighted version of the 
 affine vector field $e_0\parallel \partial_r$ as a 
multiplier. 
A first important observation at this point involves the mean curvature 
of level sets of 
$r$: 
 It follows easily from the asymptotics in \eqref{form}-\eqref{cl} 
  that the mean curvature of each level set $\Sigma_r$ will be of the form: 
 \[
 H(r,t,\theta)\sim G(t,\theta)r^{-3/2}. 
 \]
However, we will show  that $G(t,\theta)$ is in fact a \emph{constant}, \emph{independently} 
of the value of $\alpha(t,\theta)$. In particular, as we 
see  below, \emph{in this geodesic gauge}, the mean curvature 
of $\Sigma_r$ is automatically 
\emph{constant to leading order}. 
\medskip

This ``asymptotically  CMC'' feature of the geodesic gauge  is absolutely 
essential 
in deriving suitable estimates for the free wave 
$\gamma$ and its 
derivatives. 
\emph{If} this property had not held, then for the \emph{linear} equation 
$\Box_g\gamma=0$, 
the energy of $\gamma$ would \emph{not} behave as predicted by the first 
term in 
\eqref{cl}, but would rather blow up exponentially.

In order to take advantage of this feature, whenever we study the wave 
equation
 $\Box_g\psi=F$ (for $\psi$ 
being $\gamma$ and its suitable derivatives), we will always be using 
 energy currents, whose associated  multiplier  vector field 
will always be  
$r$-dependent 
re-scalings of $e_0$. The vector fields by which we seek to commute the 
equation are 
(for the most part) chosen to commute with $e_0$, so as to take advantage 
of the 
asymptotically CMC behaviour of the mean curvature that provides some key 
cancellations. 

The analysis of the wave equation is carried out in a separate section, 
and further details 
of the ideas are given in
 the (brief) introduction of that section. Notably, the use of the AVTD 
 behaviour of the solution {is important}
 to establishing a \emph{{weights}-descent scheme} to derive improved 
 estimates for all our parameters at lower orders (where 
 the estimates are optimal, in that they are fully consistent with
  \eqref{cl}) compared to higher orders, where the estimates are much weaker than the claimed behaviour in \eqref{cl}. 

\subsubsection*{The Riccati equations and the singular branch of
 the solutions.}
A very central  challenge to the stability result we derive,
 appears at the level 
of the (non-linear) Riccati equations. 
{Requiring that} the frame $e_1, e_2$ 
is transported according to the law:\footnote{{It is easy to see that \eqref{almpar.trans} defines an orthonormal frame, provided $e_0,e_1,e_2$ are orthonormal initially.}}
\beq
\label{almpar.trans}
\nabla_{e_0}e_0=0,\qquad
\nabla_{e_0}e_1=-K_{12}e_2,\qquad \nabla_{e_0}e_2=K_{12}e_1;
\eeq
the corresponding 
connection 
coefficients {$K_{ij}$, $i,j=1,2$,}
solve the following system:\footnote{{The Riccati system \eqref{finredEVERic11pre}-\eqref{finredEVERic12pre} is a consequence of \eqref{R0i0j} and \eqref{almpar.trans}, implying the formulas 
\begin{align}\label{R0i0j2}
\begin{split}
\text{R}_{0101}(h)=-e_0K_{11}-(K_{11})^2-3(K_{12})^2,&\quad \text{R}_{0202}(h)=-e_0K_{22}-(K_{22})^2+(K_{12})^2,\\
\text{R}_{0102}(h)=&-e_0K_{12}-2K_{22}K_{12}.
\end{split}
\end{align}
}} 
\begin{align}
\label{finredEVERic11pre}e_0K_{11}+(K_{11})^2+3(K_{12})^2+e_0\gamma K_{11}=&\,\overline{\nabla}_{11}\gamma
+(e_1\gamma)^2
-e_0^2\gamma-(e_0\gamma)^2\\
\label{finredEVERic22pre} e_0K_{22}+(K_{22})^2-(K_{12})^2+e_0\gamma K_{22}=&\,\overline{\nabla}_{22}\gamma+(e_2\gamma)^2-
e^2_0\gamma-(e_0\gamma)^2\\
\label{finredEVERic12pre} e_0K_{12}+(2K_{22}+e_0\gamma)K_{12}=&\,\overline{\nabla}_{12}\gamma+e_1
\gamma e_2\gamma
\end{align}
(Here $\overline{\nabla}$ stands for a connection on the space spanned by 
of 
$e_1, e_2$--this connection is defined to be the projection of the
 Levi-Civita connection $\nabla$ onto ${\rm Span}\langle e_1, e_2\rangle$).

 In terms of singular behaviour in $r$, we will see below that 
  given the expected asymptotic behaviour \eqref{AVTD1} of $\gamma$, as $r\to 0^+$, \emph{and} the AVTD property that implies that 
all $e_0$-derivatives are more singular in $r$ than spatial derivatives, 
  these equations admit formal solutions with the following asymptotic expansion: 
\beq
\label{zero.asympts}
K_{22}(r,t,\theta)\sim d_2(t,\theta)\sqrt{2M} r^{-\frac{3}{2}},\qquad K_{11}(r,t,\theta)\sim d_1(t,\theta)\sqrt{2M} r^{-3/2}, \qquad|K_{12}(r,t,\theta)|\lesssim r^{1-\frac{1}{4}},
\eeq
where the functions $d_1(t,\theta)=d_1[\alpha(t,\theta)]$ and $d_2(t,\theta)=d_2[\alpha(t,\theta)]$   are given by the explicit  formulas of the parameter $\alpha(t,\theta)$ in \eqref{AVTD1}:
\beq
\label{d1.d2} 
\begin{split}
&d_1(t,\theta):=\frac{\alpha(t,\theta)-\frac{3}{2}+\sqrt{(\alpha
(t,\theta)
-\frac{3}{2})^2
+6\alpha(t,\theta)-4|\alpha(t,\theta)|^2}}{2},
\\& d_2(t,\theta):=\frac{\alpha(t,\theta)-\frac{3}{2}
-\sqrt{(\alpha(t,\theta)-\frac{3}{2})^2+6\alpha(t,\theta)
-4|\alpha(t,\theta)|^2}}{2}.
\end{split}
\eeq

   In fact, these two possible leading-order behaviours correspond to the (unique) ``collapsing'' 
 direction, which we will choose to be $e_2$ and the dual (principal) 
 ``expanding'' direction $e_1$. In the above derivation we have implicitly 
 normalized  the expanding direction by requiring it to be 
 (asymptotically) orthogonal to the collapsing direction, to suitably 
 high order, implicitly imposing \eqref{diag.asympt}

 Given that the connection coefficients $K_{ij}$ appear in the wave equation $\Box_g \gamma{=0}$, 
 and especially in the derivatives of this equation, we need to control the tensor $K_{ij}$ 
 in higher order Sobolev spaces $H^k_{t,\theta}(\Sigma_r)$. In particular, we need to consider
  \emph{derivatives} (with respect to $\partial_t,\partial_\theta$)
   of $K_{ij}$ and derive bounds for them that are consistent with the asymptotic 
  behaviour \eqref{cl} (at least for a low enough number of derivatives).

 It is \emph{here} that an essential (and unexpected) difficulty in this problem arises: 
Assume that $\gamma$ and its (low enough) derivatives 
display a behaviour in $r$ that is consistent with the 
asymptotics  of ${g_{\phi\phi}(r,t,\theta)}$ in \eqref{cl}. We then need to derive bounds for 
$K_{ij}$ 
and its (low enough) derivatives that would be consistent with the asymptotics for \eqref{zero.asympts}. In particular (low enough) derivatives of $K_{ij}$ should behave (in Sobolev and the $L^\infty$ spaces)
as in \eqref{zero.asympts}. 

Here there is a dichotomy:  For the \emph{un-differentiated} terms $K_{ij}$, 
indeed assuming that up to two of  the derivatives of $\gamma$ satisfy pointwise bounds that 
are consistent 
with {\eqref{AVTD1}}, we can \emph{derive} the asymptotics of $K_{ij}$ consistent with \eqref{cl},
via a
 Fuchsian-type analysis of the nonlinear ODEs.

However, once we consider the \emph{differentiated} 
equations (here $\partial$ stands for $\partial_t,\partial_\theta$) 
\begin{align}
\label{finredEVERic11pre.diff}e_0\partial K_{11}+(2K_{11}+e_0\gamma) \partial  K_{11} +6K_{12}\partial K_{12}=&\,\partial [\overline{\nabla}_{11}\gamma
+(e_1\gamma)^2
-e_0^2\gamma-(e_0\gamma)^2]-(e_0\partial \gamma) K_{11} \\
\label{finredEVERic22pre.diff} e_0\partial  K_{22}+(2K_{22}+e_0\gamma) \partial K_{22}
-2K_{12}\partial  K_{12}=&\,\partial 
[ \overline{\nabla}_{22}\gamma+(e_2\gamma)^2-
e^2_0\gamma-(e_0\gamma)^2]- (e_0\partial\gamma) K_{22}\\
\label{finredEVERic12pre.diff} e_0\partial K_{12}+(2K_{22}+e_0\gamma)\partial K_{12}=&\,\partial [\overline{\nabla}_{12}\gamma
+e_1\gamma e_2\gamma]-(2\partial K_{22}
+e_0\partial\gamma) K_{12},
\end{align}
 we find that {the {\it free branches} of the solutions} of these  linear equations are
 \beq
 \begin{split}
& \partial K_{11}^{\rm free}=  c_{11}(t,\theta)
  r^{2d_1(t,\theta)-\alpha(t,\theta)}, \qquad
 \partial K_{22}^{\rm free}= c_{22}(t,\theta)
  r^{2d_2(t,\theta)-\alpha(t,\theta)},  
  \\&\partial K_{12}^{\rm free}= c_{12}(t,\theta)
  r^{2d_2(t,\theta)-\alpha(t,\theta)}.
\end{split}
 \eeq
 %The second terms $c(t,\theta) r^{2d_2(t,\theta)-\alpha(t,\theta)}$,
  % $c'(t,\theta) r^{2d_1(t,\theta)-\alpha(t,\theta)}$ represent  the
   % \emph{free branches} 
%of the equation, seen as a linear first order  ODE in $\partial_r$  for 
%$\partial K_{22}$, $\partial K_{11}$. (The equation for $K_{12}$ was 
%already linear in $K_{12}$ prior to differentiation by $\partial$). 
Recalling that $\alpha(t,\theta)$ is close to 1 in $L^\infty$ and thus $d_2(t,\theta)$ is close to {$-1$} and $d_1(t,\theta)$ close to 
$\frac{1}{2}$,  we note that 
  the first of these free branches (for $K_{11}$) is \emph{less} singular than the leading order behaviour 
$r^{-3/2}$, and thus does not 
  impede the proof that $K_{11}$ and its high derivatives satisfy bounds  consistent 
  with \eqref{cl}. 
  The \emph{second} branch of the free solution of $K_{22}$ is potentially
   detrimental: 
  If the 
  $\partial_t,\partial_\theta$ 
  derivatives of $K_{22}$  \emph{do indeed} behave in this much more singular way $c(t,\theta) r^{-3+\e(t,\theta)}$, 
  $|\e(t,\theta)|\le \frac{1}{8}$, 
  (whereas 
  the undifferentiated $K_{22}$ behaves like 
  $r^{-3/2}$), then one has absolutely no hope  of eventually \emph{proving}
  asymptotics of the form \eqref{cl}, and hence,  our result. 
  (We remark that this would even kill the hope of deriving 
  estimates for the linearized Einstein
   equations which would be consistent with the asymptotics that we prove here)--thus this feature of the equation can be termed 
a \emph{linear instability} of the EVE around the Schwarzschild singularity, at least in this gauge.

%While the ``expected'' behaviour for the deformation tensor $K$ can be derived  for the un-
%differentiated 
%quantities $K_{11}, K_{22}$ (the eigenspaces of the deformation tensor/second fundamental 
%form), once we plug in the expected behaviour of the RHS 
%$\nabla^2\gamma$, by treating these equations as Fuchsian-type non-linear ODEs. 

   The only hope therefore  is that this very singular branch, 
   allowed by  the differentiated equations, is somehow \emph{not there}. 
This hope is actually validated.    
   It is to prove this part that the ODE character of the second branch of our 
   equations (in this gauge) is used in an essential way: 
   
   Whereas if one were to solve the equations \eqref{finredEVERic22pre}, 
   \eqref{finredEVERic12pre} \emph{forwards}, towards the singularity, one 
   cannot rule out the possibility of this very singular behaviour, 
   one \emph{can} set this singular branch to \emph{zero} when one solves 
   the Riccatti 
   equations \emph{backwards} from the singularity.
      Skipping some technical issues, that is possible to do if one 
      considered the Riccati equations 
  \eqref{finredEVERic11pre}, \eqref{finredEVERic12pre}, 
  \eqref{finredEVERic22pre}  separately, taking the RHS as being 
      \emph{given}, and consistent with the behaviour \eqref{cl}. 
\medskip

{\bf The iteration scheme:}  Taking this challenge into account, we resort to an 
\emph{iteration scheme} for solving the system \eqref{redEVEwav}, \eqref{redEVERic},
 producing a sequence of 
 solutions $(\gamma^m, h^m)$: Taking the previous step $(\gamma^{m-1}, h^{m-1})$ as given,  
 we need to produce a new pair $(\gamma^m,h^m)$. 

 We first solve the free wave equation     
 \beq
 \label{wave}
 \square_{g^{m-1}}\gamma^m=0
 \eeq
  \emph{forwards}, 
 towards the singularity; we next solve for the metric $h^m$ via its connection 
 coefficients $K^m_{ij}$. 
 We  solve the two of the 
connection coefficients $K^m_{22}(r,t,\theta), K^m_{12}(r,t,\theta)$
\emph{backwards from the 
 singularity}, \emph{setting the singular {branches} of the 
 {solutions} for $\partial K^m_{22},{\partial K_{12}^m}$ to zero}. This is completed by showing that it is possible 
 to solve for the remaining connection coefficients so that the metric and second fundamental 
 form induced on a suitable  hypersurface
  $\Sigma_{r_*^m}=\{r=r_*^m(t,\theta)\}$ (which is to be determined)
matches the initial data that we have prescribed.      
   \medskip
   
     In other words, we prove our result \emph{not} by a bootstrap 
     argument, but by a 
      (forwards-backwards) Picard-type iteration. 
      ({We note that in
      recent 
       works by Hintz-Vasy, for example in their breakthrough proof of the Kerr-de Sitter stability problem \cite{HV}, the authors solved the Einstein equations via an iteration}--however the underlying reasons there are 
       entirely different). 
     Certain technical difficulties that this gives rise to will be 
     discussed in the main body of the 
     paper.
     
      For now, we wish to discuss the final main
      challenges that we need to overcome, to eventually prove the result, 
      and 
      establish the asymptotics 
      \eqref{cl}. 
      \medskip

\subsubsection*{Closing the EVE in {a} geodesic gauge}
 
 Working in a geodesic gauge presents certain challenges in terms of deriving suitable 
 bounds for all the quantities that govern our space-time. In particular, there is a clear danger  
of \emph{losing derivatives}, which would not allow the derivation of our estimates in the next step of our iteration. This is in fact 
true even for the local-in-time problem, independently of singularity formation. To distinguish these two 
challenges (singular behaviour in $r$ and regularity in fixed Sobolev spaces), we introduce the following convention: 
 
{\bf Language Convention:} 
Given any parameter $f(r,t,\theta)$ in our problem, we will use the term \emph{regularity} 
to refer to suitably many derivatives of $f$ lying in $L^2_{t,\theta}$. The term
 \emph{singularity} will refer to the behaviour in $r$ 
of different norms of $f(r,t,\theta)$ that  we keep track of (e.g. 
$L^\infty_{t,\theta}$ norms, Sobolev norms $H^k_{t,\theta}$),
 as $r\to 0^+$.  
\medskip

The well-known loss of derivatives that occurs in a geodesic gauge, e.g. in 
Fermi or exponential coordinates, 
does not make the geodesic gauge 
suitable, \emph{in general}, for a study of this initial value problem. 
Nonetheless, the special structure of the equations 
makes this possible in our case: 

In our approach to this problem, the wave $\gamma$ is treated as the main 
part of the evolution. 
The equation \eqref{redEVEwav} is however non-linear in $\gamma$, since the 
metric 
$g$ is related to $\gamma$ via the 
Ricci curvature of $h$ \eqref{redEVERic}.

The relation between the curvature of the 
projected $(2+1)$-metric $h$ and the free wave $\gamma$ is utilized via the 
Riccati equations, and also becomes manifest whenever we commute the 
equation with derivatives.  
It is important at this point that we always use the vector field $e_0$ as a multiplier. Also, at the top order 
of derivatives, $e_0$ is necessarily 
one of the commutation vector fields for our 
equation. 

From the point of view of regularity, it is clear that that the direction 
$e_0$ is \emph{privileged}: The Riccati equations show that $e_0K_{ij}$ 
is on the same level as $\overline{\nabla}^2\gamma$, while the derivatives 
$\partial_tK_{ij}, \partial_\theta K_{ij}$ are on the same level as 
$\partial_t\overline{\nabla}^2\gamma,\partial_\theta\overline{\nabla}^2\gamma$. 

A delicate balance is struck here: From the point of view of not loosing derivatives, use of the $e_0$ vector field is
good, because it brings out (differentiated) metric and connection  terms that are at the correct number of derivatives in terms of the wave $\gamma$. 
However, from the point of view of deriving asymptotics up to the singularity it is dangerous, since 
(in view of the asymptotics \eqref{cl} that we seek to establish) it generates terms that are more singular 
in terms of powers of $r$.
How this balance is achieved
 is  explained in more detail in the main body of the proof.  
The closing occurs in function spaces which at the very top orders use the
 vector field $e_0$ as a commutator up to two times. 

\subsubsection*{The location of the initial data, and regularity 
at the poles}

Two more novel aspects of our technique that we wish to highlight here 
relate to the issue of \emph{identifying} the position of the initial data 
hypersurface in the geodesic gauge we have chosen, and also some 
(technical) issues related to the regularity of our space-times at the 
axes. 

In contrast to \cite{RS1, RS2, RS3}, or the spherical symmetry setting 
we are not in a position to choose a space-like foliation which  
``synchronizes'' our approach to the singularity (via for example CMC 
surfaces). Nor do we have the option of using an \emph{area radius} 
parameter for 2-spheres as is often done in spherical symmetry. 
Rather, the approach to the singularity in our gauge is governed by a
(non-affine)   parameter $r$ along our time like $e_0$-geodesics.   
 This in particular implies that the \emph{location} of the hypersurface (expressed in terms of the coordinate $r$) 
 that is to carry the initial data must be solved for. This reduces to a 
 2x2
 system, which relies on connection coefficients that are solved for 
 \emph{starting} from the singularity. The solvability of the resulting 
 system is far from evident (at least to the authors); in fact it is to 
 obtain such a solvable system that the requirement of tangency of $e_1$
  (but not of $e_2$!) to the initial data hypersurface was imposed.  

A further challenge is related to the fact that we split our analysis 
between the (3+1)-dimensional wave equation $\Box_{g^{3+1}}\gamma=0$ and
the quotient metric $h^{2+1}$. Indeed,  the metric $h^{2+1}$ lives over a 
\emph{manifold-with-boundary} (over the coordinates $\theta, t, r$), with the boundary being at 
$\theta=0, \theta=\pi$. For various parameters in our inductive procedure 
we must \emph{impose} or \emph{derive} a certain vanishing of transverse
 derivatives to those two  boundaries. These vanishing conditions capture the 
 regularity of the resulting (3+1)-dimensional space-time. 
\medskip

More technical aspects of our analysis  will be discussed 
in separate introductions of the separate sections.

\begin{quote}
\textbf{Acknowledgements.} {\small We would like to thank Jonathan Luk, Igor Rodnianski, 
Jared Speck for useful discussions. S.A.~was supported by an NSERC  discovery grant
and an Ontario ER Award.  G.F.~was supported by the \texttt{ERC grant 714408 GEOWAKI},
 under the European Union's Horizon 2020 research and innovation program.}
\end{quote}

\section{The precise formulation of the result. }

We will be introducing the precise gauge in which the theorem is proven. 
It is useful to consider the Schwarzschild metric and a canonical frame 
associated to that metric.

\subsection{The Schwarzschild metric}\label{subsec:Schw}

The Schwarzschild solution \eqref{Schmetric.pre}, being spherically 
symmetric, belongs in the axi-symmetric polarized class and satisfies the EVE 
(\ref{redEVEwav}), (\ref{redEVERic}) for
\begin{align}\label{Schsol}
\gamma_{\rm S}=\log r+\log \sin\theta,&& h_{\rm S}=-(\frac{2M}{r}-1)^{-1}dr^2+(\frac{2M}{r}-1)dt^2+r^2d\theta^2.
\end{align}

The interior region $\{r<2M\}$ is naturally foliated by space-like hypersurfaces $\Sigma_r$, $r\in(0,2M)$, the level sets of the coordinate (and area radius) function $r$. The limiting slice $\Sigma_0$ is the hypersurface $\{r=0\}$, where the singularity occurs and where the 
 curvature invariants, such as the Kretschmann scalar, blow up. Also, across $\Sigma_0$, the space-time metric is $\mathcal{C}^0$-inextendible \cite{Sb}.\\
Consider the orthonormal frame
\begin{align}\label{Sframe}
e_0=-(\frac{2M}{r}-1)^\frac{1}{2}\partial_r,&&e_1^{\rm S}=(\frac{2M}{r}-1)^{-\frac{1}{2}}\partial_t,&&e_2^{\rm S}=\frac{1}{r}\partial_\theta,&&e_3^{\rm S}=\frac{1}{r\sin\theta}\partial_\phi.
\end{align}
In this frame, the second fundamental form $K_{\rm S}$ of the constant $r$ hypersurfaces $\Sigma_r$ is given by
\begin{align}\label{SK}
(K_{\rm S})_{11}=\frac{M}{r^2}(\frac{2M}{r}-1)^{-\frac{1}{2}},\qquad (K_{\rm S})_{22}=(K_{\rm S})_{33}=-\frac{1}{r}(\frac{2M}{r}-1)^\frac{1}{2}.
\end{align}
A direct computation also shows that 
\begin{align}\label{SRiem}
-\text{R}_{0101}(g_{\rm S})=2\text{R}_{0202}(g_{\rm S})=2\text{R}_{0202}(g_{\rm S})=\frac{2M}{r^3}.
\end{align}
%

%\begin{figure}[H]
 %   \centering
  %  \includegraphics[scale=2.5]{intdomSch}
   % \caption{The furutre part of Penrose diagram of the maximal analytic Schwarzschild 
   % space-time in the 
    %future of a Cauchy hypersurface}
%\end{figure}
%

\subsection{The initial data for our problem.}

The space-times we will study in this paper will arise as the future 
maximal hyperbolic developments of initial data sets that correspond
to \emph{perturbations}  of the initial data set 
$(\g_{\rm S}, \K_{\rm S})$ on $\Sigma_{\epsilon}$;
 the latter corresponds to the metric 
and second fundamental form induced on the hypersurface 
$\Sigma_\e:=\{r=\e\}$ in the Schwarzschild space-time.

 The closeness of our data to the Schwarzschild background 
  will be encoded in a parameter $\eta>0$, whose 
smallness  will also be specified below. We frequently denote by $\g,\K$ the 
initial data for brevity; also, all quantities in bold-faced letters will 
be related to the abstract initial data.

\medskip

Specifically, we consider a spatial metric $\g$ and a second fundamental form $\K$ 
(expressed in coordinates $t,\theta,\phi$) which satisfy the
vacuum constraint equations and the following
 polarized-axisymmetric condition, for any fixed component $\g_{ab}, \K_{ab}$, expressed  
with respect to the coordinate vector fields $\partial_\phi,\partial_\theta,\partial_t$: 
 \[
 \partial_\phi \g_{ab}=\partial_\phi \K_{ab}=0,\quad a,b=t,\theta,\phi,\qquad \K_{\phi a}=\g_{\phi a}=0,\quad a=t,
 \theta. 
 \]
For definiteness,  we will be normalizing the coordinates $t,\theta$ 
 by requiring that 
 \[
 \K_{t\theta}(t,\theta)=0,\quad \g_{t\theta}=0. 
 \]
 This requirement only 
 specifies the \emph{level sets} of the  coordinates $t,\theta$. We 
 impose an extra gauge 
  normalization to ensure that the poles occur at $\theta=0,\theta=\pi$, 
  and that 
for each fixed $t=t_0$ the set $\{\theta\in (0,\pi), \phi\in [0,2\pi)\}$ 
should 
extend to a 
smooth sphere 
at the poles $\theta=0,\theta=\pi$.

We note that this coordinate normalization 
 implies that  
the vector field $\partial_\theta$ must be normal to $\partial_t$. 
At the poles $\theta=0,\pi$ it also implies 
 that $\partial_\theta$ must be mapped to $-\partial_\theta$ after 
 flowing by $\pi$ 
 along $\partial_\phi$, and moreover the flow of $\partial_\theta$ at any of the two poles  defines a 2-dimensional space. Then 
 $\partial_t$ must be 
 invariant under the flow of $\partial_\phi$ at the two poles 
 $\theta=0,\pi$, since it must be 
 the unique vector field (up to choice of direction) that is normal to the  
 2-dimensional
  space that is left invariant under the flow of $\partial_\phi$.  
  %The fact that our key variables $\gamma_{\rm init}$, $\bf K$ extend smoothly to the poles 
 % $\theta=0, \theta=\pi$ in the $(3+1)$-dimensional picture will be captured by certain odd derivatives of thse variables vanishing at the poles in a suitable weighted $L^2$-sense; we present this condition together with the rest of our assumptions. 

\medskip

Within this gauge normalization, 
we require that this initial data  be close to the corresponding 
Schwarzschild data in a suitably high Sobolev norm $H^s$.

Our assumptions on the initial data $(\g,\K)$ 
 will be formulated in terms of 
Sobolev spaces defined relative to the coordinates $t,\theta,\phi$. 
Also, the assumption on the component $\g_{\phi\phi}=e^{2\gamma}$ 
will be separate 
from that of the normal-to-$\partial_\phi$ part of the 
 metric ${\bf g}$: For the former we will impose initial data on 
 $\gamma=\frac{1}{2}\log (\g_{\phi\phi})$, while 
 initial data on the latter will be  treated in term of the 
 relevant components of  $\g, \K$ in the coordinates $t,\theta$. 
We write $\gamma_{\rm init}$ for the initial datum of this scalar parameter $\gamma$, for notational simplicity.  
 \medskip
 
\begin{remark}
We recall that for the abstract initial data, we also consider the 
(abstract) normal vector field $n$, which is normal to our
 initial data set. This vector field is used in defining the 
 initial energy of waves \emph{on} the initial data set. 
 
 In view of this, it makes sense to consider the 
 formal jet of the solution metric (given the prescribed $(\g,\K)$) 
 off of $\Sigma$--this makes sense in a coordinate $\tau$ that satisfies 
 $n(\tau)=1$. With this formulation, it makes sense to consider 
 the energy $E$ of $\gamma$, but also the energy of 
  $\gamma_{\rm init}-\gamma_{\rm S}$, {where $\gamma_{\rm S}$ is as in \eqref{Schsol}}. (The energy of a function  is defined using formula \eqref{energy.def}--note that we use the volume form
 ${\rm sin}\theta d\theta dt$ in its definition--this is for uniformity with our measurement of other quantities). 
\end{remark}

With this in mind, we require that for some $s\in\mathbb{N}$, 
large enough to be chosen below, and for all $k_1+k_2\le s-1$:\footnote{These assumptions can in fact be weakened. The requirements
 imposed here should only hold for what we will later call the \emph{lower derivatives} of the parameters; the derivatives beyond this are allowed to be more singular (in terms of powers of $\e$). 
This follows from the proof further down straightforwardly, but we do not make this weakening of the assumptions here for the sake of brevity. }

\beq
\label{tg.bds}
E[ \partial^{k_1}_{t\dots t}\partial^{k_2}_{\theta\dots\theta} 
[\gamma_{\rm init}-\gamma_{\rm S}]]\le  \eta^2 \e^{-3},
\eeq
and also for the function $\gamma_{\rm init}$ itself we require: 

\beq
\label{gamma.init.bds}
{\int_{-\infty}^{+\infty}\int_0^\pi}|\partial^{k_1}_{t,\dots t}\partial^{k_2}_{\theta\dots\theta} 
[\gamma_{\rm init}-\gamma_{\rm S}]|^2 \sin\theta d\theta dt \le \eta^2 (\log \e)^2.
\eeq

For the remaining two non-zero components
 of the metric $\g$ on the initial data set we assume that the 
components $\g_{tt}, \g_{\theta\theta}$ satisfy the bounds, for all $k_1+k_2\le s$:
\beq
\label{g.bds}
\begin{split}
{\int_{-\infty}^{+\infty}\int_0^\pi |\partial^{k_1}_{t\dots t}\partial^{k_2}_{\theta\dots\theta} 
[\log(\frac{\g_{tt}}{(\g_{\rm S})_{tt}})]|^2\sin\theta  dt 
d\theta\le \eta^2 (\log\e)^2, }\\
{\int_{-\infty}^{+\infty}\int_0^\pi |\partial^{k_1}_{t\dots t}\partial^{k_2}_{\theta\dots\theta} 
[\log(\frac{\g_{\theta\theta}}{(\g_{\rm S})_{\theta\theta}})]|^2 \sin\theta dt
d\theta\le \eta^2 (\log\e)^2.}
\end{split}
\eeq

%(We remark that we could impose the stronger assumptions 
%$C_0\eta^2 \e^{-2}(log\e)^2, C_0\eta^2 \e^{4}(log\e)^2$. 

Next, we  define 
\beq
\label{basic.frame}
E_1=\g_{tt}^{-1/2} \partial_t, \qquad E_2= \g_{\theta\theta}^{-1/2}
 \partial_\theta
\eeq
and consider the components of $\K$ with respect to this frame,  
$\K_{22},\K_{11}$. We require then for all $k_1+k_2\le {s-2}$: 

\beq
\label{K.bds}
{\int_{-\infty}^{+\infty}\int_0^\pi} |\partial^{k_1}_{t\dots t}\partial^{k_2}_{\theta\dots\theta} 
[\K_{ab}-(\K_{\rm S})_{ab}]|^2 \sin \theta d\theta dt\le  \eta^2 \e^{-3},\qquad (a;b)=(1;1), (2;2). 
\eeq
The four conditions above capture the $\eta$-closeness of our initial
data to $(\g_{\rm S}, \K_{\rm S})$ on $\{r=\e\}$.

\subsection{The result, properly formulated.}

The space-times $(M, g^{3+1})$ that we construct will be considered  both in terms 
of coordinates (and the metric components expressed in terms of these coordinates), but also 
in terms of connection coefficients of certain special frames. 

We present a more descriptive version of our result in coordinates here: 

 Consider the coordinates $\phi\in [0,2\pi)_,t\in (-\infty,\infty),\theta\in (0,\pi)$
  constructed on our initial data $(\Sigma,\g,\K)$ above. Our maximal future hyperbolic 
  development will involve a fourth (time) coordinate $r$:

  The future maximal hyperbolic development will live over a domain:
  \[
\{  \phi\in [0,2\pi)_,t\in (-\infty,\infty),\theta\in (0,\pi), 
r\in (0,r_*(t,\theta))\},
  \] 
  where the function $r_*(t,\theta)$ is one of the parameters that will 
  be solved for in 
  the problem. The abstract initial data $(\g,\K)$ are induced by 
  ${g}$ 
  on the hypersurface $\Sigma_{r_*(t,\theta)}:=\{r=r_*(t,\theta)\}$. In 
  particular, 
  the restriction of the metric ${g}$ to $\Sigma_{r_*}$,
  \emph{expressed in these
   same coordinates} $t,\theta,\phi$, will \emph{match exactly} the 
   prescribed initial 
   metric $\g$. In other words, the metric $g|_{\Sigma_{r_*}}$ 
   expressed in 
   these coordinates is assumed to be  \emph{equal} (not just up to a coordinate 
   transformation) to our 
   prescribed  $\g$.

  \begin{theorem}
\label{thm_strict}  
  
  Consider an abstract initial data set $(\Sigma,\g,\K)$ which is an 
  $\eta$-perturbation of the Schwarzschild space-time data on
   $\Sigma_\e=\{r=\e\}$, as defined in \eqref{K.bds}, \eqref{g.bds},
   \eqref{tg.bds}. Assume $s\in\mathbb{N}$ is large enough and that $\e,\eta>0$ are 
   small enough. (How large and small these parameters  is 
   derived below).
   
   Then the maximal future  hyperbolic development ${(\mathcal{M}^{1+3},g)}$ 
   of this initial data set 
   can be described as follows: There exists a fourth coordinate $r$
  so that $g$ lives over  
  \[
  \phi\in [0,2\pi)_,t\in (-\infty,\infty),\theta\in (0,\pi), 
r\in (0,r_*(t,\theta)),
  \]   
  and {it acquires the form}: 
  \beq
  \label{final.metric.form}
  \begin{split}
&  g= -(\frac{2M}{r}-1)^{-1}dr^2+  g_{\phi\phi}(r,t,\theta)d\phi^2+ 
  g_{\theta\theta}(r,t,\theta)d\theta^2+
g_{tt}(r,t,\theta) dt^2  
  \\&   +g_{t\theta}(r,t,\theta) dtd\theta
  +g_{r\theta}(r,t,\theta)drd\theta+g_{rt}(r,t,\theta) drdt. 
   \end{split}
  \eeq
  Here the integral curves of $\partial_r$ are time-like geodesics. 
Also the abstract initial data $(\Sigma,\g,\K)$ are induced by $g$ onto 
$\Sigma_{r_*}:=\{r=r_*(t,\theta)\}$. 
\medskip

The metric $g$  exists as an $H^s$-smooth Lorenzian metric until 
$\{r=0\}$.  The asymptotic expansion of the components in the 
${\cal C}^k$, $k\le {\rm low}-2$ (for some ${\rm low}<s$ to be specified below)  norms are as follows:  
There will exist a change of coordinates 
  $(T, \Theta)=(T(t,\theta),\theta)$ 
    so that  with respect to the coordinate system 
  \[
  T,\Theta, \phi,r
  \]
  the components of the metric $g$ have  the expansion:
  \beq
  \begin{split}
  \label{g.asymptotics}
&g_{\phi\phi}(r,T,\Theta)=A(T,\Theta)r^{2\alpha(T,\Theta)}
    (1+O(r^{\frac{1}{4}})) \sin^2\Theta,\quad
  g_{\Theta\Theta}(r,T,\Theta)
   = B(T,\Theta) r^{2\beta(T,\Theta)}(1+O(r^{\frac{1}{4}})), 
\\&  g_{TT}= C(T,\Theta) r^{2\delta(T,\Theta)} (2M+O(r^{\frac{1}{4}})), \qquad 
  g_{T\Theta} (r,T,\Theta) ={O( r^{1+\frac{1}{8}}),} 
  \\& g_{T r} = {O(r^{3/8})}, \qquad g_{\Theta r}\equiv0,\;\;r\in(0,\frac{\epsilon}{2}). 
  \end{split}
  \eeq
Here the exponent functions $\alpha(T,\Theta), \beta (T,\Theta), \delta (T,\Theta)$
depend on the point $T, \Theta$; they are all pointwise close to their values for the Schwarzschild metric, in particular: 
\beq
|\alpha(T,\Theta)-1|, |\beta(T,\Theta)-1|, |\delta(T,\Theta)+\frac{1}{2}|\le \frac{1}{8}.
\eeq
The coefficients $A(T,\Theta), B(T,\Theta), C(T,\Theta)$ are also pointwise close to their values for the Schwarzschild 
metric: 

\[
|A(T,\Theta)-1|, |B(T,\Theta)-1|, |C(T,\Theta)-1|
\le \frac{1}{8}.
\]
Moreover all these functions $\alpha,\beta,\delta, A, B, C$ are 
${\cal C}^{{\rm low} -2}$-functions, and satisfy 
similar bounds in the norms ${\cal C}^k, k\le  {\rm low} -2$. 

At the higher norms the behaviour of the metric components is more 
singular;
 however we do not write those bounds  out here. 
 \end{theorem}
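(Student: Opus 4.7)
The plan is to construct the solution via a forwards-backwards Picard-type iteration, producing a sequence $(\gamma^m, h^m, r_*^m)$ whose limit is the sought metric. A standard bootstrap cannot be used: the differentiated Riccati system \eqref{finredEVERic11pre.diff}--\eqref{finredEVERic12pre.diff} admits a free branch of order $r^{2d_2(t,\theta)-\alpha(t,\theta)}$, which is strictly more singular than the $r^{-3/2}$ leading-order behaviour one seeks for $K_{22}$. Solving the ODEs forwards in $r$ from $\Sigma_\epsilon$ cannot a priori rule out the appearance of this mode; solving them backwards from $\{r=0\}$ \emph{can}, by setting the singular branch to zero as an initial condition at the singular hypersurface.

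\textbf{The iteration.} First, I fix the affine frame as in \eqref{almpar.trans}, so that $e_0=-(\tfrac{2M}{r}-1)^{1/2}\partial_r$ is a timelike geodesic vector field and the metric $h$ is encoded in the connection coefficients $K_{ij}$. Given $(\gamma^{m-1},h^{m-1})$ satisfying bounds consistent with \eqref{g.asymptotics} and \eqref{zero.asympts}, the next iterate is produced as follows. (i) Solve the linear wave equation $\Box_{g^{m-1}}\gamma^m=0$ forwards from $\Sigma_\epsilon$ by energy estimates using a weighted multiplier built from $e_0$. Here the ``asymptotically CMC for free'' property of the geodesic gauge, i.e.\ $H\sim -\tfrac{3}{2}\sqrt{2M}\,r^{-3/2}$ with a \emph{constant} leading coefficient, is essential to prevent the energy from blowing up exponentially; commutator vector fields are chosen to (asymptotically) commute with $e_0$ so this cancellation is preserved at higher orders. (ii) Extract the leading pointwise profile of $\gamma^m$ at $\{r=0\}$, producing the exponent $\alpha^m(t,\theta)$ and hence the functions $d_1[\alpha^m], d_2[\alpha^m]$ via \eqref{d1.d2}. (iii) Solve the Riccati system for $K^m_{22}$ and $K^m_{12}$ \emph{backwards} from $\{r=0\}$, prescribing the asymptotic profiles $d_2(t,\theta)\sqrt{2M}\,r^{-3/2}$ and $O(r^{1-1/4})$ respectively, and, crucially, setting the free singular branch of the differentiated equations to zero at $\{r=0\}$; solve for $K^m_{11}$ forwards (its free branch being benign); then reconstruct $h^m$ from the frame. (iv) Determine the graph $\Sigma_{r^m_*}=\{r=r^m_*(t,\theta)\}$ on which the induced $(g|_\Sigma,K|_\Sigma)$ matches the prescribed $(\g,\K)$; this reduces to a $2\times 2$ system whose solvability is enabled by imposing that $e_1$ (but not $e_2$) is tangent to the initial hypersurface at the matching locus.

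\textbf{Estimates and convergence.} The core analytic ingredient is a weights-descent scheme for the wave equation, motivated by the AVTD relation \eqref{AVTD1}: at the top order of commutation the $r$-weights are allowed to degrade, but at each subsequent lower order the stronger singular structure of $e_0$-derivatives relative to $e_1,e_2$-derivatives gives back a power of $r^{1/4}$, eventually reaching bounds consistent with \eqref{g.asymptotics} at order ${\rm low}-2$. On the ODE side, a Fuchsian analysis of \eqref{finredEVERic11pre}--\eqref{finredEVERic12pre} (undifferentiated) produces the leading asymptotics \eqref{zero.asympts} once pointwise bounds on $\gamma^m$ are in place, while the backwards solvability of the differentiated system suppresses the dangerous mode. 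Running these estimates on both $(\gamma^m,h^m)$ and on the differences $(\gamma^m-\gamma^{m-1},h^m-h^{m-1})$ in a Banach space of functions with mixed $r$-weighted $H^k_{t,\theta}$ norms yields contraction, hence convergence to a solution of the reduced EVE \eqref{redEVEwav}--\eqref{redEVERic}; Lemma \ref{lem:Weyl} then upgrades this to a solution of the full EVE \eqref{EVE} in the polarized axi-symmetric class. The asymptotics \eqref{g.asymptotics} are read off from the limiting profiles, with $\beta,\delta$ expressed in terms of $\alpha$ via \eqref{d1.d2} and the coefficients $A,B,C$ fixed by the matching step (iv).

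\textbf{Main obstacle.} The hardest part is reconciling three competing requirements in the same function space: the wave-equation energy scheme requires $K_{ij}$ and its high derivatives to be controlled in a way consistent with \eqref{g.asymptotics}; the backwards-ODE construction of $K_{22},K_{12}$ requires the forcing $\gamma^m$ and its derivatives to be known up to the singularity with precise $r$-rates; and the matching step requires control on the \emph{location} $r^m_*(t,\theta)$ of $\Sigma$ expressed through connection coefficients that were themselves constructed from the singularity. Closing this loop without loss of derivatives, while simultaneously running the weights-descent so that the top-order (weaker) bounds still feed back into strong enough lower-order bounds to justify the Fuchsian extraction of $\alpha(T,\Theta)$, is the central difficulty; it is here that the geodesic gauge's special feature of converting the nonlinear Einstein part into pure ODEs is indispensable.
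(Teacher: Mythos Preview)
Your proposal is correct and follows essentially the same architecture as the paper: the forwards--backwards iteration (wave equation forwards, $K_{22},K_{12}$ backwards from $r=0$ with the singular branch suppressed, $K_{11}$ forwards), the $e_0$-multiplier energy method exploiting the asymptotically-CMC cancellation, the weights-descent scheme driven by AVTD, the $2\times2$ matching system for $r_*^m$, and the contraction limit.

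Two small corrections. First, the upgrade from the reduced system to the full EVE is not Lemma~\ref{lem:Weyl} (that lemma only records the algebraic identity $\mathrm{Riem}(h)=\mathrm{Ric}(h)$ in $2{+}1$ dimensions); the actual recovery of $\mathrm{Ric}(g)=0$ is a propagation-of-constraints argument for the components $\mathrm{R}_{0i}(g)$ and $\mathrm{R}(g)$, carried out separately in the appendix. Second, the coefficients $A,B,C$ in \eqref{g.asymptotics} are not determined by the matching step (iv); the matching step fixes $r_*(t,\theta)$ and $\tilde K_{12}(t,\theta)$, while $A,B,C$ are read off from integrating the transport equations for the coordinate-to-frame coefficients $a_{Ai}$ using the leading profiles of $K_{ij}$. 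Relatedly, to obtain the specific off-diagonal rate $g_{T\Theta}=O(r^{1+1/8})$ claimed in \eqref{g.asymptotics}, the paper first proves an \emph{improved} bound on $K_{12}$ (of order $r^{-1/2-2DC\eta}$ rather than the cruder $r^{-3/2+1/4}$ used to close the iteration), exploiting $e_2(r)\equiv0$; this is a short additional step after the iteration has converged, which your proposal implicitly assumes but does not single out.
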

 
 \begin{remark}
  The claims we made above are \emph{optimal} for {the leading orders of 
  the first three terms} in \eqref{g.asymptotics}, in the gauge that we consider. 
  For the other terms they are in fact not optimal, yet they are sufficient 
  for our purposes. 
  \end{remark}  

\begin{remark}  
Once our theorem has been proven, we will note in the last appendix that we can express 
the same metric in  a different (still geodesic) gauge 
$\{\tilde{r},\tttt, \ttheta,\phi\}$; \emph{relative to this gauge} the coordinate 
vector fields $\partial_{\tilde{r}},\partial_\tttt,\partial_\ttheta,\partial_\phi$,
capture the \emph{principal} directions of contractions and expansion of the metric $g$ at the singularity; the cross terms in this coordinate system 
satisfy much stronger (optimal) decay properties. 
 \end{remark}

To prove our theorem \ref{thm_strict}, we find it more convenient to introduce  frames
 in 
conjunction with coordinates. We will make our claim in terms of connection 
coefficients corresponding to a gauge-normalized 
frame, {see Theorem \ref{thm_REVESNGG} below}. We will then derive Theorem 
\ref{thm_strict} {from Theorem \ref{thm_REVESNGG},} as a consequence 
of estimates derived in  Section \ref{sec_fun.spaces}, {see \S
\ref{subsec:REVESNGGimpl}} in the Appendix.

We introduce 
our  frame and its gauge normalization
(and the corresponding equations that stem from the EVE) in the next
 subsection.

\subsection{The geodesic gauge: Reduction of the EVE to free wave-ODE system}
\subsubsection{The orthonormal frames and their propagation.} 

%{\bf Propagation Conditions:} 

Given a $g$-orthonormal frame $\{e_a\}^3_0$, $e_3=e^{-\gamma}\partial_\phi$, on a 3-dim space-like hypersurface $\Sigma$ with $e_0$ transversal to $\Sigma$, we may extend the frame off of $\Sigma$ {via the propagation rule \eqref{almpar.trans}.}
{Along a fixed $e_0$ geodesic}, this uniquely determines the orthonormal 
frame, once the frame has been prescribed at one point on the geodesic. 

Choosing $e_3=e^{-\gamma}\partial_\phi$, in which case the equation $D_{e_0}e_3=0$ is 
automatic, where $D$ is the Levi-Civita connection of $g$. We may thus restrict the frame $\{e_0,e_1,e_2\}$ in the $1+2$ projected manifold 
($\mathcal{M}^{1+3}/\mathbb{S}^1,h)$.\\

{\bf Coordinate Normalization:}
We will be expressing the metric $h$ in a system of coordinates $t,\theta,r$. 
The coordinates $t,\theta$ exist on the initial data set and give rise to 
a coordinate system on $(\mathcal{M}^{1+3}/\mathbb{S}^1,h)$ as follows:

\begin{itemize}
\item The coordinates $t,\theta$ are required to satisfy 
\beq
\label{e0te0theta} 
e_0(t)=e_0(\theta)=0.
\eeq

 \item The  coordinate function $r$ satisfies
   \begin{align}\label{e0}
e_0=-(\frac{2M}{r}-1)^\frac{1}{2}\partial_r,
\end{align}

and is  normalized so that 
$r=0$ on the 
singularity.

\end{itemize}
Some key  non-trivial connection coefficients of $h$ are defined via: 
\beq
\label{conn.def}
K_{ij}:=h(\nabla_{e_i}e_0,e_j)=K_{ji}.%,\qquad A_{ij,l}:=h(\nabla_{e_i}e_j,e_l)=-A_{il,j},\qquad i,j,l\in\{1,2\}. 
\eeq
These connection coefficients must satisfy the system \eqref{finredEVERic11pre}, \eqref{finredEVERic12pre}, \eqref{finredEVERic22pre}; 
the $\gamma$
on the RHS solves the wave equation \eqref{redEVEwav}.

Now, the rest of the $(2+1)$-metric $h$ will be captured in coordinates:

\subsection{The space-time metric, expressed in terms of the 
orthonormal frame.}

To complete our set of unknowns, we will be fixing  a system of coordinates 
$(\rho, t,\theta)$, where $\rho$ is a re-parametrization of $r$, defined by the 
equation: 

\begin{align}
{\rho}={\rho}(r{,t,\theta})= r-\chi(r)({r}_*-\epsilon),&&\chi\in C^\infty([0,2\epsilon]),\;\;\chi\big|_{[0,\frac{\epsilon}{2}]\cup[\frac{3\epsilon}{2},2\epsilon]}\equiv0,\;\chi\big|_{[\frac{3\epsilon}{4},\frac{5\epsilon}{4}]}\equiv1;
\end{align}
(note in particular that $\{r=r_*(t, \theta)\}$ corresponds to $\{\rho=\e\}$). 

The three functions $\rho,t,\theta$ define a system of coordinates; for this section 
 $\partial_\rho, \partial_t,\partial_\theta$ will be the coordinate 
 vector fields for this system of coordinates.

 We will seek to express the space-time metric (with respect to this system of 
 coordinates) in terms of the frame 
 $(e_0,e_1,e_2,e_3)$ 
 constructed above. First, let us introduce  a modification of $e_1, e_2$ into 
 vector fields $\overline{e}_1,\overline{e}_2$ which are tangent to the level sets of $\rho$:

	\begin{align}\label{eibar.pre}
	\begin{split}
	\overline{e}_i:=&\,e_i-e_i(\rho)
	\partial_{\rho}\\
	=&\,e_i+e_i(\rho)
	(\frac{2M}{r}-1)^{-\frac{1}{2}}[1-\partial_r\chi(r)
	(r_*-\epsilon)]^{-1}e_0\in T\Sigma_{\rho},
	\end{split}
	\end{align}
	for $i=1,2$.

  Now, the scalars that will connect our space-time metric $h$ in terms of 
  $e_0, e_1, e_2$ are the scalars that define the coordinate vector fields 
  $\partial_t,\partial_\theta$ as linear combinations of 
  $\overline{e}_1,\overline{e}_2$. 
  
  In particular we define the functions $a_{Ai}(\rho,t,\theta)$, $A=t,\theta, i=1,2$ via the equations:

\begin{align}\label{tthetatransebar.pre}
\partial_t=a_{t1}\overline{e}_1+a_{t2}
\overline{e}_2,\qquad\partial_\theta= a_{\theta 1}
\overline{e}_1+a_{\theta 2}\overline{e}_2.
%\notag \overline{e}_1=(a)^{1t}\partial_t+
%(a^{m-1})^{1\theta}\partial_\theta,\qquad \overline{e}^{m-1}_2
%= (a^{m-1})^{2t}\partial_t+(a^{m-1})^{2\theta}
%s\partial_\theta
\end{align}

We also remark how the scalar-valued functions $a_{Ai}$ along with the scalar functions $e_i(\rho)$ determine the metric $h$ (the $(2+1)$-part of $g$):
\beq
\label{g.from.a.pre} 
\begin{split}
& h_{\theta\theta}  =  \sum_{i=1,2}   [a_{\theta i} ]^2 \cdot\bigg{[}1- [e_i(\rho) (\frac{2M}{r}-1)^{-1}
[1-\partial_r\chi(r)(r_*-\e)]^{-1}]^2\bigg]
\\&-2a_{\theta 1}a_{\theta 2} e_1(\rho)e_2(\rho)(\frac{2M}{r}-1)^{-1/2}[1-\partial_r\chi(r)(r_*-\e)]^{-2}, 
\\&h_{tt}=\sum_{i=1,2}[a_{ t i}]^2 \cdot\bigg{[}1- [e_i(\rho) (\frac{2M}{r}-1)^{-1}
[1-\partial_r\chi(r)(r_*-\e)]^{-1}]^2\bigg{]}
\\&-2a_{t 1}a_{t 2} e_1(\rho)e_2(\rho)(\frac{2M}{r}-1)^{-1}[1-\partial_r\chi(r)(r_*-\e)]^{-2}, 
\\&h_{t\theta}=\sum_{i=1,2}  [a_{ \theta i}\cdot a_{ t i} ] \cdot\bigg{[} 1- [e_i(\rho) (\frac{2M}{r}-1)^{-1/2}
[1-\partial_r\chi(r)(r_*-\e)]^{-1}]^2\bigg{]}
\\&-[a_{\theta_1}a_{t_2}+a_{t 1}a_{\theta 2}] e_1(\rho)e_2(\rho)
(\frac{2M}{r}-1)^{-1}[1-\partial_r\chi(r)(r_*-\e)]^{-2},
\\&h_{\rho \theta}=\sum_{i=1,2}(\frac{2M}{r}-1)^{-1}
a_{\theta i}e_i(\rho)\cdot [1-\partial_r\chi(r)(r_*-\e)]^{-2}, 
\\&h_{\rho t}=\sum_{i=1,2}(\frac{2M}{r}-1)^{-1}
a_{t i}e_i(\rho)\cdot [1-\partial_r\chi(r)(r_*-\e)]^{-2}.
\end{split}
\eeq

  We will also note an evolution equation on the functions $a_{Ai}(\rho,t,\theta)$, which is derived
  below, in the iterative step, just above \eqref{e0.a}. 
  
\begin{align}\label{e0.a.pre}
	\begin{split}
	e_0a_{t1}-K_{11}a_{t1}=0,\qquad e_0a_{t2}-K_{22}a_{t2}=2K_{12}a_{t1},\\
	e_0a_{\theta 1}-K_{11}a_{\theta 1}=0,\qquad e_0a_{\theta 2}-K_{22}a_{\theta 2}=2K_{12}a_{\theta 1}.
	\end{split}
\end{align}

\subsubsection{Gauge fixing for the frame $e_0, e_1, e_2$, relative to 
the 
singularity and the initial data hypersurface:}
  
First, we specify a parameter $r$ along each integral
 curve of $e_0$ via the condition \eqref{e0}. This parameter commences at 
 $r=0$, for each of the 
 integral curves of $e_0$. The family of integral curves (geodesics) 
 itself is parametrized by two functions 
 $t,\theta$  with $t\in (-\infty,+\infty)$ and ${\theta\in (0,\pi)}$. We 
 denote these
  geodesics by $l_{t,\theta}$; considering them as parametrized curves 
  with 
  parameter $r$ we  denote them by $l_{t,\theta}(r)$. We will be 
  requiring 
  that any two different 
   geodesic segments $l_{t,\theta}(r),r\in (0,2\epsilon]$, for some 
  $\epsilon>0$ small enough that will be fixed later, do not intersect.  
  (This will turn out to hold for all the space-times we construct, and 
  thus, $r,t,\theta$ and $\phi$ define  a system of coordinates for our 
  space-time).

 As we will remark below, imposing this condition is in fact 
 a partial  gauge normalization of the affine vector fields $e_0$, 
 in that they do not form focal points \emph{before} the singularity;
equivalently, there is no break-down of the geodesic gauge \emph{prior} to
 the singularity. Concretely, we require that  
all connection coefficients 
$K_{11}(r,t,\theta), K_{12}(r,t,\theta), K_{22}(r,t,\theta)$ are smooth up to $r=0$, and that 
  $e_1, e_2$ diagonalize 
 $K_{ij}(r,t,\theta)$ \emph{asymptotically} as $r\to 0$; in particular: 
 \beq
 \label{distinctK}
 K_{12}(r,t,\theta)\cdot K_{11}^{-1}(r,t,\theta)\to 0,\qquad K_{12}(r,t,\theta)
 \cdot K_{22}^{-1}(r,t,\theta)\to 0
 \eeq
  as $r\to 0$. The choice of $e_1, e_2$ is finally fixed by requiring that:
  
  \beq\label{Ksigns} K_{11}>0,\qquad
   K_{22}<0.  
   \eeq
\medskip

 Beyond these normalizations of the connection coefficients $K_{ij}$, we impose certain normalizations to the
  frame elements $e_2, e_1$ themselves: 
 
We impose that   $e_2$ is asymptotically tangent to the singularity,
as $r\to 0$, along any of the curves $l_{t,\theta}(r)$. In particular, we require (recall the definition \eqref{d1.d2} of $d_2(t,\theta)$): 
\beq
\label{gaugenorme2}
e_2(r)=o(r^{-\frac{1}{2}+d_2(t,\theta)}).
\eeq
In conjunction with the Riccati equations and \eqref{almpar.trans}, we will see that this implies that $e_2(r)=0$ on the entire space-time that we construct. (This is equivalent to 
 requiring that $e_2$ is tangent to all level sets of $r$).

Moreover we require that  the vector field $e_1$  be tangent to the entire hypersurface 
$\Sigma_{r_*}$, on which the initial data will be induced (how  this hypersurface  is found is made precise in the next subsection).
We note that these normalizations, along with the requirement that $e_0\perp {\rm Span}\langle e_1, e_2\rangle$, uniquely  specifies the locations  of the geodesics $l_{t,\theta}$ 
in the solved-for space-time.  
 \medskip

  \begin{figure}[h]
	\centering
	\def\svgwidth{9cm}
	\includegraphics[scale=1.2]{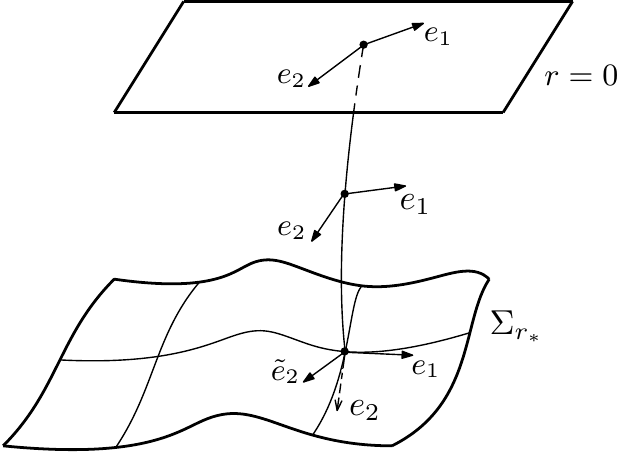}
	\caption{The frame $e_1,e_2$ normalized so that $e_2$ is ``tangent to the singularity'' and $e_1$ is tangent to the initial data hypersurface.} \label{Pict1}
\end{figure}
 
 We will see that 
 \eqref{distinctK} implicitly imposes initial conditions on 
 $K_{12}, K_{22}$ \emph{on} the singularity; as explained, these conditions should be thought of purely a 
 gauge-fixing requirements. Now, the \emph{rest} of the parameters need to 
 be prescribed on some hypersurface $\Sigma_{r_*}$, which is meant to carry the abstract initial data
$(\g,\K)$. Moreover the \emph{position} of this hypersurface (expressed
  graphically in the coordinates $r,t,\theta$ via the function 
  $r_*(t,\theta)$)
 is also to be determined. 

We discuss this next:
 \medskip
 
 \subsection{The matching of the prescribed initial data. }
 
 The prescribed initial data $(\Sigma,\g, \K)$ in our problem must be  \emph{induced} on 
 a hypersurface
  \beq
  \label{Sigmar*}
\Sigma_{r_*}:=\{r=r_*(t,\theta)\},
\eeq
for some unique function $r_*(t,\theta)$. 
 
 \subsubsection*{The induced connection on $\Sigma_{r_*}$.}
 
To make  this requirement precise, 
we firstly identify a canonical rotation of the frame $e_0, e_2$ to yield a new 
orthonormal 
frame $\te_0, \te_2$ that will be \emph{adapted} to the 
hypersurface $\Sigma_{r_*}$, on which the initial data are to live. (Recall that $e_1$ is 
required to be tangent to $\Sigma_{r_*}$).  

\begin{figure}[h]
	\centering
	\def\svgwidth{9cm}
	\includegraphics[scale=1.2]{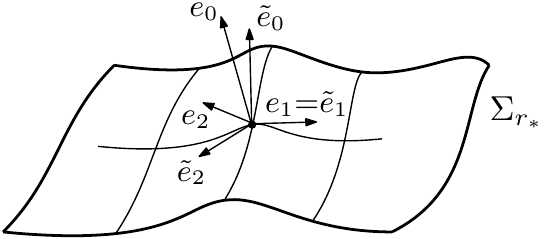}
	\caption{The frame $e_0, e_1, e_2$ and its rotation 
	$\te_0, \te_1=e_1, \te_2$.} 
\end{figure}

\begin{definition}
\label{adapted}
An orthonormal frame ${e}^\sharp_0, {e}^\sharp_1, {e}^\sharp_2$, defined 
over a 
hypersurface $\Sigma_{r_*}$, is called adapted to the hypersurface iff 
${e}^\sharp_1,{e}^\sharp_2$ are both tangent to the hypersurface (and 
thus 
${e}^\sharp_0$ is normal to the hypersurface). 
\end{definition}

We consider the (unique, within small rotation angles) rotation of the frame 
$(e_0,  e_2)$ to a new frame 
$(\te_0, \te_2)$ that makes $\te_0$ normal to $\Sigma_{r_*}$ and
 $\te_2$ tangent to $\Sigma_{r_*}$
(and we  preserve $e_1$). 
The new frame $\te_0, \te_2$ on $\Sigma_{r_*}$ is given by
the following formulas:

\begin{align}\label{tildee}
{\tilde{e}_2}:=q\,{e_2}-(\frac{2M}{r_*}-1)^{-\frac{1}{2}}({\tilde{e}_2}{r_*})e_0,\qquad
{\tilde{e}_0}=qe_0-(\frac{2M}{r_*}-1)^{-\frac{1}{2}}({\tilde{e}_2}{r_*}){e_2},
\end{align}
where 
 
\begin{align}\label{e2rstar.pre}
q=\sqrt{1+(\frac{2M}{r_*}-1)^{-1}({\tilde{e}_2}r_*)^2}.
\end{align}

Inverting (\ref{tildee}), we obtain:
\begin{align}\label{tildeeinv}
{e_2}=q\,{\tilde{e}_2}+(\frac{2M}{r_*}-1)^{-\frac{1}{2}}({\tilde{e}_2}{r_*}){\tilde{e}_0},&& e_0=q\,{\tilde{e}_0}
+(\frac{2M}{r_*}-1)^{-\frac{1}{2}}({\tilde{e}_2}{r_*}){\tilde{e}_2}.
\end{align}
 The  connection coefficients $\tilde{K}_{ij}:=g({\bf D}_{\tilde{e}_i}\tilde{e}_0,\tilde{e}_j)$, $\tilde{A}_{ij,l}:={\bf g}({\bf D}_{\tilde{e}_i}\tilde{e}_j,\tilde{e}_l)$, for this new frame $\te_0,e_1,\te_2$ on 
 $\Sigma_{r_*}$, (where $\bf D$ stands for the abstract connection of the 2-dimensional metric on the inital data set) 
are related to the connection coefficients
  $K_{ij}(r_*(t,\theta),t,\theta)$ (restricted to 
 $\Sigma_{r_*}$) 
as follows: 
\begin{align}\label{Korthexp.pre}
q^2K_{22}=&\,q\tilde{K}_{22}+(\frac{2M}{{r_*}}-1)^{-\frac{1}{2}}{\tilde{e}_2}{\tilde{e}_2}{r_*}+(\frac{2M}{r_*}-1)^{-\frac{3}{2}}\frac{2M}{r_*^2}({\tilde{e}_2}{r_*})^2,\\
\label{K11exp.pre}
K_{11}=&\,q\tilde{K}_{11}+(\frac{2M}{r_*}-1)^{-\frac{1}{2}}({\tilde{e}_2}{r_*})
\tilde{A}_{11,2},
\\ 
\label{K12exp.pre}
K_{21}=&\,
\tilde{K}_{21}+q^{-1}(\frac{2M}{r_*}-1)^{-\frac{1}{2}}({\tilde{e}_2}{r_*})
\tilde{A}_{22,1}
%\\ 
%\label{A112exp.pre}
%A_{11,2}=&\,q\tilde{A}_{11,2}-(\frac{2M}{r_*}-1)^{-\frac{1}{2}}({\tilde{e}_2}{r_*})
%\tilde{K}_{11}\\
%\label{A221exp.pre} A_{22,1}=&\,
%q^2\tilde{A}_{22,1}
%+2q(\frac{2M}{r_*}-1)^{-\frac{1}{2}}({\tilde{e}_2}{r_*})
%\tilde{K}_{21}
\end{align}

 Next, the requirement that $\tilde{K}, \tilde{A}$ should ``match'' the 
 prescribed initial data $(\Sigma, \g, \K)$ on 
  $\Sigma_{r_*}$ needs to be imposed.

   This requirement will  implicitly determine the hypersurface 
  $\Sigma_{r_*}$. 
    Before imposing this condition we will study how our initial
     data can be 
    realized with respect to different frames:

\subsubsection*{The abstract initial data realized in different frames. }
 
 First, we recall some standard formulas: 
 Consider the initial data $(\Sigma, \g, \K)$ in terms of the background 
 coordinates 
 $t,\theta$
 and the background frame ${n, E_1, E_2}$ defined in \eqref{basic.frame}.
  This initial data can 
 equivalently be expressed in 
 terms of the connection coefficients 
 $\A_{ij,l}, \K_{ij}, i,j,l\in \{1,2\}$ 
 of the background frame ${n, E_1, E_2}$. Furthermore, we can consider 
 rotations of the frame elements $E_1, E_2$ (tangent to the initial data 
 surface $\Sigma$).
 This will yield a new frame  ${(n}, E^\varphi_1, E^\varphi_2)$, on 
 the initial data hypersurface given by the formulas
\begin{align}\label{framerot.pre}
E_1^\varphi:=\cos\varphi E_1+\sin\varphi E_2,&&E^\varphi_2=-\sin\varphi E_1+\cos\varphi 
E_2.
\end{align}
 We then consider the connection coefficients, denoted by 
 $\A^\varphi, \K^\varphi$ 
 for short, in this new rotated 
 frame. The components of $\K^\varphi$ relative to $E^\varphi_1, E^\varphi_2$
   are given by 
 standard  transformation formulas: 
\begin{align}\label{KEvarphi.pre}
\notag \K^\varphi_{11}=&\,\cos^2\varphi\K_{11}+
\sin^2\varphi \K_{22}+2\sin\varphi\cos\varphi  \K_{12}\\
\K^\varphi_{22}=&\,\sin^2\varphi\K_{11}+
\cos^2\varphi 
\K_{22}-2\sin\varphi\cos\varphi  \K_{12}\\
\notag \K_{12}^\varphi=&\,\K_{12}
+\sin\varphi\cos\varphi
\big[\K_{22}- \K_{11}\big]
\end{align}

On the other hand, the spatial connection coefficients are given by
the following. (Recall that $\K_{12}=0$ by construction). 
\begin{align}
\label{AE112varphi.pre}{\bf A}_{11,2}^\varphi=&\,
\g({\bf D}_{\cos\varphi E_1+\sin\varphi E_2}(\cos
\varphi E_1+\sin\varphi E_2),-\sin\varphi E_1+\cos\varphi E_2)\\
\notag=&\,\cos\varphi\sin^2\varphi (E_1\varphi)+\cos^3\varphi
{\bf A}_{11,2}+\cos^3\varphi (E_1\varphi)
-\cos\varphi\sin^2\varphi {\bf A}_{12,1}\\
\notag&+\sin^3\varphi (E_2\varphi)+\sin\varphi\cos^2\varphi 
{\bf A}_{21,2}
+\sin\varphi\cos^2\varphi (E_2\varphi)-\sin^3\varphi {\bf A}_{22,1}\\
\notag=&\,\cos\varphi(E_1\varphi)+\sin\varphi (E_2\varphi)
+\cos\varphi {\bf A}_{11,2}-\sin\varphi {\bf A}_{22,1}\\
\notag=& E_1^\varphi (\varphi)+\cos\varphi {\bf A}_{11,2}-\sin
\varphi {\bf A}_{22,1},\\
\label{AE221varphi.pre}{\bf A}_{22,1}^\varphi=&\,\g
({\bf D}_{-\sin\varphi E_1+\cos\varphi E_2}(-\sin
\varphi E_1+
\cos\varphi E_2),\cos\varphi E_1+\sin\varphi E_2)\\
\notag=&\sin\varphi (E_1\varphi)-\cos\varphi(E_2\varphi)
+\cos\varphi {\bf A}_{22,1}+\sin\varphi {\bf A}_{11,2}\\
\notag=& -E_2^\varphi (\varphi)+\cos\varphi {\bf A}_{22,1}+\sin
\varphi {\bf A}_{11,2}.
\end{align}

In view of these transformation laws,  we now define:

\begin{definition}
\label{equiv.metr}
Consider a symmetric 2x2-matrix valued function $\tilde{K}_{ij}(t,\theta)$ and a 3x2-matrix valued 
function $\tilde{A}_{ij,k}(t,\theta)$, $i,j,k\in \{1,2\}$. 

We say that these matrix-valued data agree with the prescribed initial data $(\Sigma, \A, \K)$ up to a gauge 
transformation, provided there exists a function $\varphi(t,\theta)$ so that:
\begin{align}\label{KEvarphi'}
\notag \tilde{K}_{11}=&\,\cos^2\varphi\K_{11}+\sin^2\varphi \K_{2 2}+2\sin\varphi\cos\varphi  \K_{1 2}\\
\tilde{K}_{22}=&\,\sin^2\varphi\K_{1 1}+\cos^2\varphi \K_{2 2}-2\sin\varphi\cos\varphi  \K_{1 2}\\
\notag \tilde{K}_{12}=&\,\K_{1 2}+\sin\varphi\cos\varphi\big[\K_{2 2}- \K_{1 1}\big]
\end{align}
(in the last equation we recall that $\K_{12}=0$, yet we include it for completeness), 
and for the spatial connection coefficients:
\begin{align}\label{AEiijvarphi'}
\begin{split}
\tilde{A}_{11,2}=&\,{\tilde{e}_1(\varphi)}
+\cos\varphi\A_{11,2}-\sin\varphi\A_{22,1}\\
\tilde{A}_{22,1}=&{-\tilde{e}_2 (\varphi)}
+\cos\varphi\A_{22,1}+\sin\varphi\A_{11,2}
\end{split}
\end{align}
\end{definition}

\begin{figure}[h]
	\centering
	\def\svgwidth{9cm}
	\includegraphics[scale=1.2]{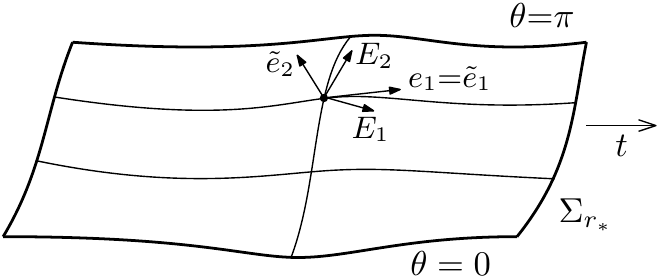}
	\caption{The adapted frame $(\te_1,\te_2)$ as a rotation of the fixed 
	background frame $(E_1, E_2)$.} \label{Pict3}
\end{figure}

 We also make a remark for future reference: 
\begin{remark} 
 The value of $\varphi(t,\theta)$ is (uniquely, up to adding an integer multiple of $\pi$) 
  fixed by the value of the tensor  
  $\tilde{K}_{12}(t,\theta)=[\tilde{K}(\te_1,\te_2)](t,\theta)$, {see \eqref{KEvarphi'}}.
  In particular: 
\beq
\label{tilF.act}
\varphi(t,\theta)={\frac{1}{2}
\sin^{-1}(\frac{2\tilde{K}_{12}}{\K_{22}-\K_{11}}).}
\eeq
\end{remark}
(Recall that $\K_{22}-\K_{11}$ is a \emph{fixed, smooth} function, 
which is fully determined by our initial data).

Note that if we consider connection coefficients $\tilde{A}, \tilde{K}$ 
solving 
\eqref{KEvarphi'} and \eqref{AEiijvarphi'} for some function 
$\varphi(t,\theta)$, then 
the metric $g$ induced by $\tilde{A}_{ij,k}$ is identical to the prescribed 
$\g$ in the $t,\theta$
coordinates. Also, the prescribed second fundamental form $\tilde{K}$ is 
the same 
(as a tensor) with the prescribed second fundamental form 
${\bf K}$ of our problem. In particular, the value of the component 
$\tilde{K}_{12}(t,\theta)=\K(E_1^\varphi,E_2^\varphi)$ uniquely forces the
 values of the connection coefficients 
 $\tilde{K}_{11}(t,\theta), \tilde{K}_{22}(t,\theta)$, 
 $\tilde{A}_{11,2}(t,\theta),\tilde{A}_{22,1}(t,\theta)$ of our initial
  data, relative to the frame $E^\varphi_1, E^\varphi_2$. 
\medskip

%This leads to a notion of solutions to the system of Riccati equations 
%\eqref{finredEVERic11pre}, \eqref{finredEVERic22pre}, 
%\eqref{finredEVERic12pre}, capturing the 
%prescribed initial data on some hypersurface: 

% \begin{definition}
% \label{met.induced}
% We say that a solution {$K_{ij},A_{ii,j},\text{R}_{0i}(h)$} to the set of equations 
% \eqref{finredEVERic11pre}, \eqref{finredEVERic22pre},  
% \eqref{finredEVERic12pre} 
% \eqref{e0A112it.pre2},
% \eqref{e0A221it.pre2}, \eqref{R01.pre}, \eqref{R02.pre}, 
% in the gauge introduced 
%  above, captures our prescribed initial data $(\Sigma, \g, \K)$ 
%  on some  hypersurface $\Sigma_{r_*}$, as in \eqref{Sigmar*}, provided:
 
% If we define the matrices $\tilde{K}(t,\theta), \tilde{A}(t,\theta)$ 
% on $\Sigma_{r_*}$ 
% via the formulas \eqref{Korthexp.pre}, \eqref{K11exp.pre}, \eqref{K12exp.pre}, 
% \eqref{A112exp.pre}, \eqref{A221exp.pre},
%  then these matrix-valued functions agree with the prescribed 
% initial data 
% $(\Sigma,{\bf g}, {\bf K})$ up to a gauge transformation, as defined in 
% Definition \ref{equiv.metr}.
% \end{definition}

\medskip

 In other words, for any trace of the form \eqref{framerot.pre}
 in our abstract initial data then the value of the component 
 $\tilde{K}_{12}(t,\theta)=[\K(\te_1,\te_2)](t,\theta)$ uniquely specifies the gauge-rotation angle $\varphi(t,\theta)$. 
 Therefore, all other components of the initial data, relative to the frame $\te_1,\te_2, \te_0$, should be expressible in terms of 
 $\tilde{K}_{12}(t,\theta)=[\K(\te_1,\te_2)](t,\theta)$. 

We obtain these relations in the next subsection.

\subsubsection*{Relations between geometric quantities on the initial 
data set.}

Assume that we have 
a pair of matrix-valued functions 
 $[\tilde{K}_{ij}](t,\theta), [\tilde{A}_{ij,l}](t,\theta)$, which 
 matches the prescribed initial data $(\Sigma, \g, \K)$ up to a gauge 
 transformation, 
 in the sense of definition \ref{equiv.metr}.

In particular,
 $[\tilde{K}_{ij}](t,\theta)$ equals 
$[\K^\varphi_{ij}](t,\theta)$ for some (apriori not 
specified) function 
$\varphi(t,\theta)$. 

Then there exists a  \emph{fixed} function 
$F^{11}_{t,\theta}(\cdot):(-\delta,\delta)\to \mathbb{R}$, $\delta>0$, so that:
\beq
\label{G.def}
\tilde{K}_{11}(t,\theta)= \K^\varphi_{11}(t,\theta)= 
F^{11}_{t,\theta}\big{(} \K^\varphi_{12}(t,\theta)\big{)}=F^{11}_{t,\theta}\big(\tilde{K}_{12}(t,\theta)\big).
\eeq
There is also another fixed function 
 $F^{22}_{t,\theta}:(-\delta,\delta)\to\mathbb{R}$,
 so that: 
 \beq
\label{tG.def}
\tilde{K}_{22}(t,\theta)=\K^\varphi_{22}(t,\theta)= 
F^{22}_{t,\theta}\big{(} 
\K^\varphi_{12}(t,\theta)\big{)}=F^{22}_{t,\theta}\big(\tilde{K}_{12}(t,\theta)\big).
\eeq
These functions can  in fact be calculated explicitly, using the formulas 
\eqref{KEvarphi'},\eqref{tilF.act} and the trigonometric identities $\cos^2\varphi=\frac{1}{2}(1+\cos 2\varphi)$, $\sin^2\varphi=\frac{1}{2}(1-\cos2\varphi)$ to find: 
\begin{align}
\label{F11.form}
F^{11}_{t,\theta}\big{(} \tilde{K}_{12}(t,\theta)\big{)}={\frac{1}{2}\bigg[1+\sqrt{1-\frac{4\tilde{K}_{12}^2}{({\bf K}_{22}-{\bf K_{11}})^2}}\bigg]}\K_{11}+
 \frac{1}{2}\bigg[1-\sqrt{1-\frac{4\tilde{K}_{12}^2}{({\bf K}_{22}-{\bf K_{11}})^2}}\bigg]\K_{22},\\
\label{F22.form}
F^{22}_{t,\theta}\big{(} \tilde{K}_{12}(t,\theta)\big{)}=
\frac{1}{2}\bigg[1+\sqrt{1-\frac{4\tilde{K}_{12}^2}{({\bf K}_{22}-{\bf K_{11}})^2}}\bigg]\K_{22}+
 \frac{1}{2}\bigg[1-\sqrt{1-\frac{4\tilde{K}_{12}^2}{({\bf K}_{22}-{\bf K_{11}})^2}}\bigg]\K_{11}.
\end{align}

In fact, the converse is also seen to be true: Assuming that a symmetric 
tensor $\tilde{K}_{ij}(t,\theta)$ satisfies the properties \eqref{F11.form}, \eqref{F22.form},
then it agrees with the prescribed second fundamental form ${\bf K}$ up to a gauge 
transformation, encoded in a function $\varphi(t,\theta)$. Moreover, that 
gauge function $\varphi(t,\theta)$ can be determined from the component 
$\tilde{K}_{12}(t,\theta)$ via the formula \eqref{tilF.act}. 
\medskip

Using formulas \eqref{AE112varphi.pre}, \eqref{AE221varphi.pre} and \eqref{tilF.act}
 we then observe that $\A^\varphi_{11,2}$, $\A^\varphi_{22,1}$ can also be expressed 
 in terms of $\tilde{K}_{12}$ via explicit formulas. 

Thus, if a pair $\tilde{A}_{11,2}, \tilde{A}_{22,1}$
arises via \eqref{AEiijvarphi'} from the background frame 
via  
a rotation by $\varphi(t,\theta)$ (and $\varphi(t,\theta)$ is given from 
$\tilde{K}_{12}(t,\theta)$ via \eqref{tilF.act}), then  
$\tilde{A}_{11,2}, \tilde{A}_{22,1}$ can also be expressed via the formulas: 

\beq
\label{tAs.def}
\begin{split}
&\tilde{A}_{11,2}= 
{\bigg(1-\frac{4\tilde{K}_{12}^2}{({\bf K}_{22}-{\bf K_{11}})^2}\bigg)^{-\frac{1}{2}}}\te_1(\frac{\tilde{K}_{12}}{\K_{22}-\K_{11}})
+F_1(\frac{\tilde{K}_{12}}{\K_{22}-\K_{11}}),
\\&\tilde{A}_{22,1}=\bigg(1-\frac{4\tilde{K}_{12}^2}{({\bf K}_{22}-{\bf K_{11}})^2}\bigg)^{-\frac{1}{2}}
 \te_2(\frac{\tilde{K}_{12}}{\K_{22}-\K_{11}})
+F_2(\frac{\tilde{K}_{12}}{\K_{22}-\K_{11}}).
\end{split}
\eeq
Here the functions $F_1, F_2{:(-\delta,\delta)\to\mathbb{R}}$ are explicit smooth functions that depend only on the 
prescribed 
initial data:
\begin{align}
\label{F1.form}
F_1(x)
=&\,{\frac{1}{2}\sqrt{1+\sqrt{1-4x^2}}}
\A_{11,2}-{\frac{{\rm sign}(x)}{2}\sqrt{1-\sqrt{1-4x^2}}}\A_{22,1},\\
\label{F2.form}
F_2(x)
=&\,\frac{1}{2}\sqrt{1+\sqrt{1-4x^2}}
\A_{22,1}+\frac{{\rm sign}(x)}{2}\sqrt{1-\sqrt{1-4x^2}}\A_{11,2}.
\end{align}

We also note that the vector fields  $\te_1, \te_2$ on the hypersurface $\Sigma_{r_*}$ can also be expressed in terms of the fixed background coordinates 
$\partial_t,\partial_\theta$, with coefficients that are determined by the value of 
$\tilde{K}_{12}$. 

This follows merely {from our choice \eqref{basic.frame} of the initial abstract frame $E_1,E_2$}, along with the formula \eqref{framerot.pre}:
\begin{align}\label{L.rel}
\begin{split}
&\te_1=E_1^\varphi=\cos\varphi (\g_{tt})^{-1/2}\partial_t +\sin\varphi
 (\g_{\theta\theta})^{-1/2}\partial_\theta,\\
&\te_2=E_2^\varphi= \cos\varphi(\g_{\theta\theta})^{-1/2}\partial_\theta-\sin\varphi
(\g_{tt})^{-1/2} \partial_t, 
\end{split}
\end{align}
which after replacing $\varphi$ in favour of $\tilde{K}_{12}$, via \eqref{tilF.act}, yields the formulas:
   \beq
   \begin{split}
   \label{frames.coords}
& \te_1={\frac{1}{2}\sqrt{1+\sqrt{1-4x^2}}}
(\g_{tt})^{-1/2} \partial_t+ {\frac{{\rm sign}(x)}{2}\sqrt{1-\sqrt{1-4x^2}}}
(\g_{\theta\theta})^{-1/2}\partial_\theta,
\\
& \te_2= \frac{1}{2}\sqrt{1+\sqrt{1-4x^2}}
(\g_{\theta\theta})^{-1/2}\partial_\theta-\frac{{\rm sign}(x)}{2}\sqrt{1-\sqrt{1-4x^2}}
(\g_{tt})^{-1/2} \partial_t,
 \end{split}
   \eeq
where $x=\tilde{K}_{12}({\bf K}_{22}-{\bf K}_{11})^{-1}$.

For future reference, let us 
note here that  the values of 
${a_{ti},a_{\theta i}}$ defined in \eqref{tthetatransebar.pre}
on 
$\Sigma_{r_*}$
are precisely determined from the coefficients that appear in \eqref{frames.coords}
(see also  \eqref{te1m.tK12}{-\eqref{te1m.tK12inv}}).

 \beq
   \begin{split}
   \label{a.init}
& a_{t1}(r_*(t,\theta), t,\theta)= \frac{1}{2}\sqrt{1+\sqrt{1-4(\frac{\tilde{K}_{12}}{{\bf K}_{22}-{\bf K}_{11}})^2}}(\g_{tt})^{1/2},
\\&a_{t2}(r_*(t,\theta), t,\theta)=-\frac{{\rm sign}(\frac{\tilde{K}_{12}}{{\bf K}_{22}-{\bf K}_{11}})}{2}
\sqrt{1-\sqrt{1-4(\frac{\tilde{K}_{12}}{{\bf K}_{22}-{\bf K}_{11}})^2}}(\g_{tt})^{1/2},
\\&  a_{\theta 2}(r_*(t,\theta), t,\theta)= \frac{1}{2}\sqrt{1+\sqrt{1-4(\frac{\tilde{K}_{12}}{{\bf K}_{22}-{\bf K}_{11}})^2}}(\g_{\theta\theta})^{1/2},
\\& a_{\theta 1}(r_*(t,\theta), t,\theta)=\frac{{\rm sign}(\frac{\tilde{K}_{12}}{{\bf K}_{22}-{\bf K}_{11}})}{2}\sqrt{1-
\sqrt{1-4(\frac{\tilde{K}_{12}}{{\bf K}_{22}-{\bf K}_{11}})^2}} (\g_{\theta\theta})^{1/2}. 
 \end{split}
   \eeq

%In particular, given the formulas \eqref{a.init}, and the ODEs \eqref{e0.a.pre} 
%the space-time metric can be reconstructed from the initial data for these parameters, 
%invoking \eqref{g.from.a.pre}. In other words, we have obtained a closed system of equations: 

This leads to a notion of solutions to the system of the equations 
\eqref{finredEVERic11pre}, \eqref{finredEVERic22pre}, 
\eqref{finredEVERic12pre}, and \eqref{e0.a.pre}
 capturing the 
prescribed initial data on some hypersurface:

\subsubsection{The system for the initial data}

 \begin{definition}
 \label{met.induced}
 We say that a solution 
 {$K_{12}(r,t,\theta), K_{22}(r,t,\theta), K_{11}(r,t,\theta)$} to the set of equations 
 \eqref{finredEVERic22pre},  
 \eqref{finredEVERic12pre}  \eqref{finredEVERic11pre},
\eqref{e0.a.pre}
% \eqref{e0A112it.pre2},
 %\eqref{e0A221it.pre2}, \eqref{R01.pre}, \eqref{R02.pre}, 
 in the gauge introduced 
  above, captures our prescribed initial data $(\Sigma, \g, \K)$ 
  on some  hypersurface $\Sigma_{r_*}$, as in \eqref{Sigmar*}, provided:
 
In addition to the function $r_*(t,\theta)$ there exists a function 
$\tilde{K}_{12}(t,\theta) $  
 so that:
 
 \begin{itemize}
 \item  if we define the functions
  $\tilde{K}_{22}(t,\theta), \tilde{A}_{22,1}(t,\theta)$, 
  $\tilde{A}_{11,2}(t,\theta)$ 
 on $\Sigma_{r_*}$ 
via the formulas \eqref{G.def}, \eqref{tG.def}, 
\eqref{tAs.def},  then  on $\Sigma_{r_*}$ 
 the formulas \eqref{Korthexp.pre},  \eqref{K12exp.pre}, \eqref{K11exp.pre}
hold, 

\item The coefficients $a_{Ai}(r_*(t,\theta),t,\theta), A=t,\theta$, $i=1,2$  
\emph{on} $\Sigma_{r_*}$ satisfy the relations \eqref{a.init}. 
\end{itemize}
\end{definition}
The above requirements ensure that the first and second fundamental forms induced by 
our solution to the system  \eqref{finredEVERic22pre},  
 \eqref{finredEVERic12pre}  \eqref{finredEVERic11pre},
\eqref{e0.a.pre} onto $\Sigma_{r_*}$ agree with the prescribed 
 initial data 
 $(\Sigma,{\bf g}, {\bf K})$ up to a gauge transformation, as defined in 
 Definition \ref{equiv.metr}. The gauge transformation is captured precisely in $\tilde{K}_{12}(t,\theta)$, via \eqref{tilF.act}.
\medskip

\subsection{The reduced Einstein equations in geodesic gauge, normalized at the singularity. } 
\label{sec:system} 

 What we have studied so far is 
 a solution $g$ of the EVE under polarized axial symmetry with abstract initial conditions 
$(\Sigma, {\bf g}, {\bf K})$, expressed in a special geodesic gauge. This gauge exists provided the 
space-time 
 admits a non-singular congruence of time-like 
geodesics, which
 emanate from the singularity at $\{r=0\}$, are normal to the collapsing direction 
 $e_2$ on the singularity 
 (in the sense that $e_2(r)=0$), and normal to the hypersurface $\Sigma_{r_*}$
 (on which the abstract initial data live)  in the direction $e_1\perp{\rm Span}\langle e_2, e_0\rangle$. 

Such a space-time 
 yields a solution to the equation 
 \[
 \square_g \gamma=0, 
 \]
 along with a system of 
 transport equations 
 in the connection and coordinates-to-frame  
parameters $$K_{22}, K_{11}, K_{12}, a_{\theta 1}, a_{\theta 2}, a_{t1}, a_{t2}$$
 which are 
functions in $r,t,\theta$. These functions satisfy initial conditions either 
 at $r=0$ ($K_{12}, K_{22}$ satisfy conditions there), 
 or at $r=r_*(t,\theta)$ (all the rest of the parameters satisfy conditions there
  in terms of 
 $\te_2(r_*),  \tilde{K}_{12}$). The initial data  at $\{r=r_*(t,\theta)\}$ for $\gamma$ are given by $\frac{1}{2}\log ({\bf g}_{\phi\phi}),{\bf K}_{33}$.\\ 
 On the other hand,
 the initial data  at $\{r=r_*(t,\theta)\}$ {for the variables $K_{11}, a_{Ai}$, satisfying the evolution equations} \eqref{finredEVERic11pre}, 
\eqref{e0.a.pre}, {are prescribed via the relations
\eqref{K11exp.pre}, \eqref{a.init},  through the explicit formulas \eqref{G.def}, \eqref{F11.form}, \eqref{a.init} (replacing $\varphi$ by $\tilde{K}_{12}$ via \eqref{tilF.act}).}

Finally, the equations that determine the values of $r_*(t,\theta)$ 
(that defines the 
hypersurface 
$\Sigma_{r_*}$ 
on which the initial data are induced) and of 
$\tilde{K}_{12}(t,\theta)$ 
(which determines the gauge parameter $\varphi(t,\theta)$ on 
$\Sigma_{r_*}$) are
 \eqref{Korthexp.pre}, \eqref{K12exp.pre}, coupled to \eqref{tG.def}--the latter being coupled to  \eqref{F22.form}. 
 The initial data for $K_{22}, K_{12}$
 have been fixed 
at $r=0$, thus, the values of $K_{22}(r,t,\theta), K_{12}(r,t,\theta)$
are in principle determined by $\gamma$ alone, via the Riccati
 equations \eqref{finredEVERic22pre}, \eqref{finredEVERic12pre}.
Therefore, the system of equations \eqref{Korthexp.pre}, \eqref{K12exp.pre}, with these 
substitutions of terms, becomes a 2x2 system on the unknowns 
$r_*(t,\theta), \tilde{K}_{12}(t,\theta)$, if we could treat the RHSs of the Riccati 
equations \eqref{finredEVERic22pre}, \eqref{finredEVERic12pre} as ``given''.  

For any solution of this 2x2 system in the (2+1)-metric $h$ to yield a smooth 
hypersurface in the (3+1)-dimensional picture we note that the condition $\te_2(r_*)=0$ 
must be imposed at the poles $\theta=0,\theta=\pi$.

Furthermore, if  this 2x2 
system could be solved separately (equivalently, if the
 functions $r_*(t,\theta), \tilde{K}_{12}(t,\theta)$ were known to us), then, {as we discussed in the second paragraph above},
 the  values of 
$\tilde{K}_{11}, \tilde{A}_{11,2}, \tilde{A}_{22,1}$  
on  
the hypersurface $\Sigma_{r_*}$, are determined from $\tilde{K}_{12}$ 
on $\Sigma_{r_*}$. In turn, these values, together with $\te_2(r_*)(t,\theta)$,
 determine $K_{11}, a_{\theta 1}, a_{\theta 2}, a_{t 1}, a_{t2}$ 
on $\Sigma_{r_*}$.

\begin{remark} 
\label{K22.gauge.0}
We note that $\lim_{r\to 0^+} {K}_{22}(r,t,\theta)$ has \emph{not} been 
prescribed an initial value,
as opposed to $K_{12}$ which has been prescribed an asymptotic expansion  
at 
$r=0$ via  
\eqref{distinctK}, and $\tilde{K}_{11}$ which has been prescribed one on 
the 
hypersurface $\Sigma_{r_*}$, via the requirement \eqref{F11.form}. 
However, we \emph{are} requiring that $K_{22}(r,t,\theta)$ be smooth all 
the way to 
$r=0$.
\emph{This} is the prescription of data on $K_{22}(r,t,\theta)$ at the 
singularity; 
in fact, from the point of view of solving the 
Riccati equation \eqref{finredEVERic22pre}
 forward in time, there is a \emph{unique} (but 
implicitly defined) initial datum for $K_{22}$ at $r=r_*$, from which the 
solution to 
that 
equation does not blow up prior to $r=0$. \end{remark}  
  
It follows readily that a solution of the system \eqref{redEVEwav}
\eqref{finredEVERic11pre}, \eqref{finredEVERic22pre}, 
\eqref{finredEVERic22pre}, \eqref{e0.a.pre}, that also satisfies 
the conditions  \eqref{Korthexp.pre}, \eqref{K11exp.pre}, 
\eqref{K12exp.pre}, \eqref{G.def}, \eqref{tG.def}, \eqref{a.init}
that involve the additional functions 
$r_*(t,\theta), \tilde{K}_{12}(t,\theta)$ 
 gives rise to a 
(unique) axially symmetric solution of the EVE with the prescribed initial data; this is shown in the Appendix, \S\ref{retrEVE} . In  
addition, the existence of such a solution shows that a 
smooth congruence of time-like geodesics, which terminates at the singularity $r=0$, 
exists. 
 This is the system we will study in this paper. Proving an existence result 
for this system will prove Theorem \ref{thm_strict},
in the geodesic-normalized gauge we have imposed.

In sum,  the initial value problem for the EVE, under polarized axial symmetry, has been reduced
 to the  system of equations 
\eqref{redEVEwav}, \eqref{finredEVERic11pre}, \eqref{finredEVERic22pre}, 
\eqref{finredEVERic12pre}, \eqref{e0.a.pre},
\eqref{K11exp.pre}, \eqref{K12exp.pre}, %\eqref{A112exp.pre}, \eqref{A221exp.pre}, 
 \eqref{F11.form}, \eqref{F22.form}, \eqref{tAs.def},
 \eqref{frames.coords}.

We call this system in the 
unknowns 
$\gamma, K_{22}, K_{12}, K_{11}, a_{\theta 1}, a_{\theta 2}, a_{t1}, a_{t2}$
(which depend on $r,t,\theta$) and 
$r_*(t,\theta), \tilde{K}_{12}(t,\theta)$ (which depend only on 
$t,\theta$),
the ``reduced Einstein vacuum equations in the singularity-normalized geodesic 
gauge''. 
We refer to it {as} REVESNGG. 
\medskip

We end this discussion with a key remark: 
Due to the highly anisotropic nature of the singularity, 
it will be necessary for our analysis to express the space-time metric 
relative to a \emph{new} coordinate system $r,T,\Theta$ instead of the system 
$r,t,\theta$ constructed above.  In particular we will be 
constructing  a new coordinate $T=T(t,\theta)$ and preserving the old coordinate 
$\theta$, 
so $\Theta=\theta$. We will then be expressing the metric $g$ with respect to the 
\emph{new} system of coordinates $\{r,T,\Theta,\phi\}$ as opposed to the old one 
$\{r,t,\theta,\phi\}$. The frame $(e_0, e_1, e_2)$ will \emph{still be the same}. 
However in view of the change of coordinates, the coordinate-to-frame
scalars $a_{Ti}, a_{\Theta i}$, $i=1,2$ will now \emph{change}, as will the expression 
of the space-time metric $g$ with respect to the new coordinates.

This, however should be seen as a gauge transformation of our REVESNGG system; in 
particular the new system of equations thus obtained is manifestly \emph{equivalent} to 
the original system.
The reason this change of gauge is performed is to  allow for the optimal 
estimates for the free wave to be derived; this requires the suitable \emph{adaptation} 
of one of the coordinate vector fields to the direction of collapse \emph{at} the 
singularity. The coordinates $T,\Theta$ achieve an \emph{alignment} 
of $\partial_\Theta$ with $e_2$ \emph{at} the singularity.

Now, in addition to the new coordinate $T(t,\theta)$
certain other parameters (notably the scalar valued functions  $e_1(r), e_2(r)$)
will enter our analysis below. However these parameters are readily solved for in terms 
of the ``main variables'' in the REVESNGG system; in this sense they are of secondary 
importance and are not recorded along with the main variables.

\subsection{The theorem re-cast in terms of the REVESNGG.}

Theorem \ref{thm_strict} refers to metric quantities expressed in 
terms of a coordinate system; in particular, it refers to a system of 
coordinates $T,\Theta, r, \phi$. 

Here, we present our theorem in terms of the connection coefficients and 
coordinate-to-frame components of the REVESNGG system. 
This is the result we show in the bulk of this paper. 
We 
show in \S\ref{subsec:REVESNGGimpl} in the appendix, how the next 
formulation implies 
our 
original Theorem \ref{thm_strict}. 

\begin{theorem}
\label{thm_REVESNGG}
Consider  polarized and axi-symmetric 
initial data $(\g, \K)$ which are perturbations of the Schwarzschild data 
$({\bf g}_{\rm S}, {\bf K}_{\rm S})$ at $r=\e$, in the sense that assumptions presented 
in Theorem \ref{thm_strict} hold. 

 Then there exists a coordinate function $T(t,\theta)$, and a coordinate
  $\Theta=\theta$ so that  the REVESNGG system (where the coordinate-to-frame components $a_{Ai}$ are defined with respect to these coordinates)  has 
 a unique solution 
 \[
\gamma(r,t,\theta), K_{ij}(r,t,\theta), a_{Ai}(r,t,\theta), i,j=1,2, A=T,\Theta
\]
and $r_*(t,\theta), \tilde{K}_{12}(t,\theta)$.
These variables satisfy the bounds presented in subsection 
\ref{sec_REVESNGG} below. 

This solution uniquely determines 
the expression for the vector fields ${e_1,e_2}$
 in terms of the coordinate vector fields 
 $\partial_r, \partial_T,\partial_{\Theta}$ via the parameters
  $a_{Ai}(\rho,t,\theta)$, $i=1,2$, $A=T,\Theta$; 
for $r\le \e/2$ these are defined by formulas {\eqref{eibar},
 \eqref{tthetatransebar}}.\footnote{With the index $m-1$ suppressed.}

\end{theorem}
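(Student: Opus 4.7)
\subsection*{Proof proposal for Theorem \ref{thm_REVESNGG}}

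The plan is to construct the solution via a forwards-backwards Picard iteration producing a sequence $(\gamma^m, K_{ij}^m, a_{Ai}^m, r_*^m, \tilde{K}_{12}^m)$, rather than via a bootstrap argument, in order to circumvent the linear instability of the differentiated Riccati equations \eqref{finredEVERic22pre.diff}--\eqref{finredEVERic12pre.diff}. At step $m$, given $(\gamma^{m-1}, K_{ij}^{m-1}, a_{Ai}^{m-1})$ and the coordinate function $T^{m-1}(t,\theta)$ from the previous iterate, we first solve the linear wave equation $\Box_{g^{m-1}}\gamma^m = 0$ \emph{forward} from $\Sigma_{r_*^{m-1}}$ toward $\{r=0\}$ with the prescribed Cauchy data $(\gamma_{\rm init},\partial_n\gamma_{\rm init})$; next we solve the Riccati equations \eqref{finredEVERic22pre}, \eqref{finredEVERic12pre} for $K_{22}^m, K_{12}^m$ \emph{backward} from the singularity, selecting precisely the branch with the leading asymptotics \eqref{zero.asympts} and enforcing that the analogous ``singular branch'' for the differentiated unknowns $\partial K_{22}^m$, $\partial K_{12}^m$ is set to zero; finally we solve \eqref{finredEVERic11pre} and \eqref{e0.a.pre} for $K_{11}^m, a_{Ai}^m$ forward from $\Sigma_{r_*^m}$, with the matching data \eqref{K11exp.pre}, \eqref{a.init} determined by the pair $(r_*^m, \tilde{K}_{12}^m)$ obtained from solving the $2\times2$ matching system \eqref{Korthexp.pre},\eqref{K12exp.pre},\eqref{tG.def}.

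The key bounds controlling this iteration are derived in two quite different function spaces. For the wave $\gamma^m$, I would use energy currents built from weighted multipliers of the affine vector field $e_0\parallel \partial_r$, together with commutators carefully chosen to commute with $e_0$ (so that the asymptotically CMC property $\mathrm{tr}_g K \sim -\tfrac{3\sqrt{2M}}{2}\,r^{-3/2}$ gives the crucial cancellation in the bulk terms of the energy identity). The resulting estimates are optimal at low orders (fully consistent with \eqref{cl}) but necessarily degrade at higher orders; this is reconciled by a weights-descent scheme, in which very singular bounds at the top $s$ derivatives are combined with the AVTD relation \eqref{AVTD1} (specifically the relative smallness of $e_1\gamma,e_2\gamma$ compared to $e_0\gamma$) to bootstrap improved bounds on lower derivatives. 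Handling the wave on this singular background is where the ``asymptotically CMC for free'' phenomenon of the geodesic gauge is essential: without it the linear wave energy itself would grow exponentially rather than polynomially in $r^{-1}$.

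For the connection coefficients I would exploit the ODE structure in $e_0$ using Fuchsian-type analysis of the Riccati system near $r=0$. The undifferentiated $K_{ij}^m$ are controlled by a backward solution: the unique smooth solution of \eqref{finredEVERic22pre},\eqref{finredEVERic12pre} matching \eqref{zero.asympts} and \eqref{distinctK} is produced by an integral reformulation with kernel $r^{-d_2(t,\theta)\sqrt{2M}}$, whose forcing term is controlled by the wave estimates for $\gamma^m$. Commuting the Riccati system with $\partial_t,\partial_\theta$ gives the linear inhomogeneous equations \eqref{finredEVERic11pre.diff}--\eqref{finredEVERic12pre.diff}, whose homogeneous solution contains the dangerous branch $r^{2d_2(t,\theta)-\alpha(t,\theta)}$; by specifying the solution backward from $r=0$ with this branch set to zero, the linear instability at the level of derivatives is removed. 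Bounding the resulting particular solution in $L^\infty_{t,\theta}$ and $H^k_{t,\theta}$ norms requires carefully tracking the balance between gain from the AVTD weights and loss from the Riccati non-linearities, which is the main source of the ``low'' threshold appearing in the theorem.

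The hardest step, I expect, will be the closure of the matching procedure that determines $(r_*^m,\tilde{K}_{12}^m)$. The system \eqref{Korthexp.pre},\eqref{K12exp.pre} coupled with \eqref{tG.def}, \eqref{F11.form}, \eqref{F22.form} is a $2\times 2$ nonlinear system on $(r_*,\tilde{K}_{12})$ whose coefficients involve $K_{22}^m,K_{12}^m$ evaluated \emph{on the unknown hypersurface} $\Sigma_{r_*^m}$, which themselves were solved starting from the singularity and hence depend, in a non-local way through the Riccati flow, on $\gamma^m$. Showing that this system is solvable (and its solution close to the Schwarzschild values $r_*\equiv\epsilon$, $\tilde{K}_{12}\equiv0$) requires a quantitative implicit function theorem whose invertibility is ultimately pinned to the separation $\K_{22}-\K_{11}\neq 0$ in \eqref{F11.form}--\eqref{F22.form} and the tangency condition $\tilde{e}_2(r_*)=0$ at the poles; the latter being needed for the resulting $(3+1)$-metric to be smooth across $\theta=0,\pi$. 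Once the step-$m$ estimates close in a suitable norm where the various parameters satisfy uniform $\epsilon,\eta$-smallness bounds, contraction of the iteration and hence existence and uniqueness follows by standard arguments. Uniqueness in the REVESNGG form is equivalent to our gauge-fixing (smoothness of the congruence through $r=0$, \eqref{distinctK}, \eqref{Ksigns}, $e_2(r)=0$, $e_1\in T\Sigma_{r_*}$), and the asymptotics claimed in subsection \ref{sec_REVESNGG} follow directly from the Fuchsian analysis combined with the lower-order descent estimates for $\gamma$.
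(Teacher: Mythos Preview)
Your proposal is essentially correct and follows the paper's architecture closely: the forwards–backwards iteration, the weighted $e_0$-multiplier energy estimates exploiting the asymptotically CMC cancellation, the Fuchsian backward solution for $K_{22}^m, K_{12}^m$ with the singular branch of the differentiated equations set to zero, the weights-descent scheme from top to low orders, and the order of operations for the matching all match what the paper does.

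The one place where the paper needs more than you anticipate is the $2\times 2$ matching system for $(r_*^m,\tilde{K}_{12}^m)$. A direct quantitative implicit function theorem does not apply because \eqref{key.link2'} contains the term $\tilde{e}_2^m(\delta r_*^m)\cdot \tilde{e}_2^m\tilde{K}_{12}^m$, which is first-order in $\tilde{K}_{12}^m$ in an equation that is otherwise zeroth order in that unknown; combined with the second-order $\tilde{e}_2^m\tilde{e}_2^m r_*^m$ in \eqref{key.link2} and the boundary conditions at the poles, the system is not in a standard elliptic or algebraic form. The paper resolves this by passing to a weak formulation on $[0,\pi]\times\mathbb{R}$ with volume form $d\theta\,dt$ (integrating by parts one $\tilde{e}_2^m$ from each problematic term onto test functions, which simultaneously encodes the pole condition $\partial_\theta r_*^m=0$), adding a viscosity term $-\zeta\Delta$ to make the linearization coercive via Lax--Milgram, obtaining uniform-in-$\zeta$ bounds in $H^{\mathrm{low}-1}$, and then passing to the limit $\zeta\to 0$. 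Your instinct that invertibility is pinned to $\K_{22}-\K_{11}\neq 0$ is correct, but the mechanism is this weak-plus-viscosity route rather than a classical IFT.
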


In all these estimates the parameters that appear are 
$s, C, c, \eta, \e$. The constant $\e>0$ determines the 
hypersurface $\{r=\e\}$ \emph{in Schwarzschild}, 
over which we consider the (re-normalized) 
perturbation of the Schwarzschild data. 
$\eta>0$ captures the (post-renormalization) closeness of our initial 
data to 
that of the Schwarzschild background.  $s\in\mathbb{N}$ denotes the 
(Sobolev space) 
order  at which we measure the initial data and our solution. 
$c>0$ captures the order ${\rm low}:=s-3-4c$ at which we provide 
\emph{optimal} 
estimates for our key parameters $\gamma, K_{ij}, a_{Ai}$ that are fully in agreement 
with the claim of Theorem \ref{thm_strict}. The constant $C>1$ 
 captures the \emph{growth multiple} of the (renormalized) 
norms of the evolution parameters $\gamma, K$ between the initial data and the 
final singularity at $r=0$. The parameters $\eta,\e$ must satisfy certain 
smallness conditions which we list out in detail in  
\S\ref{sec_csts_params}. Here we highlight that $C\cdot \eta>0$ must be 
small enough to ensure that the explicit function $d_2(\alpha)$, {defined in \eqref{d1.d2}}, is
bounded {in absolute value} by $1+\frac{1}{8}$ for $\alpha=1+C\cdot \eta$. Moreover, 
$\e>0$ must satisfy {the inequality} $\e<(\frac{C\eta}{{2}B})^4$. The full set of bounds we impose on our parameters is speled out in 
the subsection ``Key constants'' below.

\section{The Iteration scheme. }
\label{suc.iter}

\subsection{Overview.}

Our method is to solve the system REVESNGG using 
 an iteration scheme rather than treating it as a  coupled system 
 directly. In particular, we produce a sequence of metrics $g^m$ 
 in the form \eqref{gm.form}  and these 
  will converge, as the parameter $m\to\infty$, 
to a solution of the system REVESNGG. Thus, we obtain a solution of the EVE 
only in the limit $m\to\infty$. (Note in particular that the individual metrics $g^m$ do \emph{not} solve the vacuum Einstein equations).  

Let us spell out a few features of the iterated metrics: 

We set $\gamma^0=\gamma_{\rm S}$ (the value of the function in Schwarzschild), and $h^0=h_{\rm S}$ 
(the value of the metric in Schwarzschild), then the subsequent iterates 
$\gamma^m, h^m$, $m\ge1$,  define  a sequence of space-time metrics

\beq
\label{gm.form}
g^m= e^{2\gamma^m}d\phi^2+h^m(r,t,\theta).
\eeq
These solve a recursive system that we discuss next. 
\begin{itemize}
\item $\gamma^m$ is solved-for first at each step of the iteration. It is 
required to solve the (linear) free wave equation \eqref{finredEVEwavit},
relative to the \emph{previous} metric in the iteration. The 
(abstractly prescribed) 
initial data for $\gamma^m$ live on a hypersurface $\Sigma_{r^{m-1}_*}$, 
defined at the previous step in the iteration.

\item The geometry of the (2+1)-metrics $^mh$ is encoded in a suitable orthonormal frame 
$e_0, e_1^m, e_2^m$. This frame is expressible in terms of fixed background coordinates
 $(r,t,\theta)$, as specified by formulas \eqref{tthetatransebar}  below. 
 $e_1^m$ is required to be tangent to the hypersurface 
 $\Sigma_{{r^m_*}}:=\{r=r^m_*(t,\theta)\}$,
  on which the initial data will live.  $e_2^m$ is required to satisfy
\beq  \label{e2r.van.it}
   e_2^m(r)=o(r^{-\frac{1}{2}+d_2^m(t,\theta)}). 
   \eeq
      (See the 
  discussion further down {in subsection \ref{subsec:rstarm}} on how $r^m_*(t,\theta)$ is to 
  be determined).

\item In addition to the metric component $\gamma^m$, the ``remaining'' parts $h^m$ of 
the metric  $g^m$ are encoded in the independent connection and coordinate-to-frame
 coefficients of \emph{this} frame. 
These connection coefficients  (that we will 
solve for) are 
\beq K^m_{11}(r,t,\theta), K^m_{22}(r,t,\theta), K^m_{12}(r,t,\theta).
\eeq
The coordinate-to-frame coefficients are $a^m_{Ai}$, $A=t,\theta$ and $i=1,2$.
In fact we will be solving for an equivalent system for coefficients 
 $a^m_{Ai}$, $A=T,\Theta$, where $\Theta,T$ are coordinate vector fields constructed out of the old coordinates by an explicit transformation.  

\item The connection coefficients $K^m_{ij}$ solve first order ODEs 
 which are an iterated version of the propagation equations \eqref{finredEVERic11pre}, \eqref{finredEVERic22pre},
\eqref{finredEVERic12pre};  all these evolution equations involve forcing terms in their 
RHSs that contain  covariant derivatives for the
just-solved-for $\gamma^m$, evaluated against the \emph{previous} metric $h^{m-1}$ and its associated previous frame.   
In particular, these connection coefficients are
scalar-valued functions over the coordinates  $\{r,t,\theta\}$. The restriction we 
impose on 
these is that $K^m_{12}=o(r^{2d^m_2(t,\theta)-\alpha^m(t,\theta)})$ 
 as $r\to 0$, for all $t,\theta$, and that 
both $K^m_{22}, K^m_{11}$ remain smooth until $r=0$. (This is in fact a gauge
 normalization on $K^m_{22}$, as discussed in the previous section). 

\item The coordinate-to-frame coefficients $a^m_{Ai}$ 
themselves solve first order ODEs, 
with coefficients depending on the just-solved-for connection coefficients $K^m_{ij}$. 

%\item The Ricci curvature components of $h^m$ are 
% given by either explicit formulas in terms 
%of the
%function $\gamma^m$, (see equations \eqref{itR12}, \eqref{itR00}, \eqref{itR112}
% \eqref{itR22} below), or (for the two Ricci 
%curvature components 
%$\text{R}_{01}(h^m), \text{R}_{02}(h^m)$)  are determined by ODEs of the form \eqref{R01mh}, 
%\eqref{R02mh}. 
%\medskip

Thus, in particular, we 
 de-couple the free wave $\gamma$ from the $(2+1)$-metric $h$ 
by performing an iteration, to find a sequence of free waves and 
$(2+1)$-metrics, 
$(\gamma^m, h^m)$ (which are indexed by a parameter $m\in\mathbb{N}$). 
We note that
the equations described above are all evolution equations, yet we have 
not prescribed 
initial data for these parameters $K^m_{11},
 a^m_{\theta 1}, a^m_{\theta 2}, a^m_{t1}, a^m_{t2}$, 
 nor prescribed the hypersurface where this initial 
data are to
be induced. 
\medskip

To determine the solution one needs to prescribe the initial data 
$(\Sigma, \g, \K)$ for these variables 
somewhere.  
In particular, the prescribed initial data $(\Sigma, \g, \K)$ 
are to live on an $m$-dependent hypersurface 
$\Sigma_{r^m_*}=\{ r=r^m_*(t,\theta)\}$, for some function 
$r^m_*(t,\theta)$ that is to be 
solved-for.  
The equations that prescribe the function $r^m_*(t,\theta)$ are coupled to a special 
 component $\tilde{K}^m_{12}(t,\theta)= \tilde{K}^m_{12}(r^m_*(t,\theta),t,\theta)= [\tilde{K}^m(\te^m_1,\te^m_2)](r^m_*(t,\theta),t,\theta)$ of the second fundamental form 
 of that surface. This then yields a coupled 2x2 system in the unknowns 
 ${r^m_*}(t,\theta), \tilde{K}^m_{12}(t,\theta)$, which determines 
 both these 
 parameters. 
 More precisely:  
 \medskip

\item The prescribed initial data $(\Sigma,\g,\K)$ are induced on a to-be-determined 
hypersurface 
\beq
\label{Sigma.r*m}
\Sigma_{r^m_*}:=\{r=r^m_*(t,\theta)\}.
\eeq
``Induced'' here means the following: 
The frame element $e^m_1$ is tangent to $\Sigma_{r^m_*}$; and then if one 
considers the 
rotation $\{\te^m_0,e^m_1,\te^m_2\}$ of the frame 
$\{e_0, e^m_1, e^m_2\}$,
which makes $\te^m_2$ tangent to $\Sigma_{r^m_*}$ 
 and 
specifies connection
coefficients $\tilde{K}^m$ on this rotated frame, according to 
the formulas \eqref{Korthexp}, \eqref{K11exp}, \eqref{K12exp}, 
  below, as well as the  coordniate-to-frame coefficients $a^m_{Ai}$
then the so-defined $\tilde{K}^m_{ij}, a^m_{Ai}$ solve 
the requirements \eqref{tAm.def}, \eqref{F22.F11} (which are extrapolations of \eqref{G.def}, \eqref{tG.def}, {\eqref{tAs.def}} in the coupled case).
Thus, both $\tilde{K}^m_{ij}(t,\theta)$ and $a^m_{Ai}(t,\theta)$
correspond to the prescribed initial data $(\g,\K)$ 
up to 
a gauge transformation (as described in Definition \ref{equiv.metr} 
above). 

\item The initial data for the connection coefficient
$K^m_{11}$, as well as for 
$a^m_{t1}, a^m_{t2}, a^m_{\theta 1}, a^m_{\theta 2}$, 
are determined on 
the just-solved-for hypersurface $\Sigma_{{r^m_*}}$, in terms of the 
parameters
$\tilde{K}^m_{12}(t,\theta),$ and $\te^m_2(r^m_*)$ and the 
abstract 
initial data $(\Sigma, \g, \K)$ by precisely the (iterated analogues of) 
formulas 
\eqref{K11exp.pre}, \eqref{a.init}, where now 
$r^m_*(t,\theta), \tilde{K}^m_{12}(t,\theta)$ have
just been solved for.

\end{itemize}

In particular, 
the parameters that we will be solving for at each step  in the iteration 
are functions of $r,t,\theta$, and some functions of $t,\theta$. The 
former are: 
\begin{align}\label{evolparam}
\gamma^m, K^m_{11}, K^m_{22}, K^m_{12}, a^m_{Ai}, A=t,\theta, i=1,2, 
\end{align}
while the latter are  the function $r_*^m(t,\theta)$ that determines the 
hypersurface $\Sigma_{r^m_*}$, on which the initial data will be induced, 
as well as 
 the function $\tilde{K}^m_{12}(t,\theta)$ on $\Sigma_{r^m_*}$. The 
 latter encodes the 
 (gauge)
  choice of the orthonormal frame
 $\te_1^m, \te_2^m$
on the initial data hypersurface. 
\medskip

We list out the equations that govern the evolutions of the parameters  
{in \eqref{evolparam}} that depend on $r,t,\theta$. 
These  will be
 \eqref{finredEVEwavit},  \eqref{finredEVERic11it}, \eqref{finredEVERic22it}, 
 \eqref{finredEVERic12it},
\eqref{e0.a.new}. 
Notably, the equation {\eqref{finredEVEwavit}} of $\gamma^m$ is a free wave equation, which we call the free wave part 
of the system. 
The equations \eqref{finredEVERic11it}, \eqref{finredEVERic22it}, \eqref{finredEVERic12it}, \eqref{e0.a.new}
 on the time-like connection 
coefficients and (spatial) frame-to-coordinate components of $h^m$ 
are 1st order ODEs. 

\begin{definition}
We call the set of first three equations (which is de-coupled from the 
remaining ones) the Riccati part of the system;
we note that they are non-linear first order ODEs. 
Equations \eqref{e0.a.new} are linear first-order ODEs. 
We call these the spatial components part of the 
system. 
\end{definition}

%Having solved for {$\gamma^m,K_{12}^m,K_{22}^m$}, we determine the initial data for the quantities 
%$K^m_{11}, a_{Ai}$ 
% on $\Sigma_{{r^m_*}}$ via the relations \eqref{K11exp}, 
% \eqref{A112exp}, \eqref{A221exp}, {\eqref{R0iexp}}. 
% This then provides
%  the 
% initial data 
% for the 1st order ODEs  \eqref{finredEVERic11it},\eqref{e0.a.new}
%and yields a solution of these parameters everywhere. 
%\medskip

We now explain our iteration scheme in more detail:

\subsection{The recursive equations  for $\gamma^m$ and $K_{ij}^m$.}
\label{recur.gam-K}

Recall from above that at each step $m$ of the iteration, there is a 
hypersurface 
$\Sigma_{{r^m_*}}$, as in \eqref{Sigma.r*m}, on which the prescribed initial 
data 
$(\Sigma, \g, \K)$ live. In particular, at the $m^{\rm th}$ step of the 
iteration, there 
exists a hypersurface $\Sigma_{r_*^{m-1}}:=\{ r=r_*^{m-1}(t,\theta)\}$ on 
which the 
\emph{previous} metric $h^{m-1}$ induces the initial data 
$(\Sigma, \g, \K)$. 
\medskip

The component $\gamma^m$ is required to solve:
\beq
\label{finredEVEwavit}\square_{g^{m-1}}\gamma^m=0,
\eeq
and the initial Cauchy data $(\gamma_{\rm init}, n(\gamma_{\rm init}))$ 
for $\gamma^m$  live on $\Sigma_{r^{m-1}_*}$.\footnote{{These correspond to $\frac{1}{2}\log({\bf g}_{\phi\phi}),{\bf K}_{33}$ respectively.}} 
Next we determine $h^m$:
\medskip

 Each iterated metric $h^m$ (and $g^m$),  comes equipped with an 
 $m$-dependent
  frame 
 $e_0, e^m_1, e^m_2$ (and {$e^m_3:=e^{-\gamma^m}\partial_\phi$} for $g^m$). 
 This orthonormal frame is fixed by the requirement that 
 $e_2^m(r)=o(r^{-\frac{1}{2}+d_2^m(t,\theta)})$
  and 
 $e^m_1$ should be 
 tangent to the hypersurface $\Sigma_{r_*^m}$, on which the initial data 
 are to be
  induced, along with the following propagation conditions:  
  
   $e_0$ will be time-like and affine for each iterate: 
\begin{align}\label{e0it}
\nabla^m_{e_0}e_0=0,\qquad e_0=-(\frac{2M}{r}-1)^\frac{1}{2}\partial_r
\end{align}
and the vector fields $e^m_1, e^m_2$ will be transported along $e^m_0$
 according 
to the rule: 
\begin{align}\label{gaugeit}
\nabla^m_{e_0^m}e^m_1=-K_{12}^me^m_2,\qquad \nabla^m_{e^m_0}e^m_2=
K^m_{12}e^m_1,
\end{align}
{where $\nabla^m$ is the connection intrinsic to $h^m$}.\footnote{{Note that for $g^m$, 
we also have automatically the propagation relation $D^m_{e_0}e_3^m=0$, due to the 
symmetry, $D^m$ being the connection intrinsic to $g^m$.}}
 (Each such frame $e_0, e^m_1, e^m_2$ 
can be expressed in terms of the fixed background coordinates $\{r,t,\theta\}$. This is 
encoded via 
coefficients, defined in formulas \eqref{tthetatransebar} below). The vector fields 
$e^1_m, e^m_2$ are normalized by the requirements that $e^m_1$ should be tangent to 
$\Sigma_{r^m_*}:=\{ r=r^m_*(t,\theta)\}$ and $e^m_2$ by the requirement 
$e_2^m(r)=o(r^{-\frac{1}{2}+d_2^m(t,\theta)})$.

Denote by $K^m_{ij}$ the connection coefficients 
\begin{align}\label{Kijm}
K_{ij}^m:=h^m(\nabla^m_{e_i^m}e_0,e_j^m)=K_{ji}^m.
\end{align}
For these connection coefficients we impose the equations:
\begin{align}
\label{finredEVERic11it}e_0K_{11}^m+(K_{11}^m)^2+3(K_{12}^m)^2+e_0\gamma^m K_{11}^m=&\,\overline{\nabla}^{m-1}_{11}\gamma^m+\overline{\nabla}_1\gamma^{m-1}\overline{\nabla}_1\gamma^m
-e_0^2\gamma^m-(e_0\gamma^m)^2,\\
\label{finredEVERic22it} e_0K_{22}^m+(K_{22}^m)^2-
(K_{12}^{m-1})^2+e_0\gamma^m K^m_{22}=&\,
\overline{\nabla}^{m-1}_{22}\gamma^m+\overline{\nabla}_2\gamma^{m-1}
 \overline{\nabla}_2\gamma^m-
e^2_0\gamma^m-(e_0\gamma^m)^2,\\
\label{finredEVERic12it} e_0K_{12}^m+(2K_{22}^m+e_0\gamma^m)K_{12}^m=&\,\overline{\nabla}^{m-1}_{12}\gamma^m+\frac{1}{2}
[\overline{\nabla}_1\gamma^{m-1} \overline{\nabla}_2\gamma^m+
\overline{\nabla}_1\gamma^{m} \overline{\nabla}_2\gamma^{m-1}]
\end{align}
As before $\overline{\nabla}^{m-1}$ stands for the projection of the connection $\nabla^{m-1}$ onto
 ${\rm Span}\langle e^{m-1}_1, e^{m-1}_2\rangle$. It is thus the ``spatial part'' of the 
Levi-Civita connection. 

In these ODEs, $K_{ij}^m$ are seen as  simply three scalar-valued functions of $r,t,\theta$. 
In the RHS we again consider the function $\gamma^m$ that was just solved for. The 
covariant derivatives and vector indices $1,2$ that appear in the above RHS are with respect to the connection of $h^{m-1}$, projected onto 
${\rm Span}\langle e_1^{m-1},e_2^{m-1}\rangle$,\footnote{{We denote this connection by $\overline{\nabla}^{m-1}$ and omit the index $m-1$ wherever it is evident for simplicity.}} and with respect to the frame
 $e_1^{m-1}, e_2^{m-1}$, associated to the metric $h^{m-1}$. 
 The equations above are evaluated at points $(r,t,\theta)$. We note also that in the second equation above we have entered the previously-solved for scalar $K^{m-1}_{12}$ instead of $K^m_{12}$; this is for technical convenience only, as  the equations \eqref{finredEVERic22it}
 and \eqref{finredEVERic12it} then become completely de-coupled. 
\medskip

% The functions $\overline{\nabla}^2 (^m\gamma)$ are also evaluated at points $(r,s,t)$ 

%At the $m^{\rm th}$ step in the iteration we solve $^m\gamma$ by solving 
%the equation \eqref{finredEVEwavit}. 
%with the initial datum for $^m\gamma$ living on the hypersurface $%\Sigma_{{r^{m-1}_*}}$. 
%\medskip

 Note that (\ref{finredEVEwavit}) is linear in $\gamma^m$, whereas the decoupled Riccati ODEs 
 (\ref{finredEVERic11it})-(\ref{finredEVERic12it}) that we impose {\it remain} non-linear in 
 $K^m_{ij}$, $i=1,2$.\footnote{Solving a non-linear system of ODEs for $K^m_{ij}$ is crucial for 
 our argument to close. An attempt to work with a linearised version of 
 (\ref{finredEVERic11it})-
 (\ref{finredEVERic12it}) would fail to capture the
 correct rate of 
 the blow up, and would make it impossible to close the required estimates.} 
\medskip

We will be restricting our attention to solutions of the above equations that do not blow up 
in the
 $\mc{C}^0$ norm prior to $\{r=0\}$. In addition,  
we will be imposing the asymptotic diagonalisation condition that 
\beq
\label{init.cond.m}
K_{12}^m\cdot(K_{11}^m)^{-1}\to 0,\qquad K_{12}^m\cdot (K_{22}^m)^{-1}\to 0,
\eeq
 as $r\to 0$, 
as well as suitable variants of this for derivatives of $K^m_{12}$. 
To distinguish the two directions $e_1^m, e_2^m$, we choose that $K_{11}^m>0, K^m_{22}<0$ near the 
singularity.

We will see that these requirements allow us to \emph{uniquely} solve for 
$K^m_{22}(r,t,\theta), K^m_{12}(r,t,\theta)$ via the ODEs \eqref{finredEVERic22it}, \eqref{finredEVERic12it} above, subject to {the}
initial
 condition \emph{at} $r=0$, 
 $K^m_{12}(r,t,\theta)=o(r^{2d^m_2(t,\theta)-\alpha^m(t,\theta)} )$.

Having solved for these parameters $K^m_{22}, K^m_{12}$ \emph{separately} from all other connections 
coefficients, 
we next solve  for the hypersurface $\Sigma_{{r^m_*}}$ on which the initial data are to be 
induced.
This hypersurface is 
 defined by a function 
${r^m_*}(t,\theta)$, which is solved for along 
 with the component $\tilde{K}^m_{12}(t,\theta)$ on  
$\Sigma_{{r^m_*}}$. As discussed above, $\tilde{K}^m_{12}(t,\theta)$  ``sees'' 
the frame $e_1^m,\te_2^m$ that is induced by $e_1^m, e_2^m$ onto $\Sigma_{{r^m_*}}$ by rotation. 

We discuss this system in the next subsection, after a brief remark:

\subsubsection{Rotation formulas and some useful calculations. }

 Recall  that at each step in our iteration we will have 
 $e^m_2r=0$. On the to-be-determined hypersurface $\Sigma_{r^m_*}$ 
 we will evaluate $\tilde{K}^m$ against the following 
 frame, which is adapted to $\Sigma_{{r^m_*}}$:
\begin{align}\label{tildee.m}
\begin{split}
\tilde{e}_1^m:=e_1^m\in T\Sigma_{r^m_*},\qquad{\tilde{e}_2^m}:=q[e_2^m-(\frac{2M}{r_*^m}-1)^{-\frac{1}{2}}(e_2^m{{r^m_*}})e_0]\in T\Sigma_{{r^m_*}},\\
\tilde{e}_0^m=q[e_0-(\frac{2M}{r_*^m}-1)^{-\frac{1}{2}}(e_2^m{{r^m_*}}){e_2^m}]\in T\Sigma_{{r^m_*}}^\perp,\qquad  q=\big(1-(\frac{2M}{r_*^m}-1)^{-1}({e^m_2}{{r^m_*}})^2\big)^{-\frac{1}{2}}
\end{split}
\end{align}
In the latter two formulas $r^m_*$ is extended to be constant along the integral curves of $e_0$, therefore, 
$e_2^mr_*^m$ makes sense.
Note that
\begin{align}\label{e2rstar}
{\tilde{e}_2^m}{{r^m_*}}=q({e^m_2}{{r^m_*}}),&&q=\sqrt{1+(\frac{2M}{r_*^m}-1)^{-1}
({\tilde{e}^m_2}r_*^m)^2}
\end{align}
which implies that 
\begin{align}
\label{tildeedef}
{\tilde{e}^m_2}:=q{e^m_2}-
(\frac{2M}{r_*^m}-1)^{-\frac{1}{2}}({\tilde{e}^m_2}{r^m_*})e_0,\qquad
{\tilde{e}_0^m}=qe_0-(\frac{2M}{r_*^m}-1)^{-\frac{1}{2}}({\tilde{e}^m_2}{r^m_*}){e^m_2}.
\end{align}
Inverting (\ref{tildee}) we obtain:
\begin{align}\label{tildeeinvit}
{e^m_2}=q{\tilde{e}_2^m}+(\frac{2M}{r_*^m}-1)^{-\frac{1}{2}}
({\tilde{e}^m_2}{r^m_*}){\tilde{e}^m_0},&& e_0=q{\tilde{e}^m_0}+(\frac{2M}{r_*^m}-1)^{-\frac{1}{2}}({\tilde{e}^m_2}{r^m_*}){\tilde{e}^m_2}.
\end{align}

The connection coefficients $K^m_{ij}$ are then fixed by requiring 
that they should \emph{induce} the required initial data $(\Sigma,\g,\K)$ on the hypersurface 
$\Sigma_{r^m_*}$. Inducing here means the following:

 We require that if we consider 
 the orthonormal frame $\te^m_0, \te^m_2, e^m_1$ defined by the formulas
  \eqref{tildeedef},  and \emph{define}  connection coefficients 
  $\tilde{K}^m,\tilde{A}^m$ 
  on that frame given by the formulas: 
\begin{align}
\label{Korthexp}
q^2(K^m_{22})=&\,q\tilde{K}^m_{22}+(\frac{2M}{r^m_*}-1)^{-\frac{1}{2}}({\tilde{e}^m_2}{\tilde{e}^m_2}{r^m_*})+(\frac{2M}{r^m_*}-1)^{-\frac{3}{2}}\frac{2M}{r^m_*}
({\tilde{e}^m_2}{r^m_*})^2\\
\label{K11exp}
K^m_{11}=&\,q\tilde{K}^m_{11}+(\frac{2M}{r^m_*}-1)^{-\frac{1}{2}}({\tilde{e}^m_2}
{r^m_*})
\tilde{A}^m_{11,2}\\
\label{K12exp}
K^m_{21}=&\,
\tilde{K}^m_{21}+q^{-1}(\frac{2M}{r^m_*}-1)^{-\frac{1}{2}}
({\tilde{e}^m_2}{r^m_*})
\tilde{A}^m_{22,1},%\\
%\label{A112exp}
%A^m_{11,2}=&\,q\tilde{A}^m_{11,2}-(\frac{2M}{r^m_*}-1)^{-\frac{1}{2}}({\tilde{e}^m_2}{r^m_*})
%\tilde{K}^m_{11}\\
%\label{A221exp}A^m_{22,1}=&\,
%q^2\tilde{A}^m_{22,1}+2q(\frac{2M}{r^m_*}-1)^{-\frac{1}{2}}
%({\tilde{e}^m_2}{r^m_*})\tilde{K}^m_{21}
\end{align}
as well as \eqref{tAm.def} right below, then the first
and second fundamental forms induced by  
$\tilde{A}^m,\tilde{K}^m$ on $\Sigma_{r_*^m}$, should both be equivalent to the background metric and
 second fundamental 
form  (in the sense of
 definition \ref{equiv.metr}) via a rotation by a function 
 $\varphi^m(t,\theta)$. (The function $\varphi^m(t,\theta)$ is also 
 to be solved for).

Next, we derive the equations that determine the initial data hypersurface $\Sigma_{r^m_*}$, along with 
the connection coefficients $\tilde{K}^m_{ij}$, $\tilde{A}^m_{ij,k}$ 
\emph{on} $\Sigma_{r^m_*}$. 

\subsection{Determination of the initial data hypersurface ${r^m_*}(t,\theta)$ and the 
connection and curvature components adapted to that hypersurface.}\label{subsec:rstarm}

The sought-after parameters $r_*^m(t,\theta)$ and $\tilde{K}^m_{12}(t,\theta)$ will be 
fixed by imposing the equations \eqref{Korthexp}, \eqref{K12exp}, with suitable 
substitutions 
for certain terms, which are derived by making use of other necessary conditions.

We define $\tilde{A}^m_{11,2}, \tilde{A}^m_{22,1}$  \emph{on} 
$\Sigma_{r^m_*}$ as functions of $\tilde{K}^m_{12}$, via the formulas: 
\beq
\label{tAm.def}
\begin{split}
&\tilde{A}^m_{11,2}={\bigg(1-\frac{4(\tilde{K}^m_{12})^2}{({\bf K}_{22}-{\bf K_{11}})^2}\bigg)^{-\frac{1}{2}}}\te_1^m(\frac{\tilde{K}^m_{12}}{{\bf K}_{22}-{\bf K}_{11}})
+F_1(\frac{\tilde{K}^m_{12}}{{\bf K}_{22}-{\bf K}_{11}}),
\\&\tilde{A}^m_{22,1}= {\bigg(1-\frac{4(\tilde{K}_{12}^m)^2}{({\bf K}_{22}-{\bf K_{11}})^2}\bigg)^{-\frac{1}{2}}}\te_2^m(\frac{\tilde{K}^m_{12}}{{\bf K}_{22}-{\bf K}_{11}})+F_2(\frac{\tilde{K}_{12}}{{\bf K}_{22}-{\bf K}_{11}}),
\end{split}
\eeq
where the functions $F_1, F_2$ are given by formulas \eqref{F1.form}, \eqref{F2.form}. 
\medskip

%\tilde{A}^m_{11,2}=G^1_{t,\theta}[U^1_{t,\theta}[\nabla^2_{12}\gamma^m, 
%\nabla_1\gamma^m,\nabla_2\gamma^m, \te_2(r_*^m)]], 
%\tilde{A}^m_{11,2}=G^2_{t,\theta}[U^2_{t,\theta}[\nabla^2_{12}\gamma^m, 
%\nabla_1\gamma^m,\nabla_2\gamma^m, \te_2(r_*^m)]]
%\eeq

Moreover, the frame elements $\te^m_1(t,\theta), \te^m_2(t,\theta)$ are given from the
 background frame ${E}_1(t,\theta), { E}_2(t,\theta)$ by a rotation of angle 
 $\varphi^m(t,\theta)$. The rotation angle $\varphi^m(t,\theta)$ is given by the 
 value of $\tilde{K}^m_{12}(t,\theta)$ via the formula: 
 \beq
 \label{varphi.tilK.m}
  \varphi^m(t,\theta)={
\frac{1}{2}\sin^{-1}(\frac{2\tilde{K}^m_{12}}{\K_{22}-\K_{11}}).}
 \eeq

Thus, making use of formula \eqref{frames.coords}, 
we find that the  frame elements $\te_1^m(t,\theta),\te_2^m(t,\theta)$ are given from the
 background frame ${\bf E}_1(t,\theta), {\bf E}_2(t,\theta)$ and the 
 value of $\tilde{K}^m_{12}(t,\theta)$ by the formulas: 
 \beq
   \begin{split}
   \label{te1m.tK12}
& \te_1^m={\frac{1}{2}\sqrt{1+\sqrt{1-4x^2}}}
(\g_{tt})^{-1/2} \partial_t+ {\frac{{\rm sign}(x)}{2}\sqrt{1-\sqrt{1-4x^2}}}
(\g_{\theta\theta})^{-1/2}\partial_\theta,
\\
& \te_2^m= \frac{1}{2}\sqrt{1+\sqrt{1-4x^2}}
(\g_{\theta\theta})^{-1/2}\partial_\theta-\frac{{\rm sign}(x)}{2}\sqrt{1-\sqrt{1-4x^2}}
(\g_{tt})^{-1/2} \partial_t,
 \end{split}
   \eeq
where $x=\tilde{K}^m_{12}({\bf K}_{22}-{\bf K}_{11})^{-1}$. {We also write, for future reference, the inverse transformation:
 \beq
   \begin{split}
   \label{te1m.tK12inv}
& \partial_t= \frac{1}{2}\sqrt{1+\sqrt{1-4x^2}}(\g_{tt})^{1/2}\te_1^m-\frac{{\rm sign}(x)}{2}\sqrt{1-\sqrt{1-4x^2}}(\g_{tt})^{1/2}\te_2^m,
\\
& \partial_\theta= \frac{1}{2}\sqrt{1+\sqrt{1-4x^2}}(\g_{\theta\theta})^{1/2}\te_2^m +\frac{{\rm sign}(x)}{2}\sqrt{1-\sqrt{1-4x^2}} (\g_{\theta\theta})^{1/2}\te_1^m. 
 \end{split}
   \eeq
}
\medskip

{\bf Determination of $\tilde{K}^m_{ij}(t,\theta)$ and ${r^m_*}(t,\theta)$.}
The tensor $\tilde{K}^m_{ij}$ is required to satisfy that 
 $\tilde{K}_{11},\tilde{K}_{22}$ are determined in terms 
of $\tilde{K}_{12}$ via the formulas: 
\beq
\label{F22.F11}
\tilde{K}^m_{22}(t,\theta)= F^{22}_{t,\theta}[\tilde{K}^m_{12}(t,\theta)],\qquad
\tilde{K}^m_{11}(t,\theta)= F^{11}_{t,\theta}[\tilde{K}^m_{12}(t,\theta)].
\eeq
Here the functions $F^{22}[\cdot], F^{11}[\cdot]$ are given by \eqref{F22.form}, \eqref{F11.form}.  

\begin{remark}
\label{K.obtained}
Observe that in the language  of Definition \ref{equiv.metr}, 
the tensor $[\tilde{K}_{ij}^m](t,\theta)$ obtained as
 above,
is gauge-equivalent to the prescribed initial second fundamental form ${\bf K}$.
%Now  $\tilde{K}^m_{11}(t,\theta)$ has been \emph{rescribed} 
%that correct value, and the fact that $\tilde{K}^m_{22}(t,\theta)$ satisfies \eqref{K.obtained} 
%forces it to have the correct value. 
\end{remark}

Then $\tilde{K}^m_{12}$ and $r^m_*(t,\theta)$ are determined via a system of two 
equations in these two unknowns. The equations arise from imposing {\eqref{Korthexp}, \eqref{K12exp}. Plugging the first formula of \eqref{F22.F11} into \eqref{Korthexp} gives:}
\beq
\label{key.link2}
\begin{split}
&-q^{-2}(\frac{2M}{{r^m_*}}-1)^{-\frac{1}{2}}(^m{\tilde{e}_2}^m{\tilde{e}_2}{{r^m_*}})-q^{-2}
(\frac{2M}{{r^m_*}}-1)^{-\frac{3}{2}}\frac{2M}{{r^m_*}^2}
(^m{\tilde{e}_2}{{r^m_*}})^2+(K^m_{22})(r^m_*(t,\theta),t,\theta)
\\&=q^{-1}F^{22}_{t,\theta}[\tilde{K}^m_{12}(t,
\theta)](r^m_*(t,\theta),t,\theta).
\end{split}
\eeq
In the above equation, 
 $K_{22}^m(r,t,\theta)$ solves the {evolution 
 equation \eqref{finredEVERic22it}  with 
``zero free data'' at $r=0$}. In particular, it is the unique solution of this
first order  ODE, 
and thus, (given that the RHS of this equation has already been solved for at this 
stage), 
for each fixed $t,\theta$,
 $K_{22}^m(r_*(t,\theta),t,\theta)$ is a  
  function of $r_*^m(t,\theta)$ \emph{alone}. 

Thus, \eqref{key.link2} 
is a second order equation  on the sought-after  ${r^m_*}$, with derivatives 
in the direction of the vector 
field 
$\te_2^m$. We also note that the smoothness of $\Sigma_{r_*^m}$ in the resulting 
$(3+1)$-dimensional space-time  forces that $\partial_\theta r^m_*=0$ at the two poles 
$\theta=0,\pi$. The equation itself then forces that all $\partial^{k_1}_{\theta\dots\theta}\partial^{k_2}_{t\dots t} r^m_*$ with $k_1+r_2\le s$ and $k_1$ odd must vanish 
at those two poles, in the weighted $L^2$ sense defined in our assumptions. 
\eqref{key.link2} is complemented by the unknown $\tilde{K}_{12}^m$ which also appears in the 
equation. However, we can relate this quantity to the sought-after $r_*^m$ via equation
 \eqref{K12exp}, {plugging in \eqref{tAm.def}:} 
\begin{align}
\label{key.link2'}
\begin{split}
&\tilde{K}^m_{12}(r^m_*(t,\theta),t,\theta)+q^{-1}(\frac{2M}{r^m_*}-1)^{-\frac{1}{2}}
({\tilde{e}^m_2}{r^m_*})
{\bigg(1-\frac{4(\tilde{K}_{12}^m)^2}{({\bf K}_{22}-{\bf K_{11}})^2}\bigg)^{-\frac{1}{2}}}\te_2^m(\frac{\tilde{K}^m_{12}}{{\bf K}_{22}-{\bf K}_{11}})\\
&+q^{-1}(\frac{2M}{r^m_*}-1)^{-\frac{1}{2}}
({\tilde{e}^m_2}{r^m_*})F_2(\frac{\tilde{K}_{12}}{{\bf K}_{22}-{\bf K}_{11}})=K^m_{12}(r^m_*(t,\theta),t,\theta)
\end{split}
\end{align}
{We note that the equation \eqref{finredEVERic12it},} coupled with the imposed initial 
condition $K^m_{12}\sim o(r^{2d_2^m(t,\theta)-\alpha^m})$,
implies  that  $K^m_{12}(r_*(t,\theta),t,\theta)$ is a function of $r_*^m(t,\theta)$ 
alone.

Therefore, the system of equations \eqref{key.link2}, \eqref{key.link2'} provides a system 
of 
two equations in the two unknowns $r^m_*(t,\theta), \tilde{K}^m_{12}(t,\theta)$. 
The next step in the iteration process is to produce a unique solution of this 2x2 
system. As we will see below the solvability of this system is not obvious (at least to the authors), and required a special weak formulation 
to obtain existence. 
\medskip

Thus, at this point, the variables 
$K^m_{22}(r,t,\theta), K^m_{12}(r,t,\theta)$ have been solved for everywhere. Moreover, the initial parameters 
 $\tilde{K}^m_{22}(t,\theta)$, 
$\tilde{K}^m_{12}(t,\theta)$, as well as $r^m_*(t,\theta)$,
have been determined on the initial data hypersurface 
$\Sigma_{{r^m_*}(t,\theta)}$. Next, we define 
the functions $\tilde{K}^m_{11}(t,\theta)$
$a^m_{Ai}(\rho^m=\e,t,\theta)$, 
 on $\Sigma_{{r^m_*}(t,\theta)}$, via the second equation in \eqref{F22.F11} and the formula \eqref{tAm.def}, as well as \eqref{te1m.tK12inv}.

We then define $K^m_{11}(r_*(t,\theta),t,\theta)$ via the formula \eqref{K11exp} \emph{on} $\Sigma_{{r^m_*}}$. With this initial value, we determine $K^m_{11}(r,t,\theta)$ \emph{everywhere} by solving the
Riccati equation \eqref{finredEVERic11it} forwards-in-time. 
\medskip

Having solved for the components 
$K^m_{ij}(r,t,\theta)$ we can solve for $a^m_{Ai}(\rho,t,\theta)$ via \eqref{e0.a.new}, 
and the inital conditions for these parameters. 
This will complete the determination of the next metric iterate 
$h^m$ (and thus the next $g^m$ also).

\subsubsection{The system of unknowns and the system of equations.}

To summarize, the system of functions that we solve for at the $m^{\rm th}$ step is as follows: 
The parameters $\gamma^m(r,t,\theta), K^m_{22}(r,t,\theta)$, 
$K^m_{12}(r,t,\theta), K_{11}^m(r,t,\theta)$, $a^m_{t1}(r,t,\theta), a^m_{t2}(r,t,\theta)$, $a^m_{\theta 1}(r,t,\theta)$, $a^m_{\theta 2}(r,t,\theta) $, which all 
depend on $r,t,\theta$;  
the parameters 
$\tilde{K}^m_{12}, {r_*^m}$ 
that only depend on 
$(t,\theta)$. 
\medskip

The system of equations is \eqref{finredEVEwavit}, 
\eqref{finredEVERic11it},
\eqref{finredEVERic22it}, \eqref{finredEVERic12it}, \eqref{e0.a.new}
 (these are the evolution 
equations), \eqref{tAm.def}, 
\eqref{F22.F11} (these are the equations used to 
capture the initial data); the latter two equations imply 
the system \eqref{key.link2}, \eqref{key.link2'}. And finally,
the two systems are linked by the equations \eqref{K11exp}, \eqref{tAm.def}, \eqref{te1m.tK12inv}
 which provide initial data on the hypersurface $\Sigma_{r_*^m}$ for the 
 parameters 
 $K_{11}^m(r,t,\theta)$, 
 $ a^m_{Ai}(r,t,\theta)$.  
 
  \section{The function spaces and bounds for the key variables
  of our reduced system. }
\label{sec_fun.spaces}

\subsection{Regularity spaces for the parameters.} 
%Commutation vector field scheme}
%
 We present the spaces in which we will derive estimates for the 
 variables that describe  the
  space-time metric $g^m$ we deal with. 
  (The field $\gamma^m$, and the connection coefficients
   $K^m$ of the metric $h^m$ along with the 
coordinate-to-frame coefficients). 
 
It is well-known that expressing the space-time metric $g$ in a geodesic 
(Fermi-type) gauge
 leads to a loss of derivatives, in that one expects the metric components  to enjoy 
 \emph{less} regularity in the spatial directions  ($e_1, e_2$) relative 
 to the 
 \emph{special, affine} time direction $e_0$ that defines our Fermi 
 coordinates. From this point of view, we can think of the affine 
 direction $e_0$ as being 
 \emph{privileged} in terms of  regularity. 
 While one would worry that this would impede the closure of our 
 estimates 
 in a fixed function space, we do find function spaces that allow us to 
 close our estimates. The algebraic structure of our equations, with a 
 free wave and transport equations (where the free wave supplies 
 the forcing term) is very important in this regard. 
 
More specifically, the relevant variables 
$\gamma^m, K^m_{ij}, a^m_{Ai}$  
\emph{and a suitable number of derivatives thereof}, will be shown to lie 
in 
$L^2$-{based} energy spaces 
on level sets of the function $r$ and of certain variants $\rho^m$ of 
the function $r$ that we introduce. These are 
properly defined in the next subsection. Here we highlight a few  
features of the bounds we 
derive: 
\medskip

{\bf Hierarchy of Regularities:} In view of the loss
 of spatial regularity 
for the geometric parameters $K^m, a^m,$ in our geodesic gauge, 
the derivatives of the key parameters $\gamma^m, K^m, a^m$ that we control 
come in a certain hierarchy: The free wave $\gamma^m$ will have a total of 
$s-1$ 
derivatives in the energy space, \emph{however}, at the top order,  two of 
those 
derivatives must be the ``privileged'' $e_0$-direction. The variables $K^m$ 
and $a^m$ will have $s-3$ 
derivatives lying in $L^2$, while the Christoffel symbols  $\Gamma_{AB}^C$
 will have just 
$s-4$ 
derivatives lying in $L^2$. Moreover because of the singular nature of the 
functions $\gamma^m$ at the poles $\theta=0,\pi$, certain derivatives of 
these parameters 
will lie in the same spaces, but with an additional \emph{singular} weight 
${\rm cot}\theta$ which 
blows up at the poles. (Regularity with respect to these enhanced spaces 
captures 
the smoothness of the resulting space-time at those poles).

How we are able to close the energy estimates for $\gamma^m$
in a higher regularity class, relative to that of the coefficients 
$a^m_{Ai}, K_{ij}^m$, and how we can recover these singularly-weighted space estimates for the latter parameters 
will be described in {Sections \ref{sec:itergamma}, \ref{sec:iterh}, which 
deal with  the iterates $\gamma^m,h^m$ respectively.} 
\medskip

{\bf More singular estimates for the higher  derivatives:} As described 
earlier, at the very top 
orders, the estimates we derive for the energies 
of $\partial^{I}e_0 e_0\gamma^m$, $\partial^{I} K^m$ and
 $\partial^{I}a^m$, $|I|=s-3$  are much \emph{worse} (in terms of their singular
  behaviour in $r$) 
 than the bounds we derive at the lower orders. As we will see, 
 beyond  the 
 number ${\rm low}=s-3-4c$  of derivatives, where we obtain the optimal behaviour 
 (fully consistent with 
 the asymptotics \eqref{cl}), there is a \emph{descent scheme} where for each 
 order of regularity 
 {$|I|$}, the bounds we derive are improved by a power 
 $r^{1/4}$ relative to the order {$|I|+1$}. This 
 order-dependent behaviour beyond the lower orders 
  is captured precisely in the function spaces we introduce in the next 
  subsection.

\subsection{Key Constants.}
\label{sec_csts_params}

We discuss here certain key parameters that will be appearing below, in 
our claims on the various parameters that we keep track of, and in our 
derivation of the bounds further down. 
These constants will be universal and in 
 particular, independent of $m\in\mathbb{N}$.
We recall the parameters here, and put down the inequalities that we will
 be imposing on them.

We have already introduced the (small) parameter $\e>0$, which captures
 the hypersurface $\{r=\e\}$ in the Schwarzschild space-time, whose 
 induced data we are perturbing. 

We have also introduced a second (small) parameter 
 $\eta>0$, which captures 
the closeness of our abstract initial data to the Schwarzschild data. 
In particular, $\eta>0$ captures the smallness of the 
\emph{difference} in a renormalized energy space
between our initial data and those of the Schwarzschild background. 
We refer to this quantity as the perturbation size. 

A further constant that will appear is some \emph{fixed, large} number $B\gg1$. $B$ depends on the algebraic forms of the equations, 
(via, for example, the number of terms generated upon commuting our equations with suitable  vector fields below). 
It is also allowed to depend on the mass parameter $M$ of the background Schwarzschild solution that we perturb. 
This constant will never be explicitly calculated, although in principle this is certainly possible.

The next key constant is $C>1$, which captures the \emph{growth} factor of the norms of key parameters in the REVESNGG system. 
$C$ depends on $\e$ and $B$ in an explicit way
\beq
\label{C.def}
C=e^{\int_0^\e 10B^2\tau^{-1+\frac{1}{4}}d\tau}.
\eeq
 Since $B\gg1$ is fixed and independent of any other choice we make, we think of 
$C$ as a function of $\e$: $C=C(\e)$; $C(\e=0)=1$ and $C(\e)$ is a continuous increasing function in $\e$.  

The first key inequality that we demand on $C,\eta$, is that the product 
$C\cdot \eta$ should satisfy an absolute  smallness  bound. To 
present this bound, let us  recall the explicit functions of a 
parameter $\alpha\in\mathbb{R}$
\beq
\label{d2}
d_2(\alpha):=
\frac{\alpha-\frac{3}{2}-\sqrt{(\alpha-\frac{3}{2})^2+6\alpha
-4\alpha^2}}{2},
\eeq
We also consider the parameter $d_1(\alpha)$ 
\beq
\label{d1}
d_1(\alpha):=
\frac{\alpha-\frac{3}{2}+\sqrt{(\alpha-\frac{3}{2})^2+6\alpha
-4\alpha^2}}{2}
\eeq

(Note that $d_2(1)=-1, d_1(1)=\frac{1}{2}$; the significance of $d_2(\alpha)$ in terms of the asymptotics 
of $K_{22}(r,t,\theta)$ has been highlighted in the introduction). 
We then require that for all $\alpha\in [1-C\eta, 1+C\eta]$,
\beq
\label{Ceta.bds}
|d_2(\alpha)+1|\le 1/8, |d_1(\alpha)-\frac{1}{2}|\le 1/8. 
\eeq

As will become manifest in the proof,
 it is this requirement (in fact the first of the two) that ensures the AVTD behaviour of 
our solutions holds. It is also  responsible for the ``gain'' of a power at least $r^{\frac{1}{4}}$ 
of various less singular terms in our inductive estimates below, relative to the ``principal'' singular behaviour of the same terms.

To make this gain manifest further down, we let $D$ to be the sup of the Lipschitz norms of $d_2, d_1$ over $ [1-C\eta, 1+C\eta]$, so in particular: 
\beq
\label{D.def}
|d_2(1+x)+1|\le D|x|, \qquad|d_1(1+x)-\frac{1}{2}|\le D|x|.
\eeq

\begin{remark}
The requirement \eqref{Ceta.bds}
is in fact stronger than what really needs to be imposed, to derive the AVTD behaviour of our solution (and 
to show our result); however, a bound of this
 type \emph{does} need to be imposed; in particular 
the methods here do \emph{not} work for \emph{any} (polarized, axi-symmetric) large perturbation of Schwarzschild. In particular, it is
necessary  for our methods to impose that for some fixed $\delta<\frac{1}{2}$, the inequality  $|d_2(\alpha)+1|\le \delta$, 
holds for all $\alpha\in [1-C\eta, 1+C\eta]$. 
Up to some technical modifications, we believe this follows by essentially the proof we have here, but we do not pursue it in this paper. 
\end{remark}

\medskip
The final key constant that plays a role in our analysis is a constant $c>0$: 
The constant $c>0$ captures the growth of the renormalization power 
(in $r$) at the higher norms (in particular at the top norm).  
In particular, 
 $c>0$ is chosen large enough in order to absorb certain dangerous terms
  at the top order and `close the estimates'. How large $c>0$ is taken 
  depends on the coefficients of the equations and it is determined 
  further down. In particular, we will require: 
\beq
\label{c.bd}
c>20.
\eeq
$c>0$ also determines the energy space in which we will need to bound our 
  initial data, and also in which we will derive bounds for our 
  parameters. In particular, the number $s\in\mathbb{N}$ that deterines the Sobolev spaces $H^s,H^{s-1}$, in which our 
inital data metric $\g$ and $\K$ are to live, is chosen so that: 

\beq
\label{s.bd}
\frac{s-3}{2}{<}s-3-4c\iff s>{8c+3> 163}. 
\eeq

\begin{definition}
  We let ${\rm low}=s-3-4c$; we use ${\rm low}, s-3-4c$
   interchangeably. 
\end{definition}

A second bound on $\e$  that we need to impose (relative to the other parameters $B, \eta$ we have already introduced)
is:

\beq
\label{b.bd}
\frac{C\eta\e^{-\frac{3}{2}}}{2}>B\e^{-1-\frac{1}{4}}\iff \frac{C\eta}{{2}} >B\e^{1/4}.
\eeq
\medskip
In fact, for various technical reasons 
we will strengthen the bound to: 

\beq
\label{e.bd} B^3\e^{1/8}<10^{-1}C\eta.
\eeq

\subsection{Preparatory steps: The interpolating function $\rho^m$ and its 
adapted frames.}
{\bf Orthonormal frame and coordinates}: Recall that we have chosen
 $e_0$ at every step $m$ in the iteration to satisfy the same relation relative to the 
 coordinate $r$, as for the Schwarzschild space-time:
 
 \beq
 \label{e0r}
 e_0(r)= -(\frac{2M}{r}-1)^{\frac{1}{2}}.
 \eeq
Also, given $t,\theta$ coordinate functions on the initial hypersurface, identified for every step $m$, we extended them via
\eqref{e0te0theta}.

These two coordinates $t,\theta$ 
along with the coordinate $r$ provide a coordinate 
system for the $2+1$ metric $h^m$. However, for technical reasons we sometimes need to replace the coordinate $r$ by an $m$-dependent modification: 
\medskip

{\bf The coordinate function $\rho^m$, and the regularity spaces on its level sets.}

The space-like hypersurfaces $\Sigma_r$ (level sets of $r$) are suitable for deriving energy estimates in a neighborhood of the singularity at $r=0$. However, at each step in the iteration, we must adjust our foliation to include the  hypersurface 
$\Sigma_{r_*^m}$, $r_*^m:=r_*^m(t,\theta)\sim \epsilon$,
on which the initial data are to live, and where we are to `start' 
most of our estimates. For this reason, we introduce a modification of the function $r$ near $\{r=\epsilon\}$ to capture this. Let:
\begin{align}\label{rho}
{\rho^m}={\rho}^m(r{,t,\theta})= r-\chi(r)({r}_*^m-\epsilon),&&\chi\in C^\infty([0,2\epsilon]),\;\;\chi\big|_{[0,\frac{\epsilon}{2}]\cup[\frac{3\epsilon}{2},2\epsilon]}\equiv0,\;\chi\big|_{[\frac{3\epsilon}{4},\frac{5\epsilon}{4}]}\equiv1,
\end{align}
for $m\ge0$, ${\rho^0}=r$, and use the level sets of ${\rho^m}$, denoted by $\Sigma_{\rho^m}$, to foliate the region $\{0<r<2\epsilon\}$. 
We will often consider the coordinates $\{\rho^m, t, \theta\}$. {Note that for $r_*^m\sim\epsilon$, the correspondence $\rho^m\leftrightarrow r$ is one to one, for fixed $t,\theta$.}

By definition (\ref{e0te0theta}), the functions $\rho^m,t,\theta$ also constitute a coordinate system.
In this coordinate system:
\[
e_0(t)=0, e_0(\theta)=0.
\]
Note that by definition $\Sigma_{\rho^m}=\Sigma_r$, for $r\in[0,\frac{\epsilon}{2}]\cup[\frac{3\epsilon}{2},2\epsilon]$ and $\{r=r^m_*\}=\{\rho^m=\epsilon\}$. Also, along an $e_0$ geodesic we have
\begin{align}\label{partialrho}
\partial_{\rho^m}=[1-\partial_r\chi(r)(r^m_*-\epsilon)]^{-1}\partial_r=
-(\frac{2M}{r}-1)^{-\frac{1}{2}}[1-\partial_r\chi(r)(r^m_*-\epsilon)]^{-1}e_0.
\end{align}
On the other hand, the future directed ${g^m}$-unit normal to $\Sigma_{\rho^m}$ is given by the ${g}^m$-normalised gradient of ${\rho^m}$:
\begin{align}\label{n}
{n^m}={-}\frac{{\rm grad}_{g^m}{(\rho^m)}}{\sqrt{-{g^m}({\nabla}^m{\rho^m},{\nabla}^m{\rho^m})}}.
\end{align}
In the regions $r\in[0,\frac{\epsilon}{2}]\cup[\frac{3\epsilon}{2},2\epsilon]$, ${n^m}$ coincides with the unit normal to $\Sigma_r$ which can be viewed as a perturbation of $e_0$:
\begin{align}\label{nSigmar}
n^m\big|_{\{0<r<\frac{3M}{4}\}\cup\{\frac{5M}{4}<r<\frac{3M}{2}\}}=\frac{e_0-(\frac{2M}{r}-1)^{-\frac{1}{2}}(e^m_1r)e^m_1-(\frac{2M}{r}-1)^{-\frac{1}{2}}(e^m_2r)e_2^m}{\sqrt{1-(\frac{2M}{r}-1)^{-1}(e^m_1r)^2-(\frac{2M}{r}-1)^{-1}(e^m_2r)^2}}
\end{align}
Generally the future ${g^m}$-unit normal to $\Sigma_{\rho^m}$ reads:
\begin{align}\label{nSigmarho}
{n^m}=\frac{e_0-(e_0{\rho^m})^{-1}(e^m_1{\rho^m})e^m_1-(e_0{\rho^m})^{-1}(e^m_2{\rho^m})e^m_2}{\sqrt{1-(e_0{\rho^m})^{-2}(e^m_1{\rho^m})^2-(e_0{\rho^m})^{-2}(e^m_2{\rho^m})^2}},
\end{align}
while the lapse of the foliation $\Sigma_{\rho^m}$ equals:
\begin{align}\label{Phi}
\Phi^m:=\frac{1}{\sqrt{(e_0{\rho^m})^{2}-(e^m_1{\rho^m})^2-
(e^m_2{\rho^m})^2}}. 
\end{align}
We note here that one of the reasons for requiring the tangency of $e_2^m$ 
to the singularity (in the asymptotic sense \eqref{e2r.van.it}) 
is already apparent here. Had that condition not been imposed, then the 
coefficient of $e_2$ would have been \emph{much} too singular, and would in 
fact be more dominant  in the  energy of $\gamma^m$ than the vector field 
$e_0$, making impossible (and in fact false!)  the derivation of our 
inductive claims. Thus our (gauge) condition forces out 
 this potentially 
more singular  coefficient.

\subsection{Regularity spaces.}\label{Indhyp}
We introduce the spaces in which the various parameters will be measured. 
Recall that 
$\gamma^m$ is studied in the $(3+1)$-dimensional space-time and the bounds 
will be 
using $L^2$-based energies on that space 
$\mathbb{R}\times\mathbb{S}^2\times (0,2\epsilon)$. The parameters 
$K^m_{ij}, a^m_{Ai}$
 will be studied 
on the $(2+1)$-dimensional space $\mathbb{R}\times(0,\pi)\times(0,2\epsilon)$. These
parameters also will be bounded in $L^2$-based spaces, with respect to the volume form $sin\theta d\theta dt$. In the instances where we use a different volume form, we will spell it out explicitly. 
%However the functions 
%$r^m_*(t,\theta), \tilde{K}^m_{12}(t,\theta)$ are bounded with resepct to the volume form $d\theta dt$. Also, we will be imposing the condition $\partial_\theta r^m_*=0$ at the poles, and variants of this condition for suitable higher derivatives of $r^m_*$. 

\medskip

{\bf Notation}: We will defining $L^2$-based spaces on level sets of 
$\rho^m$. Thus, the functions will depend on 
$t\in (-\infty,+\infty),\theta\in (0,\pi),\phi\in [0,2\pi)$; 
 all functions will be $\phi$-independent, so sometimes we will omit 
 $\phi$ altogether.  We specify that the volume form will be 
 $\sin\theta d\theta dt d\phi$, unless otherwise stated (the canonical
  volume form on $\mathbb{S}^2\times\mathbb{R}$, for $\phi$-independent functions). We sometimes denote this also by {${\rm vol}_{\rm Euc}$}. 
  \medskip

Given a smooth function $\psi({\rho^m},t,\theta):\{\Sigma_{\rho^m}\}_{{\rho^m}\in(0,2\epsilon]}\to\mathbb{R}$, we define the energy
\begin{align}\label{Epsi}
E[\psi({\rho}^m,t,\theta)]=\int_{\Sigma_{\rho^m}}\big[(e_0\psi)^2+|\overline{\nabla}^m\psi|^2\big]\mathrm{vol}_{Euc}.
\end{align}
Here 
$|\overline{\nabla}^m\psi|^2$ stands for 
$|{e}_1^m\psi|^2+|{e}_2^m\psi|^2$, for any 
$h^m$-orthonormal frame ${e}_1^m, {e}_2^m$ {orthogonal to $e_0$}. We also
 define the $H^l$ norm
\begin{align}\label{Hsnormpsi}
\|\psi\|_{H^l[{\rho^m}]}:=\bigg(\sum_{|I|\leq l}
\int_{\Sigma_{\rho^m}}(\partial^I\psi)^2\mathrm{vol}_{Euc}
\bigg)^\frac{1}{2},
\end{align}
{where $\partial^I$ stands for a combination of $\partial_t,\partial_\theta$ derivatives dictated by the multi-index $I$.}\\
%In the case $\beta=0$, we simply write $E[\psi]=E_0[\psi]$, 
%$\|\psi\|_{H^l[{\rho^m}]}$. 
 \medskip

For some of our parameters we will be using a slight variant of these standard 
 Sobolev spaces; the variants are taylored to capture some delicate behaviour of our parameters at the poles $\theta=0,\theta=\pi$. (Had we been studying our 
system away from the poles, the Sobolev spaces introduced just above would have been sufficient). 
 
\newcommand{\DS}{{\Delta_{\mathbb{S}^2}}} 
 
 Let us introduce 
 the 1-st order operator 
 \newcommand{\opartial}{\overline{\partial}}
\beq \label{opartial}
\tilde{\partial}_\theta=\partial_\theta+\frac{cos\theta}{sin\theta},
\eeq 
as well as the second order operator 
\[
\DS= \tilde{\partial}_\theta \partial_\theta. 
\]
Note that $\DS$  agrees with the standard round Laplacian acting on $\phi$-independent functions on $\mathbb{S}^2$. 

Based on this, we will introduce the higher-order operators $\opartial^I_{\theta \dots \theta t \dots t}$, for any multi-index $I$ consisting of an \emph{even} number of $\theta$'s and any number of $t$'s: 
Letting $2k_1$ be the (even number) of $\theta$'s and $k_2$ be the number of $t$'s in the multi-index $I$, we define: 

\beq
\label{opartial.ext}
\opartial^I_{\theta \dots \theta t\dots t}v=(\DS)^{k_1}\partial^{k_2}_{t\dots t}v. 
\eeq
In other words, $\opartial^I$ differs from $\partial$ only for the $\theta$-indices;
for the $t$-indices it agrees with $\partial_t$. We will see further down how $L^2_{sin\theta d\theta dt}$ control of the $\opartial^I$ derivatives of a function (usually 
$\gamma^m_{\rm rest}$ for most of this paper) yields control of the same function in the standard Sobolev spaces $H^k_{sin\theta \theta dt}$, 
$k=|I|$. 
We also introduce the associated norm to this operator: 

\beq
\label{oHk}
\|\psi\|_{\oH^k}[\rho^m]= \sum_{|I|\le k, I=(2k_1, k_2)}\int_{\Sigma_{\rho^m}}(\opartial^I\psi)^2sin\theta d\theta dt. 
\eeq
The definition of the analogous homogenous norm $\dot{\oH}^k$ is immediate. 
\medskip
  We will note in Lemma \ref{lem:interpolation} the equivalence 
  of this $\oH^k$  norm with the standard norm $H^k$. 

      \newcommand{\tpartial}{\widetilde{\partial}}

Finally, we make a final convention: For technical reasons, we will be requiring that
 $s\in\mathbb{N}$ be an odd number, and in particular $s-3$ is an even number. 
% We will also be introducing the convention that at the top orders $\partial^I$,
%  $|I|=s-3$, and $\opartial^I, |I|=s-3$, at least \emph{one} of the derivatives will be 
 % of the form $\partial_\Theta$ (as opposed to $\partial_T$). Equivalently, not
 %  \emph{all} of the derivatives at the top order can be $\partial_T$. This assumption 
 %  will be implicit at all inductive claims below. 

\subsection{The Inductive claim for all parameters in the REVESNGG.}
\label{sec_REVESNGG}

We present here the inductive claims on the parameters we solve for in
 the REVESNGG. These claims are verified trivially at the 0th-step by the Schwarzschild variables 
$\gamma_{\rm S}=\gamma^0,(K_{\rm S})_{ij}=K^0_{ij},(a_{\rm S})_{Ai}=a^0_{Ai}$,
$i,j=1,2$, where 
$e^0_1\parallel\partial_t$, $e^0_2\parallel\partial_\theta$, 
cf. \S\ref{subsec:Schw} and \S\ref{subsubsec:0step}.
 
  In all parameters that are functions of $r,t,\theta$, there 
 will be a key distinction between the lower orders $k\le {\rm low}$ and 
 the higher derivatives. 
 
 At the lower orders, the inductive claim is substantially 
 stronger and it involves proving \emph{optimal} asymptotic behaviours, as $r\to 0$. 
 At the higher derivatives, the bounds we claim are weaker; in fact, 
 for each derivative beyond the lower ones, the bounds we claim 
 become more 
 singular by a fixed amount. Even in these very singular spaces, 
 the closeness to the Schwarzschild background is part of what is being
  claimed, albeit in a weaker sense compared to the lower norms. 
\medskip

 A few general comments: Firstly, we present the claims for the step 
 $m-1$. We will then verify the validity of the claim for
  $m\in\mathbb{N}$. Secondly, most of the estimates in our inductive 
  assumptions will be broken in three categories, depending on the number 
  of derivatives on the various quantities: 
  
   There will be the \emph{low orders} where there are a total of 
   ${\rm low}:=s-3-4c$ derivatives on the various quantities. At those orders we 
   claim what is (for $\gamma^{{m-1}}$ and $K^{{m-1}}_{ij}$) the optimal behaviour {(for their leading orders)}.
   At the next `higher' orders, where $s-3-4c<l\le s-4$, we claim 
   bounds which are more singular in terms of powers of $r$; each 
   derivative beyond the optimal orders ``costs'' a power $-\frac{1}{4}$ 
   in $r$. Finally, at the top orders, we take $s-3$ 
   coordinate derivatives, 
   and then up to two $e_0$ derivatives.
\medskip

{\bf Convention:} All the inductive statements we write below will be for 
the step $m-1$ in the induction. The statements will then have to be 
verified for the $m^{\rm th}$ step. We note that  all estimates below 
(for the step 
$m-1$) will be assumed to hold on level sets of the function 
$\rho^{m-1}$. (We will see in the validation of the inductive step $m$ below that some of the bounds will be derived on 
the level sets of $\rho^{m-1}$ or of $r$; however once the function $r^m_*$ has been sooved for, we can derive that the same bounds will hold on level sets of 
$\rho^m$).
Moreover, we will be taking the difference of these 
parameters from the corresponding values in the  Schwarzschild 
space-time. Here $\gamma^S(t,\theta,\rho^{m-1})$ evaluated at values
$t=a,\theta=b, \rho^{m-1}=c$ is identified with the 
value $\gamma_{\rm S}$ at $(t=a, \theta=b, r=c)$ in the standard $t,\theta, r$ coordinates. 
The same convention applies to all other quantities 
($K^{m-1}_{ij}$ etc.) below.

\subsubsection{Inductive claim for $\gamma^{m-1}$}

We assume certain  energy estimates are satisfied by $\gamma^{m-1}$ 
across all level sets of the functions $r$ and $\rho^{m-2}$, $m\ge2$.
Precisely the same estimates are true on level sets of $\rho^{m-1}$,
with $r$ replaced by $(\rho^{m-1})$ in the RHSs. 
%{inductiongammaopt} {inductiongammalow}{inductiongammatopmixed}
\begin{align}
\label{inductiongammaopt}\sqrt{E[\partial^I(\gamma^{m-1}-\gamma^S)]}
\leq C\cdot\eta\cdot r^{-\frac{3}{2}},\qquad\text{for}\;\;\; |I|\leq s-3-4c,\\
\label{inductiongammalow}
\sqrt{E[\partial^I(\gamma^{m-1}-\gamma^S)]}
\leq C\cdot \eta\cdot r^{-\frac{3}{2}+(s-3-|I|)\frac{1}{4}-c},\qquad
\text{for}\;\;\; s-3-4c<|I|\leq s-4,\\
\label{inductiongammatopmixed}
\sqrt{E
[\partial^Ie_0^{J_0}(\gamma^{m-1}-\gamma^S)]}
\leq C\cdot\eta\cdot r^{-\frac{3}{2}-\frac{3}{2}|J_0| -c},\;\;|J_0|\leq2, \text{ }
{\rm for}\text{ } |I|=s-3, I\ne (T, T, \dots, T).
\end{align}
\emph{Note:} At the very last estimates, we \emph{exclude} the top order derivatives, 
when all the $s-3$ derivatives are in one ``less regular'' direction 
$\partial_T=\partial_{T^{m-2}}$ direction\footnote{This $\partial_{T^{m-1}}$ is to illustrate that 
the direction depends on the step $m-1$ in our induction} that we will introduce in \S\ref{sec:met.Christ.bds} below.  
%
%where $\beta_{m-2,i}=\beta_{m-2,i}(t,\theta)\sim 1$, $i=1,2$, is a weight 
%exponent depending on $m,i,t,\theta$ and it is given by 
%the recursive relation:
%\beq\label{betam-2}
%\beta_{m-2,i}={1+d^0_i(t,\theta)-d^{m-2}_1(t,\theta),\qquad m\ge2}
%\eeq 
 % where $d^{m-2}_i$ are as in 
%\eqref{d1.m}-\eqref{d2.m} below, {$d^0_1=\frac{1}{2}$, $d_2^0=-1$}. 

\begin{remark}
We note that there is an $r^{-c}$, $c>0$, loss in the blow up behavior of 
$\gamma^{m-1}$ at the top derivatives, which is improved at each lower order until the order $s-3-4c$. 
At that (low) order the behaviour in terms of powers of $r$ is optimal. 
\end{remark}

\begin{definition}
Below we will be using the notation
\[
\gamma^{m-1}_{\rm rest}(\rho^{m-2},t,\theta):=
\gamma^{m-1}(\rho^{m-2},t,\theta)-\gamma^{\rm S}(\rho^{m-2},t,\theta).
\]
\end{definition}
(This implicitly also defines $\gamma^m_{\rm rest}$ in the coordinates $r,t,\theta$ also--this transition of considering the 
variables here with respct to the coordniate systems $\{ \rho^{m-2},t,\theta\}$, or $\{r,t,\theta \}$). 

Furthermore, at the lower orders we make a stronger claim: 
We claim that $\gamma^{m-1}$ has the following expansion at the 
lower orders $l\le s-3-4c$:
\begin{align}\label{gammam-1exp}
\begin{split}
\gamma^{m-1}(r,t,\theta)=&\,\alpha^{m-1}(t,\theta)\log r+
\gamma_{1}^{m-1}(r,t,\theta),\\
e_0 \gamma^{m-1}(r,t,\theta)=&-{(\frac{2M}{r}-1)^\frac{1}{2}\frac{\alpha^{m-1}(t,\theta)}{r}}+
{e_0}\gamma_{1}^{m-1}(r,t,\theta),
\end{split}
\end{align}
for a function $\alpha^{m-1}(t,\theta)\in H^{s-3-4c}$ that verifies the pointwise bound
\begin{align}\label{alpham-1assum}
\|\alpha^{m-1}(t,\theta)-1\|_{L^\infty}\leq C \eta \ll1.
\end{align}
Moreover  $\gamma_1^{m-1}(r,t,\theta)$ (the `leftover term')
satisfies 
%{alpham-1assum  gamma1m-1enest
\begin{align}\label{gamma1m-1enest}
r^3\|e_0\gamma^{m-1}_1\|^2_{H^{s-3-4c}}\leq&\, 
 B^2 
r^\frac{1}{2},
\end{align}
for all $r\in(0,2\epsilon]$. In particular, the `leftover term'
is strictly less singular (at the lower derivatives) 
 than the `main term' $\alpha^{m-1}(t,\theta)\log r$ in
  \eqref{gammam-1exp}.

\medskip

\emph{Improved behaviour of the Hessian terms}  

We make certain further claims on the functions $\gamma^{m-1}$, which 
stem from the AVTD behaviour of these solutions (at the orders below the top ones). 
These will also be important
in making the optimal inductive claims on the asymptotic behaviours 
of the connection coefficients $K^{m-1}_{ij}$, {since they appear in the RHS of the Riccati system \eqref{finredEVERic11it}-\eqref{finredEVERic12it}.} 

The terms we will seek to bound at the step $m$ are: 
\newcommand{\onabla}{\overline{\nabla}}
\beq
\begin{split}
\label{Riccati.RHSs}
{{\rm Hess}(\gamma^m)\in}\,\big\{&\,\onabla_{22}^{m-1}\gamma^m+({e}^{m-1}_2\gamma^{m-1})
({e}^{m-1}_2
\gamma^m),\;\;\onabla_{11}^{m-1}\gamma^m+({e}^{m-1}_1\gamma^{m-1})
({e}^{m-1}_1
\gamma^m),
\\
&\onabla_{12}^{m-1}\gamma^m+\frac{1}{2}
[({e}^{m-1}_1\gamma^{m-1})
({e}^{m-1}_2
\gamma^m)+({e}^{m-1}_2\gamma^{m-1})
({e}^{m-1}_1\gamma^m)]\,\big\}.
\end{split}
\eeq
We seek to bound these expressions in the spaces $H^l$, $l\le s-4$.

{The bounds we claim, for the corresponding quantities of the step $m-1$, are} as follows:  {
\begin{align}\label{inductionHess}
\begin{split}
\|{\rm Hess}(\gamma^{m-1})\|_{H^{s-3-4c}}\leq&\, B r^{-2-\frac{3}{4}},\\
\|{\rm Hess}(\gamma^{m-1})\|_{\dot{H}^l}\leq&\,B r^{-2-\frac{3}{4}+(s-3-l)\frac{1}{4}-c},\qquad s-3-4c< l\leq s-4,
\end{split}
\end{align}
cf. Lemma \ref{hessian.bounds.again}.}

We remark that these estimates are not optimal, but they suffice for us to prove the results we want. 
We note for example that we can prove that at the lower orders $l\le {\rm low}-{2}$, the third term in \eqref{Riccati.RHSs} above satisfies the stronger estimate: 
\beq
\label{K12.future}
\|\onabla_{12}^{m-1}\gamma^m+\frac{1}{2}
[({e}^{m-1}_1\gamma^{m-1})
({e}^{m-1}_2
\gamma^m)+({e}^{m-1}_2\gamma^{m-1})
({e}^{m-1}_1\gamma^m)]\|_{{H^{{\rm low}-2}}}\le B {r^{-\frac{1}{2}-\frac{1}{4}}},
\eeq  
{which would yield a much more improved behaviour for $K_{12}^{m-1}$ than the one we claim below, cf. \S\ref{subsec:REVESNGGimpl}.}
However, this is not needed to close our estimates (i.e., to prove our induction). 

\subsubsection{Inductive claims on $r^{m-1}_*(t,\theta)$, 
$\tilde{K}^{m-1}_{12}(t,\theta)$.}\label{subsec:indrstar}

The functions $r^{m-1}_*(t,\theta)$ are required to satisfy 
$\partial_\theta r^m_*=0$ at the two poles $\theta=0,\theta=\pi$. Then 
the functions $r^{m-1}_*(t,\theta)$, $\tilde{K}^{m-1}_{12}(t,\theta)$
 will satisfy the following bounds at the lower derivatives: 

For $k_1+k_2=k\le \rm low$ we have:
\begin{align}
\label{r*.ind.claim.low}
\sqrt{\int_{-\infty}^\infty\int_0^\pi |\partial^{k_1+k_2}_{t\dots t\theta\dots\theta}
(r^{m-1}_*-\e)|^2\sin\theta d\theta dt} \le&\, (D+1)C\eta \epsilon,\qquad k_1+k_2=k\le {\rm low},\\
\label{tK12.ind.claim.low}
\sqrt{\int_{-\infty}^\infty\int_0^\pi |\partial^{k_1+k_2}_{t\dots t\theta\dots
\theta}
(\tilde{K}^{m-1}_{12})|^2\sin\theta d\theta dt} \le&\, C\eta 
\epsilon^{-3/2+\frac{1}{4}},\qquad k_1+k_2=k\le {\rm low}. 
\end{align}

For the higher derivatives ${\rm low}< k\le s-4$, letting 
$h=k-(s-3-4c)$ we 
have: 
\begin{align}
\label{r*.ind.claim.high}
\sqrt{\int_{-\infty}^\infty\int_0^\pi  |\partial^{k_1+k_2}_{t\dots t\theta
\dots
\theta}(r^{m-1}_*-\e)
|^2\sin\theta d\theta dt} \le &\,3C\eta \epsilon^{1-\frac{1}{4}\cdot h},\\
\label{tK12.ind.claim.high}
\sqrt{\int_{-\infty}^\infty\int_0^\pi
|\partial^{k_1+k_2}_{t\dots t
 \theta\dots\theta}(\tilde{K}^{m-1}_{12})
|^2\sin\theta d\theta dt} \le &\,C\eta \epsilon^{-3/2+\frac{1}{4}-\frac{1}{4}\cdot h}.
\end{align}
Moreover we claim the following top-order estimate, when
 $k_1+k_2=s-3$ on the first two lines and $k_1+k_2$ on the last two lines:

\begin{align}
\label{r*.ind.claim.top}
\sqrt{\int_{-\infty}^\infty\int_0^\pi  |\partial^{k_1+k_2}_{t\dots t\theta
\dots
\theta}(r^{m-1}_*-\e)
|^2\sin\theta d\theta dt} \le &\,3C\eta \epsilon^{1-c},\\
\label{tK12.ind.claim.high}
\sqrt{\int_{-\infty}^\infty\int_0^\pi
|\partial^{k_1+k_2}_{t\dots t
 \theta\dots\theta}(\tilde{K}^{m-1}_{12})
|^2\sin\theta d\theta dt} \le &\,C\eta \epsilon^{-3/2-c},\\
\notag\sqrt{\int_{-\infty}^\infty\int_0^\pi | 
\partial^{k_1+k_2}_{t\dots t\theta\dots
\theta}[\partial_\theta(r^{m-1}_*-\e)
\cdot {\rm cot}\theta ]|^2\sin\theta d\theta dt} \le &\,3C\eta
 \epsilon^{1-c},\\
\label{tK12.ind.claim.top}
\sqrt{\int_{-\infty}^\infty\int_0^\pi
 | \partial^{k_1+k_2}_{t\dots t
 \theta\dots\theta}[(\tilde{K}^{m-1}_{12})\cdot {\rm cot}\theta ]
|^2\sin\theta d\theta dt} \le &\,C\eta \epsilon^{-3/2-c}.
\end{align}
\medskip
(the last inequality holds provided $k_2>0$). 

The  bounds claimed for $\tilde{K}^{m-1}_{12}(t,\theta)$ are also 
claimed for the expression 
$[\te^{m-1}_2 \tilde{K}^{m-1}_{12}(t,\theta)\cdot \te^{m-1}_2(r^{m-1}_*)(t,\theta)]$, 
with a factor of 2 in the RHS.

We also have some inductive claims on the $\te^{m-1}_2$-derivative of the 
function $r^m_*$; at the low derivatives $k\le \rm low$ 
our claim is as follows: 
\beq
\label{te2.r*.low}
\sqrt{\int_{-\infty}^\infty\int_0^\pi |\partial^{k_1+k_2}_{t\dots t\theta
\dots\theta}
[\te_2^{m-1}(r_*^{m-1}-\e)]|^2\sin\theta d\theta dt}\le 3C\eta 
\e^{-\frac{1}{2}}.
\eeq
At the high derivatives ${\rm low}< k\le s-3$ the corresponding claim is: 
\beq
\label{te2.r*.high}
\sqrt{\int_{-\infty}^\infty\int_0^\pi  |\partial^{k_1+k_2}_{t\dots t\theta
\dots
\theta}\te^{m-1}_2({r^{m-1}_*}-\e)
|^2\sin\theta d\theta dt} \le 3C\eta \epsilon^{{-\frac{1}{2}}-\frac{1}{4}
\cdot h}.
\eeq

 Note that
 the 
equations \eqref{key.link2}, 
\eqref{key.link2'}  for $r^{m-1}_*$ in the REVESNGG  then imply that the 
function $r^{m-1}_*$ must  have an even  expansion (in $\theta$) 
at the poles $\theta=0,\pi$.  We also note that as a consequence of the 
equations,  the function $\tilde{K}^{m-1}_{12}$ will also vanish 
at the poles, along with all its derivatives which are even in $\theta$. 

\subsubsection{Inductive claims for $K^{m-1}$}
\label{sec:Kassns}

We commence here with the behaviour of the components 
$K^{m-1}_{ij}(r,t,\theta)$ at the lower orders.  

We observe that in view of the AVTD-type assumption \eqref{inductionHess} at the end of the 
inductive claim on $\gamma^{m-1}$, the most singular terms in the RHSs of 
\eqref{finredEVERic11it},  \eqref{finredEVERic22it}, 
\eqref{finredEVERic12it} (in terms of behaviour in $r$) are the terms 
$e_0^2\gamma^m, (e_0\gamma^m)^2$ ({provided we can confirm the above inductive claims for $\gamma^m$}). This is true at all orders below the 
top ones. 
\medskip

At the lowest orders, {the validity of the} inductive claims 
\eqref{gammam-1exp}, 
\eqref{gamma1m-1enest}, as well as \eqref{inductionHess}, {at the $m^{\rm th}$ step}, 
imply that the RHSs of these equations 
satisfy the following asymptotic expansion in $H^{low}$:
\begin{align*}
\|{\rm RHS}[\eqref{finredEVERic11it}]{-[\frac{3}{2}\alpha^m-
(\alpha^m)^2]} 2Mr^{-3}\|_{H^{\rm low}}=&\, {O(r^{-3+\frac{1}{4}}),}\\
\|{\rm RHS}[\eqref{finredEVERic22it}]-[\frac{3}{2}\alpha^m-
(\alpha^m)^2] 2Mr^{-3} \|_{H^{\rm low}}= &\,O(r^{-3+\frac{1}{4}}),\\
\|{\rm RHS}[\eqref{finredEVERic12it}]\|_{H^{\rm low}}= &\,O(r^{-3+\frac{1}{4}}).
\end{align*}
In particular, this implies that \emph{formally}, 
the equations \eqref{finredEVERic11it},  \eqref{finredEVERic22it}, 
\eqref{finredEVERic12it} admit solutions of the form: 
\begin{align}\label{K11heur}
K_{11}^{m-1}(r,t,\theta)=:&\,\frac{d_1^{m-1}(t,\theta)\sqrt{2M}}{r^\frac{3}{2}}+u_{11}^{m-1}(r,t,\theta), \\
\label{K22heur}
 K_{22}^{m-1}(r,t,\theta)=:&\,\frac{d_2^{m-1}(t,\theta)\sqrt{2M}}{r^\frac{3}{2}}+u_{22}^{m-1}(r,t,\theta),
\end{align}

\begin{align}
 \label{K12heur}
 K_{12}^{m-1}(r,t,\theta)=:u_{12}^{m-1}(r,t,\theta),
\end{align}
where
\begin{align}
\label{d1.m} 
d_1^{m-1}(t,
\theta):=&\,\frac{\alpha^{m-1}(t,\theta)-\frac{3}{2}+\sqrt{(\alpha^{m-1}
(t,\theta)
-\frac{3}{2})^2
+6\alpha^{m-1}(t,\theta)-4|\alpha^{m-1}(t,\theta)|^2}}{2},\\
\label{d2.m} d_2^{m-1}(t,\theta):=&\,
\frac{\alpha^{m-1}(t,\theta)-\frac{3}{2}-\sqrt{(\alpha^{m-1}(t,\theta)
-\frac{3}{2})^2+6\alpha^{m-1}(t,\theta)
-4|\alpha^{m-1}(t,\theta)|^2}}{2},
\end{align}
and furthermore,
 the ``remainder'' 
 terms $u_{22}^{m-1}(r,t,\theta), u_{11}^{m-1}(r,t,\theta)$,
$u_{12}^{m-1}(r,t,\theta)$ are $O(r^{-\frac{3}{2}+{\frac{1}{4}}})$ in 
$H^{\rm low}$. 
\medskip

 Our claim at the lower orders is that this \emph{formal} solution is in fact true. 
This will be part of our inductive claim at the lower 
orders:\footnote{At the 
higher orders, our estimates do not distinguish between a \emph{leading order} and a \emph{remainder} .}

Then the inductive claim that we make for $K^{m-1}_{ij}(r,t,\theta)$, 
$i,j=1,2$, is at the lower orders: 
\begin{align}
\label{inductiontrKopt}\|u^{m-1}_{ij}(r,t,\theta)\|_{\dot{H}^l}\leq 5
\cdot B r^{-\frac{3}{2}+\frac{1}{4}},\qquad i,j=1,2,\quad l \le {s-3-4c},
\end{align}
while at the higher orders we claim instead:
\begin{align}
\label{inductiontrKlow}&\|(K_{ii}^{m-1}-K_{ii}^{S})(r,t,\theta)\|_{\dot{H}^l}\leq 2\cdot C{\eta}\cdot  r^{-\frac{3}{2}+(s-3-l)\frac{1}{4}-c},\qquad\text{for}\;\;\;s-3-4c<l\leq s-4\\
\notag&\|K_{12}^{m-1}(r,t,\theta)\|_{\dot{H}^l}\leq 2\cdot C{\eta}\cdot  
r^{-\frac{3}{2}+\frac{1}{4}+(s-3-l)\frac{1}{4}-c},\qquad\text{for}\;\;\;s-3-4c<l\leq s-4.
\end{align}
(In particular there is no division into a ``principal term'' $d^{m-1}_i(t,\theta)r^{-\frac{3}{2}}$ and a 
``remainder term'' $u^{m-1}_{ij}$ beyond   the lower orders). 
The top order estimates for $K^{m-1}$ are when $|I|=s-3$ 
\emph{and} we allow the  possibility  of
allowing the singular weight ${\rm cot}\theta$ in our norm.

Also, at the top order we make separate claims for 
$K^{m-1}_{22},K^{m-1}_{22}$, and $K_{11}^{m-1}$.
 For the first two we claim, for all $J_0\le 2$:
\begin{align}\label{inductiontrKtopmixed}
\begin{split}
\|e_0^{J_0}(K_{22}^{m-1}-K_{22}^{S})(r,t,\theta)\|_{\dot{H}^{s-3}}\leq&\, 3\cdot 
C {\eta}\cdot 
 r^{-\frac{3}{2}-\frac{3}{2}(J_0) -c},\\
 \|e_0^{J_0}(K_{12}^{m-1})(r,t,\theta)\|_{\dot{H}^{s-3}}\leq&\, 3\cdot 
C {\eta}\cdot 
 r^{-\frac{3}{2}-\frac{3}{2}(J_0) -c}.
 \end{split}
\end{align}
The enhanced version of this claim with the singular weight at the poles 
is as follows: 
\begin{align}\label{inductiontrKtopmixed.sing}
\begin{split}
\|e_0^{J_0}\partial_\Theta(K_{22}^{m-1}-K_{22}^{S})(r,t,\theta)\cdot 
{\rm cot}\theta \|_{\dot{H}^{s-4}}\leq&\, 3\cdot 
C {\eta}\cdot 
 r^{-\frac{3}{2}-\frac{3}{2}(J_0) -c},\\
 \|e_0^{J_0}K_{12}^{m-1}(r,t,\theta)\cdot 
{\rm cot}\theta 
 \|_{\dot{H}^{s-3}}\leq&\, 3\cdot 
C {\eta}\cdot 
 r^{-\frac{3}{2}-\frac{3}{2}(J_0) -c}.
 \end{split}
\end{align}
\medskip

The claim at the top order for $K^{m-1}_{11}(r,t,\theta)$ is slightly weaker: In particular we claim the above bounds, for all top-order derivatives 
$\partial^IK^{m-1}_{11}$, $|I|=s-3$ 
\emph{except} when all the $s-3$ derivatives are in a special direction 
$\partial_T=\partial_{T^{m-1}}$ which will be specified in  \S\ref{sec:met.Christ.bds}.  
In particular we claim:

\begin{align}\label{inductiontrKtopmixed2}
\begin{split}
{ \big\| e_0^{J_0}\big[\partial^I
[K_{11}^{m-1}-K_{11}^{S}]]\big\|_{L^2}}
\leq 3\cdot C \eta\cdot 
r^{-\frac{3}{2}-\frac{3}{2}|J_0| -c},
\end{split}
\end{align}
for all $|J_0|\le 2$ and for all derivatives of order $s-3$ \emph{except} for the case 
where all $s-3$ directions are in the direction $\partial_T=\partial_{T^{m-1}}$ 
introduced in \S\ref{sec:met.Christ.bds}. 

Furthermore, in analogy with the enhanced top order-estimates
 \eqref{inductiontrKtopmixed.sing} for $K^{m-1}_{22}$ we have the inductive claim: 

\begin{align}\label{inductiontrKtopmixed.sing2}
\begin{split}
\|e_0^{J_0}\partial_\Theta(K_{11}^{m-1}-K_{11}^{S})(r,t,\theta)\cdot 
{\rm cot}\theta \|_{\dot{H}^{s-4}}\leq&\, 8\cdot 
C {\eta}\cdot 
 r^{-\frac{3}{2}-\frac{3}{2}(|J_0|) -c}.
 \end{split}
\end{align}

Let us make a few remarks here about the top order energy estimates on
 these key components to our analysis:

\begin{remark}
\label{top.remarks.orders}
We note that at the top orders for $K^{m-1}$ we include the 
singular weight ${\rm cot}\theta$ in certain of our 
 our top order  estimates. 

 This is in contrast to the estimates for 
$\gamma^{m-1}$ where this singular weight is absent, even at the top 
orders.  The reason we are able to control $K^{m-1}$ with this extra weight 
 is because of the \emph{energy} of $\gamma$ that we control 
 at the top orders involves the spatial direction $e_2$ (which is parallel to $\partial_\Theta$); this will allow 
us to control the singular weight with an application of the Hardy
 inequality further down.

Even beyond this issue, we note the absence of control of \emph{spatial} 
parts of the metric iterates $g^{m-1}_{AB}$, $A,B=T,\Theta$, from our inductive assumptions. 
This ``spatial'' control (\emph{including} a suitable version of the 
 singular weights at the top orders), will  be obtained in the next 
 section, where the control of the spatial components of the metric 
 $g^{m-1}$ will be derived from the bounds we have on $K^{m-1}_{ij}$ and 
 suitable transport equations in an $m$-dependent gauge that we control.

 The derivation  of how 
 control of $K^{m-1}$ and the initial data parameters
  $r^{m-1}_*$, $\tilde{K}^{m-1}_{12}$ yields  spatial regularity of 
  $g^{m-1}$   will be obtained in sections \ref{sec_geom.params} and \ref{sec:met.Christ.bds}.  
\end{remark}  
 
\begin{remark}
\label{top.remarks}
We note that at the very top orders, 
 the 
factors $5\cdot$, $8\cdot$, which are to be compared with the absence of such 
factors in {\eqref{inductiongammalow}-}\eqref{alpham-1assum}, is due to the algebraic structure of the (differentiated) 
Riccati equations, at the middle and top orders. In particular, the  number and coefficients of the 
 most singular terms\footnote{These are related to the notion of ``borderline terms'' we introduce below.}
  are what leads to this extra factor. 
 
 \end{remark}
\medskip

\subsubsection{The asymptotically CMC property of level sets of $r$. }
We remark that by the Sobolev embedding 
$H^2{(\mathbb{S}^2_{\theta,\phi}\times\mathbb{R})}\hookrightarrow L^\infty{(\mathbb{S}^2_{\theta,\phi}\times\mathbb{R})}$ and 
(\ref{inductiontrKopt}) we also have the pointwise bound 
$\|u^{m-1}_{ij}\|_{L^\infty}\le {C_{\rm Sob}}\cdot5Br^{-\frac{3}{2}+\frac{1}{4}}$, since  $s\ge 3+4c+2$. ($C_{\rm Sob}$ is the constant in the
above  Sobolev embedding). 

On the other hand, {by the formula $K_{33}^{m-1}=e_0\gamma^{m-1}$ in polarized axial symmetry and the claim \eqref{gamma1m-1enest}, we
 also deduce a bound on $K^{m-1}_{33}$:\footnote{$K_{31}^{m-1},K_{32}^{m-1}$ are automatically zero.}} 
\begin{align}\label{K33heur}
\begin{split}
K_{33}^{m-1}(r,t,\theta)=&\;e_0\gamma^{m-1}(r,t,\theta)=:
-\frac{\alpha^{m-1}(t,\theta)\sqrt{2M}}{r^\frac{3}{2}}+u_{33}^m(r,t,\theta)\\
\overset{(\ref{gammam-1exp})}{=}&-(\frac{2M}{r}-1)^\frac{1}{2}\frac{\alpha^{m-1}(t,\theta)}{r}+e_0\gamma^{m-1}_1(r,t,\theta),
\end{split}
\end{align}
where
\begin{align}\label{u33m-1enest}
\|u^{m-1}_{33}\|_{H^{s-3-4c}}\leq \|e_0\gamma^{m-1}_1\|_{H^{s-3-4c}}
+\frac{2}{\sqrt{2M}}\|\alpha^{m-1}\|_{H^{s-3-4c}}r^{-\frac{1}{2}}\leq  {5\cdot}B
r^{-\frac{3}{2}+\frac{1}{4}},
\end{align}
and in particular, $\|u^{m-1}_{33}\|_{L^\infty}\leq 
C_{\rm Sob}\cdot Br^{-\frac{3}{2}+\frac{1}{4}}$, for all
 $r\in(0,2\epsilon]$.

Observe, in view of the behavior of $K^{m-1}_{ij}$ at lower orders,  that by taking the 3-dim trace of $K^{m-1}$ we obtain
\begin{align}\label{trKheur}
\text{tr}_{\overline{g}^{m-1}}K^{m-1}=-\frac{3}{2}\frac{\sqrt{2M}}{r^\frac{3}{2}}+\text{tr}_{\overline{g}^{m-1}}u^{m-1}.
\end{align}
Thus, by (\ref{inductiontrKopt}), (\ref{u33m-1enest}), $\text{tr}_{\overline{g}^{m-1}}K^{m-1}$ is \emph{constant} to order $r^{-\frac{3}{2}+\frac{1}{4}}$ in the norm $\|\cdot\|_{H^{s-3-4c}}$, as $r\rightarrow0$. This uniformity of $\text{tr}_{\overline{g}^{m-1}}K^{m-1}$ plays a central role in the derivations of the energy estimates for $\gamma^m$ below and it is one of the key ingredients to deriving its logarithmic blow up, see \S\ref{sec:bott-lowenestgammam}.
\medskip

In particular,  a consequence of our inductive assumptions is that: 

\beq
\label{tru.explicit}
\|{\rm tr}_{\overline{g}^{m-1}} u^{m-1} \|_{H^{s-3-4c}}\le 15
 Br^{-\frac{3}{2}+\frac{1}{4}},
\eeq
for all $r\in(0,2\epsilon]$.
\medskip 

{{\bf Improved behaviour of $K_{12}^{m-1}$}: Up to order ${\rm low}-2$, $K_{12}^{m-1}$ in fact satisfies the stronger bound
\begin{align}\label{K12m-1imp}
\|K_{12}^{m-1}\|_{H^{{\rm low}-2}}\leq 5\cdot Br^{-\frac{1}{2}-\frac{1}{4}}
\end{align}
Although we do not need the improved behaviour of $K_{12}^{m-1}$ to close our estimates below, we find it convenient to put it down here, in order to infer directly the better
 behaviour of the metric in the $r,T,\Theta$ coordinates, see Theorem \ref{thm_strict} and its proof in \S\ref{subsec:REVESNGGimpl}. 
We verify \eqref{K12m-1imp} for the step $m$ in \S\ref{subsec:K12K22asym}.}

\subsubsection{Remark on the regularity spaces}
\label{sec:reg.spaces}

As seen in our inductive statements,  this choice of function spaces comes 
at the cost of more singular 
estimates at the top order (relative to two orders below the top).
 This can be seen, for example, by comparing 
\eqref{inductiongammatopmixed} with \eqref{inductiongammalow}. {The worse 
behaviour in $r$ at the higher orders} is remedied by a descent scheme in 
the $r$-weights, used at many points in 
this paper, which in turn exploits the AVTD behaviour of our solution
in an essential way.

\subsubsection{The passage to the limit $m\to \infty$.}

The above estimates suffice to show the boundedness of the iterates in the 
REVESNGG system. We can then consider \emph{differences} between
 corresponding variables in successive steps in our iteration, establishing 
 that  the iteration defines a contraction mapping for the terms in the 
 REVESNGG system in the corresponding spaces ($\gamma^m-\gamma^{m-1}$ in 
$H^{s-1}$, etc). This then furnishes a solution to the coupled 
REVESNGG system with our prescribed {(smooth)} initial data.
Then, the standard uniqueness result for the EVE implies that this is the 
unique {(smooth)} solution to 
our problem. Moreover, 
 at the lower derivatives $H^{\rm low{-1}}$, the solution will
display the (optimal) asymptotic behaviour that 
was claimed in our theorems.

This contraction mapping argument  is a
 straightforward modification of our argument to derive the claimed bounds;
 we just subtract the corresponding equations for each of the parameters in 
 the REVESNGG. We will not perform this here, since it would be 
 notationally very cumbersome and essentially a straightforward 
 modification of our arguments for boundedness.

\subsubsection{The base case of the inductive step.}\label{subsubsec:0step}

We proceed to prove the above estimates by induction, for all $m\in\mathbb{N}$. In 
particular we assume, that all claims listed above hold for {all steps up to} $m-1$ and we seek to derive 
the same claims for step $m$. We need to check that the claims hold at the zeroth step 
also: 
\medskip

At the zeroth step, $\gamma^0,K^0$ are equal to their Schwarzschild counterparts:
\begin{align}\label{gamma0trK0ind}
\gamma^0=\log r+\log \sin\theta, &&K^0_{11}=\frac{M}{r^2}
(\frac{2M}{r}-1)^{-\frac{1}{2}},\qquad K_{22}^0=K^0_{33}=
-\frac{1}{r}(\frac{2M}{r}-1)^\frac{1}{2}
\end{align}
Also, we have
\begin{align}\label{A0R0i0}
a^0_{\theta 1}=a^0_{t 2}=0, a^0_{\theta 2}= r, a^0_{t1}=(\frac{2M}{r}-1)^{\frac{1}{2}}. &&
\end{align}
The initial hypersurface is $\{r=r_*^0=\epsilon\}$, and $(\rho^0=r,t,\theta)$ are the 
classical coordinates in Schwarzschild. Hence, the above claims
 hold trivially at the zeroth step in our induction.
\medskip

Prior to proceeding with verifying the inductive step $m$, we will note certain consequences
of the inductive assumptions for the step $m-1$. (These consequences will be
 used
in the verification of the inductive step $m$).

\subsection{Basic Analysis tools.}

We  put down some very basic tools on which we rely, such as the Sobolev and generalized Gronwall inequalities 
and certain Fuchsian-type ODE and transport-equation type estimates, which are used 
frequently throughout this paper. 
\medskip

\subsubsection{A generalized Hardy inequality.}

We will frequently use the following Hardy-type inequality, whose proof is in the Appendix of \cite{IonKl}. 
Recall that $\opartial_\theta:=\partial_\theta +\frac{cos\theta}{sin\theta}$. Then: 

\begin{lemma}
\label{gen.Hardy}
For any  function $v(\theta)\in H^1_{\rm loc}(0,\pi)$ the following holds: 
\beq
\int_0^\pi |\opartial_\theta v|^2sin\theta d\theta + \int_0^\pi |v|^2sin\theta d\theta
\ge C_{\rm Hardy} \int_0^\pi | v|^2(sin\theta)^{-1} d\theta
\eeq
\end{lemma}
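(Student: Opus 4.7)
} The plan is to reduce the stated inequality to a standard one--dimensional weighted Hardy estimate via the substitution $w := \sin\theta \cdot v$. The key algebraic observation is the factorisation
\[
\opartial_\theta v \;=\; \partial_\theta v + \frac{\cos\theta}{\sin\theta}\,v \;=\; \frac{1}{\sin\theta}\,\partial_\theta(\sin\theta \,v),
\]
so that $(\opartial_\theta v)^2 \sin\theta = (\partial_\theta w)^2/\sin\theta$. Under the same substitution, $v^2\sin\theta = w^2/\sin\theta$ and $v^2/\sin\theta = w^2/\sin^3\theta$, so the claim becomes the inequality
\[
\int_0^\pi \frac{(\partial_\theta w)^2}{\sin\theta}\,d\theta + \int_0^\pi \frac{w^2}{\sin\theta}\,d\theta \;\ge\; C_{\rm Hardy}\int_0^\pi \frac{w^2}{\sin^3\theta}\,d\theta
\]
for $w\in H^1_{\rm loc}(0,\pi)$. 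If the right-hand side is infinite, the inequality is trivial once we show it can only occur when the left-hand side is infinite as well (a short asymptotic check near each pole, using that $w(\theta)$ grows no slower than its boundary value times $\sin\theta$, gives this).

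Assuming both sides finite, the core identity is obtained by writing $1/\sin^3\theta = -(\cot\theta)'/\sin\theta$ and integrating by parts:
\[
\int_0^\pi \frac{w^2}{\sin^3\theta}\,d\theta \;=\; -\bigg[\frac{w^2\cos\theta}{\sin^2\theta}\bigg]_0^\pi + \int_0^\pi\!\bigg(\frac{2ww'}{\sin\theta} - \frac{w^2\cos\theta}{\sin^2\theta}\bigg)\frac{\cos\theta}{\sin\theta}\,d\theta.
\]
The boundary term vanishes: finiteness of $\int w^2/\sin^3\theta$ forces $w(\theta) = o(\sqrt{\sin\theta}\cdot\sin\theta)$ at each pole (in a weak pointwise sense that can be made precise with the $H^1_{\rm loc}$ hypothesis on $v$), which easily kills $w^2\cos\theta/\sin^2\theta$ at $\theta = 0,\pi$. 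Rearranging and using $\cos^2\theta = 1-\sin^2\theta$ yields
\[
2\int_0^\pi \frac{w^2}{\sin^3\theta}\,d\theta \;-\; \int_0^\pi \frac{w^2}{\sin\theta}\,d\theta \;\le\; 2\int_0^\pi \frac{ww'\cos\theta}{\sin^2\theta}\,d\theta.
\]
A Cauchy--Schwarz on the right gives $2(\int w^2/\sin^3\theta)^{1/2}(\int (w')^2/\sin\theta)^{1/2}$, and an AM--GM with parameter $\varepsilon = 1$ absorbs the $\int w^2/\sin^3\theta$ factor into the left-hand side. This produces the inequality with constant $C_{\rm Hardy} = 1$.

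The main (and only serious) obstacle is the justification that the boundary terms arising from the integration by parts vanish, i.e.\ that $w$ decays at the required rate at the two poles under just $H^1_{\rm loc}$ regularity. This is handled by a cutoff/approximation argument: one proves the inequality first for $w$ compactly supported in $(0,\pi)$, where no boundary issue arises, and then extends by density using that only finiteness of the left-hand side is required (the inequality being trivial otherwise). The rest of the argument is a straightforward integration by parts and Cauchy--Schwarz calculation.
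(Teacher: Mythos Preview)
Your proof is correct. The substitution $w=\sin\theta\cdot v$ cleanly reduces the operator $\opartial_\theta$ to a plain derivative, and the integration-by-parts/Cauchy--Schwarz/AM--GM chain goes through to give $C_{\rm Hardy}=1$; the cutoff argument to dispose of boundary terms is the standard way to handle the $H^1_{\rm loc}$ hypothesis.

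Note however that the paper does not actually supply its own proof of this lemma: it simply cites the Appendix of Ionescu--Klainerman \cite{IonKl}. So there is no ``paper's proof'' to compare against here; your argument is a self-contained derivation of what the paper imports as a black box. One small remark: your aside that ``if the right-hand side is infinite, the inequality is trivial once we show it can only occur when the left-hand side is infinite'' is a bit circular as phrased (that implication \emph{is} the content of the inequality in that regime), but your final paragraph already gives the correct resolution via approximation by compactly supported $w$ and Fatou, so this is only a matter of presentation.
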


We also note  a consequence of the above, which follows by Cauchy-Schwarz: 

\beq
\label{gen/Hardy.conseq}
\int_0^\pi |\opartial_\theta v|^2sin\theta d\theta + \int_0^\pi |v|^2sin\theta d\theta
\ge C'_{\rm Hardy}\int_0^\pi |\partial_\theta  v|^2sin\theta d\theta
\eeq

\subsection{A generalized Gronwall inequality.}
 We will frequently use, sometimes without particular mention, the 
 classical Sobolev inequality
\begin{align}\label{Sob}
\|F(t,\theta)\|_{L^\infty_{t,\theta}(\mathbb{S}^2\times\mathbb{R})}
\leq C_{\rm Sob}\|F(t,\theta)\|_{H^2(\mathbb{S}^2\times\mathbb{R})},
\end{align}
where $C_{\rm Sob}>0$ is a  universal constant.

We also recall the following variant of the standard Gronwall inequality: 

\begin{lemma}\label{lem:Gron}
	Let $F,F_0,G,H:{(0,\delta]}\to\mathbb{R}$ be continuous functions, 
	$F_0$ non-increasing, satisfying
	\begin{align}\label{Gronineq}
	F^2(r)\leq F_0^2(r)+\int^{\delta}_r |H(\tau)| F^2(\tau) d\tau
	+\int^{\delta}_r |G(\tau)| |F(\tau)| d\tau,&&r\in (0,\delta).
	\end{align}
	Then $F$ verifies the bound:
	\begin{align}\label{Gronineq2}
	|F(r)|\leq e^{\int^{\delta}_r\frac{1}{2}|H(\tau)|d\tau}
	\bigg(|F_0(r)|+\frac{1}{2}\int^{\delta}_r|G(\tau)|d\tau\bigg),
	\end{align}
	for all $r\in(0,\delta]$.
\end{lemma}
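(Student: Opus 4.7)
The plan is to reduce the quadratic-in-$F$ inequality to a linear differential inequality for $\Psi(r):=\sqrt{\Phi(r)}$, where
\[
\Phi(r):=F_0^2(r)+\int_r^\delta |H(\tau)|F^2(\tau)\,d\tau+\int_r^\delta |G(\tau)||F(\tau)|\,d\tau.
\]
By hypothesis $F^2(r)\le \Phi(r)$, so $|F(r)|\le \Psi(r)$ and it suffices to estimate $\Psi$. Note that $\Phi$ is non-increasing in $r$ because the integral terms grow as $r$ decreases and $F_0^2$ is non-increasing (WLOG $F_0\ge 0$, replacing $F_0$ by $|F_0|$ if necessary; if $F_0$ can change sign the argument can be run by working with $|F_0|$ after observing it is also non-increasing in the relevant cases, or by adding a small regularization). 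To avoid dividing by zero, I would first work with $\Psi_\eta:=\sqrt{\Phi+\eta}$ for $\eta>0$ and let $\eta\to 0^+$ at the end.

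Differentiating, $2\Psi\Psi'=\Phi'=(F_0^2)'(r)-|H(r)|F^2(r)-|G(r)||F(r)|$. Using $F^2\le\Psi^2$, $|F|\le\Psi$, and the pointwise inequality $F_0\le \Psi$ together with $(F_0^2)'\le 0$, one deduces
\[
-\frac{(F_0^2)'(r)}{2\Psi(r)}\le -\frac{(F_0^2)'(r)}{2F_0(r)}=-F_0'(r),
\]
yielding the linear differential inequality
\[
\Psi'(r)\ge F_0'(r)-\tfrac12|H(r)|\Psi(r)-\tfrac12|G(r)|.
\]

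Now I would apply the integrating factor $\mu(r):=\exp\bigl(-\tfrac12\int_r^\delta|H(\tau)|d\tau\bigr)$, so that $\mu'(r)=\tfrac12|H(r)|\mu(r)$ and $\mu(\delta)=1$. Multiplying by $\mu$ gives
\[
\frac{d}{dr}[\mu(r)\Psi(r)]\ge \mu(r)F_0'(r)-\tfrac12\mu(r)|G(r)|.
\]
Integrating from $r$ to $\delta$, using $\Psi(\delta)=F_0(\delta)$, the crude bounds $\mu\le 1$ and $-F_0'\ge 0$, telescoping the $F_0$ contributions, and finally dividing by $\mu(r)$, one obtains
\[
\Psi(r)\le e^{\frac12\int_r^\delta|H(\tau)|d\tau}\Bigl(F_0(r)+\tfrac12\int_r^\delta|G(\tau)|d\tau\Bigr),
\]
which is the claim. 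The only delicate point is the division by $\Psi$ when $\Phi$ could vanish; this is the main technical obstacle and is handled by the $\eta$-regularization described above, after which the estimate passes to the limit by the continuity and monotonicity of all quantities involved.
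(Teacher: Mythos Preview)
Your approach is correct and follows the same core idea as the paper—pass to $\Psi=\sqrt{\Phi}$ and reduce the quadratic inequality to a linear one—but the execution differs in one instructive way.

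The paper does not keep $F_0^2(r)$ inside the auxiliary function. Instead it \emph{freezes} the target point: for a fixed $r_0$ it sets
\[
A(r):=F_0^2(r_0)+\int_r^\delta |H|F^2\,d\tau+\int_r^\delta |G||F|\,d\tau,\qquad r\in[r_0,\delta],
\]
so that $A$ is $C^1$ with $A'=-|H|F^2-|G||F|$ exactly, and the monotonicity of $F_0$ is used only once, to ensure $F^2(r)\le A(r)$ on $[r_0,\delta]$. One then gets the clean differential inequality $2(\sqrt{A})'\ge -|H|\sqrt{A}-|G|$, integrates to an integral inequality, and applies the standard Grönwall lemma; evaluating at $r=r_0$ gives the result.

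Your route keeps $F_0^2(r)$ variable and compensates via the inequality $\tfrac{(F_0^2)'}{2\Psi}\ge F_0'$, then uses an integrating factor directly. This is fine, but it tacitly uses that $F_0$ is differentiable (or at least absolutely continuous) when you write $(F_0^2)'$ and later telescope $\int \mu F_0'$; the hypothesis only gives $F_0$ continuous and non-increasing. The freezing trick in the paper sidesteps this entirely and also makes the $\eta$-regularisation unnecessary in that step. Otherwise the two arguments are equivalent, and your integrating-factor computation is a perfectly good substitute for the paper's appeal to standard Grönwall.
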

\begin{proof}
	Fix $r_0\in (0,\delta]$ and 
	let $A(r)=F_0^2(r_0)+\int^{\delta}_r |H(\tau)| F^2(\tau) 
	d\tau
	+\int^{\delta}_r |G(\tau)| |F(\tau)| d\tau$, $r\in[r_0,\delta]$. 
	Since $F_0^2(r)$ is non-increasing, we have $F^2(r)\leq A(r)$, 
	for all $r\in[r_0,\delta]$ and hence
	\begin{align*}
	\partial_rA(r)=-|H(r)|F^2(r)-|G(r)||F(r)|\ge -|H(r)|A(r)-|G(r)|\sqrt{A(r)},\qquad r\in[r_0,\delta],
	\end{align*}
	or
	\begin{align*}
	2\partial_r\sqrt{A(r)}\ge -|H(\tau)|\sqrt{A(r)}-|G(r)|,\qquad r
	\in[r_0,\delta].
	\end{align*}
	Hence, integrating in $[r,\delta]$ we obtain
	\begin{align*}
	\sqrt{A(r)}\leq|F_0(r_0)|+\frac{1}{2}\int^\delta_r|G(\tau)|
	d\tau+\frac{1}{2}\int^\delta_r|H(\tau)|\sqrt{A(\tau)}d\tau,\qquad 
	r\in[r_0,\delta).
	\end{align*}
	The standard Gronwall's inequality now implies
	\begin{align*}
	|F(r)|\overset{(\ref{Gronineq})}{\leq}\sqrt{A(r)}\leq 
	e^{\int^\delta_r\frac{1}{2}|H(\tau)|d\tau}\bigg(|F_0(r_0)|
	+\frac{1}{2}\int^\delta_r|G(\tau)|d\tau\bigg),
	\end{align*}
	for all $r\in[r_0,\delta]$. Evaluating the preceding inequality at $r=r_0$, 
	we validate (\ref{Gronineq2}) for $r=r_0$. Since 
	$r_0\in(0,\delta]$ is arbitrary, the conclusion follows.
\end{proof}

 \medskip

Let us put down some standard elliptic estimates on $\mathbb{S}^2$, which help us 
in 
using the operator $\DS$ to obtain our derived estimates in the usual Sobolev 
spaces, 
instead of $\partial_\theta$. 
First we note that for any $\phi$-independent function $v\in H^2(\mathbb{S}^2)$ we have: 

\beq
\label{IBP.easy}
\int_{\mathbb{S}^2}|\partial_\theta v|^2 dV_{\mathbb{S}^2}=-\int_{\mathbb{S}^2}
\DS v
\cdot v dV_{\mathbb{S}^2}\le \delta \int_{\mathbb{S}^2} |\DS v|^2 
dV_{\mathbb{S}^2}+(4\delta)^{-1} \int_{\mathbb{S}^2} |v|^2 dV_{\mathbb{S}^2},
\eeq
for any $\delta>0$. We also recall the standard elliptic estimate (for 
$\phi$-independent functions $v$ over $\mathbb{S}^2$) : 

\beq
\label{basic.elliptic}
\int_{\mathbb{S}^2}|\partial^2_{\theta\theta} v|^2 dV_{\mathbb{S}^2}+\int_{\mathbb{S}^2}|\partial_{\theta} v\cdot {\rm cot}\theta|^2 dV_{\mathbb{S}^2}=
\int_{\mathbb{S}^2}|\DS v|^2 dV_{\mathbb{S}^2}+\int_{\mathbb{S}^2}
|\partial_\theta v|^2 
dV_{\mathbb{S}^2}.
\eeq
Combining this with \eqref{IBP.easy} with $\delta=\frac{1}{2}$ we find that
control of $\int_{\mathbb{S}^2}|\DS v|^2 dV_{\mathbb{S}^2}$ and 
$\int_{\mathbb{S}^2}| v|^2 dV_{\mathbb{S}^2}$ implies control of the 
$H^2(\mathbb{S}^2)$ norm. In particular for $\gamma^m_{\rm rest}$ and for each 
multi-index $I=(2k_1, k_2)$ (where there is an even number of $\phi$-indices), 
 it suffices to derive our claimed bounds for 
 $\opartial^I\gamma^m_{\rm rest}=(\DS)^{k_1}\partial^{k_2}_{t\dots t}\gamma^m_{\rm rest}$:

 \begin{lemma}
\label{lem:interpolation}
On any level set $\Sigma_{r=\delta(t,\theta)}$ let $\oH^k$ be the Sobolev 
spaces built with respect to the operators $\DS,\partial_t$ (with the 
volume form $sin\theta d\theta dt$). Let $H^k$ be the standard Sobolev 
spaces built out of $\partial_t,\partial_\theta$ (with the same volume
 form). 
Consider a function $v(t,\theta)$ and which is bounded in 
$\oH^k[\Sigma_{\rho^m=\tau}]$, where $k$ is even.
Then the same function is bounded in $H^k[\Sigma_{\rho^m=\tau}]$, with the same bounds, up to a universal multiplicative constant. 
\end{lemma}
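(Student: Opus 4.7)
The plan is in two steps: first reduce the inequality to the spherical slices, then iterate the identity \eqref{basic.elliptic} together with the generalized Hardy inequality \eqref{gen/Hardy.conseq}.

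Since $\partial_t$ commutes with $\DS$, with $\partial_\theta$, and with the product volume form $\sin\theta\, d\theta\, dt$, Fubini reduces the claim to its slicewise counterpart on $\mathbb{S}^2$: for every $\phi$-independent $w\in C^\infty(\mathbb{S}^2)$ and every integer $K\leq k$,
\begin{equation}\label{sl-cl}
\sum_{j\leq K}\int_0^\pi|\partial_\theta^j w|^2\sin\theta\,d\theta \;\leq\; C\sum_{2k_1\leq K}\int_0^\pi|(\DS)^{k_1}w|^2\sin\theta\,d\theta.
\end{equation}
Applying \eqref{sl-cl} to $w=\partial_t^{j_2}v$ with $K=k-j_2$, then integrating in $t$ and summing over $j_2\leq k$, recovers the desired inequality $\|v\|_{H^k}\lesssim \|v\|_{\oH^k}$.

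I would prove \eqref{sl-cl} by induction on $n=\lceil K/2\rceil$. The base case $K\leq 2$ is essentially already written down in the excerpt: combining \eqref{IBP.easy} with $\delta=1/2$ and \eqref{basic.elliptic} yields
\[
\|\partial_\theta^2 w\|_{L^2(\sin\theta d\theta)}^2+\|\partial_\theta w\|_{L^2(\sin\theta d\theta)}^2+\|\cot\theta\,\partial_\theta w\|_{L^2(\sin\theta d\theta)}^2 \leq C\bigl(\|\DS w\|_{L^2}^2+\|w\|_{L^2}^2\bigr),
\]
which is \eqref{sl-cl} at $K=2$, with some extra information on the $\cot\theta$-weighted remainder. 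For the step from $K=2(n-1)$ to $K=2n$, apply the base case to $u:=(\DS)^{n-1}w$ to control $\|\partial_\theta^2(\DS)^{n-1}w\|_{L^2}$ and $\|\partial_\theta (\DS)^{n-1}w\|_{L^2}$ by the $\oH^{2n}$ norm of $w$. The pure derivatives $\partial_\theta^{2n}w$ and $\partial_\theta^{2n-1}w$ are then extracted from $\partial_\theta^2(\DS)^{n-1}w$ by repeatedly substituting $\partial_\theta^2=\DS-\cot\theta\,\partial_\theta$ so as to pull all $\DS$ factors to the outside. The resulting error terms are products of $\cot\theta$ (and derivatives thereof) with $\partial_\theta^j w$ for $j<2n$, all of which are handled by the inductive hypothesis together with the generalized Hardy inequality \eqref{gen/Hardy.conseq}, which converts an $L^2(\sin\theta d\theta)$ bound on $\cot\theta\cdot\partial_\theta^j w$ into one on $\partial_\theta^{j+1}w$ up to lower-order terms already controlled.

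The only mild obstacle in executing this plan is the bookkeeping of the $\cot\theta$-weighted remainder terms produced by the commutator expansion: each application of $\partial_\theta^2=\DS-\cot\theta\,\partial_\theta$ generates a factor that is singular at the poles. To keep the iteration closed I would strengthen the inductive statement to include the weighted sums $\sum_{j+j'\leq K}\|(\cot\theta)^{j'}\partial_\theta^j w\|_{L^2(\sin\theta d\theta)}^2$ alongside the unweighted $H^K$ norm; this is precisely what a single application of \eqref{basic.elliptic} produces, and Lemma \ref{gen.Hardy} ensures that such weighted norms at order $j$ are absorbed by the standard norms at order $j+1$ appearing in the inductive hypothesis. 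Once this stronger inductive statement is set up, the closure of the step from $2(n-1)$ to $2n$ becomes purely algebraic.
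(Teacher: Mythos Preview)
Your slicewise reduction has a genuine gap. The claim \eqref{sl-cl} is simply false for odd $K$: already at $K=1$ it asserts $\|\partial_\theta w\|_{L^2(\sin\theta\,d\theta)}\le C\|w\|_{L^2(\sin\theta\,d\theta)}$, which fails for oscillatory $w$. Your induction only steps through even $K$, so at best you prove \eqref{sl-cl} for even $K$; but then the Fubini reduction, applied with $w=\partial_t^{j_2}v$ and $K=k-j_2$, only covers the cases where $j_2$ is even (since $k$ is even). It does \emph{not} yield control of the top-order mixed derivatives $\partial_\theta^{j_1}\partial_t^{j_2}v$ with $j_1+j_2=k$ and $j_1$ (hence $j_2$) odd. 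Fixing the number of $t$-derivatives and working slicewise on $\mathbb{S}^2$ prevents exactly the trading of a $\partial_t$ for a $\partial_\theta$ that those terms require.

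The paper's proof handles this missing case with one extra ingredient you do not invoke: after controlling all derivatives of order $\le k-1$ and all order-$k$ derivatives with an even number of $\partial_\theta$'s (via iterated use of \eqref{IBP.easy} and \eqref{basic.elliptic}, which is morally your even-$K$ argument), it closes the odd-$\theta$-count top-order terms by the standard interpolation inequality
\[
\|\partial_{t\theta}^2 u\|_{L^2}^2 \le \|\partial_{\theta\theta}^2 u\|_{L^2}^2+\|\partial_{tt}^2 u\|_{L^2}^2+\|u\|_{L^2}^2,
\]
applied with $u=\partial_\theta^{2m}\partial_t^{j_2-1}v$. Both $\partial_{\theta\theta}^2 u$ and $\partial_{tt}^2 u$ are then order-$k$ with even $\theta$-count, and $u$ is of order $k-2$, so all are already controlled. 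Your scheme can be salvaged by inserting exactly this step; without it the argument is incomplete.
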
 

\begin{proof}
The proof for all derivatives of order $\le k-1$ 
follows by an iterated application of \eqref{IBP.easy} and \eqref{basic.elliptic}. We also thus obtain 
the desired bound for all derivatives of order $k$, provided an even number of them are $\theta$-derivatives. The missing ones are obtained by the standard interpolation inequality: 

\[
\| \partial^2_{t\theta}v \|^2_{L^2(\mathbb{S}^2\times \mathbb{R})}\le  
\|\partial^2_{\theta\theta}v \|^2_{L^2(\mathbb{S}^2\times \mathbb{R})}+ \|\partial^2_{tt}v \|_{L^2(\mathbb{S}^2\times \mathbb{R})}+ \|v 
\|_{L^2(\mathbb{S}^2\times \mathbb{R})}.
\]
\end{proof}
 
 We will apply the above to $\gamma^m_{\rm rest}$ also: 
 
  For each multi-index $I=(2k_1, k_2)$ Lemma \ref{lem:interpolation} implies that it suffices to derive our claimed bound for 
  $\opartial^I\gamma^m_{\rm rest}$ (and
  $\opartial^I(e_0^{J_0}\gamma^m_{\rm rest})$ at the top orders)
   instead of  $\partial^I\gamma^m_{\rm rest}$ (and
  $\partial^I(e_0^{J_0}\gamma^m_{\rm rest})$ at the top orders):
 
An iterated application of the above Lemma shows that $\|\opartial^{2k_1, k_2}_{\theta\dots\theta t\dots t}\gamma^m_{\rm rest}\|_{L^2(\mathbb{S}^2\times \mathbb{R})}$ controls 
 $\|\partial^{2k_1, k_2}_{\theta\dots\theta t\dots t}\gamma^m_{\rm rest}\|_{L^2(\mathbb{S}^2\times \mathbb{R})}$.

 This still leaves the challenge of deriving our bounds when 
 $I=(2k_1+1, k_2)$. For those, we use the bounds on the  orders 
 $(2k_1+2, k_2), (2k_1, k_2)$; if $2k_1+2\le \rm low$ we use \eqref{IBP.easy} with 
 $\delta=\frac{1}{2}$. In the remaining cases we use $\delta=(\rho^{m-2})^{1/4}$. We easily verify that if we can check our inductive claims for 
 $\opartial^I\gamma^m_{\rm rest} =(\DS)^{k+1}\partial^{k_2}_{t\dots t} \gamma^m_{\rm rest}$ then our full inductive  claim follows.
\medskip

Finally, we put down some useful bounds that generalize the Hardy inequality.
\newcommand{\stwo}{{\mathbb{S}^2}}

We will seek to bound 
$\int_{\mathbb{S}^2}| \frac{\partial_\theta v}{sin\theta}|^2sin\theta d\theta$ for
 certain functions $v(\theta)$ which are \emph{even}  at the poles
 $\theta=0,\pi$.  
 In particular we will derive  bounds on quantities: 
 
 \beq
 \|\partial^I\frac{\partial_\theta v}{sin\theta} \|_{L^2(\stwo)}
 \eeq
 by regular Sobolev norms on $\stwo$. Let us distinguish the two cases $I=(2k_1, k_2)$ 
 and $I=(2k_1+1, k_2)$. Let us consider the first case first, where we derive: 
 \beq
  \|\partial^I\frac{\partial_\theta v}{sin\theta} \|_{L^2(\stwo)}\le \| (\DS)^{k_1}
  \partial_{t\dots t}^{k_2}\frac{\partial_\theta v}{sin\theta} \|_{L^2(\stwo)}+
   \| (\DS)^{k_1-1}\partial_{t\dots t}^{k_2}\frac{\partial_\theta v}{sin\theta} 
   \|_{L^2(\stwo)}.
   \eeq
 Thus it suffices to bound expressions $ \| (\DS)^{k}
  \partial_{t\dots t}^{k_2}\frac{cos\theta\cdot \partial_\theta v}{sin\theta} 
  \|_{L^2(\stwo)}$
  by regular Sobolev norms (the introduction of $cos\theta$ in the 
  numerator above
   instead of the factor $1$ makes no difference, clearly). We do this by 
   merely
    writing: 
\beq
 (\DS)^{k_1}
  \partial_{t\dots t}^{k_2}\frac{cos\theta\cdot \partial_\theta v}{sin\theta} 
 = (\DS)^{k_1+1}
  \partial_{t\dots t}^{k_2}v -(\DS)^{k_1}\partial^2_{\theta\theta}  \partial_{t\dots t}^{k_2}v
\eeq   
 The $L^2(\stwo)$-norm of the RHS is clearly bounded by   
 \[
 \|  (\DS)^{k_1+1}
  \partial_{t\dots t}^{k_2}v\|_{L^2(\stwo)}+\| (\DS)^{k_1}\partial_{t\dots t}^{k_2}
  \partial_\theta v\|_{L^2(\stwo)}\le  \|  
  \opartial_{\theta \dots \theta t\dots t}^{2(k_1+1),k_2}v\|_{L^2(\stwo)}+
  \|  
  \opartial_{\theta \dots \theta t\dots t}^{2k_1+1,k_2}v\|_{L^2(\stwo)}
 \]
  
  For the case where $I=(2k_1+1, k_2)$ above is treated in exactly the same way, except 
  that we keep a $\partial_\theta$ at the left in all formulas and all substitutions 
  above. 
\medskip

\begin{remark}
Below when we apply the Leibnitz rule to terms $\DS [F\cdot G]$ and $\opartial^I [F\cdot G]$ we will denote the terms on the RHS by 
$\sum_{I_1\bigcup I_2=I} \partial^{I_1} F\cdot \partial^{I_2}G$. This is a slight abuse of notation, since the RHS in fact contains derivatives of the form 
$(\DS)^{k_1}\partial^{k_2}_{\theta\dots \theta}\partial^{k_3}_{t\dots t}F $ (involving $\DS$ directly). However the Lemma \ref{lem:interpolation} and the Hardy inequality above 
implies that the $L^2$  norms of those terms are bounded by that of $\opartial^{2k_1+k_2}_{\theta\dots \theta}\partial^{k_3}_{t\dots t}F $. 

Since we bound $L^2$ norms in this paper, this abuse of notation will not cause any confusion. 
\end{remark} 
 
We also frequently use the following classical product  inequality (for $s>2$ always below), often without mention: 

\beq
\label{prodcut.ineq}
\| F_1(t,\theta)\cdot F_2(t,\theta)\|_{H^s(\mathbb{S}^2\times\mathbb{R})}\le C_{\rm product}
\|F_1 \|_{L^\infty (\mathbb{S}^2\times\mathbb{R})}\cdot \|F_2
 \|_{H^s(\mathbb{S}^2\times\mathbb{R})}+C_{\rm product} \|F_1 \|_{H^s (\mathbb{S}^2\times\mathbb{R})}\cdot \|F_2
 \|_{L^\infty (\mathbb{S}^2\times\mathbb{R})}
\eeq 

\subsubsection{Fuchsian ODEs and transport equations: Basic estimates. }

We will be frequently encountering equations of the form: 
\beq
\label{fuchs}
\partial_r u(r,t,\theta)+f(r,t,\theta)\cdot u(r,t,\theta)= G(r,t,\theta),
\eeq
with the coefficient $f(r,t,\theta)$ satisfying an an asymptotic expansion: 
\beq
\label{asympt}
f(r,t,\theta)\sim \zeta(t,\theta)r^{-1},
\eeq
in the sense that: 
\[
|f(r,t,\theta)- \zeta(t,\theta)r^{-1}|\le Br^{-1+\delta},
\]
for some $\delta>0$ and $r\in(0,r_*]$.\footnote{Usually in this paper $\delta=1/4$.}
We then note that the general solution of this equation 
is of the form: 
\[
u(r,t,\theta)= e^{-\int_{r_*(t,\theta)}^r f(s,t,\theta)ds }\int_{r_*(t,\theta)}^r  e^{\int_{r_*(t,\theta)}^s f(y,t,\theta) dy}
G(s,t,\theta) ds+c(t,\theta) e^{-\int_{r_*(t,\theta)}^r f(s)ds },
\]
for any $r_*> 0$ we wish to choose.
The first term arises from  the forcing term in \eqref{fuchs}, 
while the second corresponds to the general solution 
of the corresponding homogeneous equation. In particular, $c(t,\theta)$ is a 
function that we are free to choose. However, specifying an initial condition for the function  
$u(r,t,\theta)$ at some point $t=t_0, \theta=\theta_0$ and $r=r_0$, \emph{or} specifying 
the limit 
\[
\lim_{r\to 0^+} e^{\int_{r_*(t,\theta)}^r f(s)ds } \bigg[u(r,t,\theta)- 
e^{-\int_{r_*(t,\theta)}^r f(s)ds }
\int_{r_*(t,\theta)}^r  e^{\int_{r_*(t,\theta)}^s f(y,t,\theta) dy}
G(s,t,\theta) ds\bigg],
\]
uniquely fixes the value of $c(t_0,\theta_0)$. 
(So the point 
{$r_0$} can be chosen arbitrarily, including {$r_0=0$}. 
In the latter 
case, however, one needs to know that the integral $\int_{0}^{{r_*}} f(s,t,\theta)ds$
 is convergent, for this formula to make sense).  
 \medskip
 
We also note that this formula, 
along with an initial data prescription of the form:
\[
c(t,\theta)=u_{\rm init}(r_*(t,\theta),t,\theta)
\] 
 can be used to derive the following energy estimate
for the solution 

\begin{lemma}
\label{Fuchsian}
Assuming $|\zeta(t,\theta)|< {c_0}$ and $r_*>0$ is small enough so that for all $r\in (0,r_*]$: 
\[
|f(r,t,\theta)|\le {\frac{c_0}{r}},
\] then 
thinking of $u(r):=u(r,t,\theta)$ as a map from $r$ to
 $L^2(\mathbb{S}^2\times\mathbb{R})$, we derive that 
for every $r\in (0,r_*]$:  

\beq
\label{Fuchian.basic}
{\|u(r)\|_{L^2}^2 \le C r^{-2c_0}r_*^{2c_0} 
\|u_{\rm init}\|_{L^2}^2+C
 r^{-2c_0}\sqrt{\int_r^{r_*} s^{2c_0}
\|G(s,t,\theta)\|_{L^2}^2 ds}}.
\eeq
\end{lemma}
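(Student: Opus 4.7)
The plan is to convert the Fuchsian pointwise ODE into an $L^2$ energy inequality on the slices $\{r\}\times\mathbb{S}^2\times\mathbb{R}$, and then absorb the critical singular coefficient $c_0/r$ using a tailored $r$-weight. Concretely, I would multiply \eqref{fuchs} by $u$ and integrate against the fixed volume form $\sin\theta\, d\theta\, dt\, d\phi$; since this volume form is $r$-independent, one obtains the pointwise-in-$r$ identity
\begin{equation*}
\tfrac{1}{2}\partial_r \|u(r)\|_{L^2}^{2} \;=\; -\!\int f(r,\cdot)\,u^{2}\,\sin\theta\,d\theta\,dt\,d\phi \;+\; \int u\,G(r,\cdot)\,\sin\theta\,d\theta\,dt\,d\phi.
\end{equation*}
The hypothesis $|f|\le c_0/r$ and Cauchy--Schwarz then yield
\begin{equation*}
-\,\partial_r \|u(r)\|_{L^2}^{2} \;\le\; \tfrac{2 c_0}{r}\,\|u(r)\|_{L^2}^{2} \;+\; 2\,\|u(r)\|_{L^2}\,\|G(r)\|_{L^2}.
\end{equation*}

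Next, to eliminate the borderline coefficient $2c_0/r$ on the right, I would introduce the rescaled quantity $\tilde E(r):=r^{2c_0}\|u(r)\|_{L^2}^{2}$. Direct computation gives $-\partial_r \tilde E = 2c_0 r^{2c_0-1}\|u\|_{L^2}^2 - r^{2c_0}\partial_r\|u\|_{L^2}^2$, into which we plug the previous differential inequality. The two $2c_0 r^{2c_0-1}\|u\|_{L^2}^2$ terms cancel exactly (this is the whole point of the weight), leaving
\begin{equation*}
-\,\partial_r \tilde E(r) \;\le\; 2\, r^{c_0}\,\sqrt{\tilde E(r)}\,\|G(r)\|_{L^2},
\end{equation*}
which is equivalent to $-\partial_r \sqrt{\tilde E(r)} \le r^{c_0}\|G(r)\|_{L^2}$. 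Integration on $[r,r_*]$ (recall we are propagating \emph{backwards} from the initial slice $\{r=r_*\}$ toward the singularity) and undoing the rescaling yields
\begin{equation*}
\|u(r)\|_{L^2} \;\le\; r^{-c_0}\,r_*^{\,c_0}\,\|u_{\rm init}\|_{L^2} \;+\; r^{-c_0}\int_{r}^{r_*} s^{c_0}\,\|G(s)\|_{L^2}\,ds.
\end{equation*}
Squaring and using $(a+b)^2\le 2a^2+2b^2$ gives the asserted bound, after a final Cauchy--Schwarz on the forcing integral to recast it in the form involving $\int s^{2c_0}\|G(s)\|_{L^2}^{2}\,ds$.

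An alternative, essentially equivalent, route is to apply the generalized Gronwall inequality (Lemma \ref{lem:Gron}) directly to $F(r):=\|u(r)\|_{L^2}$ with $H(s)=2c_0/s$, $G(s)=2\|G(s)\|_{L^2}$ and constant $F_0=\|u_{\rm init}\|_{L^2}$; the factor $e^{\int_r^{r_*} c_0/s\,ds}=(r_*/r)^{c_0}$ then reproduces the weight $r^{-c_0}r_*^{c_0}$ automatically. I view the weighted energy argument above as cleaner since the cancellation of the $2c_0/r$ coefficient is exact, rather than mediated by an exponential.

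The only subtle step is the first one, where one must justify differentiating $\|u(r)\|_{L^2}^{2}$ in $r$; this is routine because $\partial_r u$ satisfies the ODE and $u\in L^2$ for each $r$, so the identity above is the standard energy identity. There is no real obstacle to this estimate — it is a clean, self-contained Fuchsian energy inequality — and indeed the lemma is stated precisely in order to be invoked as a black box for the connection coefficients $K^m_{ij}$, the coordinate-to-frame coefficients $a^m_{Ai}$, and their commuted analogues throughout the iterative construction.
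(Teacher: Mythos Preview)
Your proposal is correct and matches the paper's approach: the paper merely says ``The proof follows straightforwardly,'' and either your weighted-energy/integrating-factor argument or the direct application of the generalized Gronwall inequality (Lemma~\ref{lem:Gron}) that you mention as an alternative is exactly what is intended. One small typo: your displayed formula for $-\partial_r\tilde E$ should read $-\partial_r\tilde E=-2c_0 r^{2c_0-1}\|u\|_{L^2}^2 - r^{2c_0}\partial_r\|u\|_{L^2}^2$ (with a minus on the first term), which is in fact what is needed for the cancellation you correctly claim in the next sentence.
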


The proof follows straightforwardly. 
\medskip

 We are now ready to introduce certain key parameters that capture the
  \emph{spatial} geometry of the metric iterates $h^m, g^m$. The control of these parameters via the inductive assumptions we are making at step $m-1$ 
  will enable us to derive the step $m$ of our inductive claim. 

\subsection{The spatial geometry  parameters and their control by the inductive assumption. }
\label{sec_geom.params}

We will show how the inductive assumption implies certain bounds on 
{secondary} quantities. We start by introducing these quantities: 

	We will frequently need to 
	modify the spatial orthonormal frame ${e^{m-1}_1},{e^{m-1}_2}$ 
	into a new frame, which is tangential to the level sets of 
	$\rho^{m-1}$, see \eqref{rho}-(\ref{partialrho}): 

\begin{definition}
Let: 
	\begin{align}\label{eibar}
	\begin{split}
	\overline{e}_i^{m-1}:=&\,e^{m-1}_i-e^{m-1}_i(\rho^{m-1})
	\partial_{\rho^{m-1}}\\
	=&\,e^{m-1}_i+e^{m-1}_i(\rho^{m-1})
	(\frac{2M}{r}-1)^{-\frac{1}{2}}[1-\partial_r\chi(r)
	(r^{m-1}_*-\epsilon)]^{-1}e_0\in T\Sigma_{\rho^{m-1}},
	\end{split}
	\end{align}
	for $i=1,2$.
\end{definition}

Let us put down some bounds on the coefficients that appear in the above 
equation:

Recall our {\it gauge normalisation assumption} \eqref{e2r.van.it}: 
\begin{align}\label{gaugenorme2it}
\lim_{r\rightarrow0}e_2^{m-1}r=o(r^{-\frac{1}{2}+d^{m-1}_2(t,\theta)}),&&m
\ge 1.
\end{align}
In the next lemma, we will show that in fact \eqref{e2r.van.it},
 together 
with the gauge law \eqref{almpar.trans}, implies that $e_2^{m-1}$ 
annihilates $r$, i.e., it is tangent to the level sets $\Sigma_r$. On the 
other hand, $e^{m-1}_1$ acting on $r$ gives a non-zero, but much less 
singular  term. 

We recall also the inital data bounds \eqref{g.bds}, \eqref{K.bds},  the inductive assumptions 
\eqref{r*.ind.claim.low}, \eqref{r*.ind.claim.high}, as well as 
the expression for $e^{m-1}_1=\te^{m-1}_1$ on the initial data in terms of 
$\tilde{K}^{m-1}_{12}$, \eqref{te1m.tK12}.
 Combining these bounds, 
we control the initial data of $e^{m-1}_1({r^{m-1}_*})$
on $\Sigma_{r^{m-1}_*}$ as follows:
\beq
\label{e1r.init}
\begin{split}
&\|e^{m-1}_1({r^{m-1}_*}) \|_{H^{s-3-4c}}\leq C\eta \e^{-DC \eta}, \\
&\| e^{m-1}_1({r^{m-1}_*}) \|_{\dot{H}^{l}}\leq C\eta 
\e^{-DC\cdot \eta  +(s-3-l)\frac{1}{4}-c},\qquad\text{for all $l\in 
\{s-3-4c+1, \dots s-4\}$.}
%\\&	 \|(e_i^{m-1})^{J_1}e^{m-1}_1r\|_{\dot{H}^{s-3}}\leq 
%	C\eta r^{-2DC\cdot \eta  -\frac{3}{2}|J_1|+(s-3-l)\frac{1}{4}-c}.
\end{split}
\eeq
\begin{lemma}\label{lem:e1re2r}
	The vector field ${e^{m-1}_2}$ annihilates the function $r$, 
	${e^{m-1}_2r}\equiv0$, while ${e^{m-1}_1}r$ satisfies the bounds over 
	level sets of $r$:
	\begin{align}\label{e1rest}
	\begin{split}
	\|e^{m-1}_1r\|_{L^\infty}\leq 
	C\eta r^{-2DC\cdot \eta}
	,\quad
	\|e^{m-1}_1r\|_{H^{s-3-4c}}\leq  C\eta r^{-2DC\cdot \eta  },\\
	\|e_0^{J_0}e^{m-1}_1r\|_{\dot{H}^l}\leq 
	C\eta r^{-2DC\cdot \eta  -\frac{3}{2}|J_0|+(s-3-l)\frac{1}{4}-c},
	\end{split}
	\end{align}
	for all $r\in(0,2\epsilon]$, $|J_0|\leq2$, $s-3-4c<l\leq s-4$.

The same bounds hold on the 
	level sets of $\rho^{m-1}$, with $r$ replaced by $\rho^{m-1}$.
	\medskip
	
	For that parameter, we also have a bound at the top order: 
	\beq\label{e1resr.top}
	\|e_0^{J_0}\partial^I[e^{m-1}_1\rho^{m-1}]\|_{L^2}\leq 
	C\eta (\rho^{m-1})^{-2DC\cdot \eta  -c},
\eeq
		for all $I, |I|=s-3$ \emph{except} the case where $I=(T,T,\dots, T)$. In that case we make no claim. 
\end{lemma}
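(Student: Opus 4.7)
The plan is to treat $e^{m-1}_2 r$ and $e^{m-1}_1 r$ as scalar unknowns satisfying transport ODEs along the $e_0$-geodesics, then exploit the gauge normalisation \eqref{gaugenorme2it} for the first and the initial-data bound \eqref{e1r.init} on $\Sigma_{r^{m-1}_*}$ for the second, closing at higher orders by commuting with $\partial_t,\partial_\theta$ and using the Fuchsian estimate Lemma \ref{Fuchsian}.

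\emph{Step 1: Vanishing of $e^{m-1}_2 r$.} Using \eqref{almpar.trans} and the definition of $K^{m-1}_{ij}$, one computes $[e_0,e^{m-1}_2]=-K^{m-1}_{22}e^{m-1}_2-2K^{m-1}_{12}e^{m-1}_1$, so combined with $e_i^{m-1}(e_0 r)=\frac{M}{r^2}(\tfrac{2M}{r}-1)^{-1/2}e^{m-1}_i r$ one obtains the linear coupled system
\[
e_0(e^{m-1}_2 r)=\bigl[-K^{m-1}_{22}+\tfrac{M}{r^2}(\tfrac{2M}{r}-1)^{-1/2}\bigr]e^{m-1}_2 r-2K^{m-1}_{12}e^{m-1}_1 r.
\]
Rewriting in the variable $r$ via $e_0=-(\tfrac{2M}{r}-1)^{1/2}\partial_r$ and substituting the asymptotics \eqref{K22heur}, \eqref{K12heur} of $K^{m-1}_{22},K^{m-1}_{12}$, the equation for $e^{m-1}_2 r$ becomes (to leading order) $\partial_r(e^{m-1}_2 r)=(d^{m-1}_2(t,\theta)-\tfrac12)r^{-1}(e^{m-1}_2 r)+\text{(less singular)}$. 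Every non-trivial homogeneous solution grows at least like $r^{d^{m-1}_2-1/2}$ as $r\to 0^+$. Since \eqref{gaugenorme2it} forces $e^{m-1}_2 r=o(r^{-1/2+d^{m-1}_2})$, the unique such solution is identically zero. Hence $e^{m-1}_2 r\equiv 0$ throughout the domain, which in turn decouples the system.

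\emph{Step 2: Pointwise and lower-order Sobolev bounds for $e^{m-1}_1 r$.} With $e^{m-1}_2 r\equiv 0$ the analogous computation gives the scalar linear transport equation
\[
\partial_r(e^{m-1}_1 r)=(\tfrac{2M}{r}-1)^{-1/2}\bigl[K^{m-1}_{11}-\tfrac{M}{r^2}(\tfrac{2M}{r}-1)^{-1/2}\bigr]e^{m-1}_1 r.
\]
Using \eqref{K11heur} the coefficient equals $(d^{m-1}_1-\tfrac12)r^{-1}+O(r^{-1+1/4})$ in $L^\infty$, with a similar expansion in $H^{s-3-4c}$ thanks to \eqref{inductiontrKopt} and $|d^{m-1}_1-\tfrac12|\le DC\eta$. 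Solving from $r=r^{m-1}_*$ with initial datum controlled by \eqref{e1r.init} and applying Lemma \ref{Fuchsian} (or a direct Gronwall), the exponential $e^{\int 10B^2 \tau^{-1+1/4}d\tau}\le C$ and the $(d^{m-1}_1-\tfrac12)r^{-1}$ term combine to yield $\|e^{m-1}_1 r\|_{L^\infty},\|e^{m-1}_1 r\|_{H^{s-3-4c}}\lesssim C\eta r^{-2DC\eta}$.

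\emph{Step 3: Higher-order Sobolev estimates.} For $s-3-4c<l\le s-4$ and $|J_0|\le 2$ one commutes the transport equation with $e_0^{J_0}\partial^I$ and uses the Leibniz/product inequality \eqref{prodcut.ineq}. The top-order factor $\partial^I K^{m-1}_{11}$ (together with the analogous commuted expression coming from $\tfrac{M}{r^2}(\tfrac{2M}{r}-1)^{-1/2}$) is controlled by \eqref{inductiontrKlow}, and lower-order factors are controlled by the low-order bounds just established and by the inductive hypothesis through the Sobolev embedding. This places a source of size $C\eta r^{-3/2-(3/2)|J_0|+(s-3-l)/4-c}\cdot\|e^{m-1}_1 r\|_{L^\infty}$ in the RHS; Lemma \ref{Fuchsian} then delivers the claimed $(\rho^{m-1})^{-2DC\eta-\tfrac{3}{2}|J_0|+(s-3-l)/4-c}$ bound after paying one further factor of $C$ for the integration.

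\emph{Step 4: Passage from level sets of $r$ to level sets of $\rho^{m-1}$, and the top order.} Outside $[\tfrac\e2,\tfrac{3\e}{2}]$ the two foliations coincide, and in the transition region $\rho^{m-1}$ and $r$ differ by $\chi(r)(r^{m-1}_*-\e)$, which is controlled by \eqref{r*.ind.claim.low}--\eqref{r*.ind.claim.high}; a change of variables preserves the bounds up to harmless constants. For the top order $|I|=s-3$ estimate \eqref{e1resr.top}, the same commutation produces a borderline term $\partial^I K^{m-1}_{11}\cdot e^{m-1}_1\rho^{m-1}$ whose $L^2$ norm is controlled by \eqref{inductiontrKtopmixed2}, \emph{except} in the case $I=(T,T,\ldots,T)$; that is exactly the excluded case in the statement.

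The main obstacle is the higher-order commutator analysis in Step 3: one must verify that the asymptotic cancellation between $K^{m-1}_{11}$ and $\tfrac{M}{r^2}(\tfrac{2M}{r}-1)^{-1/2}$, which produced the small effective coefficient $(d^{m-1}_1-\tfrac12)r^{-1}$ at the pointwise level, survives under commutation with $\partial^I$ without introducing terms more singular than $r^{-1+1/4}$ times lower-order quantities. This is exactly where the smallness bound \eqref{Ceta.bds}, the definition \eqref{C.def} of the growth constant $C$, and the induction assumption \eqref{inductiontrKlow} are used together.
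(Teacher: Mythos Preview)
Your overall strategy matches the paper's, but there is a concrete computational error in Step~1 that breaks the argument as written. The commutator $[e_0,e^{m-1}_2]$ is not what you claim: from \eqref{almpar.trans} and \eqref{conn.def},
\[
[e_0,e^{m-1}_2]=\nabla_{e_0}e^{m-1}_2-\nabla_{e^{m-1}_2}e_0
=K^{m-1}_{12}e^{m-1}_1-\bigl(K^{m-1}_{21}e^{m-1}_1+K^{m-1}_{22}e^{m-1}_2\bigr)
=-K^{m-1}_{22}\,e^{m-1}_2,
\]
so the transport equation for $e^{m-1}_2 r$ is \emph{genuinely homogeneous}, with no $e^{m-1}_1 r$ forcing (it is $[e_0,e^{m-1}_1]=-K^{m-1}_{11}e^{m-1}_1-2K^{m-1}_{12}e^{m-1}_2$ that carries the cross term). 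With your spurious forcing $-2K^{m-1}_{12}e^{m-1}_1 r$, the gauge condition \eqref{gaugenorme2it} would only kill the free branch $\sim r^{d^{m-1}_2-1/2}$; the particular solution driven by that forcing is far less singular and hence automatically $o(r^{-1/2+d^{m-1}_2})$, but it is not zero, so you could not conclude $e^{m-1}_2 r\equiv 0$. The paper's correct homogeneous equation \eqref{e2rODE2} gives this immediately, after which Step~2 decouples exactly as you describe.

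There is also a gap at the top order in Step~4. The initial-data bound \eqref{e1r.init} only goes up to $l=s-4$, so you cannot launch the $|I|=s-3$ estimate from $\Sigma_{r^{m-1}_*}$ using that bound. The paper instead works with $e^{m-1}_1\rho^{m-1}$ and uses the gauge requirement that $e^{m-1}_1$ is tangent to $\Sigma_{r^{m-1}_*}=\{\rho^{m-1}=\epsilon\}$, which forces $e^{m-1}_1\rho^{m-1}=0$ there and hence gives \emph{zero} initial data at every order; the commuted ODE then closes with the forcing controlled by \eqref{inductiontrKtopmixed2}, yielding exactly the exclusion $I=(T,\ldots,T)$ that you identify.
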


\begin{proof}
Using the propagation rule \eqref{almpar.trans}, we compute:
	\begin{align}\label{eirODE}
	\begin{split}
	e_0(e^{m-1}_1r)=&-K_{12}^{m-1}e_2^{m-1}r-
	(K_1^{m-1})^b\,e^{m-1}_br+\frac{1}{2}
	(\frac{2M}{r}-1)^{-\frac{1}{2}}\frac{2M}{r^2}e^{m-1}_1r,\\
	=&-2K_{12}^{m-1}e_2^{m-1}r-
	K_{11}^{m-1}e^{m-1}_1r+\frac{1}{2}
	(\frac{2M}{r}-1)^{-\frac{1}{2}}\frac{2M}{r^2}e^{m-1}_1r,\\
	e_0(e^{m-1}_2r)=&\,K_{12}^{m-1}e_1^{m-1}r-
	(K_2^{m-1})^be^{m-1}_br+\frac{1}{2}
	(\frac{2M}{r}-1)^{-\frac{1}{2}}\frac{2M}{r^2}e^{m-1}_2r\\
	=&-K_{22}^{m-1}e^{m-1}_2r+\frac{1}{2}
	(\frac{2M}{r}-1)^{-\frac{1}{2}}\frac{2M}{r^2}e^{m-1}_2r
	\end{split}
	\end{align}
and thus by \eqref{K11heur}, \eqref{K22heur}, \eqref{K12heur}:
	\begin{align}
	\label{e1rODE2}
	\partial_r(e^{m-1}_1r)+\bigg[\frac{\frac{1}{2}-d_1^{m-1}}{r}-(\frac{2M}{r}-1)^{-\frac{1}{2}}u^{m-1}_{11}+\frac{1}{2}\frac{1}{2M-r}\bigg]e^{m-1}_1r\\
	\notag=2(\frac{2M}{r}-1)^{-\frac{1}{2}}K_{12}^{m-1}\,e^{m-1}_2r\\
	\label{e2rODE2}\partial_r(e^{m-1}_2r)+\bigg[\frac{\frac{1}{2}-d_2^{m-1}}{r}-(\frac{2M}{r}-1)^{-\frac{1}{2}}u^{m-1}_{22}\bigg]e^{m-1}_2r=0.
	\end{align}
Recall that 
$\|d_1^{m-1}-\frac{1}{2}\|_{L^\infty}\leq D\cdot C\eta \le\frac{1}{8}$ and 
$\|d_2^{m-1}+1\|_{L^\infty}\leq D\cdot C\eta\le  \frac{1}{8}$. The equation 
\eqref{e2rODE2}, together with the initial assumption 
\eqref{gaugenorme2it} and \eqref{inductiontrKopt}, imply that 
${e^{m-1}_2r}=0$ everywhere. Hence, \eqref{e1rODE2} reduces to a 
homogeneous ODE for ${e^{m-1}_1r}$, whose general solution  has the following behaviour (in $L^\infty)$): 
\begin{align}\label{e1rasym}
e^{m-1}_1r=(e^{m-1}_1r_*^{m-1}) \big[r^{d_1^{m-1}-\frac{1}{2}}
+O(r^{d_1^{m-1}-\frac{1}{2}+\frac{1}{4}})\big].
\end{align}
The lower order energy bounds in the second line of (\ref{e1rest}) follow by directly differentiating (\ref{e1rODE2}), utilising  (\ref{inductiontrKopt}) and the initial data bounds \eqref{e1r.init}, cf. Lemma \ref{Fuchsian}. On the other hand, for the higher order energy estimates, we commute (\ref{eirODE}) instead with $\partial^I$, $|I|=l$, and use the expansion (\ref{K11heur}) only for the coefficients of the top order terms in the resulting equation:
	\begin{equation}\label{inte1rODE}
	\begin{split}
	\partial_r\partial^I(e^{m-1}_1r)+\bigg[\frac{\frac{1}{2}-d_1^{m-1}}{r}-
	(\frac{2M}{r}-1)^{-\frac{1}{2}}u^{m-1}_{11}+\frac{1}{2}\frac{1}{2M-r}
	\bigg]\partial^Ie^{m-1}_1r\\
	=\sum_{I_1\cup I_2=I,\,|I_2|<|I|}(\frac{2M}{r}-1)^{-\frac{1}{2}}
	\partial^{I_1}\big[K_{11}^{m-1}-\frac{1}{2}
	(\frac{2M}{r}-1)^{-\frac{1}{2}}\frac{2M}{r^2}\big]
	\partial^{I_2}e_1^{m-1}r.
	\end{split}
	\end{equation}
	The higher order estimate in \eqref{e1rest}, $|J_0|=0$, follows from 
	Lemma \ref{Fuchsian}, the initial data bounds \eqref{e1r.init} and the 
	inductive assumptions \eqref{inductiontrKopt}-\eqref{inductiontrKlow} 
	for $K^{m-1}_{11}$, by finite induction in  
	$|I|\in\{s-3-4c,\ldots, s-4\}$. The case $|J_0|=1$ follows from
	 applying the $|J_0|=0$ estimate to  (\ref{inte1rODE}), after solving 
	 for $e_0\partial^Ie^{m-1}_1r$. The case $J_0=2$ follows by taking another derivative of that equation and invoking the bounds aready derived.

	 To derive the claims for $e^{m-1}(\rho^{m-1})$ we repeat the same commutation argument (up to adding  inconsequential terms involving 
	 $\chi^\prime(r)$. The top order estimate for that parameter follows, since now $e_1^{m-1}(\rho^{m-1})=0$ on $\Sigma_{\{\rho^{m-1}=\e\}}$, 
	 by invoking the tangency of $e_1^{m-1}$ to that hypersurface.  
	  
\end{proof}

	We will often use the frame 
	$\overline{e}_1^{m-1},\overline{e}^{m-1}_2$ introduced in
	 \eqref{eibar}, 
	instead of $e_1^{m-1},e_2^{m-1}$.
In order to go to-and-fro between coordinate vector fields  and frames
it is also useful to express the vector fields $\overline{e}^{m-1}_i$ in 
terms of coordinate vector fields
for some system of coordinates on the level sets $\Sigma_{\rho^{m-1}}$. 

\newcommand{\Th}{\Theta}

We will in fact be using different coordinate systems on these level sets. All of our coordinate systems $T,\Th$
on the inital data hypersurface  will 
be extended by requiring $e_0(T)=e_0(\Th)$. 
For now let us introduce the transformation formulas for coordinate vector fields to frames, and backwards. 
These formulas are \emph{universal}; we can use them for any system of coordinates $T,\Theta$ propagated according to $e_0(T)=e_0(\Theta)=0$. 
This will follow from our derivation of the relevant evolution equations. 

\begin{definition}
Consider the coordinate vector fields $\partial_t,\partial_\theta$ 
on any level set of $\rho^{m-1}$. Let us define the coordinate-to-frame and frame-to-coordinate coefficients
$a_{ti}^{m-1}$, $a^{m-1}_{\theta i}$,
$(a^{m-1})^{it}$,
$(a^{m-1})^{i\theta}$, $i=1,2$, via the relations
\begin{align}\label{tthetatransebar}
\partial_t=a^{m-1}_{t1}\overline{e}^{m-1}_1+a^{m-1}_{t2}
\overline{e}^{m-1}_2,\qquad\partial_\theta= a^{m-1}_{\theta 1}
\overline{e}^{m-1}_1+a^{m-1}_{\theta 2}\overline{e}^{m-1}_2,\\
\notag \overline{e}^{m-1}_1=(a^{m-1})^{1t}\partial_t+
(a^{m-1})^{1\theta}\partial_\theta,\qquad \overline{e}^{m-1}_2
= (a^{m-1})^{2t}\partial_t+(a^{m-1})^{2\theta}
\partial_\theta.
\end{align}
\end{definition}

Let us note that the values of the coordinate-to-frame coefficients 
also determine the form of the metric $h^{m-1}$ in analogy to 
\eqref{g.from.a.pre}, just adding indices $m-1$ to all the terms there. 
\medskip

We will in fact \emph{not} be using the coordinate-to-frame 
coefficients defined by these background coordinates, for reasons that we 
review after the next formulas. However, we put down the equations on the 
evolution of these parameters and the bounds we can derive on their initial 
data.  This is because our evolution equations are \emph{universal} 
(meaning they hold for all choices of coordinates $T,\Theta$ with
 $e_0(T)=e_0(\Theta)=0$), and to illustrate how the metric $h^{m-1}$ can be 
 reconstructed from these coefficients. 

For future reference, let us 
note here that  the values of 
$({a}^{m-1})^{it},({a}^{m-1})^{i\theta},{a^{m-1}_{ti},a^{m-1}_{\theta i}}$ on 
$\Sigma_{r^{m-1}_*}$
are precisely the coefficients that appear in 
 \eqref{te1m.tK12}{-\eqref{te1m.tK12inv}}, for the step $m-1$.\footnote{{Since $e^{m-1}$ is tangent to $\Sigma_{r_*^{m-1}}=\{\rho^{m-1}=\epsilon\}$ in our gauge, $e_1^{m-1}(\rho^{m-1})=e_1^{m-1}(\epsilon)=0$, and $e_1^{m-1}=\tilde{e}^{m-1}_1$.}}

 \beq
   \begin{split}
   \label{am-1.init}
& a^{m-1}_{t1}(r^{m-1}_*(t,\theta), t,\theta)= \frac{1}{2}\sqrt{1+\sqrt{1-4(\frac{\tilde{K}^{m-1}_{12}}{{\bf K}_{22}-{\bf K}_{11}})^2}}(\g_{tt})^{1/2},
\\&a^{m-1}_{t2}(r^{m-1}_*(t,\theta), t,\theta)=-\frac{{\rm sign}(\frac{\tilde{K}^{m-1}_{12}}{{\bf K}_{22}-{\bf K}_{11}})}{2}\sqrt{1-\sqrt{1-4(\frac{\tilde{K}^{m-1}_{12}}{{\bf K}_{22}-{\bf K}_{11}})^2}}(\g_{tt})^{1/2},
\\&  a^{m-1}_{\theta 2}(r^{m-1}_*(t,\theta), t,\theta)= \frac{1}{2}\sqrt{1+\sqrt{1-4(\frac{\tilde{K}^{m-1}_{12}}{{\bf K}_{22}-{\bf K}_{11}})^2}}(\g_{\theta\theta})^{1/2},
\\& a^{m-1}_{\theta 1}(r^{m-1}_*(t,\theta), t,\theta)=\frac{{\rm sign}(\frac{\tilde{K}^{m-1}_{12}}{{\bf K}_{22}-{\bf K}_{11}})}{2}\sqrt{1-\sqrt{1-4(\frac{\tilde{K}^{m-1}_{12}}{{\bf K}_{22}-{\bf K}_{11}})^2}} (\g_{\theta\theta})^{1/2}. 
 \end{split}
   \eeq
We also note the initial data for the variables $a^{Ai}$, $A=t,\theta$ and $i=1,2$ on the initial data set, as a consequence of 
\eqref{te1m.tK12}:
 \beq
   \begin{split}
   \label{oam-1.init}
& (a^{m-1})^{1t}(r^{m-1}_*(t,\theta), t,\theta )=\frac{1}{2}\sqrt{1+\sqrt{1-4(\tilde{K}^m_{12}({\bf K}_{22}-{\bf K}_{11})^{-1})^2}}
(\g_{tt})^{-1/2},    
\\&(a^{m-1})^{1\theta}(r^{m-1}_*(t,\theta), t,\theta )=
\frac{{\rm sign}((\tilde{K}^m_{12}({\bf K}_{22}-{\bf K}_{11})^{-1}))}{2}\sqrt{1-\sqrt{1-4(\tilde{K}^m_{12}({\bf K}_{22}-{\bf K}_{11})^{-1})^2}}
(\g_{\theta\theta})^{-1/2},
\\& (a^{m-1})^{2t}(r^{m-1}_*(t,\theta), t,\theta )=-\frac{{\rm sign}((\tilde{K}^m_{12}({\bf K}_{22}-{\bf K}_{11})^{-1}))}{2}\sqrt{1-\sqrt{1-4(\tilde{K}^m_{12}({\bf K}_{22}-{\bf K}_{11})^{-1})^2}}
(\g_{tt})^{-1/2} , 
\\&(a^{m-1})^{2\theta }(r^{m-1}_*(t,\theta), t,\theta )=
 \frac{1}{2}\sqrt{1+\sqrt{1-4(\tilde{K}^m_{12}({\bf K}_{22}-{\bf K}_{11})^{-1})^2}}
(\g_{\theta\theta})^{-1/2}.
 \end{split}
   \eeq

In particular, given the bounds {\eqref{tK12.ind.claim.low}, 
\eqref{tK12.ind.claim.top}, \eqref{tK12.ind.claim.high}} for 
$\tilde{K}^{m-1}_{12}(t,\theta)$, and the assumptions \eqref{g.bds}, \eqref{K.bds}  on
$\g_{tt}, \g_{\theta\theta}$, $\K_{22}-\K_{11}$, we derive the following  bounds on these initial data, first at the lower orders $l\le \rm low$:
\begin{align}\label{a.init.low}
\notag\|a^{m-1}_{t1}(r^{m-1}_*(t,\theta),t,\theta)-a^S_{t1}(\e,t,\theta)\|_{\dot{H}^l}
\leq C \eta 
\e^{-\frac{1}{2}-DC\eta},\|a^{m-1}_{\theta 1}(r^{m-1}_*(t,\theta),t,\theta)\|_{\dot{H}^l}
\leq C\eta  \e^{\frac{5}{4}-DC\eta},\\
\notag\|a^{m-1}_{\theta 2}(r^{m-1}_*(t,\theta),t,\theta)-a^S_{\theta 2}(\e, t,\theta)\|_{\dot{H}^l}
\leq C \eta
\e^{1-DC\eta},
\notag\|a^{m-1}_{t 2}(r^{m-1}_*(t,\theta),t,\theta)\|_{\dot{H}^l}
\leq C \eta
\e^{-\frac{1}{4}-DC\eta},
\notag
\end{align}
At the higher orders, the worse behaviour of $\tilde{K}^{m-1}_{12}$ yields 
a more singular behaviour for the above quantities, in terms of the power 
of $\e$. In particular for ${\rm low}+1\le l\le s-4$, $h=l-{\rm low}$:

\begin{align}%\label{a.init.middle}
\notag\|a^{m-1}_{t1}(r^{m-1}_*(t,\theta),t,\theta)\|_{\dot{H}^l}
\leq&\, C \e^{-\frac{1}{2}-DC\eta-\frac{h}{4}},\qquad
\|a^{m-1}_{\theta 2}(r^{m-1}_*(t,\theta),t,\theta)\|_{\dot{H}^l}
\leq C 
\e^{1-DC\eta-\frac{h}{4}},\qquad\\
\notag\|a^{m-1}_{t 2}(r^{m-1}_*(t,\theta),t,\theta)\|_{\dot{H}^l}
\leq&\, C 
\e^{-\frac{1}{4}-DC\eta-\frac{h}{4}},
 \|a^{m-1}_{\theta 1}(r^{m-1}_*(t,\theta),t,\theta)\|_{\dot{H}^l}
\leq C \e^{\frac{5}{4}-DC\eta-\frac{h}{4}},\\
\notag
\end{align}

Finally, at the top orders we have the most singular behaviour:

\begin{align}%\label{a.init.top}
\notag\|a^{m-1}_{t1}
(r^{m-1}_*(t,\theta),t,\theta)\|_{\dot{H}^{s-3}}
\leq&\, C 
\e^{-\frac{1}{2}-DC\eta-c},
\|a^{m-1}_{\theta 2}(r^{m-1}_*(t,\theta),t,\theta)\|_{\dot{H}^{s-3}}
\leq C 
\e^{1-DC\eta-c},\qquad,\\
\notag\|a^{m-1}_{t 2}(r^{m-1}_*(t,\theta),t,\theta)\|_{\dot{H}^{s-3}}
\leq&\, C 
\e^{-\frac{1}{2}-DC\eta-c},\qquad
\|a^{m-1}_{\theta 1}(r^{m-1}_*(t,\theta),t,\theta)\|_{\dot{H}^{s-3}}
\leq C \e^{1-DC\eta-c},
\qquad
\notag
\end{align}

Recall the equation: 
\beq\label{partialrho.again}
\partial_{\rho^m}=[1-\partial_r\chi(r)(r^m_*-\epsilon)]^{-1}\partial_r=
-(\frac{2M}{r}-1)^{-\frac{1}{2}}[1-\partial_r\chi(r)(r^m_*-\epsilon)]^{-1}e_0
\eeq

Let us calculate $\nabla_{e_0}\partial_{\rho^{m-1}}$:

\beq
\label{e0.partialrho}
\nabla_{e_0}\partial_{\rho^m}=-\nabla_{e_0}[(\frac{2M}{r}-1)^{-\frac{1}{2}}[1-\partial_r\chi(r)(r^m_*-\epsilon)]^{-1}e_0]=
(\frac{2M}{r}-1)^{\frac{1}{2}}\partial_r[(\frac{2M}{r}-1)^{-\frac{1}{2}}[1-\partial_r\chi(r)(r^m_*-\epsilon)]^{-1}]e_0
\eeq

Let us use this in evaluating $\nabla_{\overline{e}_i^{m-1}}\partial_{\rho^{m-1}}$:
\beq\begin{split}
&\nabla_{\overline{e}_i^{m-1}}\partial_{\rho^{m-1}}=\nabla_{{e}_i^{m-1}}\partial_{\rho^{m-1}}-[e_i^{m-1}(\rho^{m-1})
(\frac{2M}{r}-1)^{-\frac{1}{2}}[1-\partial_r\chi(r)(r^m_*-\epsilon)]^{-1}
 \nabla_{e_0}\partial_{\rho^{m-1}}
\\&=e_i^{m-1}[-(\frac{2M}{r}-1)^{-\frac{1}{2}}[1-\partial_r\chi(r)(r^m_*-\epsilon)]^{-1}]e_0-(\frac{2M}{r}-1)^{-\frac{1}{2}}[1-\partial_r\chi(r)(r^m_*-\epsilon)]^{-1}\nabla_{e^{m-1}_i}e_0
\\&+[e_i^{m-1}(\rho^{m-1})
(\frac{2M}{r}-1)^{-\frac{1}{2}}[1-\partial_r\chi(r)(r^m_*-\epsilon)]^{-1}[-(\frac{2M}{r}-1)^{-\frac{1}{2}}[1-\partial_r\chi(r)(r^m_*-\epsilon)]^{-1}e_0]
\end{split}
\eeq

%\begin{proof}

For simplicity, in the following derivations, we omit the index $m-1$ from all the relevant variables. 
The commutation relations $[\partial_{\rho},\partial_t]=[\partial_{\rho},\partial_\theta]=0$ yield an ODE for $a_{ti},a_{\theta i}$, $i=1,2$. In particular, we have:

\begin{align}
&0=[\partial_{\rho},\partial_t]=[\partial_\rho,a_{t1}\overline{e}_1+a_{t2}\overline{e}_2],\\
\notag &0=\partial_\rho(a_{t1})\overline{e}_1+\partial_\rho(a_{t2})\overline{e}_2+a_{t1}\nabla_{\partial_\rho}\overline{e}_1+a_{t2}\nabla_{\partial_\rho}\overline{e}_2-a_{t1}\nabla_{\overline{e}_1}\partial_\rho-a_{t2}\nabla_{\overline{e}_2}\partial_\rho,\\
\notag &0=\partial_\rho(a_{t1})\overline{e}_1+\partial_\rho(a_{t2})\overline{e}_2
+a_{t1}\nabla_{\partial_\rho}\overline{e}_1+a_{t2}\nabla_{\partial_\rho}\overline{e}_2-a_{t1}\nabla_{\overline{e}_1}\partial_\rho-a_{t2}\nabla_{\overline{e}_2}\partial_\rho.
\end{align}
Taking the inner product of the previous equation with respect to $e_1$ and using \eqref{partialrho.again}  we obtain:
\begin{align}
e_0a_{t1}=&-a_{t1}h(\nabla_{e_0}\overline{e}_1,e_1)-a_{t2}h(\nabla_{e_0}\overline{e}_2,e_1)+a_{t1}h(\nabla_{\overline{e}_1}e_0,e_1)+a_{t2}h(\nabla_{\overline{e}_2}e_0,e_1)\\
\tag{by \eqref{almpar.trans}}=&-a_{t2}K_{12}+a_{t1}K_{11}+a_{t2}K_{12}
=a_{t1}K_{11}
\end{align}
The analogous computation for $a_{t2}$ (multiplying with $e_2$ the first equation instead) is similar and so are the ones for $a_{\theta i}$, derived from the identity $0=[\partial_\rho,\partial_\theta]$, which yield the following ODE system:
	\begin{align}\label{e0.a}
	\begin{split}
	e_0a_{t1}-K_{11}a_{t1}=0,\qquad e_0a_{t2}-K_{22}a_{t2}=2K_{12}a_{t1},\\
	e_0a_{\theta 1}-K_{11}a_{\theta 1}=0,\qquad e_0a_{\theta 2}-K_{22}a_{\theta 2}=2K_{12}a_{\theta 1}.
	\end{split}
	\end{align}

Then for  the above system of equations, with initial data prescribed on 
$\Sigma_{r^{m-1}_*}$ 	
we can explicitly write out the solutions to the above system:

\beq
\begin{split}
\label{explicit.int}
&a^{m-1}_{t1}(r,t,\theta)= e^{-\int_{r^{m-1}_*(t,\theta)}^{r} K^{m-1}_{11} (1-\frac{2M}{s})^{-\frac{1}{2}} ds}\cdot a^{m-1}_{t1}(r^{m-1}_*(t,\theta),t,\theta),
\\& a^{m-1}_{\theta 1}(r,t,\theta)= e^{-\int_{r^{m-1}_*(t,\theta)}^{r} K^{m-1}_{11} (1-\frac{2M}{s})^{-\frac{1}{2}} ds}\cdot a_{\theta 1}^{m-1}(r^{m-1}_*(t,\theta),t,\theta),
\\&a^{m-1}_{t2}(r,t,\theta)= e^{-\int_{r^{m-1}_*(t,\theta)}^{r} K^{m-1}_{22} (1-\frac{2M}{s})^{-\frac{1}{2}} ds}\cdot a^{m-1}_{t2}(r^{m-1}_*(t,\theta),t,\theta)
\\&+ e^{-\int_{r^{m-1}_*(t,\theta)}^{r} K^{m-1}_{22} 
(1-\frac{2M}{s})^{-\frac{1}{2}} ds}\cdot [-\int_{r^{m-1}_*(t,\theta)}^{r}e^{\int_{r^{m-1}_*(t,\theta)}^{s} K^{m-1}_{22}(\tau,t,\theta) (1-\frac{2M}{\tau})^{-\frac{1}{2}} d\tau}
 2K^{m-1}_{12}(s,t,\theta)a^{m-1}_{t1}(s)\cdot 
 (1-\frac{2M}{s})^{-\frac{1}{2}} ds],
 \\&a^{m-1}_{\theta 2}(r,t,\theta)= e^{-\int_{r^{m-1}_*(t,\theta)}^{r} K^{m-1}_{22} (1-\frac{2M}{r})^{-\frac{1}{2}} ds}
 \cdot a^{m-1}_{t2}(r^{m-1}_*(t,\theta),t,\theta)
+ e^{-\int_{r^{m-1}_*(t,\theta)}^r K^{m-1}_{22} (1-\frac{2M}{s})^{-\frac{1}{2}} ds}
\\&\cdot [-\int_{r^{m-1}_*(t,\theta)}^r e^{\int_{r^{m-1}_*(t,\theta)}^s K^{m-1}_{22}(\tau,t,\theta) (1-\frac{2M}{\tau})^{-\frac{1}{2}} d\tau }2K^{m-1}_{12}(s,t,\theta)a^{m-1}_{\theta 1}(s)\cdot 
 (1-\frac{2M}{s})^{-\frac{1}{2}} ds]
\end{split}
\eeq

	\medskip

A key remark is in order here: The coordinate 
expression on the metric (in terms of $t,\theta,r$) that we can obtain 
from the above will turn out to \emph{not} be adequate to derive our 
desired estimates for $\gamma^m$. The moral reason for this is that these 
coordinates \emph{emanate} from the initial data hypersurface via extension 
along $e_0$. They thus \emph{fail} to capture the principal directions of 
collapse/expansion \emph{at} the singularity $\{r=0\}$. This would manifest 
itself in non-optimal behaviour (in terms of powers of $r$) 
for certain Christoffel symbols in this coordinate system.  

The remedy to this issue is to consider \emph{new coordinates} which are
 \emph{adapted} to the principal collapsing directions \emph{at} 
 the singularity. It is with respect to these new coordinates that the 
 Christoffel symbols will have a suitable behaviour that allows us to close 
 our estimates. 

In fact, there are \emph{two} possible choices of coordinates that we can 
make. The first system is where the optimal behaviour of the spatial part 
of the metric near the singularity becomes apparent: In this system of 
coordinates $(\tilde{\theta},\tilde{t})$, and als $\tilde{r}$ 
 the coordinate vector fields are tangent 
(in an asymptotic sense) to the \emph{principal} collapsing and expanding 
directions at the singularity. 
These are introduced in the last section in the Appendix, in the proof of an optimal Corollary of our main theorem. 
We will not work with these (very rigid) coordinates in our main
 proof however.  Instead, 
for our main proof we use  a ``hybrid'' coordinate  system; one that is in 
between the one that emanates from the initial data set and the one (
$\partial_\Theta$) that 
emanates entirely from the singularity, which captures both principal 
collapsing/expanding directions: 
This 
 ``hybrid'' coordinate system is used  to achieve two goals: First to 
 capture \emph{only} the 
 collapsing 
 direction $e^{m-1}_2$ by one of the coordinate vector fields. Secondly, 
to provide 
  sufficient spatial regularity of the metric when expressed with respect to this 
  coordinate system. 
  \medskip

We will introduce this new coordinate system shortly, after a useful remark 
on the vanishing of certain parameters at the poles $\theta=0,\pi$.

  \subsubsection{Propagation of vanishing  conditions   at the poles.}
\label{poles}

In the analysis we perform below, we will at times invoke the generalized Hardy inequality 
in Lemma \ref{gen.Hardy}. This will apply to functions of $\theta, t$ that vanish at the two poles $0,\pi$. 
We present here how certain key parameters vanish to first order at those two poles at each step in our iteration. 
This ensures that whenever the generalized Hardy inequality in Lemma \ref{gen.Hardy} is invoked, the assumed vanishing of the function 
at the poles will hold. 
\medskip

We have imposed the condition $\te^m_2(r^m_*)=0$ at the poles, which in view of the smoothness of the vector field and function implies: 
 $\te^m_2(r^m_*)=O({\rm sin}\theta)$.

We will show that this vanishing  condition for this and some other 
parameters 
 is propagated off of the initial data hypersurface:

\begin{lemma}
\label{lem:poles}
For each step of our iteration, $\tilde{K}^m_{12}(t,\theta), \tilde{e}^m_2r^m_*(t,\theta)$ both vanish to first order at the poles, in other words 
\beq
\label{tK.van}
\tilde{K}^m_{12}(t,\theta),\tilde{e}^m_2[r^m_*](t,\theta)=O(sin\theta).
\eeq

Moreover the following vanishing conditions hold off the initial data 
hyper-surface:
\beq\label{K.van}
K^m_{12}(r, t,\theta)=O(sin\theta), \forall  r\in (0,2\e], t\in \mathbb{R}.
\eeq

Furthermore 
\[ e^m_1(r,t,\theta=0),  e^m_2(r,t,\theta=\pi), e^m_1(r,t,\theta=\pi),  e^m_2(r,t,\theta=\pi)\]
are parallel to $\partial_t,\partial_\theta$ for all $r\in (0,2\e]$, which is captured by the requirements: 

\beq
\label{van.poles}
(a^m)^{ 1\theta },(a^m)^{2t}, e^m_2(r)=O(sin\theta). 
\eeq
\end{lemma}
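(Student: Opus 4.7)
The plan is to prove the vanishing statements in two stages: first, establish them on the initial data hypersurface $\Sigma_{r^m_*}$ (where they follow from the smoothness requirements at the poles and the explicit formulas for the initial data), and second, propagate them off of $\Sigma_{r^m_*}$ using the relevant transport/ODE equations for $K^m_{12}$, $e^m_i(r)$, and the coordinate-to-frame coefficients. The induction hypothesis supplies the analogous vanishing at step $m-1$, which will appear in the source terms.

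For the initial step, I would start with the fact (already recorded in the inductive claims on $r^m_*$ in \S\ref{subsec:indrstar}) that smoothness of the $(3+1)$-dimensional space-time at the poles forces $\partial_\theta r^m_*(t,\theta=0)=\partial_\theta r^m_*(t,\theta=\pi)=0$, so $\partial_\theta r^m_* = O(\sin\theta)$. Combining with $\te_2^m(r^m_*) = a_{\theta 2}^m (\g_{\theta\theta})^{-1/2}\partial_\theta r^m_* + (\text{$\partial_t$ term with coefficient } a_{t 2}^m)$ and with \eqref{oam-1.init} evaluated for step $m$, the vanishing of $\te_2^m(r^m_*)$ to order $\sin\theta$ will be forced provided $\tilde{K}^m_{12}$ vanishes to first order at the poles. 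But the latter is built into the 2$\times$2 system \eqref{key.link2}--\eqref{key.link2'}: at $\theta=0,\pi$, the RHS \eqref{key.link2'} reduces to $\tilde{K}^m_{12} = K^m_{12}(r^m_*,t,\theta)$, and since by the propagation argument below $K^m_{12}=O(\sin\theta)$, one obtains $\tilde{K}^m_{12}(t,0)=\tilde{K}^m_{12}(t,\pi)=0$. With $\tilde{K}^m_{12}=O(\sin\theta)$, formulas \eqref{oam-1.init} immediately yield $(a^m)^{1\theta}|_{\Sigma_{r^m_*}},\, (a^m)^{2t}|_{\Sigma_{r^m_*}} = O(\sin\theta)$.

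For the propagation off $\Sigma_{r^m_*}$, I would work with the Riccati equation \eqref{finredEVERic12it} viewed as a linear first-order ODE in $r$ for $K^m_{12}$ (after solving for the homogeneous part): its RHS is $\overline{\nabla}^{m-1}_{12}\gamma^m + \tfrac{1}{2}[(e_1^{m-1}\gamma^{m-1})(e_2^{m-1}\gamma^m) + (e_1^m\gamma^m)(e_2^{m-1}\gamma^{m-1})]$, and using the inductive hypothesis $e^{m-1}_2(r)=0$, $(a^{m-1})^{1\theta}=O(\sin\theta)$, $(a^{m-1})^{2t}=O(\sin\theta)$, together with $\gamma^{m-1}-\gamma_S$ smooth in $(t,\theta,r)$ and the presence of $\log\sin\theta$ in $\gamma_S$, each such product factorises through a single $\cos\theta/\sin\theta$, giving $\partial_\theta$ regularity; the mixed term $\overline{\nabla}^{m-1}_{12}\gamma^m$ likewise picks up a $\sin\theta$ factor because one of the frame vectors is $e^{m-1}_2=(a^{m-1})^{2\theta}\partial_\theta+(a^{m-1})^{2t}\partial_t$ and the corresponding Christoffel symbols inherit the vanishing from step $m-1$. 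Lemma \ref{Fuchsian} then allows me to conclude $K^m_{12}(r,t,\theta)=O(\sin\theta)$ from the initial vanishing and the RHS vanishing at the poles. Having $K^m_{12}=O(\sin\theta)$ at hand, \eqref{e2rODE2} (which is homogeneous) together with the gauge condition $e^m_2(r)=o(r^{-1/2+d^m_2})$ and the argument already used in Lemma \ref{lem:e1re2r} yields $e^m_2(r)\equiv 0$, a much stronger statement than $O(\sin\theta)$; the ODE \eqref{e1rODE2} for $e^m_1(r)$ does not produce any pole-vanishing, which is consistent with the statement.

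Finally, the vanishing of the coordinate-to-frame coefficients off of $\Sigma_{r^m_*}$ is handled by inspecting \eqref{e0.a} directly. The equations for $a^m_{\theta 1}$ and (by duality) for $(a^m)^{2t}$ are \emph{homogeneous} linear transport equations along $e_0$ with coefficient $K_{11}^m$, so the initial vanishing $a^m_{\theta 1}|_{\Sigma_{r^m_*}}=O(\sin\theta)$ propagates exactly (via the explicit formula \eqref{explicit.int}). For $a^m_{t2}$ and $(a^m)^{1\theta}$, the inhomogeneity is proportional to $K^m_{12}\cdot a^m_{t1}$ (respectively $K^m_{12}\cdot a^m_{\theta 2}$), and since we have just established $K^m_{12}=O(\sin\theta)$, Duhamel's formula \eqref{explicit.int} preserves the $O(\sin\theta)$ vanishing. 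The main technical obstacle in this plan is the second stage: tracking carefully that \emph{every} term entering the RHS of the Riccati equation for $K^m_{12}$ and of the evolution for $(a^m)^{1\theta},(a^m)^{2t}$ carries the $\sin\theta$ factor at the poles, especially the mixed-Hessian term $\overline{\nabla}^{m-1}_{12}\gamma^m$, whose vanishing relies crucially on the step $m-1$ vanishing of $(a^{m-1})^{1\theta},(a^{m-1})^{2t}$ and hence on the inductive structure being respected.
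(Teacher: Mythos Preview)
Your proposal is essentially correct and follows the same inductive strategy as the paper: use the step $m-1$ vanishing to show the RHS of \eqref{finredEVERic12it} is $O(\sin\theta)$, deduce $K^m_{12}=O(\sin\theta)$, then obtain $\tilde K^m_{12}=O(\sin\theta)$ from \eqref{K12exp}, and finally propagate to the coordinate-to-frame coefficients via \eqref{e0.a} and the matrix-inverse relation.

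There are two differences worth noting. First, the paper's order is cleaner: it establishes $K^m_{12}=O(\sin\theta)$ \emph{first} (since $K^m_{12}$ is solved from $r=0$, not from $\Sigma_{r^m_*}$, the relevant input is the integral formula \eqref{partial_ru12m.int} and the RHS vanishing, which only uses step $m-1$ data and $\gamma^m$), and only \emph{then} reads off $\tilde K^m_{12}=O(\sin\theta)$ via \eqref{K12exp}. Your presentation starts on $\Sigma_{r^m_*}$ and has to forward-reference the propagation argument, which is logically fine but slightly awkward. Second, for the crucial step that the RHS of \eqref{finredEVERic12it} is $O(\sin\theta)$, the paper gives a cleaner argument: it uses that $\gamma^m_{\rm rest}\in C^{{\rm low}-2}(\mathbb{S}^2\times\mathbb{R})$ forces $\partial_\theta\gamma^m_{\rm rest}=O(\sin\theta)$, observes that the \emph{coordinate} mixed Hessian $(\overline{\nabla}^2)^{g^{m-1}}_{t\theta}\gamma^m_{\rm rest}=O(\sin\theta)$ by regularity of $g^{m-1}$, and then passes to the full $\gamma^m$ and the frame expression using the tensorial nature of the Hessian and the inductive vanishing of $(a^{m-1})^{1\theta},(a^{m-1})^{2t}$. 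Your direct attack on the frame expression (tracking $\cot\theta$ factors term by term) is workable but, as you correctly flag, more delicate---the singular $\log\sin\theta$ pieces of $\gamma^m,\gamma^{m-1}$ have to be seen to cancel against the connection terms in $\overline{\nabla}^{m-1}_{12}$, which the paper's tensorial argument handles in one stroke.
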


\begin{proof}

We prove the above by an induction on $m$: We assume it is true at step $m-1$ and derive the statement at step $m$. 
We have derived that $\gamma^m_{\rm rest}$ is a ${\cal C}^{{\rm low}-2}$ function over $\mathbb{S}^2\times\mathbb{R}$.
As discussed above, this implies that $\partial_\theta  \gamma^m_{\rm rest}=O(sin\theta)$. 

From this we can derive our claim as follows: First we note that in view of 
the regularity of the metric $g^{m-1}$ we have that: 
\beq
\label{pole.note}
(\overline{\nabla}^2)^{g^{m-1}}_{t\theta}\gamma^m_{\rm rest}=O(sin\theta). 
\eeq
We can then invoke the tensorial nature of the LHS and choose normal 
coordinates at each of the poles to  derive that: 
\beq
\label{pole.note2}
(\overline{\nabla}^2)^{g^{m-1}}_{t\theta}\gamma^m=O(sin\theta). 
\eeq
Now, to derive our claim \eqref{van.poles} we express $e^{m-1}_i$ in terms 
of the coordinate vector fields $\partial_t,\partial_\theta,\partial_r$ 
using 
the functions $(a^{m-1})^{iA}$, using \eqref{tthetatransebar}. In view of 
our inductive assumptions on these parameters $(a^{m-1})^{iA}$
we derive:

\beq
\overline{\nabla}_{12}\gamma^m +\frac{1}{2}[\nabla_1\gamma^{m-1}_{\rm rest} 
\nabla_2\gamma^m_{\rm rest}+ 
\nabla_1\gamma^{m}_{\rm rest} \nabla_2\gamma^{m-1}_{\rm rest}]=O(sin
\theta). 
\eeq

Considering the evolution equation \eqref{finredEVERic12it} \eqref{e0.a}, 
we then derive \eqref{K.van}. Invoking equation 
\eqref{K12exp} as well as the expression \eqref{tildee.m} for $\te^m_2$ in 
terms of $\tilde{K}^m_{12}$,  
 this then implies \eqref{tK.van} since the LHS of that equation 
 is of the form $O(sin\theta)$ and the LHS is a smooth
  function of $\tilde{K}^m_{12}$.

 With these conditions verified, we proceed to confirm \eqref{van.poles}. This is initially verified on the initial data hypersurface \eqref{tK.van} 
 and the formula \eqref{varphi.tilK.m}. Then the evolution equations \eqref{e2rODE2} along with \eqref{K.van} confirm \eqref{van.poles}. \end{proof}
 \medskip

 Although not needed, we note that the above proof implies the  vanishing of the even $\theta$-derivatives of $K^m_{12}, \tilde{K}^m_{12}$ as well as $\te^m_2(r^m_*), e^m_2(r^m_*)$
and $(a^m)^{1\theta}, (a^m)^{2t}$ at the poles $\theta=0,\theta=\pi$

  \subsubsection{The new coordinate system: Bounds on metric components 
  and Christoffel symbols, by virtue of our inductive assumptions.}
  \label{sec:met.Christ.bds}
  
  We will consider a new coordinate function $T=T(t,\theta)$ so that the 
  coordinate
   system $\{ T=T(t,\theta),\Theta=\theta\}$ has the coordinate vector 
   field $\partial_\Theta$
   capturing the direction $e^{m-1}_2$ \emph{at} the singularity. (Here, as above $T=T^{m-1}$,
    but we suppress the suffix $m-1$ for notational simplicity).

 We do this as follows: Let $\partial_\Theta, \partial_T$ be the 
 sought-after coordinate vector fields that correspond to the 
 sought-after coordinates. Let us express these sought-after vector fields 
 as linear combinations of the vector fields $\overline{e}_1, \overline{e}_2$ on each level 
 set of $\rho^{m-1}$.
 They will be expressed as linear combinations, given 
 by a formula as follows: 
  
\begin{align}\label{tthetatransebar.again}
\partial_T=a^{m-1}_{T1}\overline{e}^{m-1}_1+a^{m-1}_{T2}
\overline{e}^{m-1}_2,\qquad\partial_\theta= a^{m-1}_{\Th 1}
\overline{e}^{m-1}_1+a^{m-1}_{\Th 2}\overline{e}^{m-1}_2,\\
\notag \overline{e}^{m-1}_1=(a^{m-1})^{1T}\partial_T+
(a^{m-1})^{1\Th}\partial_\Th,\qquad \overline{e}^{m-1}_2
= (a^{m-1})^{2T}\partial_T+(a^{m-1})^{2\Th}
\partial_\theta.
\end{align} 
As in the case for the coordinate vector fields
 $\partial_t,\partial_\theta$ these coefficients 
  are then governed by the evolution equations:  
	\begin{align}\label{e0.a.new}
	e_0a^{m-1}_{T1}-K^{m-1}_{11}a^{m-1}_{T1}=0,\qquad e_0a^{m-1}_{T2}-K^{m-1}_{22}a^{m-1}_{T2}=2K^{m-1}_{12}a^{m-1}_{T1}\\
\notag	e_0a^{m-1}_{\Th 1}-K^{m-1}_{11}a^{m-1}_{\Th 1}=0,\qquad e_0
	a^{m-1}_{\Th 2}-K^{m-1}_{22}a^{m-1}_{\Th 2}=2K^{m-1}_{12}a^{m-1}_{\Th 1},
	\end{align}
where the coefficients $a^{m-1}_{Ai}$ can be thought of as 
 functions of $r,t,\theta$ or of $\rho^{m-1},t,\theta$.

We can solve for $a^{m-1}_{\Th 1}$, 
$a^{m-1}_{\Th 2}$ $a^{m-1}_{T 1}, a^{m-1}_{T 2}$ after we prescribe 
suitable initial conditions \emph{somewhere}.
   We will solve for $a^{m-1}_{\Th 1}$ \emph{backwards} from the singularity 
 setting the free branch equal to zero. This will imply that $a^{m-1}_{\Theta 1}=0$. This condition captures that 
   $\partial_\Th$ is parallel (in an asymptotic sense) to the direction of 
   $e^{m-1}_2$ at the singularity.
   As a consequence of the evolution equation, we derive that  
   $a^{m-1}_{\Th 1}=0$ 
   everywhere.  
    The requirement $\Th=\theta$ is captured by requiring 
    $\partial_\Th\theta=1$. 
Thus recalling \eqref{explicit.int}, we have on $\Sigma_{r^{m-1}_*}$:

\beq
\label{a2Th}
1=\overline{e}^{m-1}_2(\theta)\cdot c^{m-1}_{\Th 2}(t,\theta), 
\eeq    
    where 
    $c^{m-1}_{\Th 2}(t,\theta)=a^{m-1}_{\Th 2}(r^{m-1}_*(t,\theta),t,
    \theta)$. Note that $\te^{m-1}_2(t,\theta)$ on   $\Sigma_{r^{m-1}_*}$
    is given in terms of the already solved-for 
    $\tilde{K}^{m-1}_{12}$, and that:
\[
\overline{e}^{m-1}_2(t,\theta)= \sqrt{1-(\frac{2M}{r}-1)^{-1}
[\te_2^{m-1}(r_*^{m-1})]^2[1-\partial_r\chi(r)(r^{m-1}_*-\e)]^{-2}}
\te^{m-1}_2(t,\theta).
\]    
      This  then implies  bounds on 
    $c^{m-1}_{\Th 2}(t,\theta)$, and thus on 
    $a^{m-1}_{\Th 2}(\rho^{m-1},t,\theta)$ at $\{\rho^{m-1}=\e\}$. 
    (We put these down right  below).

So far we have solved for the vector field $\vec{\Th}$ which is meant to be 
the coordinate vector field $\partial_\Th$, once we specify the coordinate 
function $T=T(t,\theta)$. To obtain this function, we must impose the
necessary  relation: 
  \beq\label{vecTh.T}
  \vec{\Th}(T)=0,
  \eeq
  which on 
 $\Sigma_{r^{m-1}_*}$ translates into: 
 
  \beq\label{vecTh}
  \vec{\Th}(T)=0\implies\sum_{i=1,2} a^{m-1}_{\Th i}
  \overline{e}^{m-1}_i[T(t,
  \theta)]=0.
  \eeq

Moreover the vector field $\partial_T$ in the same coordinate system will 
equal:   

\beq
\partial_T=(\frac{\partial T}{\partial t})^{-1}\partial_t. 
\eeq
  
  Now, the coefficients $a^{m-1}_{\Th i}$ have already been solved for at 
  this
   point, and $a^{m-1}_{\Th 1}=0$ ; we will see that they are $H^{s-3}$ regular (plus allowing an extra singular weight at the poles). 
   Also, recall that the vector fields 
   $\te_i^{m-1}$, $\overline{e}^{m-1}_i$ are
  expressible via
  the values of $\tilde{K}_{12}^{m-1}(t,\theta)$, which has already been 
  solved for, in terms of $\partial_t,\partial_\theta$. 
      This implies that $H^{s-3}$ 
   regularity holds for the function  $\frac{\partial T}{\partial\theta}$ (and $\overline{e}^{m-1}_2(T)$), 
with the some extra singular weights  at the poles.  
   
    We now impose the initial 
  conditions
   $T(t,\theta)=t$ on $\{\theta=0\}$. Equation \eqref{vecTh} then be seen as a 
   1-parameter
   family
   of transport equations on $\{(\theta,t)\in [0,\pi]\times \mathbb{R}\}$. 
   Coupled with the imposed initial condition, we can obtain a unique 
   solution $T(t,\theta)$.   
   
   We can derive  regularity for $\frac{\partial  T}{\partial t}$: Refer to 
   \eqref{vecTh.T} and take another $\partial_t$ derivative. 
We can then take up to another $s-4$ derivatives in the directions $\partial_\Th$ or $\partial_T$.    We recall that $\overline{e}^m_2$ is parallel to 
$\partial_\Th$. Thus the resulting equation yields estimates on up to $s-4$ derivatives of $\partial_t\Th$, provided
at least one of them is in the $\partial_\Th$ direction.

Now, on the initial data surface $\Sigma_{r^{m-1}_*}$ we recall formulas 
\eqref{te1m.tK12} that link 
$\partial_t,\partial_\theta$ to $\te^{m-1}_1, \te^{m-1}_2$
on this surface. 
Combined with the above formula, these give explicit formulas for 
$a^{m-1}_{T1}(r^{m-1}_*(t,\theta), t,\theta)$, 
$a^{m-1}_{T2}(r^{m-1}_*(t,\theta), t,\theta)$.

From this function $T(t,\theta)$ and these formulas, we can obtain the regularity of the coefficients 
$a_{Ai}^{m-1}$, $A=T,\Th$ and $i=1,2$
 on the initial data hypersurface $\Sigma_{r^{m-1}_*}$. These appear at the lower, 
 higher and top 
 orders in the Lemma right below, for $r=r^{m-1}_*(t,\theta)$.

%At the top orders,
 %these parameters satisfy the following bounds: 

	%\begin{align}\label{a.init.top}
%\|a^{m-1}_{T1}
%(r^{m-1}_*(t,\theta),t,\theta)\|_{\dot{H}^{s-3}}
%\leq&\, C 
%\e^{-\frac{1}{2}-DC\eta-c-\frac{3}{2}J_1},\\
%\notag\|a^{m-1}_{\Th 2}(r,t,\theta)\|_{\dot{H}^{s-3}}
%\leq&\, C 
%\e^{1-DC\eta-c}
%,\| \partial_\theta a^{m-1}_{\Th 2}(r,t,\theta)\cdot{\rm cot}\theta
%\|_{\dot{H}^{s-4}}
%\leq&\, C 
%\e^{1-DC\eta-c}\\
%\notag\|(a^{m-1}_{\Th 1}(r^{m-1}_*(t,\theta),t,\theta)\|_{\dot{H}^{s-3}}
%=&\, 0,\\
%\notag\|a^{m-1}_{T 2}(r^{m-1}_*(t,\theta),t,\theta)
%\|_{\dot{H}^{s-3}}
%\leq&\, C 
%\e^{-\frac{1}{4}-DC\eta-c},
%\notag
%\end{align}

Off of the initial data hypersurface 
the regularity of this solution is    obtainable from the  transport 
equation \eqref{e0.a.new}. We put these down in the next Lemma.

Prior to this, we introduce one piece of notation, which is necessary to 
single out a special case at the top orders:  On any level set of $r$ or $\rho^{m-1}$ (where we will have induced coordinates $T,\Th$) let $\tilde{H}^{s-3}$ 
stand for the homogenous Sobolev space consisting of all iterated 
$\partial_T,\partial_\Theta$ derivatives, \emph{except} for the one where all derivatives are taken in the $\partial_T$-direction: 

\[
\|f \|_{\tilde{H}^{s-3}[\Sigma_r]}=\sqrt{\sum_{I, |I|=s-3, I\ne (T,\dots ,T)} 
\int_{\Sigma_r} |\partial^I f|^2 {\rm sin\theta} d\theta dt}.
\]

\begin{lemma}
\label{a.oa.bds}
	The coefficients  $a^{m-1}_{Ti},a^{m-1}_{\Th i}$, $i=1,2$ in the transformations 
	(\ref{tthetatransebar}) that we just constructed have the following regularity properties in the Sobolev spaces $H^l$, defined with respect to the coordinates $T,\Theta=T^{m-1}, \Theta^{m-1}$:
	
	At the lower orders we claim, for $l\le \rm low$:

\begin{align}\label{a.r.low}
\|a^{m-1}_{T1}(r,t,\theta)\|_{\dot{H}^l}
\leq C 
r^{-\frac{1}{2}-DC\eta},
\|a^{m-1}_{\Th 2}(r,t,\theta)\|_{\dot{H}^l}
\leq C 
r^{1-DC\eta},\\
\notag\|a^{m-1}_{T 2}(r,t,\theta)\|_{\dot{H}^l}
\leq C 
r^{-\frac{1}{4}-DC\eta},
\|a^{m-1}_{\Th 1}(r,t,\theta)\|_{\dot{H}^l}
= 0,\end{align}

	At the higher orders, for ${\rm low}+1\le l\le s-4$, $h=l-{\rm low}$:

	\begin{align}\label{a.r.high}
\|a^{m-1}_{T1}(r,t,\theta)\|_{\dot{H}^l}
\leq&\, C r^{-\frac{1}{2}-DC\eta-\frac{h}{4}},
\|a^{m-1}_{\Th 2}(r,t,\theta)\|_{\dot{H}^l}
\leq C 
r^{1-DC\eta-\frac{h}{4}},\\
\notag\|a^{m-1}_{T 2}(r,t,\theta)\|_{\dot{H}^l}
\leq&\, C 
r^{-\frac{1}{4}-DC\eta-\frac{h}{4}},
 \|a^{m-1}_{\Th 1}(r,t,\theta)\|_{\dot{H}^l}
= 0,
\end{align}

	Finally, at the top orders our claims are as follows:

	\begin{align}\label{a.r.top}
\|a^{m-1}_{T1}
(r,t,\theta)\|_{\tilde{H}^{s-3}}
\leq C 
r^{-\frac{1}{2}-DC\eta-c},
\qquad    \|\partial_\Th a^{m-1}_{T1}
(r,t,\theta)cot\theta \|_{\dot{H}^{s-4}}
\leq C 
r^{-\frac{1}{2}-DC\eta-c},\\
\notag\|a^{m-1}_{\Th 2}(r,t,\theta)\|_{\dot{H}^{s-3}}
\leq C 
r^{1-DC\eta-c},
\|a^{m-1}_{\Th 1}(r,t,\theta)\|_{\tilde{H}^{s-3}}
= 0,\\
\notag\|a^{m-1}_{T 2}(r,t,\theta)\|_{\tilde{H}^{s-3}}
\leq C 
r^{-\frac{1}{2}-DC\eta-c},\qquad
     \|[ a^{m-1}_{T2}
(r,t,\theta)cot\theta] \|_{\dot{H}^{s-4}}
\leq C 
r^{-\frac{1}{2}-DC\eta-c}.\\
\notag
\end{align}
\end{lemma}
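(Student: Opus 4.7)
\textbf{Plan of proof for Lemma \ref{a.oa.bds}.}

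The strategy is to solve the transport system \eqref{e0.a.new} explicitly, as in \eqref{explicit.int}, along each integral curve of $e_0$, with initial data prescribed on the hypersurface $\Sigma_{r^{m-1}_*}$. Since $a^{m-1}_{\Theta 1}\equiv 0$ by construction (we imposed that $\partial_\Theta$ is asymptotically parallel to $e^{m-1}_2$ at the singularity, and the first equation of the second line in \eqref{e0.a.new} is homogeneous, so the unique choice of free datum forcing the claimed asymptotic is $0$, propagating to all $r$), the equation for $a^{m-1}_{\Theta 2}$ becomes homogeneous and decouples. The equations for $a^{m-1}_{T1}, a^{m-1}_{\Theta 2}$ are homogeneous, while the one for $a^{m-1}_{T2}$ carries the forcing $2K^{m-1}_{12}a^{m-1}_{T1}$. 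Rewriting in terms of $r$ via $e_0=-(\tfrac{2M}{r}-1)^{1/2}\partial_r$, and using the inductive asymptotic expansions \eqref{K11heur}--\eqref{K12heur} of $K^{m-1}_{ij}$, each equation becomes a Fuchsian ODE of the form \eqref{fuchs} with $\zeta(T,\Theta)$ close to $d_1^{m-1}$ (for the $T1, \Theta 1$ variables) or $d_2^{m-1}$ (for the $T2, \Theta 2$ variables). By \eqref{Ceta.bds}--\eqref{D.def}, $|d_1^{m-1}-\tfrac{1}{2}|,|d_2^{m-1}+1|\leq DC\eta$, which yields the exponents $-\tfrac{1}{2}-DC\eta$ and $1-DC\eta$ appearing in the statement.

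First I will pin down the initial data on $\Sigma_{r^{m-1}_*}$. Since $\Theta=\theta$, the requirement $\partial_\Theta(\theta)=1$ combined with $a^{m-1}_{\Theta 1}=0$ gives $a^{m-1}_{\Theta 2}(r^{m-1}_*,t,\theta)=[\overline{e}^{m-1}_2(\theta)]^{-1}$, and the formula \eqref{te1m.tK12} together with \eqref{tildeedef} expresses $\overline{e}^{m-1}_2$ in $(t,\theta)$-coordinates through $\tilde{K}^{m-1}_{12}$ and $\te^{m-1}_2(r^{m-1}_*)$. The coordinate $T(t,\theta)$ solves the transport equation $\vec{\Theta}(T)=a^{m-1}_{\Theta 2}\overline{e}^{m-1}_2(T)=0$ with $T(t,0)=t$, which is solvable by ODE theory; its regularity is dictated by that of $\overline{e}^{m-1}_2$, and hence by the inductive bounds \eqref{tK12.ind.claim.low}--\eqref{tK12.ind.claim.top} on $\tilde{K}^{m-1}_{12}$ and \eqref{r*.ind.claim.low}--\eqref{r*.ind.claim.top}, \eqref{te2.r*.low}--\eqref{te2.r*.high} on $r^{m-1}_*$. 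From this and the inverse of \eqref{tthetatransebar.again} expressed via \eqref{te1m.tK12}, one reads off $a^{m-1}_{T1}(r^{m-1}_*,\cdot), a^{m-1}_{T2}(r^{m-1}_*,\cdot)$ with exactly the regularity already established for the $(t,\theta)$-version in \eqref{am-1.init}, up to the Jacobian factors $\partial T/\partial t$.

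Next I will propagate. For the lower orders $l\le\mathrm{low}$, applying Lemma \ref{Fuchsian} to the equation $\partial_r a^{m-1}_{T1}+[\tfrac{d_1^{m-1}}{r}+O(r^{-1+1/4})]a^{m-1}_{T1}=0$ (and its differentiated version) gives $\|a^{m-1}_{T1}\|_{\dot H^l}\lesssim r^{-1/2-DC\eta}\cdot \|a^{m-1}_{T1}(r^{m-1}_*)\|_{\dot H^l}$, and analogously for $a^{m-1}_{\Theta 2}$. For $a^{m-1}_{T2}$, the Duhamel representation in Lemma \ref{Fuchsian} absorbs the forcing $(\tfrac{2M}{r}-1)^{-1/2}K^{m-1}_{12}a^{m-1}_{T1}$ which is bounded in $\dot H^l$ by $r^{-1/2-1/4-1/2-DC\eta}=r^{-5/4-DC\eta}$ using \eqref{K12m-1imp} and the just-obtained bound on $a^{m-1}_{T1}$. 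Integration between $s^{-1}$ against the factor $s^{-d_2^{m-1}}$ produces the intermediate exponent $-\tfrac{1}{4}-DC\eta$ (the optimal gain over $a^{m-1}_{T1}$'s exponent $-\tfrac{1}{2}-DC\eta$ being $r^{1/4}$, consistent with the AVTD structure). The higher orders $\mathrm{low}<l\leq s-4$ are handled by repeating the commutation, now using the weaker bounds \eqref{inductiontrKlow} on $K^{m-1}_{ij}$, each extra derivative costing a factor $r^{-1/4}$, producing $r^{-h/4}$ as claimed.

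The main obstacle lies at the top order $|I|=s-3$. At this order the Leibniz expansion of $\partial^I(K^{m-1}_{11}a^{m-1}_{T1})$ contains the term $\partial^I K^{m-1}_{11}\cdot a^{m-1}_{T1}$, which is \emph{not} controlled in $\dot H^{s-3}$ when $I=(T,\dots,T)$ by the inductive assumption \eqref{inductiontrKtopmixed2} (this is precisely where the linear-instability-avoidance argument fails at the very top, for pure $\partial_T$ derivatives); this forces the use of the truncated space $\tilde H^{s-3}$ in the conclusions. In all other top directions, \eqref{inductiontrKtopmixed2} provides the bound $r^{-3/2-c}$, which combined with Gr\"onwall yields the $r^{-1/2-DC\eta-c}$ and $r^{1-DC\eta-c}$ exponents. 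The $\cot\theta$-weighted estimates for $\partial_\Theta a^{m-1}_{T1}\cdot\cot\theta$ and $a^{m-1}_{T2}\cdot\cot\theta$ in $\dot H^{s-4}$ are obtained by coupling the same argument with the enhanced weighted bound \eqref{inductiontrKtopmixed.sing2} and the generalized Hardy inequality of Lemma \ref{gen.Hardy}, using the vanishing of $K^{m-1}_{12}$, $a^{m-1}_{T2}$ and $\partial_\Theta$-derivatives of $a^{m-1}_{T1}$ at the poles $\theta=0,\pi$ guaranteed by Lemma \ref{lem:poles} (propagated from their vanishing on $\Sigma_{r^{m-1}_*}$). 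The consistency of all constants with the bound \eqref{C.def} for $C=C(\e)$ follows from the smallness condition \eqref{e.bd}.
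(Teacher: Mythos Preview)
Your proposal is correct and follows essentially the same approach as the paper: derive initial-data bounds on $\Sigma_{r^{m-1}_*}$ from the inductive control of $\tilde K^{m-1}_{12}$ and $r^{m-1}_*$, then propagate via the explicit integral representations \eqref{explicit.int.again} of the transport system \eqref{e0.a.new}, commuting with $\partial^I$ and invoking Lemma~\ref{Fuchsian} together with the inductive bounds \eqref{inductiontrKopt}--\eqref{inductiontrKtopmixed2} on $K^{m-1}_{ij}$; the restriction to $\tilde H^{s-3}$ at the top order comes exactly from the restricted bound \eqref{inductiontrKtopmixed2} on $K^{m-1}_{11}$, as in the paper.

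One minor point: your citation of \eqref{K12m-1imp} for the forcing in the $a^{m-1}_{T2}$ equation is only valid in $H^{{\rm low}-2}$, whereas the lower-order claim covers $l\le{\rm low}$; moreover, with that improved bound the Duhamel term (after including the $(\tfrac{2M}{r}-1)^{-1/2}\sim r^{1/2}$ factor converting from $e_0$ to $\partial_r$) actually yields the stronger exponent $r^{1/4-DC\eta}$ rather than $r^{-1/4-DC\eta}$. The paper's stated (non-sharp) exponent $r^{-1/4-DC\eta}$ comes from using the weaker inductive bound \eqref{inductiontrKopt}, $\|K^{m-1}_{12}\|_{H^l}\lesssim r^{-3/2+1/4}$, which is available for all $l\le{\rm low}$.
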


(We note the second set of estimates at the top order involves an extra weight 
${\rm cot}\theta$ which is singular at the two poles). 

%\begin{remark}
%The  top order derivatives that are not captured in the above are:  

%  \begin{align}\label{a.r.top.alt}
%\|\partial^{s-3}_{T\dots T} a^{m-1}_{T1}
%(r,t,\theta)\|_{L^2}.
%\notag\|\partial^{s-3}_{T\dots T}a^{m-1}_{T 2}(r,t,\theta)\|_{L^2}
%\end{align}
%These terms will appear when trying to derive some inductive steps for $\gamma^m$ at the top orders 
%below; they will be dealt with by using the special algebraic structure of the terms 
%where they appear, to re-express them in terms of \emph{other} top-order terms
%which \emph{are} bounded, using integrations by parts. 

%\end{remark}

We note that the above, together with Lemma \ref{lem:e1re2r}  implies the following bounds on the components of the metric $g$ 
with respect to the coordinates $T,\Th$, via the formulas 
\beq
\label{g.from.a.again} 
\begin{split}
& h^{m-1}_{\Theta\Theta}  =  \sum_{i=1,2}   [a^{m-1}_{\Theta i} ]^2 \cdot\bigg{[}1- [e_i(\rho^{m-1}) (\frac{2M}{r}-1)^{-1/2}
[1-\partial_r\chi(r)(r_*-\e)]^{-1}]^2\bigg]
\\&-a^{m-1}_{\Theta 1}a^{m-1}_{\Theta 2} e^{m-1}_1(\rho^{m-1})e^{m-1}_2(\rho^{m-1})(\frac{2M}{r}-1)^{-1}[1-\partial_r\chi(r)(r_*-\e)]^{-2}, 
\\&h^{m-1}_{TT}=\sum_{i=1,2}[a^{m-1}_{ T i}]^2 \cdot\bigg{[}1- [e^{m-1}_i(\rho) (\frac{2M}{r}-1)^{-1/2}
[1-\partial_r\chi(r)(r_*-\e)]^{-1}]^2\bigg{]}
\\&-a^{m-1}_{T 1}a^{m-1}_{T 2} e^{m-1}_1(\rho)e^{m-1}_2(\rho)(\frac{2M}{r}-1)^{-1}[1-\partial_r\chi(r)(r_*-\e)]^{-2}, 
\\&h^{m-1}_{T\Theta}=\sum_{i=1,2}  [a^{m-1}_{ \Theta i}\cdot a^{m-1}_{ T i} ] \cdot\bigg{[} 1- [e^{m-1}_i(\rho) (\frac{2M}{r}-1)^{-1/2}
[1-\partial_r\chi(r)(r_*-\e)]^{-1}]^2\bigg{]}
\\&+[a^{m-1}_{\Theta_1}a^{m-1}_{T_2}+^{m-1}a_{T 1}a^{m-1}_{\Theta 2}] e_1(\rho^{m-1})e_2(\rho^{m-1})
(\frac{2M}{r}-1)^{-1}[1-\partial_r\chi(r)(r_*-\e)]^{-2},
\\&h^{m-1}_{\rho \Theta}=\sum_{i=1,2}(\frac{2M}{r}-1)^{-1}
a^{m-1}_{\Theta i}e_i(\rho^{m-1})\cdot [1-\partial_r\chi(r)(r_*-\e)]^{-2}, 
\\&h^{m-1}_{\rho T}=\sum_{i=1,2}(\frac{2M}{r}-1)^{-1}
a^{m-1}_{T i}e_i(\rho^{m-1})\cdot [1-\partial_r\chi(r)(r^{m-1}_*-\e)]^{-2}.
\end{split}
\eeq

	At the low orders  $l\leq {\rm low}$:
\begin{align}\label{g.r.low}
\|g^{m-1}_{TT}(r,t,\theta)\|_{\dot{H}^l}
\leq  C 
r^{-1-2DC\eta},
\|g^{m-1}_{\Th \Th}(r,t,\theta)\|_{\dot{H}^l}
\leq C 
r^{2-2DC\eta},
\|g^{m-1}_{T \Th}(r,t,\theta)\|_{\dot{H}^l}
\leq C 
r^{\frac{3}{4}-2DC\eta},
\end{align}
At the higher orders, for ${\rm low}+1\le l\le s-4$, $h=l-{\rm low}$:
	
\begin{align}\label{g.r.high}
\|g^{m-1}_{TT}(r,t,\theta)\|_{\dot{H}^l}
\leq&\, C 
r^{-1-2DC\eta-\frac{h}{4}},
\|g^{m-1}_{\Th \Th}(r,t,\theta)\|_{\dot{H}^l}
\leq C 
r^{2-2DC\eta-\frac{h}{4}},
\|g^{m-1}_{T \Th}(r,t,\theta)\|_{\dot{H}^l}
\leq C 
r^{\frac{3}{4}-2DC\eta-\frac{h}{4}}
\end{align}

	\begin{align}%\label{g.init.top}
\|g^{m-1}_{TT}
(r,t,\theta)\|_{\tilde{H}^{s-3}}
\leq C 
r^{-1-2DC\eta-c},   \|\partial_\Th g^{m-1}_{TT}
(r,t,\theta)\cdot cot\theta \|_{\dot{H}^{s-4}}
\leq C 
r^{-1-2DC\eta-c},\\
\notag\|g^{m-1}_{\Th \Th}(r,t,\theta)\|_{\dot{H}^{s-3}}
\leq C 
r^{2-2DC\eta-c},\qquad   \|\partial_\Th g^{m-1}_{\Th \Th}(r,t,\theta)\cdot cot\theta \|_{\dot{H}^{s-4}}
\leq  C 
r^{2-2DC\eta-c},\\
\notag \|g^{m-1}_{T \Th}(r,t,\theta){\rm cot}\theta \|_{\tilde{H}^{s-3}}
\leq C 
r^{\frac{3}{4}-DC\eta-c-\frac{1}{4}},
\end{align} 
{\bf Note:} The evolution equations for these parameters, via the evolution equations for $a^{m-1}_{Ai}$ imply that analogous bounds hold for all the $e_0$-derivatives of these quantities, at the cost of an extra power $r^{-\frac{3}{2}}$ on the RHS. 
This follows readily from the analogous bounds on the components $K^{m-1}_{ij}$ (which form part of our inductive assumption), and the evolution equations \eqref{e0.a.new}. 

\emph{Proof of Lemma \ref{a.oa.bds}.}
Let us first derive the claimed bounds on $\Sigma_{r^{m-1}_*}$. 
We start with the de-coupled quantity 
$a^{m-1}_{\Th 2}(t,\theta)$. This is defined by  \eqref{a2Th}; the 
expression \eqref{te1m.tK12}
 for $\te^{m-1}_2$ in terms of the background coordinates 
$\theta,t$, together with the bounds on $\tilde{K}^{m-1}_{12}(t,\theta)$ 
implies our claim  on $\Sigma_{r^{m-1}_*}$ for this parameter. 
For the  parameters $a^{m-1}_{T1}, a^{m-1}_{T2}$ on $\Sigma_{r^{m-1}_*}$ we outlined how the estimates in the 
claimed spaces follow using the estimates we are assuming on $\tilde{K}^{m-1}_{12}$ and $a^{m-1}_{\Th 2}$ on $\Sigma_{r^{m-1}_*}$. 
We derive the claimed bounds by simply applying the inductive assumptions on those parameters 
along with the product inequality. 

%we refer to \eqref{vecTh.T}, as well as 
%the differentiated version of this equation, 
%\eqref{partTh}. The resulting equation is  a transport equation along 
%$\vec{\Theta}$ for $\partial^I [\frac{T(t,\theta)}{ \partial t}]$; its initial 
%data is smooth on $\theta=0$; our claim then follows by 
%the assumed bounds on $\tilde{K}^{m-1}_{12}$, and a simple transport equation estimate. 

Now, we can obtain our bounds for the parameters
 $a^{m-1}_{\Th2}, a^{m-1}_{T2}, a^{m-1}_{T1}$ off
  of $\Sigma_{r^{m-1}_*}$ by using the integral representations

\beq
\begin{split}
\label{explicit.int.again}
&a^{m-1}_{T1}(r,t,\theta)= e^{-\int_{r^{m-1}_*(t,\theta)}^{r} K^{m-1}_{11} (1-\frac{2M}{r})^{-\frac{1}{2}} ds}\cdot a^{m-1}_{T1}(r^{m-1}_*(t,\theta),t,
\theta),
\\&a^{m-1}_{T2}(r,t,\theta)= e^{-\int_{r^{m-1}_*(t,\theta)}^{r} K^{m-1}_{22} (1-\frac{2M}{r})^{-\frac{1}{2}} ds}\cdot a^{m-1}_{T2}(r^{m-1}_*(t,\theta),t,
\theta)
\\&- e^{-\int_{r^{m-1}_*(t,\theta)}^{r} K^{m-1}_{22} (1-\frac{2M}{r})^{-\frac{1}{2}} ds}\cdot \int_{r^{m-1}_*}^r e^{\int_{r^{m-1}_*(t,\theta)}^{s} K^{m-1}_{22}(\tau,t,\theta) (1-\frac{2M}{\tau})^{-\frac{1}{2}} d\tau} 2K^{m-1}_{12}(s,t,\theta)a^{m-1}_{T1}(s)\cdot 
 (1-\frac{2M}{s})^{-\frac{1}{2}} ds,
 \\&a^{m-1}_{\Th 2}(r,t,\theta)= e^{-\int_{r^{m-1}_*(t,\theta)}^{r} K^{m-1}_{22} (1-\frac{2M}{r})^{-\frac{1}{2}} ds}\cdot a^{m-1}_{\Th 2}(r^{m-1}_*(t,\theta),t,
 \theta)
%\\&+ e^{-\int_{r^{m-1}_*(t,\theta)}^{r} K^{m-1}_{22} (1-\frac{2M}{r})^{-\frac{1}{2}} ds}\cdot 2K^{m-1}_{12}(s,t,\theta)a^{m-1}_{\Th 1}(s)\cdot 
 %(1-\frac{2M}{r})^{-\frac{1}{2}} ds.
\end{split}
\eeq	 
Then our claim follows straightforwardly by just differentiating the above 
equations, and using our assumed bounds on $a^{m-1}_{Ai}$ on 
$\Sigma_{r^{m-1}_*}$ and those on $K^{m-1}_{ij}(r,t,\theta)$:
The required bounds and regularity for the coefficients 
$a^{m-1}_{Ai}(r^{m-1}_*(t,\theta),t,\theta)$  on the initial data hypersurface $\Sigma_{r^{m-1}_*(t,\theta)}$ have already been established. 
Furthermore the functions $K^{m-1}_{22}(r,t,\theta), K^{m-1}_{12}(r,t,\theta)$, $K^{m-1}_{11}(r,t,\theta)$ have the required regularity, as part of the inductive assumption 
(the latter function lies only in $\tilde{H}^{s-3}$ at the top order, which
places certain of the parameters $a^{m-1}_{Ai}$ in the corresponding space, depending on whether they ``see'' $K^{m-1}_{11}$ on their evolution equations or not); this allows us to derive our claim by differentiating the evolution equations and invoking Lemma \ref{Fuchsian}.

$\Box$
\medskip

We will also need to derive bounds on the metric components $g^{m-1}_{\rho A}$,
 $A=T,\Th$. 
  Let us find an expression for these mixed components 
 $g_{\rho T}, g_{\rho \Th}$. 

Using \eqref{partialrho.again}, we derive: 

\beq\bs
\label{g.cross}
&g^{m-1}_{\rho T}= \sum_{i=1,2} a_{Ti}^{m-1} \cdot e_i^{m-1}(\rho^{m-1})
(\frac{2M}{r}-1)^{-1} [1-\partial_r\chi(r) (r^{m-1}_*-\e)]^{-2}, 
\\&g^{m-1}_{\rho \Th}= \sum_{i=1,2} a_{\Th i}^{m-1} \cdot e_i^{m-1}(\rho^{m-1})
(\frac{2M}{r}-1)^{-1} [1-\partial_r\chi(r) (r^{m-1}_*-\e)]^{-2}.
\end{split}
\eeq

Thus 
 the bounds in Lemmas \ref{lem:e1re2r}, \ref{a.oa.bds} directly imply the following bounds for $l\le\rm low $, $J_1\le 1$: 

\beq
\label{mixed.metr.low}
\|(\partial_\rho)^{J_1}g^{m-1}_{\rho \Th}\|_{\dot{H}^l}\le 
B\rho^{2-DC\eta-J_1}, 
\|(\partial_\rho)^{J_1} g^{m-1}_{\rho T}\|_{\dot{H}^l}\le 
B\rho^{\frac{1}{2}-DC\eta -J_1}.
\eeq
For $l\in \{{\rm low}+1, s-4 \}$ and $h=l-{\rm low}$:
\beq\label{mixed.metr.high}
\|(\partial_\rho)^{J_1}g^{m-1}_{\rho \Th}\|_{\dot{H}^l}\le
 B\rho^{2-DC\eta-\frac{h}{4}-J_1}, 
\|(\partial_\rho)^{J_1} g^{m-1}_{\rho T}\|_{\dot{H}^l}\le 
B\rho^{\frac{1}{2}-DC\eta -\frac{h}{4}-J_1}.
\eeq
We also note that since $\rho^{m-1}=r$ for $r\le \e/2$ and $e^{m-1}_2(r)=0$,  $(g^{m-1})_{\rho \Th}$ vanishes for $\{r\le \e/2\}$.

We next derive analogues of these bounds for the components 
 of the \emph{inverse} of $g^{m-1}$: Recall first  
that the vector fields $\overline{e}^{m-1}_i$ can also be expressed 
in terms of the coordinates $\partial_T, \partial_\Theta$ via formulas: 

\beq
\overline{e}^{m-1}_i=(a^{m-1})^{iT}\partial_T+(a^{m-1})^{i\Th}\partial_\Th 
\eeq

The components of the 2x2 matrix $(a^{m-1})^{iA}, i=1,2, A=T,\Th$ 
are then just the inverse of the matrix $a^{m-1}_{Ai}$. In particular we 
derive the following bounds: 

\begin{lemma}
\label{lem:oa.bds}
\begin{align}\label{oa.r.low}
&\|(a^{m-1})^{1T}(r,t,\theta)\|_{\dot{H}^l}
\leq C 
r^{\frac{1}{2}-DC\eta},
\|(a^{m-1})^{ 2 \Th}(r,t,\theta)\|_{\dot{H}^l}
\leq C 
r^{-1-DC\eta},\\
\notag&\|(a^{m-1})^{ 2 T}(r,t,\theta)\|_{\dot{H}^l}
= 0,  
 \|(a^{m-1})^{ 1\Th}(r,t,\theta)\|_{\dot{H}^l}
\leq    C 
r^{-\frac{3}{4}-DC\eta},\text{ } \text{ }\text{for $l\leq {\rm low}$},
\end{align}

	At the higher orders, for ${\rm low}+1\le l\le s-4$, $h=l-{\rm low}$:

	\begin{align}\label{oa.r.middle}
&\|(a^{m-1})^{1T}(r,t,\theta)\|_{\dot{H}^l}
\leq C r^{\frac{1}{2}-DC\eta-\frac{h}{4}},
\|(a^{m-1})^{2 \Th }(r,t,\theta)\|_{\dot{H}^l}
\leq C 
r^{-1-DC\eta-\frac{h}{4}},\\
\notag&\|(a^{m-1})^{2T }(r,t,\theta)\|_{\dot{H}^l}
= 0,
 \|(a^{m-1})^{1\Th }(r,t,\theta)\|_{\dot{H}^l}
\leq  C 
r^{-\frac{3}{4}-DC\eta-\frac{h}{4}},
\end{align}
	
	Finally, at the top orders our claims are: 
	
	\begin{equation}\label{oa.r.top}
	\begin{split}
&\|(a^{m-1})^{1T}
(r,t,\theta)\|_{\tilde{H}^{s-3}}
\leq
r^{\frac{1}{2}-DC\eta-c}
\notag\|(a^{m-1})^{2\Th }(r,t,\theta)\|_{\dot{H}^{s-3}}
\leq C 
r^{-1-DC\eta-c},
\\&\|\partial_\theta (a^{m-1})^{2\Th }(r,t,\theta)\cdot 
{\rm cot}\theta \|_{\dot{H}^{s-4}}
\leq C 
r^{-1-DC\eta-c}, 
\|(a^{m-1})^{2T }(r,t,\theta)\|_{\dot{H}^{s-3}}
=0, 
\|(a^{m-1})^{ 1\Th}(r,t,\theta)\|_{\dot{H}^{s-3}}
\leq r^{-1-DC\eta},
\end{split}
\eeq  

\end{lemma}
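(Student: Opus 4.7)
The plan is to derive Lemma \ref{lem:oa.bds} as an essentially algebraic consequence of Lemma \ref{a.oa.bds}, together with Fuchsian-type pointwise lower bounds on the diagonal coefficients $a^{m-1}_{T1}$ and $a^{m-1}_{\Theta 2}$. The starting point is the observation that the two transformation formulas in \eqref{tthetatransebar.again} are mutually inverse: the $2\times 2$ matrix $[(a^{m-1})^{iA}]$ is the inverse of $[a^{m-1}_{Aj}]$. Crucially, the gauge normalization $a^{m-1}_{\Theta 1}\equiv 0$, which was imposed in \S\ref{sec:met.Christ.bds} by solving the transport equation backwards from the singularity and setting the free branch to zero, forces $[a^{m-1}_{Aj}]$ to be upper-triangular. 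Its inverse then admits the explicit closed form
\[
(a^{m-1})^{1T}=\frac{1}{a^{m-1}_{T1}},\quad (a^{m-1})^{2\Theta}=\frac{1}{a^{m-1}_{\Theta 2}},\quad (a^{m-1})^{2T}=0,\quad (a^{m-1})^{1\Theta}=-\frac{a^{m-1}_{T2}}{a^{m-1}_{T1}\, a^{m-1}_{\Theta 2}}.
\]
The claimed vanishing of $(a^{m-1})^{2T}$ is then immediate.

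For the remaining three coefficients, the main task is to derive pointwise lower bounds on the denominators. These follow from the explicit integral representations \eqref{explicit.int.again}. Since $K^{m-1}_{11}\sim d_1^{m-1}\sqrt{2M}\,r^{-3/2}$ and $K^{m-1}_{22}\sim d_2^{m-1}\sqrt{2M}\,r^{-3/2}$ by \eqref{K11heur}--\eqref{K22heur}, one has $(\tfrac{2M}{r}-1)^{-1/2}K^{m-1}_{ii}\sim d_i^{m-1}/r$ as $r\to 0^+$, so the exponential integrating factors in \eqref{explicit.int.again} yield pointwise asymptotics of the form
\[
a^{m-1}_{T1}(r,t,\theta)\sim r^{-d_1^{m-1}(t,\theta)}\,a^{m-1}_{T1}(r_*^{m-1},t,\theta),\qquad a^{m-1}_{\Theta 2}(r,t,\theta)\sim r^{-d_2^{m-1}(t,\theta)}\,a^{m-1}_{\Theta 2}(r_*^{m-1},t,\theta).
\]
In view of $|d_1^{m-1}-\tfrac12|,\,|d_2^{m-1}+1|\leq DC\eta$, and the already-established non-degeneracy of the initial data for $a^{m-1}_{T1}$ and $a^{m-1}_{\Theta 2}$ on $\Sigma_{r_*^{m-1}}$ (from \eqref{oam-1.init} together with the smallness of $\tilde{K}^{m-1}_{12}$), one extracts the pointwise lower bounds
\[
|a^{m-1}_{T1}|\gtrsim r^{-1/2+DC\eta},\qquad |a^{m-1}_{\Theta 2}|\gtrsim r^{1+DC\eta},
\]
uniformly in $(t,\theta)$ for $r\in(0,2\epsilon]$ (away from the poles for the second estimate).

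With these pointwise lower bounds in hand, the Sobolev estimates for $(a^{m-1})^{1T}=1/a^{m-1}_{T1}$ and $(a^{m-1})^{2\Theta}=1/a^{m-1}_{\Theta 2}$ follow from a standard Faà di Bruno expansion applied to $x\mapsto 1/x$, combined with the product inequality \eqref{prodcut.ineq} and the Sobolev embedding $H^2\hookrightarrow L^\infty$. At each level of regularity (low, middle, top) one controls $\|\partial^I(1/a^{m-1}_{T1})\|_{L^2}$ by a sum of products $\prod_j \partial^{I_j}a^{m-1}_{T1}$ divided by a power of $a^{m-1}_{T1}$; the pointwise lower bound converts these divisions into multiplication by powers of $r^{1/2-DC\eta}$, and the claimed exponents match directly. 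The $\tilde{H}^{s-3}$ restriction on $(a^{m-1})^{1T}$ at top order is inherited from the same restriction on $a^{m-1}_{T1}$, which in turn traces back to the exclusion of the all-$T$ direction in the inductive claim \eqref{inductiontrKtopmixed2} for $K^{m-1}_{11}$. For the cross term $(a^{m-1})^{1\Theta}$, the argument is applied to the triple product $a^{m-1}_{T2}\cdot(a^{m-1})^{1T}\cdot(a^{m-1})^{2\Theta}$, producing the exponent $-1/4+1/2-1=-3/4$ up to the corrections $DC\eta$, $h/4$, $c$ at the three levels.

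The main technical obstacle is the top-order $\cot\theta$-weighted estimate for $(a^{m-1})^{2\Theta}$. The initial datum $a^{m-1}_{\Theta 2}(r_*^{m-1},t,\theta)$ vanishes at the poles $\theta=0,\pi$ at the same rate as $\sqrt{\mathbf{g}_{\theta\theta}}\sim\sin\theta$ (see \eqref{oam-1.init}), and this vanishing is propagated off the initial hypersurface by the transport equation \eqref{e0.a.new}, so that the quotient $(a^{m-1})^{2\Theta}$ is genuinely of order $(\sin\theta)^{-1}$ near the poles. The singular-weight bound in \eqref{oa.r.top} must therefore be obtained by writing $(a^{m-1})^{2\Theta}=(\sin\theta)^{-1}\cdot[\sin\theta\cdot(a^{m-1})^{2\Theta}]$ and recognizing the bracket as a smooth function on $\mathbb{S}^2\times\mathbb{R}$ that obeys the standard Sobolev estimate, after which the generalized Hardy inequality of Lemma \ref{gen.Hardy} together with \eqref{basic.elliptic} absorbs the $\cot\theta$ factor at the cost of one additional spatial derivative --- exactly the kind of trade-off already used in \S\ref{sec:reg.spaces} to pass between the $\oH^k$ and $H^k$ norms. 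Once this renormalization is in place, the remaining bookkeeping is routine and parallels the derivation of Lemma \ref{a.oa.bds}.
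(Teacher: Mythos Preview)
Your approach is essentially the same as the paper's: the paper observes in one sentence before the lemma that the matrix $[(a^{m-1})^{iA}]$ is the inverse of $[a^{m-1}_{Ai}]$, and leaves the estimates implicit. Your explicit use of the triangular structure coming from $a^{m-1}_{\Theta 1}\equiv 0$, the closed-form inverse, and the Fuchsian pointwise lower bounds from the integral representations \eqref{explicit.int.again} is exactly how one fills in the details, and is correct.

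There is, however, a confusion in your last paragraph. You assert that $a^{m-1}_{\Theta 2}(r_*^{m-1},t,\theta)$ vanishes at the poles like $\sqrt{\g_{\theta\theta}}\sim\sin\theta$, but this is not so: it is $\g_{\phi\phi}$ that vanishes like $\sin^2\theta$; the component $\g_{\theta\theta}$ (equal to $r^2$ in Schwarzschild) is regular at the poles. Hence $(a^{m-1})^{2\Theta}=1/a^{m-1}_{\Theta 2}$ is itself regular there, not of order $(\sin\theta)^{-1}$, and your proposed renormalization by $\sin\theta$ is both unnecessary and based on a false premise. The $\cot\theta$-weighted estimate in \eqref{oa.r.top} is instead a consequence of the evenness of $(a^{m-1})^{2\Theta}$ at the poles (in the sense of \S\ref{poles}), which makes $\partial_\theta(a^{m-1})^{2\Theta}$ vanish there; the weight is then absorbed either via the elliptic identity \eqref{basic.elliptic} applied at order $s-3$, or by propagating the weighted bound $\|\partial_\Theta(K^{m-1}_{22}-K^S_{22})\cdot\cot\theta\|_{\dot H^{s-4}}$ from \eqref{inductiontrKtopmixed.sing} through the transport equation $e_0 a^{m-1}_{\Theta 2}=K^{m-1}_{22}a^{m-1}_{\Theta 2}$.
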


This yields some bounds on the components of $(g^{m-1})^{AB}$ (with raised 
indices) 
in the components with respect to this system of coordinates, for $l\leq {\rm low}$ 
 
\begin{align}\label{a.r.low}
\notag\|(g^{m-1})^{TT}(r,t,\theta)\|_{\dot{H}^l}
\leq C 
r^{1-2DC\eta},
\|(g^{m-1})^{\Th \Th}(r,t,\theta)\|_{\dot{H}^l}
\leq C 
r^{-2-2DC\eta},
\|(g^{m-1})^{\Th T}(r,t,\theta)\|_{\dot{H}^l}
\leq   C 
r^{\frac{1}{4}-2DC\eta},
\notag
\end{align} 
 while at the higher orders for ${\rm low}+1\le l\le s-4$, $h=l-{\rm low}$:
 \begin{equation}\label{a.init.middle}\begin{split}
&\|(g^{m-1})^{TT}(r,t,\theta)\|_{\dot{H}^l}
\leq C r^{1-2DC\eta-\frac{h}{4}},
\|(g^{m-1})^{\Th \Th }(r,t,\theta)\|_{\dot{H}^l}
\leq C 
r^{-2-2DC\eta-\frac{h}{4}},
\\& \|(g^{m-1})^{T\Th }(r,t,\theta)\|_{\dot{H}^l}
\le  C 
r^{-\frac{1}{4}-2DC\eta-\frac{h}{4}},
\end{split}
\end{equation}
and at the top orders when $J_1\le 2$: 
\begin{equation}\label{a.init.top}\begin{split}
&\notag\|(\partial_\rho)^{J_1}(g^{m-1})^{TT}
(r,t,\theta)\|_{\tilde{H}^{s-3}}
\leq C 
r^{1-2DC\eta-c-J_1},
\|(\partial_\rho)^{J_1}(g^{m-1})^{\Th\Th }(r,t,\theta)\|_{\dot{H}^{s-3}}
\leq C 
r^{-2-2DC\eta-c-J_1},
\\&\|(\partial_\rho)^{J_1}(g^{m-1})^{\Th T}(r,t,\theta)\|_{\tilde{H}^{s-3}}
\le  r^{-\frac{1}{2}-DC\eta-c-J_1}.
\end{split}
\end{equation}

Using the bounds on all components of $a^{m-1}_{AB}, A,B=\rho,T,\Theta$, we also obtain bounds on the cross  inverse metric components:

\beq
\label{mixed.metr.inv.low}
\|(\partial_\rho)^{J_1}(g^{m-1})^{\rho \Th}\|_{\dot{H}^l}\le 
B\rho^{-\frac{1}{2}-DC\eta-J_1}, 
\|(\partial_\rho)^{J_1} (g^{m-1})^{\rho T}\|_{\dot{H}^l}\le 
B\e^{\frac{1}{2}-DC\eta -J_1}.
\eeq 
(In fact the first term vanishes for $\rho^{m-1}\le \e/2$, but we do not need that fact). The bounds at higher orders
$l\in \{{\rm low}+1,\dots, s-4\}$ are analogous, letting $h=l-{\rm low}$: 
\beq
\label{mixed.metr.inv.high}
\|(\partial_\rho)^{J_1}(g^{m-1})^{\rho \Th}\|_{\dot{H}^l}\le 
B\rho^{-\frac{1}{2}-DC\eta-J_1-\frac{h}{4}}, 
\|(\partial_\rho)^{J_1} (g^{m-1})^{\rho T}\|_{\dot{H}^l}\le 
B\e^{\frac{1}{2}-DC\eta-J_1-\frac{h}{4}}.
\eeq 

\medskip

 We also put down some estimates for the 
 Christoffel symbols $(\Gamma^{m-1})_{AB}^C$  
 $C=T,\Th$ which will be useful:

\begin{lemma} 
\label{Christ.bounds}

 At the lower and higher orders our claimed bounds for these Christoffel symbols are

  \beq
  \label{Christ.lower}\begin{split}
&  \|(\Gamma^{m-1})^\Th_{\Th\Th}
\|_{H^l[\Sigma_{\rho^{m-1}}]}\le C (\rho^{m-1})^{-2CD\eta}, \|(\Gamma^{m-1})^\Th_{TT}
\|_{H^l[\Sigma_{\rho^{m-1}}]}\le C (\rho^{m-1})^{-3-2CD\eta},
\\& \|(\Gamma^{m-1})^\Th_{T\Th}
\|_{H^l[\Sigma_{\rho^{m-1}}]}\le C (\rho^{m-1})^{-1-\frac{1}{4}-2CD\eta},
    \|(\Gamma^{m-1})^T_{TT}
  \|_{H^l[\Sigma_{\rho^{m-1}}]}\le C (\rho^{m-1})^{-2CD\eta},
\\&\|(\Gamma^{m-1})^T_{\Th\Th}
  \|_{H^l[\Sigma_{\rho^{m-1}}]}\le C (\rho^{m-1})^{2+\frac{1}{4}-2CD\eta},
 \|(\Gamma^{m-1})^T_{T\Th}
  \|_{H^l[\Sigma_{\rho^{m-1}}]}\le C (\rho^{m-1})^{-2CD\eta}    
\end{split} 
  \eeq
  for all $l\le {\rm low}-1$. 
  
  At the higher orders, the corresponding bounds are as follows, where 
  $l\in \{low,\dots ,s-5 \}$. 
  
    \beq
  \label{Christ.higher}\begin{split}
&  \|(\Gamma^{m-1})^\Th_{\Th\Th}
\|_{H^l[\Sigma_{\rho^{m-1}}]}\le C (\rho^{m-1})^{-2CD\eta-\frac{h}{4}}, \|(\Gamma^{m-1})^\Th_{TT}
\|_{H^l[\Sigma_{\rho^{m-1}}]}\le C (\rho^{m-1})^{-3-2CD\eta-\frac{h}{4}},
\\& \|(\Gamma^{m-1})^\Th_{T\Th}
\|_{H^l[\Sigma_{\rho^{m-1}}]}\le C (\rho^{m-1})^{-1-\frac{1}{4}-2CD\eta-\frac{h}{4}},
    \|(\Gamma^{m-1})^T_{TT}
  \|_{H^l[\Sigma_{\rho^{m-1}}]}\le C (\rho^{m-1})^{-2CD\eta-\frac{h}{4}},
\\&\|(\Gamma^{m-1})^T_{\Th\Th}
  \|_{H^l[\Sigma_{\rho^{m-1}}]}\le C (\rho^{m-1})^{2+\frac{1}{4}-2CD\eta-\frac{h}{4}},
 \|(\Gamma^{m-1})^T_{T\Th}
  \|_{H^l[\Sigma_{\rho^{m-1}}]}\le C (\rho^{m-1})^{2-\frac{1}{4}-2CD\eta-\frac{h}{4}}    
\end{split} 
  \eeq
  We also have the following extra bounds at the top orders, where 
  $J_1\le 2$

    \beq
  \label{Christ.top}\begin{split}
&  \|(\partial_\rho)^{J_1} (\Gamma^{m-1})^\Th_{\Th\Th}\cdot cot\Th
\|_{\dot{H}^{s-4}[\Sigma_{\rho^{m-1}}]}\le C (\rho^{m-1})^{-2CD\eta-c-\frac{1}{2}-J_1},\\
\notag& \|
(\partial_\rho)^{J_1}(\Gamma^{m-1})^\Th_{TT}\cdot cot\Th
\|_{\dot{H}^{s-4}[\Sigma_{\rho^{m-1}}]}\le C 
(\rho^{m-1})^{-3-2CD\eta-c-\frac{1}{2}-J_1},
\\& \|(\partial_\rho)^{J_1}(\Gamma^{m-1})^\Th_{T\Th}\cdot cot\Th
\|_{\dot{H}^{s-4}[\Sigma_{\rho^{m-1}}]}\le C 
(\rho^{m-1})^{-1-\frac{1}{4}-2CD\eta-c-\frac{1}{2}-J_1},\\
\notag&    \||(\partial_\rho)^{J_1}(\Gamma^{m-1})^T_{TT}
  \|_{\tilde{H}^{s-4}[\Sigma_{\rho^{m-1}}]}\le C (\rho^{m-1})^{-2CD\eta-c-J_1},
\\&\|(\partial_\rho)^{J_1}(\Gamma^{m-1})^T_{\Th\Th}
  \|_{\dot{H}^{s-4}[\Sigma_{\rho^{m-1}]}}\le C (\rho^{m-1})^{2+\frac{1}{4}-2CD\eta-c-J_1},\\
\notag & \|(\partial_\rho)^{J_1}(\Gamma^{m-1})^T_{T\Th}
  \|_{\tilde{H}^{s-4}[\Sigma_{\rho^{m-1}}]}\le C (\rho^{m-1})^{2-\frac{1}{4}-2CD
  \eta-c-J_1}    
\end{split} 
  \eeq
 \end{lemma}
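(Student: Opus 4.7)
\bigskip
\noindent\textbf{Proof proposal for Lemma \ref{Christ.bounds}.}  The plan is to treat the Christoffel symbols as explicit rational expressions in the coordinate-to-frame coefficients $a^{m-1}_{Ai},(a^{m-1})^{iA}$, and thereby reduce the desired bounds to those already proved in Lemmas \ref{a.oa.bds}, \ref{lem:oa.bds}, via the product inequality \eqref{prodcut.ineq}.  Concretely, I would start from the usual expression for the Christoffel symbols of the induced 2D metric on $\Sigma_{\rho^{m-1}}$ in coordinates $T,\Theta$,
\[
(\Gamma^{m-1})^C_{AB}\ =\ \tfrac{1}{2}(g^{m-1})^{CD}\bigl(\partial_A g^{m-1}_{DB}+\partial_B g^{m-1}_{DA}-\partial_D g^{m-1}_{AB}\bigr),\qquad A,B,C,D\in\{T,\Theta\},
\]
and then substitute the formulas \eqref{g.from.a.again} (together with the inverse-metric analogues coming from Lemma \ref{lem:oa.bds}).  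Each resulting summand is a product of a (possibly inverse-metric) factor with one coordinate derivative of another metric factor, so its $H^l$ norm is estimated via \eqref{prodcut.ineq} by the corresponding norms already controlled in \eqref{g.r.low}--\eqref{g.r.high} and \eqref{a.init.middle}, combined with Sobolev embedding \eqref{Sob} for the $L^\infty$ factor.

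For the lower and middle orders ($l\le s-5$) the argument is routine: since $\mathrm{low}-1>2$ the Sobolev embedding supplies $L^\infty$ control of one factor in every product, so applying \eqref{prodcut.ineq} gives the claimed powers of $\rho^{m-1}$ upon identifying the dominant summand in each Christoffel expression.  For instance, in $(\Gamma^{m-1})^\Theta_{TT}$ the dominant product is $\frac{1}{2}(g^{m-1})^{\Theta\Theta}\,\partial_\Theta g^{m-1}_{TT}$, whose $H^l$ norm is bounded by the product of $\|(g^{m-1})^{\Theta\Theta}\|_{L^\infty}\sim(\rho^{m-1})^{-2-2DC\eta}$ and $\|\partial_\Theta g^{m-1}_{TT}\|_{H^l}\sim(\rho^{m-1})^{-1-2DC\eta-\frac{h}{4}}$, yielding the power $(\rho^{m-1})^{-3-2CD\eta-\frac{h}{4}}$ claimed in \eqref{Christ.lower}--\eqref{Christ.higher}.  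The remaining five Christoffel symbols are handled analogously by identifying the dominant summand; the exponents claimed in \eqref{Christ.lower}--\eqref{Christ.higher} are simple arithmetic consequences of the bounds on the metric and its inverse.

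For the top-order claims \eqref{Christ.top} two additional subtleties intervene.  First, at order $s-3$ some of the metric bounds are only available in the weaker space $\tilde{H}^{s-3}$ (excluding the all-$\partial_T$ multi-index), which forces us to place those Christoffel symbols whose dominant term contains an all-$\partial_T$ derivative of $g^{m-1}_{TT}$ (such as $(\Gamma^{m-1})^T_{TT}$ and $(\Gamma^{m-1})^T_{T\Theta}$) in the corresponding $\tilde{H}^{s-4}$ space.  Second, the $\cot\Theta$-weighted bounds require combining the top-order metric estimates carrying the singular weight (as already assumed in Lemma \ref{a.oa.bds}) with the generalized Hardy inequality of Lemma \ref{gen.Hardy}; the hypothesis of this inequality is supplied by the pole-vanishing of the relevant parameters established in Lemma \ref{lem:poles}, which propagates down the Christoffel components $(\Gamma^{m-1})^\Theta_{\Theta\Theta},(\Gamma^{m-1})^\Theta_{TT},(\Gamma^{m-1})^\Theta_{T\Theta}$ that contain $\partial_\Theta$-derivatives of even functions of $\theta$.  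For the $\partial_\rho$ derivatives at top order we invoke the note at the end of \S\ref{sec:met.Christ.bds} that each $e_0$-derivative of a metric coefficient costs an extra $\rho^{-3/2}$, combined with \eqref{partialrho.again}; this is consistent with the factor $(\rho^{m-1})^{-J_1}$ on the right-hand sides of \eqref{Christ.top} (with the additional $\rho^{-1/2}$ coming from the weight $(2M/\rho-1)^{-1/2}$ in \eqref{partialrho.again} being absorbed in the claimed exponents).

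The main obstacle I anticipate is the bookkeeping of the top-order bounds under the combined $\cot\Theta$ weight and the $\tilde{H}^{s-3}$-versus-$\dot{H}^{s-3}$ dichotomy: one must verify that whenever the dominant summand of a Christoffel symbol involves the excluded all-$\partial_T$ derivative, the resulting bound naturally lands in $\tilde{H}^{s-4}$, and whenever a $\cot\Theta$ weight is claimed, the parameter on which the weight acts vanishes at the poles at sufficient order to absorb the weight through Lemma \ref{gen.Hardy}.  Once this case analysis is carried out consistently across all six Christoffel components, the quantitative exponents follow directly from Lemmas \ref{a.oa.bds}, \ref{lem:oa.bds} and the product inequality, with no genuinely new analytic ingredient required.
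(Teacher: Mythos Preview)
Your proposal is correct and follows essentially the same approach as the paper's proof, which consists of only a few lines: it states that the bounds are ``straightforward, from the definition of the Christoffel symbols, the bounds obtained for $g^{m-1}_{AB}$ and $(g^{m-1})^{AB}$ directly above, as well as the product inequality.'' Your elaboration of the top-order case (the $\tilde{H}^{s-3}$ versus $\dot{H}^{s-3}$ distinction, the role of the pole-vanishing from Lemma~\ref{lem:poles} in handling the $\cot\Theta$ weight, and the cost of $\partial_\rho$ derivatives) is more explicit than the paper's, but these ingredients are indeed what the framework supplies and what a reader would need to check.
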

 
 We will also need to put down some estimates on the Christoffel symbols 
 $(\Gamma^{m-1})_{AB}^C$  where at least one of the indices $A,B$
  (say $A$ wlog)
  equals $\rho$. 
Given our bounds on $\partial(g^{m-1}_{AB})$ and $(g^{m-1})^{CD}$   right 
above, we can 
obtain the following bounds on these Christoffel symbols:\footnote{Note that all these quantities vanish in Schwarzschild.} 

\begin{lemma}
\label{Christof.off.diag}
\beq\begin{split}\label{Christof.off.diag.low}
&\| (\Gamma^{m-1})_{\rho \Th}^\rho \|_{H^{\rm low}}\le BC\eta 
(\rho^{m-1})^{-\frac{1}{8}},  \| (\Gamma^{m-1})_{\rho T}^\rho \|_{H^{\rm low}}\le BC\eta 
(\rho^{m-1})^{-2-\frac{1}{4}-\frac{1}{8}},
\\&\| (\Gamma^{m-1})_{\rho T}^T \|_{H^{\rm low}}\le BC\eta (\rho^{m-1})^{-1-\frac{1}{8}}
\| (\Gamma^{m-1})_{\rho \Th}^\Th \|_{H^{\rm low}}\le BC\eta
 (\rho^{m-1})^{-1-\frac{1}{8}},
\\&\|(\Gamma^{m-1})_{\rho T}^\Th \|_{H^{\rm low}}\le B C\eta (\rho^{m-1})^{-2-\frac{1}{4}-\frac{1}{8}},
\| (\Gamma^{m-1})_{\rho \Th}^T \|_{H^{\rm low}}\le B C\eta
 (\rho^{m-1})^{-\frac{1}{8}},
 \\&\|(\Gamma^{m-1})_{\rho \rho}^\Th \|_{H^{\rm low}}\le BC\eta (\rho^{m-1})^{-\frac{3}{2}-\frac{1}{8}},
\| (\Gamma^{m-1})_{\rho \rho}^T \|_{H^{\rm low}}\le BC\eta(\rho^{m-1})
 (\rho^{m-1})^{-\frac{1}{4}-\frac{1}{8}}.
\end{split}
\eeq
The analogues of these estimates at the higher derivatives are as follows, 
for $h=l-{\rm low}$, $l\in \{{\rm low}+1, \dots, s-4$: 

\beq\begin{split}\label{Christof.off.diag.higher}
&\| (\Gamma^{m-1})_{\rho \Th}^\rho \|_{H^l}\le BC\eta 
(\rho^{m-1})^{-\frac{1}{8}},  \| (\Gamma^{m-1})_{\rho T}^\rho \|_{H^l}\le BC\eta 
(\rho^{m-1})^{-2-\frac{1}{4}-\frac{1}{8}}
\\&\| (\Gamma^{m-1})_{\rho T}^T \|_{H^l}\le BC\eta 
(\rho^{m-1})^{-1-\frac{1}{8}-\frac{h}{4}}
\| (\Gamma^{m-1})_{\rho \Th}^\Th \|_{H^l}\le BC\eta
 (\rho^{m-1})^{-1-\frac{1}{8}},
\\&\|(\Gamma^{m-1})_{\rho T}^\Th \|_{H^l}\le B C
\eta (\rho^{m-1})^{-2-\frac{1}{4}-\frac{1}{8}-\frac{h}{4}},
\| (\Gamma^{m-1})_{\rho \Th}^T \|_{H^l}\le B C\eta
 (\rho^{m-1})^{-\frac{1}{8}-\frac{h}{4}},
 \\&\|(\Gamma^{m-1})_{\rho \rho}^\Th \|_{H^l}\le BC\eta
  (\rho^{m-1})^{-\frac{3}{2}-\frac{1}{8}-\frac{h}{4}},
\| (\Gamma^{m-1})_{\rho \rho}^T \|_{H^l}\le BC\eta(\rho^{m-1})
 (\rho^{m-1})^{-\frac{1}{8}-\frac{h}{4}}.
\end{split}
\eeq

At the top derivatives, with singular weights at the poles,  our bounds are: 
\beq\begin{split}\label{Christof.off.diag.top}
&\|\partial_\theta (\Gamma^{m-1})_{\rho \Th}^\Th\cdot {\rm cot}\Th 
\|_{\dot{H}^{s-4}}\le BC\eta
 (\rho^{m-1})^{-1-\frac{1}{8}-c},
\|(\Gamma^{m-1})_{\rho T}^\Th {\rm cot}\Th\|_{\dot{H}^{s-4}}\le B 
C\eta (\rho^{m-1})^{-2-\frac{1}{4}-\frac{1}{8}-c},
 \\&\|(\Gamma^{m-1})_{\rho \rho}^\Th {\rm cot}\Th\|_{\dot{H}^{s-4}}\le BC\eta
  (\rho^{m-1})^{-\frac{3}{2}-\frac{1}{8}-c}, 
%  \\&\|(\Gamma^{m-1})_{\rho \Th}^T\cdot  
%\|_{\tilde{H}^l}\le BC\eta
 %(\rho^{m-1})^{-1-\frac{1}{8}-c},
%\|(\Gamma^{m-1})_{\rho T}^T \|_{\tilde{H}^{s-3}}\le B 
%C\eta (\rho^{m-1})^{-\frac{3}{2}-\frac{1}{8}-c},
 %\\&\|(\Gamma^{m-1})_{\rho \rho}^T \|_{\tilde{H}^{s-3}}\le BC\eta
  %(\rho^{m-1})^{-\frac{3}{2}-\frac{1}{8}-c}.
\end{split}
\eeq

\end{lemma}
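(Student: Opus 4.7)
The plan is to expand each Christoffel symbol using the standard formula
\[
(\Gamma^{m-1})_{AB}^{C} = \tfrac{1}{2}(g^{m-1})^{CD}\big(\partial_{A}(g^{m-1})_{BD} + \partial_{B}(g^{m-1})_{AD} - \partial_{D}(g^{m-1})_{AB}\big),
\]
with $A,B,C\in\{\rho,T,\Theta\}$, and to estimate each resulting bilinear contribution via the product inequality \eqref{prodcut.ineq} using the metric and inverse-metric bounds already established: Lemma \ref{a.oa.bds} and its displayed consequences \eqref{g.r.low}--\eqref{mixed.metr.high} for $(g^{m-1})_{AB}$, together with Lemma \ref{lem:oa.bds} and \eqref{a.init.middle}--\eqref{mixed.metr.inv.high} for $(g^{m-1})^{AB}$. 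The indispensable ingredient I still need to record is the $\rho\rho$-block: from \eqref{partialrho}, $e_0 = -(1-\chi'(r)(r_*^{m-1}-\e))(\tfrac{2M}{r}-1)^{1/2}\partial_{\rho}$, and since $e_0\perp e^{m-1}_1, e^{m-1}_2$ with $\langle e_0,e_0\rangle=-1$, combining with \eqref{eibar} yields
\[
(g^{m-1})_{\rho\rho} = -\tfrac{[1-\chi'(r)(r_*^{m-1}-\e)]^{-2}}{\tfrac{2M}{r}-1} + \sum_{i=1,2}\tfrac{[e^{m-1}_i(\rho^{m-1})]^{2}[1-\chi'(r)(r_*^{m-1}-\e)]^{-2}}{\tfrac{2M}{r}-1},
\]
and a corresponding formula for $(g^{m-1})^{\rho\rho}$; the last sum above and its inverse-metric analogue inherit the bound \eqref{e1rest} for $e^{m-1}_1(\rho^{m-1})$ while $e^{m-1}_2(\rho^{m-1})$ vanishes on $\{r\le\e/2\}$ and is of size $C\eta$ by Lemma \ref{lem:e1re2r} elsewhere.

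I would then treat the nine Christoffel symbols case-by-case. The decisive observation is that the Schwarzschild background has \emph{no} cross terms $g^{S}_{\rho T}, g^{S}_{\rho\Theta}, (g^S)^{\rho T}, (g^S)^{\rho\Theta}$, and the only non-vanishing Schwarzschild Christoffel symbols involving $\rho$ are $(\Gamma^S)^\rho_{\rho\rho}, (\Gamma^S)^\rho_{TT}, (\Gamma^S)^\rho_{\Theta\Theta}$, $(\Gamma^S)^T_{T\rho}$, $(\Gamma^S)^\Theta_{\Theta\rho}$ —\,and these last two correspond exactly to the six symbols \emph{not} listed in the lemma plus the diagonal $\Gamma^\rho_{\rho\rho}$\, which is absorbed into the statements by the $J_1\le 1$ convention. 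For each symbol in the lemma, I would split the defining expression into (i) terms that are pure Schwarzschild and evaluate to zero, (ii) a ``leading'' perturbation contribution obtained by pairing a Schwarzschild factor with a perturbative factor, and (iii) pure-perturbation bilinear terms, which are of lower order. Systematically, in every surviving term at least one factor is either a cross-metric component $g_{\rho A}, g^{\rho A}$, $A\in\{T,\Theta\}$, or the difference between a diagonal component and its Schwarzschild value, and every such factor contributes a small $C\eta$ gain. This is precisely how the overall $C\eta\cdot(\rho^{m-1})^{-\frac{1}{8}}$ gain enters (with $\rho^{-1/8}$ paying for a quarter derivative of Sobolev loss, absorbed by \eqref{e.bd}).

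The main obstacle lies at the top-order bounds \eqref{Christof.off.diag.top}, where the ${\rm cot}\Theta$ weight must be propagated through the product rule. For these, I would first route the ${\rm cot}\Theta$ factor onto whichever factor in the bilinear expansion carries a $\partial_\Theta$ derivative and therefore admits the enhanced estimates \eqref{inductiontrKtopmixed.sing}, \eqref{a.r.top}, \eqref{Christ.top}, \eqref{tK12.ind.claim.top}; Lemma \ref{gen.Hardy} and its consequence \eqref{gen/Hardy.conseq} then allow the ${\rm cot}\Theta$ weight to be exchanged with a genuine derivative when needed. The vanishing properties collected in Lemma \ref{lem:poles} (in particular $(a^{m})^{1\Theta}, (a^m)^{2T}, K^m_{12}=O(\sin\Theta)$) ensure that whenever the Hardy inequality is invoked the required boundary vanishing is satisfied. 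A short calculation then confirms each of the stated exponents in \eqref{Christof.off.diag.low}--\eqref{Christof.off.diag.top}, where the exponent of $\rho^{m-1}$ is simply the minimum exponent appearing across all bilinear contributions in the expansion of that particular Christoffel symbol, with the universal gain of $-\tfrac{1}{8}$ coming from one application of \eqref{e.bd} to absorb a $C\eta$ into $B\cdot (\rho^{m-1})^{-1/8}$.
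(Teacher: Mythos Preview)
Your approach is correct and coincides with the paper's own proof, which is simply: expand each Christoffel symbol by the standard formula, insert the metric and inverse-metric bounds already recorded in \eqref{g.r.low}--\eqref{mixed.metr.inv.high} (together with the $\rho\rho$-block you correctly supplied), and apply the product inequality \eqref{prodcut.ineq}. One small correction to your exposition: the $-\tfrac{1}{8}$ exponent does not arise by trading $C\eta$ for a power of $\rho^{m-1}$ via \eqref{e.bd}; rather, the $C\eta$ prefactor is genuine (every symbol in the lemma vanishes in Schwarzschild, so at least one factor in each bilinear term is perturbative), and the $(\rho^{m-1})^{-1/8}$ comes from the $r^{-DC\eta}$ or $r^{-2DC\eta}$ exponents in the metric/frame bounds (e.g.\ \eqref{e1rest}, \eqref{g.r.low}) together with $DC\eta\le\tfrac{1}{8}$.
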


 \emph{Proof of Lemmas \ref{Christ.bounds}, \ref{Christof.off.diag}:}
 The proof is straightforward, from the definition 
 of of the Christoffel symbols, the bounds obtained for $g^{m-1}_{AB}$ and 
 $(g^{m-1})^{AB}$ directly above, as well as the 
 product inequality. 
 $\Box$

\medskip

For the remaining Christoffel symbols as they will appear (combined) in the wave equation, 
\[
\|\sum_{A,B=T,\Th, \phi}(g^{m-1})^{AB} \Gamma_{AB}^\rho\|_{\dot{H}^l}, 
\]
 we
 make the following connections to the (frame) 
connection coefficients $K^{m-1}_{ij}$, using the definitions of the frame coefficients and the definition of the Levi-Civita connection in 
terms of Christoffel symbols.
 (We use $O(\dots)$ to denote a term which satisfies all the bounds of the quantity 
 $(\dots)$  in parentheses, up to a universal multiplicative constant).

\beq\label{get.trK}\bs
&\sum_{A,B=T,\Th, \phi}(g^{m-1})^{AB} \Gamma_{AB}^\rho\partial_\rho v
=(1+O(\sqrt{r})) {\rm tr}_{\overline{g}^{m-1}} 
K^{m-1}\cdot e_0v
\\&+ [O(e^{m-1}(\rho^{m-1}))\cdot O(\sqrt{r}) +O(\sqrt{r})\cdot [a^{m-1}_{T1}a^{m-1}_{T2}\cdot e^{m-1}_1re^{m-1}_2r]\cdot e_0v
%\\&+[O\bigg((\frac{2M}{r}-1)^{-1}\bigg)\sum_{i=1,2}e^{m-1}_i(e^{m-1}_i(\rho^{m-1}))
%+(\frac{2M}{r}-1)^{-\frac{1}{2}}\cdot e_1(\rho)e_2(\rho)K^{m-1}_{12}
%O(r^{-1})]e_0v. 
\end{split}
\eeq

The above can be derived as follows, using the conditions $\nabla_{e_0}e_0=0$, $<\nabla_{e_i}e_0, e_0>=0$:
\beq
\label{pf.get.trK}
\begin{split}
&\sum_{i=1,2,3}<\nabla_{e^{m-1}_i}e_0, e^{m-1}_i>= \sum_{i=1,2,3}<\nabla_{\overline{e}^{m-1}_i}e_0, e^{m-1}_i>=\sum_{i=1,2,3}<\nabla_{\overline{e}^{m-1}_i}e_0, \overline{e}^{m-1}_i>
\\&= \sum_{i=1,2,3}<\nabla_{\overline{e}^{m-1}_i}[[1-\partial_r\chi(r)(r)(r^{m-1}_*-\e)]^{-1}(\frac{2M}{r}-1)^{\frac{1}{2}}\partial_\rho], \overline{e}^{m-1}_i>
\\&=
[1-\partial_r\chi(r)(r)(r^{m-1}_*-\e)]^{-1}(\frac{2M}{r}-1)^{\frac{1}{2}}\sum_{i=1,2,3}<\nabla_{\overline{e}^{m-1}_i}\partial_\rho], \overline{e}^{m-1}_i>
\\&+e^{m-1}[[1-\partial_r\chi(r)(r)(r^{m-1}_*-\e)]^{-1}(\frac{2M}{r}-1)^{\frac{1}{2}}]\cdot
(\frac{2M}{r}-1)^{-\frac{1}{2}}e^{m-1}_1(r)
\\&=[1-\partial_r\chi(r)(r)(r^{m-1}_*-\e)]^{-1}(\frac{2M}{r}-1)^{\frac{1}{2}}\sum_{i=1,2,3}\sum_{A,B=T,\Th,\phi}(a^{m-1})^{iA}(a^{m-1})^{iB}<\nabla_{A}\partial_\rho, \partial_B>
\\&+e^{m-1}[[1-\partial_r\chi(r)(r)(r^{m-1}_*-\e)]^{-1}(\frac{2M}{r}-1)^{\frac{1}{2}}]\cdot
(\frac{2M}{r}-1)^{-\frac{1}{2}}e^{m-1}_1(r)
\\&=[1+O(r^{\frac{1}{2}}e^{m-1}_1(\rho)]](\frac{2M}{r}-1)^{\frac{1}{2}}\sum_{i=1,2,3}\sum_{A,B=T,\Th,\phi}(g^{m-1})^{AB}<\nabla_{A}\partial_\rho, \partial_B>
\\&+e^{m-1}[[1-\partial_r\chi(r)(r)(r^{m-1}_*-\e)]^{-1}(\frac{2M}{r}-1)^{\frac{1}{2}}]\cdot
(\frac{2M}{r}-1)^{-\frac{1}{2}}e^{m-1}_1(r)
\\&+[a^{m-1}_{T1}a^{m-1}_{T2}O(r)]\cdot 
e_1^{m-1}(\rho)e^{m-1}_2(\rho).
\end{split}
\eeq
(We used that $a^{m-1}_{\Th 1}=0$ here). Using the defintiion \eqref{e0r} here we derive \eqref{get.trK}. 
\newline

We note here again that $e^{m-1}_2(\rho)=0$ for $\rho\le \e/2$; also 
$e^{m-1}_1(\rho)$ satisfies the bounds in Lemma \ref{lem:e1re2r},
 and notably in the $L^\infty$ 
norm the coefficient $O(e^{m-1}(\rho^{m-1}))$ is bounded by 
$\rho^{-DC\eta}$ (where $DC\eta|\le \frac{1}{8}$); the higher derivatives are bounded by the corresponding bounds for $e^{m-1}(\rho^{m-1})$.
In particular the second term in the RHS is \emph{less singular} near the singularity. (The tangency of $e_2^{m-1}$ to  the level sets of $\rho$ is 
crucial here; had this choice not been made, the second term would have been more singular than the first term). 
\newcommand{\she}{{\sharp\e}}

\subsubsection{Consequences of the energy estimates on the free wave 
$\gamma^m$. }

We will also make frequent use below of certain basic implications of our 
inductive claim. One key such estimate encodes the basic implication of 
the bounds on $(a^{m-1})^{it}(r,t,\theta), (a^{m-1})^{i\theta}(r,t,\theta)$; 
it shows that the behaviour of the parameters 
$e^{m-1}_1\partial^I\gamma^m_{\rm rest}$, 
$e^{m-1}_2\partial^I\gamma^m_{\rm rest}$ is \emph{better} than 
what the energy bounds  \eqref{inductiongammaopt}, 
\eqref{inductiongammalow}, \eqref{inductiongammatopmixed} imply, 
\emph{at all orders below the top order}.

We first  prove the following: 

\begin{lemma}
\label{lem:gammam.bds.prep}
For all $|I|\le s-3$ the quantity 

\[
\| \partial^I\gamma^m_{\rm rest}\|_{H^k(\Sigma_{\rho^{m-1}})}
 \]
satisfies the estimate: 
\begin{align}\label{Hlgammamest}
(\rho^{m-1})^{\frac{1}{4}}\|\gamma^m_{\rm rest} \|_{H^l(\Sigma_{\rho^{m-1}})}\leq&\,\epsilon^{\frac{1}{4}}\|\gamma^m_{\rm rest}
(\epsilon,t,\theta)\|_{H^l(\Sigma_{\rho^{m-1}})}
+\int^\epsilon_{\rho^{m-1}}C\tau^{\frac{1}{4}+\frac{1}{2}}\|
e_0\gamma^m_{\rm rest}\|_{H^l(\Sigma_{\rho^{m-1}})}
d\tau,
\end{align}

\end{lemma}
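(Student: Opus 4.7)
\textbf{Proof proposal for Lemma \ref{lem:gammam.bds.prep}.}

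The plan is to prove this as a weighted fundamental-theorem-of-calculus identity in the transverse variable $\rho^{m-1}$, using the fact that $\partial_{\rho^{m-1}}$ and $e_0$ differ only by a scalar multiplicative factor that is explicitly controllable. The key point to exploit is the relation \eqref{partialrho}, namely $e_0 = -(\frac{2M}{r}-1)^{1/2}[1-\partial_r\chi(r)(r^{m-1}_*-\epsilon)]\,\partial_{\rho^{m-1}}$, which yields the uniform pointwise bound $\|\partial_{\rho^{m-1}} v\|_{H^l(\Sigma_{\rho^{m-1}})} \leq C\,(\rho^{m-1})^{1/2}\|e_0 v\|_{H^l(\Sigma_{\rho^{m-1}})}$ for any scalar $v$ on level sets of $\rho^{m-1}$, since $(\frac{2M}{r}-1)^{-1/2}\lesssim r^{1/2}$ for $r\leq\epsilon$ small. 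Here I use crucially that $\partial_{\rho^{m-1}}$ commutes with the tangential coordinate derivatives $\partial_t,\partial_\theta$ used to define the $H^l$-norm, and that the volume form $\sin\theta\, d\theta\, dt$ is independent of $\rho^{m-1}$, so differentiation of $\|\cdot\|_{H^l}$ in $\rho^{m-1}$ passes inside the integral in the standard way (by Cauchy--Schwarz, $|\partial_{\rho^{m-1}}\|v\|_{H^l}| \leq \|\partial_{\rho^{m-1}} v\|_{H^l}$).

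Define $F(\rho):=\rho^{1/4}\|\gamma^m_{\rm rest}(\rho,\cdot,\cdot)\|_{H^l(\Sigma_\rho)}$. A direct computation gives
\begin{equation*}
\partial_\rho F = \tfrac{1}{4}\rho^{-3/4}\|\gamma^m_{\rm rest}\|_{H^l} + \rho^{1/4}\,\partial_\rho\|\gamma^m_{\rm rest}\|_{H^l} \geq \tfrac{1}{4}\rho^{-3/4}\|\gamma^m_{\rm rest}\|_{H^l} - \rho^{1/4}\|\partial_\rho\gamma^m_{\rm rest}\|_{H^l}.
\end{equation*}
Combining with the pointwise relation above and discarding the favourable (nonnegative) first term on the right, one gets
\begin{equation*}
\partial_\rho F \geq -C\,\rho^{3/4}\|e_0\gamma^m_{\rm rest}\|_{H^l(\Sigma_\rho)}.
\end{equation*}
Integrating this inequality from $\rho=\rho^{m-1}$ up to $\rho=\epsilon$ and rearranging yields precisely the stated bound \eqref{Hlgammamest}.

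The only mildly delicate point is the justification that the $H^l$-norm on $\Sigma_{\rho^{m-1}}$ is differentiable in $\rho^{m-1}$ with the Cauchy--Schwarz-type inequality above; this is standard once one regularises $\|\cdot\|_{H^l}$ to $(\delta + \|\cdot\|_{H^l}^2)^{1/2}$, differentiates rigorously, and then sends $\delta\to 0^+$. Indeed, since $\gamma^m_{\rm rest}$ is smooth for $\rho^{m-1}\in(0,2\epsilon]$ (it is a difference of the regular iterate $\gamma^m$ with the Schwarzschild value $\gamma_{\rm S}$, the latter depending smoothly on $r,t,\theta$ away from $r=0$), this step is entirely routine, and there is no essential obstacle. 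I therefore do not expect any significant difficulty; the content of the lemma is really the weighted fundamental theorem of calculus adapted to the gauge relation between $\partial_{\rho^{m-1}}$ and $e_0$, and its usefulness downstream is that the weight $\tau^{3/4}$ on the right is \emph{integrable} against the top-order singular behaviour of $e_0\gamma^m_{\rm rest}$ controlled by our inductive assumptions.
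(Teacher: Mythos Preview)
Your proof is correct and follows essentially the same approach as the paper's. Both arguments differentiate the weighted quantity $(\rho^{m-1})^{1/4}\|\gamma^m_{\rm rest}\|_{H^l}$ in $\rho^{m-1}$, use Cauchy--Schwarz to bound $|\partial_{\rho^{m-1}}\|\gamma^m_{\rm rest}\|_{H^l}|\le\|\partial_{\rho^{m-1}}\gamma^m_{\rm rest}\|_{H^l}$, discard the favourable sign term $\tfrac{1}{4}\rho^{-3/4}\|\gamma^m_{\rm rest}\|_{H^l}$, and then integrate; the only cosmetic difference is that the paper first manipulates the squared norm $(\rho^{m-1})^{1/2}\|\gamma^m_{\rm rest}\|_{H^l}^2$ before passing to the square root, whereas you work directly with $F(\rho)$ and handle the potential non-differentiability at zero via a $\delta$-regularisation.
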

\emph{Proof:}

\begin{align*}
\tag{by C-S}-\frac{1}{2}\partial_{\rho^{m-1}}\|\gamma^m_{\rm rest}\|^2_{H^l}\leq&\, \|\gamma^m_{\rm rest}\|_{H^l}\|\partial_{\rho^{m-1}}\gamma^m\|_{H^l}\\
\Rightarrow-\frac{1}{2}\partial_{\rho^{m-1}}\big((\rho^{m-1})^{\frac{1}{2}}\|\gamma^m_{\rm rest}\|^2_{H^l}\big)\leq&\,-\frac{1}{4}(\rho^{m-1})^{-\frac{1}{2}}\|\gamma^m_{\rm rest}\|_{H^l}^2
+(\rho^{m-1})^{\frac{1}{2}}\|\gamma^m_{\rm rest}\|_{H^l}\|\partial_{\rho^{m-1}}\gamma^m_{\rm rest}\|_{H^l}\\
\Rightarrow-\partial_{\rho^{m-1}}\big((\rho^{m-1})^{\frac{1}{4}}\|\gamma^m_{\rm rest}\|_{H^l}\big)\leq&\,(\rho^{m-1})^{\frac{1}{4}}\|\partial_{\rho^{m-1}}\gamma^m_{\rm rest}\|_{H^l}
\end{align*}
Hence, integrating in $[\rho^{m-1},\epsilon]$ yields our result 
on each $\Sigma_{\rho^{m-1}}$. $\Box$
\medskip

From this we derive: 

\begin{lemma}\label{lem:Linftyest}
	The lower order energy estimates (\ref{inductiongammaopt}),
	(\ref{inductiontrKopt}) as well as Lemma 
	\ref{lem:oa.bds} imply the bounds:
	\begin{align}\label{Linftybounds}
	\|e_0\partial^I\gamma_{\rm rest}^{m-1}\|_{L^\infty}\leq C\cdot C_S\eta r^{-\frac{3}{2}}, \|^{{m-1}}\overline{\nabla}\partial^I\gamma_{\rm rest}^{m-1}\|_{L^\infty}\leq 2C\cdot C_S\eta r^{-1-DC\eta }|\log r|,
%	\|\tpartial^I(A^{m-1}_i)_{ij}\|_{L^\infty}\leq C\cdot C_S\eta r^{-1-D\eta} 
	\end{align}
	for all $r\in(0,2\epsilon]$, provided $s\ge 3+4c+|I|+3$. 
\end{lemma}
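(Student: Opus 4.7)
\bigskip

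The plan is to reduce both bounds to the Sobolev embedding $H^2(\mathbb{S}^2\times\mathbb{R})\hookrightarrow L^\infty$, fed by the already-established $L^2$ energy estimates \eqref{inductiongammaopt} at lower orders (which is the optimal regime we stay in throughout, in view of the margin provided by the hypothesis $s\ge 3+4c+|I|+3$, i.e.\ $|I|\le s-6-4c$). The two bounds require slightly different routes: the first one is immediate from commuting $\partial^J$ through $e_0$ and applying the energy bound; the second one requires first obtaining an $L^\infty$ control on $\partial^{I+1}\gamma^{m-1}_{\rm rest}$, then trading the frame derivative $e_i^{m-1}$ for coordinate derivatives via Lemma \ref{lem:oa.bds}.

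\medskip

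For the first inequality, I would apply Sobolev:
\[
\|e_0\partial^I\gamma^{m-1}_{\rm rest}\|_{L^\infty}\le C_S\|e_0\partial^I\gamma^{m-1}_{\rm rest}\|_{H^2(\mathbb{S}^2\times\mathbb{R})}.
\]
Since the coordinate vector fields $\partial_T,\partial_\Theta$ satisfy $e_0(T)=e_0(\Theta)=0$ (and $[\partial_A,e_0]=0$ for $r\le\epsilon/2$ where $\rho^{m-1}=r$; the bounded region $r\in(\epsilon/2,2\epsilon)$ contributes only harmless lower-order terms), one obtains $\partial^J e_0\partial^I\gamma^{m-1}_{\rm rest}=e_0\partial^{J+I}\gamma^{m-1}_{\rm rest}$, and hence
\[
\|e_0\partial^I\gamma^{m-1}_{\rm rest}\|_{H^2}^2\le C\sum_{|J|\le 2}E[\partial^{I+J}\gamma^{m-1}_{\rm rest}]\le C'(C\eta r^{-3/2})^2,
\]
using \eqref{inductiongammaopt}, which applies since $|I|+2\le s-4-4c\le s-3-4c$. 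After absorbing universal constants, this yields the first claim.

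\medskip

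For the second inequality, I would proceed in two steps. First, obtain a pointwise bound on one more derivative of $\gamma^{m-1}_{\rm rest}$: by integration along the integral curves of $e_0$ from $\{r=\epsilon\}$,
\[
\|\partial^{I+1}\gamma^{m-1}_{\rm rest}\|_{L^\infty}(r)\le \|\partial^{I+1}\gamma^{m-1}_{\rm rest}\|_{L^\infty}(\epsilon)+\int_r^\epsilon(2M/\tau-1)^{-1/2}\|e_0\partial^{I+1}\gamma^{m-1}_{\rm rest}\|_{L^\infty}(\tau)\,d\tau.
\]
The initial term is controlled by the initial-data assumption \eqref{gamma.init.bds} through Sobolev, giving $\lesssim\eta|\log\epsilon|$ (since $|I|+3\le s-3-4c\le s-1$). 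The first already-proved bound, applied with $I$ replaced by $I+1$ (which is still in range since $|I|+1\le s-5-4c$), gives $\|e_0\partial^{I+1}\gamma^{m-1}_{\rm rest}\|_{L^\infty}\le CC_S\eta\tau^{-3/2}$, whence the integral is $\lesssim\eta\int_r^\epsilon\tau^{-1}d\tau\lesssim\eta|\log r|$. Combining, $\|\partial^{I+1}\gamma^{m-1}_{\rm rest}\|_{L^\infty}\le C\eta|\log r|$. Next, I would write
\[
e_i^{m-1}\partial^I\gamma^{m-1}_{\rm rest}=\sum_{A=T,\Theta}(a^{m-1})^{iA}\,\partial_A\partial^I\gamma^{m-1}_{\rm rest},
\]
so that
\[
\|e_i^{m-1}\partial^I\gamma^{m-1}_{\rm rest}\|_{L^\infty}\le\max_{i,A}\|(a^{m-1})^{iA}\|_{L^\infty}\cdot\|\partial^{I+1}\gamma^{m-1}_{\rm rest}\|_{L^\infty}.
\]
By Sobolev and Lemma \ref{lem:oa.bds}, the most singular coefficient $(a^{m-1})^{2\Theta}$ obeys $\|(a^{m-1})^{2\Theta}\|_{L^\infty}\le C_Sr^{-1-DC\eta}$ (the $H^{\rm low}$ bound suffices since ${\rm low}\gg 2$). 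Combining the two factors yields $\|\overline{\nabla}^{m-1}\partial^I\gamma^{m-1}_{\rm rest}\|_{L^\infty}\le 2CC_S\eta r^{-1-DC\eta}|\log r|$, after absorbing universal constants.

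\medskip

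The only delicate accounting is the derivative bookkeeping: each Sobolev invocation costs two derivatives, each frame-to-coordinate conversion costs one, and we need to stay below the threshold ${\rm low}=s-3-4c$ where \eqref{inductiongammaopt} provides the optimal $r^{-3/2}$ decay. This is precisely what the hypothesis $s\ge 3+4c+|I|+3$ guarantees: it affords three derivatives of margin, exactly enough to perform the extra differentiation in $\partial^{I+1}$ and then the Sobolev embedding. I do not anticipate any substantive obstacle beyond this; the mention of \eqref{inductiontrKopt} in the hypothesis list is presumably a safeguard ensuring that the frame $e_i^{m-1}$ (via the $a^{m-1}_{Ai}$ dynamics in \eqref{e0.a.new}) remains controlled in the required $L^\infty$ sense, which is already encoded in Lemma \ref{lem:oa.bds}.
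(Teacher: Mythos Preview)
Your approach is essentially the same as the paper's and is correct in spirit, but there is one small gap in the second estimate. When you write
\[
e_i^{m-1}\partial^I\gamma^{m-1}_{\rm rest}=\sum_{A=T,\Theta}(a^{m-1})^{iA}\,\partial_A\partial^I\gamma^{m-1}_{\rm rest},
\]
this identity holds for the \emph{projected} frame $\overline{e}_i^{m-1}$, not for $e_i^{m-1}$ itself. The difference is the transversal piece $e_i^{m-1}(\rho^{m-1})\,\partial_{\rho^{m-1}}\partial^I\gamma^{m-1}_{\rm rest}$ (see \eqref{eibar}), which does not vanish for $i=1$. The paper's proof carries this term explicitly and bounds it using Lemma~\ref{lem:e1re2r}: since $\|e_1^{m-1}(\rho^{m-1})\|_{L^\infty}\le C\eta\, r^{-2DC\eta}$ and $\partial_{\rho^{m-1}}\sim r^{1/2}e_0$, the contribution is of order $C^2\eta^2 r^{-1-2DC\eta}$, which is subordinate (by an extra factor of $\eta$) to the main term and can be absorbed. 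Once you add this correction your argument is complete.

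A secondary remark: for the first inequality the paper invokes the refined expansion \eqref{gammam-1exp}--\eqref{gamma1m-1enest} rather than the raw energy bound \eqref{inductiongammaopt}. Your route via \eqref{inductiongammaopt} and $[\partial_A,e_0]=0$ is equally valid and slightly more direct; both cost the same number of derivatives, and the hypothesis $|I|+3\le s-3-4c$ is exactly what is needed in either case.
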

\begin{proof}
	The bounds on $e_0\partial^I\gamma^{m-1}_{\rm rest}$ follow by applying (\ref{Sob}) to $e_0\partial^I\gamma_{1}^{m-1}$, see (\ref{gammam-1exp}), and using the assumption (\ref{gamma1m-1enest}), using \eqref{s.bd}. 
	%The same holds for 
	%
	%\begin{align*}
%	\|\partial^I(A^{m-1}_i)_{ij}\|_{L^\infty}\leq C\cdot C_S\|\partial^I(A^{m-1}_i)_{ij}\|_{H^2}\overset{s\ge 3+4c+|I|+2}{\leq} C\cdot C_S\|(A^{m-1}_i)_{ij}\|_{H^{s-3-4c}}\leq C\cdot C_S \eta r^{-1-\frac{1}{8}}
	%\end{align*}
	%
	For the spatial gradient of $\partial^I\gamma_{\rm rest}^{m-1}$ we have 
	$|\overline{\nabla}^{^{m-1}h}\partial^I\gamma_{\rm rest}^{m-1}|\leq
	 |e^{m-1}_1\partial^I\gamma_\theta^{m-1}|+|e^{m-1}_2\partial^I
	\gamma_{\rm rest}^{m-1}|$, where 
	\begin{align*}
	|e^{m-1}_i\partial^I\gamma_{\rm rest}^{m-1}|\leq&\, |{^{m-1}\overline{e}}_i\opartial^I\gamma_{\rm rest}^{m-1}|+|(e^{m-1}_i\rho^{m-1})\partial_{\rho^{m-1}}\partial^I\gamma_{\rm rest}^{m-1}|\\
	\tag{by (\ref{e1rest})}\leq &\,|{\overline{a}}^{m-1}_{iT}\partial_T\partial^I\gamma_{\rm rest}^{m-1}|+|{\overline{a}}^{m-1}_{i\Th}\partial_\theta\partial^I\gamma_{\rm rest}^{m-1}|+\frac{r^{-DC\eta}
	(\frac{2M}{r}-1)^{-\frac{1}{2}}}{1-\partial_r\chi(r)({^{m-1}r}_*-\epsilon)}|e_0\partial^I\gamma_{\rm rest}^{m-1}|\\
	\tag{by (\ref{gammam-1exp}),(\ref{gamma1m-1enest}),(\ref{oa.r.low})}\leq&\,C_Sr^{-1-\eta D C }\|\opartial^I\gamma_{\rm rest}^{m-1}\|_{H^{3}}+C_Sr^{-1-\eta DC }+O(r^{\frac{1}{4}})\\
	\tag{$s\ge 3+4c+|I|+3$}\leq&\,Cr^{-1-DC\eta  }|\log r|,
	\end{align*}
	for all $r\in(0,2\epsilon]$.
\end{proof}
An extension of the above estimate on the spatial components of the 
energy of $\gamma^m_{\rm rest}$ is as follows:

\begin{lemma}
\label{lem:gammam.bds}
Assume the  energy estimates 
\eqref{inductiongammaopt}, 
\eqref{inductiongammalow}, \eqref{inductiongammatopmixed}, for the $m^{\rm th}$ step of our induction (i.e. for the function 
$\gamma^m$), up to order $|I'|=l+1$. %, as well as  that Lemma \ref{lem:oa.bds} also holds up to that order.
 Then the 
coordinate derivatives $\partial^I$ of order $|I|=l$ will be bounded as follows: 

\beq
\label{lower.der.indn}
\begin{split}
&\| \partial^I\gamma^m_{\rm rest} \|_{L^2[\Sigma_r]}\le  C\eta|log r|, \text{for}
\;\;\; 
|I|\leq s-3-4c,
\\&\| \partial^I\gamma^m_{\rm rest} \|_{L^2[\Sigma_r]}\le  C\eta 
|log r|r^{-\frac{h}{4}}, \text{for}
\;\;\; 
 s-3-4c<|I|\leq s-4, h=|I|-(s-3-4c).
 \end{split}
\eeq

On the other hand, the 
parameters 
$e^{m-1}_1\opartial^I\gamma^m_{\rm rest}$, 
$e^{m-1}_2\opartial^I\gamma^m_{\rm rest}$ satisfy the following bounds,
for $b=1,2$:

\begin{align}
\label{impl.gammaopt} \| e^{m-1}_b
[\partial^I(\gamma^{m}_{\rm rest})]|_{L^2(\Sigma_r)}
\leq C\cdot\eta\cdot r^{-\frac{3}{2}+\frac{1}{4}},\qquad\text{for}\;\;\; 
|I|\leq s-3-4c-1,\\
\label{impl.gammalow}
\|e^{m-1}_b[\partial^I(\gamma^{m}_{\rm rest})]\|_{L^2(\Sigma_r)}
\leq C\cdot \eta\cdot r^{-\frac{3}{2}+\frac{1}{4}+(|I|-{\rm low})\frac{1}{4}},
\qquad
\text{for}\;\;\; s-3-4c\le |I|\leq s-4.
\end{align}
Moreover, 
the same bounds hold on the level sets of $\rho^{m-1}$. 
\end{lemma}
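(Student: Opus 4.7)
The argument has two parts mirroring the two sub-claims.

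For part (1), I would integrate $\partial^I\gamma^m_{\rm rest}$ along the direction $\partial_{\rho^{m-1}}$ from the initial hypersurface $\{\rho^{m-1}=\epsilon\}$ inward. Since
\[
\partial_{\rho^{m-1}}=-\Big(\frac{2M}{r}-1\Big)^{-1/2}[1-\partial_r\chi(r)(r^{m-1}_*-\epsilon)]^{-1}e_0,
\]
the conversion gains a factor of order $r^{1/2}$ as $r\to 0^+$. Writing
\[
\|\partial^I\gamma^m_{\rm rest}\|_{L^2(\Sigma_\rho)}\le \|\partial^I\gamma^m_{\rm rest}\|_{L^2(\Sigma_\epsilon)}+\int_\rho^\epsilon C\tau^{1/2}\|e_0\partial^I\gamma^m_{\rm rest}\|_{L^2(\Sigma_\tau)}\,d\tau,
\]
the initial-data bound \eqref{gamma.init.bds} gives the first term $\le \eta|\log\epsilon|$, while at low orders the assumed energy bound \eqref{inductiongammaopt} gives an integrand $\lesssim CC\eta\,\tau^{-1}$, which integrates to $CC\eta|\log\rho|$ and yields the claim. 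At higher orders the same computation with the more singular integrand $C\eta\tau^{-1-h/4}$ produces $\lesssim C\eta\,\rho^{-h/4}$, which combined with the $|\log\epsilon|$ from the initial data gives the stated $C\eta|\log\rho|\rho^{-h/4}$ (with the $|\log\rho|$ factor providing slack).

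For part (2), I would use the frame decomposition $e_b^{m-1}=\overline{e}_b^{m-1}+e_b^{m-1}(\rho^{m-1})\partial_{\rho^{m-1}}$ from \eqref{eibar}, and treat the two resulting pieces separately. The tangential piece $\overline{e}_b^{m-1}\partial^I\gamma^m_{\rm rest}=\sum_{A=T,\Theta} (a^{m-1})^{bA}\partial_A\partial^I\gamma^m_{\rm rest}$ is estimated by combining the $L^\infty$ coefficient bounds in Lemma \ref{lem:oa.bds} (the dominant contribution being $(a^{m-1})^{2\Theta}\lesssim r^{-1-DC\eta}$, since $(a^{m-1})^{2T}\equiv 0$ and $(a^{m-1})^{1T},(a^{m-1})^{1\Theta}$ are strictly less singular) with the $L^2$ bound on $\partial_A\partial^I\gamma^m_{\rm rest}$ obtained in part (1) at order $|I|+1$. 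The resulting estimate $\lesssim BC\eta\,r^{-1-DC\eta}|\log r|$ is absorbed into the desired $C\eta\,r^{-3/2+1/4}$ by virtue of \eqref{Ceta.bds} ($DC\eta\le 1/8$) and \eqref{e.bd}. For the transversal piece, the same identity for $\partial_{\rho^{m-1}}$ contributes a gain of $r^{1/2}$ in passing to $e_0$; combined with the pointwise bound $\|e_b^{m-1}(\rho^{m-1})\|_{L^\infty}\lesssim C\eta\,r^{-2DC\eta}$ from Lemma \ref{lem:e1re2r} (vanishing identically for $b=2$ in $\{r<\epsilon/2\}$) and the assumed energy bound on $\|e_0\partial^I\gamma^m_{\rm rest}\|_{L^2}$, this piece contributes $\lesssim (C\eta)^2\,r^{-1-2DC\eta}$, which is again subsumed into $C\eta\,r^{-3/2+1/4}$ by the smallness of $C\eta$ and $DC\eta$. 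The higher-order case is handled identically, with the descent factor $r^{-h/4}$ tracked through both pieces. Finally, the assertion that the same bounds hold on level sets of $\rho^{m-1}$ is immediate since $\rho^{m-1}=r$ for $r\le\epsilon/2$, and in the interpolation region $r\in[\epsilon/2,2\epsilon]$ the extra $\chi'$-terms are controlled uniformly using \eqref{r*.ind.claim.low}--\eqref{r*.ind.claim.high}.

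The main technical obstacle is the careful distribution of derivatives in the product rule at the higher orders, where the coefficient bounds from Lemma \ref{lem:oa.bds} themselves become singular at rate $r^{-h/4}$ and must be balanced against the equally singular bounds from part (1) applied at order $|I|+1$; the key point is that the $r^{1/4}$ gain from the geometry of $\partial_{\rho^{m-1}}$ combined with the smallness constraints \eqref{Ceta.bds}, \eqref{e.bd} on $DC\eta$ and $C\eta$ provides exactly the room needed for these product estimates to close at the improved rate uniformly in $h$.
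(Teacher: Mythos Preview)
Your proof is correct and follows essentially the same approach as the paper's. Part (1) is exactly the integration-in-$\partial_{\rho^{m-1}}$ argument the paper uses (via the preparatory Lemma \ref{lem:gammam.bds.prep}), and part (2) is the frame decomposition \eqref{re-express} that the paper invokes, combining the $L^\infty$ bounds on $(a^{m-1})^{bA}$ from Lemma \ref{lem:oa.bds} and on $e^{m-1}_b(\rho^{m-1})$ from Lemma \ref{lem:e1re2r} with the $L^2$ bound from part (1) at order $|I|+1$.

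One small over-complication: your final paragraph about ``careful distribution of derivatives in the product rule at higher orders'' is not actually an issue here. In the expression $(a^{m-1})^{bA}\partial_A\partial^I\gamma^m_{\rm rest}$, the frame coefficient sits \emph{outside} the $\partial^I$, so only its $L^\infty$ bound (i.e., the low-order $H^2$ bound via Sobolev) is needed, independent of $|I|$. The higher-order $H^l$ bounds on $(a^{m-1})^{bA}$ with their extra $r^{-h/4}$ factors never enter this estimate. The descent simply tracks the $r^{-h/4}$ coming from part (1) applied at order $|I|+1$, and the absorption into the claimed rate works exactly as you describe for the low-order case.
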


\begin{proof}
For the first claim, 
the proof is immediate, using the inductive assumption 
on the energy of $\gamma^m_{\rm rest}$ (whichever is relevant 
for our order), as well as \eqref{Hlgammamest}. (In fact we can bond these terms by $log r$--the weaker bound claimed here is sufficient for our purposes).

We can then derive \eqref{impl.gammaopt}, \eqref{impl.gammalow}:
We use formulas 
\eqref{tthetatransebar}, \eqref{eibar}  Lemma \ref{lem:e1re2r}
to express the LHSs of \eqref{impl.gammaopt}, \eqref{impl.gammalow}
in terms of $\partial^{I'}\gamma^m_{\rm rest}$, $|I'|=|I|+1$:
\beq\label{re-express}
e^{m-1}_b\partial^I(\gamma^m_{\rm rest})=\sum_{A=T,\Th}(a^{m-1})^{bA}\partial_A
\partial^I\gamma^m_{\rm rest}+e^{m-1}_b(\rho^{m-1})\partial_\rho \partial^I(\gamma^m_{\rm rest})
\eeq
 in view of the bounds 
in  Lemmas \ref{lem:e1re2r} and \ref{lem:oa.bds}, our claim follows, on the level sets of $\rho^{m-1}$. The claim \eqref{lower.der.indn}
 also follows by directly involving the assumed $L^2$ bounds of $e_0\partial^I\gamma^m_{\rm rest}$ in view of the energy bounds on 
$\partial^I\gamma^m_{\rm rest}$ that we are assuming.

The claim on level sets of $r$ also follows by the same integration in $\partial_\rho$ argument, this follows by a general procedure 
we outline in the end of the next section. 
\end{proof}

\begin{remark}
The proof of \eqref{impl.gammaopt}, \eqref{impl.gammalow}
is the realization of the \emph{descent scheme} mentioned in the 
introduction. We note it can be applied to all orders below the top ones. 
In particular (as we will see) it implies that at the orders below top, 
the spatial derivatives $e^{m-1}_b\gamma^{m}_{\rm rest}, b=1,2$ of $\gamma^m_{\rm rest}$ 
that appear in the RHSs of the Riccati equations  
are \emph{less singular} than the derivatives $e_0\gamma^{m-1}$, 
and the main terms that contribute to 
the asymptotics of the connection coefficients $K^m_{ij}$ 
are the $e_0$-derivatives. 
This is a manifestation of the AVTD behaviour of the fields. 
\end{remark}

Following the control on the spatial part of the metric $h^{m-1}$ and 
$g^{m-1}$ and the refined control of $\gamma^m$ using the AVTD behaviour of
 the solution, we are ready to derive the next step of our inductive 
 claims. One final note prior to doing this: Our inductive claims for the derivatives 
 of the key parameters in the REVESNGG system were with respect to the 
 vector fields $\partial_t,\partial_\theta$. However in view of the 
 construction of the new coordinate $T(t,\theta)$ in this section, 
 we see that it suffices to derive our inductive claims on the derivatives 
 $\partial^I_{T\dots T\Th\dots\Th}$ of our parameters instead of
 the derivatives  $\partial^I_{t\dots t\theta\dots\theta}$.

\section{The estimates for the next iterate: The free wave $\gamma^m$. }\label{sec:itergamma}
Here we study  the free wave equation:
\beq
\label{waveq.it}
\Box_{g^{m-1}}(\gamma^m)=0, 
\eeq
which  holds on the entire region 
$[r\in (0,2\e)]\times \mathbb{S}^2\times\mathbb{R}$. 
\medskip

We recall that the prescribed initial data for this equation are required 
to live 
on a hypersurface $\Sigma_{r^{m-1}_*}$ given graphically by:
\[
\Sigma_{r_*^{m-1}}:=\{r=r^{m-1}_*(t,\theta)\}. 
\]
The function $r^{m-1}_*(t,\theta)$ is here assumed to satisfy the 
inductive assumption on regularity given by \eqref{r*.ind.claim.low}, \eqref{r*.ind.claim.high}, \eqref{te2.r*.low}, \eqref{te2.r*.high}.  

We then proceed to solve this equation in the entire region 
$r\in (0,2\e)\times \mathbb{S}^2\times\mathbb{R}$ using energy 
estimates. As noted, we will not be controlling $\gamma^m$ directly but 
instead $\gamma^m_{\rm rest}=(\gamma^m-\gamma^{\rm S})$. 

To do this, we recall the function  $\rho^{m-1}$; our 
energy estimates will live over level sets of this function. 
 We also recall that it suffices to use the   operators $\opartial^I$ instead of 
$\partial^{J}_{T\dots T\Th\dots \Th}$ to derive our claimed estimates on the next iterate $\gamma^m$. We will be employing these operators, since their commutation 
properties with the wave operator are more favorable. 

So, 
we will be considering the wave equation commuted with coordinate and 
frame vector fields, considering the equations:
\beq
\label{commuted.wav}
\Box_{g^{m-1}}\opartial^{k_1+k_2}_{T\dots T\Theta\dots\Theta} 
e_0^{J_0}(\gamma^m-\gamma^{\rm S})=[\Box_{g^{m-1}}, 
\opartial^{k_1+k_2}_{T\dots T\Theta\dots\Theta} e_0^{J_0}]
(\gamma^m-\gamma^S)-\opartial^{k_1+k_2}_{T\dots T\Theta\dots\Theta} 
e_0^{J_0}\Box_{g^{m-1}}(\gamma^S).
\eeq
here $k_1+k_2\le s-3$, $J_0\le 2$; in fact $J_0=0$ unless
 $k_1+k_2=s-3$.

 %Moreover at the very top order $k_1+k_2=s-3$, 
% $|J_1|=|J_0|=1$ then we do \emph{not} subtract $\gamma^S$ for 
% $\gamma^m$--in that very top order our claim is on $\gamma^m$ itself. 
\medskip

In particular, we will be deriving energy estimates for the equation: 
\beq
\label{wav.v}
\Box_{g^{m-1}}v=G(r,t,\theta),
\eeq
and then replacing the RHS from \eqref{commuted.wav}. 
We distinguish three  cases which are treated separately: 

The first case is where $k=k_1+k_2\le {\rm low}$. In this case, the 
inductive claim \eqref{inductiongammaopt} 
we must confirm (for the value $m$ of the index) 
asserts the optimal behaviour for $\gamma^m_{\rm rest}$; in fact the claim 
\eqref{gammam-1exp} is yet more refined (also optimal) information. The second 
case is when ${\rm low}+1\le k_1+k_2\le s-4$. The last one is $k_1+k_2=s-3$ and
 $J_0=0,1,2$. 
At orders higher than $\rm low$ the bounds claimed are non-optimal.

%In the next three subsections 
% we will derive the inductive claim \eqref{inductiongammaopt}, 
%\eqref{inductiongammalow}, \eqref{inductiongammatopmixed}. 
\medskip

These claims are asserted on level sets of the function $\rho^{m-1}$ and 
also on level sets of $r$. We derive the claims on the level sets of
 $\rho^{m-1}$ and then discuss at the end of this section how to extend these to the level sets of 
 $r$. 
 \medskip
 
  It is useful to put down some facts about the intrinsic and extrinsic 
  geometry of these level sets in the metric $g^{m-1}$. We do this right 
  below: 

\subsubsection{Some estimates on the geometry of the level sets of
 $\rho^{m-1}$.}

The lapse $\Phi^{m-1}$ (\ref{Phi}) behaves like

\begin{align}\label{lapselike}
\Phi^{m-1}=
( 1+B r^{\frac{1}{4}}).
\end{align}

We determine the behaviour of the volume form 
$\mathrm{vol}_{\Sigma_{\rho^{m-1}}}$ by integrating the mean curvature
 (\ref{trKheur}) (using the first variation of area formula, along with the fact that $e_0$ is \emph{asymptotically normal} to the level set 
$\Sigma_{\rho^{m-1}}$, captured via $e^{m-1}_1(r)\cdot [e_0(r)]=O(r^{3/8})$):

\begin{align}\label{volSigmarho}
\mathrm{vol}_{\Sigma_{\rho^{m-1}}}\sim e^{\gamma^{m-1}}
e^{\int^{r_*}_r(\frac{2M}{\tau}-1)^{-\frac{1}{2}}
(K_{11}^{m-1}+K_{22}^{m-1})d\tau}\mathrm{vol}_{Euc}\sim(2M)^{-\frac{1}{2}} r^\frac{3}{2}
\mathrm{vol}_{Euc},\qquad\mathrm{vol}_{Euc}= \sin\theta dtd\theta d\phi.
\end{align}

%We also put down some $L^2$ based estimates on the RHS of \eqref{wav.v}. 
%These will be used in deriving our energy estimates for that inhomogenous wave equation.  

\subsubsection{General framework for the energy estimates: The 
weighted multiplier.}

We will use $r$-weighted $e_0$ multipliers to obtain energy estimates for the \emph{free} wave $\gamma^m$ (which solves \eqref{waveq.it}). 

For now, for any function $v(r,t,\theta,\phi)$, with $\partial_\phi v=0$, define the weighted $e_0$-current:
\begin{align}\label{J}
J_a=Q_{ab}[v](e_0)^bf(r)=f(r)(e_0)^b(\partial_av\partial_bv-\frac{1}{2}{g^{m-1}_{ab}}\partial^dv\partial_dv).
\end{align}
Note that $Q_{00}[v]=\frac{1}{2}[(e_0v)^2+|\overline{\nabla}^{m-1}v|^2]$.
The  divergence of (\ref{J}) (with respect to the $(3+1)$-dimensional metric $g^{3+1}$) reads: 
\begin{align}\label{divJ}
^{m-1}\nabla^aJ_a=&\,\notag^{m-1}\nabla^a\big[Q_{ab}[v](e_0)^bf(r)\big]\\
\notag=&\,-e_0[f(r)]Q_{00}[v]+f(r)(K^{m-1})^{ab}(\partial_av\partial_bv-\frac{1}{2}{^{m-1}g_{ab}}\partial^dv\partial_dv)+f(r)e_0v\square_{^{m-1}g} v\\
\notag=&\,-e_0[f(r)]\frac{1}{2}\big[(e_0v)^2+|\overline{\nabla}v|^2\big]-\frac{1}{2}\text{tr}_{^{m-1}\overline{g}}K^{m-1}f(r)|\nabla v|^2_{^{m-1}g}\\
\notag&+f(r)\big[K^{m-1}_{11}(e_1v)^2+K^{m-1}_{22}(e_2v)^2+K^{m-1}_{12}e_1ve_2v\big]+f(r)e_0v\square_{^{m-1}g} v\\
=&-\frac{1}{2}\text{tr}_{^{m-1}\overline{g}}u^{m-1}f(r)|\nabla v|^2_{^{m-1}g}-\big[\frac{3}{2}\frac{\sqrt{2M}}{r^\frac{3}{2}}f(r)+e_0f(r)\big]\frac{1}{2}(e_0v)^2\\
\notag&+\big[\frac{3}{2}\frac{\sqrt{2M}}{r^\frac{3}{2}}f(r)-e_0f(r)\big]\frac{1}{2}|\overline{\nabla}v|^2_{^{m-1}\overline{g}}
+(\frac{d_1^{m-1}\sqrt{2M}}{\tau^\frac{3}{2}}+u_{11}^{m-1})f(r)(e_1v)^2\\
\notag&+(\frac{d_2^{m-1}\sqrt{2M}}{\tau^\frac{3}{2}}+u_{22}^{m-1})f(r)(e_2v)^2
+f(r)u_{12}^{m-1}e_1ve_2v+f(r)e_0v\square_{^{m-1}g} v.
\end{align}
Note that we have made use of the asymptotically CMC property; indeed ${\rm tr}_{\overline{g}^{m-1}}K^{m-1}$
to leading order contributes the (constant in $t,\theta$) factor 
$\frac{3}{2}\frac{\sqrt{2M}}{\tau^{\frac{3}{2}}}$ above. 
\medskip

We integrate the above over the domain defined by 
$\{ t\in (-\infty,+\infty), \theta\in (0,\pi), \phi\in [0,2\pi)\}$ 
and two level sets of $\rho^{m-1}$; 
one is denoted merely by $\Sigma_{\rho^{m-1}}$ 
and the value of  $\rho^{m-1}$  can be arbitrarily
 small,  and the other is $\{\rho^{m-1}=\epsilon\}$. (Recall that 
 $\{\rho^{m-1}=\epsilon\}$ is the hypersurface 
 $\{r=r^{m-1}_*\}=\Sigma_{r^{m-1}_*}$ 
 on which the initial data live).

Integrating (\ref{divJ}) over this domain, 
 and employing Stokes' theorem, we write the LHS of (\ref{divJ}) as a boundary integral to obtain the energy identity:
\begin{align}\label{Stokesv}
&\int_{\Sigma_{\rho^{m-1}}}f(r(^{m-1}\rho))Q_{ab}[v](e_0)^bn^a\mathrm{vol}_{\Sigma_{\rho^{m-1}}}-\int_{\Sigma_{{r^{m-1}_*}}}f({^{m-1}r}_*)Q_{ab}[v](e_0)^bn^a\mathrm{vol}_{\Sigma_{{{r^{m-1}_*}}}}\\
\notag=&\int_{\rho^{m-1}}^{\epsilon}\int_{\Sigma_\tau}\Phi^{m-1} \bigg[\frac{1}{2}\text{tr}_{^{m-1}\overline{g}}u^{m-1}f(\tau)|\nabla v|^2_{^{m-1}g}+\big[\frac{3}{2}\frac{\sqrt{2M}}{\tau^\frac{3}{2}}f(\tau)+e_0f(\tau)\big]\frac{1}{2}(e_0v)^2\\
\notag&-\big[\frac{3}{2}\frac{\sqrt{2M}}{\tau^\frac{3}{2}}f(\tau)-e_0f(\tau)\big]\frac{1}{2}|\overline{\nabla}v|^2_{^{m-1}\overline{g}}
-(\frac{d_1^{m-1}\sqrt{2M}}{\tau^\frac{3}{2}}+u_{11}^{m-1})f(\tau)(e_1v)^2\\
\notag&-(\frac{d_2^{m-1}\sqrt{2M}}{\tau^\frac{3}{2}}+u_{22}^{m-1})f(\tau)(e_2v)^2
-f(\tau)u_{12}^{m-1}e_1ve_2v-f(\tau)e_0v\square_{^{m-1}g} v
\bigg]
\mathrm{vol}_{\Sigma_\tau}ds,
\end{align}
where we recall that by virtue of our inductive assumption $\|u^{m-1}_{ij}\|_{L^\infty}\leq B
r^{-1-\frac{1}{4}}$ and $e_0\rho^{m-1}=-(\frac{2M}{r}-1)^\frac{1}{2}[1-\partial_r\chi(r)({^{m-1}r}_*-\epsilon)]$. On the other hand, by (\ref{nSigmarho}) and (\ref{e1rest}) we find:
\begin{align}\label{Q0n}
Q_{ab}[v](e_0)^bn^a=&\,\frac{\frac{1}{2}[(e_0v)^2+|\overline{\nabla}v|^2_{^{m-1}\overline{h}}]-\frac{e^{m-1}_1\rho^{m-1}}{e_0\rho^{m-1}}e_0ve_1v-\frac{e^{m-1}_2\rho^{m-1}}{e_0\rho^{m-1}}e_0ve_2v}{2\sqrt{1-\frac{(e^{m-1}_1\rho^{m-1})^2}{(e_0\rho^{m-1})^2}-\frac{(e^{m-1}_2\rho^{m-1})^2}{(e_0\rho^{m-1})^2}}}\\
=&\,
\frac{1}{4}(1+O([\rho^{m-1}]^{3/8}))[(e_0v)^2+|\overline{\nabla}v|^2_{\overline{h}^{m-1}}],
\end{align}
where $|\overline{\nabla}v|^2_{\overline{h}^{m-1}}=|e^{m-1}_1v|^2+|e^{m-1}_2v|^2$.
Here the term $O([\rho^{m-1}]^{3/8})$ satisfies the bound: 

\beq
\label{Oxpded}
|O([\rho^{m-1}]^{3/8})|\le C\eta |\rho^{m-1}|^{3/8}
\eeq

We here  recall the bound \eqref{e.bd} which implies

\[
|\frac{1+O([\rho^{m-1}]^{3/8})}{1-O(\e^{3/8})}-1|\le \frac{\eta}{10}.
\]
Generally, if we are able to control the bulk in the RHS of 
(\ref{Stokesv}) in terms of $f(r)Q_{00}[v]$, \emph{times a function 
$W(r)$ for which $\int_0^\e|W(r)|\sqrt{r}dr$ is finite}, then 
 we can utilize the Gronwall lemma to obtain a uniform energy 
 estimate for $v$ over the hypersurfaces  $\Sigma_{\rho^{m-1}}$, 
 $\rho^{m-1}\in (0,2\epsilon]$.\footnote{The reason for the factor
  $\sqrt{r}$
  in $\int_0^\e|W(r)\sqrt{r}dr$ is the form \eqref{lapselike} of the
   lapse function.}

With this strategy in mind, we consider the coefficients of the terms in 
the RHS of \eqref{Stokesv}, and observe that the $W(r)$ that naturally 
arises 
is of the from $r^{-\frac{3}{2}}$. This is precisely at the borderline 
where we do \emph{not} obtain a finite integral 
$\int_0^\e|W(r)|\sqrt{r}dr$.
 In fact, the most dangerous integrand in the RHS of (\ref{Stokesv}) is 
 $\Phi^{m-1}\frac{3}{2}\frac{\sqrt{2M}}{\tau^\frac{3}{2}}f(\tau)\frac{1}{2}(e_0v)^2$, which corresponds to the leading order behaviour of $\text{tr}_{^{m-1}\overline{g}}K^{m-1}$, see (\ref{trKheur}). It is the asymptotically CMC property of our geodesic 
 parameter that yields the \emph{constant} multiple of $r^{-\frac{3}{2}}$.  
Thus $f(\tau)$ must be chosen to  cancel this particular term out; so 
 we must   choose $f(\tau)=\tau^\frac{3}{2}$.
As we shall see below, the latter weight choice is also exactly consistent with the logarithmic behaviour of $v=\partial^I\gamma^m$ at $r=0$ that we will derive for the lower derivatives.

Having chosen $f(\tau)=\tau^{\frac{3}{2}}$ to cancel out the most dangerous term 
in \eqref{Stokesv}, we must consider the 
 terms 
 \beq
 \begin{split}
& -\big[\frac{3}{2}\frac{\sqrt{2M}}{\tau^\frac{3}{2}}f(\tau)-
e_0f(\tau)\big]\frac{1}{2}|\overline{\nabla}v|^2_{^{m-1}\overline{g}}
-(\frac{d_1^{m-1}(t,\theta)\sqrt{2M}}{\tau^\frac{3}{2}})f(\tau)(e_1v)^2
-(\frac{d_2^{m-1}(t,\theta)\sqrt{2M}}{\tau^\frac{3}{2}})f(\tau)(e_2v)^2
\end{split}
\eeq
  in the bulk estimate. We note that in principle these are bounded by 
  an expression 
  \[
  \int_{\rho}^\e \frac{Const}{s}[f(r)Q_{ab}[v](e_0)^bn^a ]
  {\rm vol}_{\Sigma_\rho} ds.
  \]
In principle this term would again  spell trouble, since the integrating factor we 
would obtain in our estimate would be 
$e^{\int_\rho^\e \frac{1}{s}ds}$which is not uniformly bounded  as $\rho\to 0$.
 \emph{However},  in view of the expressions for 
 $d_1^{m-1}(t,\theta)\sim \frac{1}{2}, d_2^{m-1}(t,\theta)\sim -1$ 
 and the bounds on $|d_1^{m-1}(t,\theta)-\frac{1}{2}|$, 
 $|d_2^{m-1}(t,\theta)+1|<\frac{1}{8}$ the sign of 
 $Const$ will in fact be 
 \emph{negative}, thus 
 \emph{favorable} for us.\footnote{  In \cite{RS1,RS2} certain analogous borderline 
 terms appeared with a favourable sign, in fact at the level of the entire system of the full Einstein equation; the definite, favourable sign of all such terms was 
 called ``approximate monotonicity''. The favorable signs here concern only the free wave, and only the spatial directions in the energy.}
As we will find, the remaining bulk terms from \eqref{Stokesv} \emph{except} for $\Box_{g^{m-1}}v$
 can be bounded by the energy of $v$ we are controlling times a coefficient that 
 is \emph{integrable} in $r$, and are thus \emph{not dangerous}
  in terms of deriving 
 our desired estimates. This will follow by virtue of our inductive assumptions on the terms $u^{m-1}_{ij}$.
\medskip

 Thus throughout our analysis of equation 
 $\Box_g^{m-1}v=F$, $v=\opartial^I(e_0)^{J_1}\gamma^m_{\rm rest}$ for all orders 
 $|I|$ the multiplier 
 that we choose in forming the energy current $J$ 
  will be $\tau^{3/2}e_0$. Recalling the pointwise bounds $|u^{m-1}_{ij}(r,t,\theta)|\le Br^{-1-\frac{1}{4}}$, the resulting identity is then:
  
  \begin{align}\label{Stokes.appl}
  &\int_{\Sigma_{\rho^{m-1}}}[\rho^{m-1}]^{\frac{3}{2}}Q_{ab}[v](e_0)^bn^a\mathrm{vol}_{\Sigma_{\rho^{m-1}}}-\int_{\Sigma_{{r^{m-1}_*}}}\e^{\frac{3}{2}}Q_{ab}[v](e_0)^bn^a\mathrm{vol}_{\Sigma_{{{r^{m-1}_*}}}}\\
\notag\le &\int_{\rho^{m-1}}^{\epsilon}\int_{\Sigma_\tau}\bigg[\frac{1}{2}[O(\tau^{-1-\frac{1}{4}})]\tau^{\frac{3}{2}}
[(e_0v)^2+(e^{m-1}_1v)^2+(e^{m-1}_2 v)^2]^2-\tau^{\frac{3}{2}} e_0v\square_{^{m-1}g} v
\bigg]
\mathrm{vol}_{\Sigma_\tau}ds,
  \end{align}
  
 This analysis of the free wave equation will guide us in deriving 
 the claimed estimates 
 \eqref{inductiongammaopt} at the lowest orders. We also need to 
 understand the commutation of our equations with derivatives 
 $\opartial^{(2k_1,k_2)}_{T\dots T\Theta\dots\Theta}$, $2k_1+k_2=k\le s-3$. 
 In preparation of performing this computation, we introduce some 
language and  notational conventions  
 to describe the terms generated by these commutations:

 \subsubsection{Language conventions.}
 \label{sec:lang_conv}
 Motivated  by the $r$-weighted multiplier estimates we discussed  for 
 the free wave equation, we describe the broad class of  estimates that we will be 
 deriving below for the various derivatives of $\gamma^m_{\rm rest}$ 
 \medskip
 
 Consider any inequality of the form \eqref{bas.ener}, for all $r\le 2\e$, where $w$ is a constant (which will vary depending on the context below):

 \beq
\begin{split} \label{bas.ener}
&\sum_{|I|\le b} r^w E[\partial^I\gamma^m_{\rm rest}](r)\le 
\sum_{|I|\le b} \e^w E[\partial^I\gamma^m_{\rm rest}](\e)
+\dots +
\int_r^\e Q_1(\tau) \cdot 
\sqrt{\tau^w E[\partial^I\gamma^m_{\rm rest}](\tau)}d\tau 
\\&+
\int_r^\e Q_2(\tau) \cdot 
\tau^w E[\partial^I\gamma^m_{\rm rest}](\tau)d\tau. 
\end{split} 
 \eeq

Here $Q_1(\tau), Q_2(\tau)$ will be fixed functions, which depend on the 
context below. We recall the estimates in Lemma \ref{lem:Gron}. 
We then call terms of the form 

\[
\int_r^\e Q_1(\tau) \cdot 
\sqrt{t^w E[\partial^I\gamma^m_{\rm rest}](\tau)} d\tau, 
\int_r^\e Q_2(\tau) \cdot 
\tau^w E[\partial^I\gamma^m_{\rm rest}](\tau)d\tau
\]
\emph{below borderline} if we have apriori bounds on 
$Q_1(\tau), Q_2(\tau)$ which yield: 
\[
\int_0^\e |Q_1(\tau)|d\tau, \int_0^\e |Q_2(\tau)|d\tau<\infty. 
\]
(In fact we will always have uniform bounds on the RHSs, depending on $B,\e$--this is not important for this discussion now).
\medskip

If we only  have bounds  $Q_1(\tau)\lesssim \tau^{-1}$ or $Q_2(\tau)\lesssim \tau^{-1}$ we call 
such terms \emph{borderline}. In particular for such terms 
the Gronwall inequality in Lemma \ref{lem:Gron} does \emph{not} 
yield  finite energy bounds as $r\to 0^+$. 
\medskip

We extend this notion to estimates of the form: 
 \beq
\begin{split} 
&\sum_{|I|\le b} r^w E[\partial^I\gamma^m_{\rm rest}](r)\le 
\sum_{|I|\le b} \e^w E[\partial^I\gamma^m_{\rm rest}](\e)
\\&+\dots+
\int_r^\e |\int_{\Sigma_\tau} F^I_{\rm bdd}(\tau,t,\theta) \cdot 
\tau^{w/2} {\bf e}(\partial^I\gamma^m_{\rm rest})(\tau){\rm vol}_{\rm Euc}d\tau| +
|\int_r^\e F^0(\tau) \cdot 
\tau^w E[\partial^I\gamma^m_{\rm rest}](\tau)d\tau|,
\end{split} 
 \eeq
 where ${\bf e}$ stands for one of the vector fields $e_0, e^{m-1}_1, e^{m-1}_2$. 
 
In this setting, we will be assuming that 
the functions $F^I_{\rm bdd}$ satisfy bounds in 
$L^2_{t,\theta}[\tau]$ for all $\tau\in (0,2\e)$; ``assuming'' here will 
mean either because of the inductive assumptions we are making on the 
parameters from step $m-1$, \emph{or} from bounds 
on derivatives of $\gamma^m_{\rm rest}$ at orders below 
$|I|$, as well as \emph{straightforward} combinations of such estimates 
for products of such factors. (By straightforward here we mean estimates 
that can be obtained by applying Cauchy-Schwarz, the product  and Sobolev inequalities).   If from either 
direct invocations of our inductive
 bounds of straightforward combinations of such bounds 
 we obtain bounds of the form: 
 
  \[
  \sqrt{\int_{\Sigma_\tau} |F^I_{\rm bdd}(\tau,t,\theta)|^2 {\rm vol}_{\rm Euc}}\le B^2 C^2 \tau^{-1+\delta}
  \]
for some $\delta>0$ we call the expression 
\[
\int_r^\e |\int_{\Sigma_\tau} F^I_{\rm bdd}(\tau,t,\theta) \cdot 
t^{w/2} {\bf e}\partial^I\gamma^m_{\rm rest}(\tau)|{\rm vol}_{\rm Euc}d\tau
\]
\emph{below borderline}. Observe that such functions  can be bounded 
by a term of the form 
\[\int_r^\e Q_1(\tau) \cdot 
\sqrt{\tau^w E(\partial^I\gamma^m_{\rm rest}](\tau)}d\tau
\]
which are below borderline 
in the language of the previous definition. If we can \emph{only} obtain bounds 
of the form 
 
  \[
  \sqrt{\int_{\Sigma_\tau} |F^I_{\rm bdd}(\tau,t,\theta)|^2 
  {\rm vol}_{\rm Eucl}}\le D \tau^{-1}
  \]
for some constant $D>0$   then we call the expression 
 \[
  \int_r^\e |\int_{\Sigma_\tau} F^I_{\rm bdd}(\tau,t,\theta) \cdot 
\tau^{w/2} {\bf e}[\partial^I\gamma^m_{\rm rest}](\tau) {\rm vol}_{\rm Eucl}|d\tau
\]
\emph{borderline}. We use similar language conventions for the terms: 

\[
|\int_r^\e F^0(\tau,t,\theta) \cdot 
\tau^w\cdot 
E[\partial^I\gamma^m_{\rm rest}](\tau) d\tau|
\]

In this case, we will be assuming that $F^0(\tau,t,\theta)$
depends on parameters of the step $m-1$ or on lower derivatives of 
$\gamma^m_{\rm rest}$. Moreover we will be assuming that we have 
$L^\infty_{t,\theta}(\tau)$ bounds on these parameters on each 
$\Sigma_\tau$. If these bounds then yield bounds on 
$||F^0(\tau,t,\theta)||_{L^\infty_{t,\theta}}$ of the form: 

\[
|F^0(\tau,t,\theta)|\le  B^2 \tau^{-1+\delta}
\]
for all $\tau\in (0,2\e)$ and for some fixed $\delta>0$ then the term 

\[
|\int_r^\e F^0(\tau,t,\theta) \cdot 
\tau^w E[\partial^I\gamma^m_{\rm rest}](\tau) d\tau|
\]
 is called below borderline. If these bounds yield 
 bounds on 
$||F^0(\tau,t,\theta)||_{L^\infty_{t,\theta}}$ of the form: 

\[
|F^0(\tau,t,\theta)|\le D \tau^{-1}
\]
for all $\tau\in (0,2\e)$, for some $D>0$, 
then the term is called borderline. 
\medskip

These notions will be useful in deriving our energy estimates for 
$\gamma^m_{\rm rest}$ in the rest of this section.  

\begin{remark}
We note that the bulk of our analysis will be in deriving the desired 
estimates from the initial data set 
$\Sigma_{r^{m-1}_*}=\{\rho^{m-1}=\e\}$
 \emph{towards} the singularity at $\{r=0\}=\{\rho^{m-1}=0\}$. 
 The estimates for the remaining region $\rho^{m-1}\in (\e,2\e]$ are just easier versions of these estimates and we do not write them out explicitly. 
\end{remark}

\subsection{The wave equation expanded: An inhomogenous equation for 
$\gamma^m_{\rm rest}$.}

Our claims concern the function $\gamma^m_{\rm rest}=\gamma^m-\gamma^S$ 
instead of the free wave $\gamma^m$ which satisfies \eqref{waveq.it}. 
Thus we study the inhomogeneous wave equation: 

\beq
\label{inhomog.wave.it}
\Box_{g^{m-1}}\gamma^m_{\rm rest}=-\Box_{g^{m-1}}\gamma^S. 
\eeq

It is useful to  express the wave operator with respect to the coordinates 
$\{\rho^{m-1}, T^{m-1},\Theta^{m-1}, \phi\}$.  (We suppress the suffix $m-1$ from $T,\Theta$ for simplicty below). 
There are two ``main parts'' of the wave 
operator--the most important involve 
the more singular derivatives in the $e_0$ (parallel to  $\partial_{\rho^{m-1}}$) direction. 
There are also the terms in the spatial directions $\partial_T,\partial_\Th$. These appear in the 
last two lines of the next equation. There are some cross terms, due to the fact  that $\partial_{\rho^{m-1}}$ 
is not normal to $\partial_\theta$ and $\partial_t$ in the region 
$\rho^{m-1}\in [\e/2,3\e/2]$. The origin of these terms  is manifested in the transition between the vector fields $e^{m-1}_i$, $i=1,2$ and the 
vector fields $\overline{e}^{m-1}_i$ via formulas \eqref{tthetatransebar}.
 (The latter vector fields are tangent to the level sets of $\rho^{m-1}$, recall). 
We then have, for any function $v(t,\theta, \rho^{m-1})$:

\begin{align}\label{wave.coords}
\notag&\Box_{g^{m-1}}v=-\bigg[(\frac{2M}{r}-1)
[1-\partial_r\chi(r)({^{m-1}r}_*-\epsilon)]^2\bigg]\partial_{\rho^{m-1}}^2v\\
&\,+\bigg[\mathrm{tr}_{^{m-1}\overline{g}}K^{m-1}\cdot (\frac{2M}{r}-1)^{\frac{1}{2}}
+\frac{M}{r^2}
(\frac{2M}{r}-1)^{-\frac{1}{2}}-(\frac{2M}{r}-1)^{\frac{1}{2}}\partial_r^2\chi(r)
({^{m-1}r}_*-\epsilon)[1-\partial_r\chi(r)({^{m-1}r}_*-\epsilon)]\bigg]
\partial_{\rho^{m-1}}v\\
\notag&+ [O(e^{m-1}(\rho^{m-1}))\cdot O(\sqrt{r}) +O(\sqrt{r})\cdot [a^{m-1}_{T1}a^{m-1}_{T2}\cdot e^{m-1}_1re^{m-1}_2r]\cdot  (\frac{2M}{r})^{\frac{1}{2}}\partial_{\rho^{m-1}}v\\
\notag& +2\sum_{A=T,\Th} (g^{m-1})^{A\rho }\partial_{A\rho} v
-2\sum_{A=\phi,T,\Th}({g}^{m-1})^{\rho B}(^{m-1}\Gamma)_{\rho B}^\rho
\partial_\rho v\\
\notag&- 2
\sum_{B=\phi,T,\Th}(^{m-1}\overline{g})^{\rho B}(^{m-1}\Gamma)_{\rho B}^\Th
\partial_\Th v-2
\sum_{B=\phi,T,\Th}(^{m-1}\overline{g})^{\rho B}(^{m-1}\Gamma)_{\rho B}^T
\partial_T v\\
\notag&+\sum_{A,B=\phi,T,\Th}(^{m-1}\overline{g})^{AB}\partial^2_{AB}v- 
\sum_{A,B=\phi,T,\Th}(^{m-1}\overline{g})^{AB}(^{m-1}\Gamma)_{AB}^\Th
\partial_\Th v-
\sum_{A,B=\phi,T,\Th}(^{m-1}\overline{g})^{AB}(^{m-1}\Gamma)_{AB}^T
\partial_T v.
\end{align}

%+O\bigg((\frac{2M}{r}-1)^{-1}\bigg)\sum_{i=1,2}e^{m-1}_i(e^{m-1}_i(\rho^{m-1}))
%\partial_{\rho^{m-1}}v
%\\&+(\frac{2M}{r}-1)^{-\frac{1}{2}}\cdot e_1(\rho)e_2(\rho)K^{m-1}_{12}r^{-1}
%\cdot e^{m-1}v

Here $(^{m-1}\Gamma)_{AB}^C$ are the Christoffel symbols of ${g}^{m-1}$ 
with respect to 
the system of coordinates $\{\rho^{m-1}, T,\Th, \phi\}$. 
\medskip

Let us make a comment here on the importance of choosing $e^{m-1}_2$ being ``tangent'' 
to the singularity, which implies that $e^{m-1}_2(r)$ and $e^{m-1}_2(\rho^{m-1})$ 
vanish for $r\le \e/2$: 
\begin{remark}
Had we not made the choice 
$e^{m-1}_2(r)=o(r^{-\frac{1}{2}+d^{m-1}_2(t,\theta)})$, we would have had 
$e^{m-1}_2(r)=O(r^{-\frac{1}{2}+d^{m-1}_2(t,\theta)})$. In that case, the terms in the 
third line of the above would have been \emph{more singular} than the terms in the 
second line. In particular we would \emph{not} have been able to derive our claimed 
bounds for $\gamma^m$ that we are claiming. Thus this (gauge) choice for the frame element $e^{m-1}_2$ is essential for our argument here. 
\end{remark}

Note that for the Schwarzschild metric all derivatives, metric components 
and Christoffel symbols which involve both the ``spatial'' directions 
$T,\Theta$ 
and the ``time-like'' direction $\rho$ all vanish. In the wave equation 
above there are such terms, and we will call them ``mixed'' terms:

\begin{definition}
\label{wave.mixed}
Consider the terms in \eqref{wave.coords} which involve either a ``mixed'' 
metric component $g_{A\rho}$, $A=T,\Th$ and/or a ``mixed'' Christoffel 
symbol $\Gamma_{AB}^C$ where at least one of $A,B,C$ is of the form $T,\Th$ 
and at least one other in the form $\rho$. 
We denote the sum of such terms in the operator by $\Box_{g^{m-1}}^{\rm mixed}$.  

We also consider the sum of all terms involving \emph{only} derivatives in the 
directions $\Th,T$ and metric and Christoffel symbols $g_{AB}, g^{AB}, \Gamma_{AB}^C$ 
have all indices taking values among $\Th,T$; the sum of those terms is denoted by 
$\Box_{g^{m-1}}^{\rm spatial}$.
\end{definition}

\subsubsection{Bounds on the inhomogenous term in the wave equation \eqref{inhomog.wave.it}.}
\label{sec:Inhomog}

Our first step will be to derive some bounds on the inhomogeneous term of 
\eqref{inhomog.wave.it}. Prior to stating our claim, we single out one exceptional 
case: At the top order estimates $|I|=s-3,  |J_0|=2$ on $\gamma^m_{\rm rest}$  
we observe that since $e_0(\theta)=0$, it suffices to derive our claimed bounds 
\eqref{inductiongammatopmixed} on 
$\gamma^m$ instead of on $\gamma^m_{\rm rest}$. Thus we will not need to subtract 
$\gamma^S$ at the most top order; so in the Lemma below we will be making no claim on
 the terms that would be generated had we made that subtraction. 

 \begin{lemma}
 \label{lem:inhomog}
 There exists a universal constant $B=B_{|I|}>0$ so that for all $I$, with $|I|\le s-3-4c$:
 \begin{align}
 \label{BoxgammaS.bds}
 &\| \opartial^I(\Box_{^{m-1}g}\gamma^S)\|_{L^2_{t,\theta}[\Sigma_{\rho^{m-1}=\tau}]}
\le 
  B^2  \tau^{-3+\frac{1}{4}}.% \\
 % \notag &\|\sum_{i=1,2} \opartial^I\{e^{m-1}_i(\rho^{m-1})(\rho^{m-1})^{-1}\cdot %(e^{m-1}_i)\gamma^m_{\rm rest}
%- \oa^{m-1}_{i\theta}cot\theta    \cdot (e^{m-1}_i)\gamma^m_{\rm rest}\}
%||_{L^2_{t,\theta}[\Sigma_{\rho^{m-1}=\tau}]}\\
%\notag &\le   B^2   \tau^{-3+\frac{1}{4}}.
 \end{align}

 For $s-3-4c<|I|\le s-4$ the estimate we have is: 
  \begin{align}
 \label{BoxgammaS.bds2}
&\sum_{s-3-4c<|I|\le s-4} || \opartial^I(\Box_{^{m-1}g}\gamma^S)
||_{L^2_{t,\theta}
 [\Sigma_{\rho^{m-1}=\tau}]}\le 
 6\sqrt{2M}C\eta \tau^{-3-\frac{|I|-(s-3-4c)}{4}}
+  B^2  \tau^{-3+\frac{1}{4}+\frac{|I|-(s-3-4c)}{4}}.
%\notag &  \|\sum_{i=1,2} \opartial^I\{e^{m-1}_i(\rho^{m-1})(\rho^{m-1})^{-1}
%- \oa^{m-1}_{i\theta}cot\theta    \cdot (e^{m-1}_i)\gamma^m_{\rm rest}\}
%||_{L^2_{t,\theta}[\Sigma_{\rho^{m-1}=\tau}]}\\
%\notag &\le  5\sqrt{2M} C\eta \tau^{-3-\frac{|I|-(s-3-4c)}{4}}+
%  B^2  \tau^{-3+\frac{1}{4}+\frac{|I|-(s-3-4c)}{4}} %}\cdot  
 % \sqrt{\sum_{s-3-4c<|I|\le s-3}E[\partial^I\gamma^m_{\rm rest}]}
 \end{align}
 Moreover, the only terms in $\opartial^I(\Box_{g^{m-1}}\gamma^S)$ that contribute to the first (more singular in $\tau$) 
 term in the RHS of \eqref{BoxgammaS.bds2} is the term 
 $\opartial^I(tr_{\overline{g}^{m-1}}K^{m-1})\cdot e_0\gamma^S$ in 
 \eqref{wave.coords} (with $\gamma^S$ being the function $v$ that is being acted on). 

 \end{lemma}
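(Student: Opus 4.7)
\medskip

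\textbf{Proof proposal for Lemma \ref{lem:inhomog}.}

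The plan is to exploit the identity $\Box_{g^S}\gamma^S = 0$, so that $\Box_{g^{m-1}}\gamma^S = (\Box_{g^{m-1}} - \Box_{g^S})\gamma^S$, and thereby reduce the estimate to a bound on \emph{differences} of coefficients appearing in the decomposition \eqref{wave.coords}. Using $\gamma^S = \log r + \log\sin\theta$, we note that $\gamma^S$ is independent of $t$, while $\partial_\theta\gamma^S = \cot\theta$, $\partial_r\gamma^S = 1/r$, $e_0\gamma^S = -(\frac{2M}{r}-1)^{1/2}/r$, and all these are \emph{universal smooth} functions (no step-dependent control is needed for $\gamma^S$ itself). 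The main engine is therefore matching each coefficient in \eqref{wave.coords} against its Schwarzschild analogue and bounding the discrepancy via the inductive hypotheses of \S\ref{sec_REVESNGG} and the Christoffel/metric estimates of \S\ref{sec:met.Christ.bds}.

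First I would identify the \emph{principal} contribution. In \eqref{wave.coords}, the coefficient of $\partial_{\rho^{m-1}}v$ contains $\mathrm{tr}_{\overline{g}^{m-1}}K^{m-1}\cdot(\frac{2M}{r}-1)^{1/2}$, whose Schwarzschild value already yields $\Box_{g^S}\gamma^S = 0$ when combined with the $\partial_{\rho}^2\gamma^S$ contribution. Subtracting the Schwarzschild analogue therefore produces the single term $(\mathrm{tr}_{\overline{g}^{m-1}}u^{m-1})\cdot e_0\gamma^S$, which I would treat as the leading piece. Applying the product inequality \eqref{prodcut.ineq} together with \eqref{tru.explicit} (giving $\|\mathrm{tr}_{\overline{g}^{m-1}}u^{m-1}\|_{H^{\mathrm{low}}} \le 15B\tau^{-3/2+1/4}$) and the pointwise bound $|e_0\gamma^S|\le C\tau^{-3/2}$, one gets a contribution of size $\lesssim B^2\tau^{-3+1/4}$ at the low orders, and after using \eqref{inductiontrKlow} an $O(C\eta\tau^{-3-h/4})$ contribution at the higher orders $|I|=\mathrm{low}+h$. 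The numerical coefficient $6\sqrt{2M}$ comes from the explicit constant $\sqrt{2M}$ in $e_0\gamma^S$ and the factor 2 in \eqref{inductiontrKlow}, plus the product-inequality constant.

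Next I would handle all the \emph{secondary} terms in \eqref{wave.coords}, grouped as:
\begin{enumerate}
\item the $(\frac{2M}{r}-1)\partial_\rho^2\gamma^S$ term combined with the remaining Schwarzschild part of the $\partial_\rho\gamma^S$ coefficient, which cancel exactly (this is $\Box_{g^S}\gamma^S=0$);
\item the $O(e_i^{m-1}\rho^{m-1})\cdot O(\sqrt{r})$ correction in the third line of \eqref{wave.coords}, controlled via Lemma \ref{lem:e1re2r};
\item the ``mixed'' pieces $(g^{m-1})^{A\rho}\partial_{A\rho}\gamma^S$ and $(\overline{g}^{m-1})^{\rho B}\Gamma_{\rho B}^{C}$ terms, bounded using \eqref{mixed.metr.inv.low}--\eqref{mixed.metr.inv.high} and Lemma \ref{Christof.off.diag};
\item the ``spatial'' piece $\Box^{\mathrm{spatial}}_{g^{m-1}}\gamma^S$, obtained by applying $(g^{m-1})^{AB}\partial^2_{AB}$ and the associated Christoffel terms of Lemma \ref{Christ.bounds} to $\gamma^S$, and \emph{subtracting} the exact Schwarzschild analogue so one only sees the perturbation.
\end{enumerate}
Each group reduces, via the product inequality and Sobolev embedding, to products of an inductive-hypothesis quantity with a \emph{smooth} derivative of $\gamma^S$ (an explicit function of $r$ and $\cot\theta$, with the latter handled by $\cot\theta$--weighted bounds in \eqref{Christ.top}, \eqref{Christof.off.diag.top}, and Lemma \ref{gen.Hardy}). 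Quantitative inspection shows that each such product is strictly less singular than $\tau^{-3+1/4}$ at the low orders, and at orders $\mathrm{low}+h$ is bounded by $B^2\tau^{-3+1/4+h/4}$, thanks to the uniform $1/4$--descent built into the higher-order inductive claims.

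The main obstacle, I expect, will be bookkeeping at the highest orders $|I|=s-4$: one must verify that \emph{no} secondary term produces a contribution as singular as the principal $\mathrm{tr}_{\overline{g}^{m-1}}u^{m-1}\cdot e_0\gamma^S$ term (so that the ``moreover'' clause holds) and that the $\cot\theta$ singularity at the poles is absorbed using the generalized Hardy inequality and the vanishing properties in Lemma \ref{lem:poles}. In particular, the mixed terms $(g^{m-1})^{\rho T}\partial_{\rho T}\gamma^S$ must be estimated with care, since $\partial_T\gamma^S = 0$ only in the background $\{t,\theta\}$ coordinates but acquires nontrivial contributions in the $\{T,\Theta\}$ coordinates via $\partial_T=(\partial T/\partial t)^{-1}\partial_t$; however the chain-rule factors depending on $T(t,\theta)$ are smooth and $t$--independent on $\gamma^S$, so the contribution collapses. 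Once this is checked, the two displayed estimates follow.
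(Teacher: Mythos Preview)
Your approach is essentially correct and follows the same strategy as the paper: write $\Box_{g^{m-1}}\gamma^S$ as a perturbation of $\Box_{g^S}\gamma^S=0$, identify the $\mathrm{tr}\,K^{m-1}$ contribution as principal, and bound the remaining pieces via the inductive metric/Christoffel estimates. The paper organizes this slightly differently---it splits $\gamma^S=\log\rho^{m-1}+\log\sin\theta$ and treats the two summands separately, rather than subtracting $\Box_{g^S}$ coefficient-by-coefficient---but the content is the same.

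One point you gloss over that the paper makes explicit: the spatial piece $\Box^{\mathrm{spatial}}_{g^{m-1}}(\log\sin\theta)$ contains $(g^{m-1})^{\Theta\Theta}\partial^2_{\Theta\Theta}\log\sin\theta=-(g^{m-1})^{\Theta\Theta}\csc^2\theta$, which is a genuine $(\sin\theta)^{-2}$ singularity at the poles, \emph{not} merely $\cot\theta$. The Hardy inequality alone cannot absorb this. The resolution is that the $\phi\phi$--Christoffel term $(g^{m-1})^{\phi\phi}\Gamma^{m-1}_{\phi\phi,\Theta}=-\partial_\Theta\gamma^{m-1}$, whose Schwarzschild part is $-\cot\theta$, combines with the second-derivative term to give $(g^{m-1})^{\Theta\Theta}[-\csc^2\theta+\cot^2\theta]=-(g^{m-1})^{\Theta\Theta}$, which is regular at the poles. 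In your subtraction framework this cancellation still occurs, provided you include both the second-derivative difference \emph{and} the Christoffel difference (not just those of Lemma~\ref{Christ.bounds}, which omits $\Gamma^\Theta_{\phi\phi}$). The residual $\partial_\Theta\gamma^{m-1}_{\rm rest}\cdot(g^{m-1})^{\Theta\Theta}\cot\theta$ is then the term to which Hardy applies. You should make this cancellation explicit; as written, your item~(4) and the reference to ``$\cot\theta$ singularity'' could be read as missing it.

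A minor point: your concern about $\partial_T\gamma^S$ in the last paragraph is unnecessary. Since $\Theta=\theta$ and $e_0(T)=e_0(\Theta)=0$, in the coordinate system $\{\rho^{m-1},T,\Theta\}$ we have $\partial_T\gamma^S=\partial_T(\log\rho^{m-1}+\log\sin\Theta)=0$ exactly, not just ``collapsing''.
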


\begin{proof}

We commence with the bounds on 
\[
\Box_{g^{m-1}}\gamma^S = \Box_{g^{m-1}} log\rho^{m-1} +\Box_{g^{m-1}}
logsin\theta. 
\]

The two terms on the RHS will also be treated and bounded separately. 
We commence with the term $\Box_{^{m-1}g} log\rho^{m-1}$. We write $\rho=\rho^{m-1}$ for brevity. 
\medskip

We subtract from this the quantity $\Box_{g^S} log\rho=0$ (where the wave operator 
is expressed in terms of the coordinates $\rho=\rho^{m-1}, T,\Theta, \phi$--here 
$\rho=r$); thus we are reduced to 
bounding the following terms: 
\beq
 \begin{split}\label{boxgammamexp.minS}
 &(\text{tr}_{^{m-1}\overline{g}}K^{m-1}-\text{tr}_{g^S}K^S)e_0log\rho, \text{ }\text{ }
\Box^{\rm mixed}_{g^{m-1}}{\rm log}\rho,  
%\sum_{i=1,2}\overline{e}^{m-1}_i(\gamma^{m-1})
%-e_i^{m-1}(\rho^{m-1})[1-\partial_r\chi(r)(r^{m-1}_*-\e)]^{-1}\partial_{\rho^{m-1}}(\gamma^S).
%\\&=-(\text{tr}_{^{m-1}\overline{g}}K^{m-1}-\text{tr}_{g^S}K^S)e_0(log\rho)
%+\sum_{b=1,2}\big[{(e^{m-1}_b)^2}(log\rho)-{(A_b^{m-1})_b}^j\,e^{m-1}_j
%(log\rho)
%\\&+e^{m-1}_b(\gamma^{m-1}-\gamma^S)(e^{m-1}_b
%(log\rho))]
\end{split}
\eeq
The remaining terms involve derivatives $\partial_T,\partial_\Theta$ that 
annihilate $log\rho$. 
\medskip
 
 Let us commence with the first  term in \eqref{boxgammamexp.minS}:

The term 
$(\text{tr}_{^{m-1}\overline{g}}K^{m-1}-\text{tr}_{g^S}K^S)e_0(log\rho)$
 is first expanded using
 the expression
 \eqref{partialrho}
 to first  calculate  
 
 \beq\label{e0.rho}
 e_0(\rho^{m-1})=-(1-\frac{2M}{r})^{\frac{1}{2}}[1-\partial_r\chi(r)
 (r^{m-1}_*-\e)]^{-1}. 
 \eeq

Therefore we will derive our bounds as follows:
\beq
\begin{split}
\label{1st.expanded}
&\|\opartial^I(\text{tr}_{^{m-1}\overline{g}}(K^{m-1}-K^S)e_0log \rho)
 \|_{L^2[\Sigma_{\rho^{m-1}}]}
 \\&=\|\opartial^I\bigg(\text{tr}_{^{m-1}\overline{g}}(K^{m-1}-K^S)(1-\frac{2M}{r})^{\frac{1}{2}}[1-\partial_r\chi(r)
 (r^{m-1}_*(t,\theta)-\e)]^{-1}(\rho^{m-1})^{-1}\bigg)
 \|_{L^2[\Sigma_{\rho^{m-1}}]}
 \\&\le \sum_{I_1\bigcup I_2=I}\|\partial^{I_1}[(-\text{tr}_{^{m-1}\overline{g}}(K^{m-1}-K^S)]\partial^{I_2}\bigg[(1-\frac{2M}{r})^{\frac{1}{2}}[1-\partial_r\chi(r)
 (r^{m-1}_*(t,\theta)-\e)]^{-1}(\rho^{m-1})^{-1}\bigg]
 \|_{L^2[\Sigma_{\rho^{m-1}}]}
 \\&\le \|\partial^I (\text{tr}_{^{m-1}\overline{g}}(K^{m-1}-K^S)\|_{L^2}\cdot \|
 (1-\frac{2M}{r})^{\frac{1}{2}}[1-\partial_r\chi(r)
 (r^{m-1}_*(t,\theta)-\e)]^{-1}(\rho^{m-1})^{-1}
 \|_{L^\infty}
 \\&+ \|(\text{tr}_{^{m-1}\overline{g}}(K^{m-1}-K^S)\|_{L^\infty}\cdot 
\\& \|\partial^I\bigg[(1-\frac{2M}{r})^{\frac{1}{2}}[1-\partial_r\chi(r)
 (r^{m-1}_*(t,\theta)-\e)]^{-1}(\rho^{m-1})^{-1}\bigg]
 \|_{L^2[\Sigma_{\rho^{m-1}}]}.
 \end{split}
 \eeq
 Now, we consider the case where $|I|\le \rm low$ first. In that setting in both summands in the RHS of the above, 
 the second factor is bounded by 
  $C((\rho^{m-1})^{-3/2})$. 
 %If $|I_2|$ is odd, (in which case $|I_1|<|I|$ and 
% $|I_1|$ is also odd) then the second factor  is pointwise bounded by
%  $O(\rho^{-3/2}cot\theta)$. 
%  In the latter case however, $|I_1|$ will be odd, 
 % thus we can invoke the Hardy inequality above to derive that: 

%\beq
%\label{wisteria}
%\|\partial^{I_1}[(-\text{tr}_{^{m-1}\overline{g}}(K^{m-1}-K^S)]\cdot cot\theta\|_{L^2[\Sigma_{\rho^{m-1}}]}\le \|\tpartial^{I_1+1}_{\theta\dots}[(-\text{tr}%_{^{m-1}\overline{g}}(K^{m-1}-K^S)]\|_{L^2[\Sigma_{\rho^{m-1}}]}+
%\|\partial^{I_1}_{\theta\dots}[(-\text{tr}_{^{m-1}\overline{g}}(K^{m-1}-K^S)]\|_{L^2[\Sigma_{\rho^{m-1}}]}.
%\eeq

Note that by our inductive assumption \eqref{tru.explicit} at the low  
 orders the first factor in those summands is bounded by 
$4C\eta(\rho^{m-1})^{-3/2+\frac{1}{4}}$.\footnote{The extra power 
$r^{\frac{1}{4}}$ captures the asymptotically CMC property at the lower
 orders.} Thus this term satisfies our claimed bounds at the lower orders. 
At the higher orders, we have the same bound for the second factor. 
However, for the first summand, 
by our inductive assumption \eqref{inductiontrKlow} the  RHS of 
\eqref{1st.expanded}
 is bounded by a more singular power or $\rho^{m-1}$; it has an additional 
 factor 
$(\rho^{m-1})^{-\frac{|I|-(s-3-4c)}{4}}$, \emph{and} it does 
not have the extra power of $(\rho^{m-1})^{\frac{1}{4}}$.
 In either case our claims are satisfied for 
this term at the higher orders, since no extra power $(\rho^{m-1})^{1/4}$ 
is claimed there.  
\medskip

We now consider the second  term in \eqref{boxgammamexp.minS}; this terms equals:
 \beq
\begin{split}& [O(e^{m-1}(\rho^{m-1}))\cdot O(\sqrt{r}) +O(\sqrt{r})\cdot [a^{m-1}_{T1}a^{m-1}_{T2}\cdot e^{m-1}_1re^{m-1}_2r]
\cdot (\frac{2M}{r})^{\frac{1}{2}}
 \partial_{\rho^{m-1}}log\rho^{m-1}
\\&= 
 [O(e^{m-1}(\rho^{m-1}))\cdot O(\sqrt{r}) +O(\sqrt{r})\cdot [a^{m-1}_{T1}a^{m-1}_{T2}\cdot e^{m-1}_1re^{m-1}_2r]
\cdot (\frac{2M}{r})^{\frac{1}{2}}
\rho^{m-1})^{-1}.
 \end{split}
\eeq
 % We focus on
%the less straightforward term there, which is the one involving 
%\[
%[e^{m-1}_i[\rho^{m-1}]
%(\rho^{m-1})^{-1}]
%\] 

We then invoke the bounds we have on $e^{m-1}_1(\rho^{m-1})$ in Lemma 
\ref{lem:e1re2r}; using Cauhcy-schwarz and the product inequality, as well as the bounds on $a^{m-1}_{Ti}, i=1,2$ we 
easily see that all terms involving $e^{m-1}_i(\rho^{m-1})$ are bounded a claimed in our Lemma.

  The remaining terms
  in the RHS of 
\eqref{boxgammamexp.minS} are handled in a similar manner, 
using the 
inductive assumptions on the various parameters and the product inequality.  
 \medskip

Next we derive the claimed bounds for $\Box_{^{m-1}g}logsin\theta$. 
\newcommand{\tg}{{\tilde{\gamma}}}

 For this term we proceed  by expanding the wave operator  as in \eqref{wave.coords}; among coordinate derivatives, the 
 only non-zero terms are then 
those involving derivatives in the $\Th$-direction, since 
$v=logsin\theta$.  
These are: 

\beq
\label{first.Gamma}
(^{m-1}g)^{\Th\Th} \partial_{\Th\Th}(logsin\theta)
-[(^{m-1}g)^{\phi\phi}
\cdot (\Gamma^{m-1}_{\phi\phi,\Th})] \cdot
(^{m-1}g)^{\Th\Th}\partial_\Th (logsin\theta),
\eeq

\beq
\label{2nd.Gamma}
\sum_{a,b=T,\Th,r}(^{m-1}g)^{ab} (\Gamma^{m-1})_{ab}^\Th\partial_\Th(logsin\theta). 
\eeq
(Note that $\sum_{a,b=T,\Th,r}(g^S)^{ab}[^S\Gamma^\Th_{ab}]=0$).

 The first sum of terms is \emph{more} singular (in terms of powers 
$\theta^{-1}, (\pi-\theta)^{-1}$) than the second, so we commence with the sum of 
those two terms.

\begin{equation}\label{dangexp}
\begin{split}
& (^{m-1}g)^{\Th\Th} \partial_{\Th\Th}(logsin\theta)-[(^{m-1}g)^{\phi\phi}
\cdot (\Gamma^{m-1}_{\phi\phi,\Th}) \cdot
(^{m-1}g)^{\Th\Th}\partial_\Th (logsin\theta))=
\\& (^{m-1}g)^{\Th\Th} \partial_{\Th\Th}(logsin\theta)
-[(^{m-1}g)^{\phi\phi}\cdot (\Gamma^{m-1}_{\phi\phi,\Th})-(g^S)^{\phi\phi}(
\Gamma^S_{\phi\phi,\Th})] \cdot
(^{m-1}g)^{\Th\Th}\partial_\Th (logsin\theta)
\\&-
[(g^S)^{\phi\phi}\cdot (\Gamma^S_{\phi\phi,\theta})] \cdot
(^{m-1}g)^{\Th\Th}\partial_\Th (logsin\theta).
\end{split}
\end{equation}

 A key thing to observe here is a certain cancellation of two singular terms in the right hand
  side (the first and the last, taken together), using that
   $\partial_\Th\theta=1$, since $\Th=\theta$ in these coordinates:
 
 \begin{align}
& (^{m-1}g)^{\Th\Th} \partial_{\Th\Th}(logsin\theta)-
[(g^S)^{\phi\phi}(\Gamma^{m-1}_{\phi\phi,\Theta})] \cdot
(^{m-1}g)^{\Th\Th}\partial_\Th (logsin\theta)\\
\notag &= (^{m-1}g)^{\Th
\Th}
[-\frac{1}{(sin\theta)^2}+(cot\theta)^2](\partial_\Th\theta)^2+ (^{m-1}g)^{\Th\Th}\partial_{\Th\Th}\theta
\frac{cos\theta}{sin\theta}=-(^{m-1}g)^{\Th\Th}.
 \end{align}

Recall the bounds in previous subsection on $(g^{m-1})^{\Th\Th}$; 
these imply that  the RHS term above satisfies the bounds
  claimed for the LHS of our  Lemma \ref{lem:inhomog}.

 The remaining terms in 
\eqref{dangexp}
 can be controlled as follows: 
 
 \begin{equation}
 \begin{split}
 \label{Gphiphith}
&- [(^{m-1}g)^{\phi\phi}\cdot (\Gamma^{m-1}_{\phi\phi,\Theta})-(g^S)^{\phi\phi}(
\Gamma^S_{\phi\phi,\Theta})] \cdot
(^{m-1}g)^{\Theta\Theta}\partial_\theta (logsin\theta)
\\&=
-\frac{1}{2}[(^{m-1}g)^{\phi\phi}\cdot 
(-\partial_\Th (e^{2\gamma^{m-1}}))-(g^S)^{\phi\phi}
(-\partial_\Th e^{2\gamma^S})] \cdot
(^{m-1}g)^{\Th\Th}\partial_\Th (logsin\theta))
\\&= \partial_\Th(\gamma^{m-1}-\gamma^S)\cdot 
(^{m-1}g)^{\Th\Th}\partial_\Th (logsin\theta)
\\&= \partial_\Th (\gamma^{m-1}_{\rm rest})\cdot (g^{m-1})^{\Th\Th}\cdot 
cot\theta.
 \end{split}
 \end{equation}
 
 Now consider $\opartial^I=(\DS)^{k_1}\partial_t^{k_2}$ acting on  the above.
% with $I=(2k_1,k_2)$ where $2k_1$ stands for the number of $\partial_\theta$ 
 %derivatives and $k_2$ the number of $\partial_t$ derivatives. 
% We employ formula \eqref{obartwice} $k$ times. 
 We derive that: 
 
 \beq
 \begin{split}
& \|\opartial^{I} [\partial_\Th (\gamma^{m-1}_{\rm rest})\cdot (g^{m-1})^{\Th\Th}
\cdot 
cot\theta]\|_{L^2[\Sigma_{\rho^{m-1}}]}
\le \| \partial^{I}\bigg{(}[\partial_\Th\gamma^{m-1}_{\rm rest}\cot\theta]\cdot 
(g^{m-1})^{\Th\Th}\bigg{)}\|_{L^2[\Sigma_{\rho^{m-1}}]} \\&\le 
\sum_{I_1\bigcup I_2=I}\| [(\partial^{I_1}[\partial_\Th
\gamma^{m-1}_{\rm rest}\cdot cot\theta]\cdot 
(\partial^{I_2}g^{m-1})^{\Th\Th}\|_{L^2[\Sigma_{\rho^{m-1}}]}.
\end{split}
 \eeq
 Let us see how the above can be bounded, since the same argument will be used 
 frequently in the rest of the paper. We recall first that by the product inequality the RHS is bounded by: 
 
 \[
 B\cdot \|[\partial_\Theta
\gamma^{m-1}_{\rm rest}\cdot cot\theta]\|_{H^{|I|}[\Sigma_{\rho^{m-1}}]}\cdot 
\|(g^{m-1})^{\Theta\Theta}\|_{L^\infty[\Sigma_{\rho^{m-1}}]}+
 B\cdot \|\partial_\theta
\gamma^{m-1}_{\rm rest}\cdot cot\theta\|_{L^\infty[\Sigma_{\rho^{m-1}}]}\cdot 
\|(g^{m-1})^{\Theta\Theta}\|_{H^{|I|}[\Sigma_{\rho^{m-1}}]}
 \]

The factors:
  \[
  \|(g^{m-1})^{\Theta\Theta}\|_{L^\infty[\Sigma_{\rho^{m-1}}]}, 
  \|[\partial_\Theta
(\gamma^{m-1}_{\rm rest})\cdot cot\theta\|_{L^\infty[\Sigma_{\rho^{m-1}}]}
  \]
  are bounded respectively by $B(\rho^{m-1})^{-2-\frac{1}{4}}$ and $2\|\partial^2 
  (\gamma^{m-1}_{\rm rest})\|_{L^\infty[\Sigma_{\rho^{m-1}}]}\le C\eta (\rho^{m-1})^{-1/8}$.
  The other factors are bounded by our inductive assumptions, after also using the 
  Hardy inequality for the first one; in particular we invoke the inequality:
  
  \beq
  \label{1}
 \begin{split}
 & \|(\partial^{I}[\partial_\theta
\gamma^{m-1}_{\rm rest}\cdot cot\theta]\|_{L^2[\Sigma_{\rho^{m-1}}]}\le 
 2 \|(\partial^{I}[\DS
\gamma^{m-1}_{\rm rest}]\|_{L^2[\Sigma_{\rho^{m-1}}]}+  
\|\partial^{I}[\partial_\theta
\gamma^{m-1}_{\rm rest})]\|_{L^2[\Sigma_{\rho^{m-1}}]}\le C\eta |log\rho^{m-1}|, \\&
\text{for} \text{ } |I|\le {\rm low}-2,
 \\&\|(\partial^{I}[\partial_\theta
\gamma^{m-1}_{\rm rest}\cdot cot\theta]\|_{L^2[\Sigma_{\rho^{m-1}}]}\le 
2  \|\partial^{I}[\DS
\gamma^{m-1}_{\rm rest}]\|_{L^2[\Sigma_{\rho^{m-1}}]}+  
\|\partial^{I}[\partial_\theta
\gamma^{m-1}_{\rm rest}]\|_{L^2[\Sigma_{\rho^{m-1}}]}
\\&\le C\eta (\rho^{m-1})^{-\frac{1}{8}-\frac{|I|-({\rm low}-2)}{4}}, 
\text{for} \text{ } 
 {\rm low}-1\le |I|\le s-4.
\end{split} 
  \eeq
  (A remark on the last inequality at the order $|I|=s-4$: We note that the desired
   estimate holds after we first re-express  the leftmost derivatives in $\partial^I$
   (be it $\partial_\Th$ or $\partial_T$) in the term $\partial^I\DS\gamma^{m-1}$
   in terms of $e^{m-1}_i$, using $a^{m-1}_{\A i}$; the desired bound then follows from 
   the inductive estimate at the top order for the function  $\gamma^{m-1}$ (this 
   assumed
    bound  comes from the previous step in the induction). 
  
  On the other factors we have already derived the bounds:
  \beq
  \label{2}
  \begin{split}
 & \|\partial^I(g^{m-1})^{\theta\theta}\|_{L^2[\Sigma_{\rho^{m-1}}]}\le B^2 (\rho^{m-1})^{-2-\frac{1}{4}}, \text{if} \text { } |I|\le {\rm low}
 \\& \|\partial^I(g^{m-1})^{\theta\theta}\|_{L^2[\Sigma_{\rho^{m-1}}]}\le B^2 (\rho^{m-1})^{-2-\frac{1}{4}-\frac{|I|-{\rm low}}{4}}, \text{if} \text { } 
 {\rm low} \le |I|\le  s-4.
  \end{split}
  \eeq

Combining these bounds  proves that the term \eqref{Gphiphith}
    satisfies the desired bound. 
  All remaining terms can be bounded by our assumptions on Christoffel symbols and the argument above. 
  This completes our proof of the  bounds in \eqref{BoxgammaS.bds}, \eqref{BoxgammaS.bds2}.
  \end{proof}

\subsubsection{Formulas and bounds for commutations of $\Box_{g^{m-1}}$ with $\opartial^I$. }
\label{sec:Comm}

Having bounded the RHS of \eqref{inhomog.wave.it} in suitable spaces, we next need to 
act on the LHS of that equation by $\opartial^I$,
 and commute the derivatives past $\Box_{g^{m-1}}$, 
$|I|=l\leq s-4$. (Note that since $\opartial^I$ consists of compositions of 
$\DS, \partial_T$ and $s-4$ is odd, there will be at least one $\partial_T$
 derivative). The 
case of all derivatives being in the $\partial_\Th$ direction is covered at the top
 order where $|I|=s-3$% (The commutation with $\opartial^I, |I|=s-3$ will eventually also 
%need to be calculated--we do this further down). 
\medskip

We will put down 
some formulas that will help us in calculating the required commutation. 

We proceed in two steps: First we consider the terms in the wave operator that have coefficients that are singular at the two poles 
$\theta=0,\pi$ and proceed to 
calculate and bound the commutation terms that arise from those with some care. 
Next, we study the remaining terms that arise in the commutation, and bound those also; the latter terms are almost straightforward applications of our inductive assumptions. The second step will be performed in the subsequent subsections. 
\medskip

Initially, let us consider the sum of terms in  $\Box_{g^{m-1}}$ that corresponds to the 
Laplacian on the 2-spheres 
$\Th\in (0,\pi), \phi\in [0,2\pi)$, $\rho^{m-1}={\rm fixed}$, $T=\rm fixed $. This operator
 $\Delta_{g^{m-1}}^{\rho^{m-1}, T}$ is defined via:

\[
\Delta_{g^{m-1}}^{\rho^{m-1}, T}v= (^{m-1}g)^{\Th\Th} [\partial^2_{\Th\Th}v-
(^{m-1}\Gamma_{\Th\Th}^\Th) \partial_\Th v+\partial_\Th \gamma^{m-1}\partial_\Th v]. 
\]
(See also the RHS of the above as it appears in \eqref{wave.coords}). 

Note that this operator corresponds to a special sum of terms 
\[
\sum_{A,B=\Th,\phi}(g^{m-1})^{AB} [\partial_{AB}-\Gamma_{AB}^\Th\partial_\Th]
\]
in the last line of \eqref{wave.coords}.
We can then re-express the above operator using $\gamma^{m-1}_{\rm rest}$ instead of 
$\gamma^{m-1}$: 

\beq
\label{Delta.preuse}\begin{split}
&\Delta_{g^{m-1}}^{\rho^{m-1}, T}v= (^{m-1}g^{\Th\Th}) [\partial^2_{\Th\Th}v-
\frac{cos\Th}{sin\Th}\partial_\Th v]+
\frac{1}{2}[(^{m-1}g^{\Th\Th})]^2\partial_\Th (^{m-1}g_{\Th\Th})
 \partial_\Th v+(^{m-1}g^{\Th\Th})\cdot \partial_\Th \gamma^{m-1}_{\rm rest} \cdot 
 \partial_\Th v
 \\&+\sum_{A=T,\rho}[(^{m-1}g^{\Th})]^2\partial_\Th (^{m-1}g_{A\Th})
 \partial_\Th v.
 \end{split}
\eeq
Note that the first term in the RHS is precisely 
$(^{m-1}g)^{\Th\Th}\Delta_{\mathbb{S}^2}v$, where we recall that 
$\Delta_{\mathbb{S}^2}$ is the standard Laplacian 
on the 2-sphere, acting on $\phi$-independent functions. 
We denote the first line in the RHS of the above by
 $\tilde{\Delta} _{g^{m-1}}^{\rho^{m-1}, T}v$.
\newcommand{\tDelta}{\tilde{\Delta}} 
 
We will first  calculate the 
commutation of  
$\tDelta_{g^{m-1}}^{\rho^{m-1}, T}$ with $\DS$. 

We note that the factors $ \partial_\Th \gamma^{m-1}_{\rm rest}$, 
$\partial_\Th (^{m-1}g_{\Th\Th})$ vanish at the two poles $\Th=0,\pi$. To 
take advantage of this, we re-write the terms involving those two factors as: 

\beq
\label{Delta.use}
\begin{split}
&\tDelta_{g^{m-1}}^{\rho^{m-1}, T}v= (^{m-1}g^{\Th\Th}) [\partial^2_{\Th\Th}
v-\frac{cos\Th}{sin\Th}\partial_\Th v]
\\&-
\frac{1}{2}[(^{m-1}g^{\Th\Th})^2 \cdot \frac{\partial_\Th (^{m-1}
g_{\Th\Th})}{sin\Th}]
\cdot  [sin\Th\cdot \partial_\Th v]+[(^{m-1}g^{\Th\Th})\cdot 
 \frac{\partial_\Th \gamma^{m-1}_{\rm rest}}{sin\Th}]
  \cdot [sin\Th \partial_\Th v].
 \end{split}
\eeq

Using this formula, the commutation of $\Delta_{g^{m-1}}^{\rho^{m-1}, T}$
with $\DS$ is easily calculated, in \eqref{Comm.Deltas}  right below. 

\beq
\begin{split}
\label{Comm.Deltas}
&[ \Delta_{\mathbb{S}^2},\tDelta_{g^{m-1}}^{\rho^{m-1}, t}](v)=\Delta_{\mathbb{S}^2}(^{m-1}g)^{\Th\Th}\cdot \DS v+2\partial_\Th(^{m-1}g^{\Th\Th})\cdot\partial_\Th \DS v
\\&-\DS [\frac{1}{2}(^{m-1}g^{\Th\Th})^2 \frac{\partial_\Th (^{m-1}g_{\Th\Th})}{sin\Th}]
 \cdot (sin\Th \cdot \partial_\Th )v
 \\&-\partial_\Th [(^{m-1}g^{\Th\Th})^2 \frac{\partial_\Th 
 (^{m-1}g_{\Th\Th})}{sin\Th}]
 \cdot \partial_\Th(sin\Th \cdot \partial_\Th )v
-[\frac{1}{2}(^{m-1}g^{\Th\Th})^2 \frac{\partial_\Th (^{m-1}g_{\Th\Th})}{sin\Th}]
 \cdot [\DS,(sin\Th \cdot \partial_\Th ](v)
\\&+
\DS[(^{m-1}g^{\Th\Th})\cdot 
 \frac{\partial_\Th \gamma^{m-1}_{\rm rest}}{sin\Th}]
  \cdot [sin\Th \partial_\Th v]+2\partial_\Th[(^{m-1}g^{\Th\Th})\cdot 
 \frac{\partial_\Th \gamma^{m-1}_{\rm rest}}{sin\Th}]
  \cdot \partial_\Th [sin\Th \partial_\Th v]
  \\&+
  [(^{m-1}g^{\Th\Th})\cdot 
 \frac{\partial_\Th \gamma^{m-1}_{\rm rest}}{sin\Th}]
  \cdot [\DS,sin\Th \partial_\Th]( v).
\end{split}
\eeq
We note that $[\DS,sin\Th \partial_\Th]=2cos\Th\cdot \DS$. Thus the last factors in the 
last two lines can be replaced by $2cos\Th\cdot\DS v$. We can iterate the above formula 
and then act by $\partial_T$ repeatedly on the resulting formulas. We  use this 
formula repeatedly  to calculate and bound:
\[
[\opartial^I, \tDelta^{\rho^{m-1},T}_{g^{m-1}}](\gamma^m_{\rm rest})
\]
in $L^2({\rm sin}\Theta d\Theta dT)$. 

We next wish to see how $\DS$ commutes with the remaining part of $\Box_{g^{m-1}}$.
Again recall formula \eqref{wave.coords} (denoting $\gamma^m$ by $v$); 
 the terms that require special treatment due to their singular behaviour at the poles are precisely:

$\Delta_{g^{m-1}}^{\rho^{m-1}, t}v$ (whose ``main part'' 
$\tDelta_{g^{m-1}}^{\rho^{m-1}, t}v$ was already calculated above) 
\emph{and} the terms $(^{m-1}\Gamma_{TT}^\Th)\partial_\Th v$, 
$(^{m-1}\Gamma_{T\Th}^\Th)\partial_\Th v$, and the terms in the third line of 
\eqref{wave.coords}. 
These will be treated directly below. For the \emph{rest} 
of the terms in \eqref{wave.coords} the commutation formula 
is straightforward: We use the commutation of $\partial_{\rho^{m-1}}$, $\partial_T$
 with $\partial_\Th$. Thus the commutation terms generated can be calculated 
from the Leibnitz rule, and in particular are \emph{not} singular at the poles. 
These commutations will give rise to terms that are written out (in generic notation)
in \eqref{boxpartialIgammam} directly below. 
\medskip

Let us now calculate: 
\beq
\label{Delta-comm}
[\DS, (^{m-1}g^{TT})(^{m-1}\Gamma_{TT}^\Th)\partial_\Th], 
[\DS, (^{m-1}g^{T\Th})(^{m-1}\Gamma_{T\Th}^\Th)\partial_\Th].
\eeq
We will explain how the terms in the third line of \eqref{wave.coords}, as well as the terms in the second line of \eqref{Delta.use} can be computed
 and bounded by the same argument we present for the ones here.

For both these terms, we observe that the coefficient of $\partial_\Th$ 
of the second operator in $[\cdot, \cdot]$ \emph{vanishes} at the two poles.

To make use of this, we re-write:

\beq
\label{mult.div}
\begin{split}
&(^{m-1}g^{TT})(^{m-1}\Gamma_{TT}^\Th)\partial_\Th=
[(^{m-1}g^{TT})(^{m-1}\Gamma_{TT}^\Th)(sin\Th)^{-1}]\cdot
 (sin\Th \partial_\Th), 
 \\&(^{m-1}g^{T\Th})(^{m-1}\Gamma_{T\Th}^\Th)\partial_\Th
= [(^{m-1}g^{T\Th})(^{m-1}\Gamma_{T\Th}^\Th)\cdot (sin\Th)^{-1}]\cdot (sin\Th \partial_\Th)
 \end{split}
\eeq 

In particular given our derived bounds on the Christoffel symbols (as well as the Hardy inequality),
  the RHS is uniformly bounded at orders $|I'|\le {\rm low}-2$ by 

\[
B(\rho^{m-1})^{-2-\frac{1}{8}}.
\]
At each order beyond that, the power of $\rho^{m-1}$ becomes more singular by
 $-1/4$.

With these formulas in hand, we bound the first term in \eqref{Delta-comm} as follows: 

\beq
\begin{split}
\label{Delta.other.comm}
&\|[\DS, (^{m-1}g^{TT})(^{m-1}\Gamma_{TT}^\Th)\partial_\Th v]\|_{L^2(sin\Th d
\Th dt)}\le 
\\&\|\DS[(^{m-1}g)^{TT}(^{m-1}\Gamma_{TT}^\Th)\cdot (sin\Th)^{-1}]
\cdot (sin\Th\cdot\partial_\Th v) \|_{L^2(sin\Th d\Th dt)}
\\&+2\|\partial_\Th[(^{m-1}g)^{TT}(^{m-1}\Gamma_{TT}^\Th)\cdot (sin\Th)^{-1}]
\cdot \partial_\Th (sin\Th\cdot\partial_\Th v) \|_{L^2(sin\Th d\Th dt)}
\\& +2\| [(^{m-1}g)^{TT}(^{m-1}\Gamma_{TT}^\Th)\cdot (sin\Th)^{-1}]
\cdot \DS v \|_{L^2(sin\Th d\Th dt)}.
\end{split}
\eeq 
(In the very last term we applied the commutation formula: 
\[
[\DS, sin\Th\partial_\Th]=2cos\Th\cdot \DS). 
\]

We note that in view of Lemma \ref{a.oa.bds}:
\beq
\label{Del.by.E}
\|\DS v\|_{L^2(sin\Th d
\Th dt)[\Sigma_{\rho^{m-1}}]}\le (\rho^{m-1})^{1-\frac{1}{8}} E[\partial v].
\eeq

 This calculation can be iteratively applied to calculate 
 $[\opartial^I, (^{m-1}g^{TT})(^{m-1}\Gamma_{TT}^\Th)\partial_\Th]
 \gamma^m_{\rm rest}$ and to bound this term in $L^2(sin\Th d\Th dt)$ by the RHSs of 
 the inequalities in Proposition \ref{opt.RHSs}. 
An entirely analogous calculation (and derivation of resulting bounds) 
can be performed on 
\[
\|\partial^{I'}[(sin\Th)^{-1}\cdot (^{m-1}g^{T\Th})(^{m-1}\Gamma_{T\Th}^\Th)]
\|_{L^2(\mathbb{S}^2)}.
\]
 
For the two terms in the third line of \eqref{wave.coords} we use the same argument, ``creating'' the vector field $sin\Th \partial_\Th$ by multiplying each expression by $1=\frac{{\rm sin}\Th}{\sin\Th}$, and using the above argument for commutations. We also use the vanishing of $e^{m-1}_2(\rho^{m-1})$ at the poles 
to bound expressions  
\[
\| \partial^I[\frac{e^{m-1}_2\rho^{m-1}}{sin\Th}]\|_{L^2}
\]
 by 
 \[
 \| \partial^I[\partial_\Th e^{m-1}_2\rho^{m-1}]\|_{L^2}.
 \]
 These terms are in fact easier to handle, since they vanish identically for $\rho^{m-1}\le \e/2$.

 %-\sum_{A,B=T,\Th,\phi}[\opartial^I,(^{m-1}g^{AB})(^{m-1}\Gamma_{AB}^\Th) \partial_\Th] \gamma^m_{\rm rest}

 What remains is to find all the \emph{other} commutation terms in 
$[\opartial^I, \Box_{g^{m-1}}]$. These we write out (schematically, using the 
notational conventions introduced earlier in this section) in the RHS of the next
 equation, from the third line of the RHS onwards:\footnote{We write $|I|=l$ below, for short. }

\begin{align}\label{boxpartialIgammam}
&\square_{^{m-1}g}\opartial^I\gamma^m_{\rm rest}=
%\notag&+
%2\bigg[\opartial^I, e^{m-1}_2(\rho^{m-1})((a^{m-1})^{2\Th})(\frac{2M}{r}-1)^{\frac{1}{2}}
%\partial_\Th
%\partial_{\rho^{m-1}}\bigg] \gamma^m +2\bigg[\opartial, e^{m-1}_2(\rho^{m-1})(\partial_{\rho^{m-1}}(a^{m-1})^{2\Th})(\frac{2M}{r}-1)^{\frac{1}{2}}
%\partial_\Th\bigg] \gamma^m
- \sum_{I_1\cup I_2=I,\,|I_2|<l}\bigg[\partial^{I_1}\bigg(\text{tr}_{^{m-1}\overline{g}}
K^{m-1}\bigg)e_0\partial^{I_2}\gamma^m_{\rm rest}\\
\notag&+\opartial^{I_1}
[[O(e^{m-1}(\rho^{m-1}))\cdot O(\sqrt{r}) +O(\sqrt{r})\cdot [a^{m-1}_{T1}a^{m-1}_{T2}\cdot e^{m-1}_1re^{m-1}_2r]]\partial^{I_2}
\partial_{\rho^{m-1}}](\gamma^m_{\rm rest})\bigg]\\
\notag& - [\overline{\partial}^I,\sum_{A,B=T,\Th, (A,B)\ne (\Th,\Th)}(g^{m-1})^{AB}\partial_{AB}]\gamma^m_{\rm rest}+
\sum_{A,B=T,\Th,\phi}[\opartial^I,(^{m-1}g^{AB})(^{m-1}\Gamma_{AB}^T) \partial_T] \gamma^m_{\rm rest}\\
\notag& -[\opartial^I, \Delta_{g^{m-1}}^{\rho^{m-1}, t} ]
\gamma^m_{\rm rest} -[\opartial^I,\Box^{\rm mixed}_{g^{m-1}}]\gamma^m_{\rm rest} -\opartial^I(\Box_{^{m-1}g}\gamma^S).
\end{align}

We note that the  last 
term in the last line have been controlled already by virtue of Lemma 
\ref{lem:inhomog}.  
Let us denote by $\widetilde{RHS}[\eqref{boxpartialIgammam}$] the RHS of the equation 
\eqref{boxpartialIgammam}, except for the last term, and the 
sum of terms $\tDelta_{g^{m-1}}^{\rho^{m-1}, t}$ in
 $\Delta_{g^{m-1}}^{\rho^{m-1}, t}$. 
We claim: 

\begin{proposition}
\label{opt.RHSs}
At step $m$ assume all inductive claims concerning the REVESNGG parameters of step
 $m-1$ hold true. Also, 
for each order $|I|\le s-4$ assume that the inductive claims on $\gamma^m$ for orders
 $|I'|<|I|$ hold true.

Then on each level set $\Sigma_\tau$ of $\rho^{m-1}$ the $L^2_{t,\theta}$
 norm of $\widetilde{RHS}[\eqref{boxpartialIgammam}]$ (with respect 
 to the volume form $sin\Theta d\Theta d T d\phi$) is bounded as follows: 
 For $|I|\le \rm low$:
  
 \beq
 \| \widetilde{RHS}[\eqref{boxpartialIgammam}]\|_{L^2_{t,\theta}[\tau]}\le 
  B^2 C \tau^{-3+\frac{1}{4}}+ B\tau^{-1+\frac{1}{2}-DC\eta }\cdot 
\sqrt{ E[\opartial^I\gamma^m_{\rm rest}]}.
 \eeq
While for ${\rm low}+1\le |I|\le s-4$, for $h={|I|-{\rm low}}$: 
 \beq
 \| \widetilde{RHS}[\eqref{boxpartialIgammam}]\|_{L^2_{t,\theta}[\tau]}\le 
 [8(C\eta)^2] \tau^{-\frac{3}{2}-\frac{h}{4}}+
  B^2 C \tau^{-1-\frac{1}{4}-\frac{h}{4}}+ B\tau^{-1+\frac{1}{2}-DC\eta }\cdot 
\sqrt{ E[\opartial^I\gamma^m_{\rm rest}]}.
 \eeq 
 \end{proposition}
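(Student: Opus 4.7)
\textbf{Proof proposal for Proposition \ref{opt.RHSs}.}

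My plan is to bound each of the four distinct classes of commutator terms appearing in $\widetilde{RHS}[\eqref{boxpartialIgammam}]$ separately, in each case applying the Leibniz rule to split $\overline{\partial}^I = \partial^{I_1}\partial^{I_2}$ between the ``coefficient'' (built from step $m{-}1$ geometric parameters) and the ``field'' factor built from derivatives of $\gamma^m_{\rm rest}$. For every split I then estimate one of the two factors in $L^\infty_{t,\theta}$ (via the Sobolev embedding $H^2\hookrightarrow L^\infty$, permitted because $s\gg 8$, together with the inductive pointwise bounds of Lemma \ref{lem:Linftyest}) and the other in $L^2_{t,\theta}$, invoking either our inductive $K^{m-1},a^{m-1},(\Gamma^{m-1})$-bounds from Lemmas \ref{a.oa.bds}, \ref{lem:oa.bds}, \ref{Christ.bounds}, \ref{Christof.off.diag} and \eqref{tru.explicit}, or the already-validated $\gamma^m$ bounds at derivative-orders strictly less than $|I|$, via Lemma \ref{lem:gammam.bds} (which produces the crucial $r^{1/4}$ gain for the spatial components, i.e.\ the AVTD descent). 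The product inequality \eqref{prodcut.ineq} reduces each schematic term to a sum of ``low $\times$ high'' and ``high $\times$ low'' pieces, and the Hardy-type replacements of $(\sin\Theta)^{-1}$ factors are handled by the same trick already used in the derivation of \eqref{Delta.other.comm} (multiply and divide by $\sin\Theta$ and absorb $\sin\Theta\,\partial_\Theta$ into $\Delta_{\mathbb S^2}$).

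The heart of the estimate lies in the first sum in \eqref{boxpartialIgammam}, namely $\sum_{I_1\cup I_2=I,\,|I_2|<|I|}\partial^{I_1}(\mathrm{tr}_{\overline g^{m-1}}K^{m-1})\cdot e_0\partial^{I_2}\gamma^m_{\rm rest}$, because the asymptotically-CMC leading part $-\tfrac{3}{2}\sqrt{2M}r^{-3/2}$ is \emph{constant in $(t,\Theta)$} and thus annihilated by $\partial^{I_1}$ as soon as $|I_1|\ge 1$; only the remainder $\mathrm{tr}_{\overline g^{m-1}}u^{m-1}$ survives, on which we have the crucial $r^{1/4}$-improved bound \eqref{tru.explicit}. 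Concretely I will split into the three sub-cases $|I_2|=|I|-1$, $1\le|I_2|\le|I|-2$, and $|I_2|=0$. In the first case $\partial^{I_1}(\mathrm{tr}\,u^{m-1})$ has just one derivative and Sobolev-embeds into $L^\infty$, producing the factor $B\tau^{-3/2+1/4}$ that multiplies $\|e_0\partial^{I_2}\gamma^m_{\rm rest}\|_{L^2}\le \tau^{-3/4}\sqrt{E[\overline{\partial}^I\gamma^m_{\rm rest}]}$ --- exactly the $B\tau^{-1+1/2-DC\eta}\sqrt{E[\overline{\partial}^I\gamma^m_{\rm rest}]}$ term in the conclusion. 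In the intermediate case both factors lie in $H^{|I|-1}$ and the product inequality combined with \eqref{tru.explicit} yields $\lesssim B^2 C \tau^{-3+1/4}$ at low orders; at high orders $|I|>{\rm low}$ one must instead use \eqref{inductiontrKlow} on the $\mathrm{tr}K^{m-1}$ factor combined with the higher-order bounds on $e_0\gamma^m_{\rm rest}$ coming from \eqref{inductiongammalow} already verified at orders $<|I|$, producing precisely the $8(C\eta)^2\tau^{-3/2-h/4}$ term (the constant $8$ absorbs the Leibniz combinatorics and the factors of $\sqrt{2M}$). The sub-case $|I_2|=0$ is even easier: the field factor is $L^\infty$-bounded by Lemma \ref{lem:Linftyest} and one just uses the relevant $\dot H^l$-bound on $K^{m-1}$.

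The remaining sums in \eqref{boxpartialIgammam} --- the $e^{m-1}(\rho^{m-1})\cdot a^{m-1}$ pieces, the ``genuinely spatial'' commutator $[\overline{\partial}^I,\sum(g^{m-1})^{AB}\partial_{AB}]$ with $(A,B)\ne(\Theta,\Theta)$, the $(\Gamma^{m-1})^T_{AB}\partial_T$ commutator, the non-$\tilde\Delta$ part of $[\overline{\partial}^I,\Delta^{\rho^{m-1},T}_{g^{m-1}}]$, and $[\overline{\partial}^I,\square^{\rm mixed}_{g^{m-1}}]$ --- are all strictly less singular in $\tau$ by $\tau^{1/4}$ (or better) once one plugs in the Christoffel-symbol bounds \eqref{Christ.lower}--\eqref{Christof.off.diag.low}: the $\Gamma$-coefficients carry a manifest $\tau^{-2-1/4+1/8}$ behavior compared to the $\tau^{-3}$ of the first sum, and the mixed/inverse-metric cross-components produce additional $\tau^{1/2}$ or better (cf. Lemma \ref{a.oa.bds} and the cross-term bounds \eqref{mixed.metr.low}, \eqref{mixed.metr.inv.low}). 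Each of these contributions is therefore swallowed into the $B^2 C\tau^{-3+1/4}$ (low orders) or $B^2 C\tau^{-1-1/4-h/4}$ (higher orders) terms of the conclusion.

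The single genuine obstacle is bookkeeping the \emph{higher-order} case where at or near top order some derivatives fall on $a^{m-1}_{T1}$, which we control only in $\tilde H^{s-3}$ and not in the full $H^{s-3}$: because the forbidden direction is $(\partial_T)^{s-3}$ and our multi-indices $I=(2k_1,k_2)$ always contain at least one $\Delta_{\mathbb S^2}$ (hence at least one $\partial_\Theta$), one verifies that the $(T,\dots,T)$ pattern is never triggered in \eqref{boxpartialIgammam}; alternatively, the one missing direction is recovered through the interpolation step of Lemma \ref{lem:interpolation} and the singular-weight bounds \eqref{Christ.top}. With these bookkeeping checks done, summing the contributions produces exactly the two claimed inequalities.
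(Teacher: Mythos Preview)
Your overall strategy matches the paper's: split via Leibniz and the product inequality, exploit the asymptotically-CMC cancellation for the $\mathrm{tr}K^{m-1}$ term, and feed in the inductive bounds on the step-$(m{-}1)$ geometry together with the already-verified lower-order bounds on $\gamma^m$. However, there are two genuine gaps in your attribution of terms.

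First, you misidentify the origin of the $\sqrt{E[\overline{\partial}^I\gamma^m_{\rm rest}]}$ contribution. In the $\mathrm{tr}K^{m-1}$ sum every summand has $|I_2|<|I|$, so $e_0\partial^{I_2}\gamma^m_{\rm rest}$ lies in a \emph{strictly lower-order} energy that is already controlled by the finite-induction hypothesis; hence that whole sum contributes only to the fixed $B^2C\tau^{-3+1/4}$ term. (Your inequality $\|e_0\partial^{I_2}\gamma^m_{\rm rest}\|_{L^2}\le\tau^{-3/4}\sqrt{E[\overline{\partial}^I\gamma^m_{\rm rest}]}$ has no basis, and even if it held the resulting coefficient $B\tau^{-5/4}$ is more singular than the $B\tau^{-1/2-DC\eta}$ claimed.) The true source of the $\sqrt{E}$ term is the spatial second-derivative commutator $\partial^{I_1}(g^{m-1})^{AB}\cdot\partial^{I_2}\partial^2_{AB}\gamma^m_{\rm rest}$ with $|I_1|=1$, $|I_2|=|I|-1$: there $|I|+1$ coordinate derivatives fall on $\gamma^m_{\rm rest}$, and the paper re-expresses one $\partial_A$ as $a^{m-1}_{Ai}\,\overline{e}^{m-1}_i$ to obtain $e^{m-1}_i\partial^{I'}\gamma^m_{\rm rest}$ with $|I'|=|I|$, which sits in the current energy. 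You instead claim these spatial commutator terms are ``swallowed into $B^2C\tau^{-3+1/4}$'', which is impossible since they depend on the not-yet-bounded $E[\overline{\partial}^I\gamma^m_{\rm rest}]$.

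Second, you omit the commutator of $\overline{\partial}^I$ with the $\partial_{\rho}^2$ principal term in $\square_{g^{m-1}}$. This produces factors $\partial^{I_2}\partial_\rho^2\gamma^m_{\rm rest}$ with $|I_2|\le|I|-1$, and the energy only controls one $e_0\sim\partial_\rho$ derivative of $\partial^{I'}\gamma^m_{\rm rest}$. The paper closes this by invoking the wave equation \eqref{inhomog.wave.it} itself to re-express $\partial_\rho^2\gamma^m_{\rm rest}$ in terms of quantities with at most one $\partial_\rho$; this step produces both a fixed contribution and an additional $\sqrt{E}$-dependent one, and is missing from your outline.

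Finally, your last-paragraph concern about the $\tilde H^{s-3}$ restriction on $a^{m-1}_{T1}$ is unnecessary here: since $|I|\le s-4$, at most $s-4$ derivatives fall on any geometric coefficient, and all of these are controlled in the full $H^{s-4}$.
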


\emph{Proof of Proposition \ref{opt.RHSs}:} 
 We first show our claim for the lower 
orders $|I|=l\le \rm low$: 

The most important  term in the RHS of \eqref{boxpartialIgammam}
is: 

\beq
\begin{split}
\label{3.terms}
&\sum_{I_1\cup I_2=I,\,|I_2|<l}\partial^{I_1}\text{tr}_{^{m-1}\overline{g}}K^{m-1}e_0\partial^{I_2}\gamma^m_{\rm rest}.
\end{split}
\eeq

To control this term  we use the crucial fact that 
$\text{tr}_{^{m-1}\overline{g}}K^{m-1}$ is asymptotically constant along 
$\Sigma_\tau$ to leading order, as $r\rightarrow0$, as captured in 
(\ref{trKheur}),(\ref{inductiontrKopt});
this yields: 
\begin{align}\label{partialItrKm-1}
\partial^{I_1}\text{tr}_{^{m-1}\overline{g}}K^{m-1}=O\big(\chi_{[\frac{\epsilon}{2},\frac{3\epsilon}{2}]}\|\partial^{I_1}{^{m-1}r}_*\|\big)+\partial^{I_1}\text{tr}_{^{m-1}\overline{g}}u^{m-1},\qquad 1\leq|I_1|\leq s-3-4c,
\end{align}
where $\chi_{[\frac{\epsilon}{2},\frac{3\epsilon}{2}]}(\rho^{m-1})$ is the 
characteristic function of the interval $[\frac{\epsilon}{2},\frac{3\epsilon}{2}]$. 
This implies that
\begin{align}\label{partialIgammambulkest3}
\sum_{1\leq |I_1|\leq l}\|\partial^{I_1}\text{tr}_{^{m-1}\overline{g}}
K^{m-1}\|_{L^2_{t,\theta}[\Sigma_\tau]}\le 
\frac{3B}{\tau^{1+\frac{1}{4}}}. 
\end{align}
for every $l\leq s-3-4c$. We thus observe that this term 
 will contribute a 
below-borderline bulk  term in the final energy estimate, since using our inductive
 assumption and the product inequality: 

\beq
\begin{split}
&\| \sum_{I_1\cup I_2=I,\,|I_2|<l}\partial^{I_1}\text{tr}_{^{m-1}\overline{g}}K^{m-1}
e_0\partial^{I_2}\gamma^m_{\rm rest}
 \|_{L^2_{t,\theta}[\tau]}
 \\&\le C_{|I|}\bigg[ \|\partial{\rm tr}_{\overline{g}^{m-1}}K^{m-1}\|_{L^\infty[\tau]}
 \cdot  \|e_0\gamma^m_{\rm rest}  \|_{\dot{H}^{l-1}[\tau]} +
\|\partial{\rm tr}_{\overline{g}^{m-1}}K^{m-1}\|_{\dot{H}^{l-1}[\tau]}
 \cdot  \|e_0\gamma^m_{\rm rest}  \|_{L^\infty[\tau]} 
 \bigg]\le 
 \frac{4B}{\tau^{2+\frac{3}{4}}}.
 \end{split}
\eeq
We again stress  here the importance of the asymptotically CMC property of 
the level sets of $r$: 
Had this cancellation in $\partial {\rm tr} K^{m-1} $ not been present we would have not been able to 
obtain the desired energy estimates, since we would have obtained a 
borderline term 
already at the lower order energies. 
\medskip

We next consider the commutation of $\opartial^I$ with 
$\Box^{\rm spatial}_{g^{m-1}}$ and with $\Box^{\rm mixed}_{g^{m-1}}$, and 
bound the resulting expressions. We distinguish the part
 $\Delta_{g^{m-1}}^{\rho^{m-1},T}$ in $\Box^{\rm spatial}_{g^{m-1}}$ which involves 
 derivatives in the $\Theta$-direction--thee terms can be bounded 
  using the formulas \eqref{Comm.Deltas} we already derived, and satisfy our required bounds, so we can consider the rest of the commutation terms.

We first consider the following terms  in the RHS of \eqref{boxpartialIgammam}:
\beq
\begin{split}
\label{comm.terms.Lapl}
&\sum_{I_1\bigcup I_2=I, |I_2|<|I|} \sum_{A,B=T,\Th,\rho, C=T,\rho }\partial^{I_1} 
(g^{AB}\Gamma_{AB}^C) 
\partial^{I_2}\partial_C \gamma^m_{\rm rest}, 
\sum_{I_1\bigcup I_2=I, |I_2|<|I|-1} \sum_{A,B=T,\rho}\partial^{I_1} g^{AB} 
\partial^{I_2}\partial_{AB} \gamma^m_{\rm rest}, 
\\&\sum_{I_1\bigcup I_2=I, |I_2|=|I|-1} \sum_{A,B=T,\rho}\partial^{I_1} g^{AB} 
\partial^{I_2}\partial_{AB} \gamma^m_{\rm rest}.
\end{split}
\eeq

We claim that the first two terms in the above are bounded in 
$L^2_{{\rm sin}\Th d\Th dT}[\Sigma_{\rho^{m-1}=\tau}]$ by virtue of our inductive assumptions by 
$B\tau^{-1-\frac{1}{4}}$. We claim that the second term is bounded in the same space by 
$B\tau^{-1-\frac{1}{4}} \sqrt{E[\opartial^I\gamma^m_{\rm rest}]}$.

These two claims follow readily from the inequality: 

\beq
\label{cvxty.ineq}
\begin{split}
&\|\sum_{I_1\bigcup I_2=I, |I_2|<|I|} \sum_{A,B=T,\Th,\rho, C=T,\rho}\partial^{I_1} 
(g^{AB}\Gamma_{AB}^C) 
\partial^{I_2}\partial_C \gamma^m_{\rm rest}\|_{L^2[\Sigma_{[\rho^{m-1}=\tau]}]}
\\&\le 
\sum_{A,B=T,\Th,\rho, C=T,\rho}\|\partial^{I} 
(g^{AB}\Gamma_{AB}^C) 
\|_{L^2[\Sigma_{\rho^{m-1}=\tau}]}\cdot 
\|\partial_C \gamma^m_{\rm rest}\|_{L^\infty[\Sigma_{[\rho^{m-1}=\tau]}]}
\\&+ \sum_{|I_1|=1, |I_2|=|I|-1}\sum_{A,B=T,\Th,\rho, C=T,\rho}\|\partial^{I_1}
(g^{AB}\Gamma_{AB}^C) 
\|_{L^\infty[\Sigma_{\rho^{m-1}=\tau}]}\cdot 
\|\partial^{I_2}\partial_C \gamma^m_{\rm rest}
\|_{L^2_{[\Sigma_{\rho^{m-1}=\tau}]}}.
\end{split}\eeq

Now, recall the bounds on the Christoffel symbols that we have derived. 
Recall also 
the Sobolev embedding, which bounds the $L^\infty$ norms above by the 
corresponding 
$H^2$ bound on those quantities; thus combining these estimates we derive 
that the 
above quantity is bounded by $B\tau^{-1-\frac{1}{4}}$. 

Let us now consider the third term in \eqref{comm.terms.Lapl}. In this 
term, the 
total number of derivatives on $\gamma^m_{\rm rest}$ \emph{equals} the 
number of 
derivatives that we are trying to bound (in our claimed energy bound). On 
the other 
hand, using that $|I_2|=l-1$ this term can be re-expressed and bounded as 
follows: 

\beq
\begin{split}
&\|\sum_{A,C=T,\rho}\partial g^{AC} \cdot \partial_{AC}\partial^{I_2}
\gamma^m_{\rm rest}\|_{L^2}
\\&\le 
\sum_{A,C=T,\rho}\|\partial g^{AC} \|_{L^\infty [\Sigma_{\rho^{m-1}=\tau}]}\cdot 
\sum_{i=1,2}\| (a^{m-1})^{iA}\|_{L^\infty [\Sigma_{\rho^{m-1}=\tau}]}\cdot \|
e^{m-1}_i\partial_{C}\partial^{I_2}
\gamma^m_{\rm rest}\|_{L^2 [\Sigma_{\rho^{m-1}=\tau}]}
\\&\le
B\tau^{-1-\frac{1}{4}} 
E[\partial^{I'}\gamma^m_{\rm rest}],
\end{split}
\eeq
where $|I'|=l$. 
The RHS bound is achieved when $C=A=\Th, i=2$. When either of 
$A,C$ takes the value $T$ and/or $i$ takes the value $1$ the bound is much 
less singular (in terms of powers of $\tau$). 
\medskip

Now, the commutation terms in $[\opartial^I,\big{(}\Delta^{\rho^{m-1},T}_{g^{m-1}}-\tilde{\Delta}^{\rho^{m-1},T}_{g^{m-1}}\big{)}]$, 
$[\opartial^I,g^{TT} \Gamma^{\Th}_{TT}\partial_\Th]$, 
$[\opartial^I, g^{T\Th}\Gamma^\Th_{T\Th}\partial_\Th]$ satisfy the bounds in our 
Proposition by the same simple application of the 
product inequality, by invoking the formulas we derived for
these terms using the formulas \eqref{Comm.Deltas}, \eqref{Delta.other.comm}, 
our inductive assumptions on the components of $g^{m-1}$ and the inductive assumptions 
on the lower-order derivatives of $\gamma^m$.

We can now derive $L^2$ bounds to the rest of the terms in the RHS of 
\eqref{boxpartialIgammam}. The remaining terms that were not treated
 are those commutation terms that arise from 
$\opartial^I$ acting on $\Box_{g^{m-1}}\gamma^m_{\rm rest}$ (see 
\eqref{wave.coords}), when some of 
the derivatives hit terms in the first two lines, (except for 
${\rm tr}_{\overline{g}^{m-1}}K^{m-1}$ which was the first to be considered 
above). Among the remaining terms, all terms involving \emph{one}
 derivative $\partial_{\rho^{m-1}}$ or $e^{m-1}_i(\rho^{m-1})$ 
are straightforwardly bounded as claimed, invoking the bounds 
in Lemma \ref{lem:e1re2r}
on $e^{m-1}_i(\rho^{m-1})$. 

We are left with one term: 
The  term on the RHS, 
 involving \emph{two} derivatives 
$\partial^2_{\rho^{m-1}\rho^{m-1}}\gamma^m_{\rm rest}$. 
\medskip

The first of these terms yields terms: 

\[
\partial^{I_1}\partial^2_{\rho^{m-1}\rho^{m-1}}\gamma^m_{\rm rest}\partial^{I_2}\bigg[(\frac{2M}{r}-1)
[1-\partial_r\chi(r)(r^{m-1}_*-\e)]^2\bigg],
\]
where $I_1\bigcup I+2=I$, $|I_1|\le |I|-1$. 
These terms do not immediately fall under our inductive assumptions since they involve two derivatives $\partial_\rho$.  	
For those  terms 
we invoke the inhomogenous wave equation \eqref{inhomog.wave.it} and the expression \eqref{wave.coords} for the wave operator
 to express these derivatives in terms of terms involving at most one $\partial_{\rho^{m-1}}$ derivatives. Our desired bound then follows from the already-derived bounds on the RHS of the resulting equation.  
In particular we obtain the following bounds, for $|I|\le s-3-4c$: 

\beq
\|\opartial^{I_1} \partial_{\rho^{m-1}}^2\gamma^m_{\rm rest}
\|_{L^2[\Sigma_{\rho^{m-1}=\tau}]}\le B C\eta \tau^{-2}+C\eta 
\tau^{-\frac{1}{8}}\sqrt{E[\partial^I\gamma^m_{\rm rest}]}, |I_1|=|I|-1.
\eeq
While for $|I|=k\in \{s-3-4c+1, \dots, s-4\}$:

\beq
\|\opartial^{I_1} \partial_{\rho^{m-1}}^2\gamma^m_{\rm rest}
\|_{L^2[\Sigma_{\rho^{m-1}=\tau}]}\le B C\eta \tau^{-2+\frac{k-low}{2}}
+C\eta 
\tau^{-\frac{1}{8}}\sqrt{E[\opartial^I\gamma^m_{\rm rest}]}, |I_1|=|I|-1.
\eeq
Thus these commutation terms also satisfy the required bounds. 
\medskip

Having controlled the commutation terms in the norm $L^2[{\rm sin\Th}d\Th dT]$ 
we can now derive our inductive claims on $\opartial^I\gamma^m_{\rm rest}$, at lower, and higher, and top orders:

\subsection{Lower order energy estimates: $E[\partial^I\gamma^m_{\rm rest}]$, $|I|\leq s-3-4c$}\label{sec:bott-lowenestgammam}
%

%Let us denote by $RHS[\eqref{boxpartialIgammam}$] the RHS of the equation \eqref{boxpartialIgammam}, except for the last line. 
%We claim: 

%\begin{lemma}
%\label{opt.RHSs}
%On each level set $\Sigma_\tau$ of $\rho^{m-1}$ the $L^2_{t,\theta}$
% norm of $RHS[\eqref{boxpartialIgammam}]$ (with respect 
% to the volume form $sin\theta d\theta d t d\phi$) is bounded by 
% $B^2 C \tau^{-1+\frac{1}{4}}$, for some fixed large number $B>0$ 
% that depends only on the number of terms in the RHS of the above, and 
% the Sobolev embedding constant. In particular for every 
% $\tau\in (0,2\e)$:
% \beq
% || RHS[\eqref{boxpartialIgammam}]||_{L^2_{t,\theta}[\tau]}\le 
%  B^2 C \tau^{-1-\frac{1}{4}}+ B\tau^{-1-DC\eta }\cdot 
%  E[\opartial^I\gamma^m_{\rm rest}].
% \eeq
% \end{lemma}

%We will prove this momentarily, inside the proof of the next proposition.
 
%
We summarize the energy estimate for $\partial^I\gamma^m_{\rm rest}$ (which proves our inductive step at the lower orders), 
$|I|\leq s-3-4c$ in the next proposition.
\begin{proposition}\label{prop:gammambotlowenineq}
If the inductive assumptions in \S\ref{Indhyp} on the metric $g^{m-1}$ 
hold true, then the following energy inequality on level sets 
$\{\rho^{m-1}=\rho\}$ is valid, for some universal constant $B>0$, and for
 the function $O(\dots)$ satisfying the bounds in 
\eqref{Oxpded}:
\beq
\label{gammambotlowenineq}
\begin{split}
&\sum_{|I|\leq s-3-4c}\{\rho^3(1+O((\rho^{m-1})^{1/4}))E[\partial^I\gamma^m_{\rm rest}]
\}
(\Sigma_{\rho^{m-1}=\rho})\leq\sum_{|I|\leq s-3-4c}(1+O(\e^{1/4}))
\epsilon^3E[\partial^I\gamma^m_{\rm rest}(\epsilon,t,\theta)]
\\&+\int^\epsilon_\rho
\frac{B^2}{\tau^{1-\frac{1}{4}}}
\sum_{|I|\leq s-3-4c}\tau^3E[\partial^I\gamma^m_{\rm rest}]d\tau
+\int^\epsilon_\rho \frac{5B\cdot  
C\eta}{\tau^{1-\frac{1}{4}}}
\sqrt{\sum_{|I|\leq s-3-4c}\tau^3 E[\partial^I\gamma^m_{\rm rest}]}
d\tau.
\end{split}
\eeq
\end{proposition}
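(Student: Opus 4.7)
The plan is to apply the weighted energy identity \eqref{Stokes.appl} with multiplier $\tau^{3/2}e_0$ to the quantity $v = \opartial^I \gamma^m_{\rm rest}$ for each multi-index $|I|\le s-3-4c$, and then sum over such $I$. By Lemma \ref{lem:interpolation}, control of $E[\opartial^I\gamma^m_{\rm rest}]$ on each $\Sigma_{\rho^{m-1}=\rho}$ for all such $I$ implies the analogous control with $\opartial^I$ replaced by $\partial^I$, so it is enough to run the argument with $\opartial^I$, whose commutation with $\Box_{g^{m-1}}$ avoids coordinate singularities at the poles.

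First I would justify that the weight $f(\tau)=\tau^{3/2}$ is the correct choice: the borderline integrand $\frac{3}{2}\tfrac{\sqrt{2M}}{\tau^{3/2}}f(\tau)$ from the leading part of $\mathrm{tr}_{\overline{g}^{m-1}}K^{m-1}$ (see \eqref{trKheur}) cancels precisely against $e_0 f(\tau)$, thanks to the asymptotically-CMC property established in \eqref{trKheur}-\eqref{tru.explicit}. Hence after integration, the would-be borderline bulk piece in \eqref{Stokes.appl} involving $(e_0 v)^2$ reduces to a below-borderline contribution $\int \tau^{3/2}\cdot O(\tau^{-1-1/4})(e_0v)^2\,\mathrm{vol}$, bounded by $\int_\rho^\epsilon B\tau^{-1-1/4}\cdot \tau^3 E[\opartial^I\gamma^m_{\rm rest}]\,d\tau$. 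The spatial bulk terms split as $-[\tfrac{3}{2}\tau^{-3/2}f-e_0 f]\tfrac{1}{2}|\overline{\nabla}v|^2 - d_1^{m-1}\tau^{-3/2}f(e_1v)^2 - d_2^{m-1}\tau^{-3/2}f(e_2v)^2+$ lower order; combining the $3\tau^{-3/2}f$ piece with the $d_i^{m-1}$ contributions and using the strict sign bounds $|d_1^{m-1}-\tfrac{1}{2}|,|d_2^{m-1}+1|<\tfrac{1}{8}$ coming from \eqref{Ceta.bds}, one checks both $(e_1v)^2$ and $(e_2v)^2$ coefficients are either of favourable sign or absorbable into below-borderline bulk terms of the same form $B\tau^{-1-1/4}\cdot\tau^{3/2}|\overline{\nabla}v|^2$. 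The boundary term on $\{\rho^{m-1}=\rho\}$ produces the factor $\tfrac14(1+O(\rho^{3/8}))\cdot \rho^{3/2}[(e_0v)^2+|\overline{\nabla}v|^2]$ (cf.~\eqref{Q0n}), which together with the volume factor $(2M)^{-1/2}\rho^{3/2}\mathrm{vol}_{\mathrm{Euc}}$ from \eqref{volSigmarho} gives the $\rho^3(1+O(\rho^{1/4}))E[\cdot]$ term on the left.

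Next I would treat the forcing $-\tau^{3/2}e_0v\cdot\Box_{g^{m-1}}v$ appearing in the bulk of \eqref{Stokes.appl}. By \eqref{boxpartialIgammam}, $\Box_{g^{m-1}}\opartial^I\gamma^m_{\rm rest}$ equals $\opartial^I(\Box_{g^{m-1}}\gamma^m_{\rm rest})$ plus commutators; since $\Box_{g^{m-1}}\gamma^m=0$ we get $\opartial^I(\Box_{g^{m-1}}\gamma^m_{\rm rest})=-\opartial^I\Box_{g^{m-1}}\gamma^S$, controlled in $L^2_{t,\theta}[\Sigma_\tau]$ by $B^2\tau^{-3+1/4}$ via Lemma \ref{lem:inhomog}. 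The remaining commutator terms, including those from $\tilde\Delta^{\rho^{m-1},T}_{g^{m-1}}$ handled via \eqref{Comm.Deltas} and the pole-singular Christoffel commutators handled via the $\sin\Theta$-rewriting \eqref{mult.div}-\eqref{Delta.other.comm}, are collectively bounded by Proposition \ref{opt.RHSs}, yielding an $L^2$ bound $\|\widetilde{RHS}[\eqref{boxpartialIgammam}]\|_{L^2[\tau]}\le B^2C\tau^{-3+1/4}+B\tau^{-1/2-DC\eta}\sqrt{E[\opartial^I\gamma^m_{\rm rest}]}$. Multiplying by $\tau^{3/2}|e_0v|$ and integrating, applying Cauchy-Schwarz factorwise on $\Sigma_\tau$, converts the first piece into $\int_\rho^\epsilon B^2 C\tau^{-3+1/4}\cdot\tau^{3/2}\sqrt{E}\,d\tau = \int_\rho^\epsilon \tfrac{B^2C}{\tau^{1-1/4}}\sqrt{\tau^3 E}\,d\tau$, and the second into a bulk term $\int_\rho^\epsilon\tfrac{B^2}{\tau^{1-1/4}}\tau^3 E\,d\tau$, both of the form appearing in \eqref{gammambotlowenineq}.

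The main obstacle, and the subtle step, is ensuring that \emph{every} borderline contribution (i.e. every coefficient that is only $O(\tau^{-1})$ rather than $O(\tau^{-1+\delta})$) either disappears via the asymptotically-CMC cancellation at the top of the multiplier identity or via the favourable-sign mechanism in the $d_i^{m-1}$ terms. Here the quantitative smallness imposed by \eqref{Ceta.bds} and \eqref{e.bd} is used decisively: it guarantees the $(e_iv)^2$ coefficients truly split into a good-sign piece plus a below-borderline remainder absorbed with the $B\tau^{-1-1/4}$ factor from the $u_{ij}^{m-1}$ bound \eqref{tru.explicit}. Once every bulk coefficient is verified to be below borderline, summing over $|I|\le s-3-4c$ and collecting produces precisely \eqref{gammambotlowenineq}, closing the lower-order energy estimate via an application of Lemma \ref{lem:Gron} in the subsequent step of the paper.
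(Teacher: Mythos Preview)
Your proposal is correct and follows essentially the same approach as the paper: apply the weighted multiplier identity \eqref{Stokesv}/\eqref{Stokes.appl} with $f(\tau)=\tau^{3/2}$, use the asymptotically-CMC cancellation for the $(e_0v)^2$ coefficient, drop the spatial bulk pieces via the favourable sign of the $d_i^{m-1}$ coefficients, and close by invoking Proposition \ref{opt.RHSs} and Lemma \ref{lem:inhomog} for the forcing term. One small point worth making explicit: the paper runs the argument by finite induction on $|I|=l$, since Proposition \ref{opt.RHSs} itself presupposes the step-$m$ energy bounds \eqref{inductiongammaopt} for all $|I'|<|I|$; your summation over all $|I|\le s-3-4c$ hides this dependence, so you should state that the argument proceeds order by order.
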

Note that the inductive bound (\ref{inductiongammaopt}) for 
$\gamma^m_{\rm rest} $ follows readily from (\ref{gammambotlowenineq}) and Gronwall's 
inequality in Lemma \ref{lem:Gron}. In particular for 
$F^2(r)=\sum_{|I|\le s-3-4c} r^3 E[\partial^I\gamma^m_{\rm rest}]$, as well as 
$H(\tau)=10B^2\cdot \frac{1}{\tau^{1-\frac{1}{4}}}$
and 
$G(\tau)=  10BC\eta \tau^{-1+\frac{1}{4}}$
we find that \eqref{Gronineq2} implies:

\beq
\begin{split}
&\sqrt{(1+O(\tau^{1/4}))\sum_{|I|\leq s-3-4c}\tau^3 E[\partial^I\gamma^m_{\rm rest}][\Sigma_{\rho^{m-1}}]}
\\&\le e^{10B^2|
\int^\e_{\rho^{m-1}}
\tau^{-1+\frac{1}{4}}d\tau|}\big(\sqrt{(1+O(\e^{1/4}))
\sum_{|I|\le s-3-4c} \e^3 
E(\partial^I\gamma^m_{\rm rest} (\e,t,\theta)}
+\frac{1}{2}\int_{\rho^{m-1}}^\e \frac{B\cdot C 
\eta}{\tau^{1-\frac{1}{4}}}d\tau \big)
\\&\le e^{10B^2\int^\e_{\rho^{m-1}}
\tau^{-1+\frac{1}{4}}d\tau}\big( (1+O(\e^{1/4})) \eta 
+BC \e^{\frac{1}{4}}\eta  \big).
\end{split}
\eeq

Here the terms $O(\tau^{1/4})$, $O(\e^{1/4})$ satisfy the bounds \eqref{Oxpded}. 
In particular the above inequality implies:

\beq
\begin{split}
&\sqrt{\sum_{|I|\leq s-3-4c}\tau^3 E[\partial^I
\gamma^m_{\rm rest}][\Sigma_{\rho^{m-1}}]}\le (1+\e^{1/4}10B^2
)(\frac{C\eta}{4}+BC\e^{1/4}\eta).
\end{split}
\eeq

Thus invoking the bounds  \eqref{e.bd}, \eqref{Oxpded}
we derive that: 
\[
\sqrt{\sum_{|I|\leq s-3-4c}\tau^3 E[\partial^I
\gamma^m_{\rm rest}][\Sigma_{\rho^{m-1}}]}< C\eta, 
\]
as desired. 
So our claim in the lower orders follows, provided we can show Proposition 
 \ref{prop:gammambotlowenineq}. We do this next:

\begin{proof}[Proof of Proposition \ref{prop:gammambotlowenineq}] The Proposition will be proven by 
finite induction on $|I|=l$. So in particular the claimed bounds
\eqref{inductiongammaopt} are assumed to hold for all $\partial^I\gamma^m_{\rm rest}$ with $|I|\le l-1$. 
Recall the identity (\ref{Stokesv}) and set $v=\partial^I\gamma^m_{\rm rest}$, 
$|I|=l\leq s-3-4c$, $f(r)=r^\frac{3}{2}$ to obtain the energy inequality: 
[using also $\|u^{m-1}_{ij}\|_{L^\infty}\leq 5Br^{-1-\frac{1}{4}}$, $\|d_1^{m-1}(t,\theta)-\frac{1}{2}\|_{L^\infty}\leq DC\eta\le \frac{1}{8} $, 
$\|d_2^{m-1}(t,\theta)+1\|_{L^\infty}\leq DC\eta \le \frac{1}{8}$]
\begin{align}\label{partialIgammamenineq}
&\int_{\Sigma_{\rho^{m-1}}}r^\frac{3}{2}Q_{ab}[\partial^I\gamma^m_{\rm rest}](e_0)^bn^a\mathrm{vol}_{\Sigma_{\rho^{m-1}}}-\int_{\Sigma_{{r^{m-1}_*}}}{(^{m-1}r}_*)^\frac{3}{2}Q_{ab}[\partial^I\gamma^m_{\rm rest}](e_0)^bn^a\mathrm{vol}_{\Sigma_{{{r^{m-1}_*}}}}\\
\notag\leq&\int_{\rho^{m-1}}^{\epsilon}\int_{\Sigma_\tau}\Phi^{m-1} \bigg[\frac{\sqrt{2M}}{\tau^\frac{3}{2}}\big[
(DC\eta -2)\tau^\frac{3}{2}(e_1\partial^I\gamma^m_{\rm rest})^2
+(DC\eta -\frac{1}{2})\tau^\frac{3}{2}(e_2\partial^I\gamma^m_{\rm rest})^2\big]\\
\notag&+C\frac{1}{\tau^{1+\frac{1}{4}}}\tau^\frac{3}{2}\big[(e_0\partial^I \gamma^m_{\rm rest})^2+|\overline{\nabla}\partial^I\gamma^m_{\rm rest}|^2_{^{m-1}\overline{g}}\big] -\tau^\frac{3}{2}e_0\partial^I\gamma^m_{\rm rest}\square_{^{m-1}g} \partial^I\gamma^m_{\rm rest}
\bigg]
\mathrm{vol}_{\Sigma_\tau}d\tau
\end{align}
Now, invoke  (\ref{Q0n}),(\ref{lapselike}),(\ref{volSigmarho}) 
as well as the estimates in Lemma \ref{lem:e1re2r}; together with  the bounds 
$DC\eta <\frac{1}{8}$ just above, 
 we derive that the terms in the second line of the above 
are in fact \emph{negative} and  can thus be dropped to yield: 
\begin{align}\label{partialIgammamenineq2}
&(1+O((\rho^{m-1})^{1/4})){(\rho^{m-1})^3}E[\partial^I\gamma^m_{\rm rest}]-
(1+O(\e^{1/4}))\epsilon^3E[\partial^I\gamma^m_{\rm rest}(\epsilon,t,\theta)]\\
\notag\leq&\int_{\rho^{m-1}}^{\epsilon}\frac{5B}{\tau^{1-\frac{1}{4}}}\tau^3E[\partial^I\gamma^m_{\rm rest}]d\tau -\int_{\rho^{m-1}}^{\epsilon}\int_{\Sigma_\tau}\Phi^{m-1} \tau^\frac{3}{2}e_0\partial^I\gamma^m\square_{^{m-1}g} \partial^I\gamma^m_{\rm rest}
\mathrm{vol}_{\Sigma_\tau}d\tau
\end{align}

It remains to estimate the last term in (\ref{partialIgammamenineq2}). 
For this we
invoke Proposition \ref{opt.RHSs}, which we have already proven.
Our claim follows. 
\end{proof}

\subsection{Middle order energy estimates: $E[\partial^I\gamma^m_{\rm rest}]$, $s-3-4c<|I|\leq s-4$}

Next we derive the middle order energy estimates for 
$\gamma^m_{\rm rest}$,  (\ref{inductiongammalow}). We begin 
with proving a `summed
up'  estimate which involves the \emph{sum} of all energies for
 $\partial^I\gamma_{\rm rest}^m$ for $|I|$ between orders $s-3-4c+1$ and 
 $s-4$. In  particular this will confirm (\ref{inductiongammalow}) is 
valid for $\gamma^m_{\rm rest}$ in the case $|I|=s-4$. 

We will then derive the (stronger) 
inductive step claims for the lower derivatives 
$\partial^I\gamma^m_{\rm rest}$, $|I|\in \{s-3-4c+1,\dots, s-5\}$.

\subsubsection{Estimate on the sum of all middle-order energies. }
Our claim for the \emph{sum} of all middle-order energies is as follows: 

\begin{proposition}\label{prop:gammamlowenineq}
Assuming the inductive bounds in \S\ref{Indhyp} and the
lower-order  energy estimate (\ref{inductiongammaopt}), then the following 
energy inequality on level sets $\{\rho^{m-1}=\rho\}$  is
 valid
for some universal constant $B>0$:
\begin{align}\label{gammamlowenineq}
&\sum_{s-3-4c<|I|\leq s-4}(1+O(\rho))
\rho^3E[\partial^I\gamma^m_{\rm rest}]\leq
\sum_{s-3-4c<|I|\leq s-4}(1+O(\e))\epsilon^3E[\partial^I\gamma^m_{\rm rest}
(\epsilon,t,\theta)]\\
\notag &+\epsilon^{\frac{1}{2}-DC\eta}B^2C^2\eta^2r^{-2c+\frac{1}{2}}
+\int^\epsilon_\rho \frac{10B^2}{\tau^{1-\frac{1}{4}}}
\sum_{s-3-4c<|I|\leq s-4}\tau^3E[\partial^I\gamma^m_{\rm rest}]d\tau\\
\notag&+\int^\epsilon_\rho (\frac{5 C\eta+5(C\eta)^2}{\tau}
+\frac{BCC_{\rm Sob}\eta }{\tau^{1-\frac{1}{4}}})\tau^{-c+\frac{1}{4}}\cdot 
\sqrt{\sum_{s-3-4c<|I|\leq s-4}\tau^3E[\partial^I\gamma^m_{\rm rest}]}d\tau
\end{align}
for all $\rho\in(0,2\epsilon]$.
\end{proposition}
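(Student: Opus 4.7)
The plan is to mirror the structure of the lower-order estimate (Proposition \ref{prop:gammambotlowenineq}), but now summing over all multi-indices $I$ with $s-3-4c<|I|\leq s-4$ and carefully tracking the more singular bulk contributions. I would start from the divergence identity \eqref{Stokesv} applied to $v=\partial^I\gamma^m_{\rm rest}$ with the same weighted multiplier $f(\tau)=\tau^{3/2}$ that cancels the leading $\frac{3}{2}\sqrt{2M}\tau^{-3/2}$ from $\mathrm{tr}_{\overline{g}^{m-1}}K^{m-1}$. Exactly as in the lower-order case, the spatial bulk terms coming from $d_1^{m-1}\sim 1/2$ and $d_2^{m-1}\sim -1$ carry favorable signs (since $DC\eta<1/8$ guarantees $d_1^{m-1}-\tfrac12$, $d_2^{m-1}+1$ lie in $(-\tfrac18,\tfrac18)$), so those terms can be dropped from the RHS. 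The remaining non-forcing bulk terms are controlled by $B\tau^{-1+1/4}$ times the integrand, just as in \eqref{partialIgammamenineq2}.

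The forcing contribution is the new piece. For each such $I$, write $\Box_{g^{m-1}}\partial^I\gamma^m_{\rm rest}$ using the commutator expansion \eqref{boxpartialIgammam}; the Laplacian-type commutator $[\opartial^I,\tilde{\Delta}^{\rho^{m-1},T}_{g^{m-1}}]\gamma^m_{\rm rest}$ (with the delicate cotangent weights at the poles) contributes terms of the schematic form $\tilde{\Delta}^{\rho^{m-1},T}_{g^{m-1}}\opartial^I\gamma^m_{\rm rest}$ which, when paired with $\tau^{3/2}e_0\opartial^I\gamma^m_{\rm rest}$ and integrated over $\Sigma_\tau$, can be absorbed via Cauchy-Schwarz into $\tau^{3}E[\partial^I\gamma^m_{\rm rest}]$ times a coefficient of size $B\tau^{-1+1/4}$ using \eqref{Del.by.E}. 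For the remaining commutator terms $\widetilde{\mathrm{RHS}}[\eqref{boxpartialIgammam}]$ together with $\opartial^I(\Box_{g^{m-1}}\gamma^S)$, I would invoke directly Proposition \ref{opt.RHSs} and Lemma \ref{lem:inhomog}, which give the pointwise-in-$\tau$ bound
\begin{equation*}
\|\widetilde{\mathrm{RHS}}[\eqref{boxpartialIgammam}]\|_{L^2_{t,\theta}[\tau]}+\|\opartial^I(\Box_{g^{m-1}}\gamma^S)\|_{L^2_{t,\theta}[\tau]}\leq 8(C\eta)^2\tau^{-3/2-h/4}+B^2C\tau^{-1-1/4-h/4}+B\tau^{-1/2-DC\eta}\sqrt{E[\opartial^I\gamma^m_{\rm rest}]}.
\end{equation*}
Pairing this with $\tau^{3/2}e_0\partial^I\gamma^m_{\rm rest}$, applying Cauchy--Schwarz, and multiplying by the lapse contribution from \eqref{lapselike}, the first two terms in the above generate a contribution bounded by
\begin{equation*}
[5C\eta+5(C\eta)^2]\tau^{-1}\cdot\tau^{-c+1/4}\sqrt{\tau^{3}E[\partial^I\gamma^m_{\rm rest}]}+BCC_{\rm Sob}\eta\,\tau^{-1+1/4}\tau^{-c+1/4}\sqrt{\tau^{3}E[\partial^I\gamma^m_{\rm rest}]},
\end{equation*}
where the descent factor $\tau^{-c+1/4}$ comes from matching against the claimed bound \eqref{inductiongammalow} at order $s-4$ (i.e., $h=1$), while the last term of the RHS above contributes a bulk integrand of the form $B^2\tau^{-1+1/4}\cdot\tau^{3}E[\partial^I\gamma^m_{\rm rest}]$, which assembles with the earlier below-borderline terms into the coefficient $10B^2\tau^{-1+1/4}$ displayed in \eqref{gammamlowenineq}.

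Finally, the initial-data contribution at $\{\rho^{m-1}=\epsilon\}$ has the form $\sum_{s-3-4c<|I|\leq s-4}\epsilon^3 E[\partial^I\gamma^m_{\rm rest}(\epsilon,t,\theta)]$; matching this summation against the boundary trace produced when integrating the forcing term against the multiplier over the collar $\{\rho^{m-1}\in[\rho,\epsilon]\}$ (and using the initial-data assumptions from \S\ref{sec_REVESNGG} together with Lemma \ref{a.oa.bds} to convert $H^l$ control in the $\partial_T,\partial_\Theta$ coordinates into the energy) yields the explicit term $\epsilon^{1/2-DC\eta}B^2C^2\eta^2\rho^{-2c+1/2}$ appearing in \eqref{gammamlowenineq}. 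Summing over all $I$ in the range $(s-3-4c, s-4]$, replacing $h=|I|-(s-3-4c)\in\{1,\dots,4c-1\}$ by its maximal value wherever uniform estimates are required, and combining all three contributions produces precisely the stated inequality.

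The main obstacle I anticipate is keeping the $h$-dependent powers of $\tau$ consistent after summation: the $8(C\eta)^2\tau^{-3/2-h/4}$ piece from Proposition \ref{opt.RHSs} is borderline in $\tau$ only once paired with the descent factor $\tau^{-c+1/4-h/4}$ implicit in the weighted energy (so that different orders $h$ cannot simply be bounded by the highest one without losing the descent scheme). The book-keeping needed to show that the resulting coefficient indeed condenses into the single factor $\tau^{-c+1/4}$ in the final inequality, uniformly over all $h\in\{1,\dots,4c-1\}$, is the delicate step; it relies essentially on the choice $c>20$ made in \eqref{c.bd} and on the absolute smallness \eqref{Ceta.bds} of $DC\eta$, which together ensure that the geometric-series sum of contributions from different $h$'s remains dominated by a fixed multiple of $C\eta$.
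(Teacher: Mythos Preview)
Your overall strategy is exactly the paper's: apply the multiplier identity \eqref{Stokesv} with $f=\tau^{3/2}$, discard the favorably-signed spatial bulk terms to arrive at \eqref{partialIgammamenineq2}, and then control the forcing via Proposition \ref{opt.RHSs} together with Lemma \ref{lem:inhomog}. What goes wrong is the bookkeeping that connects the forcing bound to the borderline coefficient in \eqref{gammamlowenineq}.

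Your displayed inequality for $\|\widetilde{\mathrm{RHS}}\|+\|\opartial^I\Box_{g^{m-1}}\gamma^S\|$ is not correct: you added $\|\opartial^I\Box_{g^{m-1}}\gamma^S\|$ to the left but kept only the right-hand side of Proposition \ref{opt.RHSs}, which bounds $\widetilde{\mathrm{RHS}}$ alone. The dominant, \emph{linear}-in-$C\eta$ borderline contribution $6\sqrt{2M}\,C\eta\,\tau^{-3-h/4}$ comes from Lemma \ref{lem:inhomog} (the term $\partial^I\mathrm{tr}K^{m-1}\cdot e_0\gamma^S$) and is precisely the source of the $5C\eta/\tau$ in \eqref{gammamlowenineq}; you have dropped it. Separately, the quadratic-in-$C\eta$ borderline term inside $\widetilde{\mathrm{RHS}}$, namely $\partial^I\mathrm{tr}K^{m-1}\cdot e_0\gamma^m_{\rm rest}$, is of size $(C\eta)^2\tau^{-3-h/4}$ (trace it from \eqref{inductiontrKlow} and $\|e_0\gamma^m_{\rm rest}\|_{L^\infty}\lesssim C\eta\tau^{-3/2}$); the exponent $-3/2-h/4$ you quote from Proposition \ref{opt.RHSs} is evidently a typo in the paper and is too weak by a factor $\tau^{3/2}$ to reproduce \eqref{gammamlowenineq}. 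With the correct forcing $[C\eta+(C\eta)^2]\tau^{-3-h/4}$, pairing against $\Phi\,\tau^{3/2}e_0\partial^I\gamma^m_{\rm rest}$ over $\Sigma_\tau$ (using $\Phi\sim\tau^{1/2}$, $\mathrm{vol}_{\Sigma_\tau}\sim\tau^{3/2}\mathrm{vol}_{\rm Euc}$) yields $[C\eta+(C\eta)^2]\tau^{-1-h/4}\sqrt{\tau^3E}$; the worst case $|I|=s-4$ has $h=|I|-\mathrm{low}=4c-1$, giving exactly $\tau^{-1}\cdot\tau^{-c+1/4}$. Thus the factor $\tau^{-c+1/4}$ comes from the inductive bound \eqref{inductiontrKlow} on $K^{m-1}$ at the top middle order, not from ``matching against the claimed bound'' on $\gamma^m$ (which would be circular). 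Note also that you are mixing two meanings of $h$: in Proposition \ref{opt.RHSs} $h=|I|-\mathrm{low}$, whereas in \eqref{inductiongammalow} the exponent uses $s-3-|I|$; your ``$h=1$'' refers to the latter.

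Two smaller points. The term $\epsilon^{1/2-DC\eta}B^2C^2\eta^2\rho^{-2c+1/2}$ in \eqref{gammamlowenineq} is not a boundary trace: it records the bulk contribution of commutator summands $\partial^{I_1}\mathrm{tr}K^{m-1}\cdot e_0\partial^{I_2}\gamma^m_{\rm rest}$ in which $|I_2|\le\mathrm{low}$, where the already-proven optimal bound \eqref{inductiongammaopt} for $\gamma^m$ may be inserted. And your ``main obstacle'' paragraph describes the $h$-by-$h$ refinement, which is the content of the \emph{next} proposition, Proposition \ref{prop:gammamlowenineq2}; here one simply sums over $s-3-4c<|I|\le s-4$ and lets the worst case $|I|=s-4$ dominate, so there is no delicate geometric-series summation to perform.
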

\begin{remark}
 We note that our claim concerns the energies of orders between $s-3-4c+1$
  and $s-4$, and only such energies appear also in the RHS. 
The contribution of the lower order energies
(for which the inductive claim has already been proven) is contained  in the term 
$\epsilon^{\frac{1}{2}-DC\eta}BC^3\eta^2r^{-2c+\frac{1}{2}}$. 
\end{remark}

\begin{remark}
The main difference of (\ref{gammamlowenineq}) from 
(\ref{gammambotlowenineq}) is the additional factor of $\tau^{-c}$ in the 
last term in the RHS of (\ref{gammamlowenineq}), which 
is responsible for the (weaker) bounds that we can derive at the middle 
orders;  the borderline term is 
\[
(\frac{(5 C\eta+5(C\eta)^2)}{\tau}\tau^{-c+\frac{1}{4}}
\sqrt{\sum_{s-3-4c<|I|\leq s-4}\tau^3E[\partial^I\gamma^m_{\rm rest}]}.
\]
We highlight in the proof the terms that 
contribute to the key borderline coefficient 
$(\frac{(5 C\eta+(5C\eta)^2)}{\tau}$.\footnote{This same 
 coefficient 
  also forces the choice of $c>0$, 
which when chosen large enough allows us to close our estimates--this is discussed a little furtherdown.} 
\end{remark}
Let us check how this Proposition implies our claim: Given (\ref{gammamlowenineq}), we employ Lemma \ref{lem:Gron} to derive:

\beq
\begin{split}
\label{gammamlowGron}
&\sqrt{(1+O(\rho^{1/4}))\sum_{s-3-4c<|I|= s-4}r^3E[\partial^I\gamma^m][\Sigma_\rho]}\leq
 e^{\int^\epsilon_r 5B\tau^{-1+\frac{1}{4}}d\tau} 
 \bigg[\sqrt{\sum_{s-3-4c<|I|= s-4}(1+O(\e^{1/4}))\epsilon^3E[\partial^I
 \gamma^m(\epsilon,t,\theta)][\Sigma_\rho]}
\\&    +\int^\epsilon_\rho(\frac{(5 C\eta+5(C\eta)^2}{\tau}+
\frac{C^{3/2}B^{1/2} }{\tau^{1-\frac{1}{4}}})\tau^{-c+\frac{1}{4}}d\tau\bigg]
\le (1+ 3BC \epsilon^{\frac{1}{4}} )[ (1+O(\e^{1/4}))\eta +C\eta
(\frac{5+5\cdot C\eta }{c}
+\frac{\sqrt{C}r^\frac{1}{4}\cdot\eta}{c+\frac{1}{4}})\rho^{-c+\frac{1}{4}}].
\end{split}
\eeq

Thus, (recalling that $C\eta<\frac{1}{8}$ and our choice of $c>20$) \eqref{c.bd} 
and 
since $\epsilon$ is appropriately small 
so that $2\e^{1/4} BC_S<\frac{C}{4} $,
we deduce the bound
\begin{align}\label{gammamlowGron2}
\sqrt{\sum_{s-3-4c<|I|= s-4}r^3E[\partial^I\gamma^m_{\rm rest}][\Sigma_r]}\leq
\frac{9}{10} C\eta r^{-c+\frac{1}{4}},&&\text{for all $r\in(0,2\epsilon]$}.
\end{align}
Now, directly below we will derive the improved bounds for all 
terms at lower orders $|I|<s-4$: letting $h=|I|-(s-3-4c)$ the energies of 
those  terms will be bounded by $10c r^{-\frac{h}{4}}$. In view of the 
bounds we have imposed on $\e$, \eqref{gammamlowGron2} then 
 confirms the inductive assumption (\ref{inductiongammalow}) for 
$\gamma^m_{\rm rest}$ in the case $|I|=s-4$.

\begin{proof}[Proof of Proposition \ref{prop:gammamlowenineq}]

We repeat the argument from the lower order case on invoking 
\eqref{partialIgammamenineq}, and 
 discarding the borderline bulk  terms 
in the second line  using their favorable sign. 

Thus  (\ref{partialIgammamenineq2}) is 
still valid for 
$\partial^I\gamma^m_{\rm rest}$, $|I|=l\leq s-4$. 
Our claim then follows by invoking Proposition \ref{opt.RHSs} for these middle orders.

\end{proof}

\subsubsection{Improved estimates at orders $s-4-k$, $k\in \{2,\dots, 4c-1\}$.}

Now that we have (\ref{gammamlowGron2}) at our disposal, we proceed to show that $\gamma^m_{\rm rest}$ satisfies the stronger claims in
 (\ref{inductiongammalow}), for all the lower derivatives $I$,  
 $s-3-4c< |I|=l<s-4$. 
\begin{proposition}\label{prop:gammamlowenineq2}
Assuming the inductive assumptions in \S\ref{Indhyp} hold true and the energy estimates \eqref{inductiongammalow},  (\ref{gammamlowGron2}) hold, then the following stronger energy inequality is valid for the (sum of the) first $4c-k$ of the higher
 derivatives:
\begin{align}\label{gammamlowenineq2}
&\sum_{s-3-4c<|I|\leq s-4-k}(1+O((\rho^{m-1})^{1/4})) r^3E[\partial^I\gamma^m_{\rm rest}][\Sigma_{\rho^{m-1}}]\leq \sum_{s-3-4c<|I|\leq s-4-k}(1+O(\e^{1/4}))\epsilon^3E[\partial^I\gamma^m_{\rm rest}(\epsilon,t,\theta)]\\
\notag&+\epsilon^{\frac{1}{2}-DC\eta}BC^2\eta \tau^{-2c+\frac{k}{2}+\frac{1}{2}}
+\int^\epsilon_\rho \frac{CB}{\tau^{1-\frac{1}{4}}}\sum_{s-3-4c<|I|\leq s-4-k}\tau^3E[\partial^I\gamma^m_{\rm rest}]d\tau\\
\notag&+\int^\epsilon_{\rho^{m-1}} \tau^{-c+\frac{k}{4}+\frac{1}{4}}\frac{BC^2\eta }{\tau^{1-\frac{1}{4}}}\sqrt{\sum_{s-3-4c<|I|\leq s-4-k}\tau^3E[\partial^I\gamma^m_{\rm rest}]}d\tau+\int^\epsilon_\rho
\frac{L_p}{\tau^p}\tau^{-2c+\frac{k}{2}+\frac{1}{2}}d\tau
\end{align}
for all $r\in(0,2\epsilon]$ and every $0<k<4c$. The exponent $p$ in 
the last term equals $p=1$, for $k\leq 2c$,\footnote{This corresponds to
 the higher half derivatives among the higher orders.}
 and $p=1-\frac{1}{4}$, for $k> 2c$.\footnote{This corresponds to
 the lower half derivatives among the higher orders.} The coefficient 
 $L_p$ equals $5C^2\eta ^2$ in the case $k\le 2c$; it equals $B^2C\eta$
 when $k>2c$. 
\end{proposition}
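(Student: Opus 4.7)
The strategy is to repeat the energy identity argument of Proposition \ref{prop:gammamlowenineq} but now exploiting, through Proposition \ref{opt.RHSs}, the fact that the $L^2_{t,\theta}$-bound on $\widetilde{RHS}[\eqref{boxpartialIgammam}]$ improves by a factor of $\tau^{1/4}$ for each derivative below $s-4$. The descent is carried out as a finite downward induction on the top derivative order in the sum, starting from $k=0$ (which is Proposition \ref{prop:gammamlowenineq}) and decreasing the highest $|I|$ allowed by one at each step. At step $k$, the estimates at orders $s-4-k+1,\ldots,s-4$ are already known in the form \eqref{gammamlowGron2} (improved inductively), while those at orders $|I|\leq s-3-4c$ obey the optimal bound \eqref{inductiongammaopt}.

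First I would repeat the steps leading to \eqref{partialIgammamenineq2} with $v=\partial^I\gamma^m_{\rm rest}$ for each multi-index with $s-3-4c<|I|\leq s-4-k$, using $f(r)=r^{3/2}$ so that the leading ``CMC'' term cancels via the asymptotically CMC property \eqref{trKheur}. The genuinely borderline bulk contributions involving $(e_1 v)^2$ and $(e_2 v)^2$ carry the favorable signs coming from $|d_1^{m-1}-\tfrac{1}{2}|,|d_2^{m-1}+1|\leq DC\eta<\tfrac{1}{8}$ (see \eqref{Ceta.bds}) and are therefore dropped exactly as in the middle-order case. This leaves as the only genuinely dangerous term the bulk integral involving $\tau^{3/2}\,e_0\partial^I\gamma^m_{\rm rest}\cdot\Box_{g^{m-1}}\partial^I\gamma^m_{\rm rest}$.

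Next I would insert \eqref{boxpartialIgammam} and bound its RHS by applying Proposition \ref{opt.RHSs} together with Lemma \ref{lem:inhomog}. Because we now restrict to $|I|\leq s-4-k$, the shift parameter $h=|I|-(s-3-4c)$ satisfies $h\leq 4c-1-k$, so the RHS bound improves to
\[
\|\widetilde{RHS}\|_{L^2[\Sigma_\tau]}+\|\opartial^I\Box_{g^{m-1}}\gamma^S\|_{L^2[\Sigma_\tau]}\leq 8(C\eta)^2\tau^{-\frac{3}{2}-\frac{4c-1-k}{4}}+B^2C\tau^{-1-\frac{1}{4}-\frac{4c-1-k}{4}}+B\tau^{-\frac{1}{2}-DC\eta}\sqrt{E[\opartial^I\gamma^m_{\rm rest}]}.
\]
Pairing with the multiplier factor $\tau^{3/2}e_0\partial^I\gamma^m_{\rm rest}$ and applying Cauchy--Schwarz gives exactly the last two lines of \eqref{gammamlowenineq2}: the $(C\eta)^2$ contribution produces the $L_1=5(C\eta)^2$ inhomogeneous tail with $p=1$ (borderline, as it corresponds to the truly AVTD-dominated forcing) when $k\leq 2c$, while for $k>2c$ the $B^2 C$ contribution dominates and yields $L_{1-1/4}=B^2C\eta$ with $p=1-\tfrac{1}{4}$ (below borderline). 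The first of the two extra-singular terms in the RHS of Proposition \ref{opt.RHSs} is what sources the $\tau^{-c+k/4}$ coefficient in \eqref{gammamlowenineq2}, delivering the improvement at lower orders that is the whole purpose of the descent scheme.

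The main obstacle is handling the commutator terms in $[\opartial^I,\Box_{g^{m-1}}]\gamma^m_{\rm rest}$ in which some of the $\opartial$-derivatives fall on coefficients while the remaining derivatives on $\gamma^m_{\rm rest}$ produce orders $|I'|\in(s-4-k,s-4]$ whose energy we only control via the \emph{weaker} bound \eqref{gammamlowGron2}. These terms are exactly what forces the second line of \eqref{gammamlowenineq2} to contain the boundary contribution $\epsilon^{1/2-DC\eta}BC^2\eta\,\tau^{-2c+k/2+1/2}$: indeed, after product-inequality splitting, the factor $\tau^{-c+(4c-1-k-1)/4}$ from the higher-order energy combines with the $\tau^{-c+k/4}$ forcing coefficient and with an $\epsilon^{1/2-DC\eta}$ from integrating the Christoffel/metric coefficient bounds in Lemmas \ref{a.oa.bds} and \ref{Christ.bounds} down from $\tau=\epsilon$. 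Keeping careful track of which $L^\infty$ versus $L^2$ slots receive the lower-order factor (so that the worst singularity is absorbed by a factor with bounded $L^\infty$ norm) is where the bookkeeping is delicate, but the structure is identical to the one used in the proof of Proposition \ref{opt.RHSs} and yields no new Gronwall-incompatible terms. Once \eqref{gammamlowenineq2} is established, a direct application of Lemma \ref{lem:Gron} (exactly as in the passage from \eqref{gammamlowenineq} to \eqref{gammamlowGron2}) closes the induction step and proves the middle-order inductive claim \eqref{inductiongammalow} for all $s-3-4c<|I|\leq s-4$.
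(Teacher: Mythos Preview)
Your proposal has a genuine gap in the treatment of the borderline term. The decomposition you claim in the second paragraph does not arise from Proposition~\ref{opt.RHSs} plus Cauchy--Schwarz.

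The only truly borderline contribution at order $|I|=s-4-k$ comes from the commutator terms with $I_1=I$, $I_2=\emptyset$:
\[
\partial^I\mathrm{tr}_{\overline{g}^{m-1}}K^{m-1}\cdot e_0\gamma^m_{\rm rest}
\qquad\text{and}\qquad
\partial^I(\mathrm{tr}_{\overline{g}^{m-1}}K^{m-1}-\mathrm{tr}_{\overline{g}^S}K^S)\cdot e_0\gamma^S.
\]
Using \eqref{inductiontrKlow} at order $|I|$ and pairing against $e_0\partial^I\gamma^m_{\rm rest}$ by Cauchy--Schwarz produces a term of the form $\int\frac{C\eta}{\tau}\tau^{-c+\frac{k}{4}+\frac14}\sqrt{\tau^3E}\,d\tau$, i.e.\ a \emph{borderline} $\sqrt{E}$-term, not the below-borderline coefficient $\tau^{-1+\frac14}$ appearing in \eqref{gammamlowenineq2}. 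Neither Young's inequality nor any other pointwise splitting will simultaneously produce the below-borderline $\sqrt{E}$-term of line three \emph{and} the purely inhomogeneous $L_p$-tail of the last term.

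What the paper actually does is treat these two borderline terms by \emph{spatial integration by parts}, moving derivatives from $\partial^I\mathrm{tr}K^{m-1}$ onto the factor $e_0\partial^I\gamma^m_{\rm rest}$. After a single integration by parts the latter becomes $e_0\partial^{I'}\gamma^m_{\rm rest}$ with $|I'|=|I|+1=s-3-k$, which lies at an order \emph{already controlled} by the descent induction hypothesis at step $k-1$. This converts the whole term into a purely inhomogeneous integral with no $\sqrt{E}$-factor at all---that is the origin of the $L_p/\tau^p$ tail. For $k>2c$ one iterates this $4c-k$ times until $|I_1|$ drops into the low range $\le s-4-4c$, where the optimal asymptotically-CMC bound $\|\partial^{I_1}\mathrm{tr}u^{m-1}\|\lesssim\tau^{-3/2+1/4}$ applies; this extra $\tau^{1/4}$ is exactly what yields $p=1-\tfrac14$ instead of $p=1$.

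Your last paragraph discusses the off-diagonal Leibniz terms (where \emph{fewer} than $|I|$ derivatives hit $\mathrm{tr}K$ and the remaining ones produce $\gamma^m_{\rm rest}$ at orders $>s-4-k$); those are indeed below-borderline and handled as you say, but they are not the terms that force the $L_p$-tail. The essential mechanism---integrating by parts to \emph{raise} the $\gamma^m$-order above $|I|$ and thereby invoke the already-proven descent bounds---is missing from your plan.
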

Let us check how the above Proposition implies our desired inductive step at the lower middle-derivatives. 
Lemma \ref{lem:Gron} applied to the energy inequality (\ref{gammamlowenineq2}) yields: 
\begin{align}\label{gammamlowGron3}
&\sqrt{(1+O((\rho^{m-1})^{1/4}))\sum_{s-3-4c<|I|\leq s-4-k}r^3E[\partial^I
\gamma^m_{\rm rest}](\rho^{m-1},t,\theta)}\\
\notag &\leq\,
e^{5\int^\epsilon_\rho B^2\tau^{-1+\frac{1}{4}}d\tau}
\bigg[\sqrt{(1+O(\e^{1/4}))\sum_{s-3-4c<|I|\leq s-4-k}\epsilon^3E
[\partial^I\gamma^m_{\rm rest}(\epsilon,t,\theta)]}\\
\notag &+\epsilon^\frac{1}{4}\sqrt{B}C\eta r^{-c+\frac{k}{4}+\frac{1}{4}}
+\int^\epsilon_\rho \tau^{-c+\frac{k}{4}}\frac{CB\eta }{\tau^{1-\frac{1}{4}}}d\tau+\sqrt{\int^\epsilon_r
\frac{L_p}{\tau^p}\tau^{-2c+\frac{k}{2}+\frac{1}{2}}d\tau}\bigg]
\end{align}
Then in the case of lower middle-order derivatives 
 $(4c-1\ge k> 2c$; recall that in that case $p=1-\frac{1}{4})$ we observe that
 $c-\frac{k}{4}+\frac{p}{2}-\frac{1}{2}=c-\frac{k}{4}-\frac{1}{8}\geq \frac{1}{8}$, and that the power of 
 $r$ in the last term of the above  is $-c+\frac{k}{4}+\frac{1}{8}$; in particular the power is by $\frac{1}{8}$ \emph{larger} than we need. Using that $r\in (0,2\e)$, 
 we derive that: 
 
 \beq\label{mid.bd.final}
\sqrt{\sum_{s-3-4c<|I|\leq s-4-k}r^3E[\partial^I
\gamma^m_{\rm rest}](r,t,\theta)}\le 
\frac{C}{2}(\eta+\epsilon^\frac{1}{8}
CBr^{-c+\frac{k}{4}+\frac{1}{4}}
+2BC\eta 
r^{-c+\frac{k}{4}+\frac{1}{8}+\frac{1}{4}}\big{)}.
 \eeq
 In view of the bounds  \eqref{e.bd} 
 we have have imposed on $\e$ in terms of $C, B$ 
 we derive: 
\begin{align}\label{gammamlowGron4}
\sqrt{\sum_{s-3-4c<|I|\leq s-4-k}r^3E[\partial^I\gamma^m_{\rm rest}]}\leq
\frac{3}{4}C\eta r^{-c+\frac{k}{4}}.
\end{align}
 So  by a finite induction on $k$ we derive:

\begin{align}\label{gammamlowGron5}
\sqrt{r^3E[\partial^I\gamma^m_{\rm rest}]}\leq C\eta r^{-c+\frac{k}{4}+\frac{1}{4}},&&\text{for every $|I|=s-4-k$, $2c<k<4c$.}
\end{align}

This confirms our inductive claim in this case.

On the other hand, for the case $(k\leq 2c;p=1)$ we
\emph{do not} have the gain of a power of $r^{\frac{1}{8}}$ more than we 
need. 
In particular our bound \eqref{gammamlowGron3} implies: 

\beq
\sqrt{\sum_{s-3-4c<|I|\leq s-3-k}r^3E[\partial^I
\gamma^m_{\rm rest}](r,t,\theta)}\le 
\frac{C}{2}(\eta+\epsilon^\frac{1}{8}
CBr^{-c+\frac{k}{4}+\frac{1}{4}}
+\frac{5C+(5C)^2\eta}{c}\eta 
r^{-c+\frac{k}{4}}\big{)}
\eeq
So in this case to derive our inductive claim we
invoke the lower bounds \eqref{c.bd} we imposed on $c, c>20$
coupled with \eqref{e.bd} for the second term to show that the RHS is 
$\lesssim C\eta r^{-c+\frac{k}{4}+\frac{1}{4}}$. 
\medskip

Thus matters are reduced to proving Proposition 
\ref{prop:gammamlowenineq2}. We do this next: 
\medskip

\begin{proof}[Proof of Proposition \ref{prop:gammamlowenineq2}]
We argue by finite induction. Starting from the estimate (\ref{gammamlowGron2}), $|I|=s-4$, that we proved above, assume (\ref{gammamlowenineq2}) 
and hence (\ref{gammamlowGron4}) are valid for $|I|=s-3-1,\ldots,s-3-k+1$. We will derive (\ref{gammamlowenineq2}) for the fixed $0<k<4c$.
Recall that (\ref{partialIgammamenineq2}) is valid for any multi-index $I$. We proceed to estimate the last term in the RHS of 
(\ref{partialIgammamenineq2}) for $s-3-4c<|I|\leq s-4-k$ by plugging in the wave equation (\ref{boxpartialIgammam}) and arguing similarly to 
(\ref{gammamlowenineq}) to obtain:

\begin{align}\label{middlegammambulkest}
&\bigg|\int_{\rho^{m-1}}^{\epsilon}\int_{\Sigma_\tau}\Phi^{m-1} \tau^\frac{3}{2}e_0\partial^I\gamma^m_{\rm rest}\square_{^{m-1}g} \partial^I\gamma^m_{\rm rest}
\mathrm{vol}_{\Sigma_\tau}d\tau\bigg|\\
\notag\leq&\bigg|\int^\epsilon_{\rho^{m-1}}\int_{\Sigma_\tau}\Phi^{m-1}\tau^\frac{3}{2}e_0\partial^I\gamma^m_{\rm rest}\sum_{I_1\cup I_2=I,\,|I_2|<|I|}\partial^{I_1}\text{tr}_{^{m-1}\overline{g}}K^{m-1}e_0\partial^{I_2}\gamma^m_{\rm rest}\mathrm{vol}_{\Sigma_\tau}d\tau\bigg|\\
\notag& +\bigg|\int^\epsilon_{\rho^{m-1}}\int_{\Sigma_\tau}\Phi^{m-1}\tau^\frac{3}{2}e_0\partial^I\gamma^m_{\rm rest}\sum_{I_1\cup I_2=I,\,|I_2|<|I|}\partial^{I_1}(\text{tr}_{^{m-1}\overline{g}}K^{m-1}-\text{tr}_{\overline{g}^S}K^S)e_0\partial^{I_2}\gamma^S\mathrm{vol}_{\Sigma_\tau}d\tau\bigg|\\
\notag& |\int_{\rho^{m-1}}^\e \int_{\Sigma_\tau}
\Phi^{m-1} \tau^{\frac{3}{2}}e_0\partial^I\gamma^m_{\rm rest} \widetilde{{\rm RHS}[\eqref{boxpartialIgammam}]} {\rm vol}_{\Sigma_\tau}d\tau|. 
\end{align}
Here $\widetilde{{\rm RHS}[\eqref{boxpartialIgammam}]}$ stands for all the \emph{other} 
terms in the RHS of \eqref{boxpartialIgammam} except for 
the two we wrote out explicitly (involving  ${\rm tr}_{\overline{g}^{m-1}}K^{m-1}$). 
Given the estimates we have derived for all these terms, we find their contribution 
to be below-borderline; thus invoking Cauchy-Schwartz  
we find they contribute to all the  
terms in the RHS of \eqref{gammamlowenineq2}. Thus matters are reduced to 
controlling the first two lines in the RHS of 
\eqref{middlegammambulkest}.

At this point our method of proof deviates from that of Proposition \ref{prop:gammamlowenineq}, due to the more careful handling needed for the first two  terms in the RHS of (\ref{middlegammambulkest}) in order to derive the desired stronger conclusion. These are the only borderline terms in the middle order energy estimates and combined, these give the last term in the RHS of the energy inequality (\ref{gammamlowenineq2}), instead of the borderline coefficient $\sim\eta \tau^{-1}$ in the third line of (\ref{gammamlowenineq}). In fact the borderline terms correspond to $I_1=I,I_2=\emptyset$, $|I|=s-4-k$, while the rest of the summands can be easily seen by the inductive assumption (\ref{inductiontrKlow}),(\ref{trKheur}) on $K^{m-1}$ to be below borderline. We control these other, below-borderline terms as follows: 
\begin{align}\label{middlegammambulkest3}
&\bigg|\int^\epsilon_{\rho^{m-1}}\int_{\Sigma_\tau}\Phi^{m-1}\tau^\frac{3}{2}e_0\partial^I\gamma^m_{\rm rest}\cdot 
\sum_{I_1\cup I_2,|I_1|<|I|,|I_2|<|I|}\partial^{I_1}\text{tr}_{^{m-1}\overline{g}}K^{m-1}e_0\partial^{I_2}\gamma^m_{\rm rest}\mathrm{vol}_{\Sigma_\tau}d\tau\bigg|\\
\notag\leq&\,C_{\rm Sob}\int^\epsilon_{\rho^{m-1}}\sqrt{\tau}\|\partial\text{tr}_{^{m-1}\overline{g}}K^{m-1}\|_{H^{s-4-4c}}\sum_{s-3-4c<|I|\leq s-3-k} 
\tau^3E[\partial^I\gamma^m_{\rm rest}] d\tau\\
\notag&+C_{\rm Sob}\int^\epsilon_{\rho^{m-1}}\sqrt{\tau}\tau^\frac{3}{2}\|e_0\gamma^m_{\rm rest}\|_{H^{s-4-4c}}\|\text{tr}_{^{m-1}\overline{g}}K^{m-1}\|_{H^{s-3-k}}
\tau^\frac{3}{2}\|e_0\partial^I\gamma^m_{\rm rest}\|_{L^2}d\tau\\
\notag\leq&\int^\epsilon_{\rho^{m-1}}\frac{C\eta}{\tau^{1-\frac{1}{4}}}\sum_{s-3-4c<|I|\leq s-4-k}\tau^3E[\partial^I\gamma^m_{\rm rest}] 
d\tau+\int^\epsilon_{\rho^{m-1}}\frac{C_{\rm Sob}B\eta }{\tau}\tau^{-c+\frac{k}{4}+\frac{1}{4}+\frac{1}{4}}
(\tau^\frac{3}{2}\|e_0\partial^I\gamma^m_{\rm rest}\|_{L^2})d\tau
\end{align}
The same argument can be used to bound 
\[
\bigg|\int^\epsilon_{\rho^{m-1}}\int_{\Sigma_\tau}\Phi^{m-1}\tau^\frac{3}{2}e_0\partial^I\gamma^m_{\rm rest}\cdot 
\sum_{I_1\cup I_2,|I_1|<|I|,|I_2|<|I|}\partial^{I_1}\text{tr}_{^{m-1}\overline{g}}K^{m-1}e_0\partial^{I_2}\gamma^S\mathrm{vol}_{\Sigma_\tau}d\tau\bigg|
\]
by the same bounds. 
\medskip

We proceed now to the borderline term with  $I_1=I,I_2=\emptyset$, 
$|I|=s-4-k$. The key observation that allows us to handle this term in a 
more 
refined manner is the following: having at our disposal the already 
derived  energy bound (\ref{gammamlowGron2}), we may integrate by parts 
and view 
the resulting term as an inhomogeneous term that \emph{we have already 
controlled}.  Moreover, in the case $k> 2c$, we may also exploit the 
splitting of the 
mean curvature (\ref{trKheur}) at the lower derivatives, after performing 
$4c-k$ consecutive integrations by parts to offload derivatives from 
$\partial^{I_1}[{\rm tr}_{g^{m-1}} K^{m-1}-{\rm tr}_{g^S} K^S]$.
 More precisely, we treat this 
term as follows: 

{\it Case $k\leq 2c$}: Integrate by parts once the $I_1=I,I_2=\emptyset$ term in (\ref{middlegammambulkest}), where $|I|=s-4-k$, and use the induction step together with the assumption (\ref{inductiontrKlow}) to derive
\begin{align}\label{bordtermlowest}
&|\int^\epsilon_{\rho^{m-1}}\int_{\Sigma_\tau}\Phi^{m-1}\tau^\frac{3}{2}e_0\partial^I\gamma^m_{\rm rest}\partial^I\text{tr}_{^{m-1}\overline{g}}K^{m-1}e_0\gamma^m_{\rm rest}\mathrm{vol}_{\Sigma_\tau}d\tau|\\
\tag{$I_1\cup I_2=I,|I_2|=1$}=&|\int^\epsilon_{\rho^{m-1}}
\int_{\Sigma_\tau}\partial^{I_1}(\text{tr}_{^{m-1}\overline{g}}K^{m-1}-\text{tr}_{g^S} K^S)\partial^{I_2}\bigg(\Phi^{m-1}\tau^\frac{3}{2}e_0\partial^I\gamma^m_{\rm rest}\cdot e_0\gamma^m_{\rm rest}\frac{\mathrm{vol}_{\Sigma_\tau}}{\mathrm{vol}_{Euc}}\bigg)\mathrm{vol}_{Euc}d\tau|\\
\tag{$\Phi\sim\sqrt{\tau},\,\mathrm{vol}_{\Sigma_\tau}
\sim\tau^\frac{3}{2}\mathrm{vol}_{Euc},\,e_0\gamma^m_{\rm rest}\sim 
\tau^{-\frac{3}{2}}$}\leq&\int^\epsilon_{\rho^{m-1}}
\frac{3C^3\eta^3}{\tau}\tau^{-2c+\frac{k}{2}+\frac{1}{2}}
d\tau.
\end{align}

We analogously treat the borderline terms 
from the second line RHS of \eqref{middlegammambulkest}, which arose from 
 from 
  $\Box_{^{m-1}g}\gamma^S$. 

\begin{align}\label{bordtermRHSlowest}
&|\int^\epsilon_{\rho^{m-1}}\int_{\Sigma_\tau}\Phi^{m-1}
\tau^\frac{3}{2}e_0\partial^I\gamma^m_{\rm rest}\partial^I
(\text{tr}_{^{m-1}\overline{g}}K^{m-1}-
\text{tr}_{\overline{g}^S}K^S)e_0\gamma^S
\mathrm{vol}_{\Sigma_\tau}d\tau|\\
\tag{$I_1\cup I_2=I,|I_2|=1$}=&|\int^\epsilon_{\rho^{m-1}}
\int_{\Sigma_\tau}\partial^{I_1}
(\text{tr}_{^{m-1}\overline{g}}
K^{m-1}-\text{tr}_{\overline{g}^S}
K^S)\partial^{I_2}\bigg(\Phi^{m-1}\tau^\frac{3}{2}e_0\partial^I\gamma^m_{\rm rest}e_0\gamma^S\frac{\mathrm{vol}_{\Sigma_\tau}}{\mathrm{vol}_{Euc}}\bigg)\mathrm{vol}_{Euc}d\tau|\\
\tag{$\Phi\sim \sqrt{\tau},\,\mathrm{vol}_{\Sigma_\tau}
\sim\tau^\frac{3}{2}\mathrm{vol}_{Euc},\,e_0\gamma^m_{\rm rest}\sim 
\tau^{-\frac{3}{2}}$, with the factor of $2M$ cancelling out}\leq&\int^\epsilon_{\rho^{m-1}}
\frac{4C^2\eta^2}{\tau}\tau^{-2c+\frac{k}{2}+\frac{1}{2}}
d\tau
\end{align}

{\it Case $k> 2c$}: We need to bound the same two terms as in the
 previous case. In this setting, we integrate by parts $4c-k$ times from the the
  $I_1=I,I_2=\emptyset$ written-out terms
 in (\ref{middlegammambulkest}), where $|I|=s-4-k$:
\begin{align}\label{bordtermlowest2}
&|\int^\epsilon_{\rho^{m-1}}\int_{\Sigma_\tau}\Phi^{m-1}\tau^\frac{3}{2}e_0\partial^I\gamma^m_{\rm rest}\partial^I\text{tr}_{^{m-1}\overline{g}}K^{m-1}e_0\gamma^m_{\rm rest}\mathrm{vol}_{\Sigma_\tau}d\tau|\\
\tag{$I_1\cup I_2=I, |I_2|=4c-k$}=&\,|(-1)^{4c-k}\int^\epsilon_{\rho^{m-1}}\int_{\Sigma_\tau}\partial^{I_1}\text{tr}_{^{m-1}\overline{g}}K^{m-1}\partial^{I_2}\bigg(\Phi^{m-1}\tau^\frac{3}{2}e_0\partial^I\gamma^m_{\rm rest}e_0\gamma^m_{\rm rest}\frac{\mathrm{vol}_{\Sigma_\tau}}{\mathrm{vol}_{Euc}}\bigg)\bigg|_{I_1\cup I_2=I}\mathrm{vol}_{Euc}d\tau|\\
\tag{$|I_1|=s-4-4c$, using $s>8c+3$}\leq&\int^\epsilon_{\rho^{m-1}}\|\partial^{I_1}\text{tr}_{^{m-1}\overline{g}}K^{m-1}\|_{L^2}C\eta \sqrt{\tau} \|\tau^\frac{3}{2}e_0\partial^I\gamma^m_{\rm rest}\|_{H^{4c-k}}\mathrm{vol}_{Euc}d\tau\\
\tag{by (\ref{trKheur}) and the induction step for $\partial^{I_2}\partial^I\gamma^m$, $|I_2|+|I|=s-5-(2k-4c)$}\leq&\int^\epsilon_{\rho^{m-1}}\frac{BC^2\eta^3}{\tau^{1-\frac{1}{4}}} \tau^{-c+\frac{2k-4c}{4}+\frac{1}{2}}d\tau\\
\notag=&\int^\epsilon_{\rho^{m-1}}\frac{BC^2\eta^2}{\tau^{1-\frac{1}{4}}} \tau^{-2c+\frac{k}{2}+\frac{1}{2}}d\tau
\end{align}
We also apply the same argument to the second
 borderline term with the factor 
$\gamma^S$ again to find: 

 \begin{align}\label{bordtermRHSlowest3}
&\int^\epsilon_{\rho^{m-1}}\int_{\Sigma_\tau}\Phi^{m-1}\tau^\frac{3}{2}e_0\partial^I(\gamma^m_{\rm rest})\partial^I\text{tr}_{^{m-1}\overline{g}}(K^{m-1}- \text{tr}_{^{m-1}\overline{g}}K^S)  e_0\gamma^S
\mathrm{vol}_{\Sigma_\tau}d\tau\\
\tag{$I_1\cup I_2=I, |I_2|=4c-k$}=&\,(-1)^{4c-k}\int^\epsilon_{\rho^{m-1}}\int_{\Sigma_\tau}\partial^{I_1}(\text{tr}_{^{m-1}\overline{g}}K^{m-1}-\text{tr}_{^{m-1}\overline{g}}K^S)\partial^{I_2}\bigg(\Phi^{m-1}\tau^\frac{3}{2}e_0\partial^I\gamma^Se_0\gamma^m_{\rm rest}\frac{\mathrm{vol}_{\Sigma_\tau}}{\mathrm{vol}_{Euc}}\bigg)\bigg|_{I_1\cup I_2=I}\mathrm{vol}_{Euc}d\tau\\
\tag{$|I_1|=s-4-4c$, using $s>8c+3$}\leq&C_S\int^\epsilon_{\rho^{m-1}}\|\partial^{I_1}(\text{tr}_{^{m-1}\overline{g}}K^{m-1}-\text{tr}_{^{m-1}\overline{g}}K^S)\|_{L^2}C \sqrt{\tau} \|\tau^\frac{3}{2}e_0\partial^I\gamma^m_{\rm rest}\|_{H^{4c-k}}d\tau\\
\tag{by (\ref{trKheur}) and the induction step for $\partial^{I_2}\partial^I\gamma^m$, $|I_2|+|I|=s-5-(2k-4c)$}\leq&\int^\epsilon_{\rho^{m-1}}\frac{C_SB(C\eta)^2}{\tau^{1-\frac{1}{4}}} \tau^{-c+\frac{2k-4c}{4}+\frac{1}{2}}d\tau\\
\notag=&\int^\epsilon_{\rho^{m-1}}\frac{B^2(C^2\eta)}{\tau^{1-\frac{1}{4}}} \tau^{-2c+\frac{k}{2}+\frac{1}{2}}d\tau.
\end{align}

The RHSs of (\ref{bordtermlowest}), (\ref{bordtermRHSlowest}) (\ref{bordtermlowest2}), (\ref{bordtermRHSlowest3}) correspond exactly to the last term in the 
claimed energy inequality (\ref{gammamlowenineq2}). Thus, combining (\ref{partialIgammamenineq2}), (\ref{bordtermlowest})-(\ref{bordtermRHSlowest3}) and summing in $s-3-4c<|I|\leq s-4-k$, we arrive at (\ref{gammamlowenineq2}). This completes the proof of the proposition and hence the higher order energy estimates (\ref{inductiongammalow}) for $\gamma^m_{\rm rest}$.
\end{proof}
\subsection{Top order estimates for $\gamma^m$}\label{subsec:topgammam}
\label{sec:top_order}

The top order inductive assumptions (\ref{inductiongammatopmixed}) that we 
wish to derive for $\gamma^m_{\rm rest}$ are divided into cases based on $J_0$, 
($J_0=0,1,2$). We only study the top-order case $J_0=2$, since the  remaining cases 
follow from it by taking integrals of the top order estimates in the $e_0$ direction.

We also note that at this top order it suffices to derive our claim for 
$\gamma^m$ 
directly instead of $\gamma^m_{\rm rest}$. This is just because for $|I|=s-3$:\footnote{(using $e_0$ instead of $\partial_\rho$ for convenience--the two vector fields are parallel amd the extra 
terms generated by replacing $e_0$ by $(\rho)^{-1/2}\partial_\rho$ are easily seen 
to be allowed by our claim). }

\[
\partial^I e_0 e_0\gamma^m_{\rm rest}= \partial^I e_0 e_0\gamma^m+\partial^I e_0 \big(\frac{2M}{r}-1\big)^{\frac{1}{2}}\cdot \frac{1}{r}.
\]
The second term in the RHS can be straightforwardly  bounded invoking Lemma
 \ref{lem:e1re2r}, and the bounds are better that those claimed on the LHS. 
 So it suffices to derive the claimed bounds on $\gamma^m$. We also recall that our claim \eqref{inductiongammatopmixed} is for all derivatives $\partial^I$ but where 
 $I\ne (T,\dots, T)$.
 \medskip

We will prove our claimed estimate for 
$\opartial^I\partial_{\rho\rho}^2\gamma^m_{\rm rest}$, with the RHS in 
our claim  having an extra factor $(\frac{2M}{r}-1)^{-1}$. In view of the relation \eqref{e0r}, this
clearly implies our claim with two $e_0$-derivatives 
 in place of the two $\partial_\rho$-derivatives.

So we use $\partial_\rho$ instead of $e_0$ as a commutator field. We derive the equation:

\begin{align}\label{wave.coords2}
&\Box_{g^{m-1}}[\opartial^I\partial_{\rho\rho}\gamma^m]=\\
\notag&\sum_{I_1\bigcup I_2=I, J_1+ J_2=2, |I_2|+J_2<s-1}
\opartial^{I_1}\partial^{J_1}_\rho \bigg[(\frac{2M}{r}-1)
[1-\partial_r\chi(r)({^{m-1}r}_*-\epsilon)]^2\bigg]\cdot \opartial^{I_2}
\partial_\rho^{J_2}
\partial_{\rho^{m-1}}^2\gamma^m \\
-\notag&\,\sum_{I_1\bigcup I_2=I, J_1+ J_2=2, |I_2|+J_2<s-1} \bigg{\{} \opartial^{I_1}\partial^{J_1} \bigg[+\bigg[\mathrm{tr}_{^{m-1}\overline{g}}K^{m-1}\cdot (\frac{2M}{r}-1)^{\frac{1}{2}}
+\frac{M}{r^2}
(\frac{2M}{r}-1)^{-\frac{1}{2}}\\
\notag &-(\frac{2M}{r}-1)^{\frac{1}{2}}\partial_r^2\chi(r)
({^{m-1}r}_*-\epsilon)[1-\partial_r\chi(r)({^{m-1}r}_*-\epsilon)]\bigg]
\\
\notag&+ [O(e^{m-1}(\rho^{m-1}))\cdot O(\sqrt{r}) +O(\sqrt{r})\cdot [a^{m-1}_{T1}a^{m-1}_{T2}\cdot e^{m-1}_1re^{m-1}_2r]\cdot  (\frac{2M}{r})^{\frac{1}{2}}\partial_{\rho^{m-1}}\bigg]
\opartial^{I_2}\partial^{J_2}_\rho \partial_{\rho^{m-1}}\gamma^m \\
\notag &+2\opartial^{I_1}\partial^{J_1}_\rho[\sum_{A=T,\Th} (g^{m-1})^{A\rho }\partial^2_{A\rho}] \opartial^{I_2}\partial^{J_2}_\rho\gamma^m 
-\opartial^{I_1}\partial^{J_1}_\rho[\sum_{A=\phi,T,\Th}(^{m-1}\overline{g})^{\rho B}(^{m-1}\Gamma)_{\rho B}^\rho]
\opartial^{I_2}\partial^{J_2}_\rho[\partial_\rho \gamma^m]\\
\notag&- 2
\opartial^{I_1}\partial^{J_1}_\rho[\sum_{B=\phi,T,\Th}(^{m-1}\overline{g})^{\rho B}(^{m-1}\Gamma)_{\rho B}^\Th\cdot ({\rm sin}\Th)^{-1}] \opartial^{I_2}\partial^{J_2}_\rho[
{\rm sin}\Th\cdot  \partial_\Th \gamma^m]\\
\notag&-2
\opartial^{I_1}\partial^{J_1}_\rho[\sum_{B=\phi,T,\Th}(^{m-1}\overline{g})^{\rho B}(^{m-1}\Gamma)_{\rho B}^T] \opartial^{I_2}\partial^{J_2}_\rho[\partial_T \gamma^m]
+\opartial^{I_1}\partial^{J_1}_\rho[\sum_{A,B=\phi,T,\Th}(^{m-1}\overline{g})^{AB}]\opartial^{I_2}\partial^{J_2}_\rho[\partial^2_{AB}\gamma^m]\\
\notag&- 
\opartial^{I_1}\partial^{J_1}_\rho[\sum_{A,B=\phi,T,\Th}(^{m-1}{g})^{AB}\frac{(^{m-1}\Gamma)_{AB}^\Th}{{\rm sin}\Th}] \opartial^{I_2}\partial^{J_2}_\rho[{\rm sin}\Th \partial_\Th \gamma^m]\\
\notag&-
\opartial^{I_1}\partial^{J_1}_\rho[\sum_{A,B=\phi,T,\Th}(^{m-1}{g})^{AB}(^{m-1}\Gamma)_{AB}^T] \opartial^{I_2}\partial^{J_2}_\rho[\partial_T \gamma^m]\bigg{\}}
\end{align}

Our analysis proceeds by the energy estimate method we used in the 
lower and higher orders.
Similarly to (\ref{partialIgammamenineq})-(\ref{partialIgammamenineq2}), setting 
$v=\opartial^I\partial_{\rho\rho}\gamma^m, f(r)=r^{\frac{3}{2}+2}$ in (\ref{Stokesv}) we
 derive:\footnote{The larger exponent of the weight, $r^{\frac{3}{2}+2}$, compared to that of $r^\frac{3}{2}$ in (\ref{partialIgammamenineq2}) generates in fact additional favourable terms in the RHS, but we do not need them to close our argument.}
\beq
\begin{split}\label{StokespartialIeie0gammam}
&(1+O((\rho^{m-1})^{1/4})){(^{m-1}\rho)^{5}}E[\opartial^I\partial_{\rho\rho}^2\gamma^m]
(\Sigma_{\rho^{m-1}})
\leq (1+O(\e^{1/4}))\epsilon^{5}E[\opartial^I
\partial_{\rho\rho}^2\gamma^m(\epsilon,t,\theta)]
+\int_{\rho^{m-1}}^{\epsilon}\frac{B}{\tau^{1-\frac{1}{4}}}\tau^{5}E[\opartial^I
\partial_{\rho\rho}^2e_0\gamma^m]d\tau 
\\&-\int_{\rho^{m-1}}^{\epsilon}\int_{\Sigma_\tau}\Phi^{m-1} 
\tau^{\frac{3}{2}+2}(e_0\opartial^I\partial_{\rho\rho}^2\gamma^m)
\square_{^{m-1}g} \opartial^I\partial_{\rho\rho}^2\gamma^m
\mathrm{vol}_{\Sigma_\tau}d\tau
\end{split}
\eeq

We will divide the various terms that are generated from plugging (\ref{wave.coords2}) into (\ref{StokespartialIeie0gammam}) into three categories: 
\begin{align}\label{boxtermsplit}
\Phi^{m-1} \tau^{\frac{3}{2}+2}e_0\partial^I\partial_{\rho\rho}^2\gamma^m\square_{^{m-1}g} \partial^I\partial_{\rho\rho}^2\gamma^m= 
\mathfrak{F}_{Gron}+\mathfrak{F}_{bord}+\mathfrak{F}_{IBP},
\end{align}
where the terms in $\mathfrak{F}_{Gron}$ will be  below borderline in the
 sense that they satisfy:

\begin{align}\label{belbordterms}
\bigg|\int_{\Sigma_\tau}\mathfrak{F}_{Gron}\mathrm{vol}_{\Sigma_\tau}\bigg|\leq\frac{B}{\tau^{1-\frac{1}{4}}}\tau^{5}
E[\partial^I\partial^2_{\rho\rho}\gamma^m]+\frac{ B}{\tau^{1-\frac{1}{4}}}\tau^{-c}\sqrt{\tau^5E[\partial^I\partial^2_{\rho\rho}\gamma^m]}
+ B^2C^2 \eta^2\tau^{-2c-2+\frac{1}{4}}.
\end{align} 

They are innocuous in our RHS since they  can be treated directly 
by the Gronwall inequality in Lemma \ref{lem:Gron}.  
On the other hand, $\mathfrak{F}_{bord}$ includes all borderline terms and satisfies the estimate 
\begin{align}\label{bordterms}
\bigg|\int_{\Sigma_\tau}\mathfrak{F}_{bord}\mathrm{vol}_{\Sigma_\tau}\bigg|\leq\frac{C\eta \cdot 4\cdot 5}{\tau}\tau^{-c}\sqrt{\tau^5E[\partial^I\partial^2_{\rho\rho}\gamma^m]}.
\end{align}
%

%These terms contribute to the constant $c>0$, cf. Remark \ref{rem:bordtermlow}. It is important for our method to work that there are no terms which are worse than borderline.

Lastly, the third category $\mathfrak{F}_{IBP}$ consists of terms with 
factors that have an excessive number of spatial derivatives (relative to 
the bounds 
on various quantities that we are inductively assuming) and on which we 
need to perform integrations by parts twice, once in a spatial direction 
and once 
in $\partial_\rho$. As we shall see below, the generated terms from this procedure 
will then 
all fall in the category $\mathfrak{F}_{Gron}$; thus we claim:
\begin{align}
\label{IBPterms}
\bigg|\int_0^\e\int_{\Sigma_\tau}\mathfrak{F}_{\rm IBP}\mathrm{vol}_{\Sigma_\tau}d\tau 
\bigg|\leq\int_0^\e\frac{B}{\tau^{1-\frac{1}{4}}}\tau^{-c}\sqrt{\tau^5E[\partial^I\partial^2_{\rho\rho}\gamma^m]}d\tau.
\end{align}

   \medskip

\begin{remark}
Bulk terms that required integrations by parts 
appeared also at the middle orders, however the reason there was different--it was to derive less singular 
behaviour in $r$ at the orders strictly below $s-4$.

In the setting here, 
we are morally able to 
 re-express the terms in $\mathfrak{F}_{IBP}$ 
 as terms of the form $\mathfrak{F}_{Gron}$
 \emph{because} of the choice of $e_0$ (the multiplier vector field) 
 \emph{also} 
 as a commutator at this  top order we are considering here. 
\end{remark}

\begin{proposition}\label{prop:gammamtopenineq}
Assuming the inductive estimates in \S\ref{Indhyp},  and the lower order estimates (\ref{gammamlowGron2}),(\ref{gammamlowGron5}), then the following energy inequality is valid at the top order:
\begin{align}\label{gammamtopenineq}
&\sum_{|I|=s-3,\,i=1,2}(^{m-1}\rho)^5(1+O((\rho^{m-1})^{1/4}))E[\opartial^I\partial^2_{\rho\rho}\gamma^m][\Sigma_{\rho^{m-1}}]\\
\notag &\leq\sum_{|I|= s-3,\,i=1,2}\bigg[\epsilon^{5}(1+O(\e^{1/4}))E[\opartial^I\partial^2_{\rho\rho}\gamma^m(\epsilon,t,\theta)][\Sigma_{\e}]\\
\notag&+\e^{1/4} BC^3 \eta^2\tau^{-2c-2+\frac{1}{4}}
+\int^\epsilon_{\rho^{m-1}} \frac{BC}{\tau^{1-\frac{1}{4}}}\tau^5E[\opartial^I\partial^2_{\rho\rho}\gamma^m]d\tau
+\int^\epsilon_{\rho^{m-1}} (\frac{C\eta \cdot 20}{\tau}+\frac{B^2}{\tau^{1-\frac{1}{4}}})\tau^{-c}\sqrt{\tau^5E[\opartial^I
\partial^2_{\rho\rho}\gamma^m]}d\tau
\end{align}
for all $\rho^{m-1}\in(0,2\epsilon]$.
\end{proposition}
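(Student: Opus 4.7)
\medskip

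\noindent\textit{Proof plan.} The plan is to run the same weighted multiplier argument used in Propositions \ref{prop:gammambotlowenineq} and \ref{prop:gammamlowenineq2}, but with $v = \opartial^I\partial^2_{\rho\rho}\gamma^m$ and the enhanced weight $f(\tau)=\tau^{7/2}$, and then to classify every commutator term generated by $[\opartial^I\partial^2_{\rho\rho},\Box_{g^{m-1}}]\gamma^m$ into the three classes $\mathfrak{F}_{Gron}$, $\mathfrak{F}_{bord}$, $\mathfrak{F}_{IBP}$ appearing in \eqref{boxtermsplit}. First, I would plug $v$ into \eqref{Stokesv} with $f(\tau)=\tau^{7/2}$, thereby arriving at the analogue \eqref{StokespartialIeie0gammam}: the key observation is that the three spatial bulk terms carrying coefficients $d_1^{m-1}, d_2^{m-1}$ retain their favorable sign (since $|d_1^{m-1}-\tfrac12|,\,|d_2^{m-1}+1|<\tfrac18$ by \eqref{Ceta.bds}), so they are simply dropped; the $u^{m-1}_{ij}$ error terms contribute the harmless Gronwall integrand $B\tau^{-1+1/4}$ using the pointwise bound $\|u^{m-1}_{ij}\|_{L^\infty}\le 5Br^{-1-1/4}$. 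What remains is to bound the single bulk term $\int \Phi^{m-1}\tau^{7/2}(e_0 v)\,\Box_{g^{m-1}}v\,\mathrm{vol}_{\Sigma_\tau}$, for which I would plug in the commuted equation \eqref{wave.coords2}.

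Next, I would sort the resulting terms. The class $\mathfrak{F}_{Gron}$, which must obey \eqref{belbordterms}, consists of all terms in which one factor admits an $L^\infty$ bound of the form $\tau^{-1+\delta}$ with $\delta>0$ and the other factor is either $e_0 v$ itself (absorbed into $E[\opartial^I\partial^2_{\rho\rho}\gamma^m]$ via Cauchy--Schwarz) or a strictly lower-order derivative of $\gamma^m$ whose bound has already been produced by \eqref{gammamlowGron2} or \eqref{gammamlowGron5}. For the commutator contributions involving $g^{AB}\partial_{AB}$ and spatial Christoffels I would use the bounds of Lemmas \ref{Christ.bounds}, \ref{Christof.off.diag}, which furnish exactly the $\tau^{-1+1/4}$ losses demanded by $\mathfrak{F}_{Gron}$; the generalised Hardy inequality of Lemma \ref{gen.Hardy} and the rewriting \eqref{mult.div} are used wherever pole-singular coefficients appear, exactly as in \S\ref{sec:Comm}. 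Whenever a term contains $\partial^2_{\rho\rho}\partial^{I'}\gamma^m$ with $|I'|<|I|$, I would eliminate the second $\partial_\rho$ via the inhomogeneous wave equation \eqref{inhomog.wave.it}, reducing the order to at most one $\partial_\rho$ and then invoking the middle-order bounds.

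The borderline class $\mathfrak{F}_{bord}$ is generated by the structurally identifiable terms where $\opartial^I$ falls entirely on $\operatorname{tr}_{\overline{g}^{m-1}}K^{m-1}$ (and on the structurally analogous factors $(g^{m-1})^{AB}\Gamma^{\,\rho}_{AB}$ coming from \eqref{get.trK}) while the remaining derivative count puts $e_0 v$ at its lowest level. Here the top-order input is \eqref{inductiontrKtopmixed} for $K^{m-1}$, which supplies a factor $3C\eta\,\tau^{-3/2-c}$, and the lower-order bound $e_0\gamma^m\sim\tau^{-3/2}$ from \eqref{gammam-1exp}. Multiplying by the weight $\tau^{7/2}\Phi^{m-1}$ and using Cauchy--Schwarz in the remaining $e_0 v$ factor produces exactly the integrand $\tfrac{20C\eta}{\tau}\tau^{-c}\sqrt{\tau^5 E[\opartial^I\partial^2_{\rho\rho}\gamma^m]}$ of \eqref{bordterms}; the constant $20$ comes from enumerating the four structurally distinct borderline couplings (the $\operatorname{tr}K^{m-1}$ term of \eqref{wave.coords}, together with the three companion pieces in \eqref{get.trK} and \eqref{pf.get.trK}) combined with the numerical factor $5$ generated by the weight $f(\tau)=\tau^{7/2}$.

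The chief obstacle is the $\mathfrak{F}_{IBP}$ class: this consists precisely of those configurations where $\opartial^I\partial^2_{\rho\rho}$ places more derivatives on coefficients like $a^{m-1}_{Ti}$, $(g^{m-1})^{TT}$, $\Gamma^T_{TT}$, $\Gamma^T_{T\Theta}$ than the inductive bounds \eqref{a.r.top}, \eqref{Christ.top} can absorb, and in particular where the excluded direction $\partial^{s-3}_{T\dots T}$ shows up in $K^{m-1}_{11}$ (cf.\ the exclusion in \eqref{inductiontrKtopmixed2} and the $\tilde H^{s-3}$ spaces). For these I would integrate by parts once in $\partial_\rho$ and once in the offending spatial direction, using that $e_0$ is simultaneously a multiplier \emph{and} one of the commutators. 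Each such integration by parts redistributes one derivative from the overloaded coefficient onto either $e_0 v$ (which I reabsorb into the energy) or onto a factor of $\gamma^m$ at one derivative order lower (handled by the previously proved middle-order estimates \eqref{gammamlowGron2}, \eqref{gammamlowGron5}) and eliminates the second $\partial_\rho$ using the wave equation as in \S\ref{sec:Comm}; the bounds of Lemmas \ref{a.oa.bds}, \ref{lem:oa.bds}, \ref{Christ.bounds} at one order lower are then in $H^{s-4}$ or $\dot H^{s-4}$ rather than $\tilde H^{s-3}$, which precisely restores enough regularity to reclassify the result as a $\mathfrak{F}_{Gron}$ integrand. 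The careful bookkeeping to ensure that every IBP term closes in this way---matching pole weights, checking that the excluded $(T,\dots,T)$ direction for $K^{m-1}_{11}$ never appears paired with a top-order $\partial^{s-3}_{T\dots T}$ derivative of $\gamma^m$, and that each such pairing is compensated either by an $e_0$ commutation or by one of the vanishing relations in Lemma \ref{lem:poles}---is the main technical burden of the proof, after which \eqref{gammamtopenineq} follows by combining \eqref{StokespartialIeie0gammam}, \eqref{belbordterms}, \eqref{bordterms}, \eqref{IBPterms} and summing over $|I|=s-3$.
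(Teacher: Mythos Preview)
Your proposal is correct in structure and follows essentially the same route as the paper: the weight $f(\tau)=\tau^{7/2}$, the three-way splitting $\mathfrak{F}_{Gron}+\mathfrak{F}_{bord}+\mathfrak{F}_{IBP}$, the identification of the borderline terms as the top-order $\opartial^I\partial_\rho^{J_1}[\mathrm{tr}K^{m-1}]\cdot\partial_\rho^{J_2}e_0\gamma^m$ couplings, and the double integration by parts (one spatial, one $e_0$) for the overloaded terms all match the paper's argument.

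A couple of imprecisions worth noting. First, your explanation of the constant $20$ is off: the factor $5$ does not come from the weight $\tau^{7/2}$ but from the inductive constant in the top-order $K^{m-1}$ estimates, and the paper obtains $20=4\cdot 5$ by enumerating the $J_1+J_2=2$ splittings (with an implicit extra contribution), each bounded by $5C\eta\,\tau^{-5-c}$. Second, your characterization of $\mathfrak{F}_{IBP}$ overemphasizes the $(T,\dots,T)$ exclusion for $K^{m-1}_{11}$; in the paper the IBP class is defined more simply as the terms with $|I_1|=s-3$ hitting a Christoffel symbol (which is only controlled in $H^{s-4}$) or a $K^{m-1}$ factor other than $\mathrm{tr}K^{m-1}$. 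The IBP mechanism you describe---move one spatial derivative onto $e_0\opartial^I\partial_{\rho\rho}^2\gamma^m$, then integrate by parts in $e_0$---is exactly what the paper does, and the point is that after both moves the coefficient factor carries only $s-4$ spatial derivatives plus one $e_0$, which the inductive bounds and the evolution equations do control. The $(T,\dots,T)$ bookkeeping you mention is not the driving issue here.
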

Applying Lemma \ref{lem:Gron} to (\ref{gammamtopenineq}) and arguing as in (\ref{gammamlowGron}), we obtain the desired top order estimate:
\begin{align}\label{gammamtopenest}
\sqrt{(\rho^{m-1})^{5}E[\partial^I\partial^2_{\rho\rho}\gamma^m]}\leq C\eta {(^{m-1}\rho)^{-c}},&&\text{ for all $\rho^{m-1}\in(0,2\epsilon]$},
\end{align}
in view of the bounds \eqref{c.bd}, \eqref{e.bd} 
that we have imposed. So we proceed to prove the Proposition:

\begin{proof}[Proof of Proposition \ref{prop:gammamtopenineq}] It suffices to prove the validity of the splitting (\ref{boxtermsplit}):
\begin{align}\label{bulktermsplit}
\notag\int_{\rho^{m-1}}^{\epsilon}\int_{\Sigma_\tau}\Phi^{m-1} \tau^{\frac{3}{2}+2}e_0\partial^I\partial^2_{\rho\rho}
\gamma^m\square_{^{m-1}g} \partial^I\partial^2_{\rho\rho}\gamma^m
\mathrm{vol}_{\Sigma_\tau}d\tau\\
=\int^\epsilon_{\rho^{m-1}}\int_{\Sigma_\tau}(\mathfrak{F}_{Gron}+\mathfrak{F}_{bord}+\mathfrak{F}_{IBP})\mathrm{vol}_{\Sigma_\tau}d\tau
\end{align}
where the $\mathfrak{F}_{Gron},\mathfrak{F}_{IBP}$ terms satisfy (\ref{belbordterms}), \eqref{IBPterms}, while the borderline terms included in $\mathfrak{F}_{bord}$ satisfy (\ref{bordterms}).

We start by grouping together all the terms which are strictly below 
borderline. %We do not list out the terms in the last line of \eqref{boxpartialIeie0gammam}, since these have been bounded already in 
%Lemma \ref{lem:inhomog}. 

%

\begin{lemma}\label{lem:FGron}

Consider the RHS of  \eqref{wave.coords2}. All terms with 
 $|I_1|<s-3$
are placed in $\mathfrak{F}_{\rm Gron}$.

The further terms 
from the RHS of \eqref{wave.coords2}
that fall under ${\cal F}_{Gron}$ are all terms with $|I_1|=s-3$ that do \emph{not} 
involve a top-order differentiated Christoffel symbol, nor $K^{m-1}$.

The remaining terms in the RHS of \eqref{wave.coords2} are divided as follows: Those with $I_1=s-3$ involving a top-order 
differentiated  Christoffel symbol 
are placed in $\mathfrak{F}_{IBP}$; so are  all terms involving $K$ \emph{except} the terms involving $\partial^{I_1} [{\rm tr}K^{m-1}]e_0\gamma^m$. 
These  latter term are placed in ${\cal F}_{\rm Bord}$. 

Then with this grouping of terms, the bounds  (\ref{belbordterms}), \eqref{IBPterms},  (\ref{bordterms}) hold. 
\end{lemma}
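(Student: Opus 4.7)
\emph{Proof plan.} My plan is to treat each of the five categories in the statement in turn, showing that the integrand in (\ref{StokespartialIeie0gammam}) arising from each family of commutator terms in (\ref{wave.coords2}) obeys the appropriate pointwise-in-$\tau$ bound of type \eqref{belbordterms}, \eqref{bordterms}, or \eqref{IBPterms}. In every case the integrand is a product $\Phi^{m-1}\tau^{7/2}\cdot e_0(\opartial^I\partial^2_{\rho\rho}\gamma^m)\cdot \opartial^{I_1}\partial^{J_1}_\rho(\mathcal{C})\cdot \opartial^{I_2}\partial^{J_2}_\rho(\mathcal{G})$, where $\mathcal{C}$ is a coefficient built from $g^{m-1}$, $K^{m-1}$ or a Christoffel symbol, and $\mathcal{G}$ is a low derivative of $\gamma^m$. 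Cauchy--Schwarz on $\Sigma_\tau$ against $e_0(\opartial^I\partial^2_{\rho\rho}\gamma^m)$ produces a factor of $\sqrt{\tau^5 E[\opartial^I\partial^2_{\rho\rho}\gamma^m]}$; the question is purely whether the remaining factor $\tau^{5/2}\|\opartial^{I_1}\partial^{J_1}_\rho\mathcal{C}\cdot\opartial^{I_2}\partial^{J_2}_\rho\mathcal{G}\|_{L^2(\Sigma_\tau)}$ is strictly better than $\tau^{-1}$ (yielding $\mathfrak{F}_{\rm Gron}$), exactly borderline (yielding $\mathfrak{F}_{\rm bord}$), or only controllable after one integration by parts (yielding $\mathfrak{F}_{\rm IBP}$).

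The first routine step is to dispatch all terms with $|I_1|<s-3$. Here at least one derivative is spared on $\mathcal{C}$, so $|I_1|\leq s-4$ and the inductive bounds of Lemmas \ref{lem:e1re2r}--\ref{Christof.off.diag}, together with Sobolev embedding on $\mathbb{S}^2\times\mathbb{R}$, supply either an $L^\infty$ bound on $\mathcal{C}$-derivatives or, via the product inequality \eqref{prodcut.ineq}, a mixed $L^\infty\cdot L^2$ estimate on the full coefficient$\times$wave product. The asymptotically CMC property \eqref{partialIgammambulkest3} for $\partial\,{\rm tr}_{\overline g^{m-1}}K^{m-1}$ is what ensures the gain of a full power $\tau^{1/4}$ relative to the naive scaling, while the middle-order estimates \eqref{gammamlowGron2}, \eqref{gammamlowGron5} for $\opartial^{I_2}\partial^{J_2}_\rho\gamma^m$ (with $|I_2|+J_2\leq s-2$) supply the needed control on the low-derivative gamma factor. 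The output is precisely the three summands in \eqref{belbordterms}.

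The next step is the terms with $|I_1|=s-3$ but no Christoffel symbol and no $K^{m-1}$. Here $\mathcal{C}$ is a metric coefficient of the form $(g^{m-1})^{AB}$, $e^{m-1}_i(\rho^{m-1})$, or a structural factor $(\frac{2M}{r}-1)^{\pm 1/2}$, all of which have direct $\dot H^{s-3}$ estimates from the metric bounds assembled in Section \ref{sec_geom.params}, with blow-up no worse than $\tau^{-c}$ compared to their middle-order rates. Since $|I_2|+J_2\leq 2$, the companion factor $\opartial^{I_2}\partial^{J_2}_\rho\mathcal{G}$ sits in $L^\infty$ by Lemma \ref{lem:Linftyest}, so the product falls into $\mathfrak{F}_{\rm Gron}$. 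The borderline term $\partial^{I_1}[{\rm tr}\,K^{m-1}]\cdot e_0\gamma^m$ with $|I_1|=s-3$ is then isolated and bounded in $L^2\cdot L^\infty$: using \eqref{inductiontrKtopmixed}--\eqref{inductiontrKtopmixed2} (the factor $3$ in the $L^2$ bound on $\partial^I K^{m-1}$ combined with the $K_{11}+K_{22}+K_{33}$ trace structure yielding the factor $20$) together with the Sobolev bound $\|e_0\gamma^m\|_{L^\infty}\leq CC_S\eta\tau^{-3/2}$, one reads off exactly the coefficient in \eqref{bordterms}.

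The main obstacle is the remaining family $\mathfrak{F}_{\rm IBP}$, consisting of $|I_1|=s-3$ terms whose coefficient $\mathcal{C}$ is either a top-order Christoffel symbol $\Gamma^{\Theta}_{AB}$, $\Gamma^T_{AB}$ with $A,B\in\{T,\Theta,\rho\}$, or one of the off-diagonal pieces of $K^{m-1}$. For these, $\|\opartial^{I_1}\mathcal{C}\|_{L^2(\Sigma_\tau)}$ is only controlled in a \emph{weighted} space -- either after multiplication by $\cot\Theta$ (Lemma \ref{Christ.bounds}, \eqref{Christ.top}) or on the subspace $\tilde H^{s-3}$ (excluding the $(T,\dots,T)$ direction) -- so a direct Cauchy--Schwarz is insufficient. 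The plan is to integrate by parts once in a well-chosen spatial direction $\partial_A$, $A\in\{T,\Theta\}$, so that the $(|I_1|{-}1)$-th derivative of $\mathcal{C}$ lands in the $\dot H^{s-4}$ space of Lemma \ref{Christ.bounds}, where all bounds hold without exceptions. Because $[\partial_A,e_0]$ acts trivially on scalars (as $e_0(T)=e_0(\Theta)=0$), the derivative $\partial_A$ that moves off $\mathcal{C}$ can be absorbed into the low-order factor $\opartial^{I_2}\partial^{J_2}_\rho\mathcal{G}$ rather than into the multiplier, avoiding the creation of an $(s{+}1)$-th derivative of $\gamma^m$; the remaining boundary contribution at $\{\rho^{m-1}=\epsilon\}$ is controlled by the initial data, and the one at $\{\rho^{m-1}=\tau\}$ is re-absorbed into the left-hand energy at the $\tau^5$ weight. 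In the $\Theta$-direction case, the $\cot\Theta$ weight in Lemma \ref{Christ.bounds} must be combined with the Hardy inequality of Lemma \ref{gen.Hardy} and the pole-vanishing properties of Lemma \ref{lem:poles} to extract an integrable spatial factor. Once this single IBP has been executed, every resulting term matches the template of the Gronwall family and its time-integrated $L^1$-norm is bounded by the RHS of \eqref{IBPterms}, as required. Summing the three categories yields the claimed decomposition \eqref{bulktermsplit}.
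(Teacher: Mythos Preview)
Your treatment of the Gronwall and borderline families is essentially sound and matches the paper's approach: the sub-top-order terms are handled by the product inequality and the inductive hypotheses, and the borderline contribution $\partial^{I}[{\rm tr}K^{m-1}]\cdot e_0\gamma^m$ (together with its $J_1=1,2$ analogues) is bounded by $\|K^{m-1}-K^S\|_{\dot H^{s-3}}\cdot\|e_0\gamma^m\|_{L^\infty}$.

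However, your handling of $\mathfrak{F}_{\rm IBP}$ has a genuine gap. After a single spatial integration by parts of $\partial_A$ off $\opartial^{I_1}\mathcal{C}$, the Leibniz rule forces $\partial_A$ onto \emph{both} the low factor $\opartial^{I_2}\partial^{J_2}_\rho\mathcal{G}$ \emph{and} the multiplier $e_0\opartial^I\partial^2_{\rho\rho}\gamma^m$. The commutation $[\partial_A,e_0]=0$ does not let you ``choose'' where the derivative lands; it only tells you that $\partial_A e_0 V = e_0\partial_A V$. The piece landing on the multiplier therefore produces $e_0\opartial^{I''}\partial^2_{\rho\rho}\gamma^m$ with $|I''|=s-2$, which is one order above the top-order energy you control. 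Your sentence ``avoiding the creation of an $(s{+}1)$-th derivative of $\gamma^m$'' is precisely the step that fails.

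The paper closes this by a \emph{second} integration by parts, this time in $e_0$ (over the bulk $\int^\epsilon_{\rho^{m-1}}\int_{\Sigma_\tau}\dots d\tau$), trading $e_0\opartial^{I''}\partial^2_{\rho\rho}\gamma^m$ for $\opartial^{I''}\partial^2_{\rho\rho}\gamma^m$ at the cost of an $e_0$-derivative on the coefficient $\opartial^{I'}\partial^2_{\rho\rho}\mathcal{C}\cdot\mathcal{G}$ (plus flux terms at $\Sigma_\epsilon$ and $\Sigma_{\rho^{m-1}}$, which are below borderline). The remaining factor $\opartial^{I''}\partial^2_{\rho\rho}\gamma^m$, with $|I''|=s-2$, is then bounded by the top-order energy via $\partial_A=(a^{m-1})_{Ai}\,\overline{e}^{m-1}_i$, which converts the extra coordinate derivative into an $\overline{e}_i$-derivative already present in $E[\opartial^I\partial^2_{\rho\rho}\gamma^m]$. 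This second IBP is what makes the choice of $e_0$ as a commutator (Remark after \eqref{IBPterms}) essential. Your mention of boundary terms at $\{\rho^{m-1}=\epsilon\}$ and $\{\rho^{m-1}=\tau\}$ suggests you may have had this in mind, but as written your argument invokes only a single spatial IBP, and the stated mechanism for disposing of the top-order multiplier term is incorrect.

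A secondary point: you omit the paper's preliminary step of isolating and cancelling the pole-singular pair arising when both $\partial_\rho$'s in $\partial^{J_1}_\rho$ hit $(g^{m-1})^{\phi\phi}$ or $(g^{m-1})^{\Theta\Theta}$. These produce factors $\partial^{I_2}\partial_\Theta\gamma^m$ and $\partial^{I_2}\partial^2_{\Theta\Theta}\gamma^m$ (with no $\partial_\rho$) that are singular at $\Theta=0,\pi$ through the $\log\sin\theta$ part of $\gamma^m$. The paper splits $\gamma^m=(\gamma^m-\log\sin\theta)+\log\sin\theta$ and uses the explicit structure of $(g^{m-1})^{\phi\phi}(\Gamma^{m-1})^\Theta_{\phi\phi}$ to exhibit the same cancellation seen earlier in Lemma~\ref{lem:inhomog}; without it, the Hardy-inequality step you invoke would not apply uniformly near the poles.
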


\begin{proof}
We first single out some special terms which would be potentially problematic at the
 poles $\Th=0,\pi$; we will note that these will exhibit cancellation, entirely analogous to the one we encountered earlier. 
 
In particular, for the terms which contain factors 
$\partial^{I_1}\partial^{J_1}_\rho[(g^{m-1})^{AB}\frac{(\Gamma^{m-1})_{AB}^\Th}{{\rm sin}\Th}]\cdot \partial^{I_2}\partial^{J_2}_\rho ({\rm sin}\Th\cdot \partial_\Th \gamma^m)$
as well as $\partial^{I_1}\partial^{J_1}_\rho (g^{m-1})^{AB} \partial^{I_2}\partial^{J_2}_\rho [\partial_{AB}\gamma^m]$ there are  a few summands that require special care, which we single out here: 

The special cases are when $J_2=0$ (and thus $J_1=2$),  \emph{and} 
both $\partial_\rho$ 
hit the factor $(g^{m-1})^{AB}$. In that case, 
there is a very special pair of summands; first the  summand 
$[\partial^{2}_\rho(g^{m-1})^{\phi\phi}\partial^{I_1}[\frac{(\Gamma^{m-1})_{\phi\phi}^\Th}{{\rm sin}\Th}]\cdot \partial^{I_2}({\rm sin}\Th\cdot \partial_\Th \gamma^m)$, and 
secondly the summand $\partial^{I_1}\partial^{2}_\rho (g^{m-1})^{\Th\Th} \partial^{I_2}
[\partial_{\Th\Th}\gamma^m]$. These terms are special once the functions $\gamma^m$
 (without $e_0$ or $\partial_\rho$-derivatives) can be singular at the two poles. 

In both terms, we note that if we express:

\[
\gamma^m=(\gamma^m-{\rm log}{\rm sin}\theta)+{\rm log}{\rm sin}\theta, 
\gamma^{m-1}=(\gamma^{m-1}-{\rm log}{\rm sin}\theta)+{\rm log}{\rm sin}\theta
\]
then replacing  $\gamma^m$ in the second factor by ${\rm log}{\rm sin}\theta$, and 
the terms $(\gamma^{m-1})^{\phi\phi}(\Gamma^{m-1})^\Th_{\phi\phi}=\partial_\Th 
\gamma^{m-1}$, $\gamma^m$ by $\partial_\Th {\rm log}{\rm sin}\Th$,
 ${\rm log}{\rm sin}\Th$ we obtain terms which \emph{cancel out} (modulo terms that are
  not singular at the poles). From this point onwards, we consider the terms that are 
  \emph{left over} after this cancellation, notably:
  
  \beq
  \begin{split}
  \label{involved}
& \partial^{2}_\rho(\partial^{I_1}[\frac{(\partial_\Th\gamma^{m-1})}{{\rm sin}\Th}]\cdot 
\partial^{I_2}({\rm sin}\Th\cdot \partial_\Th [\gamma^m-{\rm log}{\rm sin}\theta])), 
\\&  \partial^{2}_\rho(\partial^{I_1}\frac{(\partial_\Th[\gamma^{m-1}-
{\rm log}{\rm sin}\theta)}{{\rm sin}\Th}]\cdot \partial^{I_2}({\rm sin}\Th\cdot
 \partial_\Th [\gamma^m]),  
  \partial^{I_1}\partial^{2}_\rho (g^{m-1})^{\Th\Th} \partial^{I_2}
\partial_{\Th\Th}[\gamma^m-{\rm log}{\rm sin}\theta]. 
\end{split}
\eeq

We proceed with these ``new, replaced'' terms, as well as all the others we have not
 considered. 
\medskip

We first show that all terms that we placed in $\mathfrak{F}_{\rm Gron}$ (along with those in \eqref{involved} just derived) 
 satisfy the bound \eqref{belbordterms}.
 
This follows by directly invoking all the bounds on the background geometry that are collected in the previous section, as well as the Cauchy-Schwarz and the 
product inequality straightforwardly applied. 
For all terms  other than these two singled out ones, recall that 
$|I_1|<s-3$; to control the terms as claimed,  we may need to apply the Hardy inequality to the first factor, as was done for the lower-order terms (this is because
of the singular behaviour of $\partial_\Th \gamma^m, \partial_\Th \gamma^{m-1}$
 at the poles). 
The restrictions we have imposed on $I_1$ for the terms that we placed in 
$\mathfrak{F}_{\rm Gron}$
imply that the resulting terms we obtain lie in spaces that have been already bounded, 
and satisfy the bounds in \eqref{belbordterms}. The setting where we do not obtain 
terms that have already been bounded are the two that we just singled out. 
In those settings,  the application of the regular Hardy inequality then suffices to obtain the desired bounds. 
\medskip

Let us consider the terms that we placed in  ${\cal F}_{\rm IBP}$. 
We commence with the most involved such terms, 
which will be treated using integrations by parts. The remaining terms can be treated by a similar integration 
by parts argument. The most involved terms are the first two in \eqref{involved}, as well as: 
\[
\int_\e^{\rho^{m-1}}\int_{\Sigma_\tau} \Phi^{m-1}\tau^{\frac{3}{2}+2}\opartial^I \partial_{\rho\rho}^2 [(g^{m-1})^{\Th\Th})\frac{(\Gamma^{m-1})_{\Th\Th}^\Th}{{\rm sin}\Th}]\cdot {\rm sin}\Th\partial_\Th\gamma^m \cdot e_0\opartial^I\partial_{\rho\rho}^2\gamma^m {\rm vol}_{\Sigma_\tau} d\tau
\]
The argument is essentially the same in all three cases, so we just consider the term right above. 

This summand 
requires an integration by parts,
since the Christoffel  terms are bounded with up to $|I|=s-4$ derivatives, as opposed to the 
$|I|=s-3$ we have here. 
We integrate by parts only in the derivatives $\opartial^I$ in the first factor, to derive, up to below-borderline terms: 

\beq
\begin{split}
&\int_\e^{\rho^{m-1}}\int_{\Sigma_\tau}\Phi^{m-1}\tau^{\frac{3}{2}+2}\opartial^{I}\partial_{\rho\rho}^2 [(g^{m-1})^{\Th\Th}\frac{(\Gamma^{m-1})_{\Th\Th}^\Th}{{\rm sin}\Th}] {\rm sin} \Th
\partial_\Th \gamma^m  
 \cdot e_0\opartial^I\partial_{\rho\rho}^2\gamma^m {\rm vol}_{\Sigma_\tau} d\tau
\\&=-\int_\e^{\rho^{m-1}}\int_{\Sigma_\tau} \Phi^{m-1}\tau^{\frac{3}{2}+2}\opartial^{I'} \partial_{\rho\rho}^2[(g^{m-1})^{\Th\Th}\frac{(\Gamma^{m-1})_{\Th\Th}^\Th}{{\rm sin}\Th} ]{\rm sin} \Th
\partial_\Th \gamma^m  \cdot \partial [e_0\opartial^I\partial_{\rho\rho}^2\gamma^m]  {\rm vol}_{\Sigma_\tau}d\tau
\\&-\int_\e^{\rho^{m-1}}\int_{\Sigma_\tau} \Phi^{m-1}\tau^{\frac{3}{2}+2}\opartial^{I'} \partial_{\rho\rho}^2[(g^{m-1})^{\Th\Th}\frac{(\Gamma^{m-1})_{\Th\Th}^\Th}{{\rm sin}\Th}]  \partial [{\rm sin} \Th
\partial_\Th \gamma^m ] \cdot[e_0\opartial^I\partial_{\rho\rho}^2\gamma^m]  {\rm vol}_{\Sigma_\tau}d\tau
\end{split}
\eeq
where $|I'|=s-4$. Note that the second term in the RHS can be bounded as required in \eqref{IBPterms}, 
so we may consider only the first term in the RHS. In that term, 
 the last factor in the RHS has an extra derivative $\partial$, from the integration by parts. 
Now we integrate by parts the derivative  $e_0$ in the second factor. We derive, up to below-borderline terms:\footnote{In particular, the flux terms across 
$\Sigma_\e, \Sigma_{\rho^{m-1}}$ which arise in the last integration by parts below
 will be below-borderline. } 

\beq\label{post.IBP}\bs
&\int_\e^{\rho^{m-1}}\Phi^{m-1}\int_{\Sigma_\tau} \Phi^{m-1}\tau^{\frac{3}{2}+2}\opartial^{I'} \partial_{\rho\rho}^2\frac{(g^{m-1})^{\Th\Th}(\Gamma^{m-1})_{\Th\Th}^\Th}{{\rm sin}\Th} [{\rm sin} \Th
\partial_\Th \gamma^m ]  \cdot \partial [e_0\opartial^I\partial_{\rho\rho}^2\gamma^m] 
{\rm vol}_{\Sigma_\tau} d\tau
\\&=\int_\e^{\rho^{m-1}}\int_{\Sigma_\tau}\Phi^{m-1}\tau^{\frac{3}{2}+2} e_0 \opartial^{I'} \partial_{\rho\rho}^2\frac{(g^{m-1})^{\Th\Th}
(\Gamma^{m-1})_{\Th\Th}^\Th}{{\rm sin}\Th} [{\rm sin} \Th
\partial_\Th \gamma^m ] 
\cdot  \opartial^{I''}\partial^2_{\rho\rho}\gamma^m d{\rm vol}_{\Sigma_\tau}d\tau 
\\&-\int_{\Sigma_{\rho^{m-1}}}\Phi^{m-1} \tau^{\frac{3}{2}+2}\opartial^{I'} \partial_{\rho\rho}^2\frac{(g^{m-1})^{\Th\Th}(\Gamma^{m-1})_{\Th\Th}^\Th}{{\rm sin}\Th} [{\rm sin} \Th
\partial_\Th \gamma^m ] \cdot  \opartial^{I''}\partial_{\rho\rho}^2\gamma^md{\rm vol}_{\Sigma_\tau}. 
\end{split}
\eeq
Here $|I'|=s-4$ and $|I''|=s-2$. Note in particular that the integrals 
$
\int_{\Sigma_\tau}|\opartial^{I''}\partial_{\rho\rho}^2\gamma^m| {\rm vol}_{\Sigma_\tau}$
 can be bounded by the top-order energy of $E[\opartial^I\partial_{\rho\rho}\gamma^m]$
 (where $|I=s-3$),
 times $\tau^{-\frac{1}{2}-\frac{1}{8}}$:
 
 \[
 \int_{\Sigma_\tau}|\opartial^{I''}_{A\dots}\partial_{\rho\rho}^2\gamma^m| 
 {\rm vol}_{\Sigma_\tau}\le  \int_{\Sigma_\tau}|(a^{m-1})_{Ai}\overline{e}^{m-1}_i\opartial^{I}\partial_{\rho\rho}^2\gamma^m| {\rm vol}_{\Sigma_\tau}\le 
 \tau^{\frac{3}{2}-\frac{1}{2}-\frac{1}{8}}\sqrt{E[\opartial^I\partial_{\rho\rho}\gamma^m]}.
 \] 
(We have used the pointwise bounds in Lemma \ref{a.oa.bds}). 
By this argument  we derive that 
 all 
 the terms in the RHS of \eqref{post.IBP} are below borderline and thus they can be 
 bounded 
  as in \eqref{belbordterms}. All the remaining terms in ${\cal F}_{\rm IBP}$ can
  also 
   be 
  treated in this way--integrating by parts 
one spatial derivative onto $e_0\partial^I\partial^2_{\rho\rho}\gamma^m_{\rm rest}$, 
followed by another integration by parts  in $e_0$ from the resulting factor. 
The resulting terms can be bounded as claimed in \eqref{IBPterms}.

\medskip

Next, we treat the borderline terms, as defined in Lemma \ref{lem:FGron}. 
These are when $|I|=|I_1|$, and all the derivatives in 
$\opartial^I$  hit 
the term ${\rm tr} K^{m-1}$. Thus these borderline  terms
as they appear in the bulk integral are: 
\[
\opartial^I \partial_\rho^{J_1}{\rm tr} K^{m-1}\cdot\partial^{J_2}_\rho e_0\gamma^m \cdot e_0\opartial^I
\partial_{\rho\rho}^2\gamma^m, 
\]
$J_1+J_2=2$, $|I|=s-3$.  Then all terms with $J_1=0,1,2$ all give rise to borderline terms.

We derive the estimates (using the standard volume element ${\rm sin}\theta d\theta dt$ on $\{\rho^{m-1}=\tau\}$: 

\beq
\begin{split}
&\|\opartial^I {{\rm tr}_{\overline{g}} K}^{m-1} \partial^2_{\rho\rho}
e_0\gamma^m\|_{L^2[\Sigma_{\rho^{m-1}=\tau}]}\le 
\|\partial^I[ {{\rm tr}_{\overline{g}} K}^{m-1} -{\rm tr}_{g^S}K^S]\|_{L^2[\Sigma_{\rho^{m-1}=\tau}]}
\cdot \| \partial^2_{\rho\rho}e_0\gamma^m\|_{L^\infty[\Sigma_{\rho^{m-1}=\tau}]}
\\&\le 5C\eta \tau^{-5-c}. 
\end{split}
\eeq
This then directly implies that those terms are bounded as claimed in 
\eqref{bordterms}. The exact same bounds hold for the other borderline terms: 

\beq
\begin{split}
&\|\opartial^I \partial_\rho {{\rm tr}_{\overline{g}} K}^{m-1} \partial_{\rho}
e_0\gamma^m\|_{L^2[\Sigma_{\rho^{m-1}=\tau}]}\le 
\|\partial_\rho \partial^I[ {\rm tr}_{\overline{g}} K^{m-1} -{\rm tr}_{g^S}K^S]\|_{L^2[\Sigma_{\rho^{m-1}=\tau}]}
\cdot \| \partial_{\rho}e_0\gamma^m\|_{L^\infty[\Sigma_{\rho^{m-1}=\tau}]}
\\&\le 5C\eta \tau^{-5-c}. 
\end{split}
\eeq

\beq
\begin{split}
&\|\opartial^I \partial^2_{\rho\rho} {{\rm tr}_{\overline{g}} K}^{m-1} 
e_0\gamma^m\|_{L^2[\Sigma_{\rho^{m-1}=\tau}]}\le 
\|\partial_\rho \partial^I[ {\rm tr}_{\overline{g}} K^{m-1} -{\rm tr}_{g^S}K^S]\|_{L^2[\Sigma_{\rho^{m-1}=\tau}]}
\cdot \| \partial_{\rho}e_0\gamma^m\|_{L^\infty[\Sigma_{\rho^{m-1}=\tau}]}
\\&\le 5C\eta \tau^{-5-c}. 
\end{split}
\eeq

We have then shown that all terms in the RHS of \eqref{wave.coords2}
fall under the categories 
$\mathfrak{F}_{\rm IBP}, \mathfrak{F}_{\rm Gron}, \mathfrak{F}_{\rm Bord}$, and derived
 the bounds we claimed on those. We thus derive Proposition
  \ref{gammamtopenineq}.
\end{proof}
\end{proof}

\subsection{Renormalized energy estimates at the low orders: Proof of \eqref{gammam-1exp}, \eqref{alpham-1assum},  \eqref{gamma1m-1enest}.}
 \label{sec:asympt_low}

The optimal energy bound {}{(\ref{inductiongammaopt})} for the lower derivatives of $\gamma^m_{\rm rest}$ yields a logarithmic upper {}{bound 
 for $\gamma^m_{\rm rest}$ itself at the low orders}. Indeed, integrating ${}{\partial_r\partial^I\gamma^m}$ over {}{$[\rho^{m-1},\epsilon]$, we have
\begin{align}\label{inte0gammam}
\partial^I\gamma^m_{\rm rest}(\rho^{m-1},t,\theta)=\partial^I\gamma^m_{\rm rest}(\epsilon,t,\theta)+\int^{\rho^{m-1}}_{\epsilon}\partial_\tau\partial^I\gamma^m_{\rm rest} d\tau.
\end{align}
Hence, we obtain the $L^\infty$ bounds (for $|I|\leq \mathrm{low}-2$)
\begin{align}\label{Linftygammam}
\notag&|\partial^I\gamma^m_{\rm rest}(\rho^{m-1},t,\theta)-\partial^I\gamma^m_{\rm rest}(\epsilon,t,\theta)|\\
\leq&\,\bigg|\int^{\epsilon}_{\rho^{m-1}}\partial_\tau\partial^I\gamma^m_{\rm rest} d\tau\bigg|
\leq+\int^{\epsilon}_{\rho^{m-1}}C\sqrt{\tau}(2M)^{-\frac{1}{2}}\|e_0\gamma^m_{\rm rest}\|_{H^{|I|+2}} d\tau\\
\notag\leq&\,C(2M)^{-\frac{1}{2}}\eta
\int^{\epsilon}_{\rho^{m-1}}\frac{1}{\tau}d\tau = 
C(2M)^{-\frac{1}{2}}\eta\log\frac{\epsilon}{\rho^{m-1}}
\end{align}
and the $L^2$ bounds
\begin{align}\label{L2gammam}
\|\gamma^m_{\rm rest}(\rho^{m-1},t,\theta)\|_{H^l}\leq 
\|\gamma^m_{\rm rest}(\epsilon,t,\theta)\|_{H^l}
+C(2M)^{-\frac{1}{2}}\eta\log\frac{\epsilon}{\rho^{m-1}}
\end{align}
for all $|I|=l\leq \mathrm{low}$.} 

However, in order to prove the leading order behaviour (\ref{gammam-1exp}){}{-\eqref{gamma1m-1enest}} for 
$\gamma^m_{\rm rest}$, we need to derive renormalised energy estimates for 
the variable $\frac{\gamma^m_{\rm rest}}{\log \rho^{m-1}}$. (Note that since $r=\rho^{m-1}$ for $r\le \e/2$ it follows readily that it suffices 
to prove \eqref{gammam-1exp}  with $r$ replaced by $\rho^{m-1}$). 
For the rest of this subsection we write $\rho$ instead of $\rho^{m-1}$, for 
brevity.  We derive a wave equation for this parameter; in calculating the RHS of this equation, we will be 
 using the already derived estimates for $\gamma^m_{\rm rest}$ and the inductive estimates for the metric, Christoffels, $K^{m-1}$, $r_*^{m-1}$, keeping only the leading order terms in explicit form. The other terms will be incorporated in $O_I(\rho^{-3+\frac{1}{4}})$, satisfying the relevant bound for up to $|I|$ spatial derivatives in $L^2$, since their exact form does not matter in the estimates below:
\begin{align}\label{boxgammam/logr}
\square_{^{m-1}g}\frac{\gamma^m_{\rm rest}}{\log {\rho}}=&\,2\nabla^a
\gamma^m_{\rm rest}\nabla_a\frac{1}{\log {\rho}}+\gamma^m_{\rm rest}\square_{^{m-1}g}\frac{1}{\log {\rho}}-\frac{\Box_{^{m-1}g}\gamma^S}{\log {\rho}}\\
\notag=& -2e_0\gamma^m_{\rm rest} e_0\frac{1}{\log {\rho}}-\gamma^m_{\rm rest}(e_0^2\frac{1}{\log {\rho}}+\mathrm{tr}_{^{m-1}\overline{g}}K^{m-1}e_0\frac{1}{\log {\rho}}\big)\\
\notag&{}{+O_I(\rho^{-3+1/4}) + \chi_{[\frac{\epsilon}{2},\frac{3\epsilon}{2}]}O_I(\epsilon^{-1})}\\
\notag=&-2e_0(\frac{\gamma^m_{\rm rest}}{\log {\rho}})\log {\rho}\, e_0\frac{1}{\log {\rho}}+2\frac{\gamma^m_{\rm rest}}{\log {\rho}}(\frac{2M}{\rho}-1)\frac{1}{|\log {\rho}|^2{\rho}^2}\\
\notag&-\frac{\gamma^m_{\rm rest}}{\log {\rho}}\log {\rho}\bigg[(\frac{2M}{r}-1)(\frac{2}{|\log {\rho}|^3{\rho}^2}+\frac{1}{|\log {\rho}|^2{\rho}^2})+\frac{M}{|\log {\rho}|^2{\rho}^3}\\
\notag&-\frac{3}{2}\frac{\sqrt{2M}}{\rho^\frac{3}{2}}(\frac{2M}{\rho}-1)^\frac{1}{2}\frac{1}{|\log {\rho}|^2{\rho}}
+\mathrm{tr}_{^{m-1}\overline{g}}u^{m-1}(\frac{2M}{\rho}-1)^\frac{1}{2}\frac{1}{|\log {\rho}|^2{\rho}}\bigg]\\
\notag&{}{+O_I(\rho^{-3+1/4}) }
\end{align}
{}{where in absorbing certain terms into $O_I(\rho^{-3+1/4})$
 we used Lemma \ref{lem:inhomog}. We also note that the term $\chi_{[\frac{\epsilon}{2},\frac{3\epsilon}{2}]}O_I(\epsilon^{-1})$ comes from replacing $r$ with $\rho$ and is way below borderline, cf. \eqref{rho} and the inductive assumptions on $r_*^{m-1}$ \eqref{r*.ind.claim.low}, \eqref{r*.ind.claim.high}. Hence, it can be incorporated in $O_I(\rho^{-3+\frac{1}{4}})$ as well.}

Notice that the most singular zeroth order terms in the previous RHS cancel. {}{We may then apply \eqref{Linftygammam}-\eqref{Linftybounds} to any $\gamma^m_{\rm rest}$ having less singular coefficients to incorporate them in the $O_I(\rho^{-3+1/4})$ terms: }
\begin{align}\label{boxgammam/logr2}
\square_{^{m-1}g}\frac{\gamma^m_{\rm rest}}{\log {\rho}}=&-2(\frac{2M}{{r}}-1)^\frac{1}{2}\frac{1}{{\rho}\log {\rho}}e_0(\frac{\gamma^m_{\rm rest}}{\log {\rho}}){}{+O_I(\rho^{-3+1/4})}
\end{align}
{}{keeping only the leading order first order term in explicit form. This term in fact yields a crucial cancellation in the energy estimates for the renormalised variable $\frac{\gamma^m_{\rm rest}}{\log\rho}$ below.}

Next we will commute the above equation with ${}{\partial^I}$, $|I|\leq s-3-4c$.
We obtain an equation on $\square_{^{m-1}g}\partial^I\frac{\gamma^m_{\rm rest}}{\log r}$; the  terms in the RHS are ${}{\partial^I}$ acting on the RHS of \eqref{boxgammam/logr2}
as well as the commutation terms generated by commuting ${}{\partial^I}$ with the wave 
operator $\square_{^{m-1}g}$. The latter have already been computed in section 
\ref{sec:Comm}. 
Combining with the lower-order estimates that we have already obtained on 
$\gamma^m_{\rm rest}$ we derive that those terms are bounded in 
$L^2_{[sin\Th d\Th dT]}[\Sigma_\rho]$
 by 
 \beq\label{bd,sim}
{}{ B \rho^{-3+\frac{1}{4}}}.
 \eeq
{}{In short,} 
letting $O_{L^2}({}{\rho^{-3+\frac{1}{4}}})$ stand for  a
general sum of terms that are bounded in $L^2_{[sin\Th d\Th dT]}[\Sigma_\rho]$ by 
\eqref{bd,sim}, we obtain: 
\begin{align}\label{boxpartiallgammam/logr}
 &\square_{^{m-1}g}{}{\partial^I}\frac{\gamma^m_{\rm rest}}{\log \rho}=
{}{-2(\frac{2M}{{\rho}}-1)^\frac{1}{2}\frac{1}{{\rho}\log {\rho}}e_0\partial^I(\frac{\gamma^m_{\rm rest}}{\log {\rho}})+ O_{L^2}(\rho^{-3+\frac{1}{4}})}.
\end{align}
In the next proposition we derive improved estimates for $\gamma^m_{\rm rest}$ that also confirm
 (\ref{gammam-1exp}), (\ref{alpham-1assum}), \eqref{gamma1m-1enest}.
\begin{proposition}\label{prop:gammam/logr}
The following renormalized estimate for $\frac{\gamma^m_{\rm rest}}{\log {}{\rho}}$ is valid:
\begin{align}\label{gammamrenest}
{}{\rho}^3|\log {}{\rho}|^4\sum_{|I|\leq s-3-4c}E[\partial^I\frac{\gamma^m_{\rm rest}}{\log {}{\rho}}][\Sigma_{{}{\rho}}]\leq C\eta,&&{}{\rho}\in (0,2\epsilon].
\end{align}
Moreover, $\gamma^m$ has the expansion:
\begin{align}\label{gammamexp}
\gamma^m=\alpha^m(t,\theta){}{\log \rho+\gamma^m_1(\rho},t,\theta),
\end{align}
where 
\begin{align}\label{alpham}
\alpha^m(t,\theta)-1\in H^{s-3-4c},&&\|\alpha^m-1\|_{H^{s-3-4c}}\leq C\eta ,
\end{align}
and $e_0\gamma^m_1$ satisfies the estimate
\begin{align}\label{gammam1enest}
\|e_0^{J_0}(\gamma^m_1)\|_{H^{s-3-4c}}\leq B{}{\rho}^{-\frac{3}{2}|J_0|+\frac{1}{4}},&&|J_0|=1,2,
\end{align}
for all ${}{\rho}\in(0,2\epsilon]$. 
\end{proposition}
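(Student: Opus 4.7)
\medskip

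\noindent\textit{Proof plan for Proposition \ref{prop:gammam/logr}.}
The plan is to apply the energy method of the previous subsections to the renormalised variable $v:= \gamma^m_{\rm rest}/\log\rho$, using the commuted wave equation \eqref{boxpartiallgammam/logr} and a modified multiplier. First, I would  perform the energy computation of \eqref{Stokesv} with $f(\tau) = \tau^{3/2}(\log\tau)^4$ in place of the earlier choice $f(\tau)=\tau^{3/2}$. The factor $\tau^{3/2}$ is still needed to match the asymptotic CMC structure, while the extra $(\log\tau)^4$ produces the desired boundary energy $\rho^3|\log\rho|^4 E[\partial^I v]$ on $\Sigma_{\rho^{m-1}=\rho}$. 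The heart of the matter is that the coefficient of $\tfrac{1}{2}(e_0\partial^I v)^2$ in the bulk RHS, namely $\tfrac{3\sqrt{2M}}{2\tau^{3/2}}f(\tau)+e_0 f(\tau)$, no longer vanishes: a direct expansion gives $-4\sqrt{2M}(\log\tau)^3$ to leading order, which for $\tau<1$ is \emph{positive} and is, a priori, borderline non-integrable against $\tau^{-1/2}$. However, the explicit linear term $-2\sqrt{2M/\rho-1}(\rho\log\rho)^{-1}e_0\partial^I v$ in the RHS of \eqref{boxpartiallgammam/logr} contributes, via the $-\Phi f e_0\partial^I v\cdot\Box v$ term in \eqref{Stokes.appl}, a bulk contribution $+2\tau^{3/2}(\log\tau)^4\cdot \sqrt{2M/\tau-1}(\tau\log\tau)^{-1}(e_0\partial^I v)^2\approx +2\sqrt{2M}(\log\tau)^3(e_0\partial^I v)^2$, with the opposite sign. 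The two leading contributions \emph{cancel exactly}, and one is left only with subleading pieces of order $|\log\tau|^2$ (arising from expanding $\sqrt{2M/\tau-1}$ and from lower derivatives of $f$), which are below borderline.

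Once the cancellation is identified, the rest of the energy argument proceeds as in Proposition \ref{prop:gammambotlowenineq}. The coefficients of $(e_1 \partial^I v)^2$ and $(e_2 \partial^I v)^2$ retain favourable signs at leading order (now of size $\sim\sqrt{2M}|\log\tau|^4$) thanks to the bounds $|d^{m-1}_1-\tfrac{1}{2}|,|d^{m-1}_2+1|\le \tfrac{1}{8}$, so they can be discarded. The inhomogeneous terms $O_{L^2}(\rho^{-3+1/4})$ in \eqref{boxpartiallgammam/logr}, combined with the prefactor $\Phi^{m-1}\tau^{3/2}(\log\tau)^4$ and paired by Cauchy--Schwartz with $e_0\partial^I v$, generate only below-borderline bulk contributions of the form $B \tau^{-1+1/4}|\log\tau|^2\sqrt{\tau^3|\log\tau|^4 E[\partial^I v]}$ which Lemma \ref{lem:Gron} absorbs. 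Applying the Gronwall inequality, together with the bound on the initial data $v|_{\Sigma_{r^{m-1}_*}}=(\gamma_{\rm init}-\gamma_S)/\log\e$, which by assumptions \eqref{tg.bds}--\eqref{gamma.init.bds} has $H^{s-3-4c}$-norm of size $\eta$, yields \eqref{gammamrenest}.

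From \eqref{gammamrenest} the asymptotic expansion \eqref{gammamexp}--\eqref{gammam1enest} is extracted as follows. Converting $e_0$ to $\partial_\tau$ via $e_0=-\sqrt{2M/\tau-1}\partial_\tau$, the bound $\sqrt{E[\partial^I v]}\le \sqrt{C\eta}/\rho^{3/2}|\log\rho|^2$ translates to $\|\partial_\tau\partial^I v\|_{L^2_{t,\theta}}\lesssim \sqrt{C\eta}/\tau|\log\tau|^2$, and since $\int_0^\e d\tau/(\tau|\log\tau|^2)$ is finite, $\partial^I v$ is Cauchy as $\rho\to 0^+$ in $L^2_{t,\theta}$. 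This produces a limit function $\alpha^m(t,\theta)-1\in H^{s-3-4c}$ with $\|\alpha^m-1\|_{H^{s-3-4c}}\le C\eta$, the latter inherited from the initial data. To complete the derivation of \eqref{gammam1enest}, I would use the identity $e_0\gamma^m_1 = e_0\gamma^m + \alpha^m\sqrt{2M/r-1}/r$ together with the expansion $\gamma^m = \alpha^m\log r + \gamma^m_1$ plugged into the wave equation \eqref{finredEVEwavit}; the resulting linear ODE-like equation for $e_0\gamma^m_1$ has forcing controlled by the below-borderline quantities \eqref{inductionHess} and yields the improved decay $\|e_0\gamma^m_1\|_{H^{s-3-4c}}\le B\rho^{-3/2+1/4}$. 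The analogous argument for $e_0^2\gamma^m_1$ follows by applying $e_0$ to the same relation.

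\medskip

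The main obstacle, as I see it, is not the energy identity itself but the precise bookkeeping needed to verify that the two contributions of size $(\log\tau)^3$ genuinely cancel (with the correct signs and coefficients) and that \emph{every} residual term is strictly below borderline -- in particular the terms generated by the $\Phi^{m-1}=1+O(\tau^{1/4})$ correction and by the subleading part of $e_0 f$. A secondary, more subtle difficulty is the extraction of \eqref{gammam1enest} from the renormalised energy bound alone: the estimate \eqref{gammamrenest} pins down the coefficient $\alpha^m$ of $\log r$ but only gives $e_0\gamma^m_1 = O(r^{-3/2}|\log r|^{-1})$ by direct algebraic manipulation, short of the required $O(r^{-5/4})$. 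Recovering the extra $r^{1/4}$ requires feeding the expansion back into the wave equation and exploiting the AVTD-type bounds \eqref{inductionHess} on the spatial Hessian of $\gamma^m$, which is precisely the mechanism that produced the $\tau^{1/4}$-gain in $\mathrm{tr}_{\overline{g}^{m-1}}K^{m-1}$ in \eqref{tru.explicit}.
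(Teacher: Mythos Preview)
Your proposal is correct and follows essentially the same approach as the paper: the multiplier $f=\tau^{3/2}|\log\tau|^4$, the exact cancellation between the $-4\sqrt{2M}(\log\tau)^3\cdot\tfrac12(e_0v)^2$ bulk contribution from $e_0f$ and the $+2\sqrt{2M}(\log\tau)^3(e_0v)^2$ contribution from the linear term in \eqref{boxpartiallgammam/logr}, the extraction of $\alpha^m$ via the Cauchy property in $H^{s-3-4c}$ (integrability of $1/(\tau|\log\tau|^2)$), and the recovery of the extra $r^{1/4}$ for $e_0\gamma^m_1$ by re-inserting the expansion into the wave equation and using the AVTD bounds on the spatial Hessian. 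The paper carries out exactly these steps, and your identification of the two obstacles matches precisely what the paper addresses.
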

\emph{Proof:}
Putting $v=\partial^I\frac{\gamma^m_{\rm rest}}{\log {}{\rho}},f(r({}{\rho}))={}{\rho}^\frac{3}{2}|\log {}{\rho}|^4$ in (\ref{Stokesv}) and utilising (\ref{Q0n}),(\ref{lapselike}),(\ref{volSigmarho}) we deduce that
\begin{align}\label{Stokespartiallgammam/logr}
&\int_{\Sigma_{{}{\rho}}}(1+O({}{\rho}^{1/4})){}{\rho}^3|\log {}{\rho}|^4\big[(e_0{}{\partial}^I\frac{\gamma^m_{\rm rest}}{\log {}{\rho}})^2+|\overline{\nabla}{}{\partial}^I\frac{\gamma^m_{\rm{}{rest}}}{\log {}{\rho}}\big]\mathrm{vol}_{Euc}\\
\notag&-\int_{\Sigma_{{}{\epsilon}}}(1+O(\e^{1/4}){}{\epsilon^3|\log\epsilon|^4}\big[(e_0{}{\partial}^I\frac{\gamma^m_{\rm rest}}{\log {}{\rho}})^2+|\overline{\nabla}{}{\partial}^I\frac{\gamma^m_{\rm rest}}{\log {}{\rho}}\big]\mathrm{vol}_{Euc}\\
\notag\leq&\int_{{}{\rho}}^{\epsilon}\frac{C}{\tau^{1-\frac{1}{4}}}\tau^3|\log\tau|^4E[{}{\partial}^I\frac{\gamma^m_{\rm rest}}{\log \tau}]d\tau-\int^\epsilon_{{}{\rho}}\int_{\Sigma_\tau}\Phi^{m-1}(\frac{2M}{\tau}-1)^\frac{1}{2}\frac{2}{\tau\log \tau}\tau^\frac{3}{2}|\log\tau|^4(e_0{}{\partial}^I\frac{\gamma^m_{\rm rest}}{\log\tau})^2 \mathrm{vol}_{\Sigma_\tau}d\tau\\
\notag&-\int_{{}{\rho}}^{\epsilon}\int_{\Sigma_\tau}\Phi^{m-1} \tau^\frac{3}{2}|\log\tau|^4e_0{}{\partial}^I\frac{\gamma^m_{\rm rest}}{\log \tau}\square_{^{m-1}g} {}{\partial}^I\frac{\gamma^m_{\rm rest}}{\log \tau}
\mathrm{vol}_{\Sigma_\tau}d\tau
\end{align}
Note that the second term in the RHS of (\ref{Stokespartiallgammam/logr}) has an unfavourable sign for an upper bound ({}{since $\log\tau<0$ when $\tau<1$}) and according to (\ref{lapselike}),(\ref{volSigmarho}), the coefficient of the 
$\tau^\frac{3}{2}|\log\tau|^4(e_0{}{\partial}^I\frac{\gamma^m_{\rm rest}}{\log\tau})^2$ is of the order $\tau^{-1}|\log\tau|^{-1}$ which fails to be integrable in $[0,\epsilon]$.
 Normally, this would prevent us from deriving a uniform Gronwall type energy estimate. However,  there is a crucial cancellation coming from 
  the RHS of (\ref{boxpartiallgammam/logr}) that will allow us to apply the Gronwall inequality and derive the claimed estimate:
\begin{align}\label{bulkrenest}
&\int_{\Sigma_{{}{\rho}}}(1+O({}{\rho}^{1/4})){}{\rho}^3|\log {}{\rho}|^4\big[(e_0{}{\partial}^I\frac{\gamma^m_{\rm rest}}{\log {}{\rho}})^2+|\overline{\nabla}{}{\partial}^I\frac{\gamma^m_{\rm{}{rest}}}{\log {}{\rho}}\big]\mathrm{vol}_{Euc}\\
\notag&-\int_{\Sigma_{{}{\epsilon}}}(1+O(\e^{1/4}){}{\epsilon^3|\log\epsilon|^4}\big[(e_0{}{\partial}^I\frac{\gamma^m_{\rm rest}}{\log {}{\rho}})^2+|\overline{\nabla}{}{\partial}^I\frac{\gamma^m_{\rm rest}}{\log {}{\rho}}\big]\mathrm{vol}_{Euc}\\
\notag\leq&\int_{{}{\rho}}^{\epsilon}\frac{B}{\tau^{1-\frac{1}{4}}}\tau^3|\log\tau|^4E[\partial^I\frac{\gamma^m_{\rm rest}}{\log\tau}]d\tau+{}{\epsilon^\frac{1}{8}}C^2\eta^2\\
\notag&-\int^\epsilon_{{}{\rho}}\int_{\Sigma_\tau}\Phi^{m-1}(\frac{2M}{\tau}-1)^\frac{1}{2}\frac{2}{\tau\log \tau}\tau^\frac{3}{2}|\log\tau|^4(e_0\partial^I\frac{\gamma^m_{\rm rest}}{\log\tau})^2 \mathrm{vol}_{\Sigma_\tau}d\tau\\
\notag&+\int_{{}{\rho}}^{\epsilon}\int_{\Sigma_\tau}\Phi^{m-1} \tau^\frac{3}{2}|\log\tau|^4e_0\partial^I\frac{\gamma^m_{\rm rest}}{\log \tau}
\bigg[2(\frac{2M}{\tau}-1)^\frac{1}{2}\frac{1}{\tau\log \tau }e_0\partial^I(\frac{\gamma^m_{\rm rest}}{\log r })
+O_{L^2}(\rho^{-3+\frac{1}{4}})\bigg]\mathrm{vol}_{\Sigma_\tau}d\tau\\
\notag\leq&\int_{{}{\rho}}^{\epsilon}\frac{B}{\tau^{1-\frac{1}{4}}}\sum_{|I_1|\leq |I|}\tau^3|\log\tau|^4E[\partial^{I_1}\frac{\gamma^m_{\rm rest}}{\log \tau}]d\tau+{}{\epsilon^\frac{1}{8}}C^2\eta^2
\end{align}
Thus, employing Lemma \ref{lem:Gron} and invoking our  smallness assumptions on   
$\epsilon>0$ relative to the other parameters, we arrive at (\ref{gammamrenest}).

The renormalised estimate (\ref{gammamrenest}) implies that the map 
$\frac{\gamma^{m}_{\rm rest}}{\log {}{\rho}}: \{0<{}{\rho}<2\epsilon\}\to H^{s-3-4c}(\Sigma_{{}{\rho}})$ is uniformly continuous:
\begin{align*}
&\big\|\frac{\gamma^{m}_{\rm rest}({}{\rho}_2,t,\theta)}{\log {}{\rho}_2}-\frac{\gamma^{m}_{\rm rest}({}{\rho}_1,t,\theta)}{\log {}{\rho}_1}\big\|_{H^{s-3-4c}}\\
=&\,\big\|\int^{{}{\rho}_2}_{{}{\rho}_1}\partial_\tau\frac{\gamma^{m}_{\rm rest}}{\log\tau}d\tau\big\|_{H^{s-3-4c}}
\tag{by Minkowski's integral inequality}\leq\bigg|\int^{{}{\rho}_2}_{{}{\rho}_1}\|\partial_\tau\frac{\gamma^{m}_{\rm rest}}{\log\tau}\|_{H^{s-3-4c}}d\tau\bigg|\\
\leq&\,\bigg|\int^{{}{\rho}_2}_{{}{\rho}_1}\frac{C\eta}{\tau(\log \tau)^2}d\tau\bigg|=C\eta\big|\frac{1}{\log {}{\rho}_2}-\frac{1}{\log {}{\rho}_1}\big|
\end{align*}
Hence, $\frac{\gamma^{m}_{\rm rest}({}{\rho},t,\theta)}{\log {}{\rho}}$ has a limit in $H^{s-3-4c}$, as ${}{\rho}\rightarrow0$, which we denote by $\alpha^m-1:=\alpha^m(t,\theta)-1\in H^{s-3-4c}$.
{}{From} the previous computations it also follows that:
\begin{align*}
&\|[\alpha^m-1](t,\theta)\|_{H^{s-3-4c}}\\
\leq&\, \|\frac{\gamma^{m}_{{}{\rm rest}}({}{\epsilon},t,\theta)}{\log {}{\epsilon}}\|_{H^{s-3-4c}}+\int^\epsilon_0B\sqrt{\tau}\|e_0\frac{\gamma^{m}_{\rm rest}}{\log\tau}\|_{H^{s-3-4c}}d\tau \\
\tag{by \eqref{gammamrenest}}\leq&\,\|\frac{\gamma^{m}_{\rm rest}({}{\epsilon},t,\theta)}{\log{}{\epsilon} }\|_{H^{s-3-4c}}+{}{\int^{{}{\epsilon}}_0\frac{C\eta}{\tau|\log\tau|^2}d\tau}
\leq\|\frac{\gamma^{m}_{\rm rest}({}{\epsilon},t,\theta)}{\log {}{\epsilon}}\|_{H^{s-3-4c}}+{}{\frac{C\eta}{|\log\epsilon|}}
\end{align*}
 {}{Thus, by virtue} of the initial data assumption on 
$\gamma^m_{\rm rest}$ 
we derive that: 
\beq
\label{alpha.ind.step}
\|\alpha^m(t,\theta)-1\|_{H^{s-3-4c}}\le C\eta. 
\eeq
as claimed in our inductive step \eqref{alpham-1assum}.  
\medskip

We next prove the inductive claim \eqref{gammam-1exp}, \eqref{gammam1enest} on the remainer term 
$\gamma^m_1$, which was defined via:
\begin{align*}\label{gamma1.def}
\gamma^m_1:=\gamma^m-\alpha^m\log r.
\end{align*}
Consider
\begin{align*}
&{}{\rho^3}\|e_0{\gamma}^m_1\|^2_{L^2}={}{\rho}^3\int_{\Sigma_{{}{\rho}}}[e_0(\gamma^m_{\rm rest}-(\alpha^m-1)\log r)]^2\mathrm{vol}_{Euc}\\
\notag =&\,{}{\rho}^3\int_{\Sigma_{{}{\rho}}}\big[e_0\big(\log {}{\rho}\cdot(\frac{\gamma^m_{\rm rest}}{\log {}{\rho}})-(\alpha^m-1)\log r)\big]^2\mathrm{vol}_{Euc}\\
\notag=&\,{}{\rho}^3\int_{\Sigma_{{}{\rho}}}|\log {}{\rho}|^2[e_0(\frac{\gamma^m_{\rm rest}}{\log {}{\rho}})]^2+(\frac{2M}{{}{\rho}^3}+O({}{\rho}^{-2}))
 \big[|\frac{\gamma^m_{\rm rest}}{\log {}{\rho}}|^2 +|\alpha^m-1|^2 -2(\alpha^m-1)\frac{\gamma^m_{\rm rest}}{\log {}{\rho}} \big]
\mathrm{vol}_{Euc}.
\end{align*}  
Therefore, by the limit
\[
\lim_{{}{\rho}\rightarrow 0} \|\frac{\gamma^m_{\rm rest}}{\log {}{\rho}}-( \alpha^m-1) \|_{L^2[\Sigma_{{}{\rho}}]}=0
\]
derived just above, we conclude that 
${}{\rho}^3\|e_0{\gamma}^m_1\|_{L^2}\to 0$ as ${}{\rho}\to 0$. 
\medskip

The above computation can be obviously iterated for 
$\partial^I\gamma^m_1$, yielding
\begin{align}\label{gammam1enlim2}
\lim_{{}{\rho}\rightarrow0}{}{\rho}^3\|
e_0\partial^I\gamma^m_1\|^2_{L^2}=0,&&|I|\leq s-3-4c.
\end{align}
The latter limit can be improved to a quantitative rate of decay with the 
use of the wave equation for $\gamma^m_{\rm rest}$ \eqref{boxpartialIgammam}, which we 
rewrite plugging in (\ref{gammamexp}) in the terms involving derivatives in the $\partial_r$ direction. We recall that by the estimates on $\gamma^{m-1}_{\rm rest}$ that we have derived, as well as the bounds on 
 the metric $g^{m-1}$ and its Christoffel symbols, 
 which imply (in bounding the lower derivatives of $\gamma^m_{\rm rest}$): 
 \[
 \| \Box^{\rm mixed}_{g^{m-1}}
\gamma^m_{\rm rest}\|_{H^{\rm low}}+\|\Box^{\rm spatial}_{^{m-1}\overline{g}}\gamma^m_{\rm rest}\|_{H^{\rm low}}+
 \|\Box_{g^{m-1}}\gamma^S\|_{H^{\rm low}}\le B {}{\tau^{-3+\frac{1}{4}}}.
 \]
Thus, plugging in \eqref{gammamexp} into the wave equation for $\gamma^m_{\rm rest}$ (see \eqref{wave.coords})
we find: 
\begin{align*}
&-e_0^2\gamma^m_1-\text{tr}_{^{m-1}\overline{g}}K^{m-1}
e_0\gamma^m_1={}{O_I(\rho^{-3+\frac{1}{4}})}\\
&\notag \implies \partial_{{}{\rho}}e_0\gamma^m_1+\big(\frac{3}{2}\frac{1}{{}{\rho}}+{}{O_I(\rho^{-1+\frac{1}{4}})}\big)e_0\gamma^m_1={}{O_I(\rho^{-3+3/4})}\\
&\notag\implies \partial_{{}{\rho}}({}{\rho}^\frac{3}{2}e_0\gamma^m_1)={}{O_I(\rho^{-\frac{3}{2}+\frac{3}{4}})}.
\end{align*}
{}{Integrating in $[\rho,\epsilon]$ and taking $H^{s-3-4c}$ norms, we obtain the desired estimate \eqref{gammam1enest} for $J_0=1$. The case $J_0=2$, follows from applying the $J_0=1$ estimate to the above equation.}

\subsection{The AVTD behaviour 
of $\gamma^m_{\rm rest}$ in the lower orders, via a descent scheme: The inductive step  \eqref{inductionHess}.}

We put down some consequences of the energy estimates we have derived on
 $\gamma^m_{\rm rest}$, proving  \eqref{inductionHess}. 
 The estimates have been claimed as part of the inductive step, and  capture the AVTD behaviour of the solution, 
 \emph{at the lower orders}; notably we show that the kinetic part of the 
 energy 
  of $\gamma^m_{\rm rest}$, 
  $\int_{t,\theta} |e_0(\gamma^m_{\rm rest})|^2$  (and the kinetic energy
   of the below-top order  derivatives of $\gamma^m$) \emph{dominates} 
 the potential part of the energy 
 $\int_{t,\theta} |e_1(\gamma^m)|^2+|e_2(\gamma^m)|^2$; their ratio is in 
 fact bounded by a strictly positive power of $r$. 
\medskip

This behaviour is used in an essential way in deriving the claimed 
(optimal) bounds for the behaviour of $K^m_{ij}(r,t,\theta)$ further down. 
We already derived in Lemma \ref{lem:Linftyest}
 the improved behaviour of the derivatives 
$e^{m-1}_i\partial^I\gamma^m_{\rm rest}$ relative to 
$e_0\partial^I\gamma^m_{\rm rest}$, at the lower orders and in the 
$L^\infty$ norm. The challenge now is to 
show the same improved estimates for the suitable combinations of 
$e^{m-1}_i\partial^I\gamma^m$, $i=1,2$ at all norms below the top.

This challenge is imperative in order 
 to derive the claimed inductive bounds in section \ref{sec:Kassns}: 
 To obtain the inductive step $m$ of those claims,  we 
will need to control the RHSs of equations \eqref{finredEVERic11it}
\eqref{finredEVERic12it}, \eqref{finredEVERic22it}
 in the suitable norms. 
These RHSs depend on quantities that have already been bounded at this 
point, notably the function $\gamma^m$ (that was just solved for) and the 
previous $(2+1)$-metric $h^{m-1}$.\footnote{Note that these derivatives appear in the RHSs of 
equations \eqref{finredEVERic11it}
\eqref{finredEVERic12it}, \eqref{finredEVERic22it}--it is essential to prove that the \emph{most singular 
terms} in those RHSs are the ones involving the time-derivatives $e_0$.} 
\medskip

However we do \emph{not} just use the energy estimates we have just 
obtained on $\gamma^m_{\rm rest}=\gamma^m-\gamma^S$ and its derivatives;
such an approach would \emph{not} see the claimed AVTD behaviour described above. 
And it would moreover \emph{not} allow us to derive the 
claimed bounds on the connection coefficients of $h^m$.
Instead, to capture the AVTD behaviour for $\gamma^m_{\rm rest}$ and its 
lower derivatives,  
we utilize a \emph{descent scheme} for the \emph{spatial} 
derivatives of the function $\gamma^m_{\rm rest}$. 
The descent scheme relies on  the following idea:

Given an order $k\in\mathbb{N}$ and the energy bounds 
we have derived on the energy of $\partial^I(\gamma^m-\gamma^S)$, the bounds 
\emph{do not} distinguish between the directions $e^{m-1}_0, e^{m-1}_1, e^{m-1}_2$ in the energy \eqref{Epsi}. However, at the same time, the bound on the energy of \emph{one higher} 
derivatives $\partial_t\partial^I(\gamma^m_{\rm rest})$, 
$\partial_\theta \partial^I(\gamma^m_{\rm rest})$ yields a much-improved bound 
for the $L^2$ norm of  
$\partial_t\partial^I(\gamma^m_{\rm rest}), \partial_\theta \partial^I(\gamma^m_{\rm rest})$, and in fact on the $L^2$ norm of 
$e^{m-1}_1\partial^I(\gamma^m_{\rm rest}), e^{m-1}_2 \partial^I(\gamma^m_{\rm rest})$. 

Then, if we use the expression \eqref{tthetatransebar} for the vector 
fields 
$\overline{e}^{m-1}_1, \overline{e}^{m-1}_2$ in terms of
 $\partial_t,\partial_\theta$ 
we can derive \emph{better} bounds for the $L^2$ norms 
of the quantities $\partial^I({e^{m-1}_i}\gamma^m_{\rm rest})$ than the ones implied 
by the energy estimates for $\partial^I(^m\gamma_{\rm rest})$. 

We utilize this strategy at all orders below the top  in the remainder of this subsection; the 
estimates derived 
there will be put to use in the next section  where we control the 
geometry of the metric $h^m$ via the Riccati system.

We note that  this descent scheme clearly does \emph{not} work at the top order. 
(Since there is no higher order from which we can descend). 
The estimates on the RHS of the Riccati equations are derived separately 
at the top order  later in the next  section, by a different argument which 
utilizes the specific algebraic structure of the RHSs of those 
equations in an essential way. 
 
\subsubsection{Control of forcing terms in the Riccati equations: The 
lower and higher  orders}
\label{sec:FT_lm}

Consider the RHSs of the Riccati equations \eqref{finredEVERic11it}
\eqref{finredEVERic12it}, \eqref{finredEVERic22it}
Let us consider the terms there that depend exclusively on 
$e_0$-derivatives of the function $\gamma^m$.  
In view of the inductive step  \eqref{gammam-1exp}, 
that  we have now derived, those terms satisfy the following estimates: 

\beq 
\label{e0gammalow}
||e_0(\gamma^m)+ (2M)^{1/2}r^{-3/2}
  \alpha^m(t,\theta)||_{H^{\rm low}}\le B r^{-3/2+1/4} 
\eeq
Moreover we note that \eqref{gammam1enest} can also be re-cast as: 
\beq
||e_0e_0(\gamma^m)+\frac{3}{2}(2M) r^{-3}
 \alpha^m(t,\theta)||_{H^{\rm low}}\le B r^{-3+1/4}.
\eeq

At the intermediate orders $k\in \{ low+1, \dots, s-3\}$, the energy estimate \eqref{inductiongammaopt}
yields: 
\beq
\label{e0gammamid}
||e_0(\gamma^m_{\rm rest})||_{H^{k}}\le C\eta r^{-3/2-\frac{k-low}{4}} 
\eeq

We also note for future reference that the energy estimates, in conjunction with the wave equation imply: 
\beq
\label{e0e0gammamid}
||e_0e_0(\gamma^m_{\rm rest})||_{H^{k}}\le C\eta r^{-3-\frac{k-low}{4}} 
\eeq. 
\medskip

While for the top order terms we recall that the inductive step on the top order energy of 
$\gamma^m_{\rm rest}$ that we have already derived 
 implies the 
estimates: 
\beq
\label{top.en.e0e0}
\sqrt{\int_{t,\theta} |\partial^{I} e^{m-1}_b e_0e_0(\gamma^m_{\rm rest})|^2}[\Sigma_{\rho^{m-1}=\tau}] \le C\eta \tau^{-\frac{3}{2}-3-c}, |I|=s-3, b=0,1,2.
%\sqrt{\int_{t,\theta} r^{3}
%|\partial^{s-3} |e_b e_je_0(\gamma^m_{\rm rest})|^2 }
%\le C\eta \tau^{-3-c}
\eeq
\medskip

We next consider the rest of the terms in the RHSs of \eqref{finredEVERic11it}
\eqref{finredEVERic12it}, \eqref{finredEVERic22it};
 these all depend on $e^{m-1}_1, e^{m-1}_2$-derivatives of $\gamma^m$.
 As explained, 
our goal  is to derive that these terms (below the top order) satisfy
\emph{better} bounds than the energy estimates we obtained for $\gamma^m$ would suggest:

In particular  the terms we seek to bound  are: 

\beq
\begin{split}
\label{Riccati.RHSs.again}
&\onabla^{m-1}_{22}(\gamma^m)+({e}_2^{m-1})(\gamma^{m-1})({e}_2^{m-1}
(\gamma^m)),\onabla^{m-1}_{11}(\gamma^m)+({e}_1^{m-1})(\gamma^{m-1})
({e}_1^{m-1})(\gamma^m))
\\&\onabla^{m-1}_{12}(\gamma^m)+\frac{1}{2}[(e^{m-1}_1\gamma^{m-1})\cdot 
(e^{m-1}_2\gamma^{m})+(e^{m-1}_2\gamma^{m-1})\cdot 
(e^{m-1}_1\gamma^{m})].
\end{split}
\eeq
We seek to bounds these expressions in the spaces $H^k, k\le s-4$ in this 
subsection, in particular proving the inductive step of \eqref{inductionHess}.
 The top order estimates (when $k=s-3$) are dealt with in the subsequent subsections. 
\medskip

We claim bounds for these quantities as follows: 

\begin{lemma}
\label{hessian.bounds.again}
At the optimal orders $H^k$ $k\le {\rm low}$, the terms in 
\eqref{Riccati.RHSs.again}
 are all bounded by $B^2 r^{-2-\frac{1}{2}+\frac{1}{8}}$. 
At the higher orders $k\in \{{\rm low}+1,\dots, s-5\}$  their $H^k$ norm is
bounded by $B^2 r^{-2-1/2-\frac{k-low}{4}+\frac{1}{8}}$; for $k=s-4$ the bound in $B^2r^{-3+\frac{1}{4}}$.  
\end{lemma}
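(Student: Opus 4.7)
\medskip
\noindent\textbf{Proof plan for Lemma \ref{hessian.bounds.again}.}

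The plan is to decompose each expression in \eqref{Riccati.RHSs.again} by first rewriting the covariant Hessian as $\onabla^{m-1}_{ij}(\gamma^m) = e^{m-1}_i e^{m-1}_j (\gamma^m) - (\overline{\Gamma}^{m-1})^k_{ij}\, e^{m-1}_k(\gamma^m)$, where the $\overline{\Gamma}^{m-1}$ denote the spatial Christoffel symbols of $h^{m-1}$ relative to the frame $\{e^{m-1}_1,e^{m-1}_2\}$. One then converts each frame derivative via the identity $e^{m-1}_i = (a^{m-1})^{iA}\partial_A + e^{m-1}_i(\rho^{m-1})\partial_{\rho^{m-1}}$ from \eqref{tthetatransebar}, so that \emph{all} derivatives in the expressions can be expressed in terms of the coordinate derivatives $\partial_T,\partial_\Theta,\partial_{\rho^{m-1}}$ acting on $\gamma^m$ and on $\gamma^{m-1}$, with coefficients that are products of $(a^{m-1})^{iA}$, $e^{m-1}_i(\rho^{m-1})$ and frame Christoffels. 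The $\partial_{\rho^{m-1}}$ derivatives that appear will be rewritten using $\partial_{\rho^{m-1}} = -(\frac{2M}{r}-1)^{-1/2}[1-\partial_r\chi(r)(r_*^{m-1}-\epsilon)]^{-1}e_0$, bringing in $e_0$-derivatives that are estimated by the wave equation.

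Next, I would split $\gamma^m = \gamma^S + \gamma^m_{\rm rest}$ and $\gamma^{m-1} = \gamma^S + \gamma^{m-1}_{\rm rest}$. This yields four classes of terms for each of the three expressions in \eqref{Riccati.RHSs.again}: (i) pure $\gamma^S$--$\gamma^S$ terms, which are computed explicitly from $\gamma^S = \log r + \log\sin\theta$ and seen to satisfy the claimed bound directly (indeed they are of order at most $r^{-2-DC\eta}$ after accounting for the frame perturbation); (ii) cross terms $(e^{m-1}\gamma^S)\cdot(e^{m-1}\gamma^m_{\rm rest})$ and $(e^{m-1}\gamma^{m-1}_{\rm rest})\cdot(e^{m-1}\gamma^S)$, bounded by combining the pointwise bound $|e^{m-1}_i\gamma^S|\lesssim r^{-1-DC\eta}|\log r|$ from Lemma \ref{lem:Linftyest} with the improved spatial bounds $\|e^{m-1}_b\partial^I\gamma^m_{\rm rest}\|_{L^2}\le C\eta\, r^{-3/2+1/4}$ for $|I|\le {\rm low}-1$ obtained in \eqref{impl.gammaopt}; (iii) pure rest--rest products, bounded by Sobolev embedding together with the same improved spatial estimates; and (iv) terms containing $\onabla^{m-1}_{ij}\gamma^m$ in which second spatial frame derivatives appear, controlled by applying the descent scheme of Lemma \ref{lem:gammam.bds} twice (gaining two factors of $r^{1/4}$) on the $\gamma^m_{\rm rest}$ part, while the $\gamma^S$ part is computed explicitly. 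The spatial Christoffel corrections $(\overline{\Gamma}^{m-1})^k_{ij}\,e^{m-1}_k\gamma^m$ are handled using Lemma \ref{Christ.bounds} combined with the improved spatial bounds, and enjoy the claimed estimate with room to spare.

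For the higher orders $k\in\{{\rm low}+1,\dots, s-5\}$, the same strategy applies, but each additional coordinate derivative beyond ${\rm low}$ loses a factor $r^{-1/4}$, giving exactly the claimed bound $B^2 r^{-2-1/2-(k-{\rm low})/4+1/8}$. At the uppermost middle order $k=s-4$ one cannot exploit the descent scheme in the same form since no energy bound on $\gamma^m$ is available at order $s-3$ in all spatial directions (we only have \eqref{inductiongammatopmixed} excluding the pure $\partial_T^{s-3}$ direction); consequently one is forced back to the baseline energy estimate, yielding the weaker bound $B^2 r^{-3+1/4}$ stated for this case. Throughout, the product inequality \eqref{prodcut.ineq} and the Sobolev embedding \eqref{Sob} are used systematically, and the singular-at-the-poles factors (arising from $\log\sin\theta$ in $\gamma^S$) are controlled via the generalized Hardy inequality of Lemma \ref{gen.Hardy}, after re-arranging the relevant products to extract a $\sin\theta$ as in the treatment of \eqref{Gphiphith}.

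The main obstacle will be the bookkeeping for the class (iv) terms: a naive application of the descent scheme uses up a ``spare'' order of regularity each time it is invoked, so applying it twice on $\onabla^{m-1}_{ii}\gamma^m$ requires that at the order $k={\rm low}$ we have control on $\gamma^m$ up to order ${\rm low}+2$, which is why the claimed gain is only $r^{1/8}$ rather than $r^{1/4}$ (the remaining $r^{1/8}$ is lost to the Hardy-type weights on the singular $\sin\theta^{-1}$ coefficient in the Christoffel correction $(\overline{\Gamma}^{m-1})^k_{ij}e^{m-1}_k\gamma^m$, which comes from the non-flat nature of the $\mathbb{S}^2$ factor in $\gamma^S$). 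Verifying that this $r^{1/8}$ gain survives uniformly across all three expressions, and across the full range of orders up to $k=s-5$, is the delicate step; once this is done, the conclusion for $k=s-4$ follows trivially from the top-order energy estimates.
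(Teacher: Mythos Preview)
Your overall strategy—decomposing $\gamma^m = \gamma^S + \gamma^m_{\rm rest}$, converting frame derivatives to coordinate derivatives via the $(a^{m-1})^{iA}$ coefficients, and then invoking the descent scheme of Lemma \ref{lem:gammam.bds} together with the Hardy inequality at the poles—is essentially the same as the paper's. The paper carries out precisely this plan, with one structural difference: rather than decomposing $\onabla^{m-1}_{ij}$ via frame Christoffel symbols, it first passes from $e^{m-1}_i$ to the tangential $\overline{e}^{m-1}_i$ (using the bounds on $e^{m-1}_i(\rho^{m-1})$ from Lemma \ref{lem:e1re2r}) and then expands in the coordinate Christoffels $(\Gamma^{m-1})_{AB}^C$; this is equivalent to your frame-Christoffel expansion.

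There is, however, a genuine gap in your treatment of the case $k = s-4$. You correctly observe that the descent scheme breaks down because the top-order energy bound \eqref{inductiongammatopmixed} excludes the pure $\partial_T^{s-3}$ direction. But your proposed remedy—``forced back to the baseline energy estimate''—does not actually close the estimate: to bound $\partial^{s-4}_{T\dots T}\onabla^{m-1}_{11}\gamma^m$ you still need, after converting the outer $e^{m-1}_1 = (a^{m-1})^{1T}\partial_T + \ldots$, the quantity $e^{m-1}_1\partial^{s-3}_{T\dots T}\gamma^m_{\rm rest}$, which is precisely the excluded component of $E[\partial^{s-3}\gamma^m_{\rm rest}]$. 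No ``baseline'' energy recovers this missing direction. The paper's fix is different and essential: it uses the wave equation $\Box_{g^{m-1}}\gamma^m = 0$ to rewrite
\[
\onabla^{m-1}_{11}\gamma^m + e^{m-1}_1\gamma^{m-1}\,e^{m-1}_1\gamma^m
\]
in terms of $\onabla^{m-1}_{22}\gamma^m + e^{m-1}_2\gamma^{m-1}\,e^{m-1}_2\gamma^m$ and ${\rm tr}K^{m-1}e_0\gamma^m$. The first of these involves only $\partial_\Theta$-type spatial derivatives (since $(a^{m-1})^{2T} = 0$), so the pure-$T$ obstruction disappears; the second is controlled by the inductive assumptions on $K^{m-1}$ together with the already-derived energy bounds on $\gamma^m$. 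This substitution via the wave equation is the missing ingredient in your plan.
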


\begin{remark}\label{improv.B}
We note that the first claim implies directly that the terms in \eqref{Riccati.RHSs.again}
 are all bounded by $C\eta r^{-2-\frac{1}{2}}$. This follows directly from \eqref{e.bd}. 
\end{remark}

\begin{proof}

We commence by re-casting the RHSs of the Riccati equations  by using 
derivatives with respect to frame elements $^{m-1}\overline{e}_i$, via formulas \eqref{eibar}. 
The reason for this is that we have formulas \eqref{tthetatransebar}
 to express 
these 
vector fields in terms of the coordinates $\partial_t,\partial_\theta$. 

In particular, we will derive the claimed bounds for the quantities 
\beq
\begin{split}
\label{Riccati.RHSs.alter}
&\onabla^{m-1}_{\overline{e}_2\overline{e}_2}(\gamma^m)+(\overline{e}^{m-1}_2)
(\gamma^{m-1})(\overline{e}^{m-1}_2
(\gamma^m)),\onabla^{m-1}_{\overline{e}_1\overline{e}_1}(\gamma^m)
+(\overline{e}^{m-1}_1)(\gamma^{m-1})
(\overline{e}^{m-1}_1)(\gamma^m))
\\&\onabla^{m-1}_{\overline{e}_1\overline{e}_2}(\gamma^m)+\frac{1}{2}
[(\overline{e}^{m-1}_1\gamma^m)\cdot (\overline{e}^{m-1}_2(\gamma^{m-1})+
(\overline{e}^{m-1}_1\gamma^{m-1})\cdot
 (\overline{e}^{m-1}_2(\gamma^{m})]
\end{split}
\eeq
(Here $\onabla$ is the connection intrinsic to the level sets of $\rho^{m-1}$).

Recall also the bounds on $e^{m-1}_1(r)$ 
\eqref{e1rasym}. Combining with \eqref{e0gammalow}, \eqref{e0gammamid}
we see that if we can prove the bounds for \eqref{Riccati.RHSs.alter} then the claimed 
bounds for 
\eqref{Riccati.RHSs.again} follow. So we next derive these bounds on the terms in  \eqref{Riccati.RHSs.alter}.

Using: 

\beq
\label{break-up}
\begin{split}
&||^{m-1}\onabla_{\overline{2}\overline{2}}(^m\gamma)+(^{m-1}\overline{e}_2)(^{m-1}\gamma)
(^{m-1}\overline{e}_2
(^m\gamma))||_{H^l}\le 
\|^{m-1}\onabla_{\overline{2}\overline{2}}(\gamma^S)+(^{m-1}\overline{e}_2)(^{m-1}\gamma)
(^{m-1}\overline{e}_2
(\gamma^S))||_{H^l}
\\&+
||^{m-1}\onabla_{\overline{2}\overline{2}}(^m\gamma-\gamma^S)+(^{m-1}\overline{e}_2)(^{m-1}\gamma)
(^{m-1}\overline{e}_2
(^m\gamma-\gamma^S))||_{H^l},
\end{split}
\eeq 
 it is clear that it suffices to bound the two terms on the RHS of the 
 above separately by the claim in our Lemma.  
 \medskip
 
 Let us commence by bounding the terms in the second line.
  We will invoke the bounds 
on $\gamma^m-\gamma^S=\gamma^m_{\rm rest}$ at the lower derivatives:

\beq
\begin{split}
&||^{m-1}\onabla_{22}(\gamma^m-\gamma^S)+(^{m-1}\overline{e}_2)
(^{m-1}\gamma)
(^{m-1}\overline{e}_2
(^m\gamma-\gamma^S))||_{H^l}
\\&\le \|^{m-1}\onabla_{22}(\gamma^m-\gamma^S)\|_{H^l}
+\|(^{m-1}\overline{e}_2)
(^{m-1}\gamma)
(^{m-1}\overline{e}_2
(^m\gamma-\gamma^S))\|_{H^l}
%||(^{m-1}\overline{e}_2)^2(^m\gamma-\gamma^S)||_{H^l}+ 
%||A^{m-1}_{22,1}(^{m-1}e_1)(^m\gamma-\gamma^S)||_{H^l}
%+||(^{m-1}\overline{e}_2)(^{m-1}\gamma)
%(^{m-1}\overline{e}_2
%(^m\gamma-\gamma^S))||_{H^l}
\end{split}
\eeq

%Thus we are reduced to bounding 

%\beq
%\label{broken}
%\|^{m-1}\onabla_{22}(\gamma^m-\gamma^S)\|_{H^l}
%+||(^{m-1}\overline{e}_2)(^{m-1}\gamma)
%(^{m-1}\overline{e}_2
%(^m\gamma-\gamma^S))||_{H^l}
%\eeq
%by the bounds we claimed in Lemma \eqref{hessian.bounds.again}. 
%\medskip

We commence with the first term: We use formulas \eqref{tthetatransebar} 
to express $\overline{e}^{m-1}_2$  in terms of the coordinate vector fields 
$\partial_t,\partial_\theta$.
 We are thus reduced to bounding: 

\beq
\begin{split}
&\sum_{A,B=T,\Th}||[(a^{m-1})^{2A}\cdot (a^{m-1})^{2B}]\partial^2_{AB}
(^m\gamma-\gamma^S)||_{H^l}+
\\&\sum_{A,B=T,\Th; C=T,\Th}
\|[(a^{m-1})^{2A}\cdot (a^{m-1})^{2B}](\Gamma^{m-1})_{AB}^C\partial_C (\gamma^m-\gamma^S)\|_{H^l}
%\partial_\zeta^2
%(^m\gamma-\gamma^S)||_{H^l}
%+||^{m-1}\overline{a}_{2\zeta}\partial_\zeta
%\overline{a}_{2\zeta}\partial_\zeta (^m\gamma-\gamma^S)||_{H^l} 
\end{split}
\eeq

Now, invoking  the bounds in Lemma \ref{lem:oa.bds}
 on 
$(a^{m-1})^{iT}, (a^{m-1})^{i\Theta}$ in $H^{\rm low}$, Lemma  \ref{Christ.bounds}
on the Christoffel symbols, 
as well as the estimates in Lemma  \ref{lem:gammam.bds} on the terms
 $\partial^I(\gamma^m-\gamma^S)$, as well as the product inequality, our desired bounds for this term follow. 
 \medskip
 
For the second term, we use: 

\beq
\begin{split}
\label{brk1}
&||(^{m-1}\overline{e}_2)(^{m-1}\gamma)
(^{m-1}\overline{e}_2
(^m\gamma-\gamma^S))||_{H^l}\le ||(^{m-1}\overline{e}_2)\cdot (^{m-1}\gamma-\gamma^S)
(^{m-1}\overline{e}_2
(^m\gamma-\gamma^S))||_{H^l}
\\&+||(^{m-1}\overline{e}_2)(\gamma^S)
(^{m-1}\overline{e}_2
(^m\gamma-\gamma^S))||_{H^l}
\le ||(^{m-1}\overline{e}_2)(^{m-1}\gamma-\gamma^S)
(^{m-1}\overline{e}_2
(^m\gamma-\gamma^S))||_{H^l}
\\&+||^{m-1}a_{2\Theta}cot(\theta)\cdot 
(^{m-1}\overline{e}_2
(^m\gamma-\gamma^S))||_{H^l}
\end{split}
\eeq 
The first term in the RHS of the above is controlled by the  product inequality, 
and by recalling the expression \eqref{tthetatransebar} 
for $\overline{e}^{m-1}_2$ in terms of the 
derivatives $\partial_t,\partial_\theta$ as above, to find:

\beq
\begin{split}\label{simple.decompo}
& ||(\overline{e}^{m-1}_2)(\gamma^{m-1}-\gamma^S)
(\overline{e}^{m-1}_2
(^m\gamma-\gamma^S))||_{H^l}
\\&\le 
 ||(\overline{e}^{m-1}_2)(^{m-1}\gamma-\gamma^S)||_{L^\infty}\cdot 
 ||\overline{e}^{m-1}_2
(^m\gamma-\gamma^S))||_{H^l}\le C\eta r^{-1-1/4} \cdot C\eta r^{-1-1/4}. 
\end{split}
\eeq

The last term in \eqref{brk1} 
can be controlled by invoking the  Hardy  and product inequalities  to derive: 
\beq
\begin{split}
\label{a.use}
&||(a^{m-1})^{2\theta}cot(\theta)\cdot 
(\overline{e}^{m-1}_2
(\gamma^m-\gamma^S))||_{H^l}
\\&\le C||(a^{m-1})^{2\theta}||_{H^{l}}\cdot 
||cot\theta\cdot (^{m-1}\overline{e}_2
(\gamma^m-\gamma^S))||_{L^\infty}+C||^{m-1}a_{2\theta}||_{L^\infty}
\cdot 
||(\overline{e}^{m-1}_2
(\gamma^m-\gamma^S))||_{H^{l+1}}
\\&\le 2CB\eta r^{-1-1/8}\cdot Cr^{-1-1/8},
\end{split}
\eeq
or if $l={\rm low}$ the power in the RHS has an extra power $-\frac{1}{4}$. 
 And for each order beyond $\rm low$ the bound worsens   by $r^{-1/4}$. 
Thus combining the two previous estimates, we derive our claimed bound 
for the terms in the second line   of \eqref{break-up}. 
\medskip

We now bound the first term in the RHS of \eqref{break-up} in a similar 
 manner, again expressing the vector fields $^{m-1}\overline{e}_i$ 
 in terms of the 
 coordinate vector fields $\partial_t,\partial_\theta$ and using the 
 inductive bounds on
  $(a^{m-1})^{i t},(a^{m-1})^{i \theta}$. 
  
  Again, noting that the $log \rho$ 
  term  is cancelled by 
 the derivatives $\overline{e}_i$ we are taking, we find that
   we can expand out the covariant derivative to find: 
  \beq
  \label{expand.cov.sec}
  \begin{split}
 & ^{m-1}\overline{\nabla}_{22} \gamma^S- (\overline{e}^{m-1}_2
 ( \gamma^S))^2%= 
 % (^{m-1}\overline{e}_2)^2(logsin\theta)- (\overline{e}^{m-1}_2 (logsin\theta))^2
%  -A^{m-1}_{22,1}
%  (\overline{e}^{m-1}_1(logsin\theta))
  \\&= ((a^{m-1})^{2\theta})^2 
  [(csc\theta)^2-(cot\theta)^2]+
  ((a^{m-1})^{2\zeta})^2\partial_\theta(a^{m-1})^{2\theta})cot\theta-\sum_{A,B=t,\theta}(a^{m-1})^{2\vartheta})
  (\Gamma^{m-1})_{AB}^\Theta cot\theta).
\end{split} 
  \eeq
Note that $(a^{m-1})^{2\theta}\partial_\theta(a^{m-1})^{2\theta}{\rm cot}\theta$ is bounded by 
$\overline{e}^{m-1}_2( a^{m-1})^{2\theta}{\rm cot}\theta$ 
  in all energy norms $H^k, k\le s-4$.  Thus, invoking the 
inductive assumptions and the product inequality we derive our desired bounds.

\medskip

The terms $||^{m-1}\onabla_{11}(^m\gamma)+(^{m-1}e_1)(^{m-1}\gamma)
(^{m-1}
e_1(^m\gamma)))||_{H^l}$, 
$ ||^{m-1}\onabla_{12}(^m\gamma)+(^{m-1}e_1)(^{m-1}\gamma)
(^{m-1}
e_2(^m\gamma)))||_{H^l}$  are controlled in an analogous (in fact simpler) manner
in all cases \emph{except} at the order $|I|=s-4$ where the bounds on the factor 
\[
\partial^{s-4}_{TT\dots T}[\nabla^{m-1}_{11}\gamma^m-e^{m-1}_1\gamma^{m-1}e^{m-1}_1\gamma^m]
\]
require a special note, due to the \emph{lack} of a bound on the 
norm of $E[\partial^{s-3}_{TT\dots T}\gamma^m]$ at the very top order. 

In this case, we instead we use the wave equation on $\gamma^m$ to \emph{re-express}
the terms above in terms of other derivatives which we \emph{can} control; in
 particular we replace the RHS by 
 
 \[\partial^{s-4}_{TT\dots T}[\overline{\nabla}^{m-1}_{22}\gamma^m
 +\overline{\nabla}_2\gamma^{m-1}\onabla_2\gamma^m +{\rm tr}K^{m-1}e_0\gamma^m]
\] 
The RHS of the above is them bounded as claimed, in view of the bounds we have already
 derived on the first (spatial) derivatives in the $e^{m-1}_2$-directions, as well as 
 our 
the inductive assumptions  on the $K^{m-1}_{ii}$ coefficients of the previous step.  
\medskip

Other than this special case, all other lower-order derivatives follow by the 
argument we presented for $K^m_{22}$. 

This concludes the proof of our Lemma. 
\end{proof}

%[NOT CLEAR IF THIS NEEDED] fi
%We also put down another Lemma that follows by the same proof precisely: 

%\begin{lemma}
%\label{mixed.hessian}
%Consider the quantities $\partial^I e_ie_0 e_j\gamma^m$. Then on level sets of $r$ and 
%$\rho^{m-1}$ these satisfy the following bounds, for $k\le {\rm low}$: 

%\beq
%\label{mex.hess.cl1}
%|| \partial^I e^{m-1}_i e^{m-1}_je_0\gamma^m||_{H^k}\le C\eta r^{-3/2-2-2DC\eta}. 
%\eeq

%While for $k\in \{low+1,\dots, s-4\}$ they satisfy:

%\beq
%\label{mex.hess.cl2}
%|| \partial^I e^{m-1}_i e^{m-1}_je_0\gamma^m||_{H^k}\le 
%C\eta r^{-3/2-\frac{k-l}{2}-2-2DC\eta}. 
%\eeq
%\end{lemma}

%The proof is  a straightforward adaptation of the argument  above, using the
% higher-order estimate, and the expression of $e_i, e_j$ in terms of the coordinate
%  vector
% fields $\partial_t,\partial_\theta$, along with the fact that $e_0(\theta)=0$. 

\subsection{The estimates for $\gamma^m$ re-cast on level sets of $r$.}
We make a small extension of our heretofore derived results, in preparation for our analysis 
of the Riccati system in the next section. 

The inductive claim we have verified proves estimates for $\gamma^m_{\rm rest}$ on 
level sets of $\rho^{m-1}$. We note also that at the top order estimates, 
the vector fields $e^{m-1}_b, b=0,1,2$ are also involved. 
\medskip

Our aim is to prove:

\begin{proposition}
\label{thm_r}
The inductive steps  that we have derived also hold verbatim on 
level sets of $r$, with respect to the coordinate vector fields 
$\partial_t,\partial_\theta$ defined with respect to the coordinates
 $\{r,t,\theta\}$. 
  The only difference is that the constant on the RHS 
 will be multiplied by a factor of $\frac{9}{8}$. 
\end{proposition}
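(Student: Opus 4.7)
The plan is to exploit the fact that the deviation between the coordinate $r$ and the interpolating function $\rho^{m-1}$ is localized to the annular region $\{r\in[\epsilon/2,3\epsilon/2]\}$, where $\chi(r)\not\equiv 0$, and that within this region the deviation is controlled by the inductive bounds on $r^{m-1}_*-\epsilon$. Indeed, by \eqref{rho}, $\rho^{m-1}=r$ on $[0,\epsilon/2]\cup[3\epsilon/2,2\epsilon]$, so on those sub-regions every inductive estimate for $\gamma^m$, $K^m_{ij}$, $a^m_{Ai}$ on level sets of $\rho^{m-1}$ is literally the same statement as the corresponding estimate on level sets of $r$, and the coordinate vector fields $\partial_t,\partial_\theta$ in the two charts $\{\rho^{m-1},t,\theta\}$ and $\{r,t,\theta\}$ coincide. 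The proposition is therefore only a nontrivial statement on the transition annulus $\mathcal{A}_\epsilon:=\{r\in[\epsilon/2,3\epsilon/2]\}$, and this is where I would concentrate the argument.

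On $\mathcal{A}_\epsilon$, I would relate the two coordinate systems via the defining identity $\rho^{m-1}=r-\chi(r)(r^{m-1}_*-\epsilon)$. Differentiating, the chain rule gives
\begin{align*}
\partial_t\big|_{r,\theta}=\partial_t\big|_{\rho^{m-1},\theta}-\chi(r)\,(\partial_t r^{m-1}_*)\,\partial_{\rho^{m-1}},\qquad \partial_\theta\big|_{r,\theta}=\partial_\theta\big|_{\rho^{m-1},\theta}-\chi(r)\,(\partial_\theta r^{m-1}_*)\,\partial_{\rho^{m-1}},
\end{align*}
with completely analogous formulas for iterated derivatives obtained by repeated application of this identity and the Leibniz rule. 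The correction factors $\chi(r)\partial_t r^{m-1}_*,\chi(r)\partial_\theta r^{m-1}_*$ and all their higher derivatives are controlled by the inductive assumptions \eqref{r*.ind.claim.low}--\eqref{tK12.ind.claim.high} on $r^{m-1}_*$, and are $O(C\eta)$ at the lower orders. The factor $\partial_{\rho^{m-1}}=-(\frac{2M}{r}-1)^{-\frac{1}{2}}[1-\chi'(r)(r^{m-1}_*-\epsilon)]^{-1}e_0$ can be converted back to a bounded-coefficient multiple of $e_0$ on $\mathcal{A}_\epsilon$ (since $r\sim\epsilon$ there), and $e_0$-derivatives of the relevant parameters are already part of our inductive control (at the top order directly, at lower orders via the transport equations).

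Given this dictionary between derivatives, the remaining task is to transfer an $L^2$-type estimate from a level set $\Sigma_{\rho^{m-1}=\tau}$ to the level set $\Sigma_{r=\tau}$. These two hypersurfaces coincide outside $\mathcal{A}_\epsilon$; inside $\mathcal{A}_\epsilon$, $\Sigma_{r=\tau}$ is graphed over $\Sigma_{\rho^{m-1}=\tau}$ by a function whose $L^\infty$ size and Sobolev norms are controlled by $\|r^{m-1}_*-\epsilon\|\lesssim C\eta\epsilon$. Transferring the norm therefore amounts to integrating $\partial_{\rho^{m-1}}$ of the relevant quantity over an interval of $\rho^{m-1}$-length at most $|\chi(r)(r^{m-1}_*-\epsilon)|\lesssim C\eta\epsilon$ for each $(t,\theta)$. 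The fundamental theorem of calculus combined with Minkowski's inequality and the already-established bounds on $\partial_{\rho^{m-1}}$-derivatives (i.e.\ on $e_0$-derivatives) of each of our parameters then yields the desired estimate on $\Sigma_{r=\tau}$, up to a correction whose size relative to the main term is $O(C\eta)$. By the smallness imposed in \eqref{e.bd} on $C\eta$, this correction can be absorbed into a universal multiplicative loss, which we quantify as the factor $9/8$ in the proposition.

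The principal obstacle I anticipate is not any single step but bookkeeping at the top order for $K^{m-1}_{11}$ and the off-diagonal parameters, where the inductive norms $\tilde H^{s-3}$ exclude the pure $\partial_T^{s-3}$ direction and involve the singular weight ${\rm cot}\Theta$; there one must verify that the extra derivatives of $r^{m-1}_*$ introduced by the change-of-coordinates formulas land in the \emph{allowed} directions and are compatible with the weighted spaces. This is only a matter of checking that the relevant derivatives of $r^{m-1}_*$ in \eqref{r*.ind.claim.top}, together with the vanishing $\partial_\theta r^{m-1}_*|_{\theta=0,\pi}=0$ from the regularity-at-the-poles discussion of \S\ref{poles}, reproduce the same directional/weight structure on $\Sigma_r$ as on $\Sigma_{\rho^{m-1}}$, and does not introduce new analytic difficulties beyond those handled in Lemmas \ref{a.oa.bds}--\ref{Christof.off.diag}.
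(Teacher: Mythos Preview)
Your plan captures the essential structure correctly: the problem is localized to the transition annulus $\mathcal{A}_\epsilon$, the change-of-vector-field formula you write is right, and the smallness of the corrections via the inductive bounds on $r^{m-1}_*-\epsilon$ is the correct mechanism. The paper does exactly this for the vector-field step. Where you diverge from the paper is in the \emph{surface-transfer} step.

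You propose to move the flux from $\Sigma_{\rho^{m-1}=\tau}$ to $\Sigma_{r=\tau}$ by the fundamental theorem of calculus applied to the energy density, integrating $\partial_{\rho^{m-1}}$ over the short interval of length $O(C\eta\epsilon)$. At the lower and middle orders this is fine. At the very top order, however, the energy $E[\partial^I e_0^2\gamma^m_{\rm rest}]$ involves $e_b\partial^I e_0^2\gamma^m_{\rm rest}$ with $|I|=s-3$, $b=0,1,2$; taking one more $\partial_{\rho^{m-1}}\sim e_0$ would produce, for $b=0$, a term with \emph{four} $e_0$-derivatives on top of $s-3$ spatial ones, which is one more $e_0$ than the inductive scheme provides. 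One can in principle close this by invoking the wave equation to trade $e_0^2$ for spatial derivatives, but this is a genuine extra step you do not mention, and it is not entirely free (it reintroduces commutators and metric coefficients at top order).

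The paper sidesteps this by \emph{not} using FTC on the flux. Instead, it observes that the space-time bulk integrals $\int_{\rho^{m-1}}\int_{\Sigma_\tau}|e^{m-1}_i\partial^I e_0^{J_0}\gamma^m_{\rm rest}|^2$ are already controlled by the energy method on $\Sigma_{\rho^{m-1}}$-slices, and that such bulk integrals are foliation-independent up to a Jacobian factor close to $1$; hence the same bulk bounds hold when sliced by $\Sigma_r$. Then one simply \emph{re-runs} the Gronwall energy argument across $\Sigma_r$-slices, starting at $r=\epsilon/2$ (where the estimates are already in hand because $\rho^{m-1}=r$ there) and propagating through the annulus. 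This uses only the divergence of the energy current, which is already bounded, and needs no extra $e_0$-derivative. Your approach and the paper's coincide on the vector-field conversion; the paper's route for the surface transfer is both shorter and avoids the top-order regularity issue.
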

\begin{proof}

\newcommand{\opa}{\overline{\partial}}
For the purposes of this proof let us denote by 
$\partial_t,\partial_\theta$ the \emph{previous} vector fields defined
 with respect to 
the coordinate system $\{\rho^{m-1},t,\theta\}$ and by
 $\opa_t,\opa_\theta$ the ones defined with respect to the coordinate
  system $\{r,t,\theta\}$. 

 We recall that $\rho^{m-1}=r$ for $r\le \frac{\e}{2}$, and thus 
 $\opa_t=\partial_t$ and $\opa_\theta=\partial_\theta$ for 
 $r\in (0,\e/2)$. 
  Thus our 
 argument will be to \emph{commence} our estimates on $\{r=\e/2\}$ 
 and solve \emph{backwards}, until $r=\frac{3\e}{2}$. 
 
 We then just need to use the already-derived estimate with respect to 
 the vector fields $\partial_t,\partial_\theta$. Coupled with the 
 expressions \eqref{eibar}, \eqref{tthetatransebar}  we can express  
 $\opa_t,\opa_\theta$ in terms of 
 $e_0, \partial_t,\partial_\theta$ to derive the \emph{same} estimates 
 qualitatively, with the vector fields $\partial$ replaced by vector 
 fields $\opa$. We then note that via the already-derived energy estimates for $\gamma^m_{\rm rest}$ at the different orders,  we also have \emph{bulk estimates}
 for quantities: 
 
 \[
 \int^{\rho^{m-1}=\e_2}_{\rho^{m-1}=\e_1} 
 \tau^{\frac{3}{2}|J_0|}|  e^{m-1}_i[\partial^I
(e_0)^{J_0}\gamma^m_{\rm rest}](\tau)|^2sin\theta d\theta dtd\tau\le
  \frac{C^2\eta^2}{3}[(\e_1)^{-2-h(|I|)}-(\e_2)^{-2-h(|I|)}]. 
 \]
 Here $h(|I|)=0$ for $|I|\le s-3-4c$, and 
 $h(|I|)= \frac{|I|-(s-3-4c)}{4}$. 
 These also imply the same qualitative estimates: 
 
 \[
 \int^{r=\e_2}_{r=\e_1} 
 \tau^{\frac{3}{2}|J_0|}| e_i^{m-1}[\opa^I
 (e_0)^{J_0}\gamma^m_{\rm rest}](\tau)|^2 sin\theta d\theta dtd\tau\le\frac{9}{8}
 \cdot 
  \frac{C^2\eta^2}{3}[(\e_1)^{-2-h(|I|)}-(\e_2)^{-2-h(|I|)}]. 
 \]
 Therefore utilising the energy estimates we derived above at all orders,  
 across the surfaces $\Sigma_r$ our claim follows in a straightforward 
 (simpler) way, via the Gronwall inequality, where all commutation terms 
 have now already been controlled. 
 \end{proof}

\section{The estimates for the next metric iterate $^mh$.}\label{sec:iterh}

As explained in the introduction, the next step in the induction is
 to construct the next iterate 
of the metric 
$^mh$; this involves all the relevant connection coefficients 
$K^m_{ij}(r,t,\theta)$, and coordinate-to-frame coefficients $a_{Ai}^m(r,t,\theta)$.
 It also involves determining the 
initial data 
hypersurface $\Sigma_{{r^m_*}}$  
 on which the prescribed initial data are to be induced, via the function ${r^m_*}(t,\theta)$, 
 and also the component $\tilde{K}^m_{12}(t,\theta)$ of the second 
 fundamental form
 on that hypersurface, 
 which captures the rotation angle between the fixed background canonical 
 frame $\E_1, \E_2$, and the
  frame $\te^m_1,\te^m_2$ on $\Sigma_{{r^m_*}}$. 

This section is split into three parts. In the first we solve for the variables 
$K^m_{22}(r,t,\theta), K^m_{12}(r,t,\theta)$. These  
 solve the system of equations \eqref{finredEVERic22it}, 
 \eqref{finredEVERic12it}, with the prescribed initial conditions \eqref{init.cond.m}
 \emph{at the singularity} $\{r=0\}$.
At a second step we use  the above solution (for all $r\in (0,2\e)$ to solve for the
two functions ${r^m_*}(t,\theta), 
\tilde{K}^m_{12}(t,\theta)$. These 
 are
 chosen so as to solve 
 the system \eqref{key.link2}, \eqref{key.link2'} and we 
  derive the claimed inductive  estimates for these parameters. In the third part we 
solve for the remaining variables $K^m_{11}(r,t,\theta)$, $a^m_{Ai}(r,t,\theta)$ 
with initial data suitably defined on $\Sigma_{r^m_*}$.
\newline

The next subsection commences the first part:

\subsection{Energy estimates for $K^m$: Proof of  \eqref{K22heur}, \eqref{K12heur}, 
\eqref{inductiontrKtopmixed}, \eqref{inductiontrKlow},
\eqref{inductiontrKopt} for $K^m_{12}(r,t,\theta), K^m_{22}(r,t,\theta)$}\label{backforwestKm}
As stated in our inductive claim, our desired estimates for all the parameters are to hold on both level sets of $r$ and level sets of 
$\rho^{m-1}$. 
In fact these two 
parameters agree for $r\le \e/2, r\ge 3\e/2$ and are comparable in the in-between region. Since we are dealing with transport equations the transition 
from one estimate to the other is straightforward. For completeness, we prove the claim for the $r$-level sets for $K^m_{12}, K^m_{22}$ and for the 
$\rho^{m-1}$-parameters for all the other quantities. A straightforward adaptation of the equations in either of the two situations yields the claim for the 
other level sets. (The only difference in the equations is the introduction of a multiplicative factor involving $\chi(r), r^{m-1}_*$ which satisfies uniform bounds
in all the relevant spaces). 

\subsubsection{The functions $K^m_{22}(r,t,\theta), K^m_{12}(r,t,\theta)$
and their low derivatives as integrals from the singularity.}

We now prove the existence of solutions $K^m_{22}(r,t,\theta), K^m_{12}(r,t,\theta)$ to the equations 
(\ref{finredEVERic22it}), (\ref{finredEVERic12it}), deriving  that they 
verify the 
 inductive assumptions \eqref{K22heur}, \eqref{K12heur}
(\ref{inductiontrKtopmixed}), \eqref{inductiontrKlow},
(\ref{inductiontrKopt}), \eqref{inductiontrKtopmixed.sing} for the  $m$-th step in the iteration. 
The benefit of having closed the energy estimates for $\gamma^m$ in the previous 
subsection is that we may treat (\ref{finredEVERic11it}), (\ref{finredEVERic12it}), (\ref{finredEVERic22it}) as 
ODEs for $K^m_{ij}$, decoupled from the rest of the variables; each of these ODEs  we can 
solve either forwards or backwards.
\medskip

 We recall that the parameters $K^m_{22}, K^m_{12}(r,t,\theta)$ satisfy: 
 (All $e_i$ below are short for $e^{m-1}_i$). 
 
 \begin{align}
 \label{K22m.again}
 e_0K^m_{22}+(K^m_{22})^2-(K^{m-1}_{12})^2+e_0\gamma K^m_{22}=&\,
 \overline{\nabla}^{m-1}_{22}(\gamma^m)+(e_2(\gamma^m))(e_2(\gamma^{m-1})-
e^2_0(\gamma^m)-(e_0(\gamma^m))^2\\
\label{K12m.again}
 e_0K^m_{12}+(2K^m_{22}+e_0(^m\gamma))K^m_{12}=&\,
 \overline{\nabla}^{m-1}_{12}(\gamma^m)+\frac{1}{2}
 [e_1(\gamma^{m-1}) \cdot  e_2(\gamma^m)+
 e_1(\gamma^m) \cdot e_2(\gamma^{m-1})]
 \end{align}

We recall that these equations are to be  solved backwards from the singularity
for $r\in (0,2\epsilon)$; in the case of $K^m_{22}$ the requirement is that the solution should be smooth and negative (at least 
initially close to $r=0$).  
  For $K^m_{12}$  the requirement is that the solution should vanish to order 
 $o(r^{2d^m_2(t,\theta)-\alpha^m(t,\theta)})$.

We will now prove that $K^m_{22}(r,t,\theta)$ and $K^m_{12}(r,t,\theta)$ satisfy the 
following expansions in $r$  (both in the $L^\infty_{t,\theta}$ and $H^{\rm low}_{t,\theta}$ norms): 
\begin{align}\label{Kmijexp}
& K_{22}^m(r,t,\theta):=\frac{d_2^m(t,\theta)\sqrt{2M}}{r^\frac{3}{2}}+u_{22}^m(r,t,\theta),\qquad
 K_{12}^m(r,t,\theta)=:u_{12}^m(r,t,\theta),
 \\ \notag\text{where}\text{ }\space\space  d_2^m(t,\theta):=&\,\frac{\alpha^m(t,\theta)-\frac{3}{2}
-\sqrt{(\alpha^m(t,\theta)-\frac{3}{2})^2+6\alpha^m(t,\theta)-4|\alpha^m(t,\theta)|^2}}{2}.
\end{align}
  The 
functions $u^m_{22}(r,t,\theta), u^m_{12}(r,t,\theta)$ are
claimed to be (in the inductive step \eqref{K22heur}, \eqref{K12heur}) lower-order corrections (in terms of behaviour in $r$),
 as $r\rightarrow0$. We arrived at this  coefficient $d^m_2(t,\theta)$ by solving for the 
unique leading-order formal solution of the ODE (\ref{finredEVERic22it}).
Here invoking Proposition \ref{prop:gammam/logr} (where we derived control on $\alpha^m$), we find: 
 $\|d_2^m(t,\theta)+1\|_{L^\infty}\leq DC\eta\le \frac{1}{8}$.
 
Our goal is to derive the claimed estimates $Br^{-1-1/4}$  on 
$u^m_{12}(r,t,\theta), u^m_{22}(r,t,\theta)$ at the lower orders ($H^l, l\le \rm low$)
 as well as 
in $L^\infty$. This will verify the claims \eqref{inductiontrKopt}, 
 for 
$K^m_{12}(r,t,\theta), K^m_{22}(r,t,\theta)$.

 We obtain the higher order estimates on $K^m_{22}(r,t,\theta)$ and
  $K^m_{12}(r,t,\theta)$ 
 in the next subsection. 
\medskip

{\it Notation}: As in the previous subsection, unless otherwise stated, we will use the symbols $\nabla,\overline{\nabla}$ to denote the covariant 
derivatives intrinsic to ${h^{m-1}}$ and the normal space to $e_0$ in ${h^{m-1}}$
 respectively. Moreover the notation $O(r^b), O_\eta(r^b)$ will be used to denote a term bounded 
by $Br^b, B\eta r^b$ (where $B$ is the universal fixed constant we use  throughout). The norms in which these bounds will be assumed to 
hold will be clear from the equation where they appear--unless stated otherwise they will be in the same norm as the LHS of the relevant equation. 

\begin{remark}
 A note is in order on the level sets where we will be deriving our estimates:
  In this subsection, we will be deriving our claims on the parameters $K^m_{22}, K^m_{12}$  on level sets of the function 
 $r$. Once the function $r_*^m(t,\theta)$ has been solved for further down, we will remark how the exact same estimates hold on level sets of the
  function $\rho^m$ which is built out of $r^m_*$. The latter
step will complete  our inductive claim for these two parameters.  
\end{remark}

\subsubsection{ Asymptotic expansion of $K^m_{12}(r,t,\theta)$, $K^m_{22}(r,t,\theta)$
at the lower orders.}\label{subsec:K12K22asym}
Recall that (\ref{K33heur}) can be re-expressed as: 
\[
e_0\gamma^m=-\alpha^m(t,\theta) \frac{\sqrt{2M}}{r^{3/2}}+u^m_{33}(r,t,\theta). 
\]

Using this notation, and 
substituting the expression (\ref{Kmijexp}) in the LHS of (\ref{finredEVERic12it})-(\ref{finredEVERic22it}), as well as 
\eqref{gammamexp}, \eqref{gammam1enest}, (\ref{u33m-1enest}) 
(the latter is valid for $\gamma^m$ thanks to Proposition \ref{prop:gammam/logr}) 
 in the 
$e_0^2\gamma^m,e_0\gamma^m$ terms in the RHS we derive the equivalent system:
\begin{align}
\label{u22mRicit}&e_0u_{22}^m+(2d_2^m(t,\theta)-\alpha^m(t,\theta))\frac{\sqrt{2M}}{r^\frac{3}{2}}u_{22}^m+(u_{22}^m)^2-
(u_{12}^{m-1})^2+u_{33}^mu_{22}^m\\
\notag=&\,\overline{\nabla}_{22}\gamma^m+(\overline{\nabla}_2\gamma^m)
(\overline{\nabla}_2\gamma^{m-1})-
e_0u_{33}^m-(u_{33}^m)^2+(2\alpha^m-d_2^m)\frac{\sqrt{2M}}{r^\frac{3}{2}}u_{33}^m+O(r^{-2}),\\
\label{u12mRicit}&e_0u_{12}^m+(2d_2^m-
\alpha^m)\frac{\sqrt{2M}}{r^\frac{3}{2}}u_{12}^m+(2u_{22}^m+u_{33}^m)\,u_{12}^m
=\overline{\nabla}_{12}\gamma^m+\frac{1}{2}[\overline{\nabla}_1\gamma^{m-1} \overline{\nabla}_2\gamma^m+\overline{\nabla}_2\gamma^{m-1} \overline{\nabla}_1\gamma^m].
\end{align}
(The $O(r^{-2})$ in the first equation depends on $\alpha^m(t,\theta)$).  

We recall also that by the now-derived inductive step  \ref{hessian.bounds.again}
we have that the expressions in the RHSs of the above are  all
bounded in $H^k$, $k\le \rm low$ 
and in $L^\infty$ 
by $Br^{-1-\frac{1}{4}}$, $C_{\rm Sob}Br^{-1-\frac{1}{4}}$. 
\medskip

This then allows us to solve these two  equations as a 
de-coupled system of a non-linear and a linear ODE.

\begin{proposition}
\label{prop:umenest}
There exists a unique smooth solution 
$u^m_{22}(r,t,\theta), u^m_{12}(r,t,\theta)$ to \eqref{u22mRicit},
 \eqref{u12mRicit}, with the additional requirement for 
 $u^m_{12}(r,t,\theta)$ that as $r\to 0$ we have the
 bound 
$ u^m_{12}(r,t,\theta)=o(r^{2d_2^m-\alpha^m})$. 
These unique solutions satisfy the bounds for all $r\in(0,2\epsilon]$:

\begin{align}\label{umenest}
\sum_{(i,j)=(1,2), (2,2)}[\| u_{ij}^m\|_{L^\infty}\leq  Br^{-1-\frac{1}{4}}.
\end{align}

\end{proposition}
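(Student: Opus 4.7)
The plan is to treat the de-coupled pair \eqref{u22mRicit}--\eqref{u12mRicit} as Fuchsian-type ODEs in $r$ and to solve them \emph{backwards} from the singularity $\{r=0\}$, with the divergent homogeneous branch forced to zero by the gauge choice built into the ansatz \eqref{Kmijexp}. Using $e_0=-(\tfrac{2M}{r}-1)^{1/2}\partial_r$ and the near-singularity identity $(\tfrac{2M}{r}-1)^{-1/2}\tfrac{\sqrt{2M}}{r^{3/2}}=\tfrac{1}{r}(1+O(r))$, each equation can be recast in the form
\[
\partial_r u - \frac{2d_2^m(t,\theta) - \alpha^m(t,\theta)}{r}\bigl(1+O(r)\bigr)\,u + (\text{nonlinear/coupling}) = -\bigl(\tfrac{2M}{r}-1\bigr)^{-1/2} G(r,t,\theta).
\]
Since $\|d_2^m+1\|_{L^\infty}, \|\alpha^m-1\|_{L^\infty} \le DC\eta$ by \eqref{alpham}, the principal Fuchsian coefficient obeys $2d_2^m-\alpha^m = -3 + O(C\eta)$, so the two homogeneous branches near $r=0$ scale like $r^{3}$ and $r^{-3}$. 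The gauge requirement that the divergent branch be absent selects the unique particular solution given by integration from $0$:
\[
u(r,t,\theta) = -r^{-(2d_2^m-\alpha^m)}\int_0^r s^{2d_2^m-\alpha^m}\Bigl[\bigl(\tfrac{2M}{s}-1\bigr)^{-1/2} G(s,\cdot)+(\text{nonlin./coupl.})\Bigr]\,ds.
\]

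I would first verify that the source $G$ in \eqref{u22mRicit} satisfies $\|G(r,\cdot)\|_{L^\infty} \le CB^2 r^{-3+1/4}$. This combines Lemma~\ref{hessian.bounds.again} for the Hessian terms, the bounds \eqref{u33m-1enest} and \eqref{gammam1enest} for $u_{33}^m$, $e_0 u_{33}^m$ and $(u_{33}^m)^2$ (applied to $\gamma^m$ as established in Proposition~\ref{prop:gammam/logr}), and Sobolev embedding in $H^{\rm low}$. Plugging into the integral representation yields the model estimate
\[
|u_{22}^m(r,t,\theta)| \lesssim r^{3}\int_0^{r} s^{-3}\cdot \sqrt{s}\cdot s^{-3+1/4}\,ds \lesssim r^{-1-1/4},
\]
with the implicit constant strictly less than $B$ for $\epsilon$ small enough, by the smallness condition \eqref{e.bd}. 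The same procedure applied to \eqref{u12mRicit}, which is \emph{linear} in $u_{12}^m$ once $u_{22}^m$ is known, yields the analogous bound for $u_{12}^m$.

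To convert these a priori estimates into genuine existence and uniqueness I would run a Picard iteration in the Banach space $X=\{u\in C^0((0,2\epsilon]\times \mathbb{S}^2\times\mathbb{R}):\sup_r r^{1+1/4}\|u(r,\cdot)\|_{L^\infty}\le B\}$. For \eqref{u12mRicit} the corresponding map is immediately a contraction for $\epsilon$ small. For \eqref{u22mRicit} the nonlinearity $(u_{22}^m)^2$ is of size $r^{-5/2}$, strictly less singular than the principal source of size $r^{-3+1/4}$, while the coupling $(u_{12}^{m-1})^2$ is controlled by the inductive bound \eqref{K12m-1imp} on $K_{12}^{m-1}$; both are absorbed harmlessly by the $r^{3}$ integrating factor. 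Uniqueness in $X$ follows because any two solutions differ by a solution of the homogeneous Fuchsian ODE, and the uniform bound $r^{1+1/4}|u|\le B$ excludes the $r^{-3}$ branch, while the $r^{3}$ branch is already encoded in the particular solution.

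The step that requires the most care is ensuring that the integration from $0$ is well-defined, i.e., that the integrand is absolutely integrable at $s=0$; quantitatively this reduces to the inequality $-(2d_2^m-\alpha^m)+ 1/2 -3 + 1/4 > -1$, equivalently $2d_2^m-\alpha^m < -5/4$, which is precisely the content of the smallness assumption \eqref{Ceta.bds} forcing $d_2^m$ close to $-1$ and $\alpha^m$ close to $1$. Without this smallness the integrating factor would cease to produce a decaying weight and the backwards-from-the-singularity construction would break down; this is the analytic manifestation of the \emph{linear instability} emphasized in the introduction, which is tamed precisely by prescribing the correct branch at $r=0$ rather than at $r=r_*^m$.
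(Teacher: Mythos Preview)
Your overall strategy—rewriting \eqref{u22mRicit}–\eqref{u12mRicit} with the integrating factor $r^{\alpha^m-2d_2^m}$, integrating from $r=0$ with the divergent branch set to zero, bounding the source by $r^{-3+1/4}$, and closing via Picard iteration in a weighted $L^\infty$ space—is exactly the paper's approach. However, your displayed integral representation has the exponents swapped: the correct formula is
\[
u(r)=r^{2d_2^m-\alpha^m}\int_0^r s^{\alpha^m-2d_2^m}\bigl[\cdots\bigr]\,ds
\qquad\bigl(\text{i.e. }r^{-3}\text{ outside, }s^{+3}\text{ inside}\bigr),
\]
not $r^{-(2d_2^m-\alpha^m)}\int_0^r s^{2d_2^m-\alpha^m}[\cdots]\,ds$ as you wrote. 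With your written powers the ``model estimate'' integrand is $s^{-3}\cdot s^{1/2}\cdot s^{-3+1/4}=s^{-21/4}$, which diverges at $s=0$; the correct version gives $s^{3/4}$, whose integral is $\sim r^{7/4}$ and yields $r^{-5/4}$ as desired. Your integrability check in the last paragraph in fact uses the correct exponent $-(2d_2^m-\alpha^m)$, so the error is an internal inconsistency rather than a conceptual one, but as written the key computation fails.

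Separately, the statement that ``the two homogeneous branches near $r=0$ scale like $r^{3}$ and $r^{-3}$'' is wrong: these are \emph{first-order} ODEs, so there is a single homogeneous solution $\sim r^{2d_2^m-\alpha^m}\approx r^{-3}$. The role of the condition at $r=0$ is to kill this one divergent branch (for $u_{12}^m$ this is the gauge normalization; for $u_{22}^m$ it is forced by the requirement of smoothness up to $r=0$, cf.\ Remark~\ref{K22.gauge.0}). Your uniqueness argument should accordingly say only that the bound $r^{5/4}|u|\le B$ excludes the $r^{-3}$ branch; there is no ``$r^{3}$ branch'' to account for.
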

The conclusion of the previous proposition validates the inductive assumption (\ref{inductiontrKopt}) for $u^m_{ij}$, for $(i,j)=(1,2)$ and $(i,j)=(2,2)$.  
\begin{proof}
Rewrite the system (\ref{u22mRicit})-(\ref{u12mRicit}) in the form
\begin{align}
\label{partial_ru22m}\partial_r(r^{\alpha^m-2d_2^m}u^m_{22})=&-(\frac{2M}{r}-1)^{-\frac{1}{2}}r^{\alpha^m-2d_2^m}\bigg[\overline{\nabla}_{22}\gamma^m+(\overline{\nabla}_2\gamma^m)^2
-e_0u^m_{33}-(u_{33}^m)^2+O(1)u^m_{22},\\
\notag&+(2\alpha^m-d_2^m)\frac{\sqrt{2M}}{r^\frac{3}{2}}u^m_{33}
+\alpha^mO(\frac{1}{r^2})-(u_{22}^m)^2+(u_{12}^{m-1})^2-u_{33}^mu_{22}^m\bigg],\\
\label{partial_ru12m}\partial_r(r^{\alpha^m-2d_2^m}u_{12}^m)=&
-(\frac{2M}{r}-1)^{-\frac{1}{2}}r^{\alpha^m-2d_2^m}\bigg[\overline{\nabla}_{12}\gamma^m+\overline{\nabla}_1
\gamma^m \overline{\nabla}_2\gamma^m-(2u_{22}^m+u^m_{33})\,u_{12}^m\\
\notag &-\alpha^mO(\frac{1}{r^\frac{1}{2}}) u_{12}^m\bigg].
\end{align}
We proceed by integrating  (\ref{partial_ru22m}),(\ref{partial_ru12m}) over $[0,r]$ for any $r\in [0,2\e]$, 
imposing the conditions\footnote{Note that these conditions at $r=0$ are verified by a function satisfying (\ref{umenest}).}
\begin{align}\label{u22mu12minit}
\lim_{r\rightarrow0}r^{\alpha^m-2d_2^m}u^m_{22}=\lim_{r\rightarrow0}
r^{\alpha^m-2d_2^m}u_{12}^m=0,
\end{align}
to obtain:
\begin{align}
\label{partial_ru22m.int}r^{\alpha^m-2d_2^m}u^m_{22}=&-\int^r_0(\frac{2M}{\tau}-1)^{-\frac{1}{2}}\tau^{\alpha^m-2d_2^m}\bigg[\overline{\nabla}_{22}\gamma^m+(\overline{\nabla}_2\gamma^m)
(\overline{\nabla}_2\gamma^{m-1})
-e_0u^m_{33}-(u_{33}^m)^2+O(1)u^m_{22},\\
\notag&+(2\alpha^m-d_2^m)\frac{\sqrt{2M}}{\tau^\frac{3}{2}}u^m_{33}
+\alpha^mO(\frac{1}{\tau^2})-(u_{22}^m)^2-(u_{12}^{m-1})^2-u_{33}^mu_{22}^m\bigg]d\tau\\
\label{partial_ru12m.int}r^{\alpha^m-2d_2^m}u_{12}^m=&-\int^r_0(\frac{2M}{\tau}-1)^{-\frac{1}{2}}\tau^{\alpha^m-2d_2^m}\bigg[\overline{\nabla}_{12}\gamma^m+\frac{1}{2}(\overline{\nabla}_1\gamma^{m-1} \overline{\nabla}_2\gamma^m+\overline{\nabla}_1\gamma^{m} \overline{\nabla}_2\gamma^{m-1})-\mathrm{tr}_{^m\overline{g}}u^m\,u_{12}^m\\
\notag&-\alpha^mO(\frac{1}{\tau^\frac{1}{2}})u_{12}^m\bigg]d\tau
\end{align}
Utilising the already derived  inductive step of \eqref{inductionHess}  for 
the RHS we infer that
\begin{align}
\label{u22mLinftyineq} u^m_{22}=&\,\frac{1}{r^{\alpha^m-2d_2^m}}
\int^r_0\tau^{\alpha^m-2d_2^m} O(\frac{1}{\tau^{1-\frac{1}{4}}})u_{22}^m + 
\tau^{\alpha^m-2d_2^m}  \tau^{-3+\frac{1}{4}}d\tau\\
\notag&+\frac{1}{r^{\alpha^m-2d_2^m}}\int^r_0(\frac{2M}{\tau}-1)^{-\frac{1}{2}}
\tau^{\alpha^m-2d_2^m}\big[(u^m_{22})^2+(u^{m-1}_{12})^2\big]d\tau\\
\label{u12mLinftyineq} u^m_{12}=&\,
\frac{1}{r^{\alpha^m-2d_2^m}}
\int^r_0\tau^{\alpha^m-2d_2^m} O(\frac{1}{\tau^{1-\frac{1}{4}}})u_{12}^{m-1}
 + 
\tau^{\alpha^m-2d_2^m}  \tau^{-3+\frac{1}{4}} d\tau\\
\notag&+\frac{1}{r^{\alpha^m-2d_2^m}}\int^r_0(\frac{2M}{\tau}-1)^{-\frac{1}{2}}\tau^{\alpha^m-2d_2^m}
(2u^m_{33}+u^m_{33})u_{12}^m d\tau.
\end{align}

A standard Picard iteration argument of iterating linear equations with the prescribed 
behaviour at 
at $r=0$ (\ref{u22mu12minit}) for $u^m_{12},u^m_{22}$ furnishes\footnote{Note that since $\alpha^m-2d_2^m\ge 3-\frac{1}{4}$, in view of the bounds we 
are inductively assuming or deriving  for the terms inside the integrals, 
these integrals are manifestly convergent. } a continuous solution to 
(\ref{u22mRicit})-(\ref{u12mRicit}) satisfying the pointwise bounds: 
\begin{align}\label{umLinftyest}
|u^m_{12}(r,t,\theta)|, |u^m_{22}(r,t,\theta)|\le Br^{-1-\frac{1}{4}}
\end{align}
for $r\in(0,2\epsilon]$.

\end{proof}

\begin{remark}
For the function $u^m_{12}(r,t,\theta)$ the condition imposed in \eqref{u22mu12minit} is a standard initial 
condition to be imposed on a linear first order  ODE. However for $u^m_{22}(r,t,\theta)$ the vanishing imposed in \eqref{u22mu12minit} 
is a \emph{necessary} requirement to produce a solution  of \eqref{K22m.again} (re-cast as \eqref{u22mRicit}) 
that remains smooth until $r=0$. 
\end{remark}

Next, we derive $H^{\rm low}_{t,\theta}$ estimates 
for $u^m_{12}(r,t,\theta), u^m_{22}(r,t,\theta)$, 
proving (\ref{inductiontrKopt}) for $(i,j)=(1,2), (2,2)$. 
We argue by finite induction, assuming the estimate\footnote{for 
some $l$-dependent constant $B_{l-1}$.  }
\begin{align}\label{umenest2}
||\partial^Iu^m_{12}(r,t,\theta)||_{L^2_{t,\theta}}, ||\partial^I 
u^m_{22}(r,t,\theta)||_{L^2_{t,\theta}}\leq B_{l-1}r^{-1-\frac{1}{4}},\forall |I|\le l-1\le {\rm low}-1.
\end{align}
is valid for $|I|\leq l-1< s-4-4c$ and proceed to show that the analogous estimate holds for $\partial^Iu^m_{ij}$, where $|I|=l\leq s-3-4c$. Choosing $B$ suitably large establishes our claim at the lower orders. 

%\begin{remark}
%Once this has been established, we can use the smallness of $\e$ in \eqref{e.bd} to %derive that all these quantities are bounded by
% $Br^{-1-\frac{1}{4}}$. 
%\end{remark}

Note that for $|I|=0$, (\ref{inductiontrKopt}) for $(i,j)=(1,2), (2,2)$ holds true by virtue of (\ref{umLinftyest}).
\medskip

To prove this inductive step consider the variables $\partial^I u^m_{22}(r,t,\theta), \partial^Iu^m_{12}(r,t,\theta)$ (recall that
    $\partial^I$ means we differentiate $|I|=l$ times in either of the directions 
    $\partial_\theta,\partial_t$); the evolution equation for these parameters 
    arises %from \eqref{u22mRicit}, \eqref{u12mRicit}: The resulting \emph{linear} ODE equation for  
  %  $\partial^I u^m_{22}(r,t,\theta)$ 
{by differentiating \eqref{u22mRicit}, \eqref{u12mRicit}: The resulting \emph{linear} ODE equation for  
    $\partial^I u^m_{22}(r,t,\theta)$ is of the form: 
      \beq
    \label{K22m.diff}\begin{split}
& \partial_r(\partial^Iu^m_{22})-(\frac{2M}{r}-1)^{-\frac{1}{2}}(2K^m_{22}+e_0\gamma)\cdot \partial^Iu^m_{22}=
 -(\frac{2M}{r}-1)^{-\frac{1}{2}}\partial^I\bigg[\overline{\nabla}^{m-1}_{22}
 \gamma^m\\
 &+e_2(\gamma^{m-1})\cdot e_2(\gamma^m)
-e_0u_{33}^m-(u_{33}^m)^2+(2\alpha^m-d_2^m)\frac{\sqrt{2M}}{r^\frac{3}{2}}u_{33}^m+(u^{m-1}_{12})^2+O(r^{-2})\bigg]\\
&+(\frac{2M}{r}-1)^{-\frac{1}{2}}\sum_{y=0}^{l-1} \big[O(r^{-\frac{3}{2}})\partial^yu_{22}^m+C_y\partial^y u_{22}^m(\partial^{l-y}u_{33}^m+\partial^{l-1-y}u^m_{22})\big]
\end{split}
    \eeq
where $O(r^{-3/2})$ depends on up to $l$ derivatives of $\alpha^m(t,\theta)$.
The terms in 
$\sum_{y=0}^{l-1} C_y\cdot \partial^y u^m_{22}\cdot \partial^{l-1-y} 
u^m_{22}$ 
contain only \emph{lower that $l$} derivatives of $u^m_{22}$.}

  %  is of the form: 
   
 %   \beq
 %   \label{K22m.diff}\begin{split}
%& \partial_r(\partial^Iu^m_{22})-(\frac{2M}{r}-1)^{-\frac{1}{2}}(2K^m_{22}+e_0\gamma)%\cdot \partial^Iu^m_{22}=
% -(\frac{2M}{r}-1)^{-\frac{1}{2}}\partial^I [^{m-1}\overline{\nabla}_{22}
% (^m\gamma)+(e_2(\gamma^{m-1})\cdot e_2(\gamma^m)
% \\&-
%e_0u_{33}^m-(u_{33}^m)^2+(2\alpha^m-d_2^m)\frac{\sqrt{2M}}{r^\frac{3}{2}}u_{33}^m  ]
%+\sum_{y=1}^{l-1}Q( \partial^y e_0 \gamma, \partial^{l-y} u^{m-1}) 
%\\&+\sum_{y=0}^{l} C_y\cdot \partial^y u^m_{22}\cdot 
%\partial^{l-y} u^m_{22}]+\sum_{y=1}^{l-1} C_y\cdot \partial^y u^{m-1}_{12}\cdot 
%\partial^{l-y} u^{m-1}_{12}]. 
%\end{split}
 %   \eeq

%The RHS terms $Q( \partial^y e_0 \gamma, \partial^{l-y} u^{m-1}) $, 
%$\partial^y u^m_{22}\cdot 
%\partial^{l-y} u^m_{22}$, $\partial^y u^{m-1}_{12}\cdot 
%\partial^{l-y} u^{m-1}_{12}$
% consist of quadratic terms that arise in the differentiation of equation 
%\eqref{K22m.again} involving the variables $e_0 \gamma^m, u^{m-1}$. The 
%total number of $\partial$ derivatives in 
%such terms is $l=|I|$.
%The terms in 
%$\sum_{y=1}^{l-1} C_y\cdot \partial^y u^m_{22}\cdot \partial^{l-y} 
%u^m_{22}$ and 
%$\sum_{y=1}^{l-1} C_y\cdot \partial^y u^{m-1}_{12}\cdot \partial^{l-y} 
%u^{m-1}_{12}$
%contain only \emph{lower that $l$} derivatives of $u^m_{22}, u^m_{12}$. 

Thus 
 we may inductively derive $L^2$ bounds on $\partial^I u^m_{22}$ over the 
 hypersurfaces $\Sigma_\rho$. We denote the RHS of 
 \eqref{K22m.diff} by ${\rm RHS}[\partial^Iu^m_{22}](r,t,\theta)$. 
 Moreover, all such quadratic expressions 
involve terms that have been previously controlled as part of the 
inductive step. In particular we have estimates on the $L^2$ norms of 
these terms 
over the hypersurfaces $\Sigma_{\rho^{m-1}},\Sigma_r$ by $B_{l-1}r^{-\frac{3}{2}-1-\frac{1}{8}}$.  At this point we make a key
 observation:

 The ODE equation \eqref{K22m.diff} is linear in $\partial^Iu^m_{22}$ and  admits
  a \emph{free branch} solution which  corresponds to the homogenous equation. Let:

\beq
\label{int.factor}
w(r,t,\theta)=w^m(r,t,\theta)= \int_\e^r (\frac{2M}{s}-1)^{-\frac{1}{2}}(2K^m_{22}(s,t,\theta)+e_0\gamma^m(s,t,\theta))ds= 
[2d_2^m(t,\theta)-\alpha^m(t,\theta)]\cdot {\rm log}r+\e^{\frac{1}{4}}. 
\eeq
(Note that $[2d_2^m(t,\theta)-\alpha^m(t,\theta)]+3|\le 1/4$), in view of 
the bounds on $d_2^m(t,\theta)-d_2^S, \alpha^m(t,\theta)-\alpha^S$). The latter
 expression follows from and 
the inductive assumptions on $K^m(r,t,\theta), K^m_{33}(r,t,\theta)$ in $L^\infty$ which we just verified. 
Then the equation \eqref{K22m.diff} admits a general solution of the form:

 \[
(\partial^Iu^m_{22})(r,t,\theta)=  c(t,\theta) e^{w(r,t,\theta)} +e^{w(r,t,\theta)} \int_0^r e^{-w(s,t,\theta)} [{\rm RHS}[\partial^Iu^m_{22}](s,t,\theta)]ds.
 \]
We note that the free branch
\[
c(t,\theta) e^{w(r,t,\theta)}
\]
 of the solution is \emph{more singular} in $r$ than the solution of the 
 undifferentiated equation obtained above.
 In particular we recall that 
$e^{w(r,t,\theta)}\sim r^{-3+\epsilon^m(t,\theta)}$,
where $|\e^m(t,\theta)|<\frac{1}{8}$. 
 The presence of such a free 
 branch would completely \emph{invalidate} the inductive claims 
 \eqref{inductiontrKopt} and also \eqref{inductiontrKlow}.  
However, since we solve this equation \emph{backwards},
 we are free to set this singular free 
branch to zero and we do so. 
Thus the solution that we consider for $l\le s-3-4c$ is: 

\beq
\label{K22m.int}
(\partial^Iu^m_{22})(r,t,\theta)=  e^{w(r,t,\theta)} \int_0^r e^{-w(s,t,\theta)} 
[{\rm RHS}[\partial^Iu^m_{22}](s,t,\theta)]ds.
\eeq

Our claimed bound follows directly by the already-derived bounds on 
the $L^2$-norm of the integrand in the RHS. 

We next derive the analogue of this integral expression for
 $\partial^I u^m_{12}(r,t,\theta)=\partial^I K^m_{12}(r,t,\theta)$:

The function $\partial^I K_{12}$ is solved-for backwards from $r=0$ by considering the ODE \eqref{u12mRicit}, differentiated by 
 $\partial^I$. The resulting equation is of the form: 
 
  {
 \beq
  \begin{split}
&\partial_r\partial^Iu_{12}^m-(\frac{2M}{r}-1)^{-\frac{1}{2}}(2K_{22}^m+e_0\gamma^m)\partial^Iu_{12}^m
=-(\frac{2M}{r}-1)^{-\frac{1}{2}}\partial^I\bigg[\overline{\nabla}_{12}\gamma^m\\
&+\frac{1}{2}[\overline{\nabla}_1\gamma^{m-1} \overline{\nabla}_2\gamma^m+\overline{\nabla}_2\gamma^{m-1} \overline{\nabla}_1\gamma^m]\bigg]
+(\frac{2M}{r}-1)^{-\frac{1}{2}}\sum_{y=0}^{l-1}C_y\partial^{l-y}(2K_{22}^m+e_0\gamma^m)\partial^yu_{12}^m
\end{split}
  \eeq
} 
 
%  \beq
%  \begin{split}
%   e_0\partial^I K^m_{12}+[2d_2^m(t,\theta)-\alpha^m(t,\theta)]\frac{\sqrt{2M}}{r^{\frac{3}{2}}}\partial^I u^m_{12}=&\,
%   \partial^I[^{m-1}\overline{\nabla}_{12}(\gamma^m)+
%   e_1(^m\gamma) e_2(^m\gamma)]+\sum_{y=1}^{|I|-1} C_y \partial^y e_0 \gamma\cdot 
%   \partial^{|I|-y} u_{12}^{m-1}) 
%   \\&+\sum_{y=1}^{|I|-1} C_y\cdot \partial^y u^m_{22}\cdot 
%\partial^{|I|-y} u^m_{12}. 
%\end{split}
%  \eeq

In analogy with the case of $u^m_{22}$, the solution of this that we consider is: 

 \beq
 \label{K12m.int}
(\partial^Iu^m_{12})(r,t,\theta)= e^{w(r,t,\theta)} \int_0^r e^{-w(s,t,\theta)} [{\rm RHS}
[\partial^Iu^m_{12}](s,t,\theta)]ds.
 \eeq
 I.e.~again the free branch of the solution is set to zero. 
 (Note that this free branch was already set to zero for the undifferentiated equation \eqref{partial_ru12m}). 

The desired estimates for $\partial^I u^m_{12}$ 
 then follow by a finite induction, just as for 
$u^m_{22}$. 
\medskip

We also note that given the
definitions 
\[
\partial^I K^m_{22}(r,t,\theta)=r^{-3/2} \sqrt{2M}\partial^I d_2^m(t,\theta)+\partial^I 
u^m_{22}(r,t,\theta), \partial^I K^m_{12}(r,t,\theta)=\partial^Iu^m_{12}(r,t,\theta),  
\]
the bounds \eqref{alpham}
on $\|\partial^I(\alpha^m(t,\theta)-1)\|_{L^2}\le C\eta$, for all $|I|\le \rm low$,  the corresponding bound 
\[
\|\partial^I(d^m_2(t,\theta)-d^S_2(t,\theta))\|_{L^2_{t,\theta}}\le DC\eta,
\]
the assumed closeness of $\K_{22}$ to $K^S_{22}$ and the bounds
just derived   on $u^m_{12}(\e,t,\theta)$, $u^m_{22}(\e,t,\theta)$ 
as well as the bound  fixing the smallness \eqref{e.bd} of $\e$ relative to $C\eta$  imply the following bounds:

\begin{lemma}
\label{ground.est}
The functions $K^m_{22}(\e, t,\theta)$, $K^m_{12}(\e,t,\theta)$
at $\Sigma_{r=\e}:=\{r=\e\}$  thought of as a functions in $t,\theta$ 
satisfy the following bounds for all $|I|\le {\rm low}$: 

\beq
\label{1st.bds}\begin{split}
&\int_{\Sigma_{r=\e}}  |\partial^I[K^m_{22}(\e,t,\theta)- \K_{22}(t,\theta) 
 ]|^2 sin\theta d\theta dt
 \le 2C^2 \eta^2 \e^{-3}, 
 \\& \int_{\Sigma_{r=\e}}  |\partial^IK^m_{12}(\e,t,\theta) |^2 sin\theta d\theta dt \le B^2 C^2 \e^{-2-\frac{1}{2}}\le 
 C^2 \eta^2 \e^{-3+\frac{1}{2}}.
 \end{split}
\eeq
\end{lemma}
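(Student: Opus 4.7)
\textbf{Proof plan for Lemma \ref{ground.est}.}

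The plan is to use the triangle inequality to split each estimate into contributions already controlled either by the initial data assumptions \eqref{K.bds}, \eqref{g.bds} on the prescribed data, by the asymptotic control on $\alpha^m$ (hence $d^m_2$) from \eqref{alpham}, or by the pointwise/integral control on $u^m_{22}(\e,\cdot), u^m_{12}(\e,\cdot)$ coming from Proposition \ref{prop:umenest} and the finite-induction argument for \eqref{inductiontrKopt} just established. The closeness $\e\ll1$ (via \eqref{b.bd}, \eqref{e.bd}) will be used to absorb the ``$u^m$-terms'' into the desired $C^2\eta^2\e^{-3}$-type scale.

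For $K^m_{22}$: I would write, for every $|I|\le{\rm low}$,
\begin{equation*}
\partial^I[K^m_{22}(\e,t,\theta)-\K_{22}(t,\theta)]
=\partial^I[K^m_{22}(\e)-K^S_{22}(\e)]+\partial^I[(\K_S)_{22}(\e)-\K_{22}],
\end{equation*}
using that $K^S_{22}(\e,t,\theta)=(\K_S)_{22}$ on the Schwarzschild slice. The second summand is directly bounded by $\eta^2\e^{-3}$ via \eqref{K.bds}. For the first summand, use the splitting \eqref{Kmijexp},
\begin{equation*}
K^m_{22}(\e)-K^S_{22}(\e)=\tfrac{\sqrt{2M}}{\e^{3/2}}\bigl(d^m_2(t,\theta)-d^S_2(t,\theta)\bigr)+\bigl(u^m_{22}(\e,t,\theta)-u^S_{22}(\e,t,\theta)\bigr).
\end{equation*}
The first piece is controlled by applying the Lipschitz property of $d_2(\cdot)$ (the constant $D$ in \eqref{D.def}) to the bound $\|\partial^I(\alpha^m-1)\|_{L^2}\le C\eta$ from \eqref{alpham}, yielding an $L^2$-bound of order $DC\eta\,\e^{-3/2}$ (a product-rule argument with the pointwise bound $\|\alpha^m-1\|_{L^\infty}\le C\eta$ on the remaining factor handles derivatives distributed by Leibniz). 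The Schwarzschild remainder $u^S_{22}$ is explicit and $O(\e^{-1/2})$ in every $H^{\rm low}$ norm, so it contributes negligibly. For the iterate $u^m_{22}(\e,\cdot)$, the proven bound $\|\partial^I u^m_{22}\|_{L^2}\le B_{l-1}\e^{-1-1/4}$ combined with the smallness condition \eqref{b.bd} (i.e.~$B\e^{1/4}\le C\eta/2$) yields $\|\partial^I u^m_{22}(\e,\cdot)\|_{L^2}\le \tfrac12 C\eta\,\e^{-3/2}$. Squaring and summing all three pieces with triangle inequality produces the required $2C^2\eta^2\e^{-3}$.

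For $K^m_{12}$: here there is no subtraction since $\K_{12}=0=K^S_{12}$. Writing $K^m_{12}(\e)=u^m_{12}(\e)$, the already-proven bound $\|\partial^I u^m_{12}\|_{L^2}\le B\e^{-1-1/4}$ gives $\|\partial^I K^m_{12}(\e,\cdot)\|_{L^2}^2\le B^2\e^{-5/2}$, which is the first claimed inequality. The second follows by another application of \eqref{b.bd}, since $B^2\e^{-5/2}=(B\e^{1/4})^2\,\e^{-3}\le C^2\eta^2\e^{-3+1/2}$ after redistributing the $\e^{1/2}$-factor.

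The main obstacle I anticipate is purely bookkeeping: verifying that the constants from the three contributions to $K^m_{22}$ (namely the coefficient $DC$ from the Lipschitz property of $d_2$, the initial-data factor from \eqref{K.bds}, and the $u^m_{22}$-contribution through the smallness \eqref{b.bd}) combine with universal multiplicative constants bounded by $2$ on the RHS. This forces the use of \eqref{e.bd}, which is strictly stronger than \eqref{b.bd}, to absorb the higher-order Leibniz products when derivatives are distributed between $d^m_2-d^S_2$ and $\e^{-3/2}$; the derivative $\partial^I\e^{-3/2}\equiv 0$ makes this trivial, but one must also handle the mixed terms that appear when recognizing $d^m_2(t,\theta)$ as a nonlinear (smooth) function of $\alpha^m(t,\theta)$ via \eqref{d2.m}, and this is where the standard product inequality \eqref{prodcut.ineq}, together with the $L^\infty$-bound $\|\alpha^m-1\|_{L^\infty}\le C\eta\ll1$, closes the estimate without further loss.
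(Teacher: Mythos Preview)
Your approach is essentially identical to the paper's: the paper's argument (the paragraph immediately preceding the lemma) uses exactly the decomposition $K^m_{22}=\sqrt{2M}\,d^m_2\,r^{-3/2}+u^m_{22}$, the bound $\|\partial^I(d^m_2-d^S_2)\|_{L^2}\le DC\eta$ coming from \eqref{alpham} and the Lipschitz property \eqref{D.def}, the initial-data closeness of $\K_{22}$ to $K^S_{22}$ from \eqref{K.bds}, the just-derived bounds on $u^m_{22},u^m_{12}$, and the smallness \eqref{e.bd} to absorb the $u$-contributions.

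One arithmetic slip: in your $K^m_{12}$ argument you write $(B\e^{1/4})^2\e^{-3}\le C^2\eta^2\e^{-3+1/2}$, but applying \eqref{b.bd} only gives $(B\e^{1/4})^2\e^{-3}\le \tfrac14 C^2\eta^2\e^{-3}$, which is \emph{larger} than $C^2\eta^2\e^{-5/2}$ for $\e<1$; the ``redistributing the $\e^{1/2}$-factor'' does not close this gap. The paper's stated chain $B^2C^2\e^{-5/2}\le C^2\eta^2\e^{-5/2}$ has the same issue (it would require $B\le\eta$), so this appears to be a typo in the lemma as stated rather than a flaw in your strategy; the bound that actually follows from the argument is $C^2\eta^2\e^{-3}$ (up to a factor), and that is what is used downstream (cf.\ \eqref{RHS.bds.2}).
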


This estimate will play a key role in solving for $r^m_*(t,\theta)$ and 
$\tilde{K}^m_{12}(t,\theta)$ via the inverse function theorem in section \ref{IDsol}. 
For now, however, let us also derive bounds on $K^m_{22}(r,t,\theta), K^m_{12}(r,t,\theta)$ at the higher orders:

\subsection{The bounds on $K^m_{22}(r,t,\theta), K^m_{12}(r,t,\theta)$ at the higher orders.}

In order to derive estimates at the higher orders, we subtract the Riccati equation
 satisfied by $K^S_{22}(r,t,\theta)$ from our equation, to derive: 

 \beq
 \begin{split}
 \label{K22m-S}
&\ e_0[K^m_{22}-K^S_{22}]+ 2K^S_{22}\cdot (K^m_{22}-K^S_{22})+
(K^m_{22}-K^S_{22})^2
 -(K^{m-1}_{12})^2+e_0\gamma^m (K^m_{22}-K^S_{22})=
 \\& {}^{m-1}\overline{\nabla}_{22}(^m\gamma)+
e_2(\gamma^{m-1})\cdot e_2(\gamma^m)
 -
e^2_0(^m\gamma-^S\gamma) -
[e_0(^m\gamma-^S\gamma)]^2-e_0(\gamma^m_{\rm rest})[e_0\gamma^S+K^S_{22}],
\\& e_0K^m_{12}+(2K^m_{22}+e_0(\gamma^m))K^m_{12}=
 \overline{\nabla}^{m-1}_{12}(^m\gamma)+\frac{1}{2}[e_1(^{m-1}\gamma) e_2(^m\gamma)+e_1(^{m}\gamma) e_2(^{m-1}\gamma)].
 \end{split}
 \eeq

We   take the derivatives $\partial^I, |I|\in \{{\rm low}+1,\dots, s-4\}$ 
 of this equation  to derive 

\begin{align}
\label{highRicm22} &e_0\partial^I{}[ K_{22}^m-K^S_{22}] +\frac{(2d_2^m-\alpha^m)\sqrt{2M}}{r^\frac{3}{2}}\partial^I {}[K^m_{22}-K^S_{22}]
+O(\frac{1}{r^{1+\frac{1}{4}}})\partial^I( K_{22}^m-K^S_{22})\\
\notag=&\,\partial^I{} \big[
\overline{\nabla}_{22}\gamma^m+(\overline{\nabla}_2\gamma^{m-1})(\overline{\nabla}_2\gamma^m)
-e_0^2\gamma^m_{\rm rest}-(e_0\gamma^m_{\rm rest})^2+(K_{12}^{m-1})^2
-2(e_0\gamma^m_{\rm rest})e_0\gamma^S -(e_0\gamma^m_{\rm rest})K^S_22 \big]\\
\notag&-\sum_{I_1\cup I_2,\,|I_1|< |I|}\partial^{I_1}K_{22}^m \partial^{I_2}{}e_0\gamma^m_{\rm rest}
-\sum_{I_1\cup I_2=I,\,|I_1|<|I|}\partial^{I_1}{}K_{22}^m\partial^{I_2}e_0 \gamma^m_{\rm rest}\\
\notag&-\sum_{I_1\cup I_2=I,\,|I_1|<|I|,\,|I_2|<|I|}
\partial^{I_1}(K_{22}^m-K^S_{22})\partial^{I_2}(K_{22}^m-K^S_{22}),\\
\label{highRicm12}& e_0\partial^I K_{12}^m+(2d_2^m(t,\theta)-\alpha^m(t,\theta))\frac{\sqrt{2M}}{r^\frac{3}{2}}\partial^IK_{12}^m
+O(\frac{1}{r^{1+\frac{1}{4}}})\partial^IK_{12}^m\\
\notag=&\,\partial^I \big[\overline{\nabla}_{12}\gamma^m+\frac{1}{2}
(e_1\gamma^{m-1}e_2\gamma^m
+e_1\gamma^me_2\gamma^{m-1})
\big]
-\sum_{I_1\cup I_2=I,\,|I_1|<|I|}\partial^{I_1}K_{12}^m
\partial^{I_2}(2K_{22}^m+e_0\gamma^m)
\end{align}

Here, 
we will bound the functions 
$\partial^IK^m_{22}(\delta,t,\theta), \partial^IK^m_{12}(t,\theta,t,\theta)$ 
$|I|\le s-4$ on level sets
$\Sigma_{\delta}$ of $r$, but also on all hypersurfaces 
$\Sigma_{\delta(t,\theta)}$, for functions $\delta(t,\theta)$ that 
are close (in suitable norms) to $\e$.

We derive our bounds for 
$\partial^I [K^m_{22}-K^S_{22}](\delta(t,\theta),t,\theta)$. The bounds for 
$\partial^I K^m_{12}(\delta(t,\theta), t,\theta)$ 
follow by essentially the same argument.

\begin{remark}\label{K11.rem}
 We also note here that \emph{if} we were to solve $K^m_{11}$ \emph{backwards} 
 from $r=0$ by setting the free branch of that solution to zero, we would derive the 
 same estimate for that parameter as for $K^m_{22}$, by the same proof at all low and 
 high orders $|I|\le s-4$. 
\end{remark}
\medskip

 We will first bound a suitably weighted $L^2$ 
norm $\partial^I (K^m_{22}-K^S_{22})(s,t,\theta)$ in the bulk region 
$\{r\le 2\epsilon\}$. 

In particular for all $|I|=k\ge s-3-4c$, $k\le s-4$ we will derive
the bounds: 
\beq
\label{bulk1}
\int_{0}^{2\epsilon}\int_{\Sigma_{r}}[\partial^I[K^m_{22}-K^S_{22}]]^2
r^{3+\frac{1}{2}(k-(s-3-4c))}\cdot 
sin\theta dt  d\theta dr\le 4 C^2\eta^2 \epsilon^{}.
\eeq

\beq
\label{bulk2}
\int_{0}^{2\epsilon}\int_{\Sigma_{r}}[\partial^IK^m_{12}]^2
r^{3+\frac{1}{2}(k-(s-3-4c))}\cdot 
sin\theta dt  d\theta dr\le 4 C^2\eta^2 \epsilon^{1+\frac{1}{2}}.
\eeq

%At the highest derivatives we will derive the bound: 
%\beq
%\label{bulk.highest}
%\int_{0}^{2\epsilon}\int_{\Sigma_{r}}[\partial^{s-3}e_q^{J_1}
%[K^m_{22}]]^2(r,t,\theta)
%r^{-2c+2|J_1|\frac{3}{2}}(r,t,\theta)
%sin\theta dtd\theta dr
%\le 4C\eta^2 \epsilon^{-3}, q=1,2. 
%\eeq
%\medskip

For brevity of notation, for each $k>low$, we denote by $p(k)$ 
the term: 
\[
p(k):=\frac{1}{2}(k-(s-3-4c)). 
\]

Then, after \eqref{bulk1}, \eqref{bulk2} 
have been established, we will derive the following energy estimate for 
$\int_{\Sigma_{\delta(t,\theta)}}[|\partial^I[K^m_{22}-K^S_{22}]|^2r^{p(k)}](\delta(t,\theta),t,\theta))sin\theta dtd\theta$, where $\Sigma_{\delta(t,\theta)}$
 is any graphical hypersurface expressed in terms of  $\{\rho^{m-1}=\delta(t,\theta)\}$ or $\{r=\delta(t,\theta)\}$, $\delta(t,\theta)\sim \e$:

\beq
\label{K22m.r*.higher}
\int_{\Sigma_{\delta(t,\theta)}}|\partial^I 
[K^m_{22}(r(t,\theta),t,\theta)-
K^S_{22}(t,\theta)]|^2
 sin\theta d\theta dt\le C\eta^2 \delta^{-3-p(k)}.
\eeq
(After these bounds have been derived, we will also explain how the same 
bounds hold on level sets of the new coordinate function $\rho^m$).

Moreover the same proof  applies to $K^m_{12}$ to yield:

\beq
\label{Km12.high}
\int_{\Sigma_{r=\delta(t,\theta)}}|\partial^I 
K^m_{12}(\delta(t,\theta),
t,\theta)|^2dt sin\theta d\theta
\le \e^{-p(k)} 3 C^2  \eta^2 \delta^{-3+\frac{1}{2}}. 
\eeq
\medskip

(The reason for the smaller power of $r$  in the RHS of the evolution equation 
 \eqref{finredEVERic12it} for 
 $K^m_{12}$), where no \emph{singular} $e_0$-derivatives are present; the only 
 derivatives of $\gamma^m$ are in the directions $e^{m-1}_1, e^{m-1}_2$, 
 which are less singular in $r$, by virtue of the AVTD behaviour of our solution. 
 We also note that stronger bounds for $K^m_{12}$ can be derived at the orders below the 
 top, in view 
 of Lemma 
\ref{hessian.bounds.again}, but these are not needed and 
so we do not put them down.  
 \medskip

\emph{Proof of \eqref{bulk1}, \eqref{bulk2}:} We focus on \eqref{bulk1} and 
explain at the end the modification needed in deriving \eqref{bulk2}. 
We will prove this claim by a finite induction. So we assume our claim has been proven 
to orders $ |I|=l-1\le s-5$ and we will derive it for order 
$|I|=l\le s-4$. 

Consider the equation \eqref{highRicm22}. 
Recall this becomes a linear 1st order ODE equation in 
$\partial^IK^m_{22}$. 
Recall the integrating factor for that equation is  
$F(r,t,\theta)=e^{-\int_{\e}^r(\frac{2M}{s}-1)^{-1/2} [2K_{22}^m+ e_0(^m\gamma)](s,t,\theta)ds}$.

 Notably, using the expressions we 
have for the asymptotics of $K^m_{22}(\rho,t,\theta), e_0(^m\gamma)$ we 
see that in $L^\infty$ 
 integrating factor is asymptotic to 
$r^{3+\e^m(t,\theta)}$ as $r\to 0$ in the sense that for some function $c(t,\theta)\sim 1$: 

\[
\|F(r,t,\theta)-c(t,\theta)r^{3+\e^m(t,\theta)} \|_{L^\infty}\le Cr^{-2-3/4}, 
\] 
moreover  the function $\e^m(t,\theta)$ satisfies 
$|\e^m(t,\theta)|\le DC\eta\le 1/8$. 
\medskip

Now, consider the equation \eqref{highRicm22};
using the integrating factor $F(r)$ and shorthand notation this can be re-expressed as:

\beq
\label{int.fact}
r^{p(k)+5}[F^{-1}(r)\partial_r(F(r)\partial^I[K^m_{22}-K_{22}^S](r,t,\theta))]^2=  (\frac{2M}{r}-1)^{-1}
[{\rm RHS}[\eqref{highRicm22}]]^2\cdot r^{p(k)+5}.
\eeq

In fact, let us recall the bounds on ${\rm RHS}[\eqref{highRicm22}]]$ that we derived 
in the inductive step of \eqref{inductionHess}.

We will integrate this equation over $t,\theta,r$ (with the volume form
 $ sin\theta dt  d\theta dr$)
and then apply the standard  Hardy inequality on the RHS:  The first integration gives:

\beq
\begin{split}
&\int_0^{2\e}\int_{t,\theta}r^{p(k)+5}[F^{-1}(r)\partial_r\bigg( F(r)
\partial^I
[K^m_{22}-K_{22}^S](r,t,\theta)\bigg)]^2drdtsin\theta d\theta
\\& = \int_0^{2\e}\int_{t,\theta}r^{p(k)+5}
(\frac{2M}{r}-1)^{-1}
[{\rm RHS}[\eqref{highRicm22}](s,t,\theta)]]^2 drdt sin\theta d\theta.
\end{split}
\eeq

The standard weighted 1-d Hardy inequality %(which can be invoked, in view of the vanishing of the 
%functions $r^{\frac{1}{2}p(k)+3}\partial^I [K^m_{22}-K^S_{22}]$, given the 
% choice of $p(k)$), 
 then implies a lower bound for the LHS of the above: 

\beq
\begin{split}
&\int_0^{2\e}\int_{t,\theta}r^{p(k)+5}
[F^{-1}(r)\partial_r(F(r)(\partial^I[K^m_{22}-K_{22}^S](r,t,
\theta))]^2dt sin\theta dr
d\theta
\\& \ge C\int_0^{2\e}\int_{t,\theta}r^{p(k)+5}
[F^{-1}(r)\partial_r(
r^{3+\e^m(t,\theta)}(\partial^I[K^m_{22}-K_{22}^S](r,t,
\theta)))]^2dt sin\theta d\theta dr
\\&\ge C_{p(k)} \int_0^{2\e}\int_{t,\theta}r^{p(k)+5}
[r^{-(3+\e^m(t,\theta))-1}
r^{3+\e^m(t,\theta)}[\partial^I 
[K^m_{22}-K_{22}^S](r,t,\theta))]]^2dtsin\theta dr
d\theta
\end{split}
\eeq
(The constant $C(p(k))$ has come from the classical Hardy inequality. Note that 
$C(p(k))\sim [p(k)]^2$).
Thus we derive:

\beq
\begin{split}
& \int_0^{2\e}\int_{t,\theta}r^{p(k)+5-2}[\partial^I[K^m_{22}-K_{22}^S](r,t,
\theta))]^2
 drdt sin
 \theta 
d\theta
\\&\le C_{p(k)}^{-1}\int_0^{\e}\int_{t,\theta}(\frac{2M}{r}-1)^{-1}
r^{5}r^{p(k)}
[{\rm RHS}[\eqref{highRicm22}]]^2\cdot 
  drdt sin\theta d\theta.
\end{split}
\eeq
 
Let us now derive a bound on the RHS of the above: 
We note that the terms involving spatial derivatives $\onabla\gamma^m$ have 
all been bounded in  
Lemmas \ref{hessian.bounds.again}.
 Lemma \ref{hessian.bounds.again} 
directly implies that the contribution of those terms to 
the total norm is bounded by $\frac{B^2\e^{1/4}}{C(p(k))} \e^3$.
 The terms involving $e_0e_0(\gamma^m), e_0(\gamma^m)$ have 
been bounded in \eqref{e0gammamid}, 
\eqref{e0e0gammamid}. The contribution of those terms  (after the integrations in the RHS of the above) is thus  bounded by 
$\frac{2}{C(p(k)}C^2\eta^2\e $. Finally, there are all the terms that involve 
lower derivatives of $K^m_{22}, K^m_{12}$. 
Since we are assuming that those terms are already bounded as in 
\eqref{bulk1}, we find that the contributions of those terms is bounded by 
$\frac{B^2\e^{1/4}}{C(p(k))} \e$.

In sum, using \eqref{e.bd},  we conclude that the RHS of the above is
 bounded by $\frac{6}{C(p(k))}C^2\eta^2\e $ for all $|I|\le s-4$. $\qed$

%At the top orders, the only difference is that Lemma \ref{top.hessian} must be used instead of Lemma \ref{hessian.bounds}; and in particular the ``spatial derivatives'' 
%of $\gamma^m$ also contribute to the most singular term. We then find that the RHS in that case is bounded by  $\frac{10}{C(p(k)}C^2\eta^2\e^3$ at the top orders. 

Thus we derive an upper  bound for the LHS of the above by 
$\frac{7}{C(p(k)}C^2\eta^2\delta^3$.
\medskip

Having bounded this bulk term, we can now bound 

\[
\int_{\Sigma_{r=\e}}r^{p(k)+4}
|\partial^I[K^m_{22}-K_{22}^S](r,t,\theta)|^2sin\theta  dt
 d\theta.
\]

By the fundamental theorem of calculus we find: 

\beq
\begin{split}
\label{expand}
&\int_{\Sigma_{\delta}}r^{p(k)}r^{4} (
\partial^I[K^m_{22}-K_{22}^S] )^2   dt sin\theta d\theta=
\int_{t,\theta}\int_0^{\delta}\partial_r \{r^{p(k)+4}
[\partial^I[K^m_{22}-K_{22}^S]]^2 \}
drdt sin\theta d\theta
\\&\le 
(p(k)+6)\int_{t,\theta}\int_0^{\delta} r^{p(k)} 
(r^{2}\partial^I[K^m_{22}-K_{22}^S])^2   
drdtsin\theta d\theta
\\&+
\int_{t,\theta}\int_0^{\delta}(\frac{2M}{r}-1)^{-1/2} 
r^{p(k)}r^{4}{{\rm RHS}[\eqref{highRicm22}}]\cdot 
\partial^I[K^m_{22}-K_{22}^S]  drdt  sin\theta d\theta.
\end{split}
\eeq
 (The coefficient $+6$ in the first term in the RHS has come from incorporating the second and third terms in the LHSs of \eqref{highRicm22} into that term).

Then the first term in the RHS of \eqref{expand}
 has already been bounded by 
$\frac{10(p(k)+4))}{C(p(k))}C^2\eta^2\delta$. 
The second term can be controlled by Cauchy-Schwarz:

\beq
\begin{split}
&\int_{t,\theta}\int_0^{\delta} (\frac{2M}{r}-1)^{-1/2} r^{p(k)}(r^4
{\rm RHS}[\partial^I[K^m_{22}-K_{22}^S]]\cdot 
\partial^I[K^m_{22}-K_{22}^S]] dr dt sin\theta 
d\theta
\\&\le 
\kappa\int_{t,\theta}\int_0^{\delta} (\frac{2M}{r}-1)^{-1} 
r^{p(k)} r^{5}
|[\widetilde{{\rm RHS}[\eqref{highRicm22}]}|^2 drdt sin\theta d
\theta
\\&+(4\kappa)^{-1}\int_{t,\theta}\int_0^{\delta}  r^{p(k)} 
r^3|\partial^I[K^m_{22}-K_{22}^S]|^2
  drdt sin\theta d\theta
\end{split}
\eeq
Choose $\kappa=1$. Then given the bounds 
we have obtained on the second term in the RHS (the bulk bound derived 
above) 
and the bounds we have derived  on 
the bulk integral of ${\rm RHS}[\eqref{highRicm22}$, we 
derive the bound, for every fixed $\delta$ (factoring out the $r^3$ from 
the LHS, and recalling the bound on the constant $C(p(k))$):

\beq
\label{Km22.high}
\int_{\Sigma_{\delta}}|\partial^I[K^m_{22}-K_{22}^S]|^2
(\delta,t,\theta)dt sin\theta d\theta\le \delta^{-p(k)} 10  C^2  \eta^2 
\delta^{-3}. 
\eeq

If the hypersurface $\{r=\delta\}$ is replaced by a function 
$\{r=\delta(t,\theta)\}, \delta(t,\theta) \sim \e$ the same argument 
applies to derive: 

\beq
\label{Km22.high.var}
\int_{\Sigma_{\delta(t,\theta)}}|(\partial^I[K^m_{22}-K_{22}^S])
(\delta(t,
\theta),t,\theta)|^2dt sin\theta d\theta\le 2 \e^{-p(k)}10 C^2 
 \eta^2 \e^{-3}. 
\eeq

In particular as we will see below (once we have defined the function 
$\rho^m$ via $r^m_*(t,\theta)$) these bounds imply: 

\beq
\label{Km22.high.var.rhom}
\int_{\Sigma_{\rho^m}}|\partial^I
[K^m_{22}-K_{22}^S]
(r=\rho^m(t,\theta))|^2dt sin\theta d\theta\le 5\e^{-p(k)} 
 C^2  \eta^2 \e^{-3}. 
\eeq

%And in analogy with \eqref{Km22.high.var}, we have: 
%\beq
%\label{Km12.high.var}
%\int_{\Sigma_{\delta(t,\theta)}}
%|\partial^I(K^m_{12}(\delta(t,\theta)|^2dt sin\theta d\theta\le 
%3\e^{-p(k)}  C  \eta^2 \e^{-3}. 
%\eeq
Again, once we have derived bounds for 
$\rho^m$ via $r^m_*(t,\theta)$ these bounds will  imply: 

\beq
\label{Km12.high.var.rhom}
\int_{\Sigma_{\rho^m}}|\partial^I(K^m_{12}(r=\rho^m(t,\theta))|^2dt
 sin\theta d\theta\le 3\e^{-p(k)} 
 C^2 \eta^2 \e^{-3+\frac{1}{2}}. 
\eeq

\medskip

\subsection{The inductive step for $K^m_{22}, K^m_{12}$ at the top order.}
\label{sec:K22K11.top}

\newcommand{\oK}{\underline{K}}
We here prove the inductive claim for $K^m_{22}, K^m_{12}$ at the top 
orders. We note that to derive our claim, we will use that the variables 
$K_{22}, K_{12}$ are \emph{de-coupled} from the remaining variable $K_{11}$ 
in the
 equations \eqref{finredEVERic11pre}, 
 \eqref{finredEVERic22pre}, \eqref{finredEVERic12pre}. This remains true in 
 the iterative variables
  $K^m_{22}(r,t,\theta), K^m_{12}(r,t,\theta)$, via equations
   \eqref{finredEVERic12it}, \eqref{finredEVERic22it}.

We note that the proof of the top order claims would have been  
simpler, had we been treating directly the \emph{coupled} system in section \ref{sec:system}. The setting of the iteration that we use here requires us to use some tricks; these make use of this
 de-coupling of $K^m_{22}, K^m_{12}$ from $K^m_{11}$.   
   \medskip

We just treat the top order case in the case \eqref{inductiontrKtopmixed.sing}
of extra singular weights, with the 
unknowns 
 $\partial^J\partial_\theta [K^m_{22}-K^S_{22}]{\rm cot}\theta$, $|J|=s-4$,
 $\partial^I [K^m_{12}\cdot {\rm cot}\theta]$; the cases 
\eqref{inductiontrKtopmixed}, $|I|=s-3$,
are an easier version of this case. 
\medskip

\newcommand{\oh}{\overline{h}}

Our use of the de-coupling of the two variables that we are interested in here, is to 
construct  a metric $\oh^m$ which is partly {artificial}, in that its 
components $\oK^m_{22}(r,t,\theta), \oK^m_{12}(r,t,\theta)$ agree with 
$K^m_{22}(r,t,\theta), K^m_{12}(r,t,\theta)$; but its connection coefficients where 
 $e_1$ appears twice do not agree with that of the metric iterate $h^m$.\footnote{The 
 latter is for convenience only, and inconsequential since we are only proving claims on 
$K^m_{22}(r,t,\theta), K^m_{12}(r,t,\theta)$ here. }

We define a new tensor  $\oK^m_{ij}$ in place of  
$K^m_{ij}$ as follows:
\beq
\oK_{22}=K^m_{22}, 
\oK^m_{12}=K^m_{12}, 
\oK^m_{11}=
(K^m_{11})^{\rm reg},
\eeq
where the latter is defined to be the \emph{unique} solution of \eqref{finredEVERic11it}
with the ``free branch'' (with behaviour $O(r^{2d_1^m(t,\theta)-\alpha^m(t,\theta)})$ 
of the solution set to zero.

In particular proving our claim for $\oK^m_{22}, \oK^m_{12}$ 
will imply our claim for $K^m_{22}, K^m_{12}$. 
We also note that having solved for the variables $\oK^m_{ij}(r,t,\theta)$, we can the 
define a metric $\oh^m_{ij}$ once we specify functions $e_2(r), e_1(r)$, as well as 
(asymptotic) initial data for  the corresponding coordinate-to-frame coefficients 
$a^m_{Ai}(r,t,\theta)$. To define this new (artificial) $(2+1)$-metric 
$\underline{h}^m$ we 
set $e_2(r)=o(r^{-\frac{1}{2}+d_2^m(t,\theta)})$, but also 
$e_1(r)=o(r^{\frac{1}{2}-d_1^m(t,\theta)})$; equations \eqref{eirODE} then imply that 
$e_1(r)=0,e_2(r)=0$. We also 
prescribe the solutions of the system in \eqref{e0.a} by requiring the free branches 
of $a^m_{t2}, a^m_{\theta 1}$ to zero, and also the 
coefficients of the free branches of $a^m_{\theta 2}, a^m_{t1}$ as follows:
$a^m_{\theta 2}$ is prescribed asymptotically as $r\to 0$ as before. (In particular 
$\partial_\theta$ captures the direction of $e_2^m$ at the singularity). 
$a^m_{t 1}$ is also chosen asymptotically as $r\to 0$ to ensure the (asymptotic) 
commutation of the (asymptotic) vector fields $a^m_{t1} e^m_1, a^m_{\theta 2} e^m_2$.\footnote{A fuller discussion of this relation appears in section 
\ref{subsec:REVESNGGimpl}--here we just employ this aspect of the construction
 in that subsection  for technical convenience. }
 We note in particular that 
we 
have the formulas \eqref{explicit.int}, by replacing the initial data factors, 
and we commence the integrals in the exponentials by $\e$. 
\medskip

We note (see remark \ref{K11.rem}) that we can derive the same bounds for 
$\oK^m_{11}$ as those 
claimed for $K^m_{11}$ at all orders below the top. 
We also note that we have $a^m_{t2}=a^m_{\theta 1}=0$, and for the variables 
$a^m_{\theta 2}, a^m_{t1}$ we have the same bounds as for the ``real'' metric $h^m$. 

Now, let us derive the claimed bounds on $K^m_{22}=\oK^m_{22}, K^m_{12}=\oK^m_{12}$
at the top orders.

We do this via the Lemma: 
\begin{lemma}
\label{trK.em}
Under the inductive assumptions on $\gamma^m$, $g^{m-1}$, we also claim for all 
$|I|=s-3$, 
and on any level set $\Sigma_r$, $\Sigma_{\rho^m}$ we have the 
estimate:

\beq
\label{K.em.flux}
\int_{\Sigma_r} r^{2c+3} 
|\partial^I\overline{K}^m|^2
{\rm sin}\theta d\theta dt d\tau
\le (C\eta)^2 r^3. 
\eeq

We also have analogues of these estimates at the other top order terms: 
In particular for each multi-index $J, |J|=s-4$ and for $i=1,2$ we  claim:

\beq
\label{K.em.flux.sing}
\int_{\Sigma_r} r^{2c+3 } 
|\partial^J\partial_\theta \overline{K}^m_{ii}\cdot {\rm cot}\theta|^2
{\rm sin}\theta d\theta dt d\tau
\le (C\eta)^2 r^3, \int_{\Sigma_r} r^{2c+3 } 
|\partial^I \overline{K}^m_{12}\cdot {\rm cot}\theta|^2
{\rm sin}\theta d\theta dt d\tau
\le (C\eta)^2 r^{3}.
\eeq
(In the second equation the multi-index $I$ satisfies $|I|=s-3$)
\end{lemma}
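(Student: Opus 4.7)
The plan is to establish the top-order weighted bounds by extending the integrating-factor representation that was used in Section~\ref{sec:K22K11.top} for the middle orders to the case $|I|=s-3$, combined with the top-order forcing bounds already derived for $\gamma^m$. Differentiating the Riccati equations \eqref{finredEVERic22it} and \eqref{finredEVERic12it} by $\partial^I$ yields linear first-order ODEs along $e_0$ in $\partial^I \overline{K}^m_{22}$ and $\partial^I \overline{K}^m_{12}$, with linear coefficient $(2 K^m_{22}+e_0\gamma^m)$ whose leading behaviour is $(2d_2^m-\alpha^m)\sqrt{2M}\,r^{-3/2}$. Solving backwards from the singularity with the free branch set to zero gives
\[
\partial^I \overline{K}^m_{22}(r,t,\theta) = F(r,t,\theta)\int_0^r F(s,t,\theta)^{-1}(2M/s-1)^{-1/2}\,\mathrm{RHS}_I(s,t,\theta)\,ds,
\]
where $F(r,t,\theta)\sim r^{2d_2^m-\alpha^m}$ (with $2d_2^m-\alpha^m$ close to $-3$), and analogously for $\partial^I\overline{K}^m_{12}$.

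First I would control $\mathrm{RHS}_I$ in $L^2_{t,\theta}$ across level sets of $r$. The dominant forcing comes from $\partial^I(e_0^2\gamma^m+(e_0\gamma^m)^2)$, whose top-order bound \eqref{top.en.e0e0} is $C\eta\,r^{-3-c}$. The Hessian-type terms $\partial^I(\overline{\nabla}_{22}\gamma^m+(e_2\gamma^{m-1})(e_2\gamma^m))$ and their analogues are treated via Lemma~\ref{hessian.bounds.again} and are strictly below-borderline thanks to the AVTD estimates \eqref{impl.gammaopt}--\eqref{impl.gammalow}. The commutator and nonlinear terms arising from the Leibniz expansion of $(K^m_{22})^2$ and $e_0\gamma^m\, K^m_{22}$ split into (a) pure lower-order pieces, directly controlled by \eqref{inductiontrKlow}, and (b) borderline pieces involving $\partial^I K^m_{22}$ paired with one derivative of $K^m$ or of $e_0\gamma^m$, which are absorbed by Gr\"onwall. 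It is precisely here that working with $\overline{K}^m$ instead of $K^m$ matters: equations \eqref{finredEVERic22it}--\eqref{finredEVERic12it} are decoupled from $K^m_{11}$, so the top-order analysis of $\overline{K}^m_{22},\overline{K}^m_{12}$ never has to contend with the potentially dangerous free branch of $K^m_{11}$.

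Second I would square the integral representation, multiply by $r^{2c+3}$, integrate in the bulk $(\tau,t,\theta)\in(0,r]\times\mathbb{S}^2\times\mathbb{R}$, and apply the weighted Hardy inequality exactly as in the step following \eqref{expand}, with Hardy constant $C(p(k))\sim c^2$ made harmless by the lower bound \eqref{c.bd} on $c$ and the smallness \eqref{e.bd} of $\epsilon$. The weighted integral of $|\mathrm{RHS}_I|^2$ is then bounded using the estimates assembled in the previous paragraph. For the singular-weight claim \eqref{K.em.flux.sing}, I would first use Lemma~\ref{lem:poles} to note that $K^m_{12}$ and $\partial_\theta K^m_{22}$ vanish to first order at the poles $\theta=0,\pi$, so that $K^m_{12}\cot\theta$ and $\partial_\theta K^m_{22}\cot\theta$ are smooth; multiplying the $\partial^I$-differentiated equation by $\cot\theta$ and commuting through produces new nonlinear/commutator terms controlled by the singularly-weighted assumptions \eqref{inductiontrKtopmixed.sing} together with the generalized Hardy inequality of Lemma~\ref{gen.Hardy}, after which the integrating-factor argument closes as before. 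The main obstacle in the whole proof is the borderline nonlinear term $2\,\partial^I K^m_{22}\cdot K^m_{22}$ at top order: the constant $2$ combined with $K^m_{22}\sim d_2^m(t,\theta)\sqrt{2M}\,r^{-3/2}$ is exactly what appears in the integrating-factor coefficient, so rather than surviving as a free loss it is consumed by the choice of $F$ -- this is precisely why $F$ is built out of the full $2K^m_{22}+e_0\gamma^m$ and not merely its Schwarzschild value.
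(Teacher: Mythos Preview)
Your proposal has a genuine gap at precisely the place that forces the paper to take a different route.

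The integrating-factor representation for $\partial^I\overline K^m_{22}$ requires you to bound $\mathrm{RHS}_I$ in $L^2[\Sigma_r]$ at $|I|=s-3$. You correctly identify that the $e_0$-derivative forcing $\partial^I\bigl(e_0^2\gamma^m+(e_0\gamma^m)^2\bigr)$ is controlled by the top-order energy \eqref{inductiongammatopmixed}. But the spatial Hessian forcing $\partial^I\bigl(\overline\nabla_{22}^{m-1}\gamma^m+(e_2\gamma^{m-1})(e_2\gamma^m)\bigr)$ is \emph{not} available: it contains $s-1$ spatial derivatives of $\gamma^m$, whereas the energy $E[\partial^I e_0^{J_0}\gamma^m]$, $|I|=s-3$, controls at most $s-2$ spatial derivatives (one $e_b$ on top of $\partial^I$). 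Lemma~\ref{hessian.bounds.again} stops at $k=s-4$ for exactly this reason, and the AVTD descent behind \eqref{impl.gammaopt}--\eqref{impl.gammalow} cannot be invoked at top order because there is no order $s-2$ to descend from. The paper flags this explicitly just before \S\ref{sec:FT_lm}.

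The paper circumvents the missing Hessian bound by a structural trick you did not use. One first passes to ${\rm tr}_{\overline h^m}\overline K^m$: summing the Riccati equations for $\overline K^m_{11}$ and $\overline K^m_{22}$ and invoking the wave equation $\Box_{g^{m-1}}\gamma^m=0$ eliminates the spatial Hessian entirely, leaving only $e_0$-derivatives of $\gamma^m$ and difference terms $K^m-K^{m-1}$ on the right-hand side (equation \eqref{Ric00.appl}). This is where the artificial choice $\overline K^m_{11}=(K^m_{11})^{\rm reg}$ is essential --- you need all diagonal components to form the trace, but the true $K^m_{11}$ has not yet been solved for at this stage. Having controlled $\partial^I{\rm tr}\,\overline K^m$ via Hardy in $r$, one then recovers the individual components $\partial^J\partial_\theta\overline K^m_{ab}$ by applying the Codazzi equation $\partial_\theta{\rm tr}\,\overline K^m=\overline\nabla^b\overline K^m_{\theta b}-\mathrm{Ric}_{0\theta}$ and integrating by parts twice in the spatial indices (equations \eqref{Codazz.appl}--\eqref{to.IBP.3}); the curvature corrections are lower order and absorbable. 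Your direct attack on the $K^m_{22}$ equation alone cannot see this cancellation of the Hessian through the wave equation.
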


We note that in view of the definition of $\oK^m$, the above implies 
\eqref{inductiontrKtopmixed} for $K^m_{22}, K^m_{12}$.

\begin{proof}

We will prove the slightly harder  case of %\eqref{K.em.bulk.sing} and 
\eqref{K.em.flux.sing}. The  claim \eqref{K.em.flux} follows by an easier adaptation of this
 argument. 
We will first derive our bounds for 
$\partial^J \partial_\theta {\rm tr}_{\overline{h}^{m-1}} \oK^m\cdot {\rm cot}\theta$;\footnote{Here $\overline{h}^{m}$ is the 2-dimensional metric on the space 
$e_0^\perp$, spanned by $e^{m-1}_1, e^{m-1}_2$.} we will see further down how this implies our claim for the full 
second fundamental form $\oK^m$ at the top orders, as claimed. 
\medskip

We recall the Riccati equations \eqref{finredEVERic11it}, \eqref{finredEVERic12it}, 
\eqref{finredEVERic22it}; we consider these Ricatti equations for $\overline{h}^m$ whose deformation 
tensor (for the vector field $e_0$) in $\underline{K}^m$--by our construction, the Ricatti 
equations   hold verbatim for the components of $\oK^m$, 
with all  occurrences of $K^m_{ij}$ replaced by $\oK^m_{ij}$; 
Note that if we add the evolution equations with  $e_0\oK^m_{11}, e_0\oK^m_{22}$ and invoke the wave equation \eqref{waveq.it} on $\gamma^m$, then after
we multiply the resulting equation  by $r^{c+\frac{3}{2}}$ we derive an equation of the form: %and 
%invoke \eqref{itR00},  to re-express the above as: 
 
 \beq
 \label{Ric00.appl}\begin{split}
& e_0[r^{c+\frac{3}{2}} {\rm tr}_{\overline{h}^{m}}\oK^m]=-(\frac{2M}{r}-1)^{\frac{1}{2}}(c+\frac{3}{2})
 r^{c+\frac{1}{2}} {\rm tr}_{\overline{h}^{m}}\oK^m+r^{c+\frac{3}{2}}\{
 -\frac{1}{2}[{\rm tr}_{\overline{h}^{m}}\oK^m]^2-|\oK^m|^2_{\overline{h}^{m}}
\\&-[e_0^2\gamma^m+(e_0\gamma^m)^2+\sum_{i=1,2}(\oK_{ii}^m-\oK_{ii}^{m-1})e_0\gamma^m+(e_0\gamma^m-e_0\gamma^{m-1})e_0\gamma^m\\
&+(\oK^m_{12})^2-(\oK^{m-1}_{12})^2]\}\end{split}
 \eeq

In particular we note that all forcing terms involving $\gamma^m$ in the RHS now satisfy bounds in our desired energy spaces; in particular the terms with second spatial derivatives of $\gamma^m$ do not appear. 
This is the upshot of invoking  the wave equation on $\gamma^m$ to re-express spatial derivatives of $\gamma^m$ in terms
of time
 $e_0$-derivatives.

We then take the $\partial^I$ derivative of the above equation,  with $|I|=s-3$ (for the first claim in our Lemma), 
or act on it by the operator 
 ${\rm cot}\theta\cdot \partial^J\partial_\theta$
with $|J|=s-4$, for the second case. We consider only the second case, since the first is an easier adaptation of this.

 We obtain commutation terms which will not be top-order in the resulting equation; these we denote by ${\rm l.o.t.'s}$. 
Our differentiated equation then yields: 
\beq
\label{e0trK}\begin{split}
&e_0[{\rm cot}\theta\cdot \partial^J\partial_\theta {\rm tr}_{\overline{h}^{m}}\oK^m]+ \sum_{a,b=1,2,3}[{\rm cot}\theta\cdot \partial^J\partial_\theta  \oK^m_{ab}]\cdot [(\oK^m)^{ab}+g^m_{ab}e_0\gamma^m]-{\rm cot}\theta\cdot \partial^J\partial_\theta [(\oK^{m-1}_{12})^2-3(\oK^m_{12})^2]
\\&= -2{\rm cot}\theta\cdot \partial^J\partial_\theta [e_0e_0\gamma^m] 
+\partial^I[\nabla\gamma^{m-1},\nabla\gamma^m]+{\rm l.o.t.'s},\end{split}
\eeq
where in the last term we have used generic notation for products of first derivatives. 

Using  the usual Hardy inequality in $r$ from $r=0$,
we derive the bound: 

\beq
\int_{0}^r\int_{\Sigma_\tau} \tau^{2c+3} [\partial^J\partial_\theta {\rm tr}\oK^m\cdot {\rm cot}
\theta]^2 dt
{\rm sin\theta d\theta }d\tau\le \frac{4}{(2c+4)^2}\int_{0}^r\int_{\Sigma_\tau}
 \tau^{2c+3+2} 
[\partial_r
\partial^J\partial_\theta {\rm tr}\oK^m\cdot {\rm cot}\theta]^2 dt
{\rm sin\theta d\theta }d\tau 
\eeq
Now combine this with the equation \eqref{e0trK}, to substitute the RHS.

Let us note a bound in $L^2[\Sigma_r]$ on 
$\partial^J\partial_\theta  e_0\gamma^m {\rm cot}\theta$.
We express $\partial^J\partial_\theta$ by $\partial^I$ with $|I|=s-3$. 
We then use the Hardy inequality to bound: 

\[
\| \partial^J\partial_\theta  e_0e_0\gamma^m {\rm cot}\theta\|_{L^2[\Sigma_r]}\le 
\|a^{m-1}_{\Theta 2} \|_{L^\infty}\cdot \|e^{m-1}_2
\partial^Ie_0e_0\gamma^m  \|_{L^2[\Sigma_r]} \le \|a^{m-1}_{\Theta 2} \|_{L^\infty}
\cdot \sqrt{E[\partial^Ie_0e_0\gamma^m]}.
\]

Let us recall the bound on the top order term 
$\partial^Ie^{m-1}_ie_0e_0(\gamma^m)$, $i=0,1,2$ in $L^2_{({\rm sin}\theta d\theta dt)}[\Sigma_\tau]$
by 
$C\eta \tau^{-3-c-\frac{3}{2}}$. 

Using this  we will be able to 
prove our claim (using \eqref{c.bd} as before)
 \emph{provided} we can prove the bound: 

\beq
\label{to.IBP}\begin{split}
&\int_0^r\int_{\Sigma_{\tau}}
 r^{2c+6}\sum_{a,b=1,2,3}[(\oK^m)^{ab}\partial^J\partial_\theta\oK^m_{ab}]
\cdot 
[\partial^J\partial_\theta(\oK^m)^{a'b'}\cdot \oK^m_{a'b'}]({\rm cot}\theta)^2
dt{\rm sin\theta d\theta }d\tau
\\&\le \int_0^r\int_{\Sigma_\tau} 4M\tau^{2c+3}
 |\partial^J\partial_\theta {\rm tr}\oK^m\cdot {\rm cot}\theta|^2{\rm sin}\theta d\theta dtdr+O(r^{\frac{1}{4}}).
 \end{split}
\eeq

This inequality can be proven by
first invoking the pointwise bounds on $\|\oK^m_{ij}\|_{L^\infty[\Sigma_r]}$ 
(as noted, these are the 
same as those for $K^m_{ij}$), 
which reduce matters to bounding 

\beq\label{bd.this}
\int_0^r\int_{\Sigma_{\tau}} r^{2c+3}\sum_{a,b=1,2}[\partial^J\partial_\theta\oK^m_{ab}]
\cdot 
[\partial^J\partial_\theta(\oK^m)^{ab}]({\rm cot}\theta)^2
dt{\rm sin\theta d\theta }d\tau
\eeq
by the RHS of \eqref{to.IBP}. In particular we will show that: 

\beq
\begin{split}\label{get.bd}
&\bigg{|}\sum_{a,b=1,2}\int_0^r(1+O(\tau^{\frac{3}{8}}))\int_{\Sigma_{\tau}} r^{2c+3}[\partial^J\partial_\theta\oK^m_{ab}]
\cdot 
[\partial^J\partial_\theta(\oK^m)^{ab}]({\rm cot}\theta)^2
dt{\rm sin\theta s\theta }d\tau
\\&-\int_0^r\tau^{2c+3} \int_{\Sigma_\tau}
 |\partial^J\partial_\theta {\rm tr}\oK^m\cdot {\rm cot}\theta|^2{\rm sin}\theta d\theta dtdr\bigg{|}=O(r^{\frac{1}{4}}).
\end{split}
\eeq
This is done by 
 integration by parts: We commence with the term 
 $\partial^J\partial_\theta {\rm tr}\oK^m$ to which we apply the Codazzi equation to
  write: 
 \beq\label{Codazz.appl}
 \partial^J\partial_\theta {\rm tr}\oK^m= \partial^J\onabla^bK^m_{\theta b}-
 \partial^JRic^{\oh^m}_{0\theta}.
 \eeq
 
 We then invoke  the standard coordinate expression for the Ricci curvature components 
 $Ric_{0\theta}$,
  $Ric_{0T}$; in particular we express this in terms of coordinate derivatives 
  of the metric components and Christoffel symbols.
   We note that the terms with most derivatives will have a 
  derivative $\partial_\rho$; this is since \emph{for this ``artificial'' metric} 
  $\underline{h}^m$ 
  $e_0$ is normal to $\partial_T,\partial_\Theta$ everywhere. [Recall that 
 in this proof  we choose $e_1$ so that $e_1(r)=0$] . 
  Thus, the bounds on $\partial^JRic^{\oh^m}_{0\theta}$ follow just from the estimates 
  we have on the metric components and Christoffel coefficients, 
  as part of the implications of the inductive assumptions for the artificial 
  metric $\underline{h}^m$--the estimates we get for all Christoffel symbols and 
  metric components of $\underline{h}^m$ are the same as in the implications of our 
  inductive assumptions for $h^m$. 
  
 In particular, we will have at most $s-3$ spatial derivatives hitting any 
 metric component. These metric components are expressible in terms of the coordinate-
 to-frame component, the latter having been expressed as integrals from $r=0$ involving 
 the components $\oK^m_{ij}$. Now, some of these integrals involve fewer than 
 $s-3$ spatial derivatives of $\oK^m$; these have already been bounded and contribute to 
 the term $O(r^{\frac{1}{4}})$ in \eqref{get.bd}. The terms that depend on $s-3$ 
 derivatives of $\oK^m$ are expressible as integrals of these quantities, and by 
 Cauchy-Schwarz can be absorbed into the main terms we are seeking to bound. 
In particular we can show: 
\newcommand{\uh}{\underline{h}} 
 
 \beq\begin{split}\label{curv.bd.1}
& \sum_{b,a=1,2,3}\int_0^r\int_{\Sigma_\tau} \tau^{2c+3} \partial^J 
|(R^{\uh^m})_{\theta 0}|^2({\rm cot}\theta)^2{\rm sin}\theta d\theta dtd\tau
\\&\le \int_0^rO(\tau^{\frac{3}{8}})\int_{\Sigma_\tau} \tau^{2c+3}|\partial^J\partial_
\theta K^m_{ab}|^2({\rm cot}\theta)^2{\rm sin}\theta d\theta dtd\tau.
\end{split} \eeq
Thus the term generated by the second term in \eqref{Codazz.appl}
 can be absorbed (in \eqref{get.bd}) into the first term, 
 which is the one we are seeking to bound in \eqref{bd.this}.  
 \medskip

The ``main term'' we obtain from \eqref{Codazz.appl} 
is the one with $\onabla_a\oK^m_{\theta b}$. 
So we are reduced to controlling the term: 

\[
\int_0^r\int_{\Sigma_\tau} \tau^{2c+3}\sum_{a,b=1,2}[\partial^J  \onabla^a\oK^m_{\Theta  a}{\rm cot}\Th][\partial^J\onabla^{b} (\oK^m_{\Theta b}{\rm cot}\Th]
{\rm sin}\theta d\theta dt d\tau
\]

We then integrate by parts the derivatives $\onabla^a,\onabla^b$.
  The resulting main term (after less singular and already-bounded terms that are generated by commutations) is: 
\beq
\label{to.IBP.2}
\int_0^r\int_{\Sigma_\tau} \tau^{2c+3}\sum_{a,b=1,2}[\partial^J \onabla^a 
\oK^m_{\theta b}{\rm cot}\Th][\partial^J
\onabla^a (\oK^m)_{\theta b}{\rm cot}\Th]
{\rm sin}\theta d\theta dt d\tau.
\eeq
 
%We can then use the curvature identity to switch $\nabla_a,\nabla^b$ in the second factor 
%(the resulting term involves a curvature term that is again bounded like the curature 
%term \eqref{curv.bd.1} above); 
After this integration by parts, we invoke the Codazzi equations again, 
to obtain: 

\beq
\label{to.IBP.3}\begin{split}
&\int_0^r\int_{\Sigma_\tau} \tau^{2c+3}\sum_{a,b=1,2}[\partial^J  \nabla^a\oK^m_{\theta  b}{\rm cot}\Th][\partial^J\nabla^{b} (\oK^m_{\theta a}{\rm cot}\Th]
{\rm sin}\theta d\theta dt d\tau
\\&= \int_0^r\int_{\Sigma_\tau} \tau^{2c+3}\sum_{a,b=1,2}
[\partial^J \partial_\Th \oK^m_{a  b}{\rm cot}\Th][\partial^I\partial_\Th (\oK^m)^{ ab}{\rm cot}\Th]
{\rm sin}\theta d\theta dt d\tau
\\&+ \int_0^r\int_{\Sigma_\tau} \tau^{2c+3}\sum_{a,b=1,2}
[\partial^J R_{ab\Th 0}]^2[{\rm cot}\Th]^2
{\rm sin}\theta d\theta dt d\tau.
\end{split}
\eeq
The first term on the RHS are the term we wanted to obtain for our claim. 
The curvature term is bounded as in \eqref{curv.bd.1}, by first expressing that curvature term in terms of components of 
$Ric^{\uh^m}_{0i}$,\footnote{Using the fact that $\uh^m$ is a $2+1$ dimensional metric.}
in terms of derivatives of the metric components, which ultimately are controlled 
by the components $\oK^m$ that we are bounding now; the lower derivatives have already been bounded, while the higher derivatives 
are again expressibe in terms of \emph{integrals in $\tau$ of $\partial^I\oK^m$} thus these terms 
can be absorbed into the main terms.  
This completes our proof of \eqref{K.em.flux.sing}; for $i=2$ we thus derive our 
desired estimates for $K^m_{22}=\oK^m_{22}$ at the top orders. 
\medskip

The desired top-order estimates for $K^m_{12}=\oK^m_{12}$ are obtained from \eqref{get.bd}
by virtue of  the already desired bonds on ${\rm tr}\oK^m$.

\end{proof}

\subsection{Capturing the hypersurface that carries the initial data: \\
Determining the functions $r^m_*(t,\theta), \tilde{K}^m_{12}(t,\theta)$.}\label{IDsol}

We here seek to identify the hypersurface $\Sigma_{{r^m_*}}$ on which our prescribed initial 
data will be induced.

As noted in the introduction, in the gauge we have chosen two key parameters related to 
the initial data are not fixed 
apriori by us, but must be solved for at each stage of the iteration: 
These are:

a.  the function ${r^m_*}(t,\theta)$ which defines the graphical 
hypersurface 
$\Sigma_{r^m_*(t,\theta)}:=\{r=r^m_*(t,\theta)\}$ on which the prescribed  
initial data will live. 

b. The direction $e^m_2$ on the initial data hypersurface (expressed as a 
rotation of the 
background fixed frame $\E_0,\E_1, \E_2$)
 which upon transport along $e_0$ according to
  equation \eqref{gaugeit}
  yields the collapsing direction at the singularity; 
 moreover $e^m_2$ and 
 the normal direction $e^m_1$ to $e^m_2$ 
 is asymptotically a principal direction, in the sense that 
 $|^mK_{12}(r,t,
 \theta)|\cdot|K^m_{22}(r,t,
 \theta)|^{-1}=o(1)$, $|K^m_{12}(r,t,
 \theta)|\cdot|K^m_{11}(r,t,
 \theta)|^{-1}=o(1)$ as $r\to 0$. This 
 direction $^me_2$ is in 1-1 correspondence with 
 a special direction $\tilde{e}^m_2$ on $\Sigma_{r^m_*}$; the latter is captured by $\tilde{K}^m_{12}$

\medskip

As we have  seen in formula \eqref{varphi.tilK.m}, the rotation angle
 $\varphi^m(t,\theta)$
to transition from the background frame 
$(\E_1(t,\theta), \E_2(t,\theta))$ to $(\tilde{e}^m_1(t,\theta), \tilde{e}^m_2(t,\theta))$ 
is in 1-1 correspondence with the value of the 
initial data tensor $\tilde{K}$ \emph{evaluated against the frame} 
$(\tilde{e}^m_1, \tilde{e}^m_2)$
at $(t,\theta)$. 
Thus $\tilde{K}(\tilde{e}^m_1,\tilde{e}^m_2)=\tilde{K}^m_{12}(t,\theta)$ 
is the key parameter that ``sees'' the rotation of the background frame.

\subsection{The inductive step on  the functions $r^m_*(t,\theta)$, 
$\tilde{K}^m_{12}(t,\theta)$, re-formulated.}

We will be proving  the inductive claims for  
$r^m_*(t,\theta)$ and $\tilde{K}^m_{12}(t,\theta)$. We find it technically more 
convenient (for technical reasons related to the poles at $\theta=0,\pi$) to obtain our 
estimates in the spaces $L^2(d\theta dt)$ instead of $L^2(sin\theta d\theta dt)$, for 
all $\partial_\theta, \partial_t$ derivatives up to order ${\rm low-1}$.
In fact in this space the boundary condition $\te^m_2(r^m_*)=0$ at the two poles 
is more readily imposed.  At the orders 
higher than that we will obtain our estimates in the spaces defined by 
$L^2(sin\theta d\theta dt)$.
\medskip
To obtain our strengthened bound at the lower orders, we note that by virtue of the 
standard Hardy inequality in $\theta$, the bounds \eqref{1st.bds} (which hold for $k\le \rm low$ 
imply the bounds: 

\beq
\label{1st.bds.str}\begin{split}
&\int  |\partial^I[K^m_{22}(\e,t,\theta)- \K_{22}(t,\theta)  ]|^2  d\theta dt
 \le 2C^2 \eta^2 \e^{-3}, 
 \\& \int  |\partial^IK^m_{12}(\e,t,\theta) |^2  d\theta dt \le B^2 C^2 
 \e^{-2-\frac{1}{2}}\,
 \end{split}
\eeq
for all multi-indices $I, |I|\le {\rm low}-1$. 
\medskip

To distinguish estimates obtained with respect to this volume form, we will denote this
 space by $L^2( d\theta dt)$. When we consider hypersurfaces $\Sigma$ below on
  which coordinates $\theta, t$ naturally live, 
we will denote this volume form by $L^2(\Sigma, d\theta dt)$. 
The above notation will also extend to the standard Sobolev spaces $H^k(d\theta dt)$.
\emph{Outside this subsection}, when we write $L^2$ or $H^k$ 
 we will mean that $L^2$ or $H^k$  is with respect 
 to the usual volume form 
$sin\theta d\theta dt$.
We then claim:

\begin{proposition}
\label{prop.claim}
Consider the functions $K^m_{22}(r,t,\theta), K^m_{12}(r,t,\theta)$ 
defined by the formulas \eqref{K22m.int} (using \eqref{Kmijexp}), \eqref{K12m.int}. 
Then there exists a unique pair of functions 
$r^m_*(t,\theta), \tilde{K}^m_{12}(t,\theta)$ with $\e^{-1}r^m_*(t,\theta)-1, \e^{-3}\tilde{K}^m_{12}(t,\theta)$ both close to zero 
 in the 
norms below so that  the functions 
$\tilde{K}^m_{22}[\Sigma_{{r^m_*}}]$, $\tilde{K}^m_{12}[\Sigma_{{r^m_*}}]$
 defined  via the relations  \eqref{Korthexp}, \eqref{K12exp} 
 (and invoking   \eqref{tAm.def} to express the second term in \eqref{K12exp}
 in terms of the unknown $\tilde{K}^m_{12}$) in terms of $r^m_*(t,\theta)$, 
 ($\tilde{K}^m_{12}(t,\theta)=\tilde{K}_{12}^m(r^m_*(t,\theta),t,\theta)$)
  satisfy the requirement:

\beq
\label{to.solve}
\tilde{K}^m_{22}(t,\theta)= F^{22}_{t,\theta}[\tilde{K}^m_{12}(t,\theta)], 
{\rm subject}\text{ }{\rm to} \text{ } \space \space \partial_\theta r^m_*=0\space\space \text{at}
 \text{ }
\theta=0,\pi. 
\eeq

 Moreover the functions $r^m_*(t,\theta)$ 
  $\tilde{K}^m_{12}(t,\theta)$ then satisfy the following estimates:

 At the lower derivatives $|I|=k\le {\rm low}-1:=s-4-4c$: 
\beq
\label{bd1}
\sum_{i=0}^k||\partial^i(r^m_*-\e)||_{L^2(\Sigma_{r^m_*}, d\theta dt)}\le (D+1) C \eta 
\epsilon,\sum_{i=0}^{k}||
\te_2\partial ^i(r^m_*-\e)||_{L^2(\Sigma_{r^m_*}, d\theta dt)}\le 2(D+1)C \eta 
\epsilon^{-1/2}. 
\eeq

 \beq
 \label{bd2}
\sum_{i=0}^l|| \partial^{i}\tilde{K}^m_{12}||_{L^2(\Sigma_{r^m_*}, d\theta dt)}\le DC \eta \epsilon^{-3/2+\frac{1}{4}},
 \eeq
 
 At the higher derivatives $|I|=k\in \{ {\rm low},\dots, s-3\}$ we have the bounds: 
 
\beq
\label{highbd1}
\int_{-\infty}^\infty\int_0^\pi  |\te_2\partial^I 
({r^m_*})|^2sin\theta d\theta dt\le C^2 \eta^2
\epsilon^{-1 -\frac{k-{\rm low}}{2}}, 
\eeq

\beq
\label{highbd2}
 \int_{-\infty}^\infty\int_0^\pi |\partial^I ({r^m_*}-\e)|^2sin\theta d\theta dt\le 4
C^2 \eta^2 \epsilon^{2-\frac{k-{\rm low}}{2}},
\eeq

\beq
\label{highbd3}
\begin{split}
&\int_{-\infty}^\infty\int_0^\pi  
|\partial^I (\tilde{K}^m_{12})|^2sin\theta d\theta dt\le 
2C^2 \eta^2 \epsilon^{-3+\frac{1}{2}-\frac{k-{\rm low}}{2}} \text{ }\text{ }(|I|\le s-4), 
\\&\int_{-\infty}^\infty\int_0^\pi  
|\partial^I (\tilde{K}^m_{12})|^2sin\theta d\theta dt\le 
2C^2 \eta^2 \epsilon^{-3-2c}\text{ }\text{ } (|I|=s-3). \end{split}
\eeq
 
\beq
\label{extra.bound.te2K12}\begin{split}
&\int_{-\infty}^\infty\int_0^\pi  
|\partial^I (\te^m_2(r^m_*) \cdot \te^m_2\tilde{K}^m_{12})|^2sin\theta d\theta dt\le 
4C^2 \eta^2 \epsilon^{-3+\frac{1}{2}-\frac{k-{\rm low}}{2}} (|I|\le s-4),
\\&\int_{-\infty}^\infty\int_0^\pi  
|\partial^I (\te^m_2(r^m_*) \cdot \te^m_2\tilde{K}^m_{12})|^2sin\theta d\theta dt\le 
4C^2 \eta^2 \epsilon^{-3-\frac{k-{\rm low}}{2}} (|I|= s-3).
\end{split}
\eeq
At the top orders, the same bounds hold for $\partial^I$ replaced by $\partial^J\partial_\Th [(\dots){\rm cot}\Th]$.  
 
\end{proposition}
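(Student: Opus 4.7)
The plan is to view \eqref{to.solve} together with \eqref{K12exp} as a coupled $2\times 2$ system for the pair $(r^m_*, \tilde{K}^m_{12})$, perturbatively around the Schwarzschild values $(r^m_*\equiv\epsilon,\tilde{K}^m_{12}\equiv 0)$. Concretely, I would substitute the explicit formulas \eqref{F22.form}, \eqref{F22.F11} for $F^{22}_{t,\theta}$ and \eqref{tAm.def} for $\tilde A^m_{22,1}$ into \eqref{key.link2}--\eqref{key.link2'}, obtaining a system in which \eqref{key.link2} is quasilinear second-order in $r^m_*$ (the leading derivative being $-q^{-2}(\tfrac{2M}{r^m_*}-1)^{-1/2}\tilde e_2\tilde e_2 r^m_*$) coupled to an algebraic/gradient dependence on $\tilde{K}^m_{12}$, and in which \eqref{key.link2'} is first order in $\tilde e_2\tilde{K}^m_{12}$ coupled to $\tilde e_2 r^m_*$. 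At the background, both equations are satisfied identically (with $K^m_{ij}$ replaced by $K^S_{ij}$), and Lemma \ref{ground.est} tells us that the mismatch $K^m_{22}(\epsilon,\cdot)-\K_{22}$ and $K^m_{12}(\epsilon,\cdot)$ seeded by the just-solved wave $\gamma^m$ is of controlled size $C\eta\epsilon^{-3/2}$ in $L^2(d\theta\,dt)$.

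For existence at low regularity I would set up a Picard iteration $\Phi:(r^m_*,\tilde{K}^m_{12})\mapsto (r^{\text{new}}_*,\tilde{K}^{\text{new}}_{12})$: given an approximation, use the second-order equation \eqref{key.link2} to solve for the correction to $r^{\text{new}}_*$ (treating the RHS as source), then use \eqref{key.link2'} to solve for $\tilde{K}^{\text{new}}_{12}$ along the integral curves of $\tilde e_2$ with the known boundary data coming from vanishing at the poles (forced by Lemma \ref{lem:poles}). The solvability of the $\tilde e_2\tilde e_2$ equation requires a \emph{weak} formulation, obtained by multiplying by $\tilde e_2 r^{\text{new}}_*$ (times a suitable weight) and integrating over the quotient surface, which yields an energy identity whose boundary terms at $\theta=0,\pi$ vanish because of the gauge condition $\partial_\theta r^m_*=0$ at the poles. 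This pairing is precisely why the natural space at the low orders is $L^2(d\theta\,dt)$ rather than $L^2(\sin\theta\,d\theta\,dt)$. Standard elliptic/coercivity estimates for this weak form, combined with the transport nature of \eqref{key.link2'}, give a contraction in $H^k(d\theta\,dt)$ for $k\le\mathrm{low}-1$ provided $\epsilon,\eta$ are chosen as in \S\ref{sec_csts_params}, yielding \eqref{bd1}--\eqref{bd2}.

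Once the pair is constructed at low regularity, I would derive the higher-order bounds \eqref{highbd1}--\eqref{highbd2}--\eqref{highbd3} by commuting the reformulated system with $\partial^I$ for $\mathrm{low}\le|I|\le s-3$, placing all commutation terms on the right-hand side and absorbing the top-order principal terms on the left via the same weak energy identity used above. The input needed to estimate the sources at order $|I|$ comes entirely from the now-established bounds on $K^m_{22}, K^m_{12}$ at order $|I|$ on $\Sigma_{r=\delta(t,\theta)}$ (cf.~\eqref{Km22.high.var}, \eqref{Km12.high} and Lemma \ref{trK.em}), together with the inductive bounds on $g^{m-1}$-geometric quantities from Section \ref{sec_geom.params}. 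The extra bound \eqref{extra.bound.te2K12} on the product $\tilde e_2(r^m_*)\cdot\tilde e_2\tilde{K}^m_{12}$ is obtained as a byproduct, by squaring and integrating the differentiated \eqref{key.link2'}. Finally, the top-order weighted estimates involving $\cot\theta$ follow from the vanishing $\tilde{K}^m_{12},\tilde e_2(r^m_*)=O(\sin\theta)$ established in Lemma \ref{lem:poles}, combined with the generalized Hardy inequality of Lemma \ref{gen.Hardy} applied in the $\theta$-direction.

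The main obstacle, as flagged in the body of the paper, is the solvability of the coupled system at the base level: the principal operator on $r^m_*$ in \eqref{key.link2} is second order in $\tilde e_2$ but the equation also carries a zeroth-order term $K^m_{22}(r^m_*(t,\theta),t,\theta)$ of size $\epsilon^{-3/2}$ whose sign and size must be carefully matched against $q^{-1}F^{22}_{t,\theta}[\tilde K^m_{12}]$. Getting a closed weak-formulation energy estimate that (i) couples \eqref{key.link2} and \eqref{key.link2'} consistently, (ii) controls the nonlinear quadratic term $q^{-2}(\tilde e_2 r^m_*)^2/r^m_*$ in \eqref{key.link2}, and (iii) respects the vanishing at the poles, is the delicate step; this is precisely where the gauge choice of $e^m_1$ tangent to $\Sigma_{r^m_*}$ (rather than $e^m_2$) is used, as it ensures the second derivative that appears in \eqref{key.link2} is along the direction $\tilde e_2$ whose boundary trace is forced to vanish.
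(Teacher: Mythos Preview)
Your overall architecture is right---perturb around Schwarzschild, use a weak formulation to handle the second-order $\tilde e_2\tilde e_2 r^m_*$ term with Neumann data at the poles, work in $L^2(d\theta\,dt)$ at low orders, then commute for higher orders---but your existence mechanism differs from the paper's and has a gap. You propose a Picard iteration that decouples the two equations at each step. The paper instead treats the system \emph{jointly} in a weak formulation, adds a viscosity term $-\zeta\Delta$ to both equations, and obtains existence via a surjectivity argument for the linearization (essentially Lax--Milgram plus inverse function theorem), then passes to the limit $\zeta\to 0$. The reason the paper does this, and the point your proposal does not address, is that the vector field $\tilde e^m_2$ itself depends on the unknown $\tilde K^m_{12}$ through \eqref{te1m.tK12}. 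This means your ``solve the second-order equation for $r^{\text{new}}_*$'' step is not posed in a fixed function space: the natural energy (controlling $\|\tilde e_2\partial^I\delta r^m_*\|^2 + V\|\partial^I\delta r^m_*\|^2$) lives in a $\tilde K^m_{12}$-dependent space, and it is not clear your Picard map is well-defined, let alone contractive, without further work. The viscosity term gives a solution-independent elliptic piece that makes the linearized problem coercive in ordinary $H^1$, sidestepping this issue entirely.

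Your high-order strategy also diverges from the paper. You say you would reuse ``the same weak energy identity'', but at orders $\ge\mathrm{low}$ the paper switches to the volume form $\sin\theta\,d\theta\,dt$ and cannot simply integrate by parts one $\tilde e_2$ as at low orders (the divergence of $\tilde e_2$ with respect to this form blows up at the poles). Instead the paper applies the antiderivative operator $(\tilde e^m_2)^{-1}$ (integration along integral curves of $\tilde e_2$ from the poles) to the differentiated first equation \emph{before} pairing with $\tilde e_2\partial^I\delta r^m_*$; the vanishing of $\tilde e_2 r^m_*$ at the poles is then encoded as the absence of a boundary term in the fundamental theorem of calculus, and a second integration by parts together with Hardy's inequality handles the $\cos\theta/\sin\theta$ terms. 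Your proposal does not indicate awareness of this obstruction or its resolution. The bound \eqref{extra.bound.te2K12} is indeed read off from the differentiated \eqref{key.link2'} once the other bounds are in hand, as you say.
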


Once we have proven this proposition, the inductive claim for 
$r^m_*(t,\theta)$, $\tilde{K}^m_{12}(t,\theta)$ will be verified, and we 
may 
define $\tilde{K}^m_{11}$, \emph{on} $\Sigma_{{r^m_*}}$  as an 
   explicit function of $\tilde{K}_{12}^m$. 
This will be done in the later subsections.    
    \medskip

\subsection{Solving for $r_*^m(t,\theta), \tilde{K}^m_{12}(t,\theta)$  at the
 lower orders: weak formulation, and  solutions via a perturbation.}
 
Here we prove  Proposition \ref{prop.claim}, producing the desired bounds on 
${r^m_*}(t,\theta), \tilde{K}^m_{12}(t,\theta)$ 
only at the lower $\le {\rm low}-1$ orders. 
The higher order bounds will be obtained in the next subsection.

We will prove our result, once we suitably 
express the requirement \eqref{to.solve} (with the boundary condition at the poles)
in terms of the sought-after functions ${r^m_*}(t,\theta)$ 
and $\tilde{K}^m_{12}(t,\theta)$. 
We  first  present the key ideas in our argument:
\medskip

The equation  \eqref{to.solve} involves the function $\tilde{K}^m_{12}$, as well as the function $\tilde{K}^m_{22}(t,\theta)$. The function 
$\tilde{K}^m_{22}(t,\theta)$
will be expressed in terms of the function $K^m_{22}(r^m_*(t,\theta),t,\theta)$ via the equation \eqref{rstarlowODE.opt}
recalling that $K^m_{22}(r^m_*(t,\theta),t,\theta)$ is a function of $r^m_*$ alone, we see that the left hand side of \eqref{to.solve}
is a second-order equation in the unknown $r^m_*(t,\theta)$. (We note that the derivatives are exclusively in the direction $\te^m_2$--and this vector
field in fact depends on $\tilde{K}^m_{12}$, in view of formula \eqref{te1m.tK12}). 

Now, the RHS of \eqref{to.solve} depends on $\tilde{K}^m_{12}(t,\theta)$; this function also depends on $K^m_{12}(r^m(t,\theta),t,\theta)$, and thus in 
particular on $r^m_*(t,\theta)$. The relation is given by \eqref{key.link2'}
. In short, the pair of equations \eqref{to.solve} and \eqref{key.link2'}
 provide the 2x2 system of 
equations in our two unknowns. We recall also that on $r^m_*(t,\theta)$ we have also imposed the boundary conditions $\partial_\theta r^m_*=0$ at 
$\theta=0,\pi$. 

To obtain a solution to this system we proceed as follows: 

\begin{enumerate}
\item The very first step is to re-express our two equations in terms of $\delta r^m_*(t,\theta)=r^m_*(t,\theta)-\e$, $\tilde{K}^m_{12}(t,\theta)$. This 
permits to seek solutions via a perturbative argument, since the unknowns $\e^{-1} \delta r^m_*(t,\theta), \e^{-3/2}\tilde{K}^m_{12}(t,\theta)$
will be \emph{small}, in suitable norms. Also, the two equations have fixed RHSs, which are now known to be suitably small in suitable norms. 

\item The first difficulty is that the system \eqref{to.solve}, \eqref{key.link2'} is not in  a standard form,\footnote{A ``standard form'' would be, e.~g.~a pair of second order elliptic equations, or an ODE coupled to an algebraic equation.} where we can derive solvability. The main reason for this is 
the 
second equation, where we note the presence of a term $\te^m_2(\delta r^m_*) \te^m_2[\tilde{K}^m_{12}]$ in an equation that is otherwise zeroth order 
in $\tilde{K}^m_{12}$. To achieve a solution of this equation, we introduce a 
\emph{weak formulation} thereof. The weak formulation requires it to hold up 
to integration against test functions, via integration by parts of the extra $\te^m_2$ derivatives from the term $\te^m_2\te^m_2(\delta r^m_*)$
in the LHS of \eqref{to.solve} (having replaced this using \eqref{rstarlowODE.opt}), as well as an integration by parts of the $\te^m_2$ derivative of the most dangerous term 
in \eqref{key.link2'}. 
This leads us to the second element in the proof: 

\item It is more suitable to perform this integration by parts over the region $[0,\pi]\times \mathbb{R} $ (with the usual volume form) instead of 
$\mathbb{S}^2\times \mathbb{R}$ (with the usual volume form. The reason is the degeneration of ${\rm div}\te^m_2$ at the poles. The vanishing 
conditions $\te^m_2(\delta r^m_*)$ at the poles is captured directly 
in this weak formulation. Since we use this ``enhanced at the poles'' volume form, we will seek solutions only in the space $H^{{\rm low}-1}(d\theta dt)$ instead of $H^{\rm low}(sin\theta d\theta dt)$. 
 
 \item Even with this weak formulation, one does not obtain a solution to our system straightaway. This requires two extra steps: Firstly, we consider a 
 viscosity-enhanced version of the two equations. The \emph{linearization} of our equations would in that case be manifestly solvable, in view of a certain 
 surjectivity of the corresponding (viscosity-enhanced) linear system. We then observe that the surjectivity of the linearized system is in fact 
 \emph{uniformly} true for any value of the unknowns $\delta r^m_*$, $\tilde{K}^m_{12}$ in small enough balls balls around zero. This allows us to 
 derive the existence of a solution, given that the RHS are also suitably small. The solutions depend on the viscosity parameter ($\zeta>0$, below). 
 Our bounds at the orders $H^{{\rm low}-1}$ are in fact independent of the viscosity parameter. 
 
 \item Having obtained solutions for the viscosity-enhanced version of our weak formulation, and using the uniformity above, we can pass to the limit
  $\zeta\to 0$, and obtain the desired solutions to our equations. The regularity of these solutions at higher orders will be derived in the next subsection. 
 
\end{enumerate}

In particular, we expand out the equation \eqref{to.solve}, making use of
the formulas \eqref{Korthexp}, \eqref{K12exp} 
 \eqref{K22m.int}, \eqref{K12m.int},  as well as \eqref{tAm.def}.

 Writing $\tilde{e}_2$ for $\tilde{e}^m_2$ for short, 
 we derive:

\beq\label{rstarlowODE.opt}
-(\frac{2M}{{r^m_*}}-1)^{-\frac{1}{2}}{\tilde{e}_2}{\tilde{e}_2}{r^m_*}-(\frac{2M}{r_*}-1)^{-\frac{3}{2}}\frac{2M}{(r_*^m)^2}({\tilde{e}_2}{r_*})^2+(^mq)^2K^m_{22}(t,\theta,r^m_*)
=(^mq)F^{22}_{t,\theta}
\big{(}\tilde{K}^m_{12}\big{)}
\eeq

Recall the formula \eqref{K22m.int}:

\beq
\label{K22m.form}
K^m_{22}(t,\theta,r^m_*)=e^{w(r,t,\theta)}\int_0^{{r^m_*}}
e^{-w(s,t,\theta)}[{\rm RHS}[K^m_{22}(s,t,\theta)] ds
\eeq

Next, we recall the formula for $F^{22}_{t,\theta}(\cdot)$:

\beq
\label{F22.expn}
F^{22}_{t,\theta}\big{(} \tilde{K}_{12}(t,\theta)\big{)}=
\frac{1}{2}\bigg[1+\sqrt{1-\frac{4\tilde{K}_{12}^2}{({\bf K}_{22}-{\bf K_{11}})^2}}\bigg]\K_{22}+
 \frac{1}{2}\bigg[1-\sqrt{1-\frac{4\tilde{K}_{12}^2}{({\bf K}_{22}-{\bf K_{11}})^2}}\bigg]\K_{11}.
\eeq

Next, we subtract $K^S_{22}(\e,t,\theta)$ from both sides of the above 
equation. 
To analyze the resulting differences, we introduce the notation:  
\beq
\label{breakr*}
r^m_*-\e= \delta r^m_*. 
\eeq
\medskip

We remark that we will be seeking the variables $\delta r^m_*$, 
$\tilde{K}^m_{12}(t,\theta)$ in the certain suitably small  balls in the Banach space 
$H^{{\rm low}-1}(d\theta dt)\times H^{{\rm low}-1}(d\theta dt)$, defined in \eqref{balls.sol} below.
\medskip

We also introduce a  piece of notation: 
\begin{definition}
\label{O.low}
$O^{{\rm low}-1}(f)$ below will stand for a quantity bounded in the relevant norm $\| \cdot \|_{H^{{\rm low}-1}}$ by 
$B\| f\|_{H^{{\rm low}-1}}$ for $B$ the universal constant introduced in our introduction.
 We let $O^{{\rm low}-1}_{\eta^q\e^p}(f)$ stand for a generic function $G$ 
of the variable
 $f$ (and possibly other parameters too, such as $t,\theta$, 
 which satisfies a uniform bound for all $l\le {\rm low}-1$):
 
\[
\sum_{|I|=0}^l\sqrt{\int_{t,\theta} |\partial^I G|^2  d\theta dt }
\le B\eta^q \e^p \sum_{|I|=0}^l\sqrt{\int_{t,\theta} |\partial^I f|^2
 d\theta dt},
\] 
where $B>0$ is the uniform constant from the introduction. We can analogously define $O^{K}(f)$ for any $K\le {\rm low}-1$ 
in the obvious modification of the above definition. 
\end{definition}

We next  make the simple but key observation that: 

\beq
\label{TaylorK22}
\begin{split}
&K^m_{22}(r^m_*(t,\theta),t,\theta)-K^m_{22}(\e,t,\theta)= \sqrt{2M}
d^m_2(t,\theta)[(r^m_*(t,\theta))^{-\frac{3}{2}}-\e^{-\frac{3}{2}}]+
O^{\rm low}_{\e^{-5/2+\frac{1}{4}}}(\delta r^m_*).
\end{split}
\eeq

Prior to deriving this, let us expand out the first term on the RHS and make a note about its derivatives: 

\beq
\begin{split}
&\partial^I\big
[d^m_2(t,\theta)[(r^m_*(t,\theta))^{-\frac{3}{2}}-\e^{-\frac{3}{2}}]\big]=
\partial^I\big
[d^m_2(t,\theta)\e^{-3/2}[(1+\frac{\delta r^m_*}{\e})^{-3/2}-1]\big]
\\&=
\partial^I\big
[d^m_2(t,\theta)\e^{-3/2}[\sum_{q=1}^\infty {-\frac{3}{2}\choose q}\frac{\delta r^m_*}{\e})^{q}]\big]
\\&=\sum_{I_1\bigcup I_2=I} \partial^{I_1}[d^m_2(t,\theta)]\e^{-3/2}[\sum_{q=1}^\infty {-\frac{3}{2}\choose q}\partial^{I_2}(\frac{\delta r^m_*}{\e})^{q}].
\end{split}
\eeq

Let us note that one term (the ``main term'' for us)  in the RHS is when $I_2=I$ and $q=1$.
 For those values the ``main term'' we obtain  is: 

\[
-\frac{3}{2}\sqrt{2M} d^m_2(t,\theta)\e^{-\frac{5}{2}}\partial^I(\delta r^m_*).
\]
We note that all \emph{other} terms in the above, for $\delta r^m_*$ satisfying the bound $\|\delta r^m_* \|_{H^{\rm low}}\le DC\eta \e$
 add up to a term of the form: 

\[
O_{\eta \e^{-\frac{5}{2}}}(\partial^I \delta r^m_*).
\]
This is immediate form the explicit expression above. 
\medskip

Now let us derive \eqref{TaylorK22}:
\medskip

{\it Proof of \eqref{TaylorK22}:}
The thing that needs proof is that if we define $D(\delta r_*^m)$ via the formula: 

\beq
[D^I(\delta r_*^m)](t,\theta):=\partial^I [u^m_{22}(r^m_*(t,\theta),t,\theta)-u^m_{22}(\e,t,\theta)]
\eeq
then we need to show: 

\beq
\label{toshow}
D^I(\delta r_*^m)=O^{{\rm low}-|I|}_{\e^{-5/2+\frac{1}{4}}}(\delta r^m_*).
\eeq
To show this, let us express:

\[
[D(\partial^I\delta r_*^m)](t,\theta)=e^{w(r_*^m(t,\theta))}
\int_\e^{r_*^m(t,\theta)}
e^{-w(s)}[RHS [\partial^I \partial_s[u^m_{22}]](s,t,\theta)]ds
\]

We can then replace the terms $RHS [\partial^I  \partial_s[u^m_{22}]]$ using the expression 
in the RHS of \eqref{K22m.diff}. Each of these RHSs 
(evaluated in $L^2$ on any hypersurface 
$\Sigma_{s(t,\theta)}, s(t,\theta)\in(r^m_*(t,\theta), \e)$ 
is uniformly 
bounded by $B \e^{-3+\frac{1}{4}}$ by virtue of  the inductive  bounds we have verified on $\gamma^m$. Recalling the expression \eqref{int.factor} on $w(s,t,\theta)$, we derive the claim \eqref{toshow}. $\Box$
\medskip

 In particular, for all $|I|\le{\rm low}-1$  we have derived the equation:
\beq
\label{K22.exp.r=e}
\partial^IK^m_{22}(r^m_*(t,\theta),t,\theta)-\partial^IK^m_{22}(\e,t,\theta)=-\frac{3}{2}\sqrt{2M}d^m_2(t,\theta)\e^{-\frac{5}{2}}\partial^I(\delta r^m_*)
+O^{{\rm low}-|I|}_{\eta \e^{-5/2}}(\delta r^m_*)+O^{{\rm low}-|I|}_{\e^{-5/2+\frac{1}{4}}}(\delta r^m_*).
\eeq

We define a key term in \eqref{TaylorK22} 
to be a function $V(t,\theta)$, and note a pointwise lower bound for it: 

\beq
\label{def.V}
V(t,\theta):= -(2M)^{1/2}\frac{3}{2}d^m_2(t,\theta)\e^{-\frac{5}{2}}\ge (2M)^{1/2}\frac{3}{2}[1-\frac{1}{8}] \e^{-\frac{5}{2}}. 
%sqrt{\e}\{[K^m_{22}(\e,t,\theta)]^2+ e_0(^m\gamma)
%\cdot K^m_{22}+O(\e^{-2})\}\ge M^{-1/2}\e^{-\frac{5}{2}}[2-\frac{1}{8}+O(\sqrt{\e})]. 
\eeq
(The last inequality follows from the expression
\eqref{d2.m}
 for $d_2^m=d^m_2(t,\theta)$ 
in terms of $\alpha^m(t,\theta)$ as well as the $L^\infty$ 
bound $|\alpha^m(t,\theta)-1|\le C\eta$ and the Lipschitz bound for the function $d_2^m(\alpha^m)$).

With this remark, we observe that  equation  \eqref{to.solve} can be expressed in the following 
form, if we subtract $K^m_{22}(\e,t,\theta)$ from both sides: 
\beq
\label{pre.Phi1}
\Phi^1[\delta r^m_*, \tilde{K}^m_{12}]= F^{22}_{t,\theta}[\tilde{K}^m_{12}(t,\theta)]-K^{m}_{22}(\e,t,\theta);
\eeq 
 here the operator on the LHS is precisely: 
 
 \beq
 \label{Phi1}
 \Phi^1[\delta r^m_*, \tilde{K}^m_{12}]=
 -q^{-1}[(\frac{2M}{{r^m_*}}-1)^{-\frac{1}{2}}{\tilde{e}_2}{\tilde{e}_2}{r^m_*}
 +(\frac{2M}{r^m_*}-1)^{-\frac{3}{2}}
 \frac{2M}{r_*^2}({\tilde{e}_2}{(r_*^m)})^2]+
 V(t,\theta)\cdot (\delta r^m_*)+O^{\rm low}_{\eta \e^{-\frac{5}{2}}}(\delta r^m_*)
 +O^{\rm low}_{\e^{-5/2+\frac{1}{4}}}[(\delta r^m_*)].
  \eeq
We note also that the RHS of \eqref{pre.Phi1} can be decomposed into a \emph{fixed} term, and a term that depends on
 $\tilde{K}^m_{12}(t,\theta)$, by recalling the form of $F^{22}[\dots]$ from \eqref{F22.form}: 

\beq
\label{RHS.exp1}
 F^{22}_{t,\theta}[\tilde{K}^m_{12}(t,\theta)]-K^{m}_{22}(\e,t,\theta)= 
[\K_{22}(t,\theta)-K^{m}_{22}(\e,t,\theta)]-\frac{O^{\rm low}[(\tilde{K}^m_{12})^2]}{(\K_{11}-\K_{22})^2}\K_{11}+\frac{O^{\rm low}[(\tilde{K}^m_{12})^2]}{(\K_{11}-\K_{22})^2}\K_{22}.
\eeq 
Thus \eqref{pre.Phi1}  can be re-expressed by moving all terms that depend on $\tilde{K}^m_{12}$ to the LHS:

\beq
\label{Phi1.again} 
\begin{split}
& \tilde{\Phi}^1[\delta r^m_*, \tilde{K}^m_{12}]=
 -[(\frac{2M}{{r^m_*}}-1)^{-\frac{1}{2}}{\tilde{e}_2}{\tilde{e}_2}{r_*}
 +(\frac{2M}{r^m_*}-1)^{-\frac{3}{2}}
 \frac{2M}{r_*^2}({\tilde{e}_2}{(r_*^m)})^2+
 V(t,\theta)\cdot (\delta r^m_*)+O^{\rm low}_{\eta \e^{-\frac{5}{2}}}[\delta r^m_*]
 \\&+O^{low}_{\e^{-5/2+\frac{1}{4}}}[(\delta r^m_*)]+\frac{O^{\rm low}[(\tilde{K}^m_{12})^2]}{(\K_{11}-\K_{22})^2}\K_{11}-\frac{O^{\rm low}[(\tilde{K}^m_{12})^2]}{(\K_{11}-\K_{22})^2}\K_{22}
 = 
[\K_{22}(t,\theta)-K^{m}_{22}(\e,t,\theta)].
\end{split}
\eeq

 We should recall that $\te_2=\te^m_2$ is a vector field that  
 \emph{depends} on the 
 unknown 
 $\tilde{K}^m_{12}$, via the formula \eqref{te1m.tK12}.
  Thus \eqref{pre.Phi1} 
 (with its LHS given by \eqref{Phi1}) is one equation involving only 
  the two unknowns $r^m_*(t,\theta), \tilde{K}^m_{12}(t,\theta)$ in the LHS, and the RHS being \emph{fixed}.

We also recall the bound derived in Lemma \ref{ground.est}, which implies:
\[
\int_{t,\theta} |\partial^I(K^m_{22}(\e,t,\theta)-\K_{22}(t,\theta))|^2  d\theta dt  \le 2C^2\eta^2 \e^{-3},
\]
for all $|I|\le {\rm low}-1$.\footnote{Below $e_i$ is shorthand for $e^{m-1}_i$. }

Now, we perform the same analysis on the second equation 
\eqref{key.link2'}.   
\medskip

Our first aim is to express 
$u^m_{12}(r^m_*(t,\theta),t,\theta)=K^m_{12}(r^m_*(t,\theta),t,\theta)$
as a function of $\delta r^m_*$. For this, we refer to the integral representation \eqref{K12m.int} and again utilize \eqref{breakr*}, along with Taylor's theorem and 
\eqref{K12m.again} to obtain: 

\beq
\begin{split}
\label{K12.pert}
&[K^m_{12}(r^m_*,t,\theta)-K^m_{12}(\e,t,\theta)]=
%\partial_r|_{r=\e}\partial^I K^m_{12} \cdot (\delta r_*^m)+O^{{\rm low}-|I|}_{\eta 
%\e^{-5/2+\frac{1}{4}}}[(\delta r_*^m)^2]
 ( \delta r_*^m)\cdot \sqrt{\e}M^{1/2}[(-2K_{22}^m\cdot +e_0(^m\gamma))
 \cdot 
K^m_{12}](\e,t,\theta)
\\&+[^{m-1}\overline{\nabla}^2_{12}(^m\gamma) +\frac{1}{2}\big{(}
e_1(\gamma^{m-1})e_2(
 \gamma^m)+e_2(\gamma^{m-1})e_1(
 \gamma^m)\big{)}]
 (\e,t,\theta)
 +O^{{\rm low}}_{\e^{-5/2+\frac{1}{4}}}[(\delta r_*^m)].
\end{split}
\eeq

The fact that the ``remainder term'' is of the form 
$O^{{\rm low}-|I|}_{\eta\e^{-5/2+\frac{1}{4}}}[(\delta r_*^m)]$
follows readily 
by the same argument as for $\partial^I K^m_{22}$, applied this time to 
\eqref{K12m.int}
and invoking the (verified for the step $m$) inductive claim on the 
quantities
\[
\partial^I\overline{\nabla}_{ij}(^m\gamma), \partial^I(e_i(^m\gamma)), i,j
\in \{1,2\}. 
\]
In particular, in analogy with \eqref{K22.exp.r=e} we derive the following 
expression for derivatives $\partial^I, |I|\le \rm low$ 
of \eqref{K12.pert}: 

\beq
\begin{split}
\label{K12.pert.partial}
&\partial^I[K^m_{12}(r^m_*,t,\theta)-K^m_{12}(\e,t,\theta)]=
 \partial^I( \delta r_*^m)\cdot \sqrt{\e}M^{1/2}[(-2K_{22}^m\cdot +e_0(^m\gamma))
 \cdot 
 K^m_{12}](\e,t,\theta)
 \\&+\partial^I[^{m-1}\overline{\nabla}^2_{12}(^m\gamma) 
 +\frac{1}{2}\big{(}e_1(\gamma^{m-1})e_2(
 \gamma^m)+e_1(\gamma^{m})e_2(
 \gamma^{m-1})\big{)}]
 (\e,t,\theta)
\\& +O^{{\rm low}-|I|}_{\e^{-5/2+\frac{1}{4}}}[(\delta r_*^m)].
\end{split}
\eeq

We note now that the explicit terms in the RHS of the above are themselves of the form 
$O^{{\rm low}-|I|}_{\e^{-5/2+\frac{1}{4}}}[(\delta r_*^m)]$; so from this point onwards we will absorb it into that term. 

Now, we recall our second equation \eqref{key.link2'}, which we re-express as:

%\begin{align}
%\begin{split}
%&\tilde{K}^m_{12}(r^m_*(t,\theta),t,\theta)+q^{-1}(\frac{2M}{r^m_*}-1)^{-\frac{1}{2}}
%({\tilde{e}^m_2}{r^m_*})
%{\bigg(1-\frac{4(\tilde{K}_{12}^m)^2}{({\bf K}_{22}-{\bf K_{11}})^2}\bigg)^{-\frac{1}{2}}}\te_2^m(\frac{\tilde{K}^m_{12}}{{\bf K}_{22}-{\bf K}_{11}})\\
%&+q^{-1}(\frac{2M}{r^m_*}-1)^{-\frac{1}{2}}
%({\tilde{e}^m_2}{r^m_*})F_2(\frac{\tilde{K}_{12}}{{\bf K}_{22}-{\bf K}_{11}})=K^m_{12}%(r^m_*(t,\theta),t,\theta)
%\end{split}
%\end{align}

\beq
\begin{split}
\label{2nd.eq}
&q(\frac{2M}{r^m_*}-1)^{\frac{1}{2}}\tilde{K}^m_{12}(r_*^m(t,\theta),t,\theta)
= q(\frac{2M}{r^m_*}-1)^{\frac{1}{2}}K^m_{12}(r_*^m(t,\theta),t,\theta)
\\&+\te^m_2(r_*)\cdot \bigg{(} 1+O^{\rm low}(\frac{\tilde{K}^m_{12}}{\K_{22}-\K_{11}})  
\bigg{)}
\te^m_2[  \frac{\tilde{K}^m_{12}}{\K_{22}-\K_{11}}](t,\theta)
+({\tilde{e}^m_2}{r^m_*})F_2\big{(}
\frac{\tilde{K}^m_{12}(t,\theta), t,\theta}{\K_{22}-\K_{11}}\big{)}.
\end{split}
\eeq

Thus, %using the same analysis as for \eqref{1st.eq}, 
we find that using the  expression 
\eqref{K12.pert} as well as \eqref{tAm.def} 
  we derive that   our second equation \eqref{2nd.eq} can be re-expressed in the form:

\beq
\begin{split}
\label{2nd.eq.short}
&q(\frac{2M}{r^m_*}-1)^{\frac{1}{2}}\tilde{K}^m_{12}(r_*^m(t,\theta),t,\theta))%-\tilde{V}
 %(t,\theta)\cdot (\delta r_*^m)
 +O^{\rm low}_{\eta \e^{-5/2+\frac{1}{4}}}[(\delta r_*^m)]
-\te_2(\delta r^m_*)\cdot \te_2[(\K_{22}-\K_{11})^{-1}\cdot \tilde{K}^m_{12}](t,\theta)
\\&+({\tilde{e}^m_2}{r^m_*})F_2\big{(}
\frac{\tilde{K}^m_{12}(t,\theta), t,\theta}{\K_{22}-\K_{11}}\big{)}=q(\frac{2M}{r^m_*}-1)^{\frac{1}{2}}K^m_{12}(\e,t,\theta).
\end{split}
\eeq

We denote the LHS of the above by 

\beq
\label{Phi2}
\Phi^2[\delta r^m_*, \tilde{K}^m_{12}].
\eeq
 
 Thus matters are reduced to solving the system of equations 
 
\begin{align}
\label{1stPhi}
\tilde{\Phi}^1[\delta r^m_*, \tilde{K}^m_{12}]=\K^m_{22}(t,\theta)-K^m_{22}(\e,t,\theta),\\
\label{2ndPhi}
\Phi^2[\delta r^m_*, \tilde{K}^m_{12}]=qK^m_{12}(\e,t,\theta).%=e^{-w(\e)} \int_0^{\e} e^{w(s)} 
%[^{m-1}\overline{\nabla}_{12}(^m\gamma)+e^{m-1}_1(^m\gamma)e_2^{m-1}.
%(^m\gamma)]ds
\end{align} 
This system is to be solved over $\theta\in [0,\pi], t\in (-\infty,+\infty)$,
 subject to the boundary conditions 
$\te^m_2(\delta r^m_*)=0$
at the poles $\theta=0,\theta=\pi$. The function $\tilde{K}^m_{12}$ is not assigned a
 boundary condition at those poles--we can derive as a consequence of the equations
 that $\tilde{K}^m_{12}=0$ at those boundaries. 
 \medskip

  We will solve this 2x2 system of equations by treating a 
  \emph{weak formulation} of this system of equations. To obtain a solution in
   this weak formulation, we will use a variant of the inverse function theorem. 
   Moreover, since our goal is to derive estimates  for our solutions $\delta r^m_*, \te^m_2(\delta r^m_*), \tilde{K}^m_{12}$ etc in the space 
   $H^{{\rm low}-1}(dt d\theta)$, 
we must  also consider the differentiated version of these equations. Thus our weak formulation will involve up to ${\rm low}-1$ of the parameters we solve for. 
It is useful to introduce a piece of notation:

\begin{remark}
\label{lots.def}
Given a multi-index $I$ with $|I|=k\in \{1,\dots, {\rm low}-1\}$,  
the terms $(l.o.t.'s)_I=(l.o.t.'s)_I(\delta r^m_*, \tilde{K}^m_{12})$ in any equation appearing below 
 will be a  linear combination of products of lower-order terms $\partial^p (\delta r^m_*)$ with $p<k$
 multiplied by terms of the form 
$\partial^q K^m_{22}(\e,t,\theta), \partial^q K^m_{12}(\e,t,\theta)$, $\partial^q \overline{\nabla}(^m\gamma)(\e,t,\theta)$, $q<k$. 

Unless mentioned otherwise, the $L^2$ norm of these terms relative to the volume form $ d\theta dt$ will be  bounded by 

\[
B \e^{-5/2+1/4}[\|\delta r^m_* \|_{H^{k-1}}+\|\tilde{K}^m_{12}]
 \|_{H^{k-1}},
\]
when appearing in the equation $\Phi^1=\dots$, and by 

\[
B \e^{1/4}\|\te_2\delta r^m_* \|_{H^{k-1}}+B\e\|\tilde{K}^m_{12}
 \|_{H^{k-1}},
\]
when appearing in the equation $\Phi^2=\dots$.
\medskip

Now, the differentiated version of \eqref{1stPhi}, \eqref{2ndPhi} is:
\end{remark}
\beq
  \begin{split}
 \label{1st.diffd}
&\partial^I[ -(\frac{2M}{{r^m_*}}-1)^{-\frac{1}{2}}{\tilde{e}^m_2}{\tilde{e}^m_2}{r_*}
+(\frac{2M}{r_*}-1)^{-\frac{3}{2}}\frac{2M}{(r^m_*)^2}({\tilde{e}^m_2}{r^m_*})^2]+
[V(t,\theta)+O(\eta \e^{-\frac{5}{2}})+O(\e^{-\frac{5}{2}+\frac{1}{4}})]\cdot  
\partial^I(\delta r^m_*)
 \\&+\partial^I \tilde{K}^m_{12}\frac{O(\tilde{K}^m_{12})}{\K_{11}-\K_{22}}
 +(l.o.t.'s)^1_I(\delta r^m_*, \tilde{K}^m_{12})
 = \partial^I [\K_{22}(t,\theta)-K^m_{22}(\e,t,\theta)], 
\end{split}
\eeq

\beq
\begin{split}
\label{2nd.diffd}
&q(\frac{2M}{r^m_*}-1)^{\frac{1}{2}}\partial^I\tilde{K}^m_{12}(t,\theta)
=
\te^m_2(r_*)\cdot \bigg{(} 1+O^{\rm low}(\frac{\tilde{K}^m_{12}}{\K_{22}-\K_{11}})  
\bigg{)}
[  \frac{\te^m_2\partial^I\tilde{K}^m_{12}}{\K_{22}-\K_{11}}](t,\theta)
\\&+\te^m_2\partial^I(r_*)\cdot \bigg{(} 1+O^{\rm low}
(\frac{\tilde{K}^m_{12}}{\K_{22}-\K_{11}})  
\bigg{)}
\te^m_2[  \frac{\tilde{K}^m_{12}}{\K_{22}-\K_{11}}](t,\theta)
\\&+({\tilde{e}^m_2}\partial^I{r^m_*})F_2\big{(}
\frac{\tilde{K}^m_{12}(t,\theta)}{\K_{22}-\K_{11}}\big{)}+
+({\tilde{e}^m_2}{r^m_*})(F_2)^\prime\big{(}
\frac{\tilde{K}^m_{12}(t,\theta)}{\K_{22}-\K_{11}}\big{)}\cdot 
\frac{\partial^I\tilde{K}^m_{12}(t,\theta)}{\K_{22}-\K_{11}}
\\&+O_{\eta \e^{-\frac{5}{2}}}(\partial^I(\delta r^m_*))
+(l.o.t.'s)^2_I(\delta r^m_*, \tilde{K}^m_{12})= \partial^I[q(\frac{2M}{r^m_*}-1)^{\frac{1}{2}} K^m_{12}(\e,t,\theta)].
\end{split}
\eeq

(The terms  $(l.o.t.'s)^1_I(\dots)$, $(l.o.t.'s)^2_I(\dots)$
are quadratic terms that arise from the product rule. We do not record their precise form here but note that they satisfy the bounds in remark 
\eqref{lots.def}). 
  \medskip

  We will solve the pair of equations \eqref{1stPhi}, \eqref{2ndPhi} 
  in the two unknowns by using a weak formulation of these same equations. To do this, 
we recall that the RHSs in  \eqref{1stPhi}, \eqref{2ndPhi} satisfy the following bounds 
in 
$H^k( d\theta dt)$, $k= {\rm low}-1$ in view of Lemma \ref{ground.est}:
\begin{align}
 \int_0^\pi\int_{-\infty}^\infty|\partial^I {\rm RHS} [\eqref{1stPhi}]|^2
  d\theta dt\le 2C^2 \eta^2\e^{-3},%+[O(\tilde{K}^m_{12})]^2\cdot 
 %\|\tilde{K}^m_{12} \|_{H^l}\\ 
\int_0^\pi\int_{-\infty}^\infty|\partial^I {\rm RHS} [\eqref{2ndPhi}]|^2
 d\theta dt\le C^2\eta^2\e^{-3+\frac{1}{2}}.%+[O(\tilde{K}^m_{12})]^2\cdot 
% \|\tilde{K}^m_{12} \|_{H^l}.
\end{align}

  We now specify the notion of solution that we will use: 
 The \emph{solution} to  the system of  equations \eqref{1stPhi}, \eqref{2ndPhi} is  in the sense of integration by parts. Specifically 
if we expand out those two equations in terms of the sought-after variables $\delta r^m_*, \tilde{K}^m_{12}$, multiply against test functions $v, u$ 
and integrate by parts one of the derivatives $\te^m_2$ from the first term in 
\eqref{1st.diffd} and the one derivative $\te^m_2$ from the term 
$\te^m_2\partial^I[ \tilde{K}^m_{12}](t,\theta)\cdot (\K_{22}-\K_{11})^{-1}$ from \eqref{2nd.diffd}
then the corresponding \emph{integral} identity should hold. 

To put forward the weak formulation of this system of equations, 
let us derive the integration by parts of the vector fields $\te^m_2$ that we perform, 
as well as the boundary terms at the poles $\theta=0,\theta=\pi$ that it gives rise to. 

We recall that: 
\beq
\label{te2.exp}
\te^m_2[F]= ((a^m)^{2\theta}\partial_\theta+(a^m)^{2t}\partial_t)[F].
\eeq

Recall that $(a^m)^{2\theta}$, $(a^m)^{2t}$ are determined by $\tilde{K}^m_{12}$ via the coefficients in the formula \eqref{te1m.tK12}.

Thus, integration by parts of the outside $\te^m_2$ (integrating against a general test function $v$) works as follows: 

\beq
\begin{split}
&-\int_{-\infty}^\infty\int_0^\pi \te^m_2\te^m_2[F]\cdot v d\theta dt= 
\int_{-\infty}^\infty\int_0^\pi \te^m_2[F]\cdot \te^m_2v d\theta dt
-\int_{-\infty}^\infty\int_0^\pi(\partial_\theta (a^m)^{2\theta}+\partial_t(a^m)^{2t})
\te^m_2 [F] \cdot v d\theta dt
\\&+\int_{-\infty}^\infty (a^m)^{2\theta} \te^m_2 \partial^I[F](\theta=0,t)
\cdot v  dt- \int_{-\infty}^\infty (a^m)^{2\theta} \te^m_2 \partial^I[F](\theta=\pi,t)
\cdot v  dt.
\end{split}
\eeq

In view of this and the system \eqref{1stPhi}, \eqref{2ndPhi}, 
 then up to ${\rm low}-1$ derivatives 
of the solution should satisfy the following  \emph{weak} version of the two equations, 
where 
\[
({\rm l.o.t.'s})^1_I[\delta r^m_*, \tilde{K}^m_{12}], ({\rm l.o.t.'s})^2_I
[\delta r^m_*, \tilde{K}^m_{12}]
\]
stand for  a general linear combination of lower-order terms as above, which now include 
such terms that arise from the commutations of derivatives in order to perform the 
integrations by parts we just described: 

We also introduce a further piece of notation: We let $H^1_{\rm even}[d\theta dt]$ 
over $[0,\pi]\times \mathbb{R}$ to stand for the space of functions whose 
(distributional) derivatives $\partial^{I_1,I_2}_{\theta\dots \theta t\dots t} v$ 
with $I_1$ being odd vanishing at $\{\theta=0\}, \{\theta=\pi\}$, in the sense of
 distributions. 

\begin{definition}
\label{weak.soln.def} 
We consider functions 
$(\delta r^m_*, \tilde{K}^m_{12})\in H^{{\rm low}-1}(d\theta dt)\times H^{{\rm low}-1}(d\theta dt)$. 
We call this pair a \emph{weak solution} of the system 
\eqref{1stPhi}, \eqref{2ndPhi} if any only if for 
 any  multi-index $I, |I|=k\le {\rm low}-1$ and any  fixed pair of functions 
$v\in H^1_{\rm even}((0,\pi)\times\mathbb{R}), u\in H^1((0,\pi)\times
\mathbb{R})$. 
We have:%\footnote{We write $\te_2$ instead of $\te^m_2$, in some heavily labeled terms, for notational simplicity. }

\beq
\begin{split}
\label{weak.sol}
&\int_{t,\theta} (\frac{2M}{{r^m_*}}-1)^{-\frac{1}{2}}\te^m_2\partial^I (\delta r^m_*)\cdot \te^m_2 v 
-(\frac{2M}{r^m_*}-1)^{-\frac{3}{2}}\frac{2M}{r_*^2}\partial^I[({\tilde{e}_2}{r^m_*})^2]v 
\\&+[V+O(\eta\e^{-\frac{5}{2}})+O(\e^{-\frac{5}{2}+\frac{1}{4}}))-\te^m_2\te^m_2(\frac{2M}{r^m_*}-1)^{-\frac{1}{2}}+(\frac{2M}{r^m_*}-1)^{-\frac{1}{2}}[\tilde{div}(\te^m_2)]^2+\te^m_2[\tilde{div}(\te^m_2)] ]\cdot 
[\partial^I(\delta r^m_*)]\cdot v
\\&+[\frac{O(\tilde{K}^m_{12})}{\K_{22}-\K_{11}}] \partial^I \tilde{K}_{12}^m \cdot  v    d\theta dt
+ \int_{t,\theta} 
({\rm l.o.t.'s})^1_I[\delta r^m_*, \tilde{K}^m_{12}] v  d\theta dt
\\&= \int_{t,\theta} 
q_m^2\partial^I[\K_{22}(t,\theta)-K^{m}_{22}(\e,t,\theta)]vdt  d\theta,
\\&  \int_{t,\theta}\partial^I \tilde{K}^m_{12}(t,\theta)\cdot u dtd\theta
+\int_{t,\theta}\te^m_2(\delta r^m_*)\cdot \partial^I[(\K_{22}-\K_{11})^{-1}\cdot \tilde{K}^m_{12}]
(t,\theta)\cdot \te^m_2 u dtd\theta
\\&+\int_{t,\theta}[\te^m_2\te^m_2(\delta r^m_*)+\tilde{div}(\te^m_2) \te^m_2(\delta r^m_*) ]
\cdot \partial^I[(\K_{22}-\K_{11})^{-1}\cdot \tilde{K}^m_{12}]
(t,\theta)\cdot (\frac{2M}{r_*^m}-1)^{-\frac{1}{2}} u dtd\theta 
\\&-\int_{t,\theta} (\frac{2M}{r_*^m}-1)^{-\frac{1}{2}}\te^m_2[(\K_{22}-\K_{11})^{-1}\cdot \tilde{K}_{12}]\cdot \partial^I \te^m_2(\delta r^m_*)
\\&+
(F^1)^\prime
[(\K_{22}-\K_{11})^{-1}\tilde{K}^m_{12}](\K_{22}-\K_{11})^{-1}\partial^I
\tilde{K}^m_{12}(t,\theta)\cdot u dtd\theta 
\\&+\int_{t,\theta} [O(\eta\e^{-\frac{5}{2}})+O(\e^{-\frac{5}{2}+\frac{1}{4}})]\cdot \partial^I(\delta r^m_*)(t,\theta)\cdot u
+(l.o.t.'s)^2_I(\delta r^m_*, \tilde{K}^m_{12})\cdot u dtd\theta 
\\&=
\int_{t,\theta} \partial^I K^m_{12}(\e,t,\theta)\cdot u dtd\theta 
\end{split}
\eeq
\end{definition}

\begin{remark}
\label{tdiv.def}
Let us check how strong solutions of the original system \eqref{1stPhi}, 
\eqref{2ndPhi} would yield solutions in this weak sense: Strong solutions in 
$H^{{\rm low}-1}$  would be differentiated by $\partial^I$, yielding solutions of 
\eqref{1st.diffd}, \eqref{2nd.diffd}. In these, we would integrate by parts one of the derivatives $\te^m_2$ in the first term $\partial^I \te^m_2\te^m_2r_*$, as well as the derivative $\te^m_2$ in \eqref{2nd.diffd} from the term 
$\te^m_2\partial^I\tilde{K}^m_{12}$. This thus ``offloads'' extra derivatives onto the test functions.

In particular, in the above $\tilde{div}(\te_2)$ is defined as the divergence of the vector field 
$\te^m_2$ \emph{with respect to the Euclidean metric $dt^2+d\theta^2$}, where the vector field $\te^m_2$  is expressed as a linear combination of $\partial_\theta, \partial_t$ 
via formula \eqref{te1m.tK12}, see \eqref{div.te2m} below. These terms are some factors that arise in the integrations by parts mentioned above. 
\end{remark}

\begin{remark}
\label{no.bdry.term}The above definition  is a fairly `canonical' definition of  a weak solution using duality: Indeed, the above is equivalent to choosing functions 
$V\in H^1_{\rm even}[dtd\theta]$, $U \in H^1[dtd\theta]$
considering their (distributional) derivatives $\partial^I U, \partial^IV$, and acting by these distributions on the two  equations \eqref{1stPhi}, 
\eqref{2ndPhi}.

The system above is then obtained by integrations by parts of the derivatives
 $\partial^I$, but \emph{also} by integrations by parts of one derivative $\te^m_2$ from the term $\te^m_2\te^m_2(\delta^m_*)$ in \eqref{1stPhi}, \emph{as well as} 
 integrations by parts of the derivative $\te^m_2$ on the term containing 
 $\te^m_2\tilde{K}^m_{12}$ in the second equation, as explained
  in the previous remark. 
 \medskip
 
We note that the imposed boundary conditions $\partial_\theta (\delta r^m_*)=0$ at the two boundaries $\theta=0, \theta=\pi$ are captured in the above weak formulation, 
by the \emph{absence} of boundary terms 
\[
\int_{-\infty}^\infty \partial^{I'}\te^m_2(\delta r^m_*)\cdot v(\theta,t)dt, 
\int_{-\infty}^\infty \te^m_2(\delta r^m_*)\cdot \partial^J\tilde{K}^m_{12}\cdot u(\theta, t) dt,
\]
$|I'|$ even, $|J|\le |I|-1$ 
at the boundaries $\theta=0,\theta=\pi$. Such terms would arise in the integrations by parts of $\partial^I$; their absence in the above precisely imposes the imposed boundary conditions. (The terms that arise with $|I'|$ odd would also have a $\partial^{I''}V$ with $|I''|$ odd, and thus vanish by the choice of test functions $V$).
\end{remark}

 We will be deriving a solution in the weak sense described above. This 
solution will be obtained by combining the inverse function theorem 
with a viscosity-type modification of the system \eqref{1stPhi}, 
\eqref{2ndPhi} [re-cast as 
\eqref{weak.sol}].

 We start with setting the ground for the inverse function theorem. In particular, we will be seeking solutions
 $(\delta r^m_*)(t,\theta), \tilde{K}^m_{12}(t,\theta)$, in the following subsets of 
 the Banach space $H^{{\rm low}-1}$:\footnote{The notion of ${\cal B}_{\rm even}$ 
 naturally extends to this setting.} 
 
 \beq
 \label{balls.sol}
\begin{split}
{\cal B}:=\{&||(\delta r^m_*)||_{\dot{H}^l(\Sigma_{r_*})}\le DC \eta \epsilon, \|
\te_2(r^m_*-\e)||_{\dot{H}^l(\Sigma_{r^m_*})}\le 2DC \eta \epsilon^{-1/2},
\\&||
\te^m_2\te^m_2\partial ^i(r^m_*-\e)||_{\dot{H}^l(\Sigma_{r^m_*})}\le 5DC \eta \epsilon^{-2},
 || \tilde{K}^m_{12}||_{\dot{H}^l(\Sigma_{r^m_*})}\le D C \eta 
\epsilon^{-3/2+\frac{1}{4}}, \forall l\le {\rm low}-1 \}
\end{split}
 \eeq
(Note again that the vector field $\te^m_2(t,\theta)$ 
is implicitly defined by 
$\tilde{K}^m_{12}(t,\theta)$ via formula \eqref{te1m.tK12}).  
\medskip

We start by  recalling that the definition of the space $\cal B$ and the Sobolev embedding show that over $\cal B$: 
\beq
\label{Sob.appl}
|\te^m_2(^m\delta r^m_*)|\le DC C_{\rm Sob}\eta\e^{-1/2}, |\te^m_2\te^m_2
(\delta r^m_*)|\le D C C_{\rm Sob}\eta\e^{-2}
\eeq

%We note that over $\cal B$, $|q_m-1|\le \frac{1}{8}$, in view of the bounds on $\eta$. 
\medskip

Note also that $\tilde{div}(\te^m_2)$ 
as defined in remark \ref{tdiv.def} can be bounded as follows, letting 
$x=\tilde{K}^m_{12}(\K_{22}-\K_{11})^{-1}$: 
\beq
\label{div.te2m}
\tilde{div}(\te^m_2)=\partial_\theta\{
\frac{1}{2}\sqrt{1+\sqrt{1-4x^2}}
(\g_{\theta\theta})^{-1/2}\}+\partial_t\{\frac{{\rm sign}(x)}{2}\sqrt{1-\sqrt{1-4x^2}}
(\g_{tt})^{-1/2}\} 
\eeq
Thus  $\tilde{div}(\te^m_2)$ is pointwise  bounded by 
$2CC_{\rm Sob}\eta\cdot \e^{-1-1/4}$ in $\cal B$.
\medskip

Let us note that these bounds imply that any pair of functions  
$(\delta r^m_*)(t,\theta), \tilde{K}^m_{12}(t,\theta)$ 
in the above space we have the bound: 
\[
[V(t,\theta)+O(\eta\e^{-\frac{5}{2}})+O(\e^{-\frac{5}{2}+\frac{1}{4}})
-\te^m_2\te^m_2(\frac{2M}{{r^m_*}}-1)^{-\frac{1}{2}}]+
 (\frac{2M}{{r^m_*}}-1)^{-\frac{1}{2}}\cdot \tilde{div}(\te^m_2)\ge \sqrt{2M}(\frac{3}{2}-\frac{1}{4})\e^{-5/2}.
\] 

We will use the above  momentarily to derive the existence of a solution to the system \eqref{1stPhi},
 \eqref{2ndPhi}, in the weak sense described in \eqref{weak.sol}, in the space \eqref{balls.sol}. Thus 
 we will be deriving the claimed bounds \eqref{bd1}, \eqref{bd2}.

\begin{remark}
 We note that the standard Sobolev embedding on $[0,\pi]\times\mathbb{R}$ 
 yields that any weak  solutions belong to ${\cal C}^{{\rm low}-3}$, and the 
 fact 
 that they satisfy the solutions weakly (in the sense of \eqref{weak.sol})  then implies that they 
 also solve the equations in the classical sense in that space. This follows by an approximation of the identity argument for the test functions, 
and an integration 
by parts off of $v, u$.  
  \end{remark}
\medskip

  To obtain our solutions and our bounds, we work not with the equations
  \eqref{weak.sol} 
  directly, but rather with a viscosity version of these equations.
  (We do this to make use of the Lax-Milgram theorem 
  further down--it is more convenient to apply this theorem instead 
  of working on spaces that are dependent on the solution-dependent vector field 
  $\te^m_2$).  
  
  Letting $\Delta$ be the standard Laplacian on $[0,\pi]\times\mathbb{R}$, 
  (with $t\in\mathbb{R},\theta\in [0,\pi]$)
  $\partial^2_{tt}+\partial^2_{\theta\theta}$,   for any 
  $\zeta>0$ we consider the modified equations that arise from \eqref{1stPhi}, \eqref{2ndPhi}  by adding  a 
  term 
  $-\zeta \cdot \Delta$ leading order term to each side. 
We do not write out the \emph{modified} equations here, but rather skip to the (modified) \emph{weak solutions} that we consider for 
the new equations:

  Note that the solutions depend on $\zeta>0$ now, and in particular the vector fields 
  $\te_2^\zeta$ (being given by 
  $^m\tilde{K}_{12}^\zeta$ via formula \eqref{te1m.tK12}) depend on $\zeta$ also, since 
  $^m\tilde{K}_{12}^\zeta$
    depends on $\zeta$.

We again use the same notation as in definition \ref{lots.def} for the lower-order terms $({\rm l.o.t.'s})^j_I, j=1,2$.

The notion of solution for this viscosity-altered system of 
 equations is then again in the following weak sense, for any $|I|\le {\rm low}-1$ and any 
 $(v,u)\in [H^1_{\rm even}\times H^1](\mathbb{S}^2\times\mathbb{R})$:  
  \beq
\begin{split}
\label{weak.sol.delta}
&\int_{t,\theta}\zeta\cdot \sum_{i=t,\theta}\partial^I\partial_i(^m\delta r_*^\zeta)
\partial_i 
v  d\theta dt
+\int_{t,\theta} (\frac{2M}{{r^{m,\zeta}_*}}-1)^{-\frac{1}{2}}\te^{m,\zeta}_2\partial^I (\delta r^{m,\zeta}_*)\cdot \te^{m,\zeta}_2 v 
\\&-(\frac{2M}{r^{m,\zeta}_*}-1)^{-\frac{3}{2}}\frac{2M}{(r^{m,\zeta}_*)^2}\partial^I[({\tilde{e}^{m,\zeta}_2}{r^{m,\zeta}_*})^2]v 
\\&+[V(t,\theta)+O(\eta\e^{-\frac{5}{2}})+O(\e^{-\frac{5}{2}+\frac{1}{4}})-\te^{m,\zeta}_2\te^{m,\zeta}_2(\frac{2M}{r^{m,\zeta}_*}-1)^{-\frac{1}{2}}
\\&+(\frac{2M}{r^{m,\zeta}_*}-1)^{-\frac{1}{2}}[\tilde{div}(\te^{m,\zeta}_2)]^2+\te^{m,\zeta}_2[\tilde{div}(\te^{m,\zeta}_2)] ]\cdot 
[\partial^I(\delta r^{m,\zeta}_*)]\cdot v
\\&+[\frac{O(\tilde{K}^{m,\zeta}_{12})}{\K_{22}-\K_{11}}] \partial^I \tilde{K}_{12}^{m,\zeta} \cdot  v    d\theta dt
+ \int_{t,\theta} 
({\rm l.o.t.'s})^1_I[\delta r^{m,\zeta}_*, \tilde{K}^{m,\zeta}_{12}] v  d\theta dt
\\&= \int_{t,\theta} 
q_m^2\partial^I[\K_{22}(t,\theta)-K^{m}_{22}(\e,t,\theta)]vdt  d\theta,
\\& \int_{t,\theta}\zeta\cdot \sum_{i=t,\theta}\partial^I\partial_i
(\tilde{K}^{m,\zeta}_{12})
\partial^I\partial_i u  d\theta dt+
\\&  \int_{t,\theta}\partial^I \tilde{K}^{m,\zeta}_{12}(t,\theta)\cdot u dtd\theta
+\int_{t,\theta}\te^m_2(\delta r^{m,\zeta}_*)\cdot \partial^I[(\K_{22}-\K_{11})^{-1}\cdot \tilde{K}^{m,\zeta}_{12}]
(t,\theta)\cdot \te^{m,\zeta}_2 u dtd\theta
\\&+\int_{t,\theta}[\te^{m,\zeta}_2\te^{m,\zeta}_2(\delta r^{m,\zeta}_*)+\tilde{div}(\te^{m,\zeta}_2) \te^{m,\zeta}_2(\delta r^{m,\zeta}_*) ]
\cdot \partial^I[(\K_{22}-\K_{11})^{-1}\cdot \tilde{K}^{m,\zeta}_{12}]
(t,\theta)\cdot (\frac{2M}{r_*^{m,\zeta}}-1)^{-\frac{1}{2}} u dtd\theta 
\\&-\int_{t,\theta} (\frac{2M}{r_*^{m,\zeta}}-1)^{-\frac{1}{2}}\te^{m,\zeta}_2[(\K_{22}-\K_{11})^{-1}\cdot \tilde{K}_{12}]\cdot \partial^I \te^{m,\zeta}_2(\delta r^{m,\zeta}_*)
\\&+
(F^1)^\prime
[(\K_{22}-\K_{11})^{-1}\tilde{K}_{12}](\K_{22}-\K_{11})^{-1}\partial^I
\tilde{K}^{m,\zeta}_{12}(t,\theta)\cdot u dtd\theta 
\\&+\int_{t,\theta} [O(\eta\e^{-\frac{5}{2}})+O(\e^{-\frac{5}{2}+\frac{1}{4}})]\cdot 
\partial^I(\delta r^{m,\zeta}_*)(t,\theta)\cdot u
+(l.o.t.'s)^2_I(\delta r^{m,\zeta}_*, \tilde{K}^{m,\zeta}_{12})\cdot u dtd\theta 
\\&=
\int_{t,\theta} \partial^I K^{m}_{12}(\e,t,\theta)\cdot u dtd\theta 
\end{split}
\eeq
We note that in the second equation, the coefficient in front of 
 $\partial^I\tilde{K}^m_{12}\cdot u$ (after we group up terms) 
 is of the form $(1+O(\eta))\ge \frac{7}{8}$.

Let us derive the existence of a weak solution to this viscosity-enhanced system in 
detail, for completeness:
\medskip

 Recall that the factors in the above two equations that do not involve the functions
  $v,u$ are derivatives of fixed expressions; in particular 
  if $\partial^J= \partial^{I'}\partial^{I_2}$  then the equation\ above 
  with $I=J$ arises from the equation with $I=I_2$ by replacing $v, u$ by 
  $(-1)^{|I'|}\partial^{I'}v, (-1)^{|I'|}\partial^{I'}u$ and performing integrations by 
  parts for those terms. 
In particular, this allows us to consider the LHSs of the above two operators (where 
$\delta r^{m,\zeta}_*,\tilde{K}^{m,\zeta}_{12}$ are two fixed functions,
in a space that is soon to be specified) as elements of the dual space to 
$(H^{2-\rm low}\times H^{2-\rm low}) $:
  
Note that for 
  $ ((H^{{\rm low}-2})^*, (H^{{\rm low}-2})^*)=(H^{1-({\rm low}-1)}$, 
 $ H^{1-({\rm low}-1)})$ we consider the above system of equations with $I=\emptyset$, and choose 
  $v=\partial^JV, u=\partial^JU$ for any fixed $V\in H^1_{\rm even},U\in H^1)$ and 
  $|J|\le {\rm low}-1$. Integrating by parts in $\partial^J$ yields the above 
  system with $I=J$. 
  
  We can thus think of \eqref{weak.sol.delta} (for functions 
  $\delta r^{m,\zeta}_*, \tilde{K}^{m,\zeta}_{12}$ in $H^{{\rm low}}$ and for 
  $I=\emptyset$) as providing 
  an element in the space $((H^{1-({\rm low}-1)})_{\rm even}\times 
  (H^{1-({\rm low}-1)}))^*$, via the above 
  procedure. We note that the dependence of the element on $\delta r^{m,\zeta}_*, \tilde{K}^{m,\zeta}_{12}$
  is non-linear. 
  Denote this operator in $((H^{1-({\rm low}-1)})_{\rm even}\times 
  (H^{1-({\rm low}-1)}))^*$ by 
  $\{\Psi_\zeta(\delta r^{m,\zeta}_*, \tilde{K}^{m,\zeta}_{12})\}[v,u]$. In fact for each choice $v=\partial^IV, u=\partial^IU$ we can denote it for short by 
  $(\Psi^1_{\zeta,I}[v], \Psi^2_{\zeta,I}[u])_{|I|\le {\rm low}-1}$, where 
  
  \beq\begin{split}
 & \Psi^1_{\zeta,I}[V]=\int_{t,\theta}\zeta\cdot \sum_{i=t,\theta}\partial^I\partial_i(^m\delta r_*^\zeta)
\partial_i 
V  d\theta dt
+\int_{t,\theta} (\frac{2M}{{r^{m,\zeta}_*}}-1)^{-\frac{1}{2}}\te^{m,\zeta}_2\partial^I (\delta r^{m,\zeta}_*)\cdot \te^{m,\zeta}_2 V
\\&-(\frac{2M}{r^{m,\zeta}_*}-1)^{-\frac{3}{2}}\frac{2M}{(r^{m,\zeta}_*)^2}\partial^I[({\tilde{e}^{m,\zeta}_2}{r^{m,\zeta}_*})^2]V 
\\&+[V(t,\theta)+O(\eta\e^{-\frac{5}{2}})+O(\e^{-\frac{5}{2}+\frac{1}{4}})-\te^{m,\zeta}_2\te^{m,\zeta}_2(\frac{2M}{r^{m,\zeta}_*}-1)^{-\frac{1}{2}}\\&+(\frac{2M}{r^{m,\zeta}_*}-1)^{-\frac{1}{2}}[\tilde{div}(\te^{m,\zeta}_2)]^2+\te^{m,\zeta}_2[\tilde{div}(\te^{m,\zeta}_2)] ]\cdot 
[\partial^I(\delta r^{m,\zeta}_*)]\cdot V
\\&+[\frac{O(\tilde{K}^{m,\zeta}_{12})}{\K_{22}-\K_{11}}] \partial^I \tilde{K}_{12}^{m,\zeta} \cdot  V   d\theta dt
+ \int_{t,\theta} 
({\rm l.o.t.'s})^1_I[\delta r^{m,\zeta}_*, \tilde{K}^{m,\zeta}_{12}] V d\theta dt,
\\&\Psi^2_{\zeta,I}[U]=\int_{t,\theta}\zeta\cdot \sum_{i=t,\theta}\partial^I\partial_i
(\tilde{K}^{m,\zeta}_{12})
\partial^I\partial_i U  d\theta dt+
\\&  \int_{t,\theta}\partial^I \tilde{K}^{m,\zeta}_{12}(t,\theta)\cdot U dtd\theta
+\int_{t,\theta}\te^m_2(\delta r^{m,\zeta}_*)\cdot \partial^I[(\K_{22}-\K_{11})^{-1}\cdot \tilde{K}^{m,\zeta}_{12}]
(t,\theta)\cdot \te^{m,\zeta}_2 U dtd\theta
\\&+\int_{t,\theta}[\te^{m,\zeta}_2\te^{m,\zeta}_2(\delta r^{m,\zeta}_*)+\tilde{div}(\te^{m,\zeta}_2) \te^{m,\zeta}_2(\delta r^{m,\zeta}_*) ]
\cdot \partial^I[(\K_{22}-\K_{11})^{-1}\cdot \tilde{K}^{m,\zeta}_{12}]
(t,\theta)\cdot (\frac{2M}{r_*^{m,\zeta}}-1)^{-\frac{1}{2}} U dtd\theta 
\\&-\int_{t,\theta} (\frac{2M}{r_*^{m,\zeta}}-1)^{-\frac{1}{2}}\te^{m,\zeta}_2[(\K_{22}-\K_{11})^{-1}\cdot \tilde{K}_{12}]\cdot \partial^I \te^{m,\zeta}_2(\delta r^{m,\zeta}_*)
\\&+
(F^1)^\prime
[(\K_{22}-\K_{11})^{-1}\tilde{K}_{12}](\K_{22}-\K_{11})^{-1}\partial^I
\tilde{K}^{m,\zeta}_{12}(t,\theta)\cdot U dtd\theta 
\\&+\int_{t,\theta} [O(\eta\e^{-\frac{5}{2}})+O(\e^{-\frac{5}{2}+\frac{1}{4}})]\cdot 
\partial^I(\delta r^{m,\zeta}_*)(t,\theta)\cdot U
+(l.o.t.'s)^2_I(\delta r^{m,\zeta}_*, \tilde{K}^{m,\zeta}_{12})\cdot U dtd\theta 
\end{split}
  \eeq
   We denote the norm of this operator in
  this latter space by 
  \[  
  \| \Psi_\zeta \|_{B}=sup_{\|V\|^2_{H^1}+\|U\|^2_{H^1}\le 1} 
\{  \sum_{|I|\le {\rm low}-1}
  \Psi^1_{\zeta,I}[V]+\Psi^2_{\zeta,I}[U]\}.
    \] %In particular the norm is achieved by maximizing the sum 

 Thus, we are reduced to proving the existence of functions 
 $ \delta r_*^{m,\zeta}, \tilde{K}_{12}^{m,\zeta}\in ({\cal B}_{\rm even}, {\cal B}):=\Omega$
 so that: 
 
 \beq
 \label{new.symb}
 \big\langle\Psi^1_\zeta(\delta r^{m,\zeta}_*, \tilde{K}^{m,\zeta}_{12}), v\big\rangle= \int_{t,\theta} q_m^2[\K_{22}(t,\theta)-K^m_{22}(\e,t,\theta)]\cdot
  v dtd\theta , \big\langle\Psi^2_\zeta (\delta r^{m,\zeta}_*, \tilde{K}^{m,\zeta}_{12}), u\big\rangle=
  \int_{t,\theta} K^m_{12}(\e,t\theta) \cdot udtd\theta, 
 \eeq
  for all $u\in H^{2-{\rm low}}= (H^{1-({\rm low}-1)})$.
 
 We note that the $\Psi^1_\zeta, \Psi^2_\zeta$ depend nonlinearly on 
 $\delta r^{m,\zeta}_*, \tilde{K}^{m,\zeta}_{12}$. So we will show this 
 claim by considering the linearization of $\Psi_\zeta$ in the parameters 
 $\delta r^{m,\zeta}_*, \tilde{K}^{m,\zeta}_{12}$.
 
 We denote this linearization by $(D\Psi_\zeta)_{\tilde{v},\tilde{u}}$,
 where $\tilde{v}$ stands for a first variation of $\delta r^{m,\zeta}_*$ and 
 $\tilde{u}$ for a first variation of $\tilde{K}^{m,\zeta}_{12}$.
Thus we can consider  $(D\Psi^i_\zeta)_{\tilde{v}}, D\Psi^i_\zeta)_{\tilde{u}}$, 
$i=1,2$; each of 
these operators can be evaluated against $\partial^IV\in H^{2-\rm low}_{\rm even}$ for $i=1$ and against 
$\partial^IU\in H^{2-\rm low}$ for $i=2$.

We wish to show the existence of solutions to \eqref{new.symb}. This can be achieved 
by showing that this linearization  is uniformly surjective for all 
$\delta r^{m,\zeta}_*$, $\tilde{K}^{m,\zeta}_{12}\in \Omega$.

In particular we consider the linearization matrix: 
 \[
 \begin{bmatrix}
(D\Psi^1_\zeta)_{\tilde{v}} & (D\Psi^1_\zeta)_{\tilde{u}}\\ 
(D\Psi^2_\zeta)_{\tilde{v}} & (D\Psi_\zeta)_{\tilde{u}}
 \end{bmatrix}
 \]
 acting on pairs of functions of the form $(v,u)\in H^{2-\rm low}_{\rm even}\times H^{2-\rm low} $.

%To derive the  existence of a solution to our system \eqref{new.symb} it then suffices to show that for $(v,u)\in\Omega$

 In particular if we choose $v=\tilde{v}:=\partial^I(V)$ and 
 $u=\tilde{u}:=\partial^I U$, 
 for all $|I|\le {\rm low}-1$ and then sum in $I$ then we obtain lower 
  bounds on the 
 norms of the components of  $D\Psi_\zeta$ by these 
  evaluations  against those test functions:

 \beq
\begin{split}\label{coercivity}
 &\sum_{|I|=0}^{{\rm low}-1}  \big\langle (D\Psi^1_\zeta)_{\tilde{v}},v\big\rangle
 \ge \sum_{|I|=0}^{{\rm low}-1}  \int_{t,\theta }\zeta[\partial_i
  v]^2+
[\te_2 v]^2 
\\&+[V+O(\e^{-\frac{5}{2}+\frac{1}{4}})
 +\tilde{div}(\te_2^\zeta)+\te_2^\zeta\te_2^\zeta(\delta r^{m,\zeta}_*)](t,\theta)\cdot
[ v]^2 +O(C\eta\e^{\frac{1}{4}})\cdot  v\cdot u
 d\theta dt,
  \\&\sum_{|I|=0}^{{\rm low}-1}\big\langle(D\Psi^2_\zeta))_{\tilde{u}},u\big\rangle
\ge \sum_{|I|s=0}^{{\rm low}-1}  \int_{t,\theta } \zeta[\partial_iu]^2 +\te_2^\zeta(\delta r^{m,\zeta}_*)\cdot [u]^2
+ [u]^2    d\theta dt, 
\\& \sum_{|I|=0}^{{\rm low}-1} |\big\langle(D\Psi^1_\zeta)_{\tilde{u}},v\big\rangle|\le C\eta 
\e^{\frac{1}{4}}\|u \|\cdot \| v\|,  
\sum_{|I|=0}^{{\rm low}-1} |\big\langle(D\Psi^1_\zeta)_{\tilde{v}},u\big\rangle|\le 
[O(\eta \e^{-\frac{5}{2}}+O(\e^{-\frac{5}{2}+\frac{1}{4}})]\|u \|\cdot \| v\|.
\end{split} 
 \eeq 
 These bounds will be proven below. Let us first note that these bounds imply the existence of a solution to our system \eqref{new.symb}. 
  
\begin{lemma}
\label{lem:surj}
Consider the map $\Psi_\zeta[\delta r^{m,\zeta}_*, \tilde{K}^{m,\zeta}_{12}]$ defined for $(\delta r^{m,\zeta}_*,  \tilde{K}^{m,\zeta}_{12})\in\Omega$. Then, assuming \eqref{coercivity},  the image of $\Omega$ contains the product of balls of radii
$\frac{5}{4}C\eta \e^{-\frac{5}{2}}$, $\frac{7}{8}C\eta \e^{-\frac{3}{2}}$ 
 in $H^{{\rm low}-1}_{\rm even}\times H^{{\rm low}-1}$. 
\end{lemma}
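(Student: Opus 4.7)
The plan is to deduce the surjectivity claim from the bounds \eqref{coercivity} by combining a Lax--Milgram argument at the linearized level with a quantitative inverse function theorem (equivalently, a Newton-type iteration). First I would verify that the bilinear form associated to $D\Psi_\zeta$ at any fixed base point $(\delta r^{m,\zeta}_*, \tilde{K}^{m,\zeta}_{12}) \in \Omega$ is coercive on $H^{{\rm low}-1}_{\rm even}\times H^{{\rm low}-1}$: the diagonal entries in \eqref{coercivity} give lower bounds of order $\e^{-5/2}$ and $1$ respectively (after using \eqref{def.V}, the $L^\infty$ bounds \eqref{Sob.appl} on $\te^{m,\zeta}_2\te^{m,\zeta}_2(\delta r^{m,\zeta}_*)$, and the pointwise control of $\tilde{div}(\te^{m,\zeta}_2)$ from \eqref{div.te2m}), while the off-diagonal entries are controlled by quantities of order $\e^{-5/2+1/4}$ and $\e^{1/4}$. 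Cauchy--Schwarz applied to the off-diagonal terms, together with the smallness \eqref{e.bd} of $\e$ relative to $C\eta$, absorbs these into the diagonal, producing a coercivity estimate of the schematic form
\[
\sum_{|I|\le {\rm low}-1}\Bigl(\tfrac{1}{2}\e^{-5/2}\|\partial^I \tilde v\|_{L^2}^2 + \tfrac{1}{2}\|\partial^I \tilde u\|_{L^2}^2 + \zeta\|\partial^I\partial \tilde v\|_{L^2}^2 + \zeta\|\partial^I\partial \tilde u\|_{L^2}^2\Bigr) \le \bigl\langle D\Psi_\zeta(\tilde v, \tilde u),(\tilde v,\tilde u)\bigr\rangle.
\]

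Once coercivity is established uniformly over $\Omega$, Lax--Milgram gives that $D\Psi_\zeta$ at each base point is an isomorphism from $H^{{\rm low}-1}_{\rm even}\times H^{{\rm low}-1}$ to its dual, with an operator norm of the inverse bounded uniformly by a multiple of $\e^{5/2}$ on the first factor and by a universal constant on the second. This uniform bound, independent of the base point in $\Omega$, is what powers a quantitative inverse function theorem. I would then run a Newton iteration starting from $(0,0)$; using the initial data bounds \eqref{1st.bds.str} on $\K_{22}-K^m_{22}(\e,\cdot)$ and $K^m_{12}(\e,\cdot)$, the first correction is of size $C\eta\e$ in the $\delta r^m_*$ slot and $C\eta\e^{-3/2+1/4}$ in the $\tilde K^m_{12}$ slot, so it already lies inside $\Omega$ with room to spare. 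Moreover, the right-hand sides in \eqref{new.symb} can be increased to sizes $\tfrac{5}{4}C\eta\e^{-5/2}$ and $\tfrac{7}{8}C\eta\e^{-3/2}$ respectively (the claimed radii) without the first Newton step leaving $\Omega$.

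To close the iteration I would verify a Lipschitz estimate for $D\Psi_\zeta$ as a function of the base point, with Lipschitz constant bounded by a universal constant times the radius of $\Omega$, i.e.\ by $BC\eta$ (times appropriate powers of $\e$ matching the coercivity constants). The nonlinearities in $\Psi_\zeta$ come from two sources: the smooth algebraic dependence of $F^{22}_{t,\theta}$, $F_2$, $F_1$ on $\tilde K^m_{12}$, and the dependence of the vector field $\te^{m,\zeta}_2$ on $\tilde K^m_{12}$ through \eqref{te1m.tK12}. Both are smooth in the open set where $|\tilde K^m_{12}|<\tfrac{1}{2}|\K_{11}-\K_{22}|$, and the required Lipschitz bounds follow from the Sobolev product inequality \eqref{prodcut.ineq} applied in $\Omega$. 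The smallness condition \eqref{e.bd} then makes the composition of inverse-linearization and nonlinear remainder a contraction by factor $<1/2$, giving convergence of Newton's scheme to a unique fixed point $(\delta r^{m,\zeta}_*, \tilde{K}^{m,\zeta}_{12})\in \Omega$ solving \eqref{new.symb} for every prescribed right-hand side in the claimed product of balls.

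The main obstacle, as I see it, is bookkeeping the off-diagonal block $(D\Psi^1_\zeta)_{\tilde u}$: the variable $\tilde K^m_{12}$ enters the first equation not only algebraically through $F^{22}_{t,\theta}[\tilde K^m_{12}]$ but also \emph{implicitly} through the vector field $\te^{m,\zeta}_2$ acting on $\delta r^{m,\zeta}_*$ in $\te^{m,\zeta}_2\te^{m,\zeta}_2 r^{m,\zeta}_*$. Tracking the implicit variation requires differentiating \eqref{te1m.tK12} with respect to $\tilde K^m_{12}$, which introduces factors of $\te^{m,\zeta}_2 \delta r^m_*$ and $\te^{m,\zeta}_2\te^{m,\zeta}_2\delta r^m_*$; the saving grace is that every such factor comes multiplied by $\delta r^m_*$ or a $\te^{m,\zeta}_2$-derivative thereof, each of which is small on $\Omega$ by \eqref{Sob.appl}, so the net off-diagonal contribution is of size $C\eta\e^{-5/2+1/4}$, i.e.\ strictly smaller than the diagonal coercivity by a factor $\e^{1/4}$, which is what allows the absorption. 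Granting \eqref{coercivity} as stated, the rest of the argument is a standard Banach-space Newton scheme, and the final solution---including the passage $\zeta\to 0^+$ needed in the subsequent discussion---follows by extracting weak limits using the uniformity of the bounds in $\zeta$.
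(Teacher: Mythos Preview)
Your proposal is correct and follows essentially the same approach as the paper: uniform $C^1$ boundedness of $\Psi_\zeta$ on $\Omega$ together with the coercivity of $D\Psi_\zeta$ from \eqref{coercivity} (with Cauchy--Schwarz absorbing the off-diagonal terms) feeds into a quantitative inverse function theorem to cover the claimed product of balls. The paper's own proof is very terse---just two sentences invoking these ingredients---whereas you have spelled out the standard Lax--Milgram/Newton machinery and correctly identified the main bookkeeping issue (the implicit dependence of $\te^{m,\zeta}_2$ on $\tilde K^{m,\zeta}_{12}$ via \eqref{te1m.tK12}).
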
  

Note that this Lemma implies  the existence of a 
 weak solution  $\delta r_*^{m,\zeta}, \tilde{K}_{12}^{m,\zeta}$ to the system \eqref{weak.sol.delta} since 
 the RHSs of the two equations in \eqref{weak.sol.delta} are bounded (respectively) as 
 follows:

 \begin{equation}
 \label{RHS.bds.2}
  \|    \partial^I    [ K^m_{22}  (\epsilon, t, \theta) -K^S_{22}(\epsilon,t,\theta) ]   \|_{L^2}\cdot
  \|v \|_{L^2}\le C\eta \e^{-3/2} \cdot \| v\|_{L^2},
  \|\partial^I K^m_{12}(\e,t,\theta)\|_{L^2}\cdot \|u\|_{L^2}\le 
  C\eta \e^{-1-\frac{1}{4}} \cdot \|u\|_{L^2}.
 \end{equation}
\eqref{RHS.bds.2} 
implies that the pair of 
RHSs of our equations is contained in that product of balls. This can be seen by choosing the norms  $\|v \|\sim \e, \|u\|\sim \e^{-\frac{3}{2}}$, for the cross terms 
to be absorbed into the main terms.  This shows 
the existence of a solution to \eqref{new.symb}.
  
  \begin{proof}
  
  We will consider the space of functions $^m\delta r_*^\zeta, ^m\tilde{K}_{12}^\zeta$ 
  which lie in
the space  $\Omega$.
 We observe 
that the 
map $\Psi_\zeta$ is uniformly (independently of $\zeta>0$) bounded in ${\cal C}^1$ between 
$H^{{\rm low}-1}\times H^{{\rm low}-1}\to (H^{{\rm low}-1}\times H^{{\rm low}-1})^*$.

On the other hand, the coercivity estimates claimed in \eqref{coercivity} and a simple Cauchy-Schwarz estimate for the 
cross terms 
imply that $\Psi_\zeta[\Omega]$ contains the product of balls claimed in our Lemma. 
 \end{proof}
\medskip

The coercivity bounds 
\eqref{coercivity} also 
imply  that our solutions are unique in the domain $\Omega$.

Moreover, we claim certain bounds on this solution: 
\begin{lemma}
\label{final.bounds}
The solutions  $^m\delta r^\zeta_*,^m\tilde{K}^\zeta_{12}$ to
the above system satisfy the bounds, for every $I, |I|\in \{0,1,\dots, {\rm low}-1\}$:

\beq\label{bds.spac.delta.1st}
\int_{t,\theta}|\partial^I (^m\delta r_*^\zeta)|^2   d\theta dt\le 2 C^2
\eta^2 \e^2,
\int_{t,\theta}|\partial^I (^m\tilde{K}_{12}^\zeta)|^2   d\theta dt\le C^2
\eta^2 
\e^{-3+\frac{1}{2}}. 
\eeq

Moreover, our solution satisfies the following bound (also independent of $\zeta>0$),
for all $I, |I|\le {\rm low}-1$:
  
 \beq
\label{spaces.delta}
\begin{split}
&\| \te_2^\zeta \partial^I(^m\delta r_*^\zeta)||^2_{L^2_{t,\theta}}\le 2 C^2
\eta^2 \epsilon^{-1}, 
|| \partial^I \te_2^\zeta\te_2^\zeta (^m\delta r_*^\zeta)||^2_{L^2_{t,\theta}}
\le 5(D+1)^2C^2\eta^2 \epsilon^{-4}.
\end{split} 
\eeq
\end{lemma}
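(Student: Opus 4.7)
The plan is to test the weak system \eqref{weak.sol.delta} against well-chosen pairs $(v,u)$ in the dual space and to exploit the coercivity bounds \eqref{coercivity}, together with the already-derived RHS bounds \eqref{RHS.bds.2}, to produce estimates that are \emph{uniform} in the viscosity parameter $\zeta>0$. Concretely, for each multi-index $I$ with $|I|\le {\rm low}-1$, I would plug in $v=\partial^I(\delta r_*^{m,\zeta})$ in the first equation and $u=\partial^I(\tilde{K}_{12}^{m,\zeta})$ in the second, then sum over $I$. The key mechanism is the positivity of the coefficient $[V(t,\theta)+O(\eta\e^{-5/2})+O(\e^{-5/2+1/4})-\te_2^\zeta\te_2^\zeta(\tfrac{2M}{r_*^{m,\zeta}}-1)^{-1/2}+(\tfrac{2M}{r_*^{m,\zeta}}-1)^{-1/2}[\tilde{\rm div}(\te_2^\zeta)]^2+\te_2^\zeta[\tilde{\rm div}(\te_2^\zeta)]]$, bounded below by $\sqrt{2M}(\tfrac{3}{2}-\tfrac{1}{4})\e^{-5/2}$ throughout $\Omega$, together with the manifestly non-negative quadratic terms $\zeta|\partial \partial^I v|^2$, $(\tfrac{2M}{r_*^{m,\zeta}}-1)^{-1/2}|\te_2^\zeta \partial^I v|^2$, and $|\partial^I u|^2$. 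Combined with the RHS bounds $\|\partial^I[\K_{22}-K^m_{22}(\e,\cdot)]\|_{L^2}\le \sqrt{2}C\eta\e^{-3/2}$ and $\|\partial^I K^m_{12}(\e,\cdot)\|_{L^2}\le C\eta\e^{-3/2+1/4}$ from Lemma \ref{ground.est} (strengthened via the standard Hardy inequality to the $L^2(d\theta dt)$ norm), Cauchy--Schwarz with appropriately chosen weights will absorb the linear-in-$v,u$ terms into the coercive quadratic terms and yield the first-order estimates \eqref{bds.spac.delta.1st} and the $\te_2^\zeta$-part of \eqref{spaces.delta}.

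For the cross-terms coupling the two equations, I would rely on the last two inequalities in \eqref{coercivity}, namely $|\langle (D\Psi^1_\zeta)_{\tilde u},v\rangle|\le C\eta\e^{1/4}\|u\|\|v\|$ and its companion, which are strictly subordinate (thanks to the powers of $\e$ and the smallness bound \eqref{e.bd}) to the two coercive diagonal contributions; a suitably weighted Young inequality absorbs them. The $(\mathrm{l.o.t.})^j_I$ quadratic terms are handled by finite induction on $|I|$ using Remark~\ref{lots.def}: each such term involves strictly fewer than $|I|$ derivatives of $(\delta r_*^{m,\zeta},\tilde{K}_{12}^{m,\zeta})$ multiplied by factors controlled in $L^\infty$ via the Sobolev embedding on $[0,\pi]\times\mathbb{R}$ (available since ${\rm low}-1>2$). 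Because all constants in these arguments are $\zeta$-independent, the resulting bounds pass to the limit $\zeta\to 0^+$, yielding weak solutions of the original system \eqref{1stPhi}--\eqref{2ndPhi} satisfying the stated estimates.

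Finally, to obtain the second-order bound on $\|\partial^I\te_2^\zeta\te_2^\zeta(\delta r_*^{m,\zeta})\|_{L^2}$ in \eqref{spaces.delta}, I would exploit that solutions in $H^{{\rm low}-1}$ are classical in $\mathcal{C}^{{\rm low}-3}$ (by the standard Sobolev embedding on $[0,\pi]\times\mathbb R$), so that \eqref{1st.diffd} holds pointwise. Solving algebraically for $(\tfrac{2M}{r_*^{m,\zeta}}-1)^{-1/2}\partial^I\te_2^\zeta\te_2^\zeta(\delta r_*^{m,\zeta})$ from that equation expresses it as a sum of terms involving $V\cdot\partial^I(\delta r_*^{m,\zeta})$, the quadratic term $\partial^I[({\tilde e}_2^\zeta r_*^{m,\zeta})^2]$, the coupling term with $\partial^I\tilde{K}_{12}^{m,\zeta}$, the commutators $(\mathrm{l.o.t.})^1_I$, and the RHS $\K_{22}-K^m_{22}(\e,\cdot)$. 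All of these have now been bounded in $L^2(d\theta dt)$ with the appropriate powers of $\e$, yielding the final estimate after multiplication by the factor $(\tfrac{2M}{r_*^{m,\zeta}}-1)^{1/2}\sim \e^{-1/2}$.

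The main obstacle in executing this plan is the \emph{nonlinearity} inherent in $\te_2^\zeta$ itself depending on the unknown $\tilde{K}_{12}^{m,\zeta}$ through \eqref{te1m.tK12}: checking that the coercivity estimates \eqref{coercivity} hold \emph{uniformly} over the ball $\Omega$ requires controlling variations of $\te_2^\zeta$ via Sobolev embedding on the initial data bounds \eqref{g.bds}--\eqref{K.bds}, and managing that the terms $\te_2^\zeta\te_2^\zeta(\tfrac{2M}{r_*^{m,\zeta}}-1)^{-1/2}$ and $\te_2^\zeta[\tilde{\rm div}(\te_2^\zeta)]$ arising from the integration by parts remain strictly subordinate to $V(t,\theta)$. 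This is where the smallness assumption \eqref{e.bd} on $\e$ relative to $C\eta$ is crucial and where one must track constants carefully; once this is secured, the rest of the argument is a standard (if tedious) application of energy estimates with weighted Cauchy--Schwarz.
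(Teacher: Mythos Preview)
Your proposal is correct and follows essentially the same approach as the paper: test the weak formulation \eqref{weak.sol.delta} with $v=\partial^I(\delta r_*^{m,\zeta})$ and $u=\partial^I(\tilde K_{12}^{m,\zeta})$, exploit the lower bound $V\gtrsim\e^{-5/2}$ together with the nonnegative quadratic terms, absorb cross-terms by weighted Cauchy--Schwarz, handle the $(\mathrm{l.o.t.})^j_I$ by finite induction in $|I|$, and recover the second-order bound on $\te_2^\zeta\te_2^\zeta(\delta r_*^{m,\zeta})$ by reading it off from \eqref{1st.diffd} once everything else is controlled.

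One point worth making explicit: in the second equation of \eqref{weak.sol.delta}, setting $u=\partial^I\tilde K_{12}^{m,\zeta}$ produces the term
\[
\int_{t,\theta}\te_2^\zeta(\delta r_*^{m,\zeta})\cdot(\K_{22}-\K_{11})^{-1}\,\partial^I\tilde K_{12}^{m,\zeta}\cdot\te_2^\zeta\partial^I\tilde K_{12}^{m,\zeta}\,d\theta dt,
\]
which carries one more derivative on $\tilde K_{12}^{m,\zeta}$ than you are directly estimating. The paper handles this by writing the integrand as a coefficient times $\tfrac12\te_2^\zeta\bigl[(\partial^I\tilde K_{12}^{m,\zeta})^2\bigr]$ and integrating by parts, producing a coefficient bounded pointwise by $2DC\eta+4(DC\eta)^2\le\tfrac12$ in front of $\|\partial^I\tilde K_{12}^{m,\zeta}\|_{L^2}^2$, which is then absorbed into the main term. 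Your appeal to \eqref{coercivity} encodes this implicitly (indeed, the paper proves \eqref{coercivity} and Lemma~\ref{final.bounds} by the same computation), but this integration by parts is the one nontrivial maneuver beyond routine energy bookkeeping.
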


We prove this Lemma together with \eqref{coercivity}. The proof is in 
fact essentially the same; 
the proof we provide below for Lemma \ref{final.bounds} proves 
\eqref{coercivity} by replacing
 $\delta r_*^{m,\zeta}, \tilde{K}_{12}^{m,\zeta}$
by $v,u$. 
\medskip

\begin{proof}

The Lemma   follows by using the functions $\delta r_*^{m,\zeta}, 
\tilde{K}_{12}^{m,\zeta}$
themselves as test functions $v,u$ (respectively)  in the definition of 
weak solution, \eqref{weak.sol}, and using  a finite induction to move 
the 
lower-order terms (which have already been controlled) 
 to the RHS. 
\medskip

One potentially problematic term is the one with $\te^m_2u$ in the RHS of the second
 equation  in \eqref{weak.sol.delta}, due to the extra derivative on the test function $u$ (where now $u=\tilde{K}^m_{12}$). 
Using  integrations by parts (with respect to the volume form $d\theta dt$) on the 
$\te_2$ derivative in the second factor, we derive
 (writing $\te_2$ instead of $\te^m_\zeta$, for brevity):

\beq
\begin{split}\label{CS2}
 &| \int_{t,\theta} (\frac{2M}{{r^m_*}}-1)^{-\frac{1}{2}}(\K_{22}-\K_{11})^{-1}\cdot \te_2(^m\delta 
r_*^\zeta) \partial^I(^m\tilde{K}^\zeta_{12}) 
\te_2 \partial^I(^m \tilde{K}^\zeta_{12})    d\theta dt|
\\&= |\int_{t,\theta} \{
(\K_{22}-\K_{11})^{-1}\cdot[\te_2((\frac{2M}{{r^m_*}}-1)^{-\frac{1}{2}}\te_2 (^m\delta 
r_*^\zeta)) +(\frac{2M}{{r^m_*}}-1)^{-\frac{1}{2}} \tilde{div}(\te_2) \cdot \te_2 (^m\delta 
r_*^\zeta)]
\\& +\te_2[(\K_{22}-\K_{11})^{-1}](\frac{2M}{{r^m_*}}-1)^{-\frac{1}{2}}\cdot \te_2(^m\delta r_*^\zeta)]\}\cdot |\partial^I(^m \tilde{K}^\zeta_{12})|^2  d\theta dt|.
\end{split}
\eeq
Note the following bound on the coefficient of $|\partial^I\tilde{K}^m_{12}|^2$ in the RHS of the above: 

Using the pointwise bounds \eqref{bds.spac.delta.1st}, \eqref{spaces.delta}, \eqref{div.te2m} as well as the expression \eqref{te1m.tK12} on $\te_2$ and the definition of our ball $\cal B$:
we derive: 

\beq
\label{Kbd.eta}
\begin{split}
&|(\K_{22}-\K_{11})^{-1}\cdot[\te_2(\frac{2M}{{r^m_*}}-1)^{-\frac{1}{2}}\te_2 (^m\delta 
r_*^\zeta) +(\frac{2M}{{r^m_*}}-1)^{-\frac{1}{2}} \tilde{div}(\te_2) \cdot \te_2 (^m\delta 
r_*^\zeta)]|
\\& +|\te_2(\K_{22}-\K_{11})^{-1}(\frac{2M}{{r^m_*}}-1)^{-\frac{1}{2}}\cdot \te_2(^m\delta r_*^\zeta)|\le 2DC\eta +4(DC\eta)^2\le \frac{1}{2}. 
\end{split}
\eeq

We note that the RHSs of the equations \eqref{1stPhi},
\eqref{2ndPhi}
we are solving,
are thought of as linear operators on $L^2$ 
satisfy the following  bounds when we test them on functions $v=\partial^I (\delta r_*^m)(t,\theta), u=\partial^I\tilde{K}^m_{12}(t,\theta)$:

\beq
\begin{split}
& |\int_{t,\theta} \partial^I(\K_{22}(t,\theta)-K^m_{22}(\e,t,\theta))
\cdot \partial^I(^m\delta r^\zeta_*)  d\theta dt|\le 
\\&
(1/4)\cdot \e^{-5/2} \int_{t,\theta}|\partial^I(^m\delta r^\zeta_*)|^2 d\theta dt+2  
 \e^{5/2}\int_{t,\theta} |\partial^I(\K_{22}(t,\theta)-K^m_{22}(\e,t,
 \theta))|^2 d\theta dt,
 \end{split}
\eeq

\beq
 |\int_{t,\theta} \partial^I(K^m_{12}(\e,t,\theta))\cdot 
 \partial^I(\tilde{K}^m_{12}(t,\theta))  d\theta dt|\le 
(1/4)\cdot  \int_{t,\theta}|\partial^I(\tilde{K}^m_{12})|^2  d
\theta dt+
 \int_{t,\theta} |\partial^I(K^m_{12}(\e,t,\theta))|^2 d\theta 
 dt,
\eeq

We now recall the bounds from Lemma \ref{1st.bds}: 

\beq
|\int_{t,\theta} |\partial^I(\K_{22}(t,\theta)-K^m_{22}(\e,t,\theta))|^2 d\theta dt|\le 2C^2 \eta^2\e^{-3},
\eeq

\beq
|\int_{t,\theta} |\partial^I(K^m_{12}(\e,t,\theta))|^2 d\theta dt|\le C^2
\eta^2\e^{-3+\frac{1}{2}}. 
\eeq

Thus, considering the two equations in \eqref{weak.sol.delta} with 
$v=(^m\delta r_*)(t,\theta)$ and $u=\tilde{K}^m_{12}(t,\theta)$, the above two 
inequalities give bounds on the RHSs of   \eqref{weak.sol.delta}. We also use the Cauchy-Schwarz
 inequalities to absorb all cross terms in addition to 
\eqref{Kbd.eta} into the main positive quadratic 
 terms

\[
\int_{t,\theta} [ \te^m_2(\partial^I\delta {}^mr_*^\zeta)]^2  d\theta dt, 
 \int_{t,\theta} [\partial^I(\delta {}^mr_*^\zeta)]^2  d\theta dt,
 \int_{t,\theta} [\partial^I (^m\tilde{K}^\zeta_{12})]^2  d\theta dt.
\] 
  in the LHSs of the two equations in \eqref{weak.sol.delta}.   

This directly yields  the first bounds claimed in Lemma \ref{final.bounds}, 
as well as the first bound in \eqref{spaces.delta}.

 We note that the equation \eqref{1st.diffd} coupled with the already derived bound on $\partial^I (\delta r^m_*)$  implies directly that: 
 
 \[
 \int_{t,\theta}|(\frac{2M}{{r^m_*}}-1)^{-\frac{1}{2}}\partial^I\te_2\te_2 (^m\delta 
r_*^\zeta)|^2  d\theta dt\le \int_{t,\theta} |\partial^I(^m\tilde{K}^\zeta_{22}-^mK^\zeta_{22})|^2 d\theta dt\le 5C^2\eta^2\e^{-3}. 
 \]
This completes the proof of our Lemma. Thus we have derived the existence of a weak solution and of the 
desired bounds. 
\medskip

\end{proof}
\medskip

Finally, we derive a solution of the original system \eqref{weak.sol} 
by a limiting argument:

Given the uniform bounds in the spaces \eqref{spaces.delta}
we have (independently of $\zeta>0$), we can pass to a limit 
$\zeta\to 0$ and derive    the existence of a solution to the system \eqref{weak.sol}, 
which holds for all 
$v,u\in H^1_{\rm even}\times H^1$. 

These solutions satisfy the bounds \eqref{bd1}, 
\eqref{bd2}, and these are independent of $\zeta>0$.
 Moreover they solve the system 
\eqref{weak.sol} for all $I, |I|\le {\rm low}-1$.

\subsection{Estimates for the solved-for
 $r^m_*(t,\theta), \tilde{K}^m_{12}$ at the 
higher derivatives.}
\label{sec:r*.high}

We now prove the bounds \eqref{highbd1}, \eqref{highbd2}, \eqref{highbd3} in this 
subsection. For these orders our claims are with respect to the usual volume form 
${\rm sin}\theta d\theta dt$ on axi-symmetric functions on 
$\mathbb{S}^2\times \mathbb{R}$.

We recall that the functions $r^m_*(t,\theta),\tilde{K}^m_{12}(t,\theta)$ 
have \emph{already} been solved for at this point,
 and we have also obtained the bounds 
\eqref{bd1}, \eqref{bd2}   at the lower orders $|I\le {\rm low}-1$, with respect to the 
volume form $d\theta dt$--the latter of course implies our claims at the same orders 
$|I|\le {\rm low}-1$ with respect to the (weaker) volume form $sin\theta d\theta dt$. 
 We now seek to derive 
bounds for $ r^m_*(t,\theta),\tilde{K}^m_{12}(t,\theta)$ at the remaining 
higher orders.

We remark that the method of proof at the lower orders could in fact be adapted to the 
higher ones, \emph{except} when $|I|=s-3$. The reason for this is precisely the 
transition to the stronger volume form $d\theta dt$ which ``cost'' us one derivative in 
terms of the bounds on the RHSs of the equations \eqref{1stPhi}, \eqref{2ndPhi}. 
We will thus employ a new method to derive our higher-order bounds. We highlight two 
aspects of the method here:

One difference with the lower orders is that we will \emph{first} bound the functions 
$\partial^IK^m_{22}(t,\theta,r^m_*(t,\theta))$, 
$\partial^IK^m_{12}(t,\theta,r^m_*(t,\theta))$ on 
$\Sigma_{r^m_*}$, in the $L^2({\rm sin}\theta d\theta dt)$ norm. (This is possible, 
since the functions $r^m_*(t,\theta)$ have already been solved for at the point).
\emph{After} those bounds have been derived, 
we will then bound the sought-after parameters 
 $\partial^I(r^m_*(t,\theta))$,
 $\partial^I \tilde{K}^m_{12}(t,\theta,r_*(t,\theta))$. A second difference again 
 has to do with the degeneration of the volume form ${\rm sin}\theta d\theta dt$ at the 
 poles, and the singular terms that this can generate in integrations by parts, coupled 
 with the necessity of ``seeing'' the imposed vanishing conditions $\partial_\theta r^m_*$ at 
 those poles. The method we use is to (essentially) treat an ``integrated'' version of 
 the first equation \eqref{1stPhi}, where the integration is along integral curves of 
 $\te^m_2$ (recall that at the poles $\te^m_2$ is parallel to $\partial_\theta$). 
 The required vanishing of $\te^m_2(r^m_*)$ at the poles is seen via the vanishing of 
 boundary terms there that would be present when we invoke the fundamental theorem of 
 calculus. 
 \medskip

{\bf Bounds on $K^m_{22}(t,\theta,r^m_*(t,\theta))$, 
$K^m_{12}(t,\theta,r^m_*(t,\theta))$ at the higher norms:}

Having already solved for $r^m_*(t,\theta)$ in the previous subsection, 
we can now invoke the bounds \eqref{Km22.high.var} (and its analogue for $K^m_{12}$)
 applied to 
$\delta(t,\theta)= r_*^m(t,\theta)$. This then implies
\eqref{Km22.high.var.rhom}, \eqref{Km12.high.var.rhom}. 
\medskip

{\bf Derivation of the bounds on $\partial^I\te^m_2(r^m_*), \partial^I(r^m_*-\e), \partial^I\tilde{K}^m_{12}$, $|I|\ge low$:}

Instead of treating equations \eqref{1stPhi}, \eqref{2ndPhi}  \emph{directly} 
 we \emph{first} consider 
 a  suitable \emph{integrated} version of the first  equation, and then derive our desired 
 estimates using that re-cast equation. (As we will see, using the integrated version 
 of the equation will also capture the imposed $\partial_\theta (\delta r^m_*)=0$
 condition at the two poles $\theta=0,\theta=\pi$).

In view of the expression \eqref{te2.exp} for the vector field $\te^m_2$, we also 
put down the formula for the \emph{integral} along the integral curves of $\te^m_2$: 
We consider integrals originating at $\theta=0$ until $\theta=\pi/2$. Denote this integral operator by $(\te^m_2)^{-1}$. We let $s$ stand for the parameter with 
$\te^m_2(s)=1$, $s=0$ at $\theta =0$. In particular for any $s\in [0,\pi/2), t=t_*\in\mathbb{R}$ we let $\theta_{t_*}(s),t_{t_*}(s)$ be the point that arises by flowing along $\te^m_2$ from $(0,t_*)$ for parameter $s$.   Then define: 

\[
(\te^m_2)^{-1}[F](\theta,t_*)=\int_0^\theta F(\theta_{t_*}(s), t_{t_*}(s)) ds.
\]
Note of course that with this definition: 

\[
(\te^m_2)^{-1}[\te^m_2\te^m_2\partial^I  \big(\delta r^m_*\big)](\theta,t_*)= \te^m_2\partial^I  
\big(\delta r^m_*\big)(\theta,t_*).
\]

Note that the absence of a boundary term at $\theta=0$ captures precisely the 
imposed condition that $\te^m_2(\delta r^m_*)=0$ at $\theta=0$. We can define 
$(\te^m_2)^{-1}[F](\theta, t_*)$ for $\theta\in [\frac{\pi}{2}, \pi]$
 in precisely the same way, only with our line integral starting at $\pi$. 
 
Now acting on the equation \eqref{1st.diffd} by $(\te^m_2)^{-1}$ (and performing some commutations to cancel $\te^m_2$ with $(\te^m_2)^{-1}$
in the first term below, which introduce new lower-order terms)   we derive the new, equivalent 
equation: 
\newcommand{\oV}{\overline{V}}

\beq
  \begin{split}
 \label{1st.diffd.te-1}
&\partial^I[ -(\frac{2M}{{r^m_*}}-1)^{-\frac{1}{2}}{\tilde{e}^m_2}{r^m_*}
+2(\te^m_2)^{-1}[(\frac{2M}{r^m_*}-1)^{-\frac{3}{2}}\frac{2M}{(r^m_*)^2 }(\partial^I{\tilde{e}^m_2}{r^m_*})\te^m_2(r^m_*))]]+
(\te^m_2)^{-1}[\oV(t,\theta)
\partial^I(\delta r^m_*)]
\\& +(\te^m_2)^{-1}[\partial^I \tilde{K}^m_{12}\frac{O(\tilde{K}^m_{12})}{\K_{11}-\K_{22}}]
 +(\te^m_2)^{-1}[(l.o.t.'s)^1_I(\delta r^m_*, \tilde{K}^m_{12})]
 = (\te^m_2)^{-1}[\partial^I [\K_{22}(t,\theta)-K^m_{22}(\e,t,\theta)]. 
\end{split}
\eeq
(recall that 
$\oV(t,\theta)=V(t,\theta)+O(\eta \e^{-\frac{5}{2}})+O(\e^{-\frac{5}{2}+\frac{1}{4}})\ge \e^{-\frac{5}{2}}$)--from this point onwards we write $V$ instead of $\oV(t,\theta)$, slightly abusing notation). 

\medskip

We will then be 
multiplying the above equation by $-\te^m_2\partial^I  \big(\delta r^m_*\big)$
and integrating over $[0,\pi]\times \mathbb{R}$ 
\emph{with respect to the volume form $sin\theta d\theta dt $}. (Note that strictly speaking, we should break up the interval $[0,\pi]$ into $[0,\pi/2]$ and $[\pi/2, \pi]$
and add the resulting two expressions--we skip the obvious details here). 
\medskip

Let us first consider the ``main terms'' in the resulting integral identity, first in the LHS and then in the RHS. We will subsequently briefly discuss how all the remaining terms can be absorbed into these main terms. 

The first main term we  obtain from the procedure just outlined is:
\[
\int_{-\infty}^\infty \int_0^\pi |\te^m_2\partial^I  \big(\delta r^m_*\big)|^2 
sin\theta d\theta dt.
\]
This is one of the terms we are seeking to bound, and we keep this term as is. 
 The next key  term in the integral  is: 

\[
-\int_{-\infty}^\infty \int_0^\pi  (\te^m_2)^{-1}[V\cdot \partial^I  \big(\delta r^m_*\big)]  \te^m_2\partial^I  \big(\delta r^m_*\big)sin\theta d\theta dt
\]
For this term, we perform an integration by parts with respect to $\te^m_2$. We obtain the main term: 

\beq
\label{main.term.IBPs}
+\int_{-\infty}^\infty \int_0^\pi  V| \partial^I  \big(\delta r^m_*\big)|^2
sin\theta 
d\theta dt.
\eeq
This term we wish to keep. There is  also a second term arising from $-\te^m_2(sin\theta)=(a^m)^{2\theta }cos\theta $; that term is of the form: 

\[
\frac{1}{2}\int_{-\infty}^\infty \int_0^\pi (a^m)^{2\theta }\cdot  V^{-1}\cdot 
\te^m_2[(\te^m_2)^{-1}\big{(}V\cdot \partial^I \big(\delta r^m_*\big)\big{)}]^2 cos\theta d\theta dt.
\]
In this term we again integrate by parts the $\te^m_2$ derivative, and we derive: 

\beq
\begin{split}
&\frac{1}{2}\int_{-\infty}^\infty \int_0^\pi (a^m)^{2\theta }\cdot  V^{-1}\cdot 
\te^m_2[(\te^m_2)^{-1}\big{(}V\partial^I  \big(\delta r^m_*\big)
\big{)}]^2 cos\theta d\theta dt
\\&=\frac{1}{2}\int_{-\infty}^\infty \int_0^\pi[(a^m)^{2\theta}]^2\cdot 
 V^{-1}\cdot 
[(\te^m_2)^{-1}\big{(}V\partial^I  \big(\delta r^m_*\big)\big{)}]^2 
sin\theta d\theta dt
\\&-\frac{1}{2}\int_{-\infty}^\infty \int_0^\pi \partial_\theta
 [[(a^m)^{2\theta })]2\cdot  V]
\cdot 
[(\te^m_2)^{-1}\big{(}V\partial^I \big(\delta r^m_*\big)\big{)}]^2
 cos\theta d\theta dt.
\end{split}
\eeq
The first term in the RHS has a favourable sign. For the second term, 
we  recall the pointwise  bounds on $(a^m)^{2\theta}$ and its first 
$\partial_\theta$-derivative from formula \eqref{te1m.tK12}, as well as the bounds on $V(t,\theta)$ and its first 
derivatives. Combining these with the  standard Hardy inequalities on the interval 
$[0,\pi]$, we find that the RHS of the above is bounded in absolute value  by:

\[
(\e^{1/4}) \int_{-\infty}^\infty \int_0^\pi \e^{-\frac{5}{2}}
[\partial^I \big(\delta r^m_*\big{)}]^2
 sin\theta d\theta dt.
\]

Thus this term can be absorbed into the main term \eqref{main.term.IBPs}.

Let us also derive some useful bounds on the product term appearing in the RHS of 
the  equation \eqref{1st.diffd.te-1}.

For brevity, let 
$v:=\partial^I [(\delta r^m_*)(t,\theta)-(\delta r^m_*)(t,0)]$ if $I$ contains a 
$\partial_\theta$ derivative and $v:=\partial^I [(\delta r^m_*)(t,\theta)]$ otherwise. 
We must control the term: 

\beq
\int_{-\infty}^\infty \int_0^{\pi} (\te^m_2)^{-1}[ \partial^I  (K^m_{22}(\e,\cdot ,t)-\K_{22}(\cdot , t))](t,\theta) \cdot \te^m_2( v)sin\theta d\theta dt.
\eeq

Again we perform an integration by parts of the derivative $\te^m_2$. The main term we obtain is: 

\[
\int_{-\infty}^\infty \int_0^{\pi}  \partial^I  [K^m_{22}(\e,\cdot ,t)-\K_{22}(\cdot , t)](t,\theta) \cdot  vsin\theta d\theta dt.
\]
This term can be controlled by Cauchy-Schwarz in absolute value by: 
\[
2\e^2 \int_{-\infty}^\infty \int_0^{\pi} [\big( \partial^I (K^m_{22}(\e,\cdot ,t)-\K_{22}(\cdot , t)](t,\theta))]^2 sin\theta d\theta dt+ 
\frac{1}{8}\e^{-2}\int_{-\infty}^\infty \int_0^{\pi}|v|^2 sin\theta d\theta dt. 
\]

There is a correction term  from our previous integration by parts, which is 
of the form: 

\[
\int_{-\infty}^\infty \int_0^{\pi} a^{2\theta }[(\te^m_2)^{-1}\big( \partial^I  [K^m_{22}(\e,\cdot ,t)-\K_{22}(\cdot , t)])(t,\theta) \cdot  v cos\theta d\theta dt.
\]

To control this term, we break the inner integral $\int_0^\pi$ 
into $\int_0^{\pi/2}+\int_{\pi/2}^\pi$ and write $v=\te^m_2[(\te^m_2)^{-1}v]$,
where now in the first interval $(\te^m_2)^{-1}[F](\theta,t_*)=\int_0^\theta F(\theta_{t_*}(s), t_{t_*}(s))ds$ and in the second interval 
$(\te^m_2)^{-1}[F](\theta,t_*)=\int_\pi^\theta F(\theta_{t_*}(s), t_{t_*}(s))ds$. 
In both of these intervals 
we integrate by parts again. The  resulting expression in each of the intervals
 is essentially the same, so we just perform it on the first interval
  $\theta\in [0,\pi/2]$ and we   derive: 

\beq
\label{trick.IBP}
\begin{split}
&\int_{-\infty}^\infty \int_0^{\pi/2} a^{2\theta }(\te^m_2)^{-1}[ \partial^I  (K^m_{22}(\e,\cdot ,t)-\K_{22}(\cdot , t)](t,\theta) \cdot ( v)cos\theta d\theta dt
\\&= \int_{-\infty}^\infty \int_0^{\pi/2} [a^{2\theta }]^2(\te^m_2)^{-1}[ \partial^I  (K^m_{22}(\e,\cdot ,t)-\K_{22}(\cdot , t)](t,\theta) \cdot ((\te^m_2)^{-1} v)sin \theta d\theta dt
\\&+ 
\int_{-\infty}^\infty \int_0^{\pi/2} a^{2\theta }[\big( \partial^I (K^m_{22}(\e,\cdot ,t)-\K_{22}(\cdot , t)](t,\theta)] \cdot 
((\te^m_2)^{-1} v)cos \theta d\theta dt
\\&-\int_{-\infty}^\infty \int_0^{\pi/2} \partial_\theta[a^{2\theta }]^2[\big( \partial^I (K^m_{22}(\e,\cdot ,t)-\K_{22}(\cdot , t)](t,\theta)] \cdot 
((\te^m_2)^{-1} v)cos \theta d\theta dt.
\end{split}
\eeq
(Note that the two boundary terms vanish, since at $\theta=0$ the factor 
$(\te^m_2)^{-1} v$  vanishes, while at $\theta=\pi/2$ $cos\theta$ vanishes).  
Now the first term can be controlled by Cauchy-Schwarz: 

\beq
\begin{split}
&| \int_{-\infty}^\infty \int_0^{\pi} [a^{2\theta }]^2(\te^m_2)^{-1}\big( \partial^I 
 (K^m_{22}(\e,\cdot ,t)-\K_{22}(\cdot , t)\big) (t,\theta) \cdot 
((\te^m_2)^{-1} v)sin \theta d\theta dt|
\\& \le | \int_{-\infty}^\infty \int_0^{\pi/2} [a^{2\theta }]^2[(\te^m_2)^{-1}\big( 
\partial^I (K^m_{22}(\e,\cdot ,t)-\K_{22}(\cdot , t))]^2(t,\theta) 
sin \theta d\theta dt|
\\&+ \frac{1}{4}\int_{-\infty}^\infty \int_0^{\pi/2} [a^{2\theta }]^2
[(\te^m_2)^{-1} v]^2 sin\theta d\theta dt. 
\end{split}
\eeq
Using Hardy's inequality, (recalling that $\te^m_2(\theta)=a^{2\theta}$)
 both terms can be controlled by: 

\beq
\frac{1}{4}| \int_{-\infty}^\infty \int_0^{\pi} [\big( \partial^I 
(K^m_{22}(\e,\cdot ,t)-\K_{22}(\cdot , t)]^2(t,\theta) 
sin \theta d\theta dt|+
\frac{1}{4}\int_{-\infty}^\infty \int_0^{\pi} [ v]^2 sin\theta d\theta dt.
\eeq

Finally, we now bound the second term in the RHS of \eqref{trick.IBP} again by 
Cauchy-Schwarz and then Hardy, and introducing $sin\theta$ into the volume form of the
 term 
involving $K^m_{22}-\K_{22}$: 

\beq
\begin{split}
&\int_{-\infty}^\infty \int_0^{\pi/2} a^{2\theta }[\big(\partial^I  (K^m_{22}(\e,\cdot ,t)-\K_{22}(\cdot , t))](t,\theta) \cdot 
((\te^m_2)^{-1} v)cos \theta d\theta dt
\\&\le 4\e^{\frac{5}{2}}\int_{-\infty}^\infty \int_0^{\pi} [\big(\partial^I (K^m_{22}(\e,\cdot ,t)-\K_{22}(\cdot , t))]^2(t,\theta) sin^2 \theta d\theta dt+
\frac{\e^{-\frac{5}{2}}}{4} \int_{-\infty}^\infty \int_0^{\pi} [a^{2\theta }]^2 
((\te^m_2)^{-1} v)^2 (sin \theta)^{-2} d\theta dt
\\&\le 4\e^{\frac{5}{2}}\int_{-\infty}^\infty \int_0^{\pi} [\big(\partial^I  (K^m_{22}(\e,
\cdot ,t)-\K_{22}(\cdot , t))]^2(t,\theta) sin \theta d\theta dt+
\frac{\e^{-\frac{5}{2}}}{4} \int_{-\infty}^\infty \int_0^{\pi} 
( v)^2  d\theta dt.
\end{split}
\eeq
The third term in \eqref{trick.IBP} is bounded similarly. 
\medskip

We separately consider the product 

\[
\int_{-\infty}^\infty \int_0^{\pi} (\te^m_2)^{-1}[\partial^I\tilde{K}^m_{12}
\frac{O(\tilde{K}^m_{12})}{\K_{22}-\K_{11}}]\cdot 
\te^m_2\partial^I (\delta r^m_*)sin\theta d\theta dt
\]
which arises in the integrated equation out of \eqref{1st.diffd.te-1}; this term 
(using the $L^\infty$ bounds for $\tilde{K}^m_{12}$ in our space) 
is also bounded via integration by parts and then  Cauchy-Schwarz by:

\[
\frac{\e^{2+\frac{3}{4}}}{2}\int_{-\infty}^\infty \int_0^{\pi} [\partial^I\tilde{K}^m_{12}]^2 sin\theta d\theta dt+
\frac{\e^{-2-\frac{1}{4}}}{2}\int_{-\infty}^\infty \int_0^{\pi} |\partial^I (\delta r^m_*)|^2 sin\theta d\theta dt.
\]

This concludes our treatment of the main terms in the integral 
identity we derived from \eqref{1st.diffd.te-1}. We note the secondary terms 
that also arise from the same equation, which arise from the terms we have not 
considered. These are straightforwardly bounded by Cauchy-Schwarz and absorbed into our main terms; also all products of lower-order terms with $v$ are bounded by Cauchy-Schwarz, with the $L^2$-norm 
of the lower-order term placed in the right-hand side. 

%We can then combine all the above estimates to derive our desired bound: 

%\beq
%\begin{split}
%&\frac{1}{2}\int_{-\infty}^\infty \int_0^{\pi} |\te^m_2 v|^2 d\theta dt +\frac{\e^{-2}}{2} 
%\int_{-\infty}^\infty \int_0^{\pi} |v|^2d\theta dt \le 10
%\e^2\int_{-\infty}^\infty \int_0^{\pi} [\partial^I 
%(K^m_{22}(\e,\cdot ,t)-\K_{22}(\cdot , t))]^2(t,\theta) sin \theta d\theta dt
%\\&+\frac{\e^{1/4}}{2}\int_{-\infty}^\infty \int_0^{\pi} [\partial^I\tilde{K}^m_{12}]^2 sin\theta d\theta dt+\frac{\e^{1/4}}{2}\int_{-\infty}^\infty \int_0^{\pi} [{\rm l.o.t.'s}]^2 sin\theta d\theta dt.
%\end{split}
%\eeq

 Our claimed bounds  then follow directly from the system \eqref{1stPhi}, \eqref{2ndPhi} 
 as described above: From \eqref{1st.diffd} and all estimates derived after it, we 
 derive: 
 
 \beq
 \label{conseq1}
 \begin{split}
& \int_{t,\theta} (\frac{2M}{r^m_*}-1)^{-\frac{1}{2}}|\partial^I   \te_2 r_*^m|^2 
+[V(t,\theta)+o(\e^{-2})
  ]\partial^I  (\delta r_*^m)|^2
 +o(1) | \partial^I  \tilde{K}^m_{12}\cdot \partial^I 
 (\delta r^m_*) sin\theta d\theta dt
\\& \le 4\e^{\frac{5}{2}} \int_{\Sigma_{\e}}|\partial^I(K^m_{22}(\e,t,\theta)-\K_{22}
 (t,\theta)|^2dt sin\theta d\theta|+\e^{\frac{5}{2}}\int_{\Sigma_{\e}}|(l.o.t.'s)|^2dt 
 sin
 \theta d\theta+  \e^{-\frac{5}{2}}\int_{\Sigma_{\e}}|\partial^I
 (\delta r_*^m)|^2
 sin\theta dt.
 \end{split}
 \eeq
% One key term is bounded as follows:
%\beq
%\begin{split}
%&\int_{t,\theta}
%o(1) | \partial^I e^{J_1}_b \tilde{K}^m_{12}\cdot \partial^I e^{J_1}_b
% (\delta r^m_*) sin\theta d\theta dt
% \\&\le \int_{t,\theta}
%o(1) | \partial^I e^{J_1}_b \tilde{K}^m_{12}|^2 +
% \int_{t,\theta}o(1)|\partial^I e^{J_1}_b
% (\delta r^m_*)|^2 sin\theta d\theta dt
% \end{split}
% \eeq

 In this case we recall the  bounds from \eqref{Km22.high.var.rhom}, 
 \eqref{Km12.high.var.rhom} we now have on the terms 
 
 \beq
 \label{key.terms}\int_{\Sigma_{\e}}|\partial^I(K^m_{22}(\e,t,\theta)-\K_{22}(t,\theta)|^2dt sin\theta d\theta|+\int_{\Sigma_{\e}}|(l.o.t.'s)|^2dt sin\theta d\theta\le C^2\eta^2 \e^{-3-p(k)}. 
 \eeq

On the other hand, we consider  \eqref{2nd.diffd}, and multiply it by 
 $\partial^I \tilde{K}^m_{12}$  and again 
  integrate in $t,\theta$  with respect to the volume form 
  $sin\theta d\theta dt$. 
We derive:

   \beq
   \label{conseq2}
\begin{split}   
  & \int_{t,\theta}|\partial^I  \tilde{K}^m_{12}|^2-
  (\frac{2M}{r^m_*}-1)^{-\frac{1}{2}} (\te_2\delta r^m_*) 
   (\K_{22}-\K_{11})^{-1} \partial^I  \te_2\tilde{K}^m_{12}
   \cdot \partial^I  \tilde{K}^m_{12}sin\theta 
   d\theta dt=
   \\&\int_{t,\theta}o(1)\partial^I \tilde{K}^m_{12}\cdot 
   [\partial^I (K^m_{12}(\e,t,\theta)+(l.o.t,'s)]sin\theta 
   d\theta dt.  
   \end{split}
   \eeq
  
The delicate term is the second one in the LHS, which we deal 
 with via integration by parts (and commutation terms which give rise to 
 $(l.o.t.)'s$):
 
 \beq
 \begin{split}
&- \int_{t,\theta}
(\frac{2M}{r^m_*}-1)^{-\frac{1}{2}}   (\te^m_2\delta r^m_*) 
   (\K_{22}-\K_{11})^{-1} \partial^I \te_2\tilde{K}^m_{12}
   \cdot \partial^I  \tilde{K}^m_{12}sin\theta 
   d\theta dt
   \\&= \int_{t,\theta} [(\frac{2M}{r^m_*}-1)^{-\frac{1}{2}}
  \te^m_2\te^m_2(\delta r^m_*)\cdot 
   (\K_{22}-\K_{11})^{-1} + 
  M (\frac{2M}{r^m_*}-1)^{-\frac{3}{2}}
  [\te^m_2(\delta r^m_*)]^2\cdot 
   (\K_{22}-\K_{11})^{-1}   
\\&+   (\frac{2M}{r^m_*}-1)^{-\frac{1}{2}}\te^m_2(\delta r^m_*)\cdot 
   div(\te^m_2) (\K_{22}-\K_{11})^{-1}
   +(\frac{2M}{r^m_*}-1)^{-\frac{1}{2}}\te^m_2(\K_{22}-\K_{11})^{-1}]|\partial^I  \tilde{K}^m_{12}|^2
   +(l.o.t's)
      \end{split}
 \eeq
The coefficient \beq\begin{split}
&(\frac{2M}{r^m_*}-1)^{-\frac{1}{2}}
  \te^m_2\te^m_2(\delta r^m_*)\cdot 
   (\K_{22}-\K_{11})^{-1} + 
  M (\frac{2M}{r^m_*}-1)^{-\frac{3}{2}}
  [\te^m_2(\delta r^m_*)]^2\cdot 
   (\K_{22}-\K_{11})^{-1}  \\& +
[div(\te_2) (\K_{22}-\K_{11})^{-1}
   +\te_2(\K_{22}-\K_{11})^{-1}]
   +
    (\frac{2M}{r^m_*}-1)^{-\frac{1}{2}}\te^m_2(\delta r^m_*)\cdot 
   div(\te^m_2) (\K_{22}-\K_{11})^{-1}
  \\& +(\frac{2M}{r^m_*}-1)^{-\frac{1}{2}}\te^m_2(\K_{22}-\K_{11})^{-1}]
   \end{split}\eeq
    is $o(1)$; hence the term can be 
   absorbed in the first term in \eqref{conseq2}. 
   The RHS can be controlled by Cauhcy-Schwarz 
as before.

 Then adding \eqref{conseq1}, \eqref{conseq2} 
 and choosing $\kappa= \frac{1}{2}\e^{-5/2}$ and absorbing the first 
  term into the LHS, we derive our claims \eqref{highbd1}, 
  \eqref{highbd2}, \eqref{highbd3}. Using these bounds, we then derive
   \eqref{extra.bound.te2K12} using \eqref{2ndPhi}, since all the other terms in that 
   equation have now been bounded. $\Box$
\medskip

 We have thus derived the existence of our the hypersurface 
 $\Sigma_{{r^m_*}}$
 on which our initial data will be induced, along with the key component
 $\tilde{K}^m_{12}(t,\theta)$ on that hypersurface. 
  \medskip

  The rest of this section is devoted to deriving the inductive assumptions for 
  the remaining term $K^m_{11}(r,t,\theta)$ in the REVESNGG system.  
We commence by deriving the data for this \emph{on the just-solved-for 
hypersurface} $\Sigma_{{r^m_*}}$. 

\subsection{The initial data for the remaining connection coefficients on 
the initial data hypersurface $\Sigma_{{r^m_*}}$. }
\label{sec:init.dat.nxt}

Having solved for $r^m_*(t,\theta)$ and $\tilde{K}^m_{12}(t,\theta)$ and derived the 
claimed inductive estimates in the relevant spaces, we now proceed to \emph{impose} 
the required initial conditions on the remaining parameter
 $\tilde{K}^m_{11}(t,\theta)$,  \emph{on} the initial data hypersurface 
$\Sigma_{r^m_*}$. (This parameter is with respect to the \emph{adapted frame} 
$\te^m_0, \te^m_1=e_1, \te^m_2$ on $\Sigma_{r^m_*}$--it is in fact determined purely 
from the value of $\tilde{K}^m_{12}(t,\theta)$ as we will recall). After these, we will 
also impose the required initial conditions on 
 the corresponding parameters that depend on the frame 
 $e^m_0, e^m_1, e^m_2$. (This will depend also on $r^m_*(t,\theta)$ and on 
 $\te^m_2(r^m_*)(t,\theta)$).

 We also express the new frames $\te^m_0, \te^m_1, \te^m_2$ and 
 $e^m_0,  e^m_1, e^m_2$ 
 with respect to the background coordinate vector fields $\partial_t,\partial_\theta$, and find initial values for the parameters $a^m_{Ai}, (a^m)^{iA}$:
the vector fields $\te^m_1, \te^m_2$ are prescribed 
in terms of $\partial_t,\partial_\theta$ via the requirements 
\eqref{te1m.tK12}.

%   \beq
%   \begin{split}
%   \label{frames.coords.m}
%& \te^m_1=\sqrt{(1-(\frac{\tilde{K}^m_{12}}{\K_{22}-\K_{11}})^2}
%(\g_{tt})^{-1/2} \partial_t- \frac{\tilde{K}^m_{12}}{\K_{22}-\K_{11}}
%(\g_{\theta\theta})^{-1/2}\partial_\theta
%\\& \te^m_2= \sqrt{(1-(\frac{\tilde{K}^m_{12}}{\K_{22}-\K_{11}})^2}
%(\g_{\theta\theta})^{-1/2}\partial_\theta-\frac{\tilde{K}^m_{12}}{\K_{22}-\K_{11}}
%(\g_{tt})^{-1/2} \partial_t.
% \end{split}
%   \eeq

Then,  invoking \eqref{F11.form}  and \eqref{tAm.def}  we impose the values

\begin{align}
\label{K11.again}
\tilde{K}^m_{11}(t,\theta)= F^{11}_{t,\theta}(\tilde{K}^m_{12}(t,\theta)),\\
\label{A112.again}
\tilde{A}^m_{11,2}=\bigg(1-\frac{4(\tilde{K}^m_{12})^2}{({\bf K}_{22}-{\bf K_{11}})^2}\bigg)^{-1/2}\te^m_1(\frac{\tilde{K}^m_{12}}{\K_{22}-\K_{11}})+
F^1(\frac{\tilde{K}^m_{12}}{\K_{22}-\K_{11}}),\\
\label{A221.again}
\tilde{A}^m_{22,1}=\bigg(1-\frac{4(\tilde{K}^m_{12})^2}{({\bf K}_{22}-{\bf K_{11}})^2}\bigg)^{-1/2}\te^m_2(\frac{\tilde{K}^m_{12}}{\K_{22}-\K_{11}})+
F^2(\frac{\tilde{K}^m_{12}}{\K_{22}-\K_{11}});
\end{align}
  here the functions $F^{11}$ 
$F^1, F^2$ are given by formulas \eqref{F11.form},
 \eqref{F1.form}, \eqref{F2.form}.

Next we derive estimates for this quantity (on $\Sigma_{r^m_*}$).

First, using the formula \eqref{K11.again} and the form \eqref{F11.form} of the function 
$F^{11}_{t,\theta}$ we directly derive: 
\beq
\label{tilK.m.low}
||(\tilde{K}^m_{11}({r^m_*}(t,\theta), t,\theta)-\K_{11}(t,\theta))||_{\dot{H}^{|I|}_{t,
\theta}}\le
 \|\tilde{K}^m_{12}(\K_{11}-\K_{22})^{-1}\|_{L^\infty} \cdot \|\big(  \tilde{K}^m_{12}
 \big) \|_{{H}^k_{t,\theta}}\le  D
C\eta \e^{-\frac{3}{2}+\frac{1}{4}}
\eeq
for $I, |I|\le {\rm low}$ and for $|I|\in \{{\rm low}+1,\dots, s-3\}$: 
\beq
\label{tilK.m.med}
\begin{split}
&||\partial^I(\tilde{K}^m_{11}({r^m_*}(t,\theta), t,\theta)-
\K_{11}(t,\theta))||_{{L}^2_{t,\theta}}\le\|\tilde{K}^m_{12}
(\K_{11}-\K_{22})^{-1}
\|_{L^\infty} \cdot \|\partial^I\big(  \tilde{K}^m_{12}\big) 
\|_{{L}^2_{t,\theta}}
\\&\le
DC\eta  \e^{-3/2-\frac{|I|-{\rm low}}{4}+\frac{1}{4}}%\le 
%  \eta \e^{-3/2-\frac{|I|-{\rm low}}{4}}.
\end{split}
\eeq
%Moreover at the very top order we have the additional estimates with a more 
%singular weight at the poles, for all $J, |J|\le s-4$; this follows from the analogous %: 

%\beq
%\label{tilK.m.top.sing}
%\begin{split}
%&||\partial^J\partial_\theta (\tilde{K}^m_{11}({r^m_*}(t,\theta), t,\theta)-
%\K_{11}(t,\theta)){\rm cot}\theta ||_{{H}^k_{t,\theta}}\le\|\tilde{K}^m_{12}
%(\K_{11}-\K_{22})^{-1}
%\|_{L^\infty} \cdot \|\partial^I\big(  \tilde{K}^m_{12}\big) 
%\|_{{L}^2_{t,\theta}}
%\\&\le
%DC\eta  \e^{-3/2-c+\frac{1}{4}}\le 
%  \eta \e^{-3/2-c}.
%\end{split}
%\eeq

\medskip

Next, in view of the verified inductive bounds \eqref{tK12.ind.claim.low}
 and 
\eqref{tK12.ind.claim.high}, as well as \eqref{frames.coords}
 we derive the following  bounds on 
 $\tilde{A}^m_{22,1}, \tilde{A}^m_{11,2}$:
for all $k\le {\rm low}$: 
\beq
\label{tA.bd.low}
||\partial^I \tilde{A}^m_{22,1}||_{L^2_{t,\theta}}\le B
\e^{-3/2+\frac{3}{8}}\le \eta\e^{-5/4},
 ||\partial^I \tilde{A}^m_{11,2}||_{L^2_{t,\theta}}\le 
 B
 \e^{-3/2+\frac{3}{8}}\le \eta\e^{-5/4}, 
\eeq
and the following bounds on the higher derivatives ($h=k-{\rm low}$ below)
for $|I|\le s-4$: 
\beq
\label{tA.bd.high}
\|\partial^I\tilde{A}^m_{22,1}||_{L^2}
\le 
B\e^{-3/2+\frac{3}{8}+\frac{1}{4}h},
 ||\partial^I \tilde{A}^m_{11,2}||_{L^2}
 \le 
 B \e^{-3/2+\frac{3}{8}+\frac{h}{4}}\le \eta \e^{-3/2+1/4+\frac{h}{4}}. 
\eeq

The key to deriving the above four bounds  is the control (by the same 
bounds) 
of the terms $\te^m_1(\frac{\tilde{K}^m_{12}}{\K_{22}-\K_{11}})$, 
$\te^m_2(\frac{\tilde{K}^m_{12}}{\K_{22}-\K_{11}})$. These follow by 
invoking the
 bounds \eqref{tK12.ind.claim.low} and 
\eqref{tK12.ind.claim.high} we have on 
$\partial^{I'}\tilde{K}^m_{12}(t,\theta)$, $|I'|=k+1$,
and expressing $\tilde{e}^m_1, \tilde{e}^m_2$ in terms of $\partial_t,\partial_\theta$ 
using \eqref{frames.coords}. This gives a \emph{gain} of a power 
$\e^{3/8}$ at the 
minimum. The rest of the terms are easily seen to also be bounded by the RHSs of \eqref{tA.bd.low}, \eqref{tA.bd.high}. 
\medskip

Then the function $K^m_{11}({r^m_*}(t,\theta),t,\theta)$,
\emph{on} the hypersurface $\Sigma_{{r^m_*}(t,\theta)}$ is defined by the equation
\eqref{K11exp}.%, \eqref{A112exp}, \eqref{A221exp}, \eqref{Ric.rot}. 
\medskip

 In view of the inductive bounds \eqref{r*.ind.claim.low}, \eqref{tK12.ind.claim.low},
 \eqref{r*.ind.claim.high}, \eqref{tK12.ind.claim.high}  on the quantities 
 ${r^m_*}(t,\theta), \tilde{K}^m_{12}(t,\theta)$ that were verified 
 in the previous subsection, we next derive bounds on 
  $K^m_{11}({r^m_*}(t,\theta),t,\theta)$.

We  use \eqref{K11exp}, along with \eqref{tilK.m.low} and the assumed closeness of $\K_{11}$ to $K^S_{11}$  to derive for all $I, |I|\le {\rm low}$: 
\beq
||\partial^I(K^m_{11}({r^m_*}(t,\theta), t,\theta)
-\K_{11}(t,\theta))||^2_{L^2_{t,\theta}(\Sigma_{r^m_*)}}\le  ((D+1)C\eta)^2\e^{-3+\frac{1}{4}}+\eta^2\e^{-3}\le 2\eta^2\e^{-3}.
\eeq
 While at the higher orders $k>{\rm low}, k\le s-4$ 
 (invoking the closeness of $\K_{11}$ to $K^S$) we similarly derive:

\beq
\begin{split}
& ||\partial^I(K^m_{11}({r^m_*}(t,\theta), t,\theta)-K^S_{11}(t,\theta))
||^2_{H^k_{t,\theta}}\le  2(C\eta)^2\e^{-3-\frac{k-{\rm low}}{2}}+ \eta^2 \e^{-3-\frac{k-{\rm low}}{2}}\le 2\eta^2 \e^{-3-\frac{k-{\rm low}}{2}}.
\end{split}
\eeq
The desired bound at the top orders is derived in the last subsection of this section. 
\medskip

%Now, we use formulas \eqref{A112exp}, \eqref{A221exp} to derive the bounds
%\eqref{inductionAiijm-1low}, \eqref{inductionAiijm-1top}, \eqref{inductionAiijm-1opt}. 
%on the quantities $A_{11,2}^m, A^m_{22,1}$ \emph{on} $\Sigma_{r^m_*}$: 

%\beq
%\label{Am.init.bds}
%\begin{split}
%&\int_{t,\theta} |\partial^I[{A}^m_{11,2}(r^m_*(t,\theta),t,\theta)|^2sin\theta 
%d\theta dt\le  \eta^2 
%\e^{-5/2}
%\\&\int_{t,\theta} |\partial^I[{A}^m_{22,1}(r^m_*(t,\theta),t,\theta)|^2
%sin\theta d\theta dt\le  \eta^2 \e^{-5/2}
%\end{split}
%\eeq
%At the higher derivatives $k\in \{l+1,\dots, s-3\}$ the bounds 
%become: 

%\beq
%\label{Am.init.bds.higher}
%\begin{split}
%&\int_{t,\theta} (r^m_*)^{2\beta_{m,q}}
%|\partial^I e^{J_1}_q[{A}^m_{11,2}(r^m_*(t,\theta),t,\theta)|^2
%sin\theta d\theta dt\le  \eta^2 
%\e^{-5/2-\frac{k-l}{2}}
%\\&\int_{t,\theta} (r^m_*)^{2\beta_{m,q}}|\partial^I e^{J_1}_q[{A}^m_{22,1}(r^m_*(t,\theta),t,\theta)|^2
%sin\theta d\theta dt\le  \eta^2
% \e^{-5/2-\frac{k-l}{2}}%
%\end{split}
%\eeq
%\medskip

% \eqref{oa.init.low}, \eqref{oa.init.high}, \eqref{a.init.low}, 
%\eqref{a.init.high}. 
%\medskip

We have thus derived the desired bounds for 
$K^m_{11}$  on the 
initial data 
hypersurface $\Sigma_{{r^m_*}}$.

\medskip
 We proceed in the next subsections to derive the claimed bounds for 
 $K^m_{11}(r,t,\theta)$
  \emph{off} of this hypersurface, for all $r\in (0,2\e]$.

\subsection{Energy estimates for $K^m_{11}(r,t,\theta)$ 
and the 
asymptotically CMC property of the surfaces $\Sigma_{r^m}$.}
\label{backforwestK11m}

Here we verify the inductive assumptions 
\eqref{inductiontrKlow}, 
\eqref{inductiontrKopt}, \eqref{inductiontrKtopmixed}, \eqref{inductiontrKtopmixed.sing2} for the connection 
coefficient 
$K^m_{11}(r,t,\theta)$, and also verify \eqref{trKheur}. 
\medskip

We just imposed  the initial data for $K^m_{11}$ on $\Sigma_{r^m_*}$ 
and 
also derived the desired bounds for it on that hypersurface. 
The evolution equation we have imposed is 
  the  
equation (\ref{finredEVERic11it}). Having already solved for and bounded 
$K^m_{22}(r,t,\theta), K^m_{12}(r,t,\theta)$, this equation is purely a 
non-linear ODE
 in $K^m_{11}(r,t,\theta)$. Contrary to $K^m_{22}, K^m_{12}$, this ODE 
 will be 
 solved \emph{forwards}, towards the singularity.

 \begin{remark}
 The reason why $K^m_{11}(r,t,\theta)$ can be solved forwards without 
 potentially violating our ``asymptotically CMC'' claim at the level of derivatives  is that in 
 contrast to
  $K^m_{22}(r,t,\theta)$ 
 the non-linear term $(K_{11}^m)^2$ in the Riccati-type 
  equation 
  is non-focusing (given the sign of the initial datum is close to 
  $\K_{11}$).  
  This implies that the solution remains smooth for all $r>0$, and 
  becomes singular at
   $r=0$ purely because of the singular behaviour of the RHS in the Riccati equation. Moreover, 
    all solutions that we obtain (regardless of the initial 
   condition we impose, provided it is close enough to $\K_{11}$) will 
   have a ``free branch'' of the form
    $O(r^{\zeta^m(t,\theta)})$ with 
    $|\zeta^m(t,\theta)| \le \frac{1}{8}$. 
    In particular, 
    it is much less singular than the leading-order behaviour 
  $(2M)^{\frac{1}{2}}d_1^m(t,\theta)r^{-3/2}$ which is the contribution of the singular 
 forcing terms on the  RHS;
  this is in contrast to the behaviour of $K^m_{12}(r,t,\theta)$. 
  \emph{Crucially} this same free branch appears for the differentiated variable 
  $\partial K^m_{11}(r,t,\theta)$; this is in contrast with $K^m_{22}(r,t,\theta)$, which admitted a singular branch of the form 
  $r^{-3+\e^m(t,\theta)}$ for the differentiated variable 
  $\partial K^m_{22}(r,t,\theta)$. Recall that it was to set these two singular 
  branches (which would destroy our desired ``asymptotically CMC'' property) 
   to zero that we solved for $K^m_{12}(r,t,\theta)$ and $K^m_{22}(r,t,\theta)$ \emph{backwards} from the singularity.    
  \end{remark}
\medskip

Beyond the question of where initial data are imposed, the 
derivation of the inductive claim follows the same outline as for 
$K^m_{22}, K^m_{12}$:

At the lower orders, 
we show \eqref{K11heur}, recalling the expression:
\begin{align}\label{K11expn}
&K_{11}^m=:\frac{d_1^m(t,\theta)\sqrt{2M}}{r^\frac{3}{2}}+u_{11}^m
\end{align}
where according to Proposition \ref{prop:gammam/logr}, 
$\|d_1^m-\frac{1}{2}\|_{L^\infty}\leq DC\eta$. The 
function $u^m_{11}(r,t,\theta)$ is again claimed to be  less singular, as $r\rightarrow0$, see \eqref{inductiontrKopt}.

 Recall that from the-already verified inductive step for $\gamma^m$,  
  (\ref{gammam1enest})  it follows that 
\begin{align}\label{um33enest}
\|e_0^{J_0}u^m_{33}\|_{H^{s-3-c}}\leq Br^{-1-\frac{1}{4}-\frac{3}{2}|J_0|},&&r
\in(0,2\epsilon], |J_0|\le 2.
\end{align}
For the energy estimates of $u^m_{ij},K^m_{ij}$, $i,j=1,2$, we start at the lower orders 
validating (\ref{Kmijexp}) for $K^m_{11}$ 
and work ourselves up to the top order energy estimates.

\subsubsection{Lower-order  estimates for $K^m_{11}(r,t,\theta)$: 
The asymptotic expansion.}
%{K11heur} {K33heur}

Substitute (\ref{K11heur}) in the LHS of (\ref{finredEVERic11it}) and 
(\ref{K33heur}) in the $e_0^2\gamma^m,e_0\gamma^m$ 
terms:
\begin{align}
\label{u11mRicit.again}&e_0 u_{11}^m+(2d_1^m-\alpha^m)\frac{\sqrt{2M}}{r^\frac{3}{2}}
u_{11}^m+(u_{11}^m)^2+3(u_{12}^{m})^2+u_{33}^mu_{11}^m\\
\notag=&\,\overline{\nabla}_{11}\gamma^m+(e_1\gamma^{m-1})(e_1\gamma^m)
-e_0u^m_{33}-(u_{33}^m)^2+(2\alpha^m-d_1^m)\frac{\sqrt{2M}}{r^\frac{3}{2}}u^m_{33}
+O(\frac{1}{r^2})
\end{align}
where $O(\frac{1}{r^2})$ stands for a term that is bounded by $Br^{-2}$ in the
 $H^{s-3-4c}$ norm.

Notice that considering the model homogenous equation 

\[
e_0 y+(2d_1^m-\alpha^m)\frac{\sqrt{2M}}{r^\frac{3}{2}}y=0
\]
we observe that the solutions of this equation behave to  leading order as  
$r^{2d_1^m(t,\theta)-\alpha^m(t,\theta)}$. Note that in view of the formula 
\eqref{d1.m}  this behaviour is 
much less singular than $r^{2d_2^m(t,\theta)-\alpha^m(t,\theta)}$. 
 In fact \emph{this} is the real asymptotic 
behaviour for the ``homogenous free part'' of the \emph{true}
equation \eqref{u11mRicit.again}. 

\begin{proposition}\label{prop:um11enest}
Given $u^m_{11}({r^m_*},t,\theta)\in H^{s-3-4c}$, there exists a unique
 solution $u^m_{11}(r,t,\theta)\in H^{s-3-4c}$, $i,j=1,2$, to the 
equation (\ref{u11mRicit.again}).
This satisfies the bounds: 
\begin{align}\label{um11enest}
\|u^m_{11}\|_{H^{s-3-4c}}\leq \frac{C}{2}r^{-DC\eta} \|u^m_{11}
(r^m_*,t,\theta)\|_{H^{s-3-4c}}+Br^{-1-\frac{1}{4}},
\end{align}
for all $r\in(0,2\epsilon]$.
\end{proposition}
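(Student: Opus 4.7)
The plan is to rewrite (\ref{u11mRicit.again}) as a first-order ODE in $r$ for $u_{11}^m$, treat it as a linear inhomogeneous equation with small linear coefficient, and solve it forwards from the initial data on $\Sigma_{r^m_*}$ (which was controlled in $H^{s-3-4c}$ in \S\ref{sec:init.dat.nxt}) using an integrating factor together with the generalized Gronwall inequality of Lemma \ref{lem:Gron}. Dividing by $-(2M/r-1)^{1/2}$, the equation takes the form
\begin{equation*}
\partial_r u_{11}^m - \frac{2d_1^m(t,\theta)-\alpha^m(t,\theta)}{r}(1+O(r^{1/2}))\,u_{11}^m = \mathcal{F}[u_{11}^m, u_{12}^m, u_{33}^m; \gamma^m,\gamma^{m-1}](r,t,\theta),
\end{equation*}
where $\mathcal{F}$ collects the linear forcing from the already-solved $\gamma^m$ (Hessian terms from Lemma \ref{hessian.bounds.again}, $-e_0 u_{33}^m$, $-(u_{33}^m)^2$, and the linear $u_{33}^m$ contribution) together with the quadratic self-interaction $-(u_{11}^m)^2$, the cross-term $-3(u_{12}^{m})^2$, and $-u_{33}^m u_{11}^m$. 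The homogeneous part has integrating factor $r^{-(2d_1^m-\alpha^m)}$, and since $\|2d_1^m-\alpha^m\|_{L^\infty}\le DC\eta$ (using $d_1(1)=1/2$ and the Lipschitz bound on $d_1$, see \eqref{D.def}), this factor contributes at worst $r^{\pm DC\eta}$, matching the claimed $r^{-DC\eta}$ pointwise growth.

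First I would establish local existence and an a priori $L^\infty$ bound on $u_{11}^m$, ruling out blow-up before $\{r=0\}$. The key point is that, unlike $u_{22}^m$, the Riccati-type nonlinearity $-(u_{11}^m)^2$ is \emph{defocusing} relative to the forwards direction (given the sign of the initial datum is close to that of $K^S_{11}>0$), so the forwards ODE does not develop singularities away from $r=0$. A bootstrap of the form $\|u_{11}^m\|_{L^\infty}\le 2Br^{-1-1/4}$ then shows that the nonlinear forcing is bounded in $L^\infty$ by $B^2 r^{-2-1/2}$; once multiplied by $(2M/r-1)^{-1/2}\sim r^{1/2}$ and integrated against the integrating factor, it contributes to $u_{11}^m$ a term $\lesssim B^2\epsilon^{1/4}r^{-1-1/4-DC\eta}$ which is strictly weaker than the dominant contribution of the linear forcing $-e_0u_{33}^m+(2\alpha^m-d_1^m)\sqrt{2M}r^{-3/2}u_{33}^m$, of size $Br^{-5/2-1/4}$, which integrates exactly to $Br^{-1-1/4}$.

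Next I would commute the equation with $\partial^I$, $|I|=l\le s-3-4c$, and proceed by finite induction on $l$. The differentiated equation is linear in $\partial^I u_{11}^m$ with the same principal coefficient $(2d_1^m-\alpha^m)\sqrt{2M}r^{-3/2}$ and with RHS consisting of: (a) $\partial^I$ of the $\gamma^m$-forcing, already controlled in $H^{s-3-4c}$ by Lemma \ref{hessian.bounds.again} and (\ref{um33enest}); (b) products of $\partial^{I_1}u_{ij}^m$, $|I_1|<l$, with derivatives of $d_1^m,\alpha^m$, controlled using the inductive hypothesis and the product inequality; (c) terms $\partial^{I_1}u_{11}^m\cdot\partial^{I_2}u_{11}^m$ and analogues from $(u_{12}^m)^2,u_{33}^m u_{11}^m$ with $|I_1|,|I_2|<l$, which are again below-borderline by the inductive bound and \eqref{prodcut.ineq}. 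Applying Lemma \ref{lem:Gron} to $r^{2d_1^m-\alpha^m}\|\partial^I u_{11}^m\|_{L^2}$ then closes the induction step and yields (\ref{um11enest}).

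The main obstacle I anticipate is the top-order contribution where all $l$ derivatives of $\partial^I$ fall on $d_1^m$ or $\alpha^m$ inside the \emph{coefficient} $(2d_1^m-\alpha^m)\sqrt{2M}r^{-3/2}$: this produces a forcing term $[\partial^I(2d_1^m-\alpha^m)]\sqrt{2M}r^{-3/2}\,u_{11}^m$ whose $L^2$-norm is \emph{borderline} with respect to the $r^{-1}dr$ integration against the integrating factor. The resolution is that by Proposition \ref{prop:gammam/logr} and the chain rule through \eqref{d1.m}, one has $\|\partial^I(2d_1^m-\alpha^m)\|_{L^2}\le DC\eta$ which is genuinely \emph{small}, so the borderline contribution is of size $DC\eta\cdot\|u_{11}^m\|_{L^\infty}\le DC\eta\cdot Br^{-1-1/4}$, and Lemma \ref{lem:Gron} absorbs it into an integrating factor of exponent at most $DC\eta$, precisely matching the factor $r^{-DC\eta}$ in the right-hand side of (\ref{um11enest}); the smallness bounds \eqref{Ceta.bds}, \eqref{e.bd} on $C\eta$ and $\epsilon$ ensure the resulting multiplicative constant is at most $C/2$.
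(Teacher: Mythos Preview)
Your proposal is correct and follows essentially the same approach as the paper: both rewrite (\ref{u11mRicit.again}) using the integrating factor $r^{\alpha^m-2d_1^m}$, first obtain the $L^\infty$ bound (\ref{u11mLinftyest}) via a bootstrap on the integral equation (\ref{partial_ru11m.int}), and then close the $H^l$ estimates by finite induction on $l$, using that $\|2d_1^m-\alpha^m\|_{L^\infty}\le DC\eta$ so that the linear coefficient contributes only an $r^{-DC\eta}$ growth factor absorbed via Lemma \ref{lem:Gron}. Your identification of the top-order commutator term $[\partial^I(2d_1^m-\alpha^m)]\sqrt{2M}r^{-3/2}u_{11}^m$ as the borderline contribution is exactly what the paper handles in (\ref{u11menineq})--(\ref{u11menineq2}).
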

Observe that the conclusion of the previous proposition validates the inductive assumption (\ref{inductiontrKopt}) for $u^m_{11}$, 
 in view of 
the bounds on $K^m_{11}(r^m_*(t,\theta),t,\theta)$ in the previous subsection. 
\begin{proof}
Rewrite the equation (\ref{u11mRicit.again}) in the form
\begin{align}
\label{partial_ru11m}\partial_r(r^{\alpha^m-2d_1^m}u^m_{11})=&-(\frac{2M}{r}-1)^{-\frac{1}{2}}r^{\alpha^m-2d_1^m}\bigg[\overline{\nabla}_{11}\gamma^m+
(\overline{\nabla}_1\gamma^{m-1})(\overline{\nabla}_1\gamma^m)
-e_0u^m_{33}-(u_{33}^m)^2+O(1)u^m_{11}\\
\notag&+(2\alpha^m-d_1^m)\frac{\sqrt{2M}}{r^\frac{3}{2}}u^m_{33}
+O(\frac{1}{r^2})-(u_{11}^m)^2-3(u_{12}^{m})^2-u_{33}^mu_{11}^m\bigg].
\end{align}
We proceed by integrating (\ref{partial_ru11m}) in $[r,r^m_*]$
to obtain:
\begin{align}
\label{partial_ru11m.int}r^{\alpha^m-2d_1^m}u^m_{11}\big|^r_{{r^{m}_*}}=&\int^{{r^{m-1}_*}}_r(\frac{2M}{\tau}-1)^{-\frac{1}{2}}\tau^{\alpha^m-2d_1^m}\bigg[\overline{\nabla}_{11}\gamma^m+(\overline{\nabla}_1\gamma^{m-1})(\overline{\nabla}_1\gamma^m)
-e_0u^m_{33}-(u_{33}^m)^2+O(1)u^m_{11}\\
\notag&+(2\alpha^m-d_1^m)\frac{\sqrt{2M}}{\tau^\frac{3}{2}}u^m_{33}
+O(\frac{1}{\tau^2})-(u_{11}^m)^2-(u_{12}^{m})^2-u_{33}^mu_{11}^m\bigg]
d\tau.
\end{align}
Utilising the estimates (\ref{um33enest}) 
for $u^m_{33}$, as well as the estimates 
in  Lemmas \ref{hessian.bounds.again}, 
 we infer that
\begin{align}
\label{u11mLinftyineq} u^m_{11}=&\,\frac{({r^m_*})^{\alpha^m-2d_1^m}}{r^{\alpha^m-2d_1^m}} u^m_{11}({r^m_*},t,\theta)
+O(r^{-1-\frac{1}{4}})+\frac{1}{r^{\alpha^m-2d_1^m}}\int^{r^m_*}_r\tau^{\alpha^m-2d_1^m} O(\frac{1}{\tau^{1-\frac{1}{4}}})u_{11}^m d\tau\\
\notag&-\frac{1}{r^{\alpha^m-2d_1^m}}\int^{r^m_*}_r(\frac{2M}{\tau}-1)^{-\frac{1}{2}}\tau^{\alpha^m-2d_1^m}\big[(u^m_{11})^2\big]d\tau.
\end{align}
Standard ODE theory then furnishes a solution with the prescribed initial condition at $r^m_*(t,\theta)$; 
a simple bootstrap argument then yields  the bounds %
\begin{align}\label{u11mLinftyest}
\|u^m_{11}\|_{L^\infty}\leq \frac{C}{2}r^{-DC\eta} \|u^m_{11}({^{m}r_*},t,\theta)\|_{L^\infty}+Br^{-1-\frac{1}{4}},
\end{align}
for $r\in(0,2\epsilon]$. 

Next, we derive the lower order  order energy estimates for $u^m_{11},i,j=1,2$, proving (\ref{um11enest}). We argue by finite induction, assuming the estimate
\begin{align}\label{um11enest2}
\|u^m_{11}\|_{H^{l-1}}\leq \frac{C}{2}r^{-DC\eta} \|u^m_{11}({{r^{m-1}_*}},t,\theta)\|_{H^{l-1}}+Br^{-1-\frac{1}{4}},
\end{align}
is valid for $0\leq l-1< s-3-4c$ and proceed to show that the analogous estimate holds for $\partial^Iu^m_{11}$, where $|I|=l\leq s-3-4c$. Note that for $l=1$, (\ref{um11enest2}) holds true by virtue of (\ref{u11mLinftyineq}).

We derive:
\begin{align}\label{u11menineq}
&-\partial_{\rho^{m-1}}\int_{\Sigma_{\rho^{m-1}}}(\partial^Iu^m_{11})^2\mathrm{vol}_{Euc}=\int_{\Sigma_{\rho^{m-1}}}2\partial^Iu^m_{11}\partial^I\bigg[\frac{e_0u^m_{11}}{(\frac{2M}{r}-1)^{\frac{1}{2}}[1-\partial_r\chi(r)({{r^{m-1}_*}}-\epsilon)]}\bigg]\mathrm{vol}_{Euc}\\
\tag{by (\ref{gammamlowGron2}),(\ref{u33m-1enest}), and  \ref{inductionHess}) }\leq& \int_{\Sigma_{\rho^{m-1}}}\frac{2\alpha^m-4d_1^m}{r}(\partial^Iu^m_{11})^2\mathrm{vol}_{Euc}+\frac{B}{r^{2+2DC\eta}}\|\partial^Iu^m_{11}\|_{L^2}\\
\notag&+\frac{B}{r^{1-\frac{1}{4}}}\|u^m_{11}\|^2_{H^l}+\frac{B}{r^{1-\frac{1}{4}}}\|u^m_{11}\|_{H^l}\|u^m_{12}\|_{H^l}\\
\tag{$2DC\eta\leq\frac{1}{4}$}\leq& \frac{DC\eta}\rho^{m-1}\|\partial^Iu^m_{11}\|_{L^2}^2+\frac{B}{r^{2+\frac{1}{4}}}\|\partial^Iu^m_{11}\|_{L^2}
+\frac{C}{r^{1-\frac{1}{4}}}\|u^m_{11}\|^2_{H^l}+\frac{B}{r^{1-\frac{1}{4}}}\|u^m_{11}\|_{H^l}\|u^m_{12}\|_{H^l},
\end{align}
and hence
\begin{align}\label{u11menineq2}
\notag-\partial_{\rho^{m-1}}({^{m-1}\rho^{DC\eta}}\|\partial^Iu^m_{11}\|_{L^2}^2)\leq&\, {^{m-1}\rho^{DC\eta}}\big[\frac{B}{r^{2-\frac{1}{4}}}\|\partial^Iu^m_{11}\|_{L^2}
+\frac{B}{r^{1-\frac{1}{4}}}\|u^m_{11}\|^2_{H^l}+\frac{B}{r^{1-\frac{1}{4}}}\|u^m_{11}\|_{H^l}\|u^m_{12}\|_{H^l}\big]\\
\|\partial^Iu^m_{11}\|_{L^2}^2\leq&\, 
\frac{\epsilon^{DC\eta}}{{^{m-1}\rho^{DC\eta}}}\|\partial^Iu^m_{11}({{r^{m-1}_*}},t,\theta)\|_{L^2}^2+
\int^{\epsilon}_{\rho^{m-1}}\frac{\tau^{DC\eta}}{{^{m-1}\rho^{DC\eta}}}\big[\frac{B}{\tau^{2+\frac{1}{4}}}\|\partial^Iu^m_{11}\|_{L^2}\\
\notag&+\frac{B}{\tau^{1-\frac{1}{4}}}\|u^m_{11}\|^2_{H^l}+\frac{B}{\tau^{1-\frac{1}{4}}}\|u^m_{11}\|_{H^l}\|u^m_{12}\|_{H^l}\big]d\tau
\end{align}

Then applying Lemma \ref{lem:Gron} to the above we derive the claim 
\eqref{um11enest2} for $|I|=l$. Given the bounds we have imposed on 
$\e,\eta$ in terms of $C$, this yields the inductive claim 
\eqref{inductiontrKopt} for $u^m_{11}$. 
\end{proof}

\subsubsection{Higher order estimates for $K^m_{11}$}\label{subsec:K11high}

The higher order estimates for $\partial^IK^m_{11}$, 
$s-3-4c<|I|\leq s-4$, are derived in a similar manner to those
 for $u^m_{11}$ in the previous subsubsection. 
We differentiate  the Riccati equation (\ref{finredEVERic11it}) to 
the required higher orders.
We subtract from this the corresponding equation for the Schwarzschild component $K^S_{11}(r,t,\theta)$. 
For the coefficients (depending on $K^m_{11},e_0\gamma^m$) of the highest order terms  $\partial^I(K^m_{11}-K^S_{11})$ we replace the expressions
 (\ref{K11heur}), (\ref{K33heur}). This enables us to distinguish the leading-order behaviour of those coefficients from the lower-order terms; the latter can be readily absorbed into the main estimate. 
 We derive: 
\begin{align}
\label{highRicm11}   &e_0\partial^I
 [K_{11}^m-K^S_{11}]+\frac{(2d_1^m-\alpha^m)\sqrt{2M}}{r^\frac{3}{2}}
\partial^I [K^m_{11}-K^S_{11}]+O(\frac{1}{r^{1+\frac{1}{4}}})\partial^I [K_{11}^m-K^S_{11}]\\
\notag&=\partial^I \big[
\overline{\nabla}_{11}\gamma^m+(\overline{\nabla}_1\gamma^{m-1})(\overline{\nabla}_1\gamma^m)
-e_0^2[\gamma^m-\gamma^S] -[(e_0\gamma^m)^2-(e_0\gamma^S)^2]-3
(K_{12}^{m})^2\big]\\
\notag& -\{\sum_{I_1\cup I_2,\,|I_1|< |I|}\partial^{I_1}K_{11}^m \partial^{I_2}e_0\gamma^m_{\rm rest}
 +\sum_{I_1\cup I_2=I,\,|I_1|<|I|,\,|I_2|<|I|}\partial^{I_1}K_{11}^m\partial^{I_2}K_{11}^m\}\\
\notag& +\sum_{I_1\cup I_2,\,|I_1|< |I|}\partial^{I_1}K_{11}^S \partial^{I_2}e_0\gamma^S
 +\sum_{I_1\cup I_2=I,\,|I_1|<|I|,\,|I_2|<|I|}
\partial^{I_1}K_{11}^S\partial^{I_2}K_{11}^S\}.
\end{align}
This equation holds for all orders. 

Our estimate at the higher orders is then the following: 
\begin{proposition}\label{prop:Kmenest}
Given the value  $K^m_{11}({r^m_*},t,\theta)$ prescribed via 
\eqref{K11exp}, there exists a unique solution $K^m_{11}$,  to  
(\ref{highRicm11}) until $r=0$. At the higher orders it 
satisfies the following  estimates on level sets of $\rho^m$: 
\begin{align}\label{Kmenest}
\|\partial^I{}
[K_{11}^m-K^S_{11}]\|_{L^2}\leq Cr^{-DC\eta}\|[K^m_{11}-K^S_{11}]
({r^m_*},t,\theta)\|_{H^{|I|}}
+C\eta\rho^{-\frac{3}{2}+(s-3-|I|)\frac{1}{4}-c},
\end{align}
for all $r\in(0,2\epsilon]$, $s-3-4c<|I|\leq s-4$. The same estimate holds 
for $|I|=s-3$, but for $\partial^J \partial_\Th K^{m}_{11}{\rm cot}\Th$ instead of 
$\partial^IK^m_{11}$ as claimed in \eqref{inductiontrKtopmixed2}.

%At the top orders, the estimate is as follows: 

%There exist two functions $F_1(t,\theta), F_2(\rho(t,\theta)$ which satisfy the bounds FILLIN so that: 

%\begin{align}\label{Kmenest.top}
%\sum_{i,j}\|r^{\frac{3}{2}}\partial^I{e_b}
%[K_{11}^m-K^S_{11}-\rho^{\dots }F_1(t,\theta) -F_2(\rho,t,\theta)]\|_{L^2}\leq Cr^{-DC\eta}\|[K^m_{11}-K^S_{11}-F_1(t,\theta)]
%({r^m_*},t,\theta)\|_{H^{|I|+|J_1|}}
%+C\eta\rho^{-\frac{3}{2}+(s-3-|I|)\frac{1}{4}-c},
%\end{align}

%$b=1,2$.
\end{proposition}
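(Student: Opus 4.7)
\textbf{Proof plan for Proposition \ref{prop:Kmenest}.}

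The plan is to follow closely the strategy used for the lower-order estimate in Proposition \ref{prop:um11enest}, extending it to the higher orders and finally to the top order with the extra $\cot\Theta$ weight. First, existence and uniqueness of a solution $K^m_{11}(r,t,\theta)$, smooth all the way down to $r=0$ and starting from the prescribed datum $K^m_{11}(r^m_*,t,\theta)$ on $\Sigma_{r^m_*}$, follow from standard ODE theory applied to the scalar Riccati equation \eqref{finredEVERic11it}: since the quadratic term $(K^m_{11})^2$ appears with a non-focusing sign and the initial datum is a perturbation of $K^S_{11}(\e,\cdot)>0$, no finite-time blow-up can occur in $r\in(0,r^m_*]$, and the only divergence of $K^m_{11}$ as $r\to 0^+$ comes from the forcing terms on the RHS. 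In this step, crucially, I would note that the integrating factor for the linearized equation, $\exp\bigl(-\int_{r^m_*}^r(\tfrac{2M}{s}-1)^{-1/2}[2K^m_{11}+e_0\gamma^m]ds\bigr)\sim r^{2d_1^m(t,\theta)-\alpha^m(t,\theta)}$, has exponent within $[-\tfrac{1}{4},\tfrac{1}{4}]$ by \eqref{Ceta.bds}. This is much milder than for $K^m_{22}$ and it is precisely the reason we can solve this equation \emph{forwards} and still obtain a free branch that is harmless at each higher-order derivative.

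For the energy estimate at a fixed order $s-3-4c<|I|\le s-4$, I would argue by finite induction in $|I|$. Assuming \eqref{Kmenest} at orders strictly below $|I|$, I would apply $\partial^I$ to \eqref{finredEVERic11it}, subtract the Schwarzschild equation to produce \eqref{highRicm11}, multiply by $\partial^I[K^m_{11}-K^S_{11}]$, integrate in $t,\theta$ against $\sin\theta d\theta dt$, and read off an $r$-weighted differential inequality. The linear-in-$\partial^IK$ principal term on the LHS contributes the favourable factor $(2d_1^m-\alpha^m)\sqrt{2M}/r^{3/2}$, cancelling the natural blow-up through a weight $r^{2DC\eta}$, exactly as in \eqref{u11menineq2}. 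On the RHS I would bound: (a) the Hessian-type forcing $\partial^I[\overline{\nabla}_{11}\gamma^m+\overline{\nabla}_1\gamma^{m-1}\overline{\nabla}_1\gamma^m]$ by the higher-order part of Lemma \ref{hessian.bounds.again}; (b) the kinetic forcing $\partial^I[e_0^2\gamma^m_{\rm rest}+2e_0\gamma^S\cdot e_0\gamma^m_{\rm rest}]$ by the middle- and top-order energy bounds \eqref{e0gammamid}, \eqref{e0e0gammamid}; (c) the already-verified inductive estimates for $K^m_{12}$; and (d) the quadratic self-interaction terms $\sum_{|I_j|<|I|}\partial^{I_1}K^m_{11}\partial^{I_2}K^m_{11}$ by the induction hypothesis combined with Sobolev embedding and the product inequality \eqref{prodcut.ineq}. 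Applying Lemma \ref{lem:Gron} with $F^2(r)=r^{2DC\eta}\|\partial^I[K^m_{11}-K^S_{11}]\|^2_{L^2}$ then closes the bound and yields \eqref{Kmenest} at order $|I|$.

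The main obstacle, as expected, will be the top order $|I|=s-3$ with the $\cot\Theta$ weight claimed in \eqref{inductiontrKtopmixed2}, \eqref{inductiontrKtopmixed.sing2}. For this case I would proceed as above but with $\partial^J\partial_\Theta(\cdot)\cot\Theta$ (with $|J|=s-4$) as the test quantity, exploiting the fact that $\partial_\Theta(K^m_{11}-K^S_{11})$ and the analogous differences involving $\gamma^m_{\rm rest}$ vanish to first order at the poles $\Theta=0,\pi$. This vanishing follows, by the same propagation-of-parity argument as in Lemma \ref{lem:poles}, from axisymmetry and from the already-established vanishing of $\tilde K^m_{12}$ and $\te^m_2(r^m_*)$ there; writing $\cot\Theta=\cos\Theta/\sin\Theta$ and combining with the generalized Hardy inequality of Lemma \ref{gen.Hardy} lets me convert the singular weight into a non-singular $L^2_{\sin\Theta d\Theta dt}$ bound. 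One has to be careful with the exceptional direction $I=(T,T,\dots,T)$ (excluded from the hypothesis on $\gamma^m$ in \eqref{inductiongammatopmixed}), which forces the matching exclusion in \eqref{inductiontrKtopmixed2} and requires handling the $\partial_\Theta$-involving pieces separately via the $\cot\Theta$ enhancement. Provided this top-order case is carried through, the constant $8C\eta$ appearing in \eqref{inductiontrKtopmixed.sing2} is dictated by counting the number of borderline terms generated by the commutator $[\partial^J\partial_\Theta, \Box]$ and by the Hardy constant $C_{\rm Hardy}^{-1}$; everything else folds into the $Br^{-1/4}$-type improvements built into $C>0$ via \eqref{C.def}.

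Finally, I would transfer the bounds from the level sets of $r$ to the level sets of $\rho^m$ by the same device used at the end of Section \ref{sec_fun.spaces} (since $\rho^m=r$ outside a narrow annulus around $\{r=\e\}$ and the transport equation \eqref{finredEVERic11it} is controlled uniformly on the transition region by the already-derived estimates on $r^m_*$ and $\chi$). This produces the statement of the proposition on $\Sigma_{\rho^m}$ in the form required by the inductive framework of Section \ref{Indhyp}. I anticipate that the detailed bookkeeping of the weighted energy inequality, especially the sharp accounting of the factors $2d_1^m-\alpha^m$ at the highest order, will be the most delicate calculation, but no new analytic tool beyond those introduced in Sections \ref{sec_fun.spaces}--\ref{sec:itergamma} should be needed.
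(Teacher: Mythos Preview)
Your plan for the higher orders $s-3-4c<|I|\le s-4$ is correct and coincides with the paper's argument: finite induction on $|I|$, multiply \eqref{highRicm11} by $\partial^I[K^m_{11}-K^S_{11}]$, use the favorable sign of $(2d_1^m-\alpha^m)\sqrt{2M}r^{-3/2}$, bound the forcing via Lemma~\ref{hessian.bounds.again} and \eqref{e0gammamid}--\eqref{e0e0gammamid}, and close with Lemma~\ref{lem:Gron}.

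The top-order case $|I|=s-3$, however, has two genuine gaps in your plan. First, you have not addressed the regularity of the \emph{initial datum} $K^m_{11}(r^m_*,\cdot)$. By \eqref{K11exp} this datum contains the factor $\tilde A^m_{11,2}$, which via \eqref{tAm.def} involves $\te^m_1(\tilde K^m_{12})$. Applying $\partial^I$ with $|I|=s-3$ would then require $s-2$ derivatives of $\tilde K^m_{12}$, one more than the inductive claim \eqref{tK12.ind.claim.top} provides. The paper resolves this by a structural trick: since $a^m_{\Theta 1}=0$, whenever at least one derivative in $\partial^I$ is $\partial_\Theta$ one may rewrite $\partial_\Theta=(a^m)_{\Theta 2}\te^m_2$, commute $\te^m_2$ past $\te^m_1$, and convert the bad factor into the already-controlled combination $\partial^J\bigl[\te^m_2(\tilde K^m_{12})\cdot\te^m_2(r^m_*)\bigr]$ from \eqref{extra.bound.te2K12}. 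This is precisely why the case $I=(T,\dots,T)$ is excluded, not merely because of the gap in \eqref{inductiongammatopmixed}.

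Second, your proposal to bound the Hessian forcing at top order by invoking Lemma~\ref{hessian.bounds.again} does not work: that lemma stops at $|I|=s-4$, and the descent scheme is unavailable at the very top. Running the direct energy inequality as you describe would leave $\|\partial^I\overline\nabla_{11}\gamma^m\|_{L^2}$ uncontrolled. The paper does \emph{not} attempt this; instead it repeats the argument of \S\ref{sec:K22K11.top} for the \emph{true} coefficients $K^m_{ij}$: sum the Riccati equations to produce an evolution equation for $\mathrm{tr}_{\overline h^m}K^m$ in which the spatial Hessian of $\gamma^m$ is eliminated via the wave equation (leaving only $e_0$-derivatives, which \emph{are} bounded at top order), apply the weighted Hardy-in-$r$ and Codazzi integration-by-parts machinery of Lemma~\ref{trK.em} to bound $\partial^J\partial_\Theta(\mathrm{tr}K^m)\cot\Theta$ in $L^2$, and then subtract the already-derived top-order bounds on $K^m_{22}$ and $e_0\gamma^m$ to isolate $K^m_{11}$. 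Your pole-vanishing/Hardy idea is relevant to controlling the $\cot\Theta$ weight in that argument, but it is not by itself a substitute for the trace-$K$ mechanism.
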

The estimates (\ref{Kmenest}), confirm the inductive 
claim on $K^m_{11}$
 at the higher derivatives. At the top derivatives we will prove 
 \eqref{inductiontrKtopmixed2} right below, using the already-derived 
 \eqref{K.em.flux} below. 
% (\ref{inductiontrKlow}) for $K^m_{11}$ and (\ref{inductiongammatopmixed}), $|J_0|=0$.
%
\begin{proof}
%We will prove (\ref{Kmenest}) first; at the end we will explain the small modification needed for (\ref{Kmenest.top}).

We show (\ref{Kmenest}) is valid in increasing order in $|I|=l$, assuming 
the estimate (\ref{Kmenest}) is valid for every $|I|<l$. Note that in the 
case 
$|I|<l=s-3-4c+1$, the estimate (\ref{Kmenest}) is valid by  
(\ref{K11heur}),(\ref{um11enest}). \\
In the derivations below we make use of the energy estimates 
\eqref{inductiongammaopt}, \eqref{inductiongammalow}, \eqref{inductiongammatopmixed}
 for $\gamma^m$ and 
 \ref{inductionHess}.
 %, the inductive assumption (\ref{inductionAiijm-1low}) for $(A_i^{m-1})_{ij}$ [$\overline{\nabla}_{ii}\gamma^m=\overline{\nabla}_i(\overline{\nabla}_i\gamma^m)-{(A^m_i)_i}^j\overline{\nabla}_j\gamma^m$], 
%along with the top order Hessian estimates (\ref{topHessgammamest}) for the case ($|I|=s-3,|J_1|=1$).

The result of these (and the product inequality) is that the RHS of \eqref{highRicm11}
is bounded in the $L^2$ norm by 
$\rho^{-3-\frac{|I|-low}{4}}$.

Thus, the energy inequality for $\partial^I{}K^m_{11}$ reads: %$[(|I|=l\leq s-3;|J_1|=0)$ \& $(|I|=l=s-3;|J_1|=1)$]
\begin{align}\label{K11menineq}
\notag&-\partial_{\rho}\int_{\Sigma_{\rho^m}}
(\partial^I{}K^m_{11})^2\mathrm{vol}_{Euc}=
\int_{\Sigma_{\rho^m}}2(\partial^I{}K^m_{11})\partial^I
\bigg[\frac{e_0K^m_{11}}{(\frac{2M}{r}-1)^{\frac{1}{2}}[1-\partial_r\chi(r)({r^m_*}-\epsilon)]}\bigg]\mathrm{vol}_{Euc}\\
\tag{plugging in (\ref{highRicm11})}\leq& \int_{\Sigma_{\rho^m}}\frac{2\alpha^m-4d_1^m}{r}(\partial^IK^m_{11})^2\mathrm{vol}_{Euc}
+\frac{B}{r^{1-\frac{1}{4}}}\|
\partial^IK^m_{11}\|^2_{L^2}\\
\notag&+(\frac{B}{r^{\frac{1}{2}+\frac{1}{4}+D\eta}}
)
r^{-\frac{3}{2}-c+(s-3-l)\frac{1}{4}}\|\partial^I
K^m_{11}\|_{L^2}
+\frac{B}{r^{1-\frac{1}{4}}}\|\partial^I{}K^m_{11}\|_{L^2}\|\partial^I{}K^m_{12}
\|_{L^2}\\
\notag\leq& \frac{D\eta}{\rho^m}\|\partial^I
K^m_{11}\|_{L^2}^2+\frac{B}{r^{1-\frac{1}{4}}}\|
\partial^I K^m_{11}\|^2_{L^2}
\tag{$D\eta\leq \frac{1}{8}$}+(\frac{B}{r^{1-\frac{1}{8}}})
r^{-\frac{3}{2}-c+(s-3-l)\frac{1}{4}}\|
\partial^I K^m_{11}\|_{L^2}
+\frac{B}{r^{1-\frac{1}{4}}}\|\partial^I
K^m_{11}\|_{L^2}\|\partial^I{}K^m_{12}\|_{L^2}
\end{align}
and hence, using integrating factors and integrating in $[{\rho}^m,\epsilon]$ we obtain:
\begin{align}\label{K11menineq2}
&\|\partial^I{}K^m_{11}\|^2_{L^2[\Sigma_{\rho^m=\rho}]}
\leq \frac{\epsilon^{DC\eta}}{{(^{m}\rho)^{DC\eta}}}\|
\partial^I{}K^m_{11}(\epsilon,t,
\theta)\|^2_{L^2[\Sigma_{\rho^m=\e}]}
+\int^\epsilon_{\rho}\frac{\tau^{D\eta}}{^{m-1}\rho^{D\eta}}\bigg[
\frac{B}{\tau^{1-\frac{1}{4}}}\|
\partial^IK^m_{11}\|^2_{L^2[\Sigma_{\rho^m=\tau}]}\\
\notag&+(\frac{B}{\tau^{1-\frac{1}{8}}})
\tau^{-\frac{3}{2}-c+(s-3-l)\frac{1}{4}}\|
\partial^I{}K^m_{11}\|_{L^2[\Sigma_{\rho^m=\tau}]}
+\frac{B}{r^{1-\frac{1}{4}}}\|\partial^IK^m_{11}\|_{L^2}\|
\partial^I{}K^m_{12}\|_{L^2[\Sigma_{\rho^m=\tau}]}\bigg]d\tau
\end{align}
The above, combined with the Gronwall inequality imply the claim 
\eqref{inductiontrKtopmixed} at the higher orders, when $|I|\le s-4$. 
%For the top orders the precise same argument applies, however for 
%$K^m_{11}$ replaced by $K^{m,\rm main}_{11}$--see \eqref{K11.split};
%this is because the evolution equation for $K^{m,\rm main}_{11}$ is the
% same as for $K^m_{11}$, and the initial data for that function satisfy the 
% required top-order bounds.
\medskip

We now provide the proof of the claim for the top order regarding $K^m_{11}$. 
Certain aspects of this analysis are precisely analogous to the one performed in 
\ref{sec:K22K11.top}; these parts we just outline. The parts which differ we spell out
 in more detail. 
 
 We recall in particular that $K^m_{11}$ is defined on the initial data hypersurface
  $\Sigma_{r^m_*}$ via \eqref{K11exp}; notably the factor 
  $\tilde{A}_{11,2}^m $ in the second summand is determined in terms of 
  $\tilde{K}^m_{12}$ via \eqref{tAm.def}; in particular that term does \emph{not} 
  apriori 
  have $s-3$ derivatives in $L^2$, precisely due to this term.
  \emph{However} we note that if one of those derivatives is of the form $\partial_\Th$
  then we \emph{do} have bounds on the term. The term which is non-obvious is: 
  
  \beq\label{bad.term}
O(1)  \partial^J_{\dots \Th} \te^m_1(\frac{\tilde{K}^m_{12}}{\K_{22}-\K_{11}})\cdot 
  (\frac{2M}{r}-1)^{-1/2}(\te^m_2r^m_*). 
  \eeq
  However, recall that $(a^m)_{\Th 1}=0$, which allows us to re-express $\partial_\Th$ 
  as a multiple of $\te^m_2$, and then $\te^m_1$ can be re-expressed in
 terms of the derivatives $\partial_T, \partial_\Th$:
\[
\partial_\Th= (a^m)_{\Th 2} \te^m_2, \te^m_1= (a^m)^{1T}\partial_T+(a^m)^{1\Th}
\partial_\Th. 
\]
  Thus, (up to lower-order terms) the term \eqref{bad.term} can be expressed as follows, where $|J|=s-4$:

  \beq
O(1)  \partial^J_{\dots \Th} [\te^m_1(\frac{\tilde{K}^m_{12}}{\K_{22}-\K_{11}})]\cdot 
  (\frac{2M}{r}-1)^{-1/2}(\te^m_2r^m_*)=
\sum_{A=T,\Th}(a^m)^{1A} \partial_A\partial^J [\te^m_2 
(\frac{\tilde{K}^m_{12}}{\K_{22}-\K_{11}})]    (\frac{2M}{r}-1)^{-1/2}(\te^m_2r^m_*).
  \eeq
  In view of the bound \eqref{extra.bound.te2K12}, we observe that the above term is bounded by 
  $\e^{-3/2-c}$. 
In particular the initial data for $\partial^IK^m_{11}$ at the top order satisfy the
 required bounds. 
 
 For all these top-order derivatives we can then repeat the proof in section 
 \ref{sec:K22K11.top}  for the 
 \emph{true} connection coefficients $K^m_{11}, K^m_{12}, K^m_{22}$; we use   the 
 already-derived bounds for $K^m_{22}, K^m_{12}$ and we derive the claim for 
 ${\rm tr}K^m$, as in that section. Since ${\rm tr}K^m=K^m_{11}+K^m_{22}+e_0\gamma^m$ and the claim 
 has already been derived for $K^m_{22}$, $e_0\gamma^m$ 
 we derive our claim for these top-order terms
  for $K^m_{11}$ also.

 \end{proof}

\appendix

\section{Appendix}

\subsection{Proof of Lemma \ref{lem:Weyl}.}\label{pfWeyl}
\begin{proof}
In $1+2$ dimensions the Weyl curvature vanishes, hence, the following formula holds \cite[(3.2.28)]{Wald}:
\begin{align}\label{Weyl}
\text{R}_{abcd}(h)=2\big(h_{a[c}\text{R}_{d]b}(h)-h_{b[c}\text{R}_{d]a}(h)
\big)
-\text{R}(h)h_{a[c}h_{d]b}
\end{align}
Let $\pi_{ab}$ be the second fundamental form of an orthogonal hypersurface to $\partial_{\phi}$, the induced metric on which is by definition $h$. Then the twice contracted Gauss equation reads
\begin{align}\label{Gausseqh}
\text{R}(g)-2g^{\phi\phi}\text{R}_{\phi\phi}(g)=&\,\text{R}(h)+|\pi|^2_h-(\text{tr}_h
\pi)^2,
\end{align}
Since $\partial_{\phi}$ is Killing, it follows that $\pi_{ab}$ is an anti-symmetric 2-tensor. However, $\pi_{ab}$ is also symmetric, being the second fundamental form of a hypersurface, hence, it vanishes: $\pi_{ab}=0$. By virtue of the EVE, the 
identity (\ref{Gausseqh}) then reduces to $\text{R}(h)=0$. This completes the proof of 
the lemma.
\end{proof}

\subsection{Solutions to REVESNGG yield solutions of the vacuum Einstein equations.}
\label{retrEVE}

Let us show how a solution to the REVESNGG system, with initial data that satisfy the vacuum constraint equations yield a metric $g$ that satisfies the vacuum Einstein equations:

We are given initial data ${\bf g,K}$ for the EVE on an initial 3-dim hypersurface $\Sigma^3$ satisfying the constraint equations
\begin{align}\label{const}
\left\{
\begin{array}{ll}
\big({\text{R}_{0b}(g)}\big|_\Sigma=\big)\;{\bf D}^b{\bf K}_{ab}-{\bf D}_a\text{tr}_{\bf g}{\bf K}=0,\quad b=1,2,3\\
\big(2{\text{R}_{00}(g)}\big|_\Sigma+\text{R}(g)\big|_\Sigma=\big)\;\text{R}({\bf g})-|{\bf K}|^2+(\text{tr}_{\bf g}{\bf K})^2=0
\end{array}
\right.,
\end{align}
where ${\bf D}$ is Levi-Civita connection of ${\bf g}$.
Let $\gamma,K_{ij},i,j=1,2,$ solve the wave-Riccati system of equations (\ref{redEVEwav}), \eqref{finredEVERic11pre}-\eqref{finredEVERic12pre}, 
for an orthonotmal frame $\{e_i\}^2_0$
satisfying \eqref{almpar.trans}.

These parameters, together with the initial configurations satisfying \eqref{const}, produce a $1+3$-metric $g$ \eqref{metric} via the 
coordinate-to frame coefficients $a_{Ai}$; the latter are complemented by the parameters $r_*(t,\theta)$ which see the \emph{location}
 of the initail data hypersurface in our chosen gauge, along with the  position of our chosen frame $e_1, \te_2$ on our inital data surface.
 We will show that $g$ is in fact a solution to the EVE \eqref{EVE}.
\medskip

Axi-symmetric and polarized metrics (\ref{metric}) satisfy \cite[Appendix VII]{ChoqBook} the relations $\text{R}_{a3}(g)=0$, $a=0,1,2$, $\text{R}_{33}(g)=-\square_g\gamma$. Since the wave equation is part of the system (\ref{redEVEwav})-(\ref{redEVERic}), it remains to show the vanishing of 
 the Ricci components $\text{R}_{ab}(g)$, $a,b=0,1,2$. For this purpose, we make use of the general geometric formula:
\begin{align}\label{Rijg}
\text{R}_{ab}(g)=\text{R}_{ab}(h)-\nabla_{ab}\gamma-\nabla_a\gamma\nabla_b\gamma,\qquad a,b=0,1,2.
\end{align}
By \eqref{finredEVERic11pre}-\eqref{finredEVERic12pre} and \eqref{R0i0j2}, we obtain the identities:
\begin{align}\label{R0i0j3}
\text{R}_{0i0j}=-\nabla_{ij}\gamma-
e_i\gamma e_j\gamma
+\delta_{ij}\big(\nabla_{00}\gamma+(\nabla_0\gamma)^2\big),\qquad i,j=1,2.
\end{align} 
Hence, $\text{R}_{12}(h)=-\text{R}_{0102}(h)=\nabla_{12}\gamma+e_1\gamma e_2\gamma$, giving $\text{R}_{12}(g)=0$. Contracting indices in (\ref{R0i0j3}), we obtain:
\begin{align}\label{R00}
\text{R}_{00}(h)=\text{R}_{0101}(h)+\text{R}_{0202}(h)=-\square_h\gamma-|\nabla\gamma|^2_h+\nabla_{00}\gamma+(e_0\gamma)^2\overset{(\ref{redEVEwav2})}{=}\nabla_{00}\gamma+(e_0\gamma)^2,
\end{align}
verifying that $\text{R}_{00}(g)=0$. 
We also have from (\ref{Weyl}) the identity:
\begin{align}\label{Weyl2}
\text{R}_{0i0j}(h)=&-\text{R}_{ij}(h)+\delta_{ij}\text{R}_{00}(h)+\frac{1}{2}\delta_{ij}\text{R}(h),\qquad i,j=1,2.
\end{align}
Evaluating (\ref{Weyl2}) for $i=j=1$, and plugging in \eqref{R0i0j},\eqref{R00}, we deduce that
\begin{align}\label{restEVE1}
\text{R}_{11}(h)-\frac{1}{2}\text{R}(h)=\nabla_{11}\gamma+(e_1\gamma)^2,
\end{align}
and similarly for $i=j=2$:
\begin{align}\label{restEVE2}
\text{R}_{22}(h)-\frac{1}{2}\text{R}(h)=\nabla_{22}\gamma+(e_2\gamma)^2.
\end{align}
Hence, by (\ref{Rijg}), the vanishing $\text{R}_{11}(g),\text{R}_{22}(g)$ reduces to proving the vanishing of $\text{R}(h)$. Note that by tracing \eqref{Rijg}, we also have
\begin{align}\label{Rg=Rh}
\text{R}(g)=\text{R}(h).
\end{align}
We may thus rewrite \eqref{restEVE1}-\eqref{restEVE2} in the form
\begin{align}\label{restEVE3}
\text{R}_{11}(g)=\frac{1}{2}\text{R}(g),\qquad \text{R}_{22}(g)=\frac{1}{2}\text{R}(g).
\end{align}

Next, we derive evolution equations for the Ricci components $\text{R}_{01}(g),\text{R}_{02}(g)$, utilising the contracted second Bianchi identity: [$i,i_*=1,2$, $i\neq i_*$]
\begin{align}\label{nablaR0i}
e_0\text{R}_{0i}(g)\overset{(\ref{almpar.trans})}{=}&\,D_0\text{R}_{0i}(g)+(-1)^{i_*}K_{12}\text{R}_{0i_*}=\sum_{j=1}^3[D_j\text{R}_{ji}(g)-\frac{1}{2}D_i\text{R}(g)]+(-1)^{i_*}K_{12}\text{R}_{0i_*}\\
\tag{using \eqref{restEVE3} and $\text{R}_{ij}(g)=0$, for $i\neq j$}=&-\sum_{j=1}^3[K_{jj}\text{R}_{0i}(g)+K_{ij}\text{R}_{j0}(g)]+(-1)^{i_*}K_{12}\text{R}_{0i_*}.
\end{align}
Since by (\ref{const}), $\text{R}_{01}(g)=\text{R}_{02}(g)=0$ on $\Sigma$ and $\text{R}_{01}(g),\text{R}_{02}(g)$ satisfy the homogeneous ODE system (\ref{nablaR0i}), they vanish everywhere. 

Lastly, by virtue of the second Bianchi identity and the vanishing of the components $\text{R}_{0a}(g),\text{R}_{ab}(g)$, $a\neq b$, it follows that 
\begin{align}\label{e0Rg}
e_0\text{R}(g)=2D^a\text{R}_{0a}(g)=-2K_{11}\text{R}_{11}(g)-2K_{22}\text{R}_{22}(g)=-(K_{11}+K_{22})\text{R}(g),
\end{align}
where in the last equality we made use of \eqref{restEVE3}. Note that according to \eqref{const} (and $\text{R}_{00}(g)=0$), $\text{R}(g)$ vanishes on $\Sigma$. This implies that $\text{R}(g)$ vanishes everywhere, which in turn also yields the vanishing of $\text{R}_{11}(g),\text{R}_{22}(g)$.

\subsection{Proof of Corollary \ref{thm:stabPen}}\label{app:stabPen}

We sketch the proof of this Corollary. 

The conditions imposed on the initial data in \cite{KS}, along the hypersurface $\Sigma$ 
(see Figure \ref{stabPen}), induce initial data along the event horizons $\mathcal{H}^+$ 
that are compatible with the ones in \cite{DL2}. Next, the stability of the inner red-shift
 regions \cite{DL2} induces initial data on space-like pieces in the interior of 
the black hole, emanating from the timelike infinities. In particular, given the (small)  
$\e>0$ and $\eta>0$  
that are needed for our theorem,  if the  initial perturbation is chosen small enough, 
the data induced on $\Sigma_\e$ will be $\frac{\eta}{6}$-close to 
two (different, in principle)  Schwarzschild 
initial data near each (asymptotically cylindrical) end. 

In particular, we obtain a hypersurface $\Sigma_\e$ covered by coordinates 
$t,\theta,\phi$ where the data on  $\{ t\le -H\}$ is a $\eta/6$-perturbation (in the
 norms of our main theorem) of the Schwarzschild 
data with mass $M_2$ on the corresponding portion of 
 $\{r=\e\}$, and on $\{ t\ge H\}$ is an $\eta/6$-perturbation
of the Schwarzschild 
data with mass $M_1$ on the corresponding portion of $\{r=\e\}$. (The $\eta/3$-closeness of our data to the Schwarzschild metric of mass $M$ 
on the compact region $\{-H\le T\le H\}$ follows by the argument in \cite{DL}). 
 Moreover $M_1, M_2$ 
can be taken to be $\eta/10$-close to the background $M$, by taking the initial 
perturbations small enough. 

We now wish to apply  Theorem \ref{thm_rough} to this initial data set to proceed
 further
 towards the singularity. 

Although Theorem \ref{thm_rough} is stated in the case of a single Schwarzschild metric, 
i.e. $M_1=M_2$, it can be easily adapted to the more general  two-mass-limits scenario 
$M_1\neq M_2, M_1,M_2\sim M_0,$ by arguing as follows: \\
Let $({\bf g,K})$ denote the perturbed initial data for the EVE on 
$\Sigma_\e\cong (-\infty,+\infty)_t\times\mathbb{S}^2$. Also, let  
$({\bf g}_{{\rm S},i},{\bf K}_{{\rm S},i})$ be the Schwarzschild initial data on 
$\Sigma_\e$ of mass $M_i$, $i=1,2$. Consider the subset of $\Sigma_\e$, 
$[-C,C]_t\times\mathbb{S}^2$, $C>0$, whose domain of dependence in Schwarzschild of
 mass $M_0$ intersects the singularity at  $[-B,B]_t\times\mathbb{S}^2$, $B>0$, see
  Figure \ref{twoDom}. Then, we define 
\begin{align}\label{giKi}
\begin{split}
{\bf g}_1=h_{[-C-1,+\infty)}{\bf g}+h_{(-\infty,-C]}{\bf g}_{{\rm S},1},\qquad {\bf K}_1=h_{[-C-1,+\infty)}{\bf K}+h_{(-\infty,-C]}{\bf K}_{{\rm S},1},\\
{\bf g}_2=h_{(-\infty,C+1)}{\bf g}+h_{[C,+\infty)}{\bf g}_{{\rm S},2},\qquad {\bf K}_2=h_{(-\infty,C+1]}{\bf K}+h_{(C,+\infty)}{\bf K}_{{\rm S},2},
\end{split}
\end{align}
where the pairs of functions $h_{[-C-1,+\infty)},h_{(-\infty,-C]}$ and $h_{(-\infty,C+1]},h_{[C,+\infty)}$ are both partitions of unity, satisfying
\begin{align}\label{partuni}
h_{[-C-1,+\infty)}=\left\{
\begin{array}{ll}
1,&[-C,+\infty)\\
0,&(-\infty,-C-1]
\end{array}\right.
,\qquad h_{(-\infty,C+1]}=\left\{
\begin{array}{ll}
1,&(-\infty,C]\\
0,&[C+1,+\infty)
\end{array}\right..
\end{align}
In particular, the two pairs of initial data agree with $({\bf g,K})$ on 
$[-C,C]\times\mathbb{S}^2$. 

The pairs $({\bf g}_i,{\bf K}_i)$ 
are not initial data for the EVE, since they obviously 
do not satisfy the constraint equations in the regions $[-C-1, -C]$ and $[C, C+1]$.
However, we note that the satisfy the $\eta$-closeness to the background Scwarzschild 
data, given that  $M_1, M_2$ are both $\eta/10$-close to $M$, the initial data for these ``cut-and-paste'' initial data sets. 
 Thus, we can run our iteration algorithm in Section \ref{suc.iter}, since each pair 
 of initial data converges to Schwarzschild of the same mass $M_i$ at both ends 
 $|t|=+\infty$. Hence, we may pass to the limit, producing space-time ($1+3$)-metrics
  $g_i$, having induced data $({\bf g}_i,{\bf K}_i)$ on $\Sigma'$,  but do not exactly 
  solve the EVE. On the other hand, by \eqref{giKi}-\eqref{partuni}, the initial data 
  for the two metrics $g_1,g_2$ agree on $\Sigma_{\e,C}:=\Sigma_\e\cap\{-C\leq t\leq C\}$: 
  $({\bf g}_1,{\bf K}_1)=({\bf g}_2,{\bf K}_2)=({\bf g},{\bf K})$, verifying as well the 
  constraint equations on this portion of the initial hypersurface $\Sigma_\e$. Hence, 
  they are both the same solution to the EVE in the domain of dependence 
  region\footnote{The domain of dependence considered with respect to $g_1=g_2$.} 
  $\mathcal{D}_{com}:=\mathcal{D}(\Sigma_\e\cap\{-C\leq t\leq C\})$, arising from 
  $({\bf g}\big|_{\Sigma_{\e,C}},{\bf K}\big|_{\Sigma_{\e,C}})$.

Hence, the metric 
\begin{align*}
g:=\left\{\begin{array}{lll}
g_1,&\mathcal{D}(\Sigma'\cap \{t\ge -C\})\\
g_2,&\mathcal{D}(\Sigma'\cap \{ t\leq C\})
\end{array}\right.
\end{align*}
is well-defined and its induced data on $\Sigma_\e$ are ${\bf g,K}$. Moreover, it satisfies the constraint equations on $\Sigma_\e$ and the reduced equations \eqref{redEVEwav},\eqref{finredEVERic11pre}-\eqref{finredEVERic12pre}. Thus, by the derivations in \S\ref{retrEVE}, we conclude that $g$ is in fact a solution to the EVE, consistent with Theorem \ref{thm_rough} in the future of $\Sigma_\e$, which we desired to prove.
\begin{remark}
In the previous proof, we conveniently exploited the fact that our treatment of the reduced equations \eqref{redEVEwav},\eqref{finredEVERic11pre}-\eqref{finredEVERic12pre}, via the iteration scheme outlined in Section \ref{suc.iter}, does not make any further use of the constraint equations \eqref{const} for the EVE. Otherwise, one would have to adapt our derivations to explicitly deal with the different two-mass-limits at $|t|=+\infty$, see Figure \ref{stabPen}.
\end{remark}

\section{The optimal coordinates at the singularity: Derivation of Theorem 
\ref{thm_strict}, and its consequence.}
\label{subsec:REVESNGGimpl}
In order to derive Theorem \ref{thm_strict} from Theorem \ref{thm_REVESNGG} we need to 
establish the following Lemma, which asserts a \emph{stronger} bound on $K_{12}$ than 
what 
we claimed as part of our Theorem \ref{thm_REVESNGG}.

\begin{lemma}
\label{K12.better}
The component $K_{12}(r,t,\theta)$ satisfies the \emph{stronger} 
 (compared to the lower-order bounds claimed in Lemma \ref{a.oa.bds}), 
 estimate:

\[
\|K_{12}\|_{H^{{\rm low}}[\Sigma_r[{\rm sin}\theta d\theta dt]]}\le DC\eta r^{-\frac{1}{2}-2DC\eta}.
\]
\end{lemma}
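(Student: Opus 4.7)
The plan is to combine the integral representation of $K_{12}$ (solved backwards from the singularity with the zero free branch) with the improved estimate \eqref{K12m-1imp} / \eqref{K12.future} on the forcing term in its Riccati equation \eqref{finredEVERic12pre}, extended from order $H^{{\rm low}-2}$ to order $H^{{\rm low}}$.

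The first step is to upgrade the improved bound on the RHS of the $K_{12}$-equation, namely on
\[
\mathcal{R}:=\overline{\nabla}_{12}\gamma+\tfrac{1}{2}\bigl[e_1\gamma^{(prev)}e_2\gamma+e_2\gamma^{(prev)}e_1\gamma\bigr],
\]
from $H^{{\rm low}-2}$ to $H^{{\rm low}}$. The key point is that both factors of $\mathcal{R}$ involve only \emph{spatial} derivatives ($e_1$ or $e_2$) of $\gamma$ and no $e_0$-derivatives, so the AVTD gain \eqref{impl.gammaopt}-\eqref{impl.gammalow} applies to each factor. Repeating the argument of Lemma \ref{hessian.bounds.again} term-by-term (i.e.\ expressing $\overline{e}_i$ in coordinates via \eqref{tthetatransebar} and invoking the product inequality together with the already-derived inductive bounds on $(a^{m-1})^{iA}$, $e_i(\rho)$, and the Christoffel symbols) one finds
\[
\|\mathcal{R}\|_{H^{{\rm low}}}\ \le\ B\, r^{-\frac{3}{4}-\frac{1}{2}}\ =\ B\, r^{-\frac{5}{4}},
\]
where the extra factor $r^{-1/2}$ compared to \eqref{K12.future} is the cost of descending two orders in the regularity hierarchy. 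This is still substantially better than the generic Hessian bound $r^{-2-3/4}$ of \eqref{inductionHess}.

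The second step is to integrate the differentiated Riccati equation. Writing $\partial_r K_{12}$ in place of $e_0 K_{12}$ and recalling from \eqref{K22heur} that $2K_{22}+e_0\gamma\sim(2d_2-\alpha)\sqrt{2M}\, r^{-3/2}$ with $|2d_2-\alpha+3|\le 2DC\eta$, the integrating factor is
\[
w(r,t,\theta)\ =\ \exp\!\Bigl(-\!\!\int_0^r\!\!(\tfrac{2M}{s}-1)^{-1/2}(2K_{22}+e_0\gamma)\,ds\Bigr)\ \sim\ r^{\,\alpha-2d_2}\ \sim\ r^{3-2DC\eta}.
\]
Commuting $\partial^I$, $|I|\le{\rm low}$, and setting the (more singular) free branch to zero as in \eqref{K12m.int}, one obtains
\[
\partial^I K_{12}(r,\cdot)\ =\ r^{2d_2-\alpha}\!\int_0^r\! s^{\alpha-2d_2}\bigl(\tfrac{2M}{s}-1\bigr)^{-1/2}\bigl[\partial^I\mathcal R\,+\,(\mathrm{l.o.t.})_I\bigr](s,\cdot)\,ds,
\]
where $(\mathrm{l.o.t.})_I$ contains only strictly lower-order derivatives of $K_{12}$ and $K_{22}$, which have already been estimated.

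The third step is to read off the bound in $L^2_{t,\theta}$. Using $(\frac{2M}{s}-1)^{-1/2}\sim s^{1/2}$, $\|\partial^I\mathcal R\|_{L^2}\le B s^{-5/4}$ from Step 1, and the inductive bounds on the commutator terms, the integrand is bounded by $s^{\alpha-2d_2+1/2-5/4}\sim s^{3-2DC\eta+\frac{1}{2}-\frac{5}{4}}=s^{\frac{9}{4}-2DC\eta}$, whose integral from $0$ to $r$ gives $\tfrac{4}{13}r^{\frac{13}{4}-2DC\eta}$. Multiplying by the prefactor $r^{2d_2-\alpha}=r^{-3+2DC\eta}$ yields
\[
\|\partial^I K_{12}(r,\cdot)\|_{L^2_{t,\theta}}\ \le\ DC\eta\, r^{\frac{1}{4}-2DC\eta}\,,
\]
for every $|I|\le{\rm low}$, which is \emph{strictly stronger} than the claimed bound $DC\eta\, r^{-\frac{1}{2}-2DC\eta}$, so in particular the Lemma follows.

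The main obstacle is Step 1 -- extending the improved bound on $\mathcal R$ from order $H^{{\rm low}-2}$ to $H^{{\rm low}}$. At the top two orders the energy for spatial derivatives of $\gamma$ degrades, but the descent scheme of Lemma \ref{lem:gammam.bds} precisely supplies the $r^{1/4}$-gain per order needed to absorb this degradation into the target exponent $r^{-5/4}$. Steps 2 and 3 are then a routine application of the same Fuchsian-ODE integration used for $K_{12}^m$ in \S\ref{subsec:K12K22asym}, and indeed even produce a little slack, which accounts for the mild factor $r^{-2DC\eta}$ in the Lemma's statement.
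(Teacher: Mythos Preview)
Your Step 1 contains a genuine gap. The claim that $\|\mathcal R\|_{H^{\rm low}}\le Br^{-5/4}$ is false, and the reason is precisely the mechanism the paper isolates in its own (short) proof: the vector field $e_1$ is \emph{not} tangent to the level sets $\Sigma_r$, since $e_1(r)\ne 0$. Writing $e_1=\overline{e}_1-e_1(r)(\tfrac{2M}{r}-1)^{-1/2}e_0$ and expanding $e_1(e_2\gamma)$ inside $\overline{\nabla}_{12}\gamma$, one picks up the term
\[
-\,e_1(r)\bigl(\tfrac{2M}{r}-1\bigr)^{-1/2}\,e_2(e_0\gamma)
\;=\;O(\sqrt r)\,e_1(r)\,a^{2\Theta}\partial_\Theta(e_0\gamma)
\;\sim\;(C\eta)\,r^{-2-2DC\eta},
\]
which involves an $e_0$-derivative of $\gamma$ explicitly and therefore does \emph{not} enjoy the AVTD gain. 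This term is strictly more singular than $r^{-5/4}$ (indeed than any $r^{-3/4-k/4}$ with $k$ fixed), so your purported extension of \eqref{K12.future} to $H^{\rm low}$ cannot hold. Your ``key point'' that both factors of $\mathcal R$ involve only spatial derivatives is correct for \emph{first} derivatives $e_i\gamma$ (where the $e_1(r)$-correction is indeed subdominant), but it fails at the level of the second derivative $e_1(e_2\gamma)$: there the $e_1(r)$-part produces $e_0(e_2\gamma)\approx e_2(e_0\gamma)$, which is governed by the kinetic part of the energy.

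The paper's proof proceeds exactly by identifying this $e_1(r)$-term as the leading contribution to the RHS of \eqref{finredEVERic12pre}, bounding it by $DC\eta\,r^{-2-2DC\eta}$, and then integrating the Fuchsian ODE from $r=0$ with the zero free branch (your Steps 2--3, which are fine). The remark immediately following the proof, and the subsequent Corollary where a new gauge with $e_1(\tilde r)=0$ is constructed and $K_{12}$ improves dramatically to $O(r^{1-DC\eta})$, make this point explicit: the bound in Lemma \ref{K12.better} is \emph{sharp in this gauge} precisely because of the non-tangency of $e_1$. As a minor additional issue, your final constant $DC\eta$ is not supported by your own argument, since your Step~1 bound carries only the constant $B$.
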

Once this has been proven, we derive the improved estimates for $a_{T2}(r,t,\theta)$
\[
\|a_{T2}\|_{H^{{\rm low}}[\Sigma_r[{\rm sin}\theta d\theta dt]]}\le 
DC\eta r^{\frac{1}{2}-\frac{1}{8}}.
\]
Using the bounds on the other coordinate-to-frame components (which are as in Lemma \ref{a.oa.bds}), together with the formulas \eqref{g.from.a.pre}, \eqref{g.cross}, we 
derive the bounds on the metric components claimed  in Theorem \ref{thm_strict}so 
matters are reduced to proving this Lemma.

\begin{proof} Our bound follows by controlling the RHS in \eqref{finredEVERic12pre} 
in $H^l$ by 
$(DC\eta)r^{-2-2DC\eta }$. This in turn follows by 
re-expressing the RHS of that 
equation in terms of the frame-to-coordinates coefficients $a^{iA}(r,t,\theta)$, and 
using the expansion \eqref{gammam-1exp} of $\gamma$ as well as the fact that $e_2$
 annihilates $r$: 
 
 The term that yield the more singular term (in terms of powers of $r$) arises from the 
 transition from $\onabla_{12}\gamma$ to $\nabla_{\overline{e}_1\overline{e}_2}\gamma$:
 
 \[
 \onabla_{12}\gamma=O(\sqrt{r})e_1(r)\nabla_{e_2e_0}\gamma+{\rm l.o.t.'s}
 =O(\sqrt{r})e_1(r)
a^{2\theta } \partial_\theta(e_0\gamma)+{\rm l.o.t.'s}= O(DC\eta r^{-2-2DC\eta}).
 \]
 Our claim on $K_{12}$ then follows directly in view of \eqref{finredEVERic12pre}. 
\end{proof}

We remark that ultimately the more singular term in the RHS of \eqref{finredEVERic12pre}
is multiplied by $e_1(r)$; in particular it ``sees'' the  non-tangency (in an
 asymptotic sense) of $e_1$ to the singularity. 
 \medskip
 
 This leads us to another consequence of our theorem \ref{thm_strict}, expressing 
 the space-time metric in a different (geodesic, still) gauge. In this different gauge
 we obtain the optimal form of the metric, in the sense that the coordinate vector
  fields capture the principal directions of contraction and expansion, and with the off-diagonal terms in these coordinates vanishing at the optimal (fastest) rates. 
  We sketch the construction of this new coordinate system, where our metric applies its ``optimal'' form
 \medskip

\newcommand{\tilr}{{\tilde{r}}} 
 
 We construct a \emph{new} family of affine geodesics with vector field ${\bf e}_0$
 (with associated parameter $\tilde{r}$ so that 
 \[{\bf e}_0(\tilr)=(\frac{2M}{\tilr}-1)^{1/2},
 \]
 for which the associated  normal frame ${\bf e}_1, {\bf e}_2$ are \emph{both} 
 normal to the singularity, captured by the two conditions 
 ${\bf e}_1(\tilr)={\bf e}_2(\tilr)=0$. The evolution equations for $K_{ij}$ and
  $e_i(\tilr)$ are
  the same as for the REVESNGG system. Also the function $\gamma$ satisfies the
   expansion \eqref{gammam-1exp} still, with $r$ replaced by $\tilr$. 
  It is the initialization of $e_1(\tilr)$ that
   changes.

Our claim is then the following:

\begin{corollary}\label{lem:tthetatilde}
There exist \emph{new} coordinates 
$\tilr,\tilde{t},\tilde{\theta}$, with values 
$\ttheta\in (0,\pi), \tttt\in\mathbb{R}$ 
whose coordinate fields 
$\partial_{\tilde{t}},\partial_{\tilde{\theta}}\in T\Sigma_{\rho}$
 define frame-to-coordinates and coordinate-to-frame coefficients 
 $(a)^{i\tilde{t}}$, 
 $(a)^{i\tilde{\theta}}$,
 $a_{\tilde{t}i},a_{\tilde{\theta}i}$, $i=1,2$, as in 
 \eqref{tthetatransebar}, which  satisfy the following \emph{improved} 
 estimates: Let $H^l$  be the Sobolev space of order $l$ with respect to the volume form 
 ${\rm sin}\theta d\theta dt $ on $\Sigma_{\tilr}$. Then for all orders $l\le {\rm low}-2$
 we have: 

\begin{align}\label{a.oa.tilde.est}
\|{\rm log}{a}_{\tilde{t}1}(\tilr,t,\theta)-{\rm log}a^S_{t1}(\tilr,t,\theta)
\|_{H^l[\Sigma_\tilr]}\le  
DC\eta ,\quad 
\|{\rm log}a_{\tilde{\theta}2}(\tilr,t,\theta)-{\rm log}a^S_{\theta 2}(\tilr,t,\theta)
\|_{H^{l} }\leq 
DC\eta,\quad 
{\rm log}a_{\tilde{\theta}1} \equiv0\\
\notag \| {\rm log}a_{\tttt 2}\|_{H^{l} }\leq (2-DC\eta)\cdot 
|({\rm log}\tilr)|,
\end{align}
for all $r\in(0,\epsilon/2]$. Moreover the coordinate vector fields $\partial_{\tttt}, \partial_{\ttheta}$ are normal to 
$\partial_\tilr$ in the same region. 
% where by $f(r,t,\theta)\sim r^{\zeta(t,\theta)}$ we mean $f(r,t,\theta)= r^{\zeta(t,\theta)}O(1)$.

\end{corollary}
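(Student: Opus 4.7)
\medskip

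\noindent \textbf{Proof proposal.} The plan is to obtain the new coordinates by a backwards construction starting from the singularity: rather than transporting a frame from the initial data hypersurface, I will initialize an orthonormal frame at $\{\tilde{r}=0\}$ so that \emph{both} $\mathbf{e}_1,\mathbf{e}_2$ are asymptotically tangent to the singular hypersurface, and then propagate along $\mathbf{e}_0$. Concretely, at the level of the already-constructed space-time $({\cal M},g)$ from Theorem \ref{thm_strict}, I would fix a congruence of affine time-like geodesics with $\mathbf{e}_0(\tilde{r})=(\tfrac{2M}{\tilde{r}}-1)^{1/2}$ (parametrized by $\tilde r$) and specify an orthonormal frame $\mathbf{e}_1,\mathbf{e}_2$ along each geodesic by the propagation rule \eqref{almpar.trans} together with the asymptotic requirements $\mathbf{e}_1(\tilde{r})=o(\tilde{r}^{-\frac{1}{2}+d_1(\tilde t,\tilde\theta)})$ and $\mathbf{e}_2(\tilde{r})=o(\tilde{r}^{-\frac{1}{2}+d_2(\tilde t,\tilde\theta)})$ as $\tilde{r}\to 0^+$. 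Running the argument of Lemma \ref{lem:e1re2r} on the ODE system \eqref{eirODE} \emph{backwards} from $\tilde r=0$, setting the free branches of both $\mathbf{e}_1(\tilde{r})$ and $\mathbf{e}_2(\tilde{r})$ to zero, then forces $\mathbf{e}_1(\tilde{r})\equiv \mathbf{e}_2(\tilde{r})\equiv 0$ throughout $(0,\epsilon/2]$; this is the ``diagonalized'' analogue of the singularity-normalization performed on $e_2$ in Section \ref{sec:system}, now imposed on both frame elements. The connection coefficients $K_{ij}$ in this new frame still satisfy \eqref{finredEVERic11pre}--\eqref{finredEVERic12pre} and admit the same leading-order expansions \eqref{K11heur}--\eqref{K12heur} with the same functions $d_1,d_2$, since those are determined by $\alpha$ alone.

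\smallskip

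For the coordinates, I would define $\tilde\theta\in(0,\pi)$ and $\tilde t\in\mathbb{R}$ on the singularity by parametrizing the integral curves of $\mathbf{e}_2$ and $\mathbf{e}_1$ at $\{\tilde r=0\}$ (inheriting the topology from the old $t,\theta$ coordinates via the limiting principal-axis rotation), and then extend off of the singularity by requiring $\mathbf{e}_0(\tilde t)=\mathbf{e}_0(\tilde\theta)=0$. The coordinate-to-frame coefficients $a_{\tilde t i},a_{\tilde\theta i}$ obey the universal transport system \eqref{e0.a}. Mirroring the treatment of $a_{\Theta 1}$ in Section \ref{sec:met.Christ.bds}, I would solve $a_{\tilde\theta 1}$ backwards from the singularity with the singular free branch (behaving like $\tilde r^{-1/2+d_1}$) set to zero; equation \eqref{e0.a} then forces $a_{\tilde\theta 1}\equiv 0$, establishing the third bound in \eqref{a.oa.tilde.est}. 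For $a_{\tilde t 1}$ and $a_{\tilde\theta 2}$, which solve homogeneous ODEs with coefficients $K_{11},K_{22}$ of leading order $d_{i}\sqrt{2M}\tilde r^{-3/2}$, a direct application of Lemma \ref{Fuchsian} (normalizing these to their Schwarzschild values at $\tilde r=\epsilon/2$) yields the claimed $DC\eta$ bounds, by the same finite-induction $H^\ell$ argument used for Lemma \ref{a.oa.bds}. The off-diagonal coefficient $a_{\tilde t 2}$ solves the forced equation $\mathbf{e}_0 a_{\tilde t 2}-K_{22}a_{\tilde t 2}=2K_{12}a_{\tilde t 1}$; its exponent is then governed by an integration-from-the-singularity (free branch set to zero), and comes out as the leading exponent of $K_{12}$ in the new gauge \emph{plus} that of $a_{\tilde t 1}$.

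\smallskip

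The hard part will be improving the bound on $K_{12}$ in this \emph{new} gauge sufficiently to get the exponent $2-DC\eta$. In the gauge of Theorem \ref{thm_REVESNGG}, Lemma \ref{K12.better} exploited $e_1(r)\neq 0$: the source in \eqref{finredEVERic12pre} picked up precisely the factor $e_1(r)\cdot a^{2\theta}\partial_\theta(e_0\gamma)$ which blew up like $\tilde r^{-2-2DC\eta}$. In the new gauge, however, $\mathbf{e}_1(\tilde r)=\mathbf{e}_2(\tilde r)=0$, so this term is absent, and the RHS of \eqref{finredEVERic12pre} is controlled exclusively by the genuinely ``spatial'' Hessian terms $\overline{\nabla}_{12}\gamma+e_1\gamma\cdot e_2\gamma$. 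By the AVTD analysis of \S\ref{sec:FT_lm}, applied now with both frame vectors tangent to level sets of $\tilde r$, this RHS is bounded in $H^{\mathrm{low}-2}$ by $\tilde r^{-1/2-1/4}$ (cf.~\eqref{K12.future}), and solving \eqref{finredEVERic12pre} backwards from the singularity with zero free branch via Lemma \ref{Fuchsian} produces a $K_{12}$ with genuinely positive asymptotic exponent. Combined with $a_{\tilde t 1}\sim \tilde r^{d_1}$, this gives $a_{\tilde t 2}$ decaying at the claimed rate $\tilde r^{2-DC\eta}$. The remaining improved bounds on $\log a_{\tilde t 1}$ and $\log a_{\tilde\theta 2}$ follow by integrating the expansions \eqref{K11heur} in the respective ODEs and isolating the Schwarzschild part, exactly as in the derivation of \eqref{a.r.low}, with all the singular ``mixed'' error terms from Section \ref{sec:met.Christ.bds} now absent because of the vanishing of $\mathbf{e}_i(\tilde r)$.
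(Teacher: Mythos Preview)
Your overall architecture is right: choose a new geodesic congruence so that \emph{both} $\mathbf{e}_1(\tilde r)\equiv 0$ and $\mathbf{e}_2(\tilde r)\equiv 0$, observe that this kills the $e_1(r)$-driven contribution to the RHS of \eqref{finredEVERic12pre} so that $K_{12}$ in the new frame has a genuinely positive exponent, and then read off the improved decay of $a_{\tilde t 2}$ from the forced transport equation. That part matches the paper's argument.

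The gap is in the construction of $\tilde t,\tilde\theta$ themselves. You write that you would ``define $\tilde\theta$ and $\tilde t$ on the singularity by parametrizing the integral curves of $\mathbf{e}_2$ and $\mathbf{e}_1$'' and then separately ``normalize $a_{\tilde t 1},a_{\tilde\theta 2}$ to their Schwarzschild values at $\tilde r=\epsilon/2$.'' These two prescriptions are not consistent, and neither by itself produces a coordinate system. The frame elements $\mathbf{e}_1,\mathbf{e}_2$ do not commute (their bracket involves the spatial connection coefficients $A_{ij,k}$), so their integral curves do not define a chart; and if you instead fix $a_{\tilde t 1},a_{\tilde\theta 2}$ by an ad hoc normalization at $\epsilon/2$, the resulting vector fields $\partial_{\tilde t}=a_{\tilde t 1}\mathbf{e}_1+a_{\tilde t 2}\mathbf{e}_2$ and $\partial_{\tilde\theta}=a_{\tilde\theta 2}\mathbf{e}_2$ need not satisfy $[\partial_{\tilde\theta},\partial_{\tilde t}]=0$, so they are not coordinate vector fields for any $(\tilde t,\tilde\theta)$.

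What the paper does here is precisely to impose the commutation condition $[\partial_{\tilde\theta},\partial_{\tilde t}]=0$ on $\Sigma_{r_*}$ as the \emph{defining} equation for the free constants $c_{\tilde t 1}(t,\theta),c_{\tilde\theta 2}(t,\theta)$. Expanding the bracket against the frame $\tilde e_1,\tilde e_2$ gives a $2\times 2$ first-order transport system for $c_{\tilde t 1},c_{\tilde\theta 2}$ (with coefficients $\tilde A_{12,1},\tilde A_{21,2}$, themselves expressible through $\tilde K_{12}$ via \eqref{tAm.def}), which is then solved along the integral curves of $\tilde e_1,\tilde e_2$ with initial conditions $\tilde t=t$ on $\{\theta=0\}$ and $\tilde\theta=\theta$ on $\{t=0\}$. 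The $H^{{\rm low}-1}$ bounds on $c_{\tilde t 1},c_{\tilde\theta 2}$ then come from this transport system together with the already-known bounds on $\tilde K_{12}$. This step is what actually closes the construction of the new chart and cannot be replaced by a pointwise normalization.
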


In particular, in the new coordinates $\tilr, \ttheta,\tttt,\phi $ our space-time metric acquires the following expansion near the singularity: 

\beq
\bs
\label{best.expn}
g= -(\frac{2M}{\tilr}-1)d\tilr^2+ A(t,\theta) \tilr^{\alpha(t,\theta)} ({\rm sin}\theta)^2d\phi^2+ B(\tttt,\theta) \tilr^{\beta(t,\theta)}d\ttheta^2+C(\tttt,\ttheta) \tilr^{\delta(t,\theta)}d\tttt^2+O(\tilr^{3-\frac{1}{4}})d\tttt d\ttheta. 
\end{split}
\eeq
The functions $A,B,C, \alpha,\beta,\delta$ satisfy the same bounds as those claimed in Theorem \ref{thm_strict}. The strengthening here comes in the much-improved 
behaviour of the $d\ttheta d\ttt$ component, and the \emph{absence} of $d\ttheta d\tilr$ and $d\tttt d\tilr$ components. 
\medskip

\begin{figure}[h]
	\centering
	\def\svgwidth{9cm}
	\includegraphics[scale=1.2]{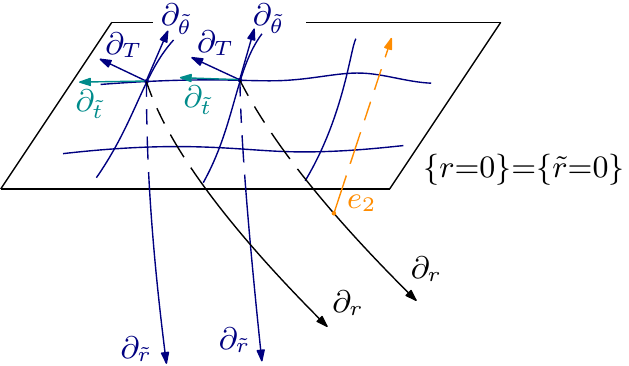}
	\caption{We display the coordinate vector fields of the old coordinates 
 $\{r,T,\Theta \}$ and $\{\tilde{r},\tilde{t},\tilde{\theta}\}$ on the final singularity hypersurface $\{r=0\}=\{\tilde{r}=0\}$ and slightly nearby. The geodesics corresponding to $\partial_{\tilde{r}}, \partial_r$ are \emph{not} the same. This new coordinate systems captures all the principal directions of contraction/expansion, and as a consequence the off-diagonal terms enjoy much stronger rates of decay.}
 \label{two-coords-sing}
\end{figure}

\begin{proof}
The key parameters of the REVESNGG can again be solved for as before. The only term that 
now satisfies a \emph{stronger} bound than in the gauge of Theorem 
\ref{thm_REVESNGG} is ${\bf K}_{12}=\langle \nabla_{{\bf e}_1}{\bf e}_0,{\bf e}_2 \rangle$; in this setting, 
we have that ${\bf e}_1(r)=0$; as a consequence the vector fields $\bf{e}_1,{\bf e}_2$ 
are \emph{both} tangent to the level sets of $\tilr$. 

This yields improved bounds on the 
the RHS in equation \eqref{finredEVERic12pre}: Since \eqref{gammam-1exp} still holds, we 
find that up to less singular terms the RHS has the following expansion:

\[
{\bf e}_1[{\bf e}_2](\gamma)={\bf e}_1[{\bf e}_2](\alpha(t,\theta))\cdot {\rm log}r=
a^{1\tilde{t}}a^{2\tilde{\theta}}\partial_{t\theta}\alpha(t,\theta)\cdot {\rm log}(r)= 
O(\tilr^{-\frac{1}{2}-DC\eta});
\]
the same bound holds for $[{\bf e}_1(\gamma)]\cdot [{\bf e}_2\gamma]$. In particular the term 
${\bf K}_{12}(\tilr,\tttt,\ttheta)$ is bounded in $H^{\rm low}$ by 
$DC\eta \tilr^{1-DC\eta}$.

This allows for improved estimates for one of the coordinate-to-frame coefficient 
$a_{\tilde{t} 2}$, which will be used to construct our new coordinates $\tttt,\ttheta$.:
\medskip

The frame coefficients $a_{\tilde{t}i},a_{\tilde{\theta}i}$, $i=1,2$, satisfy the ODEs \eqref{e0.a}, with $\tilde{t}, \tilde{\theta}$ in place of $t,\theta$. In this case, however, we will solve for \emph{all}  these parameters \emph{backwards} from the singularity. 

Notice that $a_{\tilde{\theta}1},{a}_{\tilde{t}2}$ satisfy separate  ODEs (the one for $a_{\tilde{\theta}1}$ 
is homogenous). We choose the free branches of both these ODEs to zero. 
Hence, this eliminates the variable
 $a_{\tilde{\theta}1}\equiv0$. This
choice implicitly imposes that  that $\partial_{\tilde{\theta}}$ is parallel to unique collapsing direction, \emph{and} that $\partial_\tttt$ 
should be the corresponding 
\emph{principal} dual direction.

Then, the equations for $a_{\tilde{t}1},a_{\tilde{\theta}2}$ decouple as 
well, having general solutions of the form:
\beq
\begin{split}
\label{a.tildes.formula}
& a_{\tilde{t}1}(\tilr, t,\theta)= c_{\tilde{t}1}(t,\theta) 
e^{\int_\e^{\tilr}(1-\frac{2M}{r})^{1/2} {\bf K}_{11}(s) ds }, 
a_{\tilde{\theta}2}(\tilr, t,\theta)= c_{\tilde{\theta}2}(t,\theta) 
e^{\int_\e^{\tilr}(1-\frac{2M}{r})^{1/2} {\bf K}_{22}(s) ds },
\\&a_{\tilde{\theta}1} =0, a_{\tilde{t}2}(\tilr,t,\theta)= 
e^{-\int_\e^{\tilr}(1-\frac{2M}{s})^{1/2} {\bf K}_{22}(s) ds } 
\int_0^{\rho} e^{-\int_\e^{\tau}(1-\frac{2M}{\tilr})^{1/2} {\bf K}_{22}(s,t,\theta) ds }  2{\bf K}_{12}
(\tau)a_{\tttt 1}(\tau,t,\theta)
\cdot (1-\frac{2M}{\tilr})^{1/2} d\tau  
\end{split}
\eeq
(In the last term we have used the function $a_{\tttt 1}(\rho,t,\theta)$ that was solved 
for first).

As noted, in this coordinate system , the 
parameter ${\bf K}_{12}(\tilr,\tttt,\ttheta)$ satisfies the \emph{stronger} bound, for all
 $l\le {\rm low}-2$: 

\[
\| {\bf K}_{12}\|_{{\cal C}^l[\Sigma_r]}\le DC\eta r^{1-DC\eta}
%r^{\frac{3}{2}+d_2(t,\theta)+d_1(t,\theta)}{\rm log}r.
\]

So, up to specifying the magnitude of the coefficient $c_{\tttt 1}(t,\theta)$, 
the magnitude of $a_{\tilde{t}2}(\tilr,t,\theta)$ will be of the order: 

\[
\| a_{\tilde{t}2}(\tilr,t,\theta)\|_{H^l[\Sigma_\tilr]}\lesssim O(1)DC\eta 
\tilr^{2-\frac{1}{4}}.
\]

Thus matters are reduced to bounding the coefficients $c_{\tilde{t} 1}(t,\theta), c_{\ttheta 2}(t,\theta)$. We do this next, by proving they can be specified to be 
$O(1)$
in the low norms.  
\medskip

We are free to choose the coefficients 
$c_{\tilde{t} 1}(t,\theta), c_{\ttheta 2}(t,\theta)$; the only restriction is that 
the resulting vector fields 
\[
\partial_{\tttt}=\sum_{i=1,2} a_{\tttt i}\overline{e}_i, \partial_\ttheta =\sum_{i=1,2}
 a_{\ttheta i}\overline{e}_i
\]
should commute;
it suffices to check this condition on $\Sigma_{r_*}$, where we recall that 
by construction $a_{\tttt 1}(r_*(t,\theta),t,\theta)=c_{\tttt 1}(t,\theta) $ and $a_{\ttheta 2}(r_*(t,\theta),t,\theta)=c_{\ttheta 2}(t,\theta)$;
 in particular on that hypersurface we will be requiring: 
\beq
\label{commut.reqt}
\bigg{[}c_{\ttheta 2}(t,\theta)\te_2, c_{\tttt 1}(t,\theta)\cdot 
\te_1
+a_{\tttt 2}[c_{\tttt 1}, {\bf K}_{12}]\cdot \te_2\bigg{]}=0.
\eeq
(We use the notation $a_{\tttt 2}[c_{\tttt 1}, {\bf K}_{12}]$
to highlight the dependence of the variable $a_{\tttt 2}$ only on the two 
parameters $c_{\tttt 1}, {\bf K}_{12}$).

Our freedom comes in choosing the values of the two functions $c_{\ttheta 2}(t,\theta), c_{\tttt 1}(t,\theta)$
along two curves  $\{t=0\}$ and $\{\theta=0\}$ respectively. 
In fact that requirement fixes the values of the coordinates $\tttt, \ttheta$ on the 
lines $\{\theta=0\}, \{ t=0\}$.
For definiteness, we will set $\tttt=t$ and $\ttheta=\theta$ on those lines; the 
resulting solution to 
  \eqref{commut.reqt} then specifies the values of the functions 
  $c_{\ttheta 2}(t,\theta), c_{\tttt 1}(t,\theta)$
  everywhere. 
  
Let us in fact expand \eqref{commut.reqt} into: 

\beq
\begin{split}
&\bigg{(}c_{\ttheta 2}(t,\theta)\cdot \te_2[c_{\tttt 1}(t,\theta)]\bigg{)}\cdot \te_1- 
\bigg{(}c_{\tttt 1}(t,\theta)\te_1[c_{\ttheta 2}(t,\theta)]+c_{\ttheta 2}(t,\theta)\cdot
\te_2(c_{\tttt 2}[c_{\tttt 1},{\bf K}_{12}])\bigg{)}\cdot \te_2
\\&+[c_{\ttheta 2}(t,\theta)\cdot c_{\tttt 1}(t,\theta)]\cdot [\tilde{A}_{21,2}\te_2-
\tilde{A}_{12,1}\te_1]=0.
\end{split}
\eeq
Thus we derive a  system of 1st order transport equations:
\beq
\begin{split}
&\bigg{(}c_{\ttheta 2}(t,\theta)\cdot \te_2[c_{\tttt 1}(t,\theta)]\bigg{)}=
c_{\ttheta 2}(t,\theta)\cdot 
c_{\tttt 1}(t,\theta)
 \tilde{A}_{12,1},
\\& \bigg{(}c_{\tttt 1}(t,\theta)\te_1[c_{\ttheta 2}(t,\theta)]+c_{\ttheta 2}(t,\theta)
\cdot
\te_2(a_{\ttheta 1}[c_{\tttt 1},{\bf K}_{12}])\bigg{)}=
c_{\ttheta 2}(t,\theta)\cdot c_{\tttt 2}(t,\theta)\cdot \tilde{A}_{21,2}.
\end{split}
\eeq

At this point we recall the expressions \eqref{tAm.def} for 
$\tilde{A}_{21,2},\tilde{A}_{12,1}$ in terms of (0th and 1st derivatives of) 
the variable  $\tilde{K}_{12}$;  using also the expressions \eqref{te1m.tK12} for
 $\te_1, \te_2$ in terms of
 the background coordinates $t,\theta$ we can view the above as a 2x2 1st order 
 system in the 
 two parameters $c_{1\tttt}, c_{2\ttheta}$.

%In particular given the bounds on $\tilde{K}_{12}$ 
%we can derive derive bounds on these variables in the $H^{h-4}$,integrating these
% equations we derive our desired 
%bounds on  $c_{2\ttheta}(t,\theta), c_{1\tttt}(t,\theta)$,

%\beq
%\te_2[{\rm log}[c_{1\tttt}]]=O(1)\te_1[\frac{\tilde{K}_{12}}{\K_{22}-\K_{11}} ], 
%\te_1[{\rm log}[c_{2\ttheta}]]+
%[c_{1\tttt}]^{-1} \te_2[a_{2\ttheta}[c_{1\tttt}, {\bf K}_{12}]]=O(1)
%\te_2[\frac{\tilde{K}_{12}}{\K_{22}-\K_{11}} ]. 
%\eeq 

Now, we will use the above system to solve for $c_{1\tttt}(t,\theta),
 c_{2\ttheta}(t,\theta)$. To do this, we need to impose conditions on these functions; 
 we choose to do so on the two curves $\{\theta=0\}$ and 
 $\{t=0\}$. As noted, we set 
$\tttt=t$ and $\ttheta=\theta$ respectively on those two curves. 
 
Implicitly using the formulas \eqref{tthetatransebar} this prescribes the values 
of $c_{\tttt 1}(t,\theta),
 c_{\ttheta 2}(t,\theta)$ on these two curves, respectively. Then using the 
 expressions second line formulas in \eqref{tthetatransebar}, we solve for 
 $c_{\tttt 1}(t,\theta)$ first 
 along the integral curves of $\te_2$, and then for $c_{\ttheta 2}$ along the 
 integral curves of $\te_1$.  This shows that in the norms $H^{{\rm low}-1}$
 we can bound $c_{\ttheta 2}$, $c_{\tttt 1}$ by $O(\e^{1-DC\eta})$, 
 $O(\e^{-\frac{1}{2}-DC\eta})$, respectively. 
 
From these formulas we can directly derive our claimed bounds on the parameters $a_{Ai}$
\emph{on} $\Sigma_{r_*}$,
 utilizing our derived bounds on 
$\tilde{K}_{12}$.  Given the bounds on 
 the
 components of $K_{ij}(r,t,\theta)$ (in particular 
 with the \emph{improved} bound on ${\bf K}_{12}(r,t,\theta)$),
  we then invoke the integral representations \eqref{a.tildes.formula} to derive the claimed 
 bounds for $a_{Ai}$, $A=\tttt,\ttheta$ and $i=1,2$  off of $\Sigma_{r_*}$. 
\medskip

\end{proof}

\bigskip
Spyros Alexakis\\
{\itshape Address:} {\scshape\footnotesize Department of Mathematics, University of Toronto, Room 6290, 40 St. George Street, Toronto, Ontario M5S 2E4, Canada}\\
{\itshape Email:} {\ttfamily alexakis@math.utoronto.ca}

\smallskip
Grigorios Fournodavlos\\
{\itshape Address:} {\scshape\footnotesize Laboratoire Jacques-Louis Lions, Sorbonne Universit\'e, 4 place Jussieu, 75005 Paris, France }\\
{\itshape Email:} {\ttfamily grigorios.fournodavlos@sorbonne-universite.fr}

\end{document}